\renewcommand{\chaptermark}[1]%
  {\markboth{\chaptername\ \thechapter.\ #1}{}}
\renewcommand{\sectionmark}[1]%
  {\markright{\thesection.\ #1}}
\renewcommand{\ps@plain}{\ps@empty}
\newtheorem{thm}{Theorem}[section]
\newtheorem{lemma}[thm]{Lemma}
\newtheorem{prop}[thm]{Proposition}
\newtheorem{cor}[thm]{Corollary}
\newtheorem{corollary}[thm]{Corollary}
\newtheorem{remark}[thm]{Remark}
\newtheorem{remarks}[thm]{Remarks}
\newtheorem{definition}[thm]{Definition}
\newtheorem{notation}[thm]{Notation}
\newtheorem{example}[thm]{Example}
\newtheorem{examples}[thm]{Examples}
\newcommand{\Remark}[1]{\begin{remark}{\rm #1}\end{remark}}
\newcommand{\Definition}[1]{\begin{definition}{\rm #1}\end{definition}}
\newcommand{\Notation}[1]{\begin{notation}{\rm #1}\end{notation}}
\newcommand{\Example}[1]{\begin{example}{\rm #1}\end{example}}
\newcommand{\Examples}[1]{\begin{examples}{\rm #1}\end{examples}}
\newcommand{\dist}{\operatorname{dist}}
\newcommand{\FB}{\mbox{\Pointinghand}}
\renewcommand{\Box}{\square}
\newcommand{\N}{{\mathbb N}}
\newcommand{\Co}{{\mathbb C}}
\newcommand{\R}{{\mathbb R}}
\newcommand{\K}{{\mathbb K}}
\newcommand{\RR}{\mathcal{R}}
\newcommand{\DD}{\mathcal{D}}
\newcommand{\JJ}{\mathcal{J}}
\newcommand{\KK}{\mathcal{K}}
\newcommand{\FF}{\mathcal{F}}
\newcommand{\LL}{\mathcal{L}}
\newcommand{\VV}{\mathcal{V}}
\newcommand{\A}{\mathfrak{A}}
\newcommand{\B}{\mathfrak{B}}
\newcommand{\RRt}{{\widetilde{\mathcal{R}}}}
\newcommand{\id}{{\rm id}}
\renewcommand{\Re}{\mathfrak{Re}}
\renewcommand{\Im}{\mathfrak{Im}}
\newcommand{\Obj}{\operatorname{Obj}}
\newcommand{\Mo}{\operatorname{Mor}}
\newcommand{\Id}{\operatorname{Id}}
\DeclareMathAlphabet{\mathpzc}{OT1}{pzc}{m}{it}
\newcommand{\SYMPL}{\mathpzc{SymplVec}}
\newcommand{\CALG}{\mathpzc{C^*}\!\!\!\!-\!\!\!\mathpzc{Alg}}
\newcommand{\GLOBHY}{\mathpzc{GlobHyp}}
\newcommand{\GLOBHYN}{\mathpzc{GlobHyp}_{\mathpzc{\footnotesize naked}}}
\newcommand{\LORFUND}{\mathpzc{LorFund}}
\newcommand{\QLA}{\mathpzc{QuasiLocAlg}}
\newcommand{\CNET}{\mathpzc{QuasiLocAlg}_{\mathpzc{\footnotesize weak}}}
\newcommand{\SET}{\mathpzc{Set}}
\newcommand{\TOP}{\mathpzc{Top}}
\newcommand{\GROUP}{\mathpzc{Groups}}
\newcommand{\AB}{\mathpzc{AbelGr}}
\newcommand{\AAA}{\mathpzc{A}}
\newcommand{\BBB}{\mathpzc{B}}
\newcommand{\CCR}{\operatorname{CCR}}
\newcommand{\solve}{\operatorname{SOLVE}}
\newcommand{\SYM}{\operatorname{SYMPL}}
\newcommand{\forget}{\operatorname{FORGET}}
\newcommand{\Mor}{\operatorname{Mor}}
\newcommand{\ext}{\operatorname{ext}}
\newcommand{\alg}{{\operatorname{alg}}}
\newcommand{\res}{\operatorname{res}}
\newcommand{\grad}{\operatorname{grad}}
\newcommand{\ord}{\operatorname{ord}}
\newcommand{\dom}{\operatorname{dom}}
\newcommand{\End}{\operatorname{End}}
\newcommand{\Hom}{\operatorname{Hom}}
\newcommand{\del}{\partial}
\newcommand{\tr}{\operatorname{tr}}
\newcommand{\vol}{\operatorname{vol}}
\newcommand{\dt}{{\,\,{dt}}}
\newcommand{\dV}{{\,\,\operatorname{dV}}}
\newcommand{\dA}{\operatorname{dA}}
\renewcommand{\div}{\operatorname{div}}
\newcommand{\ric}     {\mbox{\upshape ric}}
\newcommand{\scal}{\operatorname{scal}}
\newcommand{\supp}{\operatorname{supp}}
\newcommand{\ssupp}{\operatorname{sing\,supp}}
\newcommand{\guenterl}{\operatorname{L}}
\newcommand{\la}{\langle}       
\newcommand{\ra}{\rangle}       
\newcommand{\lala}{\langle \! \langle} 
\newcommand{\rara}{\rangle \! \rangle}
\newcommand{\Csc}{C_{\mathrm{sc}}^\infty}
\renewcommand{\phi}{\varphi}
\newcommand{\stetig}{\ensuremath{C}}
\newcommand{\lra}{\longrightarrow}
\newcommand{\ovl}{\overline}
\newcommand{\wit}{\widetilde}
\newcommand{\wih}{\widehat}
\def \be{\begin{eqnarray*}}
\def \ee{\end{eqnarray*}}
\def \ben{\begin{enumerate}}
\def \een{\end{enumerate}}
\def \beit{\begin{itemize}}
\def \eeit{\end{itemize}}
\def \bui#1#2{\mathrel{\mathop{\kern 0pt#1}\limits^{#2}}}
\def \buil#1#2{\mathrel{\mathop{\kern 0pt#1}\limits_{#2}}}
\newcommand{\pa}[1]{\left(#1\right)}
\newcommand{\ueber}[2]{\ensuremath{{\scriptstyle #1}\atop
    {\scriptstyle #2}}}
\title{Wave equations on Lorentzian manifolds and quantization} 
\author{Christian B\"ar\\ Nicolas Ginoux \\ Frank Pf\"aff\-le}
\date{\today}
\newcommand{\indexn}[1]{\index{DerIndex}{#1}}  
\newcommand{\indexs}[1]{\index{Symbole}{#1}}  
\newcommand{\defidx}[1]{\defem{#1}\index{DerIndex}{#1>defemidx}} 
\newcommand{\defem}[1]{\textit{{\red#1}}}
\newcounter{Abbildung}
\newcommand{\abb}[1]{\stepcounter{Abbildung}\begin{center}{\footnotesize
      Fig.\ \theAbbildung: #1}\end{center}\index{Abbildungen}{#1}}
\begin{document}

\frontmatter {
\begin{titlepage}
\enlargethispage{30cm}
\pspicture(0,0)(20,47)

\newrgbcolor{farbeL}{0 0 1}        
\newrgbcolor{farbeO}{0.75 0.75 1}  
\newrgbcolor{farbeU}{0.9 0.9 1}    

\psframe[fillcolor=farbeO,fillstyle=solid,linewidth=0pt,linecolor=farbeO](-10,45)(50,55)
\psframe[fillcolor=farbeU,fillstyle=solid,linewidth=0pt,linecolor=farbeU](0,0)(50,45)
\psframe[fillcolor=farbeL,fillstyle=solid,linewidth=0pt,linecolor=farbeL](-10,0)(0,55)

\rput[l](1,47){\huge\sf\white \begin{tabular}{l} 
    Christian B\"ar\\ 
    Nicolas Ginoux\\
    Frank Pf\"affle  
\end{tabular}} 

\rput[l](1,42){\Huge\sf \begin{tabular}{l}
    Wave Equations on Lorentzian \\
    Manifolds and Quantization
\end{tabular}}


\endpspicture    
\end{titlepage}
\newpage\thispagestyle{empty}

\chapter*{Preface}
\addcontentsline{toc}{chapter}{Preface}

In General Relativity spacetime is described mathematically by a Lorentzian
manifold.
Gravitation manifests itself as the curvature of this manifold.
Physical fields, such as the electromagnetic field, are defined on this
manifold and have to satisfy a wave equation.
This book provides an introduction to the theory of linear wave equations on
Lorentzian manifolds.
In contrast to other texts on this topic \cite{FL2,Guenther} we develop the
global theory.
This means, we ask for existence and uniqueness of solutions which are defined
on all of the underlying manifold.
Such results are of great importance and are already used much in the
literature despite the fact that published proofs are missing.
Tracing back the references one typically ends at Leray's unpublished lecture
notes \cite{L} or their exposition \cite{CB}.

In this text we develop the global theory from scratch in a modern geometric
language.
In the first chapter we provide basic definitions and facts about
distributions on manifolds, Lorentzian geometry, and normally hyperbolic
operators.
We study the building blocks for local solutions, the Riesz distributions, in
some detail.
In the second chapter we show how to solve wave equations locally.
Using Riesz distributions and a formal recursive procedure one first
constructs formal fundamental solutions.
These are formal series solving the equations formally but in general they do
not converge.
Using suitable cut-offs one gets ``almost solutions'' from these formal
solutions.
They are well-defined distributions but solve the equation only up to an error
term.
This is then corrected by some further analysis which yields true local
fundamental solutions.

This procedure is similar to the construction of the heat kernel for a Laplace
type operator on a compact Riemannian manifold.
The analogy goes even further.
Similar to the short-time asymptotics for the heat kernel, the formal
fundamental solution turns out to be an asymptotic expansion of the true
fundamental solution.
Along the diagonal the coefficients of this asymptotic expansion are given by
the same algebraic expression in the curvature of the manifold, the
coefficients of the operator, and their derivatives as the heat kernel
coefficients. 

In the third chapter we use the local theory to study global solutions.
This means we construct global fundamental solutions, Green's operators, and
solutions to the Cauchy problem.
This requires assumptions on the geometry of the underlying manifold.
In Lorentzian geometry one has to deal with the problem that there is no
good analog for the notion of completeness of Riemannian manifolds.
In our context globally hyperbolic manifolds turn out to be the right class of
manifolds to consider.
Most basic models in General Relativity turn out to be globally hyperbolic but
there are exceptions such as anti-deSitter spacetime.
This is why we also include a section in which we study cases where one can
guarantee existence (but not uniqueness) of global solutions on certain
non-globally hyperbolic manifolds.

In the last chapter we apply the analytical results and describe the basic
mathematical concepts behind field quantization.
The aim of quantum field theory on curved spacetimes is to provide a partial
unification of General Relativity with Quantum Physics where the
gravitational field is left classical while the other fields are quantized.
We develop the theory of $C^*$-algebras and CCR-representations in full detail
to the extent that we need.
Then we construct the quantization functors and check that the Haag-Kastler
axioms of a local quantum field theory are satisfied.
We also construct the Fock space and the quantum field.

From a physical perspective we just enter the door to quantum field theory but
do not go very far.
We do not discuss $n$-point functions, states, renormalization, nonlinear
fields, nor physical applications such as Hawking radiation.
For such topics we refer to the corresponding literature.
However, this book should provide the reader with a firm mathematical basis to
enter this fascinating branch of physics.

In the appendix we collect background material on category theory, functional
analysis, differential geometry, and differential operators that is used
throughout the text. 
This collection of material is included for the convenience of the reader but
cannot replace a thorough introduction to these topics.
The reader should have some experience with differential geometry.
Despite the fact that normally hyperbolic operators on Lorentzian manifolds
look formally exactly like Laplace type operators on Riemannian manifolds
their analysis is completely  different.
The elliptic theory of Laplace type operators is not needed anywhere in this
text.
All results on hyperbolic equations which are relevant to the subject are
developed in full detail.
Therefore no prior knowledge on the theory of partial differential equations
is needed. 

\section*{Acknowledgements}
\addcontentsline{toc}{section}{Acknowledgements}

This book would not have been possible without the help of and the inspired
discussions with many colleagues.
Special thanks go to 
Helga Baum,
Klaus Fredenhagen,
Olaf M\"uller,
Miguel S\'anchez,
Alexander Strohmaier,
Rainer Verch, 
and all the other participants of the workshop on ``Hyperbolic operators on
Lorentzian manifolds and quantization'' in Vienna, 2005.

We would also like to thank the Schwerpunktprogramm (1154) ``Globale
Differential\-geometrie'' funded by the Deutsche Forschungsgemeinschaft for
financial support. 

\vspace{0.8cm}
{\sc Christian B\"ar, Nicolas Ginoux, and Frank Pf\"affle}, 

Potsdam, October 2006


\tableofcontents\newpage\thispagestyle{empty}
}
\parindent0cm 
\mainmatter
\setcounter{chapter}{0}

\chapter{Preliminaries}

We want to study solutions to wave equations on Lorentzian manifolds.
In this first chapter we develop the basic concepts needed for this task.
In the appendix the reader will find the background material on differential
geometry, functional analysis and other fields of mathematics that will be
used throughout this text without further comment.

A wave equation is given by a certain differential operator of second order
called a ``normally hyperbolic operator''.
In general, these operators act on sections in vector bundles which is the
geometric way of saying that we are dealing with systems of equations and not
just with scalar equations.
It is important to allow that the sections may have certain singularities.
This is why we work with distributional sections rather than with smooth or
continuous sections only.

The concept of distributions on manifolds is explained in the first section.
One nice feature of distributions is the fact that one can apply differential
operators to them and again obtain a distribution without any further
regularity assumption.

The simplest example of a normally hyperbolic operator on a Lorentzian
manifold is given by the d'Alembert operator on Minkowski space.
Its fundamental solution, a concept to be explained later, can be described
explicitly.
This gives rise to a family of distributions on Minkowski space, the Riesz
distributions, which will provide the building blocks for solutions in the
general case later.

After explaining the relevant notions from Lorentzian geometry we will show
how to ``transplant'' Riesz distributions from the tangent space into the
Lorentzian manifold.
We will also derive the most important properties of the Riesz distributions.

\section{Distributions on manifolds}

Let us start by giving some definitions and by fixing the
terminology for distributions on manifolds.
We will confine ourselves to those facts that we will actually need later on.
A systematic and much more complete introduction may be found e.~g.\ in
\cite{FL1}.

\subsection{Preliminaries on distributions}

Let $M$ be a manifold equipped with a smooth volume density $\dV$. 
Later on we will use the volume density induced by a Lorentzian metric
but this is irrelevant for now.
We consider a real or complex vector bundle $E\to M$.
We will always write $\K=\R$\indexs{K*@$\protect\K$, field $\protect\R$ or $\protect\Co$} or $\K=\Co$ depending on whether $E$ is a real or complex. 
The space of compactly supported smooth sections in $E$ will be denoted by
$\mathcal{D}(M,E)$. 
\indexs{D*@ $\protect\mathcal{D}(M,E)$, compactly supported smooth sections in
  vector bundle $E$ over $M$}  
We equip $E$ and $T^*M$ with connections, both denoted by $\nabla$.
They induce connections on the tensor bundles $T^*M\otimes\cdots\otimes T^*M
\otimes E$, again denoted by $\nabla$.
For a continuously differentiable section $\phi\in C^1(M,E)$ the covariant
derivative is a continuous section in $T^*M\otimes E$, $\nabla\phi\in
C^0(M,T^*M\otimes E)$.
More generally, for $\varphi\in C^k(M,E)$ we get $\nabla^k\phi\in
C^0(M,\underbrace{T^*M\otimes\cdots\otimes T^*M}_{k\,\,
  \mathrm{factors}}\otimes E)$. 

We choose a Riemannian metric on $T^*M$ and a Riemannian or Hermitian metric
on $E$ depending on whether $E$ is real or complex.
This induces metrics on all bundles $T^*M\otimes\cdots\otimes
T^*M\otimes E$.
Hence the norm of $\nabla^k\phi$ is defined at all points of $M$.

For a subset $A \subset M$ and $\varphi\in C^k(M,E)$ we define the
\defem{$C^k$-norm}\indexn{Cknorm@$C^k$-norm>defemidx} 
by
\indexs{*@$\|\varphi\|_{C^k(A)}$, $C^k$-norm of section $\varphi$ over a set
  $A$} 
\begin{equation}
\|\varphi\|_{C^k(A)} := \max_{j=0,\ldots,k} \;
\sup_{x\in A}\,|\nabla^j\varphi(x)| .
\label{defCkNorm}
\end{equation}
If $A$ is compact, then different choices of the metrics and the
connections yield equivalent norms $\|\cdot\|_{C^k(A)}$.
For this reason there will usually be no need to explicitly specify the
metrics and the connections.

The elements of $\DD(M,E)$ are referred to as \defidx{test sections} in $E$.
We define a notion of convergence of test sections.

\Definition{
Let $\varphi,\varphi_n\in \mathcal{D}(M,E)$. 
We say that the sequence $(\varphi_n)_n$ {\it converges to} $\varphi$ {\it in} 
$\mathcal{D}(M,E)$ if the following two conditions hold:
\begin{enumerate}
 \item There is a compact set $K\subset M$ such that the supports of all
 $\varphi_n$ are contained in $K$, i.~e., $\supp(\varphi_n)\subset K$ for 
 all $n$.
 \item The sequence $(\varphi_n)_n$ converges to $\varphi$ in all
 $C^k$-norms over $K$, i.~e., for each $k\in\N$
$$
\|\varphi-\varphi_n\|_{C^k(K)} \buil{\lra}{n\to\infty} 0.
$$
\end{enumerate}
}

We fix a finite-dimensional $\K$-vector space $W$.
Recall that $\K=\R$ or $\K=\Co$ depending on whether $E$ is real or complex.

\Definition{\label{def:distr}
A $\K$-linear map $F:\mathcal{D}(M,E^*)\to W$ is called a 
\defem{distribution in $E$ with values in $W$} 
\indexn{distribution>defemidx}
if it is continuous in the sense  
that for all convergent sequences $\varphi_n\to\varphi$ in
$\mathcal{D}(M,E^*)$  one has $ F[\varphi_n]\to F[\varphi]$.
We write $\DD'(M,E,W)$ for the space of all $W$-valued distributions in $E$. 
}
\indexs{Da*@$\protect\DD'(M,E,W)$, $W$-valued distributions in $E$}

Note that since $W$ is finite-dimensional all norms $|\cdot|$
on $W$ yield the same topology on $W$.
Hence there is no need to specify a norm on $W$ for Definition~\ref{def:distr}
to make sense.
Note moreover, that distributions in $E$ act on test sections in $E^*$.

\begin{lemma}\label{endlicheordnung}
Let $F$ be a  $W$-valued distribution in $E$ and let $K\subset M$ be compact. 
Then there is a nonnegative integer $k$ and a constant $C>0$
such that for all $\varphi\in \mathcal{D}(M,E^*)$ with 
$\supp(\varphi)\subset K$ we have
\begin{equation}\label{ordnungsabsch}
  \left|F[\varphi]\right|\le C\cdot\left\|\varphi\right\|_{C^k(K)}.
\end{equation}
\end{lemma}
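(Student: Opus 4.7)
The plan is to argue by contradiction, exploiting the sequential continuity of $F$ together with the monotonicity of the $C^k$-norms (namely $\|\varphi\|_{C^j(K)} \le \|\varphi\|_{C^k(K)}$ for $j \le k$, which is immediate from the definition \eqref{defCkNorm} as a maximum over $0,\ldots,k$).

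Suppose that no such $k$ and $C$ exist. Then for every nonnegative integer $k$ we can find a test section $\varphi_k \in \DD(M,E^*)$ with $\supp(\varphi_k) \subset K$ such that
\[
|F[\varphi_k]| > k \cdot \|\varphi_k\|_{C^k(K)}.
\]
In particular $\|\varphi_k\|_{C^k(K)} \neq 0$, so we may rescale and set
\[
\psi_k := \frac{\varphi_k}{k\,\|\varphi_k\|_{C^k(K)}} \in \DD(M,E^*).
\]
Then $\supp(\psi_k) \subset K$ for every $k$, and $\|\psi_k\|_{C^k(K)} = 1/k$. By monotonicity of the $C^j$-norms, for every fixed $j$ and every $k \ge j$ one has $\|\psi_k\|_{C^j(K)} \le \|\psi_k\|_{C^k(K)} = 1/k \to 0$.

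Thus $\psi_k \to 0$ in $\DD(M,E^*)$ in the sense of the convergence defined above: all supports lie in the common compact set $K$, and the $C^j$-norms tend to $0$ for every $j$. By continuity of $F$ we therefore get $F[\psi_k] \to 0$ in $W$. However, by construction
\[
|F[\psi_k]| \;=\; \frac{|F[\varphi_k]|}{k\,\|\varphi_k\|_{C^k(K)}} \;>\; 1,
\]
contradicting $F[\psi_k]\to 0$. Hence the assumption fails, and the estimate \eqref{ordnungsabsch} holds for some $k$ and $C$.

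The only mildly delicate point is to recognize that the sequence $(\psi_k)_k$ really does converge to $0$ in $\DD(M,E^*)$; this hinges on the observation that for each fixed index $j$ only the tail $k\ge j$ matters, and for those $k$ the $C^j$-norm is dominated by the $C^k$-norm, which has been arranged to be $1/k$. Everything else is a routine normalization argument, and no global metric or connection choice enters the conclusion because $K$ is compact and different choices yield equivalent norms.
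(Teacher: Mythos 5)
Your proof is correct and follows essentially the same contradiction argument as the paper, differing only in the cosmetic choice of normalization: you rescale $\varphi_k$ by $k\,\|\varphi_k\|_{C^k(K)}$ so that $\|\psi_k\|_{C^k(K)}=1/k$ and $|F[\psi_k]|>1$, whereas the paper rescales by $|F[\varphi_k]|$ so that $|F[\psi_k]|=1$ and $\|\psi_k\|_{C^k(K)}\le 1/k$. The key steps (contradiction hypothesis, monotonicity of the $C^j$-norms to get convergence to $0$ in $\DD(M,E^*)$, and continuity of $F$) are identical.
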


The smallest $k$ for which inequality (\ref{ordnungsabsch}) holds is called
the \defem{order} of $F$ over $K$.
\indexn{order of a distribution>defemidx} 

\begin{proof}
Assume (\ref{ordnungsabsch}) does not hold for any pair of $C$ and $k$.
Then for every positive integer $k$ we can find a nontrivial section
$\varphi_k \in\mathcal{D}(M,E^*)$ with $\supp(\varphi_k)\subset K$ and
$\left|F[\varphi_k]\right| \ge k\cdot\left\|\varphi_k\right\|_{C^k}$. 
We define sections $\psi_k:=\tfrac{1}{|F[\varphi_k]|}\varphi_k$.
Obviously, these $\psi_k$ satisfy $\supp(\psi_k)\subset K$ and 
\[
\|\psi_k\|_{C^k(K)}=\tfrac{1}{\left|F[\varphi_k]\right|}\|\varphi_k\|_{C^k(K)}
\le\tfrac{1}{k}. 
\]
Hence for $k\geq j$ 
$$
\|\psi_k\|_{C^j(K)} \leq \|\psi_k\|_{C^k(K)} \leq \tfrac1k.
$$
Therefore the sequence $(\psi_k)_k$ converges to $0$ in $\mathcal{D}(M,E^*)$.
Since $F$ is a distribution we get $F[\psi_k]\to F[0]=0$ for $k\to\infty$.
On the other hand, $|F[\psi_k]|=
\left|\tfrac{1}{|F[\varphi_k]|}F[\varphi_k]\right|=1$
for all $k$, which yields a contradiction.
\end{proof}

Lemma~\ref{endlicheordnung} states that the restriction of any distribution to
a (relatively) compact set is of finite order. 
We say that a distribution $F$ is of order $m$ if $m$ is the smallest integer
such that for each compact subset $K\subset M$ there exists a constant $C$
so that
$$
|F[\varphi]| \leq C\cdot \|\varphi\|_{C^m(K)}
$$
for all $\varphi\in\DD(M,E^*)$ with $\supp(\varphi)\subset K$.
Such a distribution extends uniquely to a continuous linear map on
$\DD^m(M,E^*)$, the space of $C^m$-sections in $E^*$ with compact support.
\indexs{Da*@$\protect\DD^m(M,E^*)$, compactly supported $C^m$-sections in
  $E^*$} 
Convergence in $\DD^m(M,E^*)$ is defined similarly to that of test sections.
We say that $\phi_n$ converge to $\phi$ in $\DD^m(M,E^*)$ if the supports of
the $\phi_n$ and $\phi$ are contained in a common compact subset $K\subset M$
and $\|\varphi-\varphi_n\|_{C^m(K)} \to 0$ as $n\to\infty$.

Next we give two important examples of distributions.

\Example{
Pick a  bundle $E\to M$ and a point $x\in M$.
The \defidx{delta-distribution} $\delta_x$ is an $E_x^*$-valued distribution in
$E$. 
\indexs{deltadistr@$\delta_x$,  delta-distribution}
For $\varphi\in\mathcal{D}(M,E^*)$ it is defined by
\[ \delta_x[\varphi]=\varphi(x). \]
Clearly, $\delta_x$ is a distribution of order 0.
}

\Example{\label{ex:locint}
Every locally integrable section $f\in L^1_{\mathrm{loc}}(M,E)$ can be 
interpreted as a $\K$-valued distribution in $E$ by setting for any 
$\varphi\in\mathcal{D}(M,E^*)$ 
\indexs{L*@$L^1_{\protect\mathrm{loc}}(M,E)$, locally integrable sections in
  vector  bundle $E$}
\[ 
f[\varphi]:=\int_M \varphi(f) \dV.
\]
As a distribution $f$ is of order $0$.
}

\begin{lemma}\label{zweivariablen}
Let $M$ and $N$ be differentiable manifolds equipped with smooth volume
densities.
Let $E\to {M}$ and $F\to N$ be vector bundles. 
Let $K\subset N$ be compact and
let $\varphi\in C^k({M}\times N,E\boxtimes F^*)$ be such that
$supp(\varphi)\subset M\times K$. 
Let $m\leq k$ and let $T\in\mathcal{D}'(N,F,\K)$ be a distribution of
order $m$.
Then the map
\be
&f:{M}\to E,&\\
&x\mapsto T[\varphi(x,\cdot)],&
\ee
defines a $C^{k-m}$-section in $E$ with support contained in the
projection of $\supp(\varphi)$ to the first factor, i.~e., $\supp(f)\subset
\{x\in M\,|\,\exists\, y\in K\textrm{ such that }(x,y)\in\supp(\varphi)\}$. 
In particular, if $\varphi$ is smooth with compact support, and $T$ is any
distribution in $F$, then $f$ is a smooth section in $E$ with compact support.

Moreover, $x$-derivatives up to order $k-m$ may be interchanged with $T$.
More precisely, if $P$ is a linear differential operator of order $\leq k-m$
acting on sections in $E$, then
$$
P f = T[P_x\phi(x,\cdot)].
$$
\end{lemma}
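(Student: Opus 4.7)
My plan is to reduce to scalars via local trivializations and then prove continuity and differentiability by combining the order bound from Lemma \ref{endlicheordnung} with a differentiation-under-the-integral argument. The support claim is the easiest piece: if $x_0 \in M$ has no $y \in K$ with $(x_0,y) \in \supp(\varphi)$, then $\varphi(x_0,\cdot)$ vanishes identically on $N$ (its support lies in $K$ by assumption and is missed), so $f(x_0) = T[0] = 0$. Thus $\supp(f)$ is contained in the projection of $\supp(\varphi)$ onto $M$, as claimed.

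To make $T[\varphi(x,\cdot)]$ a clean object despite the fact that $T$ is $\K$-valued while $\varphi(x,\cdot)$ is a section of $E_x \otimes F^*$, I would work locally. Fix $x_0 \in M$ and choose a relatively compact chart neighborhood $U$ together with a local frame $e_1,\dots,e_r$ of $E$ over $U$. Then on $U \times N$ we may write $\varphi(x,y) = \sum_{i=1}^r e_i(x) \otimes \varphi_i(x,y)$, with $\varphi_i \in C^k(U \times N, F^*)$ and $\supp(\varphi_i) \subset U \times K$. The section $f$ is given over $U$ by $f(x) = \sum_i e_i(x)\, T[\varphi_i(x,\cdot)]$, so it suffices to prove that each scalar function $x \mapsto T[\varphi_i(x,\cdot)]$ is $C^{k-m}$ on $U$.

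For continuity, note that all partial derivatives of $\varphi_i$ of order $\leq k$ are uniformly continuous on the compact set $\overline{U} \times K$. Hence if $x_n \to x_0$ in $U$, then the sections $\varphi_i(x_n,\cdot) - \varphi_i(x_0,\cdot)$ have support in $K$ and go to $0$ in the $C^m(K)$-norm (since $m \leq k$). By the order-$m$ bound of Lemma \ref{endlicheordnung} applied to $T$ and $K$, $T[\varphi_i(x_n,\cdot)] \to T[\varphi_i(x_0,\cdot)]$. For differentiability, I would work one coordinate derivative at a time: for a chart coordinate $x^j$, the mean value theorem gives
\[
\frac{\varphi_i(x+he_j,y) - \varphi_i(x,y)}{h} - \partial_{x^j}\varphi_i(x,y)
= \int_0^1 \bigl(\partial_{x^j}\varphi_i(x+the_j,y) - \partial_{x^j}\varphi_i(x,y)\bigr)\,dt .
\]
Because $k-m \geq 1$ in this situation, $\partial_{x^j}\varphi_i$ is still $C^m$ jointly, so the right-hand side tends to $0$ in the $C^{m}(K)$-norm as $h \to 0$. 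Applying $T$ and using the order bound gives $\partial_{x^j} T[\varphi_i(x,\cdot)] = T[\partial_{x^j}\varphi_i(x,\cdot)]$, which is continuous in $x$ by the previous paragraph. Iterating this up to $k-m$ derivatives produces the $C^{k-m}$-regularity of $f$ and simultaneously the interchange formula $Pf = T[P_x\varphi(x,\cdot)]$ for any linear differential operator $P$ of order $\leq k-m$, since such $P$ is built from coordinate partials and smooth multiplication (and the smooth multiplication factor commutes trivially with $T$ as it depends only on $x$).

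The main obstacle I anticipate is purely bookkeeping: verifying that the remainder in the mean value expansion really does converge to $0$ in the $C^m$-norm on $K$, which is the step that consumes exactly the $m$ units of regularity of $\varphi$ in the $N$-variable that $T$ demands, leaving $k-m$ units of smoothness in $x$. All other steps are essentially forced once the scalar reduction is in place.
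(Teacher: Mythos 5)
Your proof is correct and rests on exactly the same three pillars as the paper's: reduce to scalars via a local trivialization of $E$ over a relatively compact chart $U$, exploit uniform continuity of the relevant $y$-derivatives of $\varphi$ on $\overline{U}\times K$, and invoke the order-$m$ bound on $T$ to pass convergence in $C^m(K)$ through $T$. Where you differ is in the mechanism for obtaining the $k-m$ derivatives: the paper performs a single Taylor expansion of $\phi(\cdot,y)$ in $x$ to order $k-m-1$ with an integral remainder of degree $k-m$, applies $T$ to the whole expansion, and shows the remainder's $C^m(K)$-norm vanishes as $x\to x_0$, obtaining all derivatives $D_x^\alpha T[\phi(x,\cdot)] = T[D_x^\alpha\phi(x_0,\cdot)]$ for $|\alpha|\leq k-m$ in one shot; you instead use the first-order integral remainder (fundamental theorem of calculus, not the mean value theorem as you name it) for a single coordinate partial and iterate. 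Your iterative version is somewhat more elementary and perhaps pedagogically cleaner: each step requires only the first-order estimate, and the derivative-interchange identity $\partial_{x^j} T[\varphi_i(x,\cdot)] = T[\partial_{x^j}\varphi_i(x,\cdot)]$ drops out immediately at each stage, so that the final statement $Pf = T[P_x\phi(x,\cdot)]$ is self-evidently a corollary. The paper's Taylor-expansion version is more economical in that the $C^{k-m}$-regularity and all the derivative formulas follow from a single computation, at the cost of handling multi-indices and a higher-order remainder. One minor caveat in your write-up: where you cite Lemma \ref{endlicheordnung}, what you are really using is the hypothesis that $T$ has order $m$ (i.e.\ the uniform bound $|T[\psi]|\leq C\|\psi\|_{C^m(K)}$ and the resulting continuous extension to $\DD^m(N,F^*)$), not the existence statement of that lemma; your estimate is correct, just the attribution is slightly off.
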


Here $E\boxtimes F^*$ 
\indexs{E*@$E\boxtimes F^*$, exterior tensor product of vector bundles $E$ and
  $F^*$} 
denotes the vector bundle over $M\times N$ whose fiber
over $(x,y)\in M\times N$ is given by $E_x\otimes F^*_y$.

\begin{proof} 
There is a canonical isomorphism
\be
E_x\otimes\DD^k(N,F^*)&\to&\DD^k(N,E_x\otimes
F^*),\\ 
v\otimes s&\mapsto&(y\mapsto v\otimes s(y)).
\ee
Thus we can apply $\id_{E_x}\otimes T$ to
$\varphi(x,\cdot)\in\DD^k(N,E_x\otimes F^*)\cong E_x\otimes\DD^k(N,F^*)$ and
we obtain $(\id_{E_x}\otimes T)[\varphi(x,\cdot)]\in E_x$.
We briefly write $T[\varphi(x,\cdot)]$ instead of $(\id_{E_x}\otimes
T)[\varphi(x,\cdot)]$.

To see that the section $x\mapsto T[\varphi(x,\cdot)]$ in $E$ is of regularity
$C^{k-m}$ we may assume that $M$ is an open ball in $\R^p$ and that the 
vector bundle $E\to{M}$ is trivialized over $M$, $E=M\times \K^n$,
because differentiability and continuity are local properties.

For fixed $y\in N$ the map $x \mapsto \phi(x,y)$ is a $C^k$-map $U \to
\K^n\otimes F_y^*$.
We perform a Taylor expansion at $x_0\in U$, see \cite[p.~38f]{FL1}.
For $x\in U$ we get
\begin{eqnarray*}
\lefteqn{\phi(x,y)}\\ 
&=&
\sum_{|\alpha|\leq k-m-1}\tfrac{1}{\alpha!}D_x^\alpha
\phi(x_0,y)(x-x_0)^\alpha
\\&&
+ \sum_{|\alpha|= k-m}\frac{k-m}{\alpha!}\int_0^1 (1-t)^{k-m-1}D_x^\alpha
\phi((1-t)x_0+tx,y)(x-x_0)^\alpha \dt\\
&=&
\sum_{|\alpha|\leq k-m}\tfrac{1}{\alpha!}D_x^\alpha \phi(x_0,y)(x-x_0)^\alpha
+ \\
&&
\sum_{|\alpha|= k-m}\tfrac{k-m}{\alpha!}\int_0^1 (1-t)^{k-m-1}\left(D_x^\alpha
\phi((1-t)x_0+tx,y)- D_x^\alpha \phi(x_0,y)\right)\dt\cdot (x-x_0)^\alpha .
\end{eqnarray*}
Here we used the usual multi-index notation,
$\alpha=(\alpha_1,\ldots,\alpha_p)\in\N^p$,
$|\alpha|=\alpha_1+\cdots+\alpha_p$,
$D_x^\alpha=\frac{\partial^{|\alpha|}}{(\partial x^1)^{\alpha_1}\cdots(\partial
  x^p)^{\alpha_p}}$, and $x^\alpha=x_1^{\alpha_1}\cdots x_p^{\alpha_p}$.
For $|\alpha|\leq k-m$ we certainly have $D_x^\alpha\phi(\cdot,\cdot)\in
C^m(U\times N,\K^n\otimes F^*)$ and, in particular, 
$D_x^\alpha \phi(x_0,\cdot)\in \DD^m(N,\K^n\otimes F^*)$.
We apply $T$ to get
\begin{eqnarray}\label{eq:TTaylor}
\lefteqn{T[\phi(x,\cdot)]}\\ 
&=&
\sum_{|\alpha|\leq k-m}\tfrac{1}{\alpha!}
T[D_x^\alpha \phi(x_0,\cdot)](x-x_0)^\alpha + \nonumber\\
&&
\sum_{|\alpha|= k-m}\tfrac{k-m}{\alpha!}T\left[\int_0^1
(1-t)^{k-m-1}\left(D_x^\alpha \phi((1-t)x_0+tx,\cdot)- D_x^\alpha
  \phi(x_0,\cdot)\right)\dt\right](x-x_0)^\alpha . \nonumber
\end{eqnarray}
Restricting the $x$ to a compact convex neighborhood $U'\subset U$ of $x_0$
the $D_x^\alpha\phi(\cdot,\cdot)$ and all their $y$-derivatives up to order
$m$ are {\em uniformly} continuous on $U'\times K$.
Given $\epsilon>0$ there exists $\delta>0$ so that
$|\nabla_y^j D_x^\alpha\phi(\tilde x,y)-\nabla_y^j
D_x^\alpha\phi(x_0,y)|\leq\frac{\epsilon}{m+1}$ whenever $|\tilde
x-x_0|\leq\delta$, $j=0,\ldots,m$.
Thus for $x$ with $|x-x_0|\leq\delta$ 
\begin{eqnarray*}
\lefteqn{\left\|\int_0^1 (1-t)^{k-m-1}\left(D_x^\alpha
  \phi((1-t)x_0+tx,\cdot)-
  D_x^\alpha\phi(x_0,\cdot)\right)\dt\right\|_{C^m(M)}}\\ 
&=& 
\left\|\int_0^1 (1-t)^{k-m-1}\left(D_x^\alpha \phi((1-t)x_0+tx,\cdot)-
  D_x^\alpha\phi(x_0,\cdot)\right)\dt\right\|_{C^m(K)}\\
&\leq&
\int_0^1 (1-t)^{k-m-1}\left\|D_x^\alpha \phi((1-t)x_0+tx,\cdot)-
  D_x^\alpha\phi(x_0,\cdot)\right\|_{C^m(K)}\dt\\
&\leq&
\int_0^1 (1-t)^{k-m-1}\epsilon\dt\\
&=& 
\frac{\epsilon}{k-m} .
\end{eqnarray*}
Since $T$ is of order $m$ this implies in (\ref{eq:TTaylor}) that 
$T[\int_0^1 \cdots \dt] \to 0$ as $x\to x_0$.
Therefore the map $x\mapsto T[\phi(x,\cdot)]$ is $k-m$ times differentiable
with derivatives $D_x^\alpha|_{x=x_0}T[\phi(x,\cdot)] = T[D_x^\alpha
\phi(x_0,\cdot)]$.
The same argument also shows that these derivatives are continuous in $x$.
\end{proof}

\subsection{Differential operators acting on distributions}
\label{subseq:DiffOpDist}

Let $E$ and $F$ be two $\K$-vector bundles over the manifold $M$, $\K=\R$ or
$\K=\Co$. 
Consider a linear differential operator $P:C^\infty(M,E)\to
C^\infty(M,F)$.
\indexn{differential operator}
There is a unique linear differential operator $P^*:C^\infty(M,F^*)\to
C^\infty(M,E^*)$ called the \defem{formal adjoint} of 
\indexn{formal adjoint of differential operator>defemidx}
$P$ such that
for any $\varphi\in \DD(M,E)$ and $\psi\in\DD(M,F^*)$
\begin{equation}
\int_M \psi(P\varphi)\, \dV =
\int_M (P^*\psi)(\varphi)\, \dV .
\label{formaladjoint}
\end{equation}
If $P$ is of order $k$, then so is $P^*$ and (\ref{formaladjoint}) holds for 
all $\varphi\in C^k(M,E)$ and $\psi\in C^k(M,F^*)$ such that
$\supp(\varphi)\cap\supp(\psi)$ is compact.
With respect to the canonical identification $E=(E^*)^*$ we have $(P^*)^*=P$.

Any linear differential operator $P:C^\infty(M,E)\to C^\infty(M,F)$ extends 
canonically to a linear operator $P:\DD'(M,E,W)\to\DD'(M,F,W)$ by
$$
(PT)[\varphi]:=T[P^*\varphi]
$$
where $\phi\in\DD(M,F^*)$.
If a sequence $(\varphi_n)_n$ converges in $\mathcal{D}(M,F^*)$ to $0$, 
then the sequence $(P^*\varphi_n)_n$ converges to $0$ as well 
because $P^*$ is a differential operator.
Hence $(PT)[\varphi_n] = T[P^*\varphi_n] \to 0$.
Therefore $PT$ is again a distribution.

The map $P:\mathcal{D}'(M,E,W)\rightarrow \mathcal{D}'(M,F,W)$
is $\K$-linear.
If $P$ is of order $k$ and $\varphi$ is a $C^k$-section in $E$, seen as a 
$\K$-valued distribution in $E$, then the distribution $P\varphi$ coincides 
with the continuous section obtained by applying $P$ to $\varphi$ classically.

An important special case occurs when $P$ is of order $0$, i.~e., 
$P\in C^\infty(M,\Hom(E,F))$.
Then $P^*\in C^\infty(M,\Hom(F^*,E^*))$ is the pointwise adjoint.
In particular, for a function $f\in C^\infty(M)$ we have 
$$
(fT)[\varphi] = T[f\varphi].
$$

\subsection{Supports}

\Definition{
The \defem{support} of a distribution $T\in\DD'(M,E,W)$ is defined as the set
\indexn{support of a distribution>defemidx}
\indexs{su*@$\protect\supp(T)$, support of distribution $T$}
\begin{eqnarray*}
\lefteqn{\supp(T)} \\
&:=& 
\{x\in M\,|\, \forall\mbox{ neighborhood $U$ of $x$ }\exists \,
\varphi\in \DD(M,E)\mbox{ with }\supp(\varphi)\subset U
\mbox{ and } T[\varphi]\not= 0\}.
\end{eqnarray*}
}
It follows from the definition that the support of $T$ is a closed subset of
$M$.
In case $T$ is a $L^1_{\mathrm{loc}}$-section this notion of support coincides
with the usual one for sections.

If for $\phi\in\DD(M,E^*)$ the supports of $\phi$ and $T$ are disjoint, then
$T[\phi]=0$.
Namely, for each $x\in\supp(\phi)$ there is a neighborhood $U$ of $x$ such
that $T[\psi]=0$ whenever $\supp(\psi)\subset U$.
Cover the compact set $\supp(\phi)$ by finitely many such open sets
$U_1,\ldots,U_k$. 
Using a partition of unity one can write $\phi=\psi_1 + \cdots + \psi_k$ with
$\psi_j\in\DD(M,E^*)$ and $\supp(\psi_j)\subset U_j$.
Hence 
$$
T[\phi]=T[\psi_1 + \cdots + \psi_k] = T[\psi_1] + \cdots + T[\psi_k]=0.
$$
Be aware that it is not sufficient to assume that $\phi$ vanishes on
$\supp(T)$ in order to ensure $T[\phi]=0$.
For example, if $M=\R$ and $E$ is the trivial $\K$-line bundle let
$T\in\DD'(\R,\K)$ be given by $T[\phi]=\phi'(0)$.
Then $\supp(T)=\{0\}$ but $T[\phi]=\phi'(0)$ may well be nonzero while
$\phi(0)=0$.

If $T\in\DD'(M,E,W)$ and $\varphi\in C^\infty(M,E^*)$, then the evaluation
$T[\varphi]$ can be defined if $\supp(T)\cap\supp(\varphi)$ is compact
even if the support of $\phi$ itself is noncompact.
To do this pick a function $\sigma\in \DD(M,\R)$ that is constant $1$ on
a neighborhood of $\supp(T)\cap\supp(\varphi)$ and put
$$
T[\varphi] := T[\sigma\varphi].
$$
This definition is independent of the choice of $\sigma$ since for another
choice $\sigma'$ we have 
$$
T[\sigma\varphi] - T[\sigma'\varphi] =
T[(\sigma-\sigma')\varphi] = 0
$$
because $\supp((\sigma-\sigma')\varphi)$ and $\supp(T)$ are disjoint.

Let $T\in\DD'(M,E,W)$ and let $\Omega\subset M$ be an open subset.
Each test section $\varphi\in\DD(\Omega,E^*)$ can be extended by $0$ and yields
a test section $\varphi \in \DD(M,E^*)$.
This defines an embedding $\DD(\Omega,E^*)\subset\DD(M,E^*)$.
By the restriction of $T$ to $\Omega$ we mean its restriction from 
$\DD(M,E^*)$ to $\DD(\Omega,E^*)$.

\Definition{
The \defem{singular support}
\indexn{singular support of a distribution>defemidx}
 $\ssupp(T)$ 
\indexs{ss@$\protect\ssupp(T)$, singular support of distribution $T$}
of a distribution $T\in\DD'(M,E,W)$ 
is the set of points which do not have a neighborhood restricted to which
$T$ coincides with a smooth section.
}

The singular support is also closed and we always have $\ssupp(T)
\subset\supp(T)$.

\Example{
For the delta-distribution $\delta_x$ we have
$\supp(\delta_x)=\ssupp(\delta_x)=\{x\}$.
}

\subsection{Convergence of distributions}

The space $\DD'(M,E)$ of distributions in $E$ will always be given the
\defem{weak topology}.
\indexn{weak topology for distributions>defemidx}
This means that $T_n \to T$ in $\DD'(M,E,W)$ if and only if $T_n[\varphi]
\to T[\varphi]$ for all $\varphi\in\DD(M,E^*)$.
Linear differential operators $P$ are always continuous with respect to the
weak topology. 
Namely, if $T_n \to T$, then we have for every $\varphi\in\DD(M,E^*)$
$$
PT_n[\varphi] = T_n[P^*\varphi] \to T[P^*\varphi] = PT[\varphi].
$$
Hence
$$
PT_n \to PT.
$$

\begin{lemma}\label{distributionlimesvertausch}
Let $T_n,T\in C^0(M,E)$ and suppose $\|T_n - T\|_{C^0(M)} \to 0$.
Consider $T_n$ and $T$ as distributions.

Then $T_n \to T$ in $\DD'(M,E)$.
In particular, for every linear differential operator $P$
we have $PT_n \to PT$.
\end{lemma}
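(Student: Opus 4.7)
The plan is to reduce everything to the definition of weak convergence: I need to show $T_n[\varphi] \to T[\varphi]$ for every $\varphi \in \DD(M, E^*)$, and then invoke the continuity of $P$ on $\DD'(M,E,W)$ established in the paragraph immediately preceding the lemma.

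First I would unpack the definition of $T_n$ and $T$ as distributions using Example~\ref{ex:locint}: for a test section $\varphi \in \DD(M,E^*)$ one has $T_n[\varphi] = \int_M \varphi(T_n) \dV$ and similarly for $T$. By linearity,
\[
T_n[\varphi] - T[\varphi] = \int_M \varphi(T_n - T) \dV,
\]
and since the integrand is supported in the compact set $K := \supp(\varphi)$, I can estimate
\[
\bigl|T_n[\varphi] - T[\varphi]\bigr| \le \|T_n - T\|_{C^0(M)} \cdot \int_K |\varphi| \dV.
\]
The integral on the right is a finite constant depending only on $\varphi$, so the hypothesis $\|T_n-T\|_{C^0(M)} \to 0$ forces $T_n[\varphi] \to T[\varphi]$. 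Since $\varphi$ was arbitrary, this is exactly convergence in the weak topology on $\DD'(M,E,W)$.

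For the second assertion I would simply quote the remark made just above the lemma: any linear differential operator $P$ is continuous with respect to the weak topology on distributions, since $PT_n[\varphi] = T_n[P^*\varphi] \to T[P^*\varphi] = PT[\varphi]$ for every $\varphi \in \DD(M,F^*)$. Applying this to the convergence $T_n \to T$ already established yields $PT_n \to PT$.

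There is essentially no obstacle here; the only minor subtlety is remembering that test sections have compact support, which is what keeps the integral $\int_K |\varphi|\dV$ finite even though the $C^0$-norm is being taken over all of $M$. No appeal to Lemma~\ref{endlicheordnung} or to any finer regularity is needed.
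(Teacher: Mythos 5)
Your proof is correct. The only difference from the paper's argument is in how the convergence $T_n[\varphi]\to T[\varphi]$ is established: the paper invokes Lebesgue's dominated convergence theorem to interchange limit and integral, whereas you make the elementary estimate
\[
\bigl|T_n[\varphi]-T[\varphi]\bigr|\le \|T_n-T\|_{C^0(M)}\int_K|\varphi|\,\dV,
\]
with $K=\supp(\varphi)$. Your route is slightly more economical: it avoids measure-theoretic machinery entirely and, as a bonus, gives a quantitative bound on $|T_n[\varphi]-T[\varphi]|$ that is linear in $\|T_n-T\|_{C^0(M)}$, which is stronger than mere convergence. The paper's use of dominated convergence is overkill here given that the hypothesis already provides uniform convergence on compacta; dominated convergence would be the natural tool if one only assumed pointwise convergence with a uniform integrable bound. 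The second assertion is handled identically in both: quote the weak continuity of $P$ established just before the lemma. You have correctly identified that no appeal to Lemma~\ref{endlicheordnung} is needed.
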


\begin{proof}
Let $\varphi\in\DD(M,E)$.
Since $\|T_n - T\|_{C^0(M)} \to 0$ and $\varphi\in L^1(M,E)$, it follows from
Lebesgue's dominated convergence theorem: 
\begin{eqnarray*}
\lim_{n\to\infty}T_n[\varphi] 
&=& 
\lim_{n\to\infty}\int_M T_n(x)\cdot\varphi(x) \dV(x)\\
&=&
\int_M \lim_{n\to\infty}(T_n(x)\cdot\varphi(x)) \dV(x)\\
&=&
\int_M (\lim_{n\to\infty}T_n(x))\cdot\varphi(x) \dV(x)\\
&=&
\int_M T(x)\cdot\varphi(x) \dV(x)\\
&=&
T[\varphi] .
\end{eqnarray*}
\end{proof}

\subsection{Two auxiliary lemmas}

The following situation will arise frequently.
Let $E$, $F$, and $G$ be $\K$-vector bundles over $M$ equipped with metrics
and with connections which we all denote by $\nabla$.
We give $E\otimes F$ and $F^*\otimes G$ the induced metrics and connections.
Here and henceforth $F^*$ will denote the dual bundle to $F$.
The natural pairing $F\otimes F^* \to \K$ given by evaluation of the second
factor on the first yields a vector bundle homomorphism
$E\otimes F\otimes F^*\otimes G \to E\otimes G$ which we write as
$\varphi\otimes\psi \mapsto \varphi\cdot\psi$.
\footnote{If one identifies $E\otimes F$ with $\Hom(E^*,F)$ and $F^*\otimes G$
with $\Hom(F,G)$, then $\varphi\cdot\psi$ corresponds to $\psi\circ\varphi$.}

\begin{lemma}\label{CkNormvonProdukt}
For all $C^k$-sections $\varphi$ in $E\otimes F$ and $\psi$ in $F^*\otimes G$
and all $A\subset M$ we have
$$
\|\varphi\cdot\psi\|_{C^k(A)}
\leq
2^k\cdot \|\varphi\|_{C^k(A)}\cdot\|\psi\|_{C^k(A)}.
$$
\end{lemma}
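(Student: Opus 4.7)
The plan is to prove the estimate pointwise at each order of differentiation, and then take the max over $j=0,\ldots,k$ in the definition \eqref{defCkNorm} of the $C^k$-norm. The main tool will be an iterated Leibniz rule for $\nabla$ applied to the product $\varphi\cdot\psi$.

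First I would establish the Leibniz formula
\[
\nabla^k(\varphi\cdot\psi) \;=\; \sum_{j=0}^k \binom{k}{j}\,(\nabla^j\varphi)\,\cdot\,(\nabla^{k-j}\psi),
\]
where the dot on the right-hand side denotes the fibrewise pairing
$(T^*M)^{\otimes j}\otimes E\otimes F\otimes (T^*M)^{\otimes (k-j)}\otimes F^*\otimes G\to (T^*M)^{\otimes k}\otimes E\otimes G$
obtained by moving all cotangent factors to the front and then contracting $F$ with $F^*$. The starting case $k=1$ is just the standard Leibniz rule for the tensor-product connection combined with the fact that the contraction $F\otimes F^*\to\K$ is parallel (so the pairing commutes with $\nabla$). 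The general case then follows by induction on $k$, exactly as in the derivation of the classical binomial formula, with $\binom{k+1}{j}=\binom{k}{j}+\binom{k}{j-1}$.

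Next I would take pointwise norms. Since the metrics on $E\otimes F$ and $F^*\otimes G$ are induced from those on the factors and the pairing $F\otimes F^*\to\K$ is Cauchy--Schwarz bounded by the product of the respective fibre norms, one obtains the pointwise submultiplicativity $|\alpha\cdot\beta|\le |\alpha|\cdot|\beta|$ for any $\alpha\in E\otimes F$, $\beta\in F^*\otimes G$ (and similarly when extra $T^*M$-factors are present). Applying this fibrewise to each summand in the Leibniz formula and using $|\nabla^j\varphi(x)|\le\|\varphi\|_{C^k(A)}$, $|\nabla^{k-j}\psi(x)|\le\|\psi\|_{C^k(A)}$ for $x\in A$ and $j\le k$, I get
\[
|\nabla^k(\varphi\cdot\psi)(x)| \;\le\; \sum_{j=0}^k \binom{k}{j}\,\|\varphi\|_{C^k(A)}\,\|\psi\|_{C^k(A)} \;=\; 2^k\,\|\varphi\|_{C^k(A)}\,\|\psi\|_{C^k(A)}.
\]

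The same bound, with $k$ replaced by any $\ell\le k$ on the left and $2^\ell\le 2^k$ on the right, holds for $|\nabla^\ell(\varphi\cdot\psi)|$. Taking the supremum over $x\in A$ and then the maximum over $\ell=0,\ldots,k$ gives the desired inequality. The only slightly delicate step is the bookkeeping in the Leibniz formula, namely checking that the various $T^*M$ factors introduced by the successive covariant derivatives can be reshuffled without changing the pointwise norm estimate; this is harmless because the induced metrics on tensor products are invariant under permutation of the factors.
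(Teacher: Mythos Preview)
Your proof is correct and follows essentially the same underlying idea as the paper's: pointwise submultiplicativity $|\varphi\cdot\psi|\le|\varphi|\,|\psi|$ (Cauchy--Schwarz in the $F$-factor) combined with the Leibniz rule. The organization differs slightly: you expand the full iterated Leibniz formula to make $2^k=\sum_j\binom{k}{j}$ appear explicitly, whereas the paper runs a cleaner induction on $k$ using only the one-step Leibniz rule $\nabla(\varphi\cdot\psi)=(\nabla\varphi)\cdot\psi+\varphi\cdot\nabla\psi$ together with $\|\nabla^{k+1}(\varphi\cdot\psi)\|_{C^0}\le\|\nabla(\varphi\cdot\psi)\|_{C^k}$, so the factor $2^k$ arises by doubling at each step. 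The paper's route sidesteps the tensor-factor bookkeeping you (correctly) flag in your last paragraph; your route makes the combinatorics more transparent. Both are fine.
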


\begin{proof}
The case $k=0$ follows from the Cauchy-Schwarz inequality.
Namely, for fixed $x\in M$ we choose an orthonormal basis $f_i$, $i=1,\ldots,
r$, for $F_x$.
Let $f_i^*$ be the basis of $F_x^*$ dual to $f_i$.
We write $\phi(x)=\sum_{i=1}^r e_i\otimes f_i$ for suitable $e_i\in E_x$ and
similarly  $\psi(x)=\sum_{i=1}^r f_i^*\otimes g_i$, $g_i\in G_x$.
Then $\phi(x)\cdot\psi(x)=\sum_{i=1}^r e_i \otimes g_i$ and we see
\begin{eqnarray*}
|\phi(x)\cdot\psi(x)|^2
&=&
|\sum_{i=1}^r e_i \otimes g_i|^2\\
&=&
\sum_{i,j=1}^r \langle e_i\otimes g_i,e_j\otimes g_j\rangle\\
&=&
\sum_{i,j=1}^r \langle e_i,e_j\rangle \langle g_i,g_j\rangle\\
&\leq&
\sqrt{\sum_{i,j=1}^r \langle e_i,e_j\rangle^2}\cdot
\sqrt{\sum_{i,j=1}^r \langle g_i,g_j\rangle^2}\\
&\leq&
\sqrt{\sum_{i,j=1}^r |e_i|^2|e_j|^2}\cdot
\sqrt{\sum_{i,j=1}^r |g_i|^2|g_j|^2}\\
&=&
\sqrt{\sum_{i=1}^r |e_i|^2\sum_{j=1}^r|e_j|^2}\cdot
\sqrt{\sum_{i=1}^r |g_i|^2\sum_{j=1}^r|g_j|^2}\\
&=&
\sum_{i=1}^r |e_i|^2 \cdot \sum_{i=1}^r |g_i|^2\\
&=&
|\phi(x)|^2\cdot |\psi(x)|^2.
\end{eqnarray*}
Now we proceed by induction on $k$.
\begin{eqnarray*}
\|\nabla^{k+1}(\varphi\cdot\psi)\|_{C^{0}(A)}
&\leq&
\|\nabla(\varphi\cdot\psi)\|_{C^{k}(A)}\\
&=&
\|(\nabla\varphi)\cdot\psi + \varphi\cdot\nabla\psi\|_{C^{k}(A)}\\
&\leq&
\|(\nabla\varphi)\cdot\psi\|_{C^{k}(A)} 
+ \|\varphi\cdot\nabla\psi\|_{C^{k}(A)}\\
&\leq&
2^k\cdot\|\nabla\varphi\|_{C^{k}(A)} \cdot\|\psi\|_{C^{k}(A)} 
+ 2^k\cdot\|\varphi\|_{C^{k}(A)}\cdot\|\nabla\psi\|_{C^{k}(A)}\\
&\leq&
2^k\cdot\|\varphi\|_{C^{k+1}(A)} \cdot\|\psi\|_{C^{k+1}(A)} 
+ 2^k\cdot\|\varphi\|_{C^{k+1}(A)}\cdot\|\psi\|_{C^{k+1}(A)}\\
&=&
2^{k+1} \cdot\|\varphi\|_{C^{k+1}(A)}\cdot\|\psi\|_{C^{k+1}(A)} .
\end{eqnarray*}
Thus 
\begin{eqnarray*}
\|\varphi\cdot\psi\|_{C^{k+1}(A)}
&=&
\max\{\|\varphi\cdot\psi\|_{C^{k}(A)},
\|\nabla^{k+1}(\varphi\cdot\psi)\|_{C^{0}(A)}\}\\
&\leq&
\max\{2^k\cdot\|\varphi\|_{C^{k}(A)}\cdot\|\psi\|_{C^{k}(A)},
2^{k+1} \cdot\|\varphi\|_{C^{k+1}(A)}\cdot\|\psi\|_{C^{k+1}(A)}\}\\
&=&
2^{k+1} \cdot\|\varphi\|_{C^{k+1}(A)}\cdot\|\psi\|_{C^{k+1}(A)} .
\end{eqnarray*}
\end{proof}

This lemma allows us to estimate the $C^k$-norm of products of sections in
terms of the $C^k$-norms of the factors.
The next lemma allows us to deal with compositions of functions.
We recursively define the following universal constants:
$$
\alpha(k,0) := 1 ,
$$
$$
\alpha(k,j) := 0
$$
for $j> k$ and for $j<0$ and
\begin{equation}
\alpha(k+1,j) := \max\{\alpha(k,j),\, 2^k \cdot \alpha(k,j-1)\}
\label{alphaconst}  
\end{equation}
if $1 \leq j \leq k$.
The precise values of the $\alpha(k,j)$ are not important.
The definition was made in such a way that the following lemma holds.

\begin{lemma}\label{CkNormvonVerkettung}
Let $\Gamma$ be a real valued $C^k$-function on a Lorentzian manifold $M$
and let $\sigma :\R\to\R$ be a $C^k$-function.
Then for all $A\subset M$ and $I \subset\R$ such that $\Gamma(A)\subset I$
we have
$$
\|\sigma\circ\Gamma\|_{C^k(A)}
\leq
\|\sigma\|_{C^k(I)} \cdot \max_{j=0,\ldots,k}\alpha(k,j)\|\Gamma\|_{C^k(A)}^j .$$
\end{lemma}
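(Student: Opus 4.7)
The plan is to prove the estimate by induction on $k$, using the chain rule and the product estimate from Lemma~\ref{CkNormvonProdukt} (which I will freely invoke). The recursive definition of $\alpha(k,j)$ is tailored precisely to make the bookkeeping work out.

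\textbf{Base case.} For $k=0$, the statement reduces to $\sup_{x\in A}|\sigma(\Gamma(x))|\leq \sup_{y\in I}|\sigma(y)|$, which holds because $\Gamma(A)\subset I$. Here the right-hand side of the claimed inequality is $\|\sigma\|_{C^0(I)}\cdot \alpha(0,0)\|\Gamma\|_{C^0(A)}^0 = \|\sigma\|_{C^0(I)}$, so the inequality becomes an equality (or trivial bound).

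\textbf{Inductive step.} Assume the bound holds for $k$ and let $\Gamma\in C^{k+1}$, $\sigma\in C^{k+1}$. Starting from
\[
\|\sigma\circ\Gamma\|_{C^{k+1}(A)} = \max\bigl\{\|\sigma\circ\Gamma\|_{C^k(A)},\;\|\nabla^{k+1}(\sigma\circ\Gamma)\|_{C^0(A)}\bigr\},
\]
I handle the two terms separately. The first is bounded by the induction hypothesis applied to $\sigma$, $\Gamma$, noting that $\|\sigma\|_{C^k(I)}\leq\|\sigma\|_{C^{k+1}(I)}$ and $\|\Gamma\|_{C^k(A)}\leq\|\Gamma\|_{C^{k+1}(A)}$. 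For the second term I use the chain rule $\nabla(\sigma\circ\Gamma)=(\sigma'\circ\Gamma)\cdot\nabla\Gamma$, the trivial bound $\|\nabla^{k+1}(\sigma\circ\Gamma)\|_{C^0(A)}\leq\|\nabla(\sigma\circ\Gamma)\|_{C^k(A)}$, and then Lemma~\ref{CkNormvonProdukt} to obtain
\[
\|\nabla^{k+1}(\sigma\circ\Gamma)\|_{C^0(A)}\leq 2^k\cdot\|\sigma'\circ\Gamma\|_{C^k(A)}\cdot\|\nabla\Gamma\|_{C^k(A)}.
\]
Applying the induction hypothesis to $\sigma'\circ\Gamma$ (with $\|\sigma'\|_{C^k(I)}\leq\|\sigma\|_{C^{k+1}(I)}$) and observing $\|\nabla\Gamma\|_{C^k(A)}\leq\|\Gamma\|_{C^{k+1}(A)}$ yields
\[
\|\nabla^{k+1}(\sigma\circ\Gamma)\|_{C^0(A)}\leq \|\sigma\|_{C^{k+1}(I)}\cdot\max_{j=0,\ldots,k}2^k\alpha(k,j)\,\|\Gamma\|_{C^{k+1}(A)}^{j+1}.
\]
Re-indexing $j\mapsto j-1$ turns this into $\|\sigma\|_{C^{k+1}(I)}\cdot\max_{j=1,\ldots,k+1} 2^k\alpha(k,j-1)\|\Gamma\|_{C^{k+1}(A)}^{j}$.

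\textbf{Combining the two terms.} Taking the maximum of the two bounds and using the recursion $\alpha(k+1,j)=\max\{\alpha(k,j),\,2^k\alpha(k,j-1)\}$ gives
\[
\|\sigma\circ\Gamma\|_{C^{k+1}(A)}\leq \|\sigma\|_{C^{k+1}(I)}\cdot\max_{j=0,\ldots,k+1}\alpha(k+1,j)\,\|\Gamma\|_{C^{k+1}(A)}^{j},
\]
which completes the induction. The only real subtlety is aligning the indexing so that the recursive definition of $\alpha(k+1,j)$ absorbs both the inductively controlled term and the factor of $2^k$ coming from the product estimate; the slight annoyance that $j=k+1$ formally lies outside the range $1\leq j\leq k$ in the stated recursion is harmless since $\alpha(k,k+1)=0$, so the recursion extends consistently to this value. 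I expect this bookkeeping of the constants $\alpha(k,j)$ to be the only non-routine part of the argument.
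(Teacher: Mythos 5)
Your proof is correct and follows essentially the same argument as the paper: induction on $k$, splitting $\|\sigma\circ\Gamma\|_{C^{k+1}(A)}$ into $\max\{\|\sigma\circ\Gamma\|_{C^k(A)},\|\nabla^{k+1}(\sigma\circ\Gamma)\|_{C^0(A)}\}$, handling the second term via the chain rule plus Lemma~\ref{CkNormvonProdukt} plus the inductive hypothesis applied to $\sigma'$, and then re-indexing so the recursion for $\alpha(k+1,j)$ absorbs both branches. Your remark about the edge case $j=k+1$ in the $\alpha$-recursion is a fair reading of the (slightly terse) recursive definition and matches how the paper implicitly uses it.
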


\begin{proof}
We again perform an induction on $k$.
The case $k=0$ is obvious.
By Lemma~\ref{CkNormvonProdukt}
\begin{eqnarray*}
\|\nabla^{k+1}(\sigma\circ\Gamma)\|_{C^0(A)}
&=&
\|\nabla^{k}[(\sigma'\circ\Gamma)\cdot\nabla\Gamma]\|_{C^0(A)}\\
&\leq&
\|(\sigma'\circ\Gamma)\cdot\nabla\Gamma\|_{C^k(A)}\\
&\leq&
2^k\cdot \|\sigma'\circ\Gamma\|_{C^k(A)}\cdot \|\nabla\Gamma\|_{C^k(A)}\\
&\leq&
2^k\cdot \|\sigma'\circ\Gamma\|_{C^k(A)} \cdot\|\Gamma\|_{C^{k+1}(A)}\\
&\leq&
2^k\cdot \|\sigma'\|_{C^k(I)} \cdot
\max_{j=0,\ldots,k}\alpha(k,j)\|\Gamma\|_{C^{k+1}(A)}^j \cdot
\|\Gamma\|_{C^{k+1}(A)}\\
&\leq&
2^k\cdot \|\sigma\|_{C^{k+1}(I)} \cdot
\max_{j=0,\ldots,k}\alpha(k,j)\|\Gamma\|_{C^{k+1}(A)}^{j+1}\\
&=&
2^k\cdot \|\sigma\|_{C^{k+1}(I)} \cdot
\max_{j=1,\ldots,k+1}\alpha(k,j-1)\|\Gamma\|_{C^{k+1}(A)}^{j} .
\end{eqnarray*}
Hence
\begin{eqnarray*}
\|\sigma\circ\Gamma\|_{C^{k+1}(A)}
&=&
\max\{\|\sigma\circ\Gamma\|_{C^{k}(A)},
\|\nabla^{k+1}(\sigma\circ\Gamma)\|_{C^0(A)}\}\\
&\leq&
\max\{\|\sigma\|_{C^k(I)} \cdot
\max_{j=0,\ldots,k}\alpha(k,j)\|\Gamma\|_{C^k(A)}^j,\\
&&\quad\quad
2^k\cdot \|\sigma\|_{C^{k+1}(I)} \cdot
\max_{j=1,\ldots,k+1}\alpha(k,j-1)\|\Gamma\|_{C^{k+1}(A)}^{j}
\}\\
&\leq&
\|\sigma\|_{C^{k+1}(I)}\cdot\max_{j=0,\ldots,k+1}
\max\{\alpha(k,j),2^k\alpha(k,j-1)\}\|\Gamma\|_{C^{k+1}(A)}^{j}\\ 
&=&
\|\sigma\|_{C^{k+1}(I)}\cdot\max_{j=0,\ldots,k+1}
\alpha(k+1,j)\|\Gamma\|_{C^{k+1}(A)}^{j} . 
\end{eqnarray*}
\end{proof}

\section{Riesz distributions on Minkowski space}

The distributions $R_+(\alpha)$ and $R_-(\alpha)$ to be defined below were
introduced by M.~Riesz in the first half of the 20$^\mathrm{th}$ century in
order to find solutions to certain differential equations.
He collected his results in \cite{Riesz}.
We will derive all relevant facts in full detail.

Let $V$ be an $n$-dimensional real vector space, let $\la\cdot,\cdot\ra$
be a nondegenerate symmetric bilinear form of index $1$ on $V$.
Hence $(V,\la\cdot,\cdot\ra)$ is isometric to $n$-dimensional Minkowski
space $(\R^n,\la\cdot,\cdot\ra_0)$ where $\la x,y\ra_0= -x_1y_1 +
x_2y_2 + \cdots + x_ny_n$.
Set 
\indexs{g*@$\gamma$, quadratic form associated to Minkowski product}
\begin{equation}
\gamma: V \to \R,\quad \gamma(X):=-\la X,X\ra.
\label{gammadef}
\end{equation}
A nonzero vector $X\in V\setminus \{0\}$ is called \defem{timelike} (or
\defem{lightlike} or \defem{spacelike}) if and only if $\gamma(X)>0$ (or
$\gamma(X)=0$ or $\gamma(X)<0$ respectively).\indexn{timelike
  vector>defemidx}\indexn{lightlike vector>defemidx}\indexn{spacelike vector>defemidx}
The zero vector $X=0$ is considered as spacelike.
The set $I(0)$ of timelike vectors consists of two connected components.
We choose a \defem{timeorientation} on $V$ by picking one of these two connected
components.
\indexn{timeorientation>defemidx}
Denote this component by $I_+(0)$ and call its elements \defem{future directed}.
\indexn{future-directed vector>defemidx}
Put $J_+(0) := \overline{I_+(0)}$, $C_+(0) := \partial I_+(0)$, 
$I_-(0):= -I_+(0)$, $J_-(0):= -J_+(0)$, and $C_-(0):= -C_+(0)$.
\indexs{I*@$I_+(0)$, chronological future in Minkowski space}
\indexs{I*@$I_-(0)$, chronological past in Minkowski space}
\indexs{C*@$C_+(0)$, future light cone in Minkowski space}
\indexs{C*@$C_-(0)$, past light cone in Minkowski space}
\indexs{J*@$J_+(0)$ causal future in Minkowski space}
\indexs{J*@$J_-(0)$ causal past in Minkowski space}
\begin{center}
\input{fig-lichtkegel}
\end{center}

\Definition{ 
For any complex number $\alpha$ with $\Re(\alpha)>n$ let $R_+(\alpha)$ 
and $R_-(\alpha)$ be the complex-valued continuous functions on $V$
defined by
\[
R_{\pm}(\alpha)(X)
:=
\left\{\begin{array}{cl}
  C(\alpha,n)\gamma(X)^{\frac{\alpha-n}{2}},& \textrm{ if }X\in J_\pm(0),\\ 
0, &  \textrm{ otherwise,}\end{array}\right.
\] 
  where $C(\alpha,n):=\frac{2^{1-\alpha}\pi^{\frac{2-n}{2}}}{
    (\frac{\alpha}{2}-1)!(\frac{\alpha-n}{2})!}$  
and $z \mapsto (z-1)!$ is the Gamma function.
\indexs{C*@$ C(\alpha,n)$, coefficients involved in definition of Riesz
    distributions} 
}

For $\alpha\in\Co$ with $\Re(\alpha)\leq n$ this definition no longer yields
continuous functions due to the singularities along $C_\pm(0)$.
This requires a more careful definition of $R_\pm(\alpha)$ as a distribution
which we will give below.
Even for $\Re(\alpha)>n$ we will from now on consider the continuous functions
$R_\pm(\alpha)$ as distributions as explained in Example~\ref{ex:locint}.

Since the Gamma function has no zeros the map $\alpha\mapsto C(\alpha,n)$ is
holomorphic on $\Co$. 
Hence for each fixed testfunction $\varphi\in\DD(V,\Co)$ the map $\alpha\mapsto
R_{\pm}(\alpha)[\varphi]$ yields a holomorphic
function on $\{\Re(\alpha)>n\}$.

There is a natural differential operator $\Box$ acting on functions on $V$,
$\Box f:= \partial_{e_1}\partial_{e_1}f - \partial_{e_2}\partial_{e_2}f -
\cdots - \partial_{e_n}\partial_{e_n}f$ where $e_1,\ldots,e_n$ is any basis of
$V$ such that $-\la e_1,e_1 \ra = \la e_2,e_2 \ra = \cdots =\la
e_n,e_n\ra = 1$ and $\la e_i,e_j\ra=0$ for $i\not= j$.
Such a basis $e_1,\ldots,e_n$ is called \defem{Lorentzian orthonormal}.
\indexn{Lorentzian orthonormal basis>defemidx}
The operator $\Box$ is called the {\em d'Alembert operator}.
The formula in Minkowski space with respect to the standard basis may look
more familiar to the reader,
\indexn{d'Alembert operator}
$$
\Box = \frac{\partial^2}{(\partial x^1)^2} - \frac{\partial^2}{(\partial
  x^2)^2} - 
\cdots - \frac{\partial^2}{(\partial x^n)^2}.
$$
The definition of the d'Alembertian on general Lorentzian manifolds can be
found in the next section.
In the following lemma the application of differential operators such as
$\Box$ to the $R_\pm(\alpha)$ is to be taken in the distributional sense.

\begin{lemma}\label{erstvorbereitung}
For all $\alpha\in\Co$ with $\Re(\alpha)>n$ we have
\begin{enumerate}
\item\label{zweitens}
 $\gamma\cdot R_\pm(\alpha)=\alpha(\alpha-n+2) R_{\pm}(\alpha+2)$,
\item\label{drittens} 
 $(\grad\gamma)\cdot R_\pm(\alpha)=2\alpha\;\grad\, R_{\pm}(\alpha+2)$,
\item\label{viertens}
 $\Box R_{\pm}(\alpha+2)=R_{\pm}(\alpha)$.  
\item\label{d}
The map $\alpha \mapsto R_{\pm}(\alpha)$ extends uniquely to $\Co$ as a
holomorphic family of distributions.
In other words, for each $\alpha \in \Co$ there exists a unique distribution
$R_{\pm}(\alpha)$ on $V$ such that for each testfunction $\varphi$ the map
$\alpha 
\mapsto R_{\pm}(\alpha)[\varphi]$ is holomorphic.
\end{enumerate}
\end{lemma}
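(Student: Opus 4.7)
The plan is to verify the identities (1)--(3) by explicit computation on $V\setminus C_\pm(0)$ in the range $\Re(\alpha)>n$ where the $R_\pm$ are represented by continuous functions, and then to use (3) itself as a recursion to produce the holomorphic extension in (4).

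Identity (1) follows from a pointwise calculation: on the interior of $J_\pm(0)$ one has $\gamma\cdot R_\pm(\alpha)=C(\alpha,n)\gamma^{(\alpha-n+2)/2}$, which matches $\alpha(\alpha-n+2)R_\pm(\alpha+2)$ precisely when $C(\alpha,n)=\alpha(\alpha-n+2)\,C(\alpha+2,n)$. This constant relation is an instance of the functional equation $z!=z\cdot(z-1)!$ applied to the two Gamma-factors appearing in the denominator of $C$, combined with the factor $4=2^{1-\alpha}/2^{-1-\alpha}$. For (2) I observe that, when $\Re(\alpha)>n$, the exponent $(\alpha-n)/2$ still has positive real part, so the classical gradient $C(\alpha+2,n)\,\tfrac{\alpha-n+2}{2}\,\gamma^{(\alpha-n)/2}\,\grad\gamma$ extends continuously by zero across $C_\pm(0)$. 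Hence $R_\pm(\alpha+2)$ is actually of class $C^1$ on $V$, its distributional gradient equals the classical one, and the resulting equation (2) reduces once more to the same constant identity used in (1).

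For (3), I would first work in the smaller range $\Re(\alpha)>n+2$, where a second application of the boundary-regularity argument renders $R_\pm(\alpha+2)$ classically $C^2$. A direct computation using $\Box\gamma=2n$ and $\la\grad\gamma,\grad\gamma\ra=-4\gamma$ gives
\[
\Box\bigl(\gamma^{(\alpha-n+2)/2}\bigr) \;=\; \alpha(\alpha-n+2)\,\gamma^{(\alpha-n)/2}
\]
on the interior of $J_\pm(0)$, and the same Gamma-function identity converts this into $\Box R_\pm(\alpha+2)=R_\pm(\alpha)$. To drop the hypothesis from $\Re(\alpha)>n+2$ to $\Re(\alpha)>n$, I would invoke the identity principle: for each fixed $\varphi\in\DD(V,\Co)$, both maps $\alpha\mapsto R_\pm(\alpha+2)[\Box\varphi]$ and $\alpha\mapsto R_\pm(\alpha)[\varphi]$ are holomorphic on $\{\Re(\alpha)>n\}$ and coincide on the open subdomain $\{\Re(\alpha)>n+2\}$, so they coincide everywhere.

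Finally, for (4), I propose the recursive definition $R_\pm(\alpha):=\Box^k R_\pm(\alpha+2k)$ for any integer $k$ with $\Re(\alpha)+2k>n$; consistency across different values of $k$ is (3) applied iteratively. Holomorphy is then inherited because $\alpha\mapsto R_\pm(\alpha+2k)[\varphi]$ is already holomorphic on $\{\Re(\alpha)>n-2k\}$ from the original integral definition, and by the duality $\Box^k R_\pm(\alpha+2k)[\varphi]=R_\pm(\alpha+2k)[\Box^k\varphi]$ the iterated d'Alembertian preserves holomorphy. Uniqueness of the extension is the identity principle on $\Co$. The main technical point in the whole argument is the boundary regularity at $C_\pm(0)$: rather than wrestle with distributional derivatives at the cone directly, the argument confines honest differentiation to strips in the $\alpha$-plane where classical calculus is legal and lets holomorphic continuation do the rest.
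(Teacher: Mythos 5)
Your proof is correct, and it follows the same overall architecture as the paper's: verify the three identities on the strip $\{\Re(\alpha)>n\}$ via the Gamma-function relation $C(\alpha,n)=\alpha(\alpha-n+2)\,C(\alpha+2,n)$ together with the regularity of $\gamma^{(\alpha-n+2)/2}$ across the cone, then bootstrap the holomorphic extension via $R_\pm(\alpha):=\Box R_\pm(\alpha+2)$.  The only substantive variations are in parts (2) and (3): for (2) you observe $C^1$-regularity of $R_\pm(\alpha+2)$ across $C_\pm(0)$ and differentiate classically, whereas the paper integrates by parts (the two arguments are equivalent, as the vanishing of the boundary term is exactly the regularity statement); for (3) you compute $\Box(\gamma^\beta)$ directly from $\Box\gamma=2n$ and $\la\grad\gamma,\grad\gamma\ra=-4\gamma$, whereas the paper derives it algebraically by applying identity (2) twice and then (1), and you make explicit the analytic-continuation step from $\Re(\alpha)>n+2$ down to $\Re(\alpha)>n$ which the paper leaves implicit.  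Both routes buy the same theorem with comparable effort; your direct computation of $\Box(\gamma^\beta)$ is arguably more self-contained, while the paper's reduction of (3) to (1) and (2) keeps the argument closer in spirit to the later treatment of the Riesz distributions on curved domains.
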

\begin{proof}
Identity (\ref{zweitens}) follows from
\begin{equation*}
\frac{C(\alpha,n)}{C(\alpha+2,n)}=  
\frac{2^{(1-\alpha)}\;(\frac{\alpha+2}{2}-1)!\;(\frac{\alpha+2-n}{2})!
}{2^{(1-\alpha-2)}\;(\frac{\alpha}{2}-1)!\;(\frac{\alpha-n}{2})!}
=\alpha\,(\alpha-n+2).
\end{equation*}
To show (\ref{drittens}) we choose a Lorentzian orthonormal basis $e_1,
\ldots, e_n$ of $V$ and we denote differentiation in direction $e_i$ by
$\del_i$.
We fix a testfunction $\varphi$ and integrate by parts: 
\begin{eqnarray*}
\del_i\gamma\cdot R_\pm(\alpha)[\varphi]
&=&
C(\alpha,n)\int_{J_\pm(0)}
\gamma(X)^{\frac{\alpha-n}{2}}\,\del_i\gamma(X)\varphi(X)\,\,dX\\
&=& 
\frac{2C(\alpha,n)}{\alpha+2-n}\int_{J_\pm(0)}
\del_i(\gamma(X)^{\frac{\alpha-n+2}{2}})\varphi(X)\,\,dX\\ 
&=&
-2\alpha C(\alpha+2,n)\int_{J_\pm(0)}
\gamma(X)^{\frac{\alpha-n+2}{2}}\del_i\varphi(X)\,\,dX\\ 
&=&
-2\alpha R_\pm(\alpha+2)[\del_i\varphi]\\
&=&
2\alpha\del_i R_\pm(\alpha+2)[\varphi], 
\end{eqnarray*}
which proves (\ref{drittens}).
Furthermore, it follows from (\ref{drittens}) that
\be
\del_i^2 R_\pm(\alpha+2)
&=&
\del_i\left(\frac{1}{2\alpha} \del_i\gamma\cdot R_\pm(\alpha)\right)\\ 
&=&\frac{1}{2\alpha}\left(\del_i^2\gamma\cdot R_\pm(\alpha)+\del_i\gamma\cdot
  \left(\frac{1}{2(\alpha-2)} \del_i\gamma \cdot
  R_\pm(\alpha-2)\right)\right)\\  
&=&\frac{1}{2\alpha}\del_i^2\gamma\cdot
R_\pm(\alpha)+\frac{1}{4\alpha(\alpha-2)}
(\del_i\gamma)^2\frac{(\alpha-2)(\alpha-n)}{\gamma}\cdot R_\pm(\alpha)\\ 
&=&\left(\frac{1}{2\alpha}\del_i^2\gamma+ \frac{\alpha-n}{4\alpha}\cdot
  \frac{(\del_i\gamma)^2}{\gamma}\right)\cdot R_\pm(\alpha), 
\ee
so that
\be
\Box\, R_\pm(\alpha+2)&=&\left(\frac{n}{\alpha}+
  \frac{\alpha-n}{4\alpha}\cdot\frac{4\gamma}{\gamma}\right)R_\pm(\alpha)\\ 
&=&R_\pm(\alpha).
\ee
To show {(\ref{d})} we first note that for fixed $\phi\in\DD(V,\Co)$ the map
$\{\Re(\alpha)>n\} \to \Co$, $\alpha \mapsto R_\pm(\alpha)[\phi]$, is
holomorphic. 
For $\Re(\alpha)>n-2$ we set 
\begin{equation}\label{defwitR}
\wit{R}_\pm(\alpha):=\Box\, R_\pm(\alpha+2).
\end{equation}
This defines a distribution on $V$.
The map $\alpha\mapsto \wit{R}_\pm(\alpha)$ is then holomorphic on
$\{\Re(\alpha)>n-2\}$.
By (\ref{viertens}) we have $\wit{R}_\pm(\alpha)=R_\pm(\alpha)$ for
$\Re(\alpha)>n$, so that $\alpha\mapsto \wit{R}_\pm(\alpha)$ extends
$\alpha\mapsto 
R_\pm(\alpha)$ holomorphically to $\{\Re(\alpha)>n-2\}$.
We proceed inductively and construct a holomorphic extension of 
$\alpha\mapsto R_\pm(\alpha)$ on $\{\Re(\alpha)>n-2k\}$ (where
$k\in\mathbb{N}\setminus\{0\}$) from that on $\{\Re(\alpha)>n-2k+2\}$ just as
above. 
Note that these extensions necessarily coincide on their common domain since
they are holomorphic and they coincide on an open subset of $\mathbb{C}$.  
We therefore obtain a holomorphic extension of $\alpha\mapsto R_\pm(\alpha)$
to the  whole of $\mathbb{C}$, which is necessarily unique.
\end{proof}

Lemma~\ref{erstvorbereitung} (\ref{d}) defines $R_\pm(\alpha)$ for all
$\alpha\in\Co$, not as functions but as distributions.

\Definition{
We call $R_+(\alpha)$ the \defem{advanced Riesz distribution} and 
$R_-(\alpha)$ the \defem{retarded Riesz distribution} on $V$ for
$\alpha\in\Co$.  
\indexn{Riesz distributions on Minkowski space>defemidx}
\indexn{advanced Riesz distributions}
\indexn{retarded Riesz distributions}
}

The following illustration shows the graphs of Riesz distributions
$R_+(\alpha)$ for $n=2$
and various values of $\alpha$.
In particular, one sees the singularities along $C_+(0)$ for $\Re(\alpha)\leq 
2$.

\begin{minipage}{6cm}
\begin{center}
\includegraphics*[width=3.5cm]{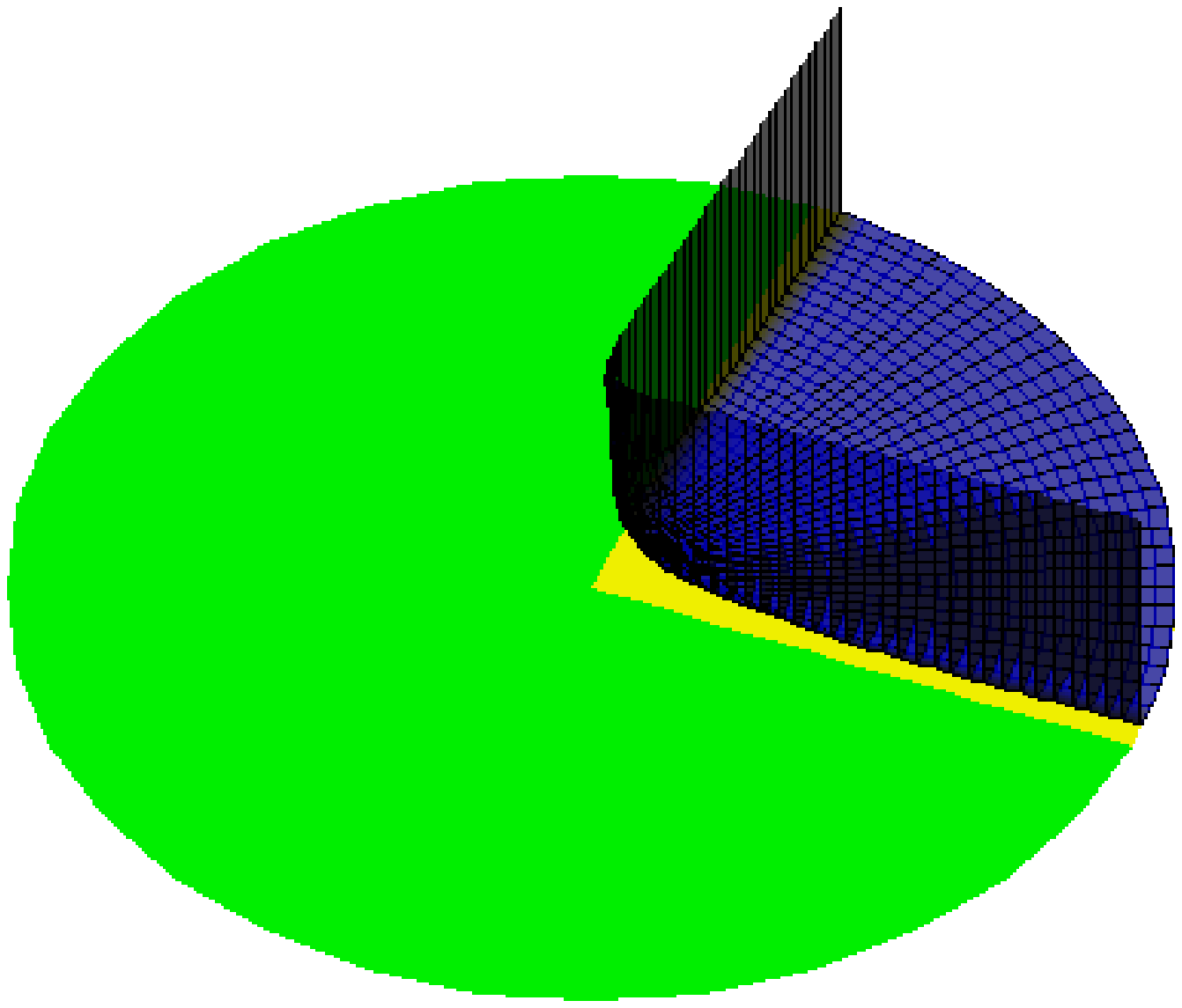}

$\alpha=0.1$
\end{center}
\end{minipage}
\hfill
\begin{minipage}{6cm}
\begin{center}
\includegraphics*[width=3.5cm]{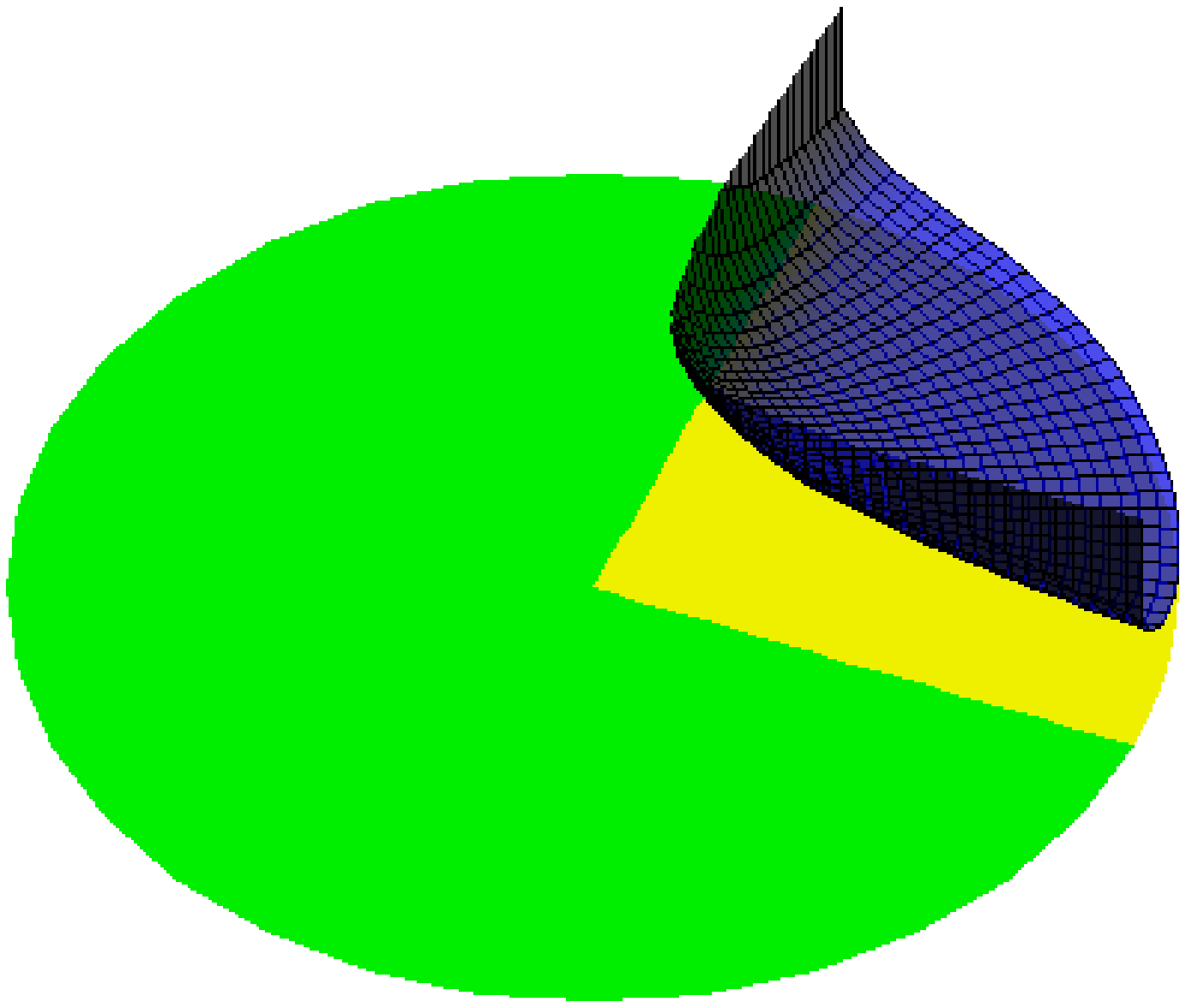}

$\alpha=1$
\end{center}
\end{minipage}

\begin{minipage}{6cm}
\begin{center}
\includegraphics*[width=3.5cm]{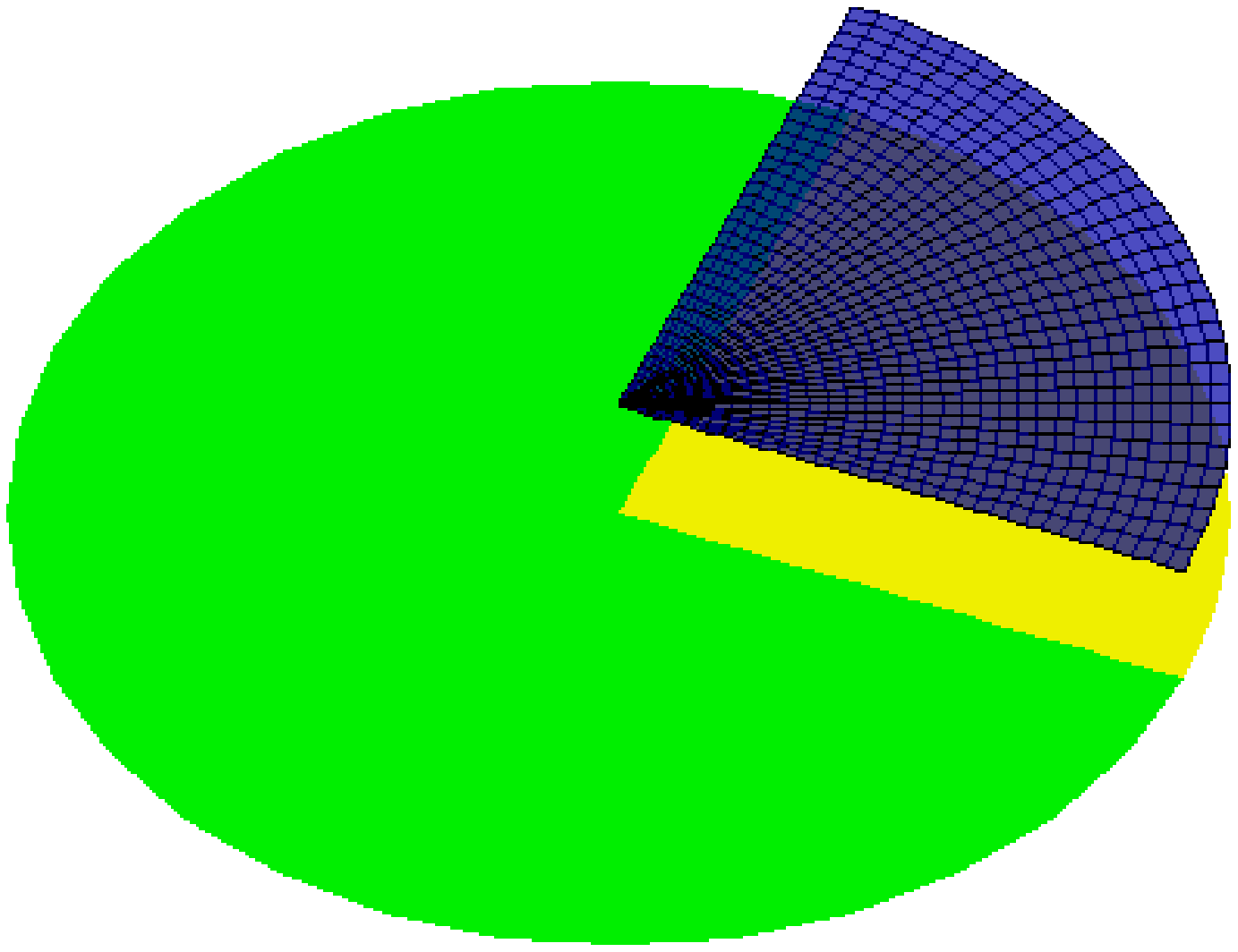}

$\alpha=2$
\end{center}
\end{minipage}
\hfill
\begin{minipage}{6cm}
\begin{center}
\includegraphics*[width=3.5cm]{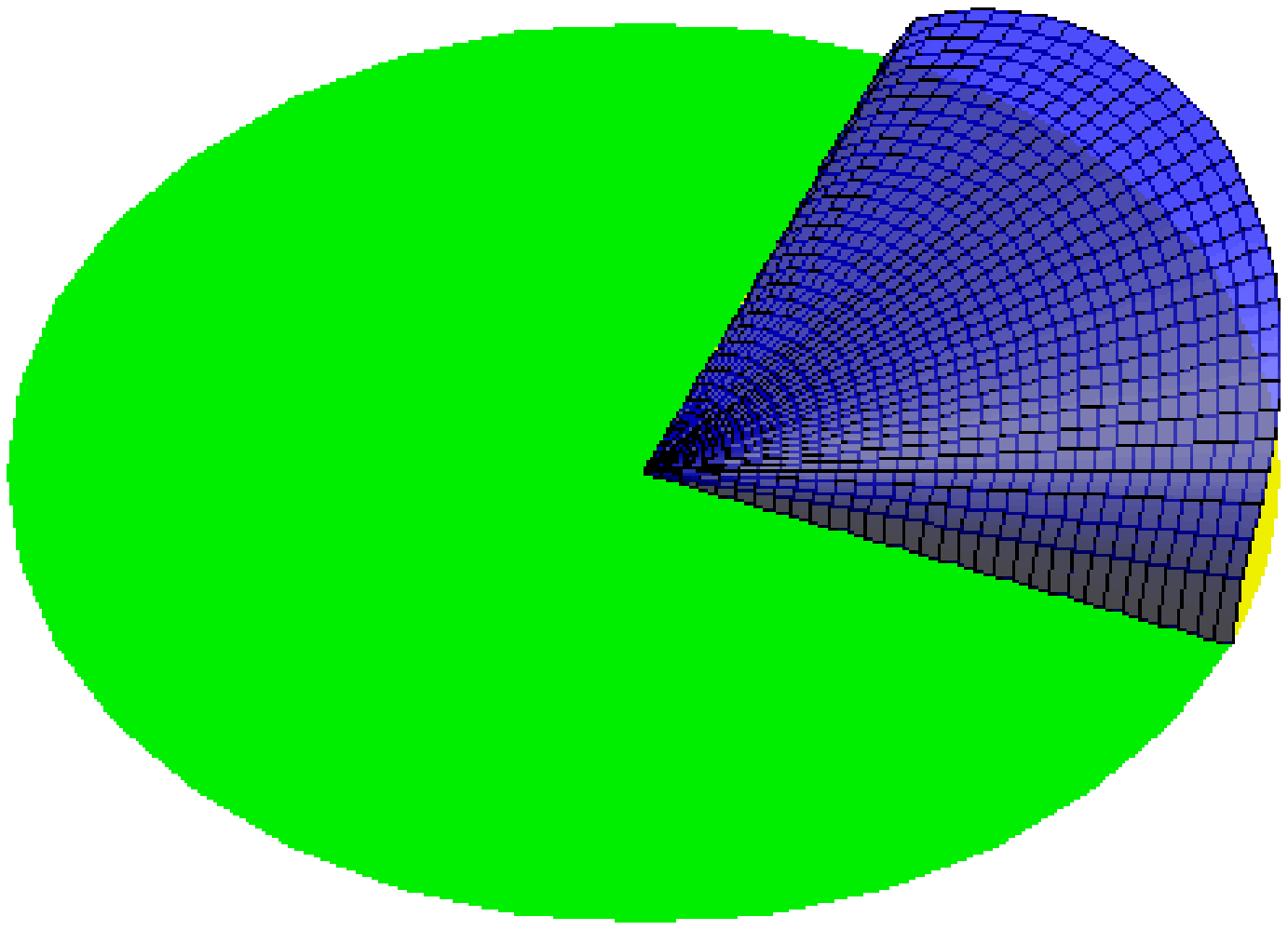}

$\alpha=3$
\end{center}
\end{minipage}

\begin{minipage}{6cm}
\begin{center}
\includegraphics*[width=3.5cm]{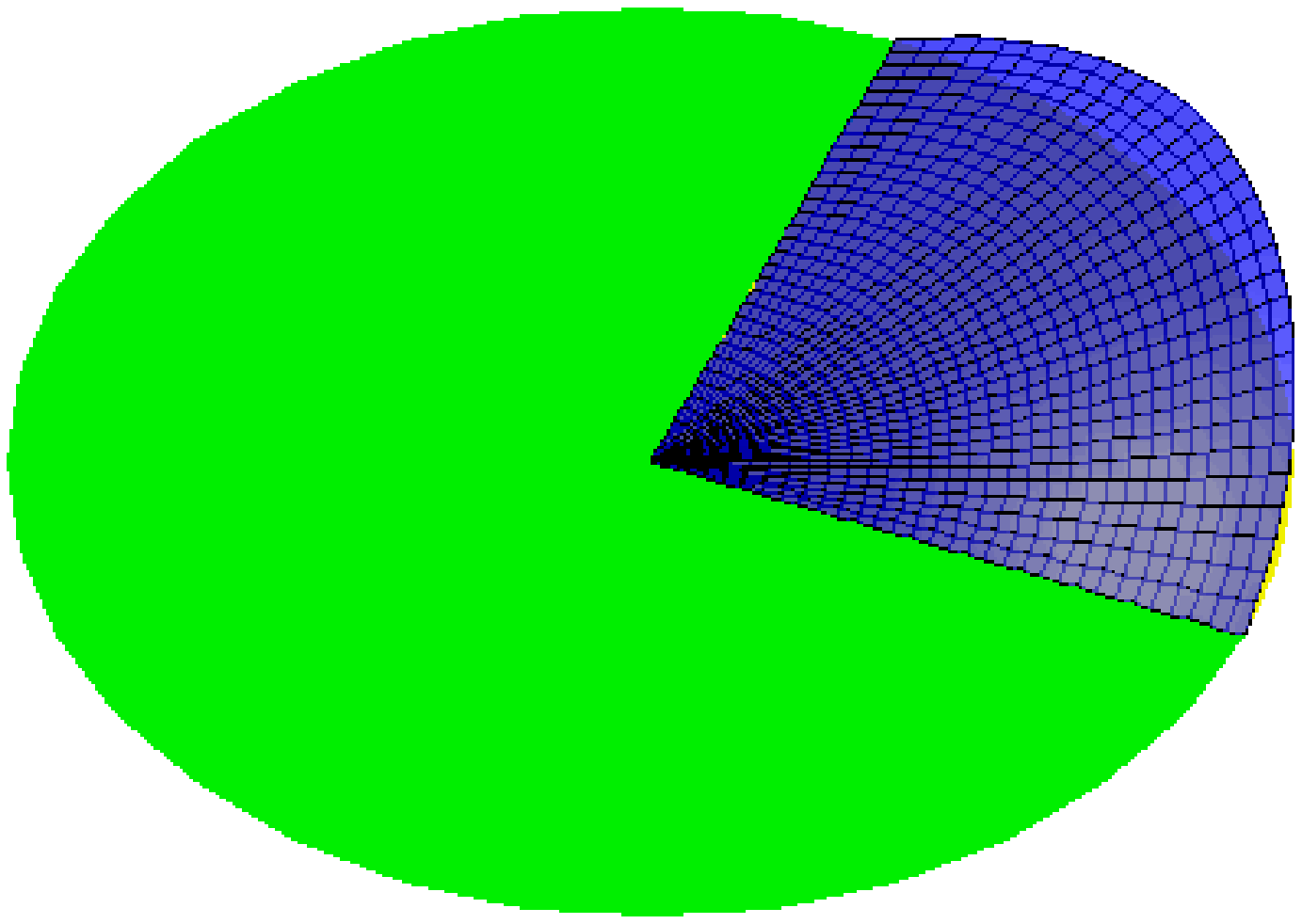}

$\alpha=4$
\end{center}
\end{minipage}
\hfill
\begin{minipage}{6cm}
\begin{center}
\includegraphics*[width=3.5cm]{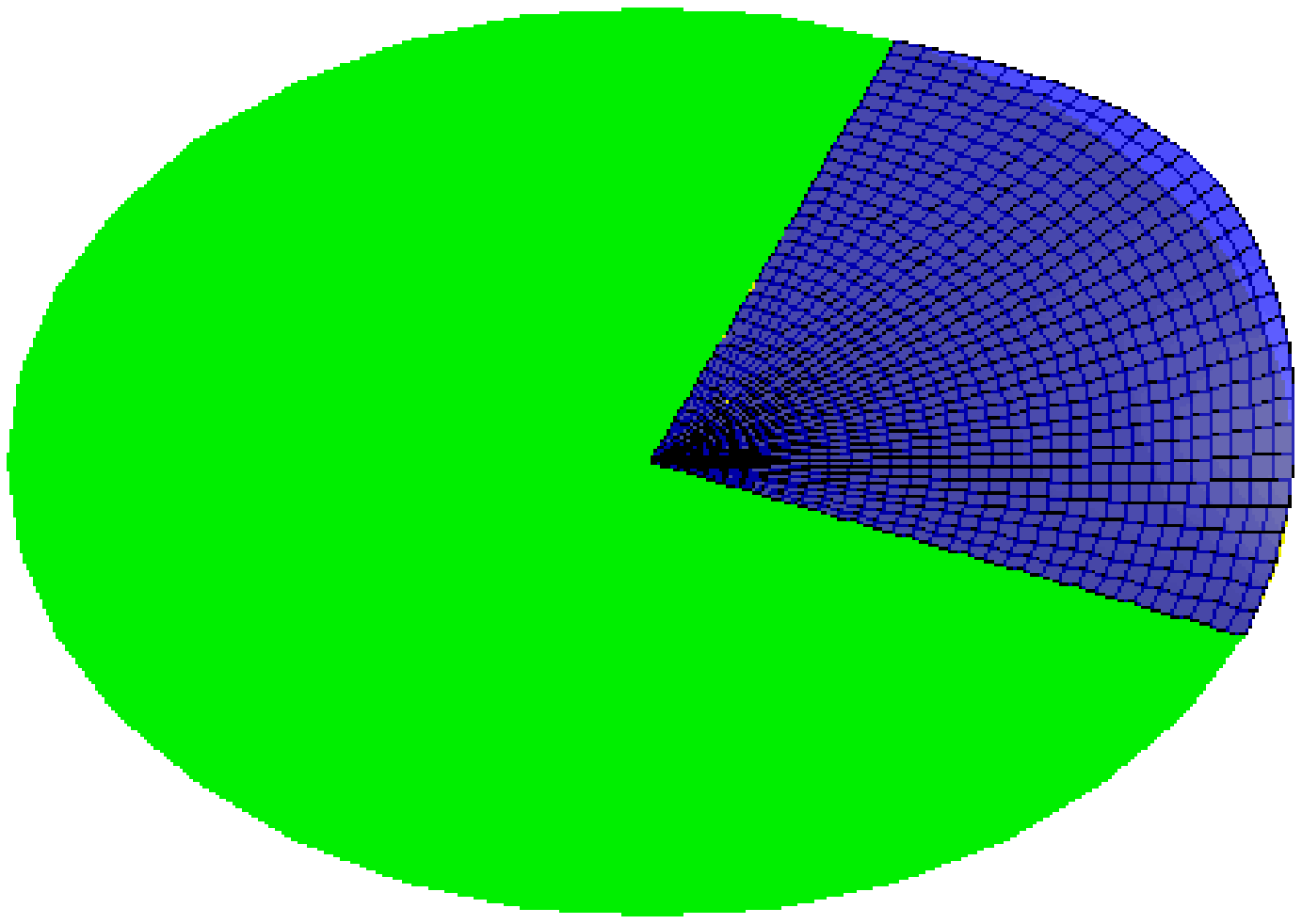}

$\alpha=5$
\end{center}
\end{minipage}

\abb{Graphs of Riesz distributions $R_+(\alpha)$ in two dimensions}

We now collect the important facts on Riesz distributions.

\begin{prop}\label{MinkRiesz}
The following holds for all $\alpha\in\Co$:
\begin{enumerate}
\item\label{a} 
$\gamma\cdot R_{\pm}(\alpha)=\alpha(\alpha-n+2)\, R_{\pm}(\alpha+2)$,
\item\label{b} 
$(\grad\gamma) R_{\pm}(\alpha)=2\alpha\grad\left(R_{\pm}(\alpha+2)\right)$,
\item\label{c} 
$\Box R_{\pm}(\alpha+2)=R_{\pm}(\alpha)$,
\item\label{e} 
For every
  $\alpha\in\mathbb{C}\setminus\left(\{0,-2,-4,\ldots\}
    \cup\{n-2,n-4,\ldots\}\right)$,   
  we have\\ $\supp\left(R_{\pm}(\alpha)\right)=J_\pm(0)$ and
  $\ssupp\left(R_{\pm}(\alpha)\right)\subset C_\pm(0)$.
\item\label{f} 
For every
  $\alpha\in\{0,-2,-4,\ldots\}\cup\{n-2,n-4,\ldots\}$, we have\\ 
$\supp\left(R_{\pm}(\alpha)\right)=\ssupp\left(R_{\pm}(\alpha)\right)\subset
C_\pm(0)$. 
\item\label{g} For $n\geq 3$ and $\alpha=n-2,n-4,\ldots,1$ or $2$
  respectively, we have\\  
$\supp\left(R_{\pm}(\alpha)\right)=
  \ssupp\left(R_{\pm}(\alpha)\right)=C_\pm(0)$.  
\item\label{h} 
$R_{\pm}(0)=\delta_0$.
\item\label{i}
For $\Re(\alpha) > 0$ the order of $R_{\pm}(\alpha)$ is bounded from above by
$n+1$. 
\item\label{j}
If $\alpha\in\R$, then $R_\pm(\alpha)$ is real, i.~e.,
$R_\pm(\alpha)[\phi]\in\R$ for all $\phi\in\DD(V,\R)$.
\end{enumerate}
\end{prop}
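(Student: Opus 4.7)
Items (a), (b), (c) extend the corresponding identities of Lemma~\ref{erstvorbereitung} from $\{\Re(\alpha)>n\}$ to all of $\Co$ by analytic continuation. For each fixed test function $\phi$, both sides of every identity define holomorphic functions of $\alpha$ (multiplication by the smooth functions $\gamma$ and $\del_i\gamma$, and the differential operator $\Box$, are weakly continuous, as noted in Subsection~\ref{subseq:DiffOpDist}), and they agree on the non-empty open set $\{\Re(\alpha)>n\}$; the identity theorem extends agreement to all $\alpha\in\Co$.

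For the support statements (e), (f), (g), I would split $V$ into three regions. On $V\setminus J_\pm(0)$ the distribution $R_\pm(\alpha)$ vanishes for $\Re(\alpha)>n$, hence for all $\alpha$ by analytic continuation, so $\supp R_\pm(\alpha)\subset J_\pm(0)$. On $I_\pm(0)$, where $\gamma>0$, the map $X\mapsto C(\alpha,n)\gamma(X)^{(\alpha-n)/2}$ is smooth in $X$ and entire in $\alpha$ and equals $R_\pm(\alpha)$ for $\Re(\alpha)>n$; by analytic continuation it represents $R_\pm(\alpha)$ smoothly on $I_\pm(0)$ for every $\alpha\in\Co$. The factor $C(\alpha,n)$ vanishes precisely at $\alpha\in E:=\{0,-2,-4,\ldots\}\cup\{n-2,n-4,\ldots\}$, where the Gamma functions in its denominator have poles. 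For $\alpha\notin E$ this gives (e). For $\alpha\in E$ the representing function vanishes on $I_\pm(0)$, so $\supp R_\pm(\alpha)\subset C_\pm(0)$; since the cone has Lebesgue measure zero, $R_\pm(\alpha)$ cannot agree locally with a nonzero smooth section near a point of its support, whence $\supp R_\pm(\alpha)=\ssupp R_\pm(\alpha)$ and (f) follows. For (g), where $\alpha$ is a positive integer in $\{n-2,n-4,\ldots\}$, I would iterate (c) to write $R_\pm(\alpha)=\Box^{k}R_\pm(n)$, observe that $R_\pm(n)=C(n,n)\chi_{J_\pm(0)}$ with $C(n,n)>0$, and verify that iterated d'Alembertians of this characteristic function produce nontrivial surface-type distributions whose support is all of $C_\pm(0)$.

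Part (h), the identification $R_\pm(0)=\delta_0$, is the crux. From (f) we already have $\supp R_\pm(0)\subset C_\pm(0)$, and the homogeneity $R_\pm(\alpha)(\lambda X)=\lambda^{\alpha-n}R_\pm(\alpha)(X)$ (obvious from the explicit formula for $\Re(\alpha)>n$ and extended to all $\alpha$ by analytic continuation) shows $R_\pm(0)$ is homogeneous of degree $-n$. To pin down the multiplicative constant, I would apply the iterated recursion $R_\pm(0)=\Box^{k}R_\pm(2k)$ from (c) for $k$ chosen so that $2k>n$; then $R_\pm(2k)$ is represented by the continuous function $C(2k,n)\gamma^{(2k-n)/2}\chi_{J_\pm(0)}$, and a direct distributional computation of $\Box^{k}$ on this function, carefully bookkeeping the Gamma-function residues that compensate for the vanishing of $C(0,n)$, yields $R_\pm(0)[\phi]=\phi(0)$. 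This residue bookkeeping is the main obstacle of the proof.

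For (i), given $\Re(\alpha)>0$ choose the smallest positive integer $k$ with $\Re(\alpha)+2k>n$; since $\Re(\alpha)>0$ this forces $2k\leq n+1$. Iterating (c) gives $R_\pm(\alpha)=\Box^{k}R_\pm(\alpha+2k)$, and since $R_\pm(\alpha+2k)$ is a continuous function and hence of order $0$, the differential operator $\Box^{k}$ of order $2k$ raises the order of $R_\pm(\alpha)$ to at most $2k\leq n+1$. For (j), define the auxiliary family $T(\alpha)[\phi]:=\overline{R_\pm(\bar\alpha)[\bar\phi]}$; this is holomorphic in $\alpha$, and for real $\alpha>n$ and real-valued $\phi$ the explicit formula (with $C(\alpha,n)$ and $\gamma^{(\alpha-n)/2}$ both real) shows $T(\alpha)[\phi]=R_\pm(\alpha)[\phi]$. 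The identity theorem forces $T(\alpha)=R_\pm(\alpha)$ on all of $\Co$, and specializing to real $\alpha$ and real-valued $\phi$ yields the reality claim.
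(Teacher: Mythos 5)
Parts (\ref{a})--(\ref{c}), (\ref{e}), (\ref{f}), and (\ref{i}) follow the paper's route exactly. For (\ref{j}) your Schwarz-reflection argument ($T(\alpha)[\phi]:=\overline{R_\pm(\bar\alpha)[\bar\phi]}$ plus the identity theorem) is a valid alternative to the paper's proof, which instead picks $k$ with $\alpha+2k>n$ and writes $R_\pm(\alpha)[\phi]=R_\pm(\alpha+2k)[\Box^k\phi]\in\R$; both work.

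For (\ref{g}) and (\ref{h}) you diverge from the paper, and (\ref{h}) has a genuine gap. The paper's key technical input for both is an explicit \emph{Claim}: for product test functions $\phi(x)=f(x^1)\psi(\hat x)$ with $\psi\equiv 1$ where it matters, one has $R_+(\alpha)[\phi]=\frac{1}{(\alpha-1)!}\int_0^\infty r^{\alpha-1}f(r)\,dr$; this is obtained by integrating in spherical coordinates and invoking Legendre's duplication formula. From this the normalizations in (\ref{g}) and (\ref{h}) drop out immediately. You avoid the Claim and instead want to compute iterated d'Alembertians of $C(2k,n)\gamma^{(2k-n)/2}\chi_{J_\pm(0)}$ directly. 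That is possible in principle, but for (\ref{g}) it is just asserted (``verify that iterated d'Alembertians \ldots produce nontrivial surface-type distributions''), and for (\ref{h}) it is exactly the content of the proof you have deferred.

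More seriously, in (\ref{h}) the preparatory reasoning does not actually reduce the problem to a constant. Knowing $\supp R_\pm(0)\subset C_\pm(0)$ together with homogeneity of degree $-n$ does \emph{not} imply $R_\pm(0)$ is a multiple of $\delta_0$: there are homogeneous degree-$(-n)$ distributions supported on the light cone (regularized boundary integrals of the form $\int_0^\infty \phi(t\omega)\,dt/t$ over rays of the cone, for instance) which are not concentrated at the origin. So the phrase ``to pin down the multiplicative constant'' presupposes a reduction that has not been made. What is missing is precisely the step the paper gets cheaply out of part (\ref{b}): setting $\alpha=0$ in (\ref{b}) gives $(\grad\gamma)\,R_\pm(0)=0$, i.e.\ $x^j R_\pm(0)=0$ for every $j$; writing a test function as $\phi=\phi(0)+\sum_j x^j\phi_j$ (Hadamard's lemma) then yields $R_\pm(0)[\phi]=c\,\phi(0)$ for some constant $c$, and only afterwards does one compute $c=1$, using the Claim and $R_+(0)[\phi]=R_+(2)[\Box\phi]=\int_0^\infty r f''(r)\,dr=f(0)=\phi(0)$. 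Without that use of (\ref{b}) (or an equivalent argument forcing the support to $\{0\}$), the ``residue bookkeeping'' you defer is not merely the last step but contains the whole of (\ref{h}).
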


\begin{proof}
Assertions {(\ref{a})}, {(\ref{b})}, and {(\ref{c})} hold for 
$\Re(\alpha)>n$ by Lemma~\ref{erstvorbereitung}.
Since, after insertion of a fixed $\phi\in\DD(V,\Co)$, all expressions in these
equations are holomorphic in $\alpha$ they hold for all $\alpha$.

Proof of {(\ref{e})}. 
Let $\varphi\in\DD(V,\Co)$ with $\supp(\varphi)\cap J_\pm(0)=\emptyset$.
Since $\supp(R_\pm(\alpha))\subset J_\pm(0)$ for $\Re(\alpha)>n$, it follows
for those $\alpha$ that
$$
R_\pm(\alpha)[\varphi]=0,
$$
and then for all $\alpha$ by Lemma~\ref{erstvorbereitung} (\ref{d}).
Therefore $\supp(R_\pm(\alpha))\subset J_\pm(0)$ for all $\alpha$.

On the other hand, if $X\in\, I_{\pm}(0)$, then $\gamma(X)>0$ and
the map $\alpha\mapsto C(\alpha,n)\gamma(X)^{\frac{\alpha-n}{2}}$ is
well-defined 
and holomorphic on all of $\mathbb{C}$.
By Lemma~\ref{erstvorbereitung} (\ref{d}) we have for $\phi\in\DD(V,\Co)$ with
$\supp(\varphi)\subset\,I_{\pm}(0)$ 
\[R_\pm(\alpha)[\varphi]=
\int_{\supp(\varphi)}C(\alpha,n)\gamma(X)^{\frac{\alpha-n}{2}}\varphi(X)dX\]  
for \emph{every} $\alpha\in\mathbb{C}$.
Thus $R_\pm(\alpha)$ coincides on $I_{\pm}(0)$ with the smooth function
$C(\alpha,n)\gamma(\cdot)^{\frac{\alpha-n}{2}}$ and therefore
$\ssupp(R_\pm(\alpha))\subset C_\pm(0)$.
Since furthermore the function $\alpha\mapsto C(\alpha,n)$ vanishes only on
$\{0,-2,-4,\ldots\}\cup\{n-2,n-4,\ldots\}$ (caused by the poles of the Gamma
function), we have $I_{\pm}(0)\subset\supp(R_\pm(\alpha))$ for every
$\alpha\in\mathbb{C}\setminus
\left(\{0,-2,-4,\ldots\}\cup\{n-2,n-4,\ldots\}\right)$. 
Thus $\supp(R_\pm(\alpha))=J_\pm(0)$. 
This proves {(\ref{e})}.

Proof of {(\ref{f})}. 
For $\alpha\in\{0,-2,-4,\ldots\}\cup\{n-2,n-4,\ldots\}$ we have
$C(\alpha,n)=0$ and therefore $I_{\pm}(0)\cap\supp(R_\pm(\alpha))=\emptyset$.
Hence $\ssupp(R_\pm(\alpha))\subset\supp(R_\pm(\alpha))\subset C_\pm(0)$.
It remains to show $\supp(R_\pm(\alpha))\subset\ssupp(R_\pm(\alpha))$.
Let $X\not\in\ssupp(R_\pm(\alpha))$.
Then $R_\pm(\alpha)$ coincides with a smooth function $f$ on a neighborhood of
$X$.
Since $\supp(R_\pm(\alpha))\subset C_\pm(0)$ and since $C_\pm(0)$ has a dense
complement in $V$, we have $f\equiv 0$.
Thus $X\not\in\supp(R_\pm(\alpha))$.
This proves {(\ref{f})}.

Before we proceed to the next point we derive a more explicit formula for
the Riesz distributions evaluated on testfunctions of a particular form.
Introduce linear coordinates $x^1,\ldots,x^n$ on $V$ such that $\gamma(x)
= -(x^1)^2 + (x^2)^2 + \cdots + (x^n)^2$ and such that the $x^1$-axis is
future directed.
Let $f\in\DD(\R,\Co)$ and $\psi\in\DD(\R^{n-1},\Co)$ and put $\varphi(x) :=
f(x^1)\psi(\hat x)$ where $\hat x = (x^2,\ldots,x^n)$.
Choose the function $\psi$ such that on $J_+(0)$ we have $\varphi(x)=f(x^1)$.

\begin{center}
\input{fig-suppphi}
\end{center}

{\em Claim:}
If $\Re(\alpha) > 1$, then
$$
R_+(\alpha)[\varphi]=\frac{1}{(\alpha-1)!}\int_0^\infty r^{\alpha-1}f(r)dr.
$$
Proof of the Claim.
Since both sides of the equation are holomorphic in $\alpha$ for $\Re(\alpha)
> 1$ it suffices to show it for $\Re(\alpha)>n$. 
In that case we have by the definition of $R_+(\alpha)$ 
\be
R_+(\alpha)[\varphi]&=&C(\alpha,n)\int_{J_+(0)}
\varphi(X)\gamma(X)^{\frac{\alpha-n}{2}}dX\\  
&=&C(\alpha,n)\int_0^\infty \int_{\{|\hat{x}|<x^1\}}
\varphi(x^1,\hat{x})((x^1)^2-|\hat{x}|^2)^{\frac{\alpha-n}{2}}d\hat{x}\;dx^1\\ 
&=&C(\alpha,n)\int_0^\infty f(x^1)
\int_{\{|\hat{x}|<x^1\}}
((x^1)^2-|\hat{x}|^2)^{\frac{\alpha-n}{2}}d\hat{x}\;dx^1\\  
&=&C(\alpha,n)\int_0^\infty f(x^1)
\int_0^{x^1}\int_{S^{n-2}}((x^1)^2-t^2)^{\frac{\alpha-n}{2}}t^{n-2}
d\omega\;dt\;dx^1, 
\ee
where $S^{n-2}$ is the $(n-2)$-dimensional round sphere and $d\omega$ its
standard volume element. 
Renaming $x^1$ we get
\[R_+(\alpha)[\varphi]=\vol(S^{n-2})\,C(\alpha,n) \int_0^\infty
f(r)\int_0^r(r^2-t^2)^{\frac{\alpha-n}{2}}t^{n-2}dt\;dr.\] 
Using $\int_0^r(r^2-t^2)^{\frac{\alpha-n}{2}}t^{n-2}dt = \frac12 
r^{\alpha-1}\frac{(\frac{\alpha-n}{2})!
  (\frac{n-3}{2})!}{(\frac{\alpha-1}{2})!}$ we obtain
\begin{eqnarray*}
R_+(\alpha)[\varphi]
&=&
\frac{\vol(S^{n-2})}{2}\,C(\alpha,n)\int_0^\infty f(r)r^{\alpha-1}
\frac{(\frac{\alpha-n}{2})!(\frac{n-3}{2})!}{(\frac{\alpha-1}{2})!}dr\\
&=&
\frac12 \frac{2\pi^{(n-1)/2}}{(\tfrac{n-1}{2}-1)!}\cdot
\frac{2^{1-\alpha}\pi^{1-n/2}}{(\alpha/2-1)!(\tfrac{\alpha-n}{2})!}\cdot
\frac{(\tfrac{\alpha-n}{2})!(\tfrac{n-3}{2})!}{(\tfrac{\alpha-1}{2})!}\cdot
\int_0^\infty f(r)r^{\alpha-1}dr\\
&=&
\frac{\sqrt{\pi}\cdot 2^{1-\alpha}}{(\alpha/2-1)!(\tfrac{\alpha-1}{2})!}\cdot
\int_0^\infty f(r)r^{\alpha-1}dr.
\end{eqnarray*}
Legendre's duplication formula (see \cite[p.~218]{Jeff})
\begin{equation}\label{legendreformel}
\left(\frac{\alpha}{2}-1\right)!\left(\frac{\alpha+1}{2}-1\right)! =
2^{1-\alpha}\sqrt{\pi}\,(\alpha-1)!
\end{equation}
yields the Claim.

To show {(\ref{g})} recall first from {(\ref{f})} that we know already
$$
\ssupp(R_\pm(\alpha))=\supp(R_\pm(\alpha))\subset C_\pm(0)
$$
for $\alpha=n-2,n-4,\ldots, 2$ or $1$ respectively.
Note also that the distribution $R_\pm(\alpha)$ is invariant under
timeorientation-preserving Lorentz transformations, that is, for any such
transformation $A$ of $V$ we have
\[R_\pm(\alpha)[\varphi\circ A]=R_\pm(\alpha)[\varphi]\]
for every testfunction $\varphi$.
Hence $\supp(R_\pm(\alpha))$ as well as $\ssupp(R_\pm(\alpha))$ are also
invariant under the group of those transformations.
Under the action of this group the orbit decomposition of $C_\pm(0)$ is given
by
\[C_\pm(0)=\{0\}\cup(C_\pm(0)\setminus\{0\}).\]
Thus $\supp(R_\pm(\alpha))=\ssupp(R_\pm(\alpha))$ coincides either with $\{0\}$
or with $C_\pm(0)$.

The Claim shows for the testfunctions $\varphi$ considered there 
$$
R_+(2)[\varphi]=\int_0^\infty rf(r)dr.
$$
Hence the support of $R_+(2)$ cannot be contained in $\{0\}$.
If $n$ is even, we conclude $\supp(R_+(2))=C_+(0)$ and then also
$\supp(R_+(\alpha))=C_+(0)$ for $\alpha = 2,4,\ldots,n-2$.

Taking the limit $\alpha \searrow 1$ in the Claim yields
$$
R_+(1)[\varphi]=\int_0^\infty f(r)dr.
$$
Now the same argument shows for odd $n$ that $\supp(R_+(1))=C_+(0)$ and then
also $\supp(R_+(\alpha))=C_+(0)$ for $\alpha = 1,3,\ldots,n-2$.
This concludes the proof of {(\ref{g})}.

Proof of {(\ref{h})}. 
Fix a compact subset $K\subset V$. 
Let $\sigma_K\in\DD(V,\R)$ be a function such that $\sigma_{|_K}\equiv 1$.
For any $\varphi\in\DD(V,\Co)$ with $\supp(\varphi)\subset K$ write
\[
\varphi(x)=\varphi(0)+\sum_{j=1}^nx^j\varphi_j(x)
\]
with suitable smooth functions $\phi_j$.
Then
\be
R_\pm(0)[\varphi]&=&R_\pm(0)[\sigma_K\varphi]\\
&=&R_\pm(0)[\varphi(0)\sigma_K+\sum_{j=1}^n x^j\sigma_K\varphi_j]\\
&=&\varphi(0)\underbrace{R_\pm(0)[\sigma_K]}_{=:c_K}+\sum_{j=1}^n
\underbrace{(x^jR_\pm(0))}_{=0\textrm{ by }(\ref{b})}[\sigma_K\varphi_j]\\ 
&=&c_K \varphi(0).
\ee
The constant $c_K$ actually does not depend on $K$ since for $K'\supset K$ 
and $\supp(\varphi)\subset K(\subset K')$,
\[c_{K'}\varphi(0)=R_+(0)[\varphi]=c_K\varphi(0),\]
so that $c_K=c_{K'}=:c$.
It remains to show $c=1$.

We again look at testfunctions $\varphi$ as in the Claim and compute using 
{(\ref{c})}
\be
c\cdot\varphi(0) &=& R_+(0)[\varphi]\\
&=&R_+(2)[\Box\varphi]\\
&=&\int_0^\infty rf''(r)dr\\
&=&-\int_0^\infty f'(r)dr\\
&=&f(0)\\
&=&\varphi(0).
\ee
This concludes the proof of {(\ref{h})}.

Proof of {(\ref{i})}. 
By its definition, the distribution $R_\pm(\alpha)$ is a continuous function
if $\Re(\alpha)>n$, therefore it is of order $0$.
Since $\Box$ is a differential operator of order $2$, the order of 
$\Box R_\pm(\alpha)$ is at most that of $R_\pm(\alpha)$ plus $2$.
It then follows from {(\ref{c})} that:\\
$\bullet$ If $n$ is even: for every $\alpha$ with $\Re(\alpha)>0$ we have
$\Re(\alpha)+n=\Re(\alpha)+2\cdot\frac{n}{2}>n$, so that the order of
$R_\pm(\alpha)$ is not greater than $n$ (and so $n+1$).\\ 
$\bullet$ If $n$ is odd: for every $\alpha$ with $\Re(\alpha)>0$ we have
$\Re(\alpha)+n+1=\Re(\alpha)+2\cdot\frac{n+1}{2}>n$, so that the order of
$R_\pm(\alpha)$ is not greater than $n+1$.\\ 
This concludes the proof of (\ref{i}).

Assertion (\ref{j}) is clear by definition whenever $\alpha>n$.
For general $\alpha\in\R$ choose $k\in\N$ so large that $\alpha + 2k >n$.
Using (\ref{c}) we get for any $\phi\in\DD(V,\R)$
$$
R_\pm(\alpha)[\phi] \quad=\quad \Box^k R_\pm(\alpha+2k)[\phi] \quad=\quad 
R_\pm(\alpha+2k)[\Box^k \phi] \in \R
$$
because $\Box^k \phi\in\DD(V,\R)$ as well.
\end{proof}
 
In the following we will need a slight generalization of
Lemma~\ref{erstvorbereitung} (\ref{d}):

\begin{cor}\label{Ckholo}
For $\varphi\in \DD^k(V,\Co)$ the map 
$\alpha\mapsto R_{\pm}(\alpha)[\varphi]$
defines a holomorphic function on $\{\alpha\in\Co\; | 
\;\Re(\alpha) > n-2[\frac{k}{2}]\}$.
\end{cor}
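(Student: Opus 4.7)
The plan is to absorb derivatives using the identity $\Box R_\pm(\beta+2)=R_\pm(\beta)$ from Lemma~\ref{erstvorbereitung}(\ref{viertens}), reducing the problem to evaluating the Riesz distribution against a merely continuous testfunction in its original range of definition. Set $j:=[k/2]$, so $k-2j\in\{0,1\}$; then for $\varphi\in\DD^k(V,\Co)$ the iterated d'Alembertian $\Box^j\varphi$ lies in $\DD^{k-2j}(V,\Co)\subset\DD^0(V,\Co)$ and is a continuous function with compact support contained in $\supp(\varphi)$. On the half-plane $\{\Re(\alpha)>n-2[k/2]\}$, which is exactly the region where $\Re(\alpha+2j)>n$, I would define
\[
R_\pm(\alpha)[\varphi]\;:=\;R_\pm(\alpha+2j)[\Box^j\varphi],
\]
interpreted by pairing the continuous function $R_\pm(\alpha+2j)$ with the compactly supported continuous function $\Box^j\varphi$.

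To check that this agrees with the holomorphic extension of Lemma~\ref{erstvorbereitung}(\ref{d}) whenever $\varphi$ happens to be smooth, I would iterate Lemma~\ref{erstvorbereitung}(\ref{viertens}) $j$ times to get $R_\pm(\alpha)=\Box^j R_\pm(\alpha+2j)$ as a distributional identity, valid first for $\Re(\alpha)>n$ and then everywhere by holomorphic continuation. Since the d'Alembertian on Minkowski space is formally self-adjoint ($\Box^*=\Box$), testing this identity against a smooth $\varphi$ gives precisely $R_\pm(\alpha)[\varphi]=R_\pm(\alpha+2j)[\Box^j\varphi]$, so the proposed formula is a genuine extension of the old distribution.

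It remains to verify holomorphy. For any fixed $\psi\in\DD^0(V,\Co)$, I want
\[
\beta\;\longmapsto\;R_\pm(\beta)[\psi]\;=\;C(\beta,n)\int_{J_\pm(0)}\gamma(X)^{(\beta-n)/2}\,\psi(X)\,dX
\]
to be holomorphic on $\{\Re(\beta)>n\}$. The prefactor $C(\beta,n)$ is entire; the integrand is pointwise holomorphic in $\beta$ on the open set where $\gamma(X)>0$; and on any compact $L\subset\{\Re(\beta)>n\}$ it is uniformly bounded on the compact set $J_\pm(0)\cap\supp(\psi)$. Holomorphy then follows either by the dominated-convergence form of differentiation under the integral sign or by combining Morera's theorem with Fubini on a small triangle in the $\beta$-plane.

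The only genuine obstacle is the boundary behaviour of $\gamma(X)^{(\beta-n)/2}$ along the light cone $C_\pm(0)=\{\gamma=0\}$. Fortunately, for $\Re(\beta)>n$ the exponent has strictly positive real part, uniformly on any compact $L$, so the integrand extends continuously by zero across $C_\pm(0)$ and is $L^\infty$-bounded on $J_\pm(0)\cap\supp(\psi)$. This is precisely what limits the extension to $\Re(\alpha)>n-2[k/2]$: one cannot push $\Re(\alpha)$ further down without absorbing further derivatives from $\varphi$, which the $C^k$-regularity may no longer permit.
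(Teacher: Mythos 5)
Your argument is correct and follows the same route as the paper's proof: both iterate the identity $\Box R_\pm(\alpha+2)=R_\pm(\alpha)$ (Proposition~\ref{MinkRiesz}~(\ref{c}), which is the global version of Lemma~\ref{erstvorbereitung}~(\ref{viertens})) to transfer $[k/2]$ d'Alembertians from the Riesz distribution onto the test function, reducing to the pairing of the continuous function $R_\pm(\alpha+2[k/2])$ against $\Box^{[k/2]}\varphi\in\DD^0(V,\Co)$ in the range $\Re(\alpha)>n-2[k/2]$. Your write-up just makes explicit the formal self-adjointness of $\Box$ and the Morera/dominated-convergence step that the paper leaves implicit.
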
 

\begin{proof}
Let $\phi\in\DD^k(V,\Co)$.
By the definition of $R_{\pm}(\alpha)$ the map $\alpha\mapsto
R_{\pm}(\alpha)[\varphi]$ is clearly holomorphic on $\{\Re(\alpha) > n\}$.
Using {(\ref{c})} of Proposition~\ref{MinkRiesz} we get the holomorphic
extension to the set $\{\Re(\alpha) > n-2[\frac{k}{2}] \}$.
\end{proof}

\section{Lorentzian geometry}
\label{sec:lorgeo}

We now summarize basic concepts of Lorentzian geometry.
We will assume familiarity with semi-Riemannian manifolds, geodesics, the
Riemannian exponential map etc.
A summary of basic notions in differential geometry can be found in
Appendix~\ref{app:diffgeo}.
A thorough introduction to Lorentzian geometry can e.~g.\ be found in
\cite{BEE} or in \cite{ONeill}.
Further results of more technical nature which could distract the reader at a
first reading but which will be needed later are collected in
Appendix~\ref{app:lorgeo}.

Let $M$ be a timeoriented Lorentzian manifold.
A piecewise $C^1$-curve in $M$ is called \defem{timelike, lightlike, causal, 
spacelike, future directed},\indexn{causal curve>defemidx}\indexn{timelike
curve>defemidx}\indexn{lightlike curve>defemidx}\indexn{spacelike
  curve>defemidx} 
or \defem{past directed} if its tangent vectors are timelike, lightlike,
causal, spacelike, future directed, or past directed respectively.
A piecewise $C^1$-curve in $M$ is called
\defem{inextendible},\indexn{inextendible curve>defemidx} if no piecewise
$C^1$-reparametrization of the curve can be continuously extended to any of
the end points of the parameter interval.

The \defidx{chronological future} $I_+^M(x)$ 
\indexs{I*@ $I_+^M(x)$, chronological future of point $x$ in $M$}
\indexs{I*@ $I_-^M(x)$, chronological past of point $x$ in $M$}of a point $x\in M$ is the set of
points that can be reached from $x$ by future directed timelike curves. 
Similarly, the \defidx{causal future} $J_+^M(x)$ 
\indexs{J*@ $J_+^M(x)$, causal future of point $x$ in $M$}
\indexs{J*@ $J_-^M(x)$, causal past of point $x$ in $M$}of a point $x\in M$ consists 
of those points that can be reached from $x$ by causal curves and of $x$
itself. 
In the following, the notation $x<y$ (or $x\leq
y$)\indexs{x*@$x\protect\textless y$, (strict) causality
  relation}\indexs{x*@$x\protect\leq y$, causality relation} will mean $y\in
I_+^M(x)$ (or $y\in J_+^M(x)$ respectively). 
The {\em chronological future} of a subset
$A\subset M$ is defined to be $I_+^M(A):=\buil{\cup}{x\in A}I_+^M(x)$. 
Similarly, the {\em causal future} of $A$ is $J_+^M(A):=\buil{\cup}{x\in
  A}J_+^M(x)$.\indexs{Ia*@$I_+^M(A)$, chronological future of subset $A$ of
  $M$}\indexs{Ia*@$I_-^M(A)$, chronological past of subset $A$ of
  $M$}\indexs{Ja*@$J_+^M(A)$, causal future of subset $A$ of
  $M$}\indexs{Ja*@$J_-^M(A)$, causal past of subset $A$ of $M$} The
\defidx{chronological past} $I_-^M(A)$ and the \defidx{causal past} $J_-^M(A)$ 
are defined by replacing future directed curves by past directed curves.
One has in general that $I_\pm^M(A)$ is the interior of $J^M_\pm(A)$ and
that $J_\pm^M(A)$ is contained in the closure of $I_\pm^M(A)$.
The chronological future and past are open subsets but the causal
future and past are not always closed even if $A$ is closed (see also
Section~\ref{app:lorgeo} in Appendix).

\begin{center}
\input{fig-futurepast}
\end{center}

We will also use the notation $J^M(A) := J_-^M(A) \cup J_+^M(A)$.
\indexs{Jaa*@$J^M(A) := J_-^M(A) \cup J_+^M(A)$}
A subset $A\subset M$ is called \defem{past compact} 
\indexn{past compact subset>defemidx}
if $A\cap J_-^M(p)$
is compact for all $p\in M$.
Similarly, one defines \defem{future compact} subsets.
\indexn{future compact subset>defemidx}

\begin{center}
\input{fig-pastcompact1}
\end{center}

\Definition{
A subset $\Omega \subset M$ in a timeoriented Lorentzian manifold
is called \defem{causally compatible}\indexn{causally compatible
  subset>defemidx} if for all points $x \in \Omega$ 
$$
J_\pm^\Omega(x) = J_\pm^M(x) \cap \Omega
$$
holds.
}

Note that the inclusion ``$\subset$'' always holds.
The condition of being causally compatible means that whenever two
points in $\Omega$ can be joined by a causal curve in $M$
this can also be done inside $\Omega$.

\begin{center}
\input{fig-causalcompatible}
\end{center}

\begin{center}
\input{fig-nonconvex}
\end{center}

If $\Omega \subset M$ is a causally compatible domain in a timeoriented
Lorentzian manifold, then we immediately see that for each subset $A \subset
\Omega$ we have
$$
J_\pm^\Omega(A) = J_\pm^M(A) \cap \Omega .
$$
Note also that being causally compatible is transitive: 
If $\Omega\subset\Omega'\subset\Omega''$, if $\Omega$ is causally compatible
in $\Omega'$, and if  $\Omega'$ is causally compatible in $\Omega''$, then
so is $\Omega$ in $\Omega''$.

\Definition{\label{ddefgeodstarshaped}
A domain $\Omega \subset M$ in a Lorentzian manifold
is called
\beit\item \defem{geodesically starshaped} 
\indexn{geodesically starshaped domain>defemidx}
with respect to a fixed point $x\in \Omega$
if there exists an open subset $\Omega' \subset T_xM$,
starshaped with respect to $0$, such that the Riemannian exponential map
$\exp_x$ 
maps $\Omega'$ diffeomorphically onto $\Omega$.
\item \defem{geodesically convex}
\indexn{geodesically convex domain>defemidx}
(or simply \defem{convex}\indexn{convex domain>defemidx}) if it is
geodesically starshaped with respect to all of its 
points.
\eeit
}

\begin{center}
\input{fig-sternfoermig}
\end{center}

If $\Omega$ is geodesically starshaped with respect to $x$, then
$\exp_x(I_\pm(0)\cap\Omega') = I_\pm^\Omega(x)$ and
$\exp_x(J_\pm(0)\cap\Omega') = J_\pm^\Omega(x)$.
We put $C_\pm^\Omega(x):=\exp_x(C_\pm(0)\cap\Omega')$.

On a geodesically starshaped domain $\Omega$ we define the smooth positive 
function $\mu_x : \Omega \to \R$ by
\begin{equation}\label{mudef}
\dV=\mu_x \cdot (\exp_x^{-1})^*\left(dz\right),
\end{equation}

where $\dV$ is the Lorentzian volume density and $dz$ is the standard volume
density on $T_x\Omega$.
In other words, $\mu_x = \det(d\exp_x)\circ \exp_x^{-1}$.
\indexs{m*@$\mu_x$, local density function}
In normal coordinates about $x$, $\mu_x=\sqrt{|\det(g_{ij})|}$.\\

For each open covering of a Lorentzian manifold there exists a refinement
consisting of convex open subsets, see \cite[Chap.~5, Lemma 10]{ONeill}.

\Definition{
A domain $\Omega$ is called \defem{causal}\indexn{causal domain>defemidx} if
$\overline\Omega$ is contained in 
a convex domain $\Omega'$ and if for any $p,q\in\overline\Omega$ the
intersection $J^{\Omega'}_+(p)\cap J^{\Omega'}_-(q)$ is compact and contained
in $\overline\Omega$.
}


\begin{center}
\input{fig-convexcausal}
\end{center}

\Definition{
A subset $S$ of a connected timeoriented Lorentzian manifold is called 
\defem{achronal} (or \defem{acausal}) if and only if each timelike
(respectively causal) curve meets $S$ at most once. 
\indexn{achronal subset>defemidx}
\indexn{acausal subset>defemidx}

A subset $S$ of a connected timeoriented Lorentzian manifold is a
\defidx{Cauchy hypersurface} if each inextendible timelike curve in $M$  
meets $S$ at exactly one point.   
}

\begin{center}
\input{fig-cauchy}
\end{center}

Obviously every acausal subset is achronal, but the reverse is wrong. However,
every achronal spacelike hypersurface is acausal (see Lemma 42 from Chap.~14
in \cite{ONeill}).\\ 
Any Cauchy hypersurface is achronal.
Moreover, it is a closed topological hypersurface and it is hit
by each inextendible causal curve in at least one point.
Any two Cauchy hypersurfaces in $M$ are homeomorphic.
Furthermore, the causal future and past of a Cauchy hypersurface is past-
and future-compact respectively.
This is a consequence of e.~g.\ \cite[Ch.~14, Lemma~40]{ONeill}. 

\Definition{\label{ddefCauchydev}
The \defem{Cauchy development} of a subset $S$ 
\indexn{Cauchy development of a subset>defemidx}
\indexs{D*@$D(S)$, Cauchy development of a subset $S$}
of a timeoriented Lorentzian manifold $M$ is the set $D(S)$ of points of $M$
through which every inextendible causal curve in $M$ meets $S$.  
}

\begin{center}
\input{fig-defcauchyentwick}
\end{center}

\Remark{\label{rem:Cauchydev}
It follows from the definition that $D(D(S))=D(S)$ for every subset $S\subset
M$.
Hence if $T\subset D(S)$, then $D(T) \subset D(D(S))=D(S)$.

Of course, if $S$ is achronal, then every inextendible causal curve in $M$
meets $S$ at most once.
The Cauchy development $D(S)$ of every \emph{acausal} hypersurface $S$ is open,
see \cite[Chap.~14, Lemma~43]{ONeill}. 
}

\Definition{
A Lorentzian manifold is said to satisfy the \defidx{causality condition} if
it does not contain any closed causal curve.  

A Lorentzian manifold is said to satisfy the \defidx{strong causality
  condition} 
if there are no almost closed causal curves.
More precisely, for each point $p\in M$ and for each open neighborhood 
$U$ of $p$ there exists an open neighborhood $V\subset U$ of $p$ such
that each causal curve in $M$ starting and ending in $V$ is entirely
contained in $U$.
}

\begin{center}
\input{fig-strongcausal}
\end{center}

Obviously, the strong causality condition implies the causality condition.
Convex open subsets of a Lorentzian manifold satisfy the strong causality
condition.
 
\Definition{
A connected timeoriented Lorentzian manifold is called \defem{globally
hyperbolic} 
\indexn{globally hyperbolic manifold>defemidx}
if it satisfies the strong causality condition and if for all
$p,q\in M$ the intersection $J_+^M(p)\cap J_-^M(q)$ is compact.
}

\Remark{\label{rem:globhypOmega}
If $M$ is a globally hyperbolic Lorentzian manifold, then a nonempty open
subset $\Omega\subset M$ is itself globally hyperbolic if and only if for any
$p,q\in \Omega$ the intersection $J_+^\Omega(p)\cap
J_-^\Omega(q)\subset\Omega$ is compact.
Indeed non-existence of almost closed causal curves in $M$ directly implies
non-existence of such curves in $\Omega$.
}

We now state a very useful characterization of globally hyperbolic manifolds.

\begin{thm}\label{globhyp}
Let $M$ be a connected timeoriented Lorentzian manifold.
Then the following are equivalent:
\begin{itemize}
\item[(1)]
$M$ is globally hyperbolic.
\item[(2)]
There exists a Cauchy hypersurface in $M$.
\item[(3)]
$M$ is isometric to $\R\times S$ with metric $-\beta dt^2 + g_t$ where
$\beta$ is a smooth positive function, $g_t$ is a Riemannian metric
on $S$ depending smoothly on $t\in\R$ and each $\{t\}\times S$ is a smooth 
spacelike Cauchy hypersurface in $M$.
\end{itemize}
\end{thm}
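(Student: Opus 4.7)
The plan is to establish the cycle $(3)\Rightarrow(2)\Rightarrow(1)\Rightarrow(3)$, with the last implication being by far the most delicate.

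The implication $(3)\Rightarrow(2)$ is immediate from the statement: under the splitting of $(3)$, any slice $\{t_0\}\times S$ is by hypothesis a smooth spacelike Cauchy hypersurface, so such a hypersurface exists.

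For $(2)\Rightarrow(1)$, assume $S\subset M$ is a Cauchy hypersurface. Strong causality is checked by contradiction: if arbitrarily small neighborhoods $V$ of some $p$ admit causal segments that leave and re-enter $V$ while exiting a fixed neighborhood $U$, one concatenates these segments and extends them to an inextendible causal curve that either misses $S$ or meets it more than once, contradicting the Cauchy property. Compactness of $J_+^M(p)\cap J_-^M(q)$ is obtained by a limit-curve argument: any sequence in this intersection yields by passage to a subsequence a causal limit curve whose endpoints lie in $J_+^M(p)\cap J_-^M(q)$, and the Cauchy hypersurface $S$ provides the compactness control that prevents the limit from escaping to infinity (this uses the past- and future-compactness of $J_+^M(S)$ and $J_-^M(S)$ mentioned just before the Cauchy development definition).

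For $(1)\Rightarrow(2)$ I would invoke Geroch's classical construction. Pick a smooth positive measure $\mu$ on $M$ with $\mu(M)<\infty$ and define
\[
\tau(p):=\log\frac{\mu(J_-^M(p))}{\mu(J_+^M(p))}.
\]
Global hyperbolicity together with strong causality implies that $\tau$ is a continuous time function whose level sets are continuous Cauchy hypersurfaces; the key points are that along any inextendible future-directed causal curve $\tau$ strictly increases and tends to $\pm\infty$, using compactness of causal diamonds to rule out bounded accumulation.

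The main obstacle, and the deepest step, is $(1)\Rightarrow(3)$: one must upgrade the continuous Cauchy time function $\tau$ to a \emph{smooth} time function whose gradient is everywhere timelike and whose level sets are still Cauchy hypersurfaces. This is the content of the Bernal--S\'anchez smoothing theorem. Given such a smooth Cauchy temporal function $\tau$, set $S:=\tau^{-1}(0)$ and let $X:=-\mathrm{grad}\,\tau/\langle\mathrm{grad}\,\tau,\mathrm{grad}\,\tau\rangle$, which is a future-directed timelike vector field with $X(\tau)=1$. Its flow $\Phi$ is complete because integral curves are inextendible timelike curves and must therefore hit every Cauchy slice. The map $(t,x)\mapsto\Phi_t(x)$ then furnishes a diffeomorphism $\R\times S\to M$ with respect to which the metric decomposes orthogonally as $-\beta\,dt^2+g_t$, where $\beta=1/|d\tau|^2$ and $g_t$ is the induced Riemannian metric on $\{t\}\times S$. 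The hard step throughout is the smoothing: naive convolution destroys the Cauchy property of level sets, so one must smooth while preserving the timelike character of $d\tau$ and the fact that each level set meets every inextendible causal curve exactly once. Since this result is classical and lengthy, I expect the authors to cite Geroch and Bernal--S\'anchez rather than reproduce the full argument.
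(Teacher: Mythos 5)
Your proposal is correct and follows essentially the same route as the paper: the implications $(3)\Rightarrow(2)$ and $(2)\Rightarrow(1)$ are treated as standard, while the deep implication $(1)\Rightarrow(3)$ is delegated to the Bernal--S\'anchez smoothing theorem building on Geroch's construction. The paper is in fact even terser, simply citing O'Neill for $(2)\Rightarrow(1)$ and Bernal--S\'anchez (together with Geroch) for $(1)\Rightarrow(3)$ without reproducing any of the sketches you give, so your outline fills in accurate background but does not diverge in strategy.
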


\begin{proof}
That (1) implies (3) has been shown by Bernal and S\'anchez in 
\cite[Thm.~1.1]{BS} using work of Geroch \cite[Thm.~11]{Geroch}.
See also \cite[Prop.~6.6.8]{EH} and \cite[p.~209]{Wa1} for earlier mentionings
of this fact.
That (3) implies (2) is trivial and that (2) implies (1) is well-known,
see e.~g.\ \cite[Cor.~39, p.~422]{ONeill}.
\end{proof}

\Examples{\label{ex:globhyp}
Minkowski space is globally hyperbolic.
Every spacelike hyperplane is a Cauchy hypersurface.
One can write Minkowski space as $\R\times\R^{n-1}$ with the metric 
$-\dt^2+g_t$ where $g_t$ is the Euclidean metric on $\R^{n-1}$ and does not
depend on $t$. 

Let $(S,g_0)$ be a connected Riemannian manifold and $I\subset\R$ an interval.
The manifold $M=I\times S$ with the metric $g=-\dt^2 + g_0$ is
globally hyperbolic if and only if $(S,g_0)$ is complete.
This applies in particular if $S$ is compact.

More generally, if $f:I\to\R$ is a smooth positive function we may equip
$M=I\times S$ with the metric $g=-\dt^2 + f(t)^2\cdot g_0$.
Again, $(M,g)$ is globally hyperbolic if and only if $(S,g_0)$ is complete,
see Lemma~\ref{lem:globhypcyl}.
{\em Robertson-Walker spacetimes} and, in particular, {\em Friedmann
  cosmological models}, are of this type. 
They are used to discuss big bang, expansion of the universe, and cosmological
redshift, compare \cite[Ch.~5 and 6]{Wa1} or \cite[Ch.~12]{ONeill}.
Another example of this type is {\em deSitter spacetime}, where $I=\R$,
$S=S^{n-1}$, $g_0$ is the canonical metric of $S^{n-1}$ of constant sectional
curvature $1$, and $f(t)=\cosh(t)$.
{\em Anti-deSitter spacetime} which we will discuss in more detail in
Section~\ref{seq:nonglobhyp} is not globally hyperbolic.

The interior and exterior {\em Schwarzschild spacetimes} are globally
hyperbolic.
They model the universe in the neighborhood of a massive static rotationally
symmetric body such as a black hole.
They are used to investigate perihelion advance of Mercury, the bending of
light near the sun and other astronomical phenomena, see \cite[Ch.~6]{Wa1}
and \cite[Ch.~13]{ONeill}.
}

\begin{cor}\label{cexisttimefctn}
On every globally hyperbolic Lorentzian manifold $M$ there exists a smooth
function $h:M\to\R$ whose gradient is past directed timelike at every point
and all of whose level-sets are spacelike Cauchy hypersurfaces.
\end{cor}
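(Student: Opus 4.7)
The plan is to read the function $h$ directly off the splitting provided by Theorem \ref{globhyp}. Concretely, apply part (3) of that theorem to obtain an isometry $M \cong \R\times S$ with metric $-\beta\, dt^2 + g_t$ such that each slice $\{t\}\times S$ is a smooth spacelike Cauchy hypersurface. Define $h : M \to \R$ by $h(t,x) := t$. Smoothness is immediate, and the level sets $h^{-1}(c) = \{c\}\times S$ are exactly the spacelike Cauchy hypersurfaces furnished by the theorem, so the level-set condition follows for free.

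The only genuine computation is to check that $\nabla h$ is past-directed timelike. Working in the coordinates of the splitting, write $\nabla h = a\,\partial_t + Y$ with $Y$ tangent to $S$. The defining relation $g(\nabla h, \cdot\,) = dh = dt$ together with $g(\partial_t,\partial_t) = -\beta$ and $g(\partial_t, Y) = 0$ forces $-a\beta = 1$ and $g_t(Y,X)=0$ for every $X$ tangent to $S$, hence $Y=0$ and
\[
\nabla h = -\frac{1}{\beta}\,\partial_t.
\]
Since $\beta>0$ and $\partial_t$ is timelike (as $g(\partial_t,\partial_t)=-\beta<0$), the vector $\nabla h$ is timelike at every point. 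Choosing the timeorientation of the splitting so that $\partial_t$ is future-directed, which is the standard convention implicit in Theorem \ref{globhyp}(3), the factor $-1/\beta$ flips the direction and makes $\nabla h$ past-directed.

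There is essentially no obstacle here: the whole content is packaged into Theorem \ref{globhyp}(3), whose proof (by Bernal--S\'anchez, refining work of Geroch) is the nontrivial input. The only thing to watch is the sign/orientation convention; if one prefers to fix the timeorientation abstractly rather than by fiat, one can simply replace $h$ by $-h$ in case $\partial_t$ turns out to be past-directed. Thus the corollary is an immediate consequence of the smooth orthogonal splitting theorem for globally hyperbolic manifolds.
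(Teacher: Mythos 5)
Your proof is correct and takes essentially the same approach as the paper: define $h$ as the $t$-coordinate via the splitting isometry of Theorem~\ref{globhyp}(3). The paper's proof is just that one sentence and omits the gradient verification entirely, whereas you carry it out; your computation $\grad h = -\beta^{-1}\partial_t$ and the remark that one may replace $h$ by $-h$ if the orientation comes out wrong are both correct and supply the detail the paper leaves implicit.
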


\begin{proof}
Define $h$ to be the composition $t\circ\Phi$ where $\Phi:M\rightarrow
\R\times S$ is the isometry given in Theorem~\ref{globhyp} and $t:\R\times S
\to \R$ is the projection onto the first factor.
\end{proof}

Such a function $h$ on a globally hyperbolic Lorentzian manifold will be
referred to as 
a \defem{Cauchy time-function}\indexn{Cauchy time-function>defemidx}.  
Note that a Cauchy time-function is strictly monotonically increasing along 
any future directed causal curve.

We quote an enhanced form of Theorem~\ref{globhyp}, due to A.~Bernal and
M.~S\'anchez (see \cite[Theorem~1.2]{BS2}), which will be needed in
Chapter~\ref{chapglobaltheorie}.

\begin{thm}\label{globhyp2}
Let $M$ be a globally hyperbolic manifold and $S$ be a spacelike smooth Cauchy
hypersurface in $M$.
Then there exists a Cauchy time-function $h:M\rightarrow\R$ such that
$S=h^{-1}(\{0\})$.
\hfill$\Box$
\end{thm}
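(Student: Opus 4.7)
The plan is to reduce to the splitting provided by Theorem~\ref{globhyp} and adjust the time coordinate so that $S$ becomes the zero level set, using the argument of Bernal--S\'anchez \cite{BS2}.

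First, I would apply Theorem~\ref{globhyp} to represent $M$ isometrically as $\R\times\Sigma$ equipped with a metric of the form $-\beta\, dt^2+g_t$, where each $\{t\}\times\Sigma$ is a smooth spacelike Cauchy hypersurface and $\partial_t$ is future-directed timelike. Since $S$ is a Cauchy hypersurface, the inextendible future-directed timelike curve $t\mapsto (t,x_0)$ meets $S$ in exactly one point for every $x_0\in\Sigma$. Denoting that unique value of $t$ by $\phi(x_0)$, one has $S=\{(\phi(x),x)\mid x\in\Sigma\}$. The transversality of $S$ to the timelike field $\partial_t$ (which follows from the spacelikeness of $S$) together with the implicit function theorem shows that $\phi\colon\Sigma\to\R$ is smooth.

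Next, consider the naive candidate $h_0(t,x):=t-\phi(x)$. One has $h_0^{-1}(\{0\})=S$, and a direct computation in the metric $g$ gives $g^{-1}(dh_0,dh_0)=-\beta^{-1}+|d\phi|^2_{g_t^{-1}}$. The spacelikeness of $S$ translates exactly into the inequality $|d\phi|^2_{g_t^{-1}}<\beta^{-1}$, so $\nabla h_0$ is timelike everywhere, and past-directedness follows from $\partial_t h_0=1>0$ and $\partial_t$ being future-directed. Hence $h_0$ is a smooth temporal function vanishing exactly on $S$.

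The main obstacle is then to upgrade $h_0$ (or replace it) by a function $h$ whose level sets are not merely smooth spacelike acausal hypersurfaces but are Cauchy hypersurfaces; equivalently, $h$ must tend to $\pm\infty$ along every inextendible future-directed causal curve. This need not hold for $h_0$ when $\phi$ is unbounded, since along an inextendible curve $\gamma$ one only controls $t(\gamma(s))\to\pm\infty$. The remedy, as in \cite[\S 4]{BS2}, is to combine $h_0$ with the projection $t$ (which is already a Cauchy time-function by Theorem~\ref{globhyp}) via a smooth interpolation: near $S$ one keeps $h_0$ so that the zero level set and its spacelike character are preserved, while far from $S$ one deforms toward a function cofinal with $t$ at $\pm\infty$. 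The essential analytic tool is that a positive convex combination of two temporal functions is again temporal (since past-directed timelike vectors form a convex cone), so partitions of unity in $h_0$ can be used to glue without destroying the timelike property of the gradient.

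Finally, having constructed $h$, I would verify the three required properties. Smoothness and a past-directed timelike gradient come from the construction; $S=h^{-1}(\{0\})$ is preserved because the interpolation was arranged to coincide with $h_0$ on a neighbourhood of $S$; and each level set $h^{-1}(\{c\})$ is a smooth spacelike hypersurface because $\nabla h$ is timelike, and is a Cauchy hypersurface because along any inextendible causal curve $h$ takes every real value exactly once, by the $h\to\pm\infty$ property built into the construction. The composition with the splitting of Theorem~\ref{globhyp} applied to any one of these level sets then realises $h$ as a Cauchy time-function in the sense defined after Corollary~\ref{cexisttimefctn}.
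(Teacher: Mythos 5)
The paper does not prove this result; it quotes it directly from Bernal--S\'anchez \cite[Theorem~1.2]{BS2} and marks the statement with $\Box$ to indicate the proof is external. Your sketch is an honest attempt to reconstruct that proof, and the overall strategy (split $M$ via Theorem~\ref{globhyp}, write $S$ as a graph over the factor $S$, build a local temporal function with zero set $S$, and deform it into a Cauchy time-function) is indeed the one used in the cited reference. But there is a genuine error in the middle of your argument, and the gluing step that is supposed to repair things is left too vague to carry the load.

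The error: you write $h_0(t,x)=t-\phi(x)$, compute $g^{-1}(dh_0,dh_0)=-\beta^{-1}+|d\phi|^2_{g_t^{-1}}$, and then claim that spacelikeness of $S$ forces this to be negative ``everywhere.'' Spacelikeness of $S$ gives you only $|d\phi(x)|^2_{g_{\phi(x)}^{-1}}<\beta(\phi(x),x)^{-1}$, i.e.\ the inequality with $\beta$ and $g_t$ evaluated \emph{on $S$} (at $t=\phi(x)$). At a general point $(t,x)$ with $t\neq\phi(x)$, both $\beta$ and $g_t$ change, and there is no reason why $|d\phi(x)|^2_{g_t(x)^{-1}}<\beta(t,x)^{-1}$ should persist. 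For instance, if $g_t=f(t)^2g_0$ with $f\to 0$ as $t\to\infty$ and $\phi$ is nonconstant, then $|d\phi|^2_{g_t^{-1}}\to\infty$ and $\nabla h_0$ becomes spacelike far from $S$. Thus $h_0$ is temporal only on a neighborhood of $S$, not globally. This changes the shape of the problem: the gluing is not merely needed to fix the ``Cauchy at infinity'' property, it is also needed to fix the timelike-gradient property away from $S$, and so the transition must be forced to happen inside the (a priori uncontrolled) region where $\nabla h_0$ is still timelike. Moreover, your remark that ``a positive convex combination of two temporal functions is again temporal'' applies cleanly only to combinations with \emph{constant} coefficients; once the coefficient $\rho$ varies, the gradient of $(1-\rho)h_0+\rho h_1$ acquires the extra term $(h_1-h_0)\nabla\rho$, and ensuring that this term does not destroy timelikeness is precisely the nontrivial analytic content of the Bernal--S\'anchez construction. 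As written, your sketch asserts the conclusion of that step rather than proving it, and it starts from a false premise about $h_0$.
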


Any given smooth spacelike Cauchy hypersurface in a (necessarily globally
hyperbolic) Lorentzian manifold is therefore the leaf of a foliation by smooth
spacelike Cauchy hypersurfaces. 

Recall that the \defem{length} \indexn{length of a curve>defemidx}
 $L[c]$ \indexs{L*@$L[c]$, length of curve $c$}
of a piecewise $C^1$-curve
$c:[a,b]\to M$ on a Lorentzian manifold $(M,g)$ is defined by 
\[L[c]:=\int_a^b\sqrt{|g(\dot{c}(t),\dot{c}(t))|}dt.\]

\Definition{\label{def:timesep}
The \defidx{time-separation} on a Lorentzian manifold $(M,g)$ is the
  function $\tau:M\times M\to \R\cup\{\infty\}$ defined by 
\[\tau(p,q):=\left\{\begin{array}{cl}\sup\{L[c]\,|\,c\textrm{\small{
  future directed causal curve from }}p\textrm{\small{ to
  }}q,&\textrm{\small{if }}p<q\\ 
0,&\textrm{\small{otherwise},}\end{array}\right.\]
for all $p$, $q$ in $M$.
}
\indexs{t*@$\tau(p,q)$, time-separation of points $p,q$}

The properties of $\tau$ which will be needed later are the following:

\begin{prop}\label{prop:timesep}
Let $M$ be a timeoriented Lorentzian manifold.
Let $p$, $q$, and $r\in M$.
Then
\ben
\item 
$\tau(p,q)>0$ if and only if $q\in I_+^M(p)$. 
\item 
The function $\tau$ is lower semi-continuous on $M\times M$.
If $M$ is convex or globally hyperbolic, then $\tau$ is finite and continuous. 
\item 
The function $\tau$ satisfies the \emph{inverse triangle inequality}:
If $p\leq q\leq r$, then
\begin{equation}\label{idu}
\tau(p,r)\geq\tau(p,q)+\tau(q,r).
\end{equation}
\een
\end{prop}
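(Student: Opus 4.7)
My plan is to handle the three assertions in order, deferring the most technical step to the end. For \textbf{(1)}, the ``if'' direction is essentially the definition: if $q\in I_+^M(p)$, any future-directed timelike curve $c$ from $p$ to $q$ satisfies $L[c]=\int\sqrt{|g(\dot c,\dot c)|}\,dt > 0$, so $\tau(p,q) > 0$. Conversely, $\tau(p,q) > 0$ forces the existence of a future-directed causal $c$ from $p$ to $q$ with $L[c] > 0$; the nonnegative integrand must then be positive on a set of positive measure, so $c$ contains a point where $\dot c$ is strictly timelike. Covering $c$ by finitely many convex neighbourhoods and using that inside a convex domain the chronological future coincides with the interior of the causal future, one can replace the null portions of $c$ by nearby timelike pieces to produce a timelike curve from $p$ to $q$, hence $q\in I_+^M(p)$.

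For \textbf{(3)}, given $\varepsilon > 0$, pick future-directed causal curves $c_1$ from $p$ to $q$ and $c_2$ from $q$ to $r$ with $L[c_i]$ within $\varepsilon/2$ of $\tau(p,q)$ and $\tau(q,r)$ respectively. Their concatenation is a piecewise $C^1$ future-directed causal curve from $p$ to $r$ of length $L[c_1]+L[c_2]$, whence $\tau(p,r)\ge\tau(p,q)+\tau(q,r)-\varepsilon$. Letting $\varepsilon\to 0$ gives (\ref{idu}).

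For the lower semi-continuity in \textbf{(2)}, fix $(p_0,q_0)$ and $\lambda < \tau(p_0,q_0)$; by (1) I select a future-directed \emph{timelike} curve $c:[0,1]\to M$ from $p_0$ to $q_0$ with $L[c] > \lambda$. For $\delta > 0$ small, $L[c|_{[\delta,1-\delta]}] > \lambda$ as well. Set $p_0':=c(\delta)$, $q_0':=c(1-\delta)$; the open sets $I_-^M(p_0')$ and $I_+^M(q_0')$ are neighbourhoods of $p_0$ and $q_0$, and for $(p,q)$ therein I concatenate a timelike curve $p\to p_0'$, the arc $c|_{[\delta,1-\delta]}$, and a timelike curve $q_0'\to q$ to obtain a future-directed causal curve from $p$ to $q$ of length exceeding $\lambda$. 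Continuity in the convex case then follows from the explicit formula $\tau(p,q)=\sqrt{-g_p(\exp_p^{-1}(q),\exp_p^{-1}(q))}$ for $q\in J_+^\Omega(p)$ (and $\tau(p,q)=0$ otherwise), which is a consequence of the reverse Schwarz inequality identifying the unique geodesic from $p$ to $q$ as the causal length-maximiser; the two branches agree on $C_+^\Omega(p)$, and smoothness of $\exp^{-1}$ jointly in both variables yields continuity.

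The main obstacle is finiteness and continuity on globally hyperbolic $M$. Lower semi-continuity is already in hand; the remaining task is upper semi-continuity (continuity then follows, and finiteness is just the case $(p_n,q_n)=(p_0,q_0)$). Given $(p_n,q_n)\to(p_0,q_0)$, I pick near-maximising causal curves $c_n$ from $p_n$ to $q_n$. Using global hyperbolicity one confines the $c_n$ eventually to a fixed compact causal diamond, and the classical limit curve lemma (based on strong causality together with compactness of causal diamonds) extracts a subsequential $C^0$-limit curve $c_\infty$, which is a future-directed causal curve from $p_0$ to $q_0$. The final ingredient is upper semi-continuity of Lorentzian arc length under $C^0$ limits of causal curves, which gives $\limsup L[c_n]\le L[c_\infty]\le\tau(p_0,q_0)$. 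This limit-curve together with the upper-semi-continuity of length is the technical heart of the argument, since it requires both global hyperbolicity (to control where the curves live) and a careful reparametrisation to pass to the limit.
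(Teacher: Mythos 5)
The paper itself gives no proof of this proposition: it simply cites Lemmas 16, 17, and 21 of Chapter~14 in O'Neill. What you have written is essentially a sketch of the standard argument that one finds there, so there is no paper-side proof to compare against; instead I will comment on the internal soundness of your argument.

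For assertion~(1), observe that the paper's definition makes the ``only if'' direction immediate: $\tau(p,q)$ is \emph{declared} to be $0$ unless $p<q$, i.e.\ unless $q\in I_+^M(p)$, so $\tau(p,q)>0$ forces $q\in I_+^M(p)$ with no further work. The push-up argument you give (``a causal curve of positive length has a timelike segment, hence the endpoints are chronologically related'') is a correct and genuinely nontrivial Lorentzian fact, but here it proves a stronger statement than is needed; it would be essential if $\tau$ were defined as the supremum over \emph{all} causal curves regardless of the chronological relation, but with this definition it is redundant.

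The one genuine gap is in your lower semi-continuity argument. You write that ``by (1) I select a future-directed \emph{timelike} curve $c$ from $p_0$ to $q_0$ with $L[c]>\lambda$.'' Assertion~(1) only furnishes the existence of \emph{some} timelike curve; it does not say that the supremum $\tau(p_0,q_0)$, which is taken over all future-directed \emph{causal} curves, is approached by \emph{timelike} curves. That is a separate fact (the supremum over causal curves equals the supremum over timelike curves whenever $p<q$), and it needs a deformation argument --- roughly, cover a near-maximising causal curve by finitely many convex domains, replace each piece by the radial geodesic (which cannot decrease length in a convex domain), and then deform the resulting broken causal geodesic into a nearby timelike curve of almost the same length. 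This is a standard but non-obvious lemma and should be stated explicitly, because your neighbourhood construction (concatenating timelike pieces $p\to p_0'$, $c|_{[\delta,1-\delta]}$, $q_0'\to q$) really uses that $c$ is timelike near both endpoints in order to make $I_-^M(p_0')$ and $I_+^M(q_0')$ into \emph{open} neighbourhoods of $p_0$ and $q_0$.

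The remaining pieces look right. Your inverse triangle inequality argument is the standard concatenation; for a truly complete write-up you would want to note the degenerate cases ($p=q$ or $q=r$, in which case $\tau=0$ and one side of the concatenation is vacuous, and the case $\tau(p,q)=\infty$ or $\tau(q,r)=\infty$, in which case the inequality holds trivially once one has $\tau(p,r)=\infty$), but these are routine. The convex-domain formula $\tau(p,q)=\sqrt{\Gamma(p,q)}$ for $p<q$, your reverse Cauchy--Schwarz justification, and the agreement of the two branches on the light cone all match the paper's own remark. For the globally hyperbolic case your outline --- confinement to a compact causal diamond, the limit curve lemma, and upper semi-continuity of Lorentzian arc length under $C^0$ convergence of causal curves --- is exactly the standard route; you are right that the last two facts are the technical heart, and citing them without proof is reasonable in a sketch, but a full proof would have to supply them.
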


See e.~g.\ Lemmas~16, 17, and 21 from Chapter~14 in \cite{ONeill} for a proof. 
\hfill$\Box$

Now let $M$ be a Lorentzian manifold.
For a differentiable function $f:M\to \R$, the \defem{gradient} of $f$ is the
vector field \indexn{gradient of a function}
\begin{equation}
\grad f := (df)^\sharp .
\label{defgrad}
\end{equation}
Here $\omega\mapsto \omega^\sharp$ denotes the canonical isomorphism $T^*M \to
TM$ induced by the Lorentzian metric, i.~e., for $\omega\in T_x^*M$ the
vector $\omega^\sharp\in T_xM$ is characterized by the fact that
$\omega(X)=\langle \omega^\sharp,X\rangle$ for all $X\in T_xM$.
\indexs{*@$\sharp$, isomorphism $T^*M\to TM$ induced by Lorentzian metric}
The inverse isomorphism $TM \to T^*M$ is denoted by $X \mapsto X^\flat$.
\indexs{b@$\flat$, isomorphism $TM\to T^*M$ induced by Lorentzian metric}
One easily checks that for differentiable functions $f,g:M\to\R$
\begin{equation}
\grad(fg)=g\grad f + f\grad g.
\label{eq:gradproduct}
\end{equation}
Locally, the gradient of $f$ can be written as
$$
\grad f = \sum_{j=1}^n \epsilon_j\, df(e_j)\, e_j
$$
where $e_1,\ldots,e_n$ is a local Lorentz orthonormal frame of
$TM$, $\varepsilon_j = \la e_j, e_j \ra = \pm 1$.
For a differentiable vector field $X$ on $M$ the \defem{divergence} is the
function \indexn{divergence of a vector field>defemidx}
\indexs{di*@$\protect\div X$, divergence of vector field $X$} 
$$
\div X := \tr(\nabla X) = \sum_{j=1}^n \varepsilon_j \la e_j, \nabla_{e_j}X
\ra
$$
If $X$ is a differentiable vector field and $f$ a differentiable function on
$M$, then one immediately sees that
\begin{equation}
\div(fX) = f\div X + \langle \grad f,X\rangle.
\label{eq:divproduct}
\end{equation}
There is another way to characterize the divergence.
Let $\dV$ be the volume form induced by the Lorentzian metric.
Inserting the vector field $X$ yields an $(n-1)$-form
$\dV(X,\cdot,\ldots,\cdot)$.
Hence $d(\dV(X,\cdot,\ldots,\cdot))$ is an $n$-form and can therefore be
written as a function times $\dV$, namely
\begin{equation}
d(\dV(X,\cdot,\ldots,\cdot) = \div X \cdot \dV.
\label{eq:defdiv2}
\end{equation}
This shows that the divergence operator depends only mildly on the Lorentzian
metric.
If two Lorentzian (or more generally, semi-Riemannian) metrics have
the same volume form, then they also have the same divergence operator.
This is certainly not true for the gradient.

The divergence is important because of Gauss' divergence theorem:
\indexn{Gauss' divergence theorem>defemidx}

\begin{thm}\label{thm:gauss}
Let $M$ be a Lorentzian manifold and let $D\subset M$ be a domain with
piecewise smooth boundary. 
We assume that the induced metric on the smooth part of the boundary is
non-degenerate, i.~e., it is either Riemannian or Lorentzian on each connected
component.
Let $\mathfrak{n}$ \indexs{n@$\protect{\mathfrak{n}}$, unit normal
  field} denote the 
exterior normal field along $\partial D$, normalized to
$\la\mathfrak{n},\mathfrak{n}\ra =: \epsilon_\mathfrak{n} = \pm 1$. 

Then for every smooth vector field $X$ on $M$ such that $\supp(X)\cap
\overline{D}$ is compact we have
$$
\int_D \div(X)\,\dV = \int_{\partial D}\epsilon_\mathfrak{n}\la
X,\mathfrak{n}\ra\,\dA . 
$$
\hfill$\Box$
\end{thm}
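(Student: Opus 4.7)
The plan is to reduce the statement to Stokes' theorem applied to the $(n-1)$-form $\omega := \dV(X,\cdot,\ldots,\cdot) = \iota_X \dV$. By equation (\ref{eq:defdiv2}), which is essentially the second characterization of the divergence given just before the theorem, we already have $d\omega = \div(X)\cdot\dV$. Since $\supp(X)\cap\overline{D}$ is compact, Stokes' theorem applies on the piecewise smooth manifold-with-boundary $D$ (after approximating the corners of $\partial D$ in the usual way, or invoking the version of Stokes' theorem for piecewise smooth boundaries) and gives
\[
\int_D \div(X)\,\dV \;=\; \int_D d\omega \;=\; \int_{\partial D} \iota^*\omega,
\]
where $\iota\colon \partial D\hookrightarrow M$ is the inclusion. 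The whole content is then to identify the pullback $\iota^*\omega$ with $\epsilon_{\mathfrak{n}}\la X,\mathfrak{n}\ra\,\dA$ on the smooth part of $\partial D$.

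For this pointwise identification, I fix $p$ on the smooth part of $\partial D$ and a Lorentz orthonormal frame $\mathfrak{n},e_1,\ldots,e_{n-1}$ of $T_pM$ with the $e_i$ tangent to $\partial D$ and chosen so that $e_1,\ldots,e_{n-1}$ is positively oriented for the induced orientation on $\partial D$ (the one that, combined with the outward normal $\mathfrak{n}$, yields the orientation of $M$). Since $\la\mathfrak{n},\mathfrak{n}\ra = \epsilon_{\mathfrak{n}}$ and $\la\mathfrak{n},e_i\ra = 0$, one may decompose
\[
X \;=\; \epsilon_{\mathfrak{n}}\la X,\mathfrak{n}\ra\,\mathfrak{n} \;+\; X^{\mathrm{tan}},
\qquad X^{\mathrm{tan}}\in T_p(\partial D).
\]
Plugging into $\omega(e_1,\ldots,e_{n-1})=\dV(X,e_1,\ldots,e_{n-1})$ and using multilinearity, the tangential piece contributes zero because $X^{\mathrm{tan}}$ lies in the span of $e_1,\ldots,e_{n-1}$. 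With the orientation convention fixed above, $\dV(\mathfrak{n},e_1,\ldots,e_{n-1})=1$ and $\dA(e_1,\ldots,e_{n-1})=1$, whence
\[
\iota^*\omega \;=\; \epsilon_{\mathfrak{n}}\la X,\mathfrak{n}\ra\,\dA
\]
at $p$, and by continuity on the whole smooth part of $\partial D$. Integrating yields the claimed formula.

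The main obstacle, and the only point deserving care, is the last identification together with its sign. The non-degeneracy hypothesis on the induced metric on $\partial D$ is exactly what lets us pick a unit normal $\mathfrak{n}$ with $\la\mathfrak{n},\mathfrak{n}\ra=\pm 1$ and simultaneously get a well-defined induced volume element $\dA$; without it, neither side of the boundary formula would even make sense. The factor $\epsilon_{\mathfrak{n}}$ arises precisely because, in the Lorentzian setting, the outward normal may be timelike (so $\epsilon_{\mathfrak{n}}=-1$), in contrast to the Riemannian case; checking both cases amounts to the same linear-algebra computation above. The compactness of $\supp(X)\cap\overline{D}$ serves only to make all integrals finite and to justify Stokes' theorem without boundary-at-infinity issues.
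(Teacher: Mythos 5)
The paper states Theorem~\ref{thm:gauss} with a $\Box$ and no proof---it is treated as a standard fact---so there is no argument in the text to compare against. Your proof is correct and is the standard reduction: set $\omega := \iota_X\,\dV$, use the characterization (\ref{eq:defdiv2}) for $d\omega = \div(X)\,\dV$, apply Stokes' theorem, and identify the pullback pointwise on the smooth part of $\partial D$. The decomposition $X = \epsilon_{\mathfrak{n}}\la X,\mathfrak{n}\ra\,\mathfrak{n} + X^{\mathrm{tan}}$ is right because $\epsilon_{\mathfrak{n}}^{-1} = \epsilon_{\mathfrak{n}}$, and with the boundary orientation induced by placing the outward normal first, $\dV(\mathfrak{n},e_1,\ldots,e_{n-1}) = 1 = \dA(e_1,\ldots,e_{n-1})$ for any Lorentz orthonormal frame, giving $\iota^*\omega = \epsilon_{\mathfrak{n}}\la X,\mathfrak{n}\ra\,\dA$. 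Your remarks on the role of the non-degeneracy hypothesis, the sign $\epsilon_{\mathfrak{n}}$, and the compactness of $\supp(X)\cap\overline{D}$ are all accurate.

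One point you leave implicit: the form-version of Stokes' theorem requires an orientation, whereas the paper introduces $\dV$ as a density and makes no orientability assumption in the statement. This is harmless---either localize via a partition of unity to orientable charts, or invoke the density (twisted-form) version of Stokes---and indeed the paper itself freely switches to the volume-form viewpoint when deriving (\ref{eq:defdiv2}), so your usage is consistent with the text's conventions.
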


Let $e_1,\ldots,e_n$ be a Lorentz orthonormal basis of $T_xM$.
Then $(\xi^1,\ldots,\xi^n) \mapsto \exp_x(\sum_j\xi^j e_j)$ is a local
diffeomorphism of a neighborhood of $0$ in $\R^n$ onto a neighborhood of $x$
in $M$.
This defines coordinates $\xi^1,\ldots,\xi^n$ on any open neighborhood of $x$
which is geodesically starshaped with respect to $x$.
Such coordinates are called \defidx{normal coordinates} about the point $x$.

We express the vector $X$ in normal coordinates about $x$ and write $X =
\sum_j \eta^j \frac{\partial}{\partial \xi^j}$.
From (\ref{eq:defdiv2}) we conclude, using $\dV=\mu_x\cdot d\xi^1 \wedge
\ldots \wedge d\xi^n$
\begin{eqnarray*}
\div(\mu_x^{-1}X)\cdot\dV
&=&
d(\dV(\mu_x^{-1}X,\cdot,\ldots,\cdot)\\
&=&
d\left(\sum_j (-1)^{j-1}\, \eta^j\, d\xi^1\wedge\ldots\wedge
\widehat{d\xi^j}\wedge\ldots\wedge d\xi^n\right)\\
&=&
\sum_j (-1)^{j-1}\, d\eta^j\wedge d\xi^1\wedge\ldots\wedge
\widehat{d\xi^j}\wedge\ldots\wedge d\xi^n\\
&=&
\sum_j  \frac{\partial\eta^j}{\partial\xi^j}\, d\xi^1\wedge\ldots\wedge
d\xi^n\\
&=&
\sum_j  \frac{\partial\eta^j}{\partial\xi^j}\,\mu_x^{-1}\,\dV.
\end{eqnarray*}
Thus
\begin{equation}
\mu_x\,\div(\mu_x^{-1}X) = 
\sum_j  \frac{\partial\eta^j}{\partial\xi^j}.
\label{eq:defdiv3}
\end{equation}
For a $C^2$-function $f$ the \defem{Hessian} at $x$ is the symmetric bilinear
form
\indexn{Hessian of a function>defemidx}
\indexs{H*@$\protect\mathrm{Hess}(f)|_x$, Hessian of function $f$ at point $x$}
$$
\mathrm{Hess}(f)|_x:T_xM\times T_xM \to \R,\quad
\mathrm{Hess}(f)|_x(X,Y) := \la \nabla_X\grad f, Y\ra.
$$
The \defidx{d'Alembert operator} is defined by
\indexs{*@$\protect\Box$, d'Alembert operator}
$$
\Box f := -\tr(\mathrm{Hess}(f)) = -\div\grad f.
$$
If $f:M\to \R$ and $F:\R\to\R$ are $C^2$ a straightforward computation yields
\begin{equation}
\Box(F\circ f) = - (F''\circ f)\la df,df \ra + (F'\circ f)\Box f .
\label{eq:BoxFf}
\end{equation}

\begin{lemma}\label{taylormu}
Let $\Omega$ be a domain in $M$, geodesically starshaped with respect to
$x\in\Omega$.
Then the function $\mu_x$ defined in (\ref{mudef}) satisfies
$$
\mu_x(x)=1,\quad
d\mu_x|_x = 0, \quad
\mathrm{Hess}(\mu_x)|_x = -\frac13 \mathrm{ric}_x, \quad
(\Box\mu_x)(x)=\frac{1}{3}\mathrm{scal}(x),
$$
where $ \mathrm{ric}_x$ denotes the Ricci curvature considered as a bilinear
form on $T_x\Omega$ and $\mathrm{scal}$ is the scalar curvature.
\end{lemma}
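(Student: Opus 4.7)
The plan is to work in normal coordinates $\xi^1,\ldots,\xi^n$ about $x$, in which $\mu_x=\sqrt{|\det(g_{ij})|}$ (as noted right after the definition of $\mu_x$). Everything then reduces to the classical Taylor expansion of the metric coefficients $g_{ij}(\xi)$ in normal coordinates, evaluated up to second order.

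First I would establish the expansion
\[
g_{ij}(\xi)=\eta_{ij}-\tfrac{1}{3}R_{ikj\ell}(x)\,\xi^{k}\xi^{\ell}+O(|\xi|^{3}),
\]
where $\eta_{ij}=\langle e_i,e_j\rangle=\varepsilon_i\delta_{ij}$ is the Lorentz orthonormal frame at $x$ and $R_{ikj\ell}$ denotes the Riemann tensor at $x$. This is the standard Lorentzian analogue of the Riemannian formula and is derived by applying a Jacobi field argument to the geodesic variation $s\mapsto \exp_x(s(v+tw))$: the variational vector field $J(s)$ satisfies $J''+R(\dot\gamma,J)\dot\gamma=0$ with $J(0)=0$, $J'(0)=w$, so $J(s)=sw-\tfrac{s^{3}}{6}R(v,w)v+O(s^{4})$, and then $g_{ij}(\xi)=\langle J_i(1),J_j(1)\rangle$ gives the claimed expansion. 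From this one immediately has $g_{ij}(x)=\eta_{ij}$, $\partial_k g_{ij}(x)=0$, and $\partial_k\partial_\ell g_{ij}(x)=-\tfrac{1}{3}(R_{ikj\ell}+R_{i\ell jk})(x)$.

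Next I would differentiate the determinant. Writing $g(\xi):=\det(g_{ij}(\xi))=\eta\cdot\det(\delta_i^{\,j}-\tfrac{1}{3}\eta^{jm}R_{imk\ell}\xi^{k}\xi^{\ell}+O(|\xi|^{3}))$ with $\eta=\det(\eta_{ij})=\pm1$, the formula $\det(I+A)=1+\mathrm{tr}(A)+O(\|A\|^{2})$ and the definition $\mathrm{ric}_{k\ell}=\eta^{ij}R_{ikj\ell}$ give
\[
|\det(g_{ij}(\xi))|=1-\tfrac{1}{3}\mathrm{ric}_{k\ell}(x)\,\xi^{k}\xi^{\ell}+O(|\xi|^{3}),
\]
so that
\[
\mu_x(\xi)=1-\tfrac{1}{6}\mathrm{ric}_{k\ell}(x)\,\xi^{k}\xi^{\ell}+O(|\xi|^{3}).
\]
Reading off Taylor coefficients gives $\mu_x(x)=1$ and $d\mu_x|_x=0$ directly, and the quadratic term yields
\[
\mathrm{Hess}(\mu_x)|_x\bigl(\tfrac{\partial}{\partial\xi^k},\tfrac{\partial}{\partial\xi^\ell}\bigr)=-\tfrac{1}{3}\mathrm{ric}_{k\ell}(x).
\]
Here I use that at $x$ the Christoffel symbols in normal coordinates vanish, so the covariant Hessian coincides with the ordinary Hessian of the coordinate expression.

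Finally, since $\Box=-\mathrm{tr}_{g}\mathrm{Hess}$, I would compute
\[
(\Box\mu_x)(x)=-\eta^{k\ell}\mathrm{Hess}(\mu_x)|_x\bigl(\tfrac{\partial}{\partial\xi^k},\tfrac{\partial}{\partial\xi^\ell}\bigr)=\tfrac{1}{3}\eta^{k\ell}\mathrm{ric}_{k\ell}(x)=\tfrac{1}{3}\mathrm{scal}(x),
\]
which finishes the proof.

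The only non-bookkeeping ingredient is the Jacobi field expansion of $g_{ij}$ in normal coordinates; this is where the curvature enters. If the authors prefer to avoid reproving it, they can cite a standard reference (e.g.\ O'Neill), and the remainder reduces to the determinant expansion and tracing, both of which are routine.
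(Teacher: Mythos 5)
Your proof is correct, and it uses the same essential ingredient as the paper: the third-order Taylor expansion of Jacobi fields along radial geodesics. The only real difference is the bookkeeping. The paper computes $\mu_x(\exp_x(tX))\,e_1\wedge\cdots\wedge e_n=\frac{1}{t}J_1(t)\wedge\cdots\wedge\frac{1}{t}J_n(t)$ directly — since $\mu_x=\det(d\exp_x)\circ\exp_x^{-1}$ — so it never takes a square root: the wedge product is linearized and the trace of the cubic Jacobi coefficient produces $-\frac{t^2}{6}\ric(X,X)$ in one step. You instead route through the classical normal-coordinate expansion $g_{ij}(\xi)=\eta_{ij}-\frac{1}{3}R_{ikj\ell}\xi^k\xi^\ell+O(|\xi|^3)$ (itself proved by the same Jacobi argument, since $g_{ij}(\xi)=\langle J_i(1),J_j(1)\rangle$), then expand the determinant and take the square root, picking up the extra factor of $\frac12$ to land on the same $-\frac{1}{6}\ric$ coefficient. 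Your version has the small advantage that the metric expansion is a widely-cited standalone fact which could simply be quoted; the paper's version is slightly more self-contained and avoids manipulating $\sqrt{|\det g|}$. Both are valid, and the remaining steps (vanishing of Christoffel symbols at the center so that the covariant Hessian agrees with the coordinate Hessian, then tracing to get $\Box$) are handled identically.
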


\begin{proof}
Let $X\in T_x\Omega$ be fixed.
Let $e_1,\ldots,e_n$ be a Lorentz orthonormal basis of $T_x\Omega$. 
Denote by $J_1,\ldots,J_n$ the Jacobi fields along $c(t) = \exp_x(tX)$
satisfying $J_j(0)=0$ and $\frac{\nabla J_j}{dt}(0)=e_j$ for every $1\leq
j\leq n$.
The differential of $\exp_x$ at $tX$ is, for every $t$ for which it is
defined, given by
\[d_{tX}\exp_x(e_j)=\frac{1}{t}J_j(t),\]
$j=1,\ldots,n$.
From the definition of $\mu_x$ we have
\begin{eqnarray*}
{\mu_x}(\exp_x(tX)) e_1\wedge\ldots\wedge e_n 
&=&
\det(d_{tX}\exp_x) e_1\wedge\ldots\wedge e_n\\
&=&
(d_{tX}\exp_x(e_1))\wedge\ldots\wedge (d_{tX}\exp_x(e_n))\\
&=&
\frac{1}{t}J_1(t)\wedge\ldots\wedge \frac{1}{t}J_n(t) .
\end{eqnarray*}
Jacobi fields $J$ along the geodesic $c(t)=\exp_x(tX)$ satisfy the Jacobi field
equation $\frac{\nabla^2}{dt^2}J(t)=-R(J(t),\dot{c}(t))\dot{c}(t)$, where $R$
denotes the curvature tensor of the Levi-Civita connection $\nabla$. 
Differentiating this once more yields
$\frac{\nabla^3}{dt^3}J(t)=-\frac{\nabla R}{dt}(J(t),\dot{c}(t))\dot{c}(t)
- R(\frac{\nabla}{dt}J(t),\dot{c}(t))\dot{c}(t)$.
For $J=J_j$ and $t=0$ we have $J_j(0)=0$, 
$\frac{\nabla J_j}{dt}(0)=e_j$, $\frac{\nabla^2 J_j}{dt^2}(0)=-
R(0,\dot{c}(0))\dot{c}(0)=0$, and $\frac{\nabla^3
  J_j}{dt^3}(0)=-R(e_j,X)X$ where $X=\dot{c}(0)$. 
Identifying $J_j(t)$ with its parallel translate to $T_x\Omega$ along $c$
the Taylor expansion of $J_j$ up to order $3$ reads as
\[J_j(t)=te_j-\frac{t^3}{6}R(e_j,X)X+{\mathrm{O}}(t^4).\]
This implies
\be
\frac{1}{t}J_1(t)\wedge\ldots\wedge \frac{1}{t}J_n(t)&=&e_1\wedge\ldots\wedge
e_n\\ 
& &-\frac{t^2}{6}\sum_{j=1}^n e_1\wedge\ldots\wedge
R(e_j,X)X\wedge\ldots\wedge e_n+{\mathrm{O}}(t^3)\\ 
&=&e_1\wedge\ldots\wedge e_n\\
& &-\frac{t^2}{6}\sum_{j=1}^n\varepsilon_j\la R(e_j,X)X,e_j\ra
e_1\wedge\ldots\wedge e_n+{\mathrm{O}}(t^3)\\ 
&=&\Big(1-\frac{t^2}{6}\ric(X,X)+{\mathrm{O}}(t^3)\Big)e_1\wedge\ldots\wedge
e_n. 
\ee
Thus 
$$
{\mu_x}(\exp_x(tX)) = 1-\frac{t^2}{6}\ric(X,X)+{\mathrm{O}}(t^3)
$$
and therefore
\[{\mu_x}(x)=1,\quad d{\mu_x}(X)=0,\quad 
\mathrm{Hess}(\mu_x)(X,X)=-\frac{1}{3}\ric(X,X).\]
Taking a trace yields the result for the d'Alembertian.
\end{proof}

Lemma~\ref{taylormu} and (\ref{eq:BoxFf}) with $f=\mu_x$ and $F(t)=t^{-1/2}$
yield:

\begin{cor}\label{boxmuscal}
Under the assumptions of Lemma~\ref{taylormu} one has
$$
(\Box \mu_x^{-1/2})(x)=-\frac{1}{6}\mathrm{scal}(x).
$$
\hfill$\Box$
\end{cor}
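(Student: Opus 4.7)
The plan is to apply formula (\ref{eq:BoxFf}) with $f = \mu_x$ and $F(t) = t^{-1/2}$, then evaluate at the point $x$ using the data collected in Lemma~\ref{taylormu}. Since the required identities involve only the 0-jet and 2-jet of $\mu_x$ at $x$, no further geometric input is needed beyond what has already been established.

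First I would compute the relevant derivatives of $F$, namely $F'(t) = -\tfrac{1}{2} t^{-3/2}$ and $F''(t) = \tfrac{3}{4} t^{-5/2}$. Plugging into (\ref{eq:BoxFf}) gives, at a general point,
\begin{equation*}
\Box(\mu_x^{-1/2}) = -\tfrac{3}{4}\mu_x^{-5/2}\,\langle d\mu_x, d\mu_x\rangle - \tfrac{1}{2}\mu_x^{-3/2}\,\Box\mu_x.
\end{equation*}

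Then I would specialize to the point $x$. By Lemma~\ref{taylormu} we have $\mu_x(x) = 1$ and $d\mu_x|_x = 0$, so the first term vanishes and the factor $\mu_x(x)^{-3/2}$ in the second term is $1$. The remaining factor is $(\Box \mu_x)(x) = \tfrac{1}{3}\,\mathrm{scal}(x)$, again by Lemma~\ref{taylormu}. Combining these yields
\begin{equation*}
(\Box \mu_x^{-1/2})(x) = -\tfrac{1}{2} \cdot \tfrac{1}{3}\,\mathrm{scal}(x) = -\tfrac{1}{6}\,\mathrm{scal}(x),
\end{equation*}
which is the claimed identity. There is no real obstacle here; the content of the corollary is entirely contained in Lemma~\ref{taylormu}, and (\ref{eq:BoxFf}) serves only to package the information into the desired form.
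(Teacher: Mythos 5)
Your proof is correct and follows exactly the route the paper indicates: apply identity (\ref{eq:BoxFf}) with $f=\mu_x$ and $F(t)=t^{-1/2}$, then substitute the values $\mu_x(x)=1$, $d\mu_x|_x=0$, and $(\Box\mu_x)(x)=\tfrac{1}{3}\mathrm{scal}(x)$ from Lemma~\ref{taylormu}. The paper leaves these computations to the reader, and you have filled them in accurately.
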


Let $\Omega$ be a domain in a Lorentzian manifold $M$, geodesically starshaped
with respect to $x\in\Omega$.
Set 
\indexs{G*@$\Gamma_x=\gamma\circ\exp_x^{-1}$}
\begin{equation}
\Gamma_x:=\gamma\circ\exp_x^{-1}:\Omega\to\R
\label{eq:GammaDef}
\end{equation}
where $\gamma$ is defined as in (\ref{gammadef}) with $V=T_x\Omega$.

\begin{lemma}\label{Gammalemma}
Let $M$ be a timeoriented Lorentzian manifold.
Let the domain $\Omega\subset M$ be geodesically starshaped
with respect to $x\in\Omega$.
Then the following holds on $\Omega$:
\begin{itemize}
\item[(1)]
$\la \grad\Gamma_x,\grad\Gamma_x \ra = -4\Gamma_x$.
\item[(2)]
On $I_+^\Omega(x)$ (or on $I_-^\Omega(x)$)
the gradient $\grad\Gamma_x$ is a past directed (or future directed
respectivel) 
timelike vector field.
\item[(3)]
$\Box\Gamma_x -2n = -\la\grad\Gamma_x,\grad(\log(\mu_x)) \ra$.
\end{itemize}
\end{lemma}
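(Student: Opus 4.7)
The plan is to reduce all three identities to a single geometric object: the \emph{radial vector field} on $\Omega\setminus\{x\}$ defined by
$$
R|_p := d\exp_x\bigr|_{F(p)}\bigl(F(p)\bigr), \qquad F := \exp_x^{-1},
$$
that is, the velocity at time $1$ of the geodesic $t\mapsto\exp_x(tF(p))$. Using the canonical identification $T_{F(p)}(T_x\Omega)\cong T_x\Omega$, the Gauss lemma gives, for every $W\in T_x\Omega$,
$$
\bigl\langle d\exp_x|_{F(p)}(F(p)),\, d\exp_x|_{F(p)}(W)\bigr\rangle_p = \langle F(p),W\rangle_x .
$$

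The key step is to show $\grad\Gamma_x = -2R$. Given $Y\in T_p\Omega$, write $Y = d\exp_x|_{F(p)}(W)$. Since $\gamma(X)=-\langle X,X\rangle_x$ is a quadratic form on $T_x\Omega$, its differential at $F(p)$ in direction $W$ equals $-2\langle F(p),W\rangle_x$, hence
$$
d\Gamma_x|_p(Y) = d\gamma|_{F(p)}(W) = -2\langle F(p),W\rangle_x = -2\langle R|_p, Y\rangle_p
$$
by the Gauss lemma. Taking $W=F(p)$ in the same identity also yields $\langle R,R\rangle = -\gamma(F(p)) = -\Gamma_x$. Statement~(1) then follows at once:
$$
\langle\grad\Gamma_x,\grad\Gamma_x\rangle = 4\langle R,R\rangle = -4\Gamma_x .
$$

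For (2), on $I_+^\Omega(x)$ the vector $F(p)\in I_+(0)\subset T_x\Omega$ is future directed timelike. Along the radial geodesic $t\mapsto\exp_x(tF(p))$ the causal character of the velocity is constant and its time orientation is preserved by continuity, so $R|_p = \dot c(1)$ is future directed timelike, and therefore $\grad\Gamma_x = -2R$ is past directed timelike. The case of $I_-^\Omega(x)$ is symmetric.

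For (3), observe first that $\Box\Gamma_x = -\div\grad\Gamma_x = 2\div R$. In normal coordinates $\xi^1,\dots,\xi^n$ about $x$ one has $R=\sum_j \xi^j\partial_{\xi^j}$. Applying formula~(\ref{eq:defdiv3}) to the vector field $X := \mu_x R$, whose coordinate components are $\eta^j = \mu_x\xi^j$, gives
$$
\mu_x\,\div R \;=\; \mu_x\,\div(\mu_x^{-1}X) \;=\; \sum_j \partial_{\xi^j}(\mu_x\xi^j) \;=\; R(\mu_x) + n\mu_x .
$$
Dividing by $\mu_x$ yields $\div R = n + R(\log\mu_x) = n + \langle\grad\log\mu_x,R\rangle$, and since $R = -\tfrac12\grad\Gamma_x$ we obtain
$$
\Box\Gamma_x - 2n \;=\; 2\,\langle\grad\log\mu_x,R\rangle \;=\; -\langle\grad\log\mu_x,\grad\Gamma_x\rangle ,
$$
which is (3). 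The only delicate point is the first step — matching the differential of $\gamma$ on $T_x\Omega$ with the gradient of $\Gamma_x$ on $\Omega$ via the Gauss lemma — after which (1) is immediate, (2) is a statement about preservation of causal character along geodesics, and (3) reduces to the coordinate formula~(\ref{eq:defdiv3}) applied to the manifestly linear radial field.
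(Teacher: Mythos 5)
Your proof is correct and follows essentially the same route as the paper's: identifying $\grad\Gamma_x=-2R$ via the Gauss lemma (the paper's equation~(\ref{gradGamma})), reading off (1) immediately, and reducing (3) to the coordinate divergence formula~(\ref{eq:defdiv3}) in normal coordinates. The only departures are small: for (2) you argue by preservation of the time orientation of $\dot c$ along a geodesic rather than the paper's observation that $\Gamma_x$ increases along future directed timelike geodesics from $x$, and for (3) you fold the product rule~(\ref{eq:divproduct}) into a single application of~(\ref{eq:defdiv3}) to $\mu_x R$; both variants are valid and arguably slightly cleaner.
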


\begin{proof}
Proof of (1). 
Let $y\in \Omega$ and $Z\in T_y\Omega$. 
The differential of $\gamma$ at a point $p$ is given by $d_p\gamma=-2\la
p,\cdot\ra$.
Hence
\be
d_y\Gamma_x(Z)&=& d_{\exp_x^{-1}(y)}\gamma\circ d_y\exp_x^{-1}(Z)\\
&=&-2\la \exp_x^{-1}(y),d_y\exp_x^{-1}(Z)\ra.
\ee
Applying the Gauss Lemma \cite[p.~127]{ONeill}, we obtain
\[
d_y\Gamma_x(Z)= -2\la d_{\exp_x^{-1}(y)}\exp_x(\exp_x^{-1}(y)),Z\ra.
\]
Thus
\begin{equation}
\label{gradGamma}
\grad_y\Gamma_x=-2d_{\exp_x^{-1}(y)}\exp_x(\exp_x^{-1}(y)).
\end{equation}
It follows again from the Gauss Lemma that
\be
\la\grad_y\Gamma_x,\grad_y\Gamma_x\ra
&=&
4\la d_{\exp_x^{-1}(y)}
\exp_x(\exp_x^{-1}(y)),d_{\exp_x^{-1}(y)}\exp_x(\exp_x^{-1}(y))\ra\\   
&=&
4\la\exp_x^{-1}(y),\exp_x^{-1}(y)\ra\\
&=&
-4\Gamma_x(y).
\ee

Proof of (2).
On $I_+^\Omega(x)$ the function $\Gamma_x$ is positive, hence $\la
\grad\Gamma_x,\grad\Gamma_x\ra = -4\Gamma_x < 0$.
Thus $\grad\Gamma_x$ is timelike.
For a future directed timelike tangent vector $Z\in T_x\Omega$ the curve
$c(t) := \exp_x(tZ)$ is future directed timelike and $\Gamma_x$ increases along
$c$.
Hence $0\leq \tfrac{d}{dt} (\Gamma_x\circ c) = \la \grad\Gamma_x,\dot{c}\ra$.
Thus $\grad\Gamma_x$ is past directed along $c$.
Since every point in $I_+^\Omega(x)$ can be written in the form $\exp_x(Z)$
for a future directed timelike tangent vector $Z$ this proves the assertion
for $I_+^\Omega(x)$.
The argument for $I_-^\Omega(x)$ is analogous.

Proof of (3).
Using (\ref{eq:divproduct}) with $f=\mu_x^{-1}$ and $X=\grad\Gamma_x$ we get
$$
\div(\mu_x^{-1}\,\grad\Gamma_x)
=
\mu_x^{-1}\,\div\grad\Gamma_x + \la \grad(\mu_x^{-1}),\grad\Gamma_x\ra
$$
and therefore
\begin{eqnarray}
\Box\Gamma_x 
&=&
\la \grad(\log(\mu_x^{-1})),\grad\Gamma_x\ra -
\mu_x\,\div(\mu_x^{-1}\,\grad\Gamma_x)\nonumber\\
&=&
-\la \grad(\log(\mu_x)),\grad\Gamma_x\ra -
\mu_x\,\div(\mu_x^{-1}\,\grad\Gamma_x) .\nonumber
\label{eq:BoxGamma}
\end{eqnarray}
It remains to show $\mu_x\,\div(\mu_x^{-1}\,\grad\Gamma_x)=-2n$.
We check this in normal coordinates $\xi^1,\ldots,\xi^n$ about $x$.
By (\ref{gradGamma}) we have $\grad\Gamma_x = -2 \sum_j
\xi^j\frac{\partial}{\partial\xi^j}$ so that (\ref{eq:defdiv3}) implies
$$
\mu_x\,\div(\mu_x^{-1}\,\grad\Gamma_x) = -2 \sum_j
\frac{\partial\xi^j}{\partial\xi^j} = -2n.
$$
\end{proof}


\Remark{
If $\Omega$ is convex and $\tau$ is the time-separation function of $\Omega$,
then one can check that 
$$
\tau(p,q) = 
\left\{
  \begin{array}{cl}
  \sqrt{\Gamma(p,q)}, & \mbox{if $p < q$}\\
  0, & \mbox{otherwise.}
  \end{array}
\right.
$$
}

\section{Riesz distributions on a domain} 

Riesz distributions have been defined on all spaces isometric to Minkowski
space.
They are therefore defined on the tangent spaces at all points of a Lorentzian
manifold.
We now show how to construct Riesz distributions defined in small open subsets
of the Lorentzian manifold itself.
The passage from the tangent space to the manifold will be provided by the
Riemannian exponential map. 

Let $\Omega$ be a domain in a timeoriented $n$-dimensional Lorentzian
manifold, $n\geq 2$. 
Suppose $\Omega$ is geodesically starshaped with respect to some point
$x\in\Omega$.  
In particular, the Riemannian exponential function $\exp_x$ is a
diffeomorphism from $\Omega':=\exp^{-1}(\Omega)\subset T_x\Omega$ to $\Omega$.
Let $\mu_x: \Omega\to\R$ be defined as in (\ref{mudef}).
Put 
$$
R_{\pm}^\Omega(\alpha,x):=\mu_x\, 
\exp_x^*R_{\pm}(\alpha),
$$
that is, for every testfunction $\varphi\in\DD(\Omega,\Co)$,  
$$
R_{\pm}^\Omega(\alpha,x)[\varphi]:=
R_{\pm}(\alpha)[\left(\mu_x\varphi\right)\circ\exp_x].
$$
Note that $\supp(\left(\mu_x\varphi\right)\circ\exp_x)$ is contained in
$\Omega'$.
Extending the function $\left(\mu_x\varphi\right)\circ\exp_x$ by zero we can
regard it as a testfunction on $T_x\Omega$ and thus apply $R_\pm(\alpha)$ to
it.


\Definition{
We call $R_+^\Omega(\alpha,x)$ the \defem{advanced Riesz distribution} and 
$R_-^\Omega(\alpha,x)$ the \defem{retarded Riesz distribution} on $\Omega$ at
$x$ 
for $\alpha\in\Co$. 
}
\indexn{Riesz distributions on a domain>defemidx}
\indexn{advanced Riesz distributions>defemidx}
\indexn{retarded Riesz distributions>defemidx}
\indexs{R*@$R_+^\Omega(\alpha,x)$, advanced Riesz distribution on domain
  $\Omega$ at point $x$}
\indexs{R*@$R_-^\Omega(\alpha,x)$, retarded Riesz distribution on domain
  $\Omega$ at point $x$} 

The relevant properties of the Riesz distributions are collected in the
following proposition.

\begin{prop}\label{OmegaRiesz}
The following holds for all $\alpha\in\Co$ and all $x\in\Omega$:
\begin{enumerate}
\item\label{Rexplic} 
If $\Re(\alpha)>n$, then $R_\pm^\Omega(\alpha,x)$ is the continuous function
\[
R_{\pm}^\Omega(\alpha,x)=
\left\{\begin{array}{cl}C(\alpha,n)\,\Gamma_x^{\frac{\alpha-n}{2}}
    & \textrm{ on }\;J_{\pm}^\Omega(x),\\ 
   0& \textrm{ elsewhere.}\end{array}\right.
\]
\item\label{Ranalyt} 
For every fixed testfunction $\varphi$ the map $\alpha\mapsto
R_{\pm}^\Omega(\alpha,x)[\varphi] $ is holomorphic on $\mathbb{C}$. 
\item\label{gammaR} 
$\Gamma_x \cdot R_{\pm}^\Omega(\alpha,x)=\alpha
(\alpha-n+2)\,R_{\pm}^\Omega(\alpha+2,x)$ 
\item\label{gradR} 
$\grad\left(\Gamma_x\right)\cdot R_{\pm}^\Omega(\alpha,x)
=2\alpha\grad R_{\pm}^\Omega(\alpha+2,x)$
\item\label{boxR} 
If $\alpha\not=0$, then $\Box R_{\pm}^\Omega(\alpha+2,x)
=\Big(\frac{\Box\Gamma_x-2n}{2\alpha}+1\Big) R_{\pm}^\Omega(\alpha,x)$
\item\label{R0} 
$R_{\pm}^\Omega(0,x)=\delta_x$
\item\label{suppR1} 
For every
$\alpha\in\mathbb{C}\setminus\left(\{0,-2,-4,\ldots\}
  \cup\{n-2,n-4,\ldots\}\right)$ 
we have $\supp\left(R_{\pm}^\Omega(\alpha,x)\right)=J_{\pm}^\Omega(x)$ and 
  $\ssupp\left(R_{\pm}^\Omega(\alpha,x)\right)\subset C_{\pm}^\Omega(x).$  
\item\label{suppR2} 
For every $\alpha\in\{0,-2,-4,\ldots\}\cup\{n-2,n-4,\ldots\}$ we have
$\supp\left(R_{\pm}^\Omega(\alpha,x)\right)=\ssupp
\left(R_{\pm}^\Omega(\alpha,x)\right)\subset C_{\pm}^\Omega(x).$  
\item\label{suppR3} 
For $n\geq 3$ and $\alpha=n-2,n-4,\ldots,1$ or $2$ respectively we have
$\supp\left(R_{\pm}^\Omega(\alpha,x)\right)=
\ssupp\left(R_{\pm}^\Omega(\alpha,x)\right)=C_{\pm}^\Omega(x).$ 
\item\label{orderR} 
For $\Re(\alpha)>0$ we have $\ord(R_{\pm}^\Omega(\alpha,x))\leq n+1$.
Moreover, there exists a neighborhood $U$ of $x$ and a constant 
$C>0$ such that
$$
|R^\Omega_\pm(\alpha,x')[\varphi]| \leq 
C\cdot\|\varphi\|_{C^{n+1}(\Omega)}
$$
for all $\varphi\in\DD(\Omega,\Co)$ and all $x'\in U$.
\item\label{Rglattinx}
If $U \subset \Omega$ is an open neighborhood of $x$ such that $\Omega$
is geodesically starshaped with respect to all $x'\in U$ and if
$V\in\DD(U\times\Omega,\Co)$, then the function 
$U \to \Co$, $x' \mapsto R_{\pm}^\Omega(\alpha,x')[y\mapsto V(x',y)]$, 
is smooth.
\item\label{RCkinx}
If $U \subset \Omega$ is an open neighborhood of $x$ such that $\Omega$
is geodesically starshaped with respect to all $x'\in U$, if $\Re(\alpha) > 0$, and if
$V\in\DD^{n+1+k}(U\times\Omega,\Co)$, then the function 
$U \to \Co$, $x' \mapsto R_{\pm}^\Omega(\alpha,x')[y\mapsto V(x',y)]$, 
is $C^k$.
\item\label{Ckanalyt}
For every $\varphi\in\DD^k(\Omega,\Co)$ the map $\alpha\mapsto
R_{\pm}^\Omega(\alpha,x)[\varphi]$ is a holomorphic function on
$\{\alpha\in\Co\;|\; \Re(\alpha)>n-2[\frac{k}{2}]\}$. 
\item\label{Rreell}
If $\alpha\in\R$, then $R_\pm^\Omega(\alpha,x)$ is real, i.~e.,
$R_\pm^\Omega(\alpha,x)[\phi]\in\R$ for all $\phi\in\DD(\Omega,\R)$.
\end{enumerate}
\end{prop}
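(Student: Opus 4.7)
The plan is to transplant each property of the Minkowski Riesz distributions $R_\pm(\alpha)$ on $T_x\Omega$ (Proposition~\ref{MinkRiesz}) to $R_\pm^\Omega(\alpha,x)$ through the defining formula $R_\pm^\Omega(\alpha,x)[\varphi]=R_\pm(\alpha)[(\mu_x\varphi)\circ\exp_x]$. The weight $\mu_x$ is there precisely so that the change of variables $dV(y)=\mu_x(y)\,d(\exp_x^{-1}(y))$ from~(\ref{mudef}) absorbs the Jacobian of $\exp_x$. In a few items I additionally invoke Lemma~\ref{Gammalemma} for $\Gamma_x$ and the base values from Lemma~\ref{taylormu}, in particular $\mu_x(x)=1$.

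First I will handle the transplantation block (\ref{Rexplic}), (\ref{Ranalyt}), (\ref{gammaR}), (\ref{R0}), (\ref{suppR1})--(\ref{suppR3}), (\ref{Ckanalyt}), (\ref{Rreell}). For (\ref{Rexplic}) I unfold the definition for $\Re(\alpha)>n$ and substitute $y=\exp_x(X)$; the two occurrences of $\mu_x$ cancel, leaving the asserted continuous function on $J_\pm^\Omega(x)$. Part~(\ref{Ranalyt}) is immediate since $\alpha\mapsto R_\pm(\alpha)[(\mu_x\varphi)\circ\exp_x]$ is Lemma~\ref{erstvorbereitung}(\ref{d}) applied to one fixed test function. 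Part~(\ref{gammaR}) uses $\Gamma_x\circ\exp_x=\gamma$ to pull $\Gamma_x$ inside the distribution and invoke Proposition~\ref{MinkRiesz}(\ref{a}). Part~(\ref{R0}) combines $R_\pm(0)=\delta_0$ with $\mu_x(x)=1$. Parts~(\ref{suppR1})--(\ref{suppR3}) transfer directly under the diffeomorphism $\exp_x:\Omega'\to\Omega$, since the positive factor $\mu_x$ does not affect supports and $\exp_x$ carries $J_\pm(0)\cap\Omega'$, $I_\pm(0)\cap\Omega'$, $C_\pm(0)\cap\Omega'$ to $J_\pm^\Omega(x)$, $I_\pm^\Omega(x)$, $C_\pm^\Omega(x)$ respectively. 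Parts~(\ref{Ckanalyt}) and~(\ref{Rreell}) follow from Corollary~\ref{Ckholo} and Proposition~\ref{MinkRiesz}(\ref{j}) applied to $(\mu_x\varphi)\circ\exp_x$, which is $C^k$ respectively real whenever $\varphi$ is.

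The substantive calculations are the derivative identities~(\ref{gradR}) and~(\ref{boxR}). By the holomorphic extension argument of Lemma~\ref{erstvorbereitung}(\ref{d}) it suffices to verify them for $\Re(\alpha)>n$ on $I_\pm^\Omega(x)$, where part~(\ref{Rexplic}) gives the explicit smooth formula $R_\pm^\Omega(\alpha,x)=C(\alpha,n)\Gamma_x^{(\alpha-n)/2}$. Differentiating this formula for $\alpha+2$ and using $C(\alpha+2,n)(\alpha+2-n)=C(\alpha,n)/\alpha$ (from Proposition~\ref{MinkRiesz}(\ref{a})) produces~(\ref{gradR}). For~(\ref{boxR}) I apply $\Box$ via the chain rule~(\ref{eq:BoxFf}) with $F(t)=t^{(\alpha+2-n)/2}$, substituting $\langle\grad\Gamma_x,\grad\Gamma_x\rangle=-4\Gamma_x$ from Lemma~\ref{Gammalemma}(1); the algebra collapses to
\[
\Box R_\pm^\Omega(\alpha+2,x) = \Bigl(1+\tfrac{\Box\Gamma_x-2n}{2\alpha}\Bigr) R_\pm^\Omega(\alpha,x).
\]
Uniqueness of holomorphic extension in $\alpha$, together with part~(\ref{Ranalyt}), then propagates both identities to all $\alpha\in\Co$ on all of $\Omega$.

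Finally, for the regularity and smooth-dependence items~(\ref{orderR})--(\ref{RCkinx}): (\ref{orderR}) follows by combining the order bound of Proposition~\ref{MinkRiesz}(\ref{i}) with an estimate $\|(\mu_{x'}\varphi)\circ\exp_{x'}\|_{C^{n+1}}\leq C\|\varphi\|_{C^{n+1}(\Omega)}$, uniform for $x'$ in a relatively compact neighborhood of $x$, obtained from Lemmas~\ref{CkNormvonProdukt} and~\ref{CkNormvonVerkettung}. For~(\ref{Rglattinx}) and~(\ref{RCkinx}) I choose a smooth local frame on $U$ to identify every $T_{x'}\Omega$ with a single $\R^n$, and set
\[
\psi(x',Y):=\mu_{x'}(\exp_{x'}(Y))\,V(x',\exp_{x'}(Y));
\]
then $\psi$ is smooth (respectively $C^{n+1+k}$) on $U\times\R^n$ with $Y$-support in a fixed compact set, and Lemma~\ref{zweivariablen} applied to $T=R_\pm(\alpha)$ (of order at most $n+1$) yields the desired regularity of $x'\mapsto R_\pm^\Omega(\alpha,x')[V(x',\cdot)]$. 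The main obstacle is precisely that the distributions $R_\pm(\alpha)$ live on the varying tangent spaces $T_{x'}\Omega$; fixing a smooth frame trivialization is the technical device that reduces~(\ref{Rglattinx}) and~(\ref{RCkinx}) to a clean application of Lemma~\ref{zweivariablen}.
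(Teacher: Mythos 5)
Your proposal follows the paper's overall strategy closely: every assertion is reduced to the corresponding Minkowski statement of Proposition~\ref{MinkRiesz} via the defining formula $R_\pm^\Omega(\alpha,x)[\phi]=R_\pm(\alpha)[(\mu_x\phi)\circ\exp_x]$ and then propagated in $\alpha$ by holomorphy. The one genuinely different step is~(\ref{boxR}): the paper derives it by chaining~(\ref{gradR}), the product rule~(\ref{eq:divproduct}), Lemma~\ref{Gammalemma}(1), and~(\ref{gammaR}), whereas you compute it directly from the Bochner-type chain rule~(\ref{eq:BoxFf}) with $F(t)=t^{(\alpha+2-n)/2}$ and $\la\grad\Gamma_x,\grad\Gamma_x\ra=-4\Gamma_x$. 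Your computation is correct and shorter; the paper's route has the mild virtue of displaying how the other identities of the proposition combine.

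Two technical points need tightening. First, for~(\ref{boxR}) it does not suffice to take $\Re(\alpha)>n$: you should take $\Re(\alpha)>n+2$, so that the exponent $(\alpha+2-n)/2$ has real part greater than $2$ and hence $R_\pm^\Omega(\alpha+2,x)$ is of class $C^2$ on all of $\Omega$, not merely smooth on $I_\pm^\Omega(x)$. It is precisely this $C^2$ regularity across the light cone that guarantees that the distributional $\Box$ coincides with the classical one; verifying the pointwise identity only on the open cone $I_\pm^\Omega(x)$ leaves open the possibility of distributional terms supported on $C_\pm^\Omega(x)$. (Your use of $\Re(\alpha)>n$ for~(\ref{gradR}) is fine, since there $C^1$ suffices.) Second, in~(\ref{Rglattinx}) and~(\ref{RCkinx}) you must require the trivializing frame on $U$ to be Lorentz orthonormal and timeorientation-preserving: only under such an identification of $T_{x'}\Omega$ with $\R^n$ do the Riesz distributions on $T_{x'}\Omega$ pull back to a single fixed $R_\pm(\alpha)$ on $\R^n$, which is exactly what the paper's family of isometries $A(x,x')$ provides. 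An arbitrary smooth frame would destroy the quadratic form $\gamma$ and hence the distribution.
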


\begin{proof}
It suffices to prove the statements for the advanced Riesz distributions.

Proof of {(\ref{Rexplic})}.
Let $\Re(\alpha)>n$ and $\varphi\in\DD(\Omega,\Co)$. 
Then
\begin{eqnarray*}
R_+^\Omega(\alpha,x)[\varphi]
&=&
R_+^\Omega(\alpha,x)[\left(\mu_x\circ\exp_x\right)\cdot
\left(\varphi\circ\exp_x\right)]\\   
&=&
C(\alpha,n)\int_{J_+(0)}\gamma^{\frac{\alpha-n}{2}}\cdot(\varphi\circ\exp_x)
\cdot\mu_x\, dz\\ 
&=&
C(\alpha,n)\int_{J_+^\Omega(x)}\Gamma_x^{\frac{\alpha-n}{2}}\cdot\varphi\, \dV.
\end{eqnarray*}
Proof of {(\ref{Ranalyt})}.
This follows directly from the definition of $R_+^\Omega(\alpha,x)$ and from
Lemma~\ref{erstvorbereitung}~(\ref{d}).

Proof of {(\ref{gammaR})}.
By (\ref{Rexplic}) this obviously holds for $\Re(\alpha)>n$ since $C(\alpha,n)
= \alpha(\alpha-n+2)C(\alpha+2,n)$. 
By analyticity of $\alpha\mapsto R_+^\Omega(\alpha,x)$ it must hold
for all $\alpha$.

Proof of {(\ref{gradR})}.
Consider $\alpha$ with $\Re(\alpha)>n$. 
By {(\ref{Rexplic})} the function $R_+^\Omega(\alpha+2,x)$ is then $C^1$.
On $J_+^\Omega(x)$ we compute 
\begin{eqnarray*}
2\alpha\grad R_+^\Omega(\alpha+2,x)
&=&
2\alpha C(\alpha+2,n)\grad\left(\Gamma_x^{\frac{\alpha+2-n}{2}}\right)\\ 
&=&\underbrace{2\alpha C(\alpha+2,n)
  \frac{\alpha+2-n}{2}}_{C(\alpha,n)}\Gamma_x^{\frac{\alpha-n}{2}}
\grad\Gamma_x\\   
&=&R_+^\Omega(\alpha,x)\grad\Gamma_x.
\end{eqnarray*}
For arbitrary $\alpha\in\Co$ assertion {(\ref{gradR})} follows from
analyticity of $\alpha\mapsto R_+^\Omega(\alpha,x)$.

Proof of {(\ref{boxR})}.
Let $\alpha\in\mathbb{C}$ with $\Re(\alpha)>n+2$.
Since $R_+^\Omega(\alpha+2,x)$ is then $C^2$, we can compute $\Box
R_+^\Omega(\alpha+2,x)$ classically.
This will show that (\ref{boxR}) holds for all $\alpha$ with
$\Re(\alpha)>n+2$.
Analyticity then implies (\ref{boxR}) for all $\alpha$. 
\begin{eqnarray*}
\Box R_+^\Omega(\alpha+2,x)
&=&
-\div\left(\grad R_+^\Omega(\alpha+2,x)\right)\\
&\stackrel{(\ref{gradR})}{=}&
-\frac{1}{2\alpha}\,\div\left(R_+^\Omega(\alpha,x)
\cdot\grad(\Gamma_x)\right)\\ 
&\stackrel{\mathrm{(\ref{eq:divproduct})}}{=}&
\frac{1}{2\alpha}\,\Box \Gamma_x\cdot R_+^\Omega(\alpha,x)-
\frac{1}{2\alpha}\, 
\la\grad\Gamma_x,\grad R_+^\Omega(\alpha,x)\ra\\ 
&\stackrel{(\ref{gradR})}{=}&
\frac{1}{2\alpha}\,\Box \Gamma_x\cdot R_+^\Omega(\alpha,x)- \frac{1}{2\alpha 
\cdot 2(\alpha-2)}\,\la\grad\Gamma_x,\grad\Gamma_x\cdot
R_+^\Omega(\alpha-2,x)\ra \\ 
&\stackrel{\mathrm{Lemma~\ref{Gammalemma}(1)}}{=}&
\frac{1}{2\alpha}\,\Box \Gamma_x\cdot
R_+^\Omega(\alpha,x)+\frac{1}{\alpha(\alpha-2)}\,\Gamma_x\cdot
R_+^\Omega(\alpha-2,x)\\ 
&\stackrel{(\ref{gammaR})}{=}&
\frac{1}{2\alpha}\,\Box \Gamma_x\cdot
R_+^\Omega(\alpha,x)+\frac{(\alpha-2)(\alpha-n)}{\alpha(\alpha-2)}
\,R_+^\Omega (\alpha,x)\\   
&=&
\left(\frac{\Box\Gamma_x-2n}{2\alpha}+1\right)R_+^\Omega(\alpha,x).
\end{eqnarray*}

Proof of {(\ref{R0})}.
Let $\varphi$ be a testfunction on $\Omega$.
Then by Proposition~\ref{MinkRiesz}~(\ref{h})
\begin{eqnarray*}
R_+^\Omega(0,x)[\varphi]
&=&R_+(0)[(\mu_x\varphi)\circ\exp_x]\\
&=&\delta_0[(\mu_x\varphi)\circ\exp_x]\\
&=&((\mu_x\varphi)\circ\exp_x)(0)\\
&=&\mu_x(x)\varphi(x)\\
&=&\varphi(x)\\
&=&\delta_x[\varphi].
\end{eqnarray*}

Proof of {(\ref{Rglattinx})}.
Let $A(x,x') : T_x\Omega \to T_{x'}\Omega$ be a timeorientation preserving
linear isometry.
Then
$$
R_+^\Omega(\alpha,x')[V(x',\cdot)] = R_+(\alpha)[(\mu_{x'}\cdot V(x',\cdot))
\circ \exp_{x'}\circ A(x,x')]
$$
where $R_+(\alpha)$ is, as before, the Riesz distribution on $T_x\Omega$.
Hence if we choose $A(x,x')$ to depend smoothly on $x'$, then
$(\mu_{x'}\cdot V(x',y))\circ\exp_{x'}\circ A(x,x')$ is smooth in $x'$ and $y$
and the assertion follows from Lemma~\ref{zweivariablen}.

Proof of {(\ref{orderR})}.
Since $\ord(R_\pm(\alpha)) \leq n+1$ by Proposition~\ref{MinkRiesz}~(\ref{i})
we have $\ord(R^\Omega_\pm(\alpha,x)) \leq n+1$ as well.
From the definition $R_{\pm}^\Omega(\alpha,x)=\mu_x\,\exp_x^*R_{\pm}(\alpha)$
it is clear that the constant $C$ may be chosen locally uniformly in $x$.

Proof of {(\ref{RCkinx})}.
By {(\ref{orderR})} we can apply $R_\pm^\Omega(\alpha,x')$ to
$V(x',\cdot)$. 
Now the same argument as for {(\ref{Rglattinx})} shows that the assertion
follows from Lemma~\ref{zweivariablen}.

The remaining assertions follow directly from the corresponding properties
of the Riesz distributions on Minkowski space.
For example (\ref{Ckanalyt}) is a consequence of Corollary~\ref{Ckholo}.
\end{proof}
 
Advanced and retarded Riesz distributions are related as follows.

\begin{lemma}\label{xytausch}
Let $\Omega$ be a convex timeoriented Lorentzian manifold.
Let $\alpha\in\Co$.
Then for all $u\in\DD(\Omega\times\Omega,\Co)$ we have
$$ 
\int_\Omega R_+^\Omega(\alpha,x)\left[y\mapsto u(x,y)\right] \dV(x)
 = \int_\Omega R_-^\Omega(\alpha,y)\left[x\mapsto u(x,y)\right] \dV(y). 
$$  
\end{lemma}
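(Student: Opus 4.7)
The plan is to prove the identity first in the convergent regime $\Re(\alpha)>n$ by a direct Fubini computation, then extend to all $\alpha\in\Co$ by analytic continuation.

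First I would unwind the definitions on $\{\Re(\alpha)>n\}$. By Proposition \ref{OmegaRiesz}(\ref{Rexplic}), $R_+^\Omega(\alpha,x)$ is the locally integrable function $C(\alpha,n)\Gamma_x^{(\alpha-n)/2}$ supported on $J_+^\Omega(x)$, and similarly for $R_-^\Omega(\alpha,y)$ on $J_-^\Omega(y)$. Thus the left-hand side equals
\[
C(\alpha,n)\iint_{\{(x,y)\,:\,y\in J_+^\Omega(x)\}}\Gamma_x(y)^{(\alpha-n)/2}\,u(x,y)\,\dV(y)\,\dV(x),
\]
while the right-hand side equals the analogous integral over $\{(x,y):x\in J_-^\Omega(y)\}$ with $\Gamma_y(x)^{(\alpha-n)/2}$. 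Since $y\in J_+^\Omega(x)\iff x\in J_-^\Omega(y)$, the two domains of integration coincide. The key geometric symmetry I need is $\Gamma_x(y)=\Gamma_y(x)$: in the convex domain $\Omega$ there is a unique geodesic $c$ from $x$ to $y$, so $\exp_x^{-1}(y)=\dot c(0)$ and $\exp_y^{-1}(x)=-\dot c(1)$, and because parallel transport along $c$ preserves $\langle\cdot,\cdot\rangle$ we get $\gamma(\exp_x^{-1}(y))=\gamma(\exp_y^{-1}(x))$. Fubini then delivers the identity in this regime.

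Next I would show that both sides of the claimed identity are, for fixed $u$, holomorphic functions of $\alpha$ on all of $\Co$. Fix $u\in\DD(\Omega\times\Omega,\Co)$ and let $K\subset\Omega$ be the projection of $\supp(u)$ to the first factor; $K$ is compact. By Proposition \ref{OmegaRiesz}(\ref{Rglattinx}) the map $x\mapsto R_+^\Omega(\alpha,x)[y\mapsto u(x,y)]$ is smooth in $x$, vanishes off $K$, and by Proposition \ref{OmegaRiesz}(\ref{Ranalyt}) is holomorphic in $\alpha$ for each $x$. A standard estimate using (\ref{orderR}) gives local uniform bounds in $(\alpha,x)$, so Morera's theorem applied to $\alpha\mapsto\int_K R_+^\Omega(\alpha,x)[y\mapsto u(x,y)]\dV(x)$ (interchanging the contour integral with the spatial integral) yields holomorphy of the left-hand side. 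The same argument handles the right-hand side.

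Finally I invoke the identity theorem: two holomorphic functions on $\Co$ that agree on the half-plane $\{\Re(\alpha)>n\}$ agree everywhere. The main obstacle, as I see it, is the verification of the geometric symmetry $\Gamma_x(y)=\Gamma_y(x)$ and the accompanying compatibility of the causal sets, which is precisely where convexity of $\Omega$ is used; everything else is either a Fubini interchange (justified by compact support and local integrability of $\Gamma_x^{(\alpha-n)/2}$ for $\Re(\alpha)>n$) or the standard analytic continuation argument. No hypothesis beyond convexity is needed because (\ref{Ranalyt}) and (\ref{Rglattinx}) already make the extension automatic.
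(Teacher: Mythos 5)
Your proof is correct and follows essentially the same strategy as the paper's: reduce to $\Re(\alpha)>n$ by noting that both sides are holomorphic in $\alpha$ (the paper invokes Proposition~\ref{OmegaRiesz}~(\ref{Ckanalyt}) for this where you use a Morera-type interchange, but this is a cosmetic difference), and in the convergent regime prove the identity by Fubini using the explicit formula from Proposition~\ref{OmegaRiesz}~(\ref{Rexplic}). The paper simply states the pointwise symmetry $R_+^\Omega(\alpha,x)(y)=R_-^\Omega(\alpha,y)(x)$ without spelling out $\Gamma_x(y)=\Gamma_y(x)$; your geodesic-reversal argument for that symmetry is the correct justification.
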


\begin{proof} 
The convexity condition for $\Omega$ ensures that the Riesz distributions 
$R_\pm^\Omega(\alpha,x)$ are defined for all $x \in \Omega$.
By Proposition~\ref{OmegaRiesz}~{(\ref{Rglattinx})} the integrands are smooth.
Since $u$ has compact support contained in $\Omega\times\Omega$ the integrand
$R_+^\Omega(\alpha,x)\left[y\mapsto u(x,y)\right]$ (as a function in $x$) has
compact support contained in $\Omega$.
A similar statement holds for the integrand of the right hand side.
Hence the integrals exist.
By Proposition~\ref{OmegaRiesz}~{(\ref{Ckanalyt})} they are holomorphic in
$\alpha$.
Thus it suffices to show the equation for $\alpha$ with 
$\Re(\alpha)>n$.

For such an $\alpha\in\Co$ the Riesz distributions $R_+(\alpha,x)$ and
$R_-(\alpha,y)$  are continuous functions.
From the explicit formula {(\ref{Rexplic})} in Proposition~\ref{OmegaRiesz} we
see
\begin{equation*} R_+(\alpha,x)(y)= R_-(\alpha,y)(x)
\end{equation*}
for all $x,y\in\Omega$.
By Fubini's theorem we get
\begin{eqnarray*}
\int_\Omega R_+^\Omega(\alpha,x)\left[ y\mapsto u(x,y) \right] \dV(x)
&=& 
\int_\Omega \left( \int_{\Omega} R_+^\Omega (\alpha,x)(y)\;u(x,y)\,
\dV(y) \right)\, \dV(x)\\
&=& 
\int_\Omega \left( \int_{\Omega} R_-^\Omega(\alpha,y)(x)\;u(x,y)\,
\dV(x)\right)\,\dV(y)\\
&=&
\int_\Omega R_-^\Omega(\alpha,y)\left[x\mapsto u(x,y)\right] \dV(y).
\end{eqnarray*}
\end{proof}

As a technical tool we will also need a version of Lemma~\ref{xytausch}
for certain nonsmooth sections.

\begin{lemma}\label{xytauschCk}
Let $\Omega$ be a causal domain in a timeoriented Lorentzian manifold of
dimension $n$.
Let $\Re(\alpha)>0$ and let $k \geq n+1$.
Let $K_1$, $K_2$ be compact subsets of $\ovl{\Omega}$ and let
$u\in C^k(\ovl{\Omega}\times\ovl{\Omega})$ so that $\supp(u)\subset
J_+^\Omega(K_1)\times J_-^\Omega(K_2)$.   
Then
$$ 
\int_\Omega R_+^\Omega(\alpha,x)\left[y\mapsto u(x,y)\right] \dV(x)
 = \int_\Omega R_-^\Omega(\alpha,y)\left[x\mapsto u(x,y)\right] \dV(y). 
$$  
\end{lemma}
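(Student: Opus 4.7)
The plan is to reduce to Lemma~\ref{xytausch} by a cutoff-plus-smoothing argument and pass to the limit using the order bound in Proposition~\ref{OmegaRiesz}\,(\ref{orderR}). First I would check that both integrands are well-defined and supported in the compact set $K:=J_+^\Omega(K_1)\cap J_-^\Omega(K_2)$ (whose closure lies in $\overline\Omega$ by the causal-domain hypothesis). For each $x\in\Omega$, the section $y\mapsto u(x,y)$ is $C^{n+1}$ with support in $J_-^\Omega(K_2)$, so $\supp R_+^\Omega(\alpha,x)\cap\supp_y u(x,\cdot)\subset J_+^\Omega(x)\cap J_-^\Omega(K_2)$ is compact, making $R_+^\Omega(\alpha,x)[y\mapsto u(x,y)]$ well-defined via the cutoff prescription of Section~1.1.3. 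The integrand vanishes outside $K$: if $x\notin J_+^\Omega(K_1)$ then $u(x,\cdot)\equiv 0$, and if $x\notin J_-^\Omega(K_2)$ then $\supp R_+^\Omega(\alpha,x)$ and $\supp_y u(x,\cdot)$ are disjoint. The right-hand integrand is symmetric.

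Next I would localize and smooth. Choose $\chi\in\DD(\Omega\times\Omega,\R)$ with $\chi\equiv 1$ on an open neighborhood of $K\times K$ and set $\tilde u:=\chi u\in\DD^{n+1}(\Omega\times\Omega,\Co)$. The support analysis above shows that replacing $u$ by $\tilde u$ leaves both integrands unchanged, since the discarded part $(1-\chi)u$ has support disjoint from $\supp R_\pm^\Omega(\alpha,\cdot)$ at every base point where the integrand is nonzero. By standard mollification in coordinate charts combined with a partition of unity, choose $v_j\in\DD(\Omega\times\Omega,\Co)$ with $v_j\to\tilde u$ in $C^{n+1}(\Omega\times\Omega)$ and with all $\supp v_j$ lying in a fixed compact $L\subset\Omega\times\Omega$.

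Finally, Lemma~\ref{xytausch} applies to each $v_j$ (since $\overline\Omega$ sits inside a convex domain $\Omega'$ and the Riesz distributions on $\Omega'$ restrict to those on $\Omega$ when tested against sections supported in $\Omega$), yielding
\begin{equation*}
\int_\Omega R_+^\Omega(\alpha,x)[y\mapsto v_j(x,y)]\,\dV(x)=\int_\Omega R_-^\Omega(\alpha,y)[x\mapsto v_j(x,y)]\,\dV(y).
\end{equation*}
Proposition~\ref{OmegaRiesz}\,(\ref{orderR}), combined with compactness of the projections of $L$, supplies a constant $C>0$ such that
\begin{equation*}
|R_\pm^\Omega(\alpha,x)[y\mapsto (v_j-\tilde u)(x,y)]|\leq C\,\|v_j-\tilde u\|_{C^{n+1}(\Omega\times\Omega)}\longrightarrow 0
\end{equation*}
uniformly in $x$. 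Since all integrands are supported in a common compact subset of $\Omega$, dominated convergence gives the identity for $\tilde u$, hence for $u$. The main obstacle is the verification in Step~1 that cutting off by $\chi$ does not alter either integrand; this combines the basic distributional fact that $T[\phi]=0$ whenever $\supp T\cap\supp\phi=\emptyset$ with the causal-domain compactness of $J_+^\Omega(x)\cap J_-^\Omega(K_2)$ and $J_+^\Omega(K_1)\cap J_-^\Omega(y)$ to localize the relevant interaction of $u$ with $R_\pm^\Omega(\alpha,\cdot)$ inside the region where $\chi\equiv 1$.
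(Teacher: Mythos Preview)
Your argument is correct but follows a genuinely different route from the paper. Both proofs begin the same way: check that each integrand is well-defined and supported in the relatively compact set $J_+^\Omega(K_1)\cap J_-^\Omega(K_2)$, and then cut off $u$ so that it has compact support. From that point on the strategies diverge. You keep $\alpha$ fixed, mollify the cut-off $u$ to smooth $v_j$, invoke the smooth case (Lemma~\ref{xytausch}) on the ambient convex $\Omega'$, and pass to the limit using the uniform $C^{n+1}$ order bound of Proposition~\ref{OmegaRiesz}\,(\ref{orderR}). The paper instead fixes $u$ and varies $\alpha$: by Proposition~\ref{OmegaRiesz}\,(\ref{Ckanalyt}) both sides are holomorphic in $\alpha$ on $\{\Re(\alpha)>0\}$ once $u$ has compact support, and for $\Re(\alpha)>n$ the Riesz distributions are continuous functions so the identity reduces to Fubini exactly as in Lemma~\ref{xytausch}; the identity theorem then gives the result for all $\Re(\alpha)>0$. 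The paper's approach is shorter and exploits the special analytic structure of the Riesz family, while your mollification argument is more elementary and would apply to any family of distributions of locally uniformly bounded order for which the smooth-case identity is already known.
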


\begin{proof}
For fixed $x$, the support of the function $y\mapsto u(x,y)$ is contained in
$J_-^\Omega(K_2)$. 
Since $\Omega$ is causal, it follows from Lemma~\ref{pastcompact} that the
subset $J_-^\Omega(K_2)\cap J_+^\Omega(x)$ is relatively compact in
$\ovl{\Omega}$. 
Therefore the intersection of the supports of $y\mapsto u(x,y)$ and
$R_+^\Omega(\alpha,x)$ is compact and contained in $\ovl{\Omega}$.
By Proposition~\ref{OmegaRiesz} {(\ref{orderR})} one can then apply
$R_+^\Omega(\alpha,x)$ to the $C^k$-function $y\mapsto u(x,y)$. 
Furthermore, the support of the continuous function $x\mapsto
R_+^\Omega(\alpha,x)\left[y\mapsto u(x,y)\right]$ is contained in
$J_+^\Omega(K_1)\cap J_-^\Omega(\supp(y\mapsto u(x,y)))\subset
J_+^\Omega(K_1)\cap J_-^\Omega(J_-^\Omega(K_2))=J_+^\Omega(K_1)\cap
J_+^\Omega(K_2)$, which is relatively compact in $\ovl{\Omega}$, again by
Lemma~\ref{pastcompact}.
Hence the function $x\mapsto R_+^\Omega(\alpha,x)\left[y\mapsto u(x,y)\right]$
has compact support in $\ovl{\Omega}$, so that the left-hand-side makes sense. 
Analogously the right-hand-side is well-defined.
Our considerations also show that the integrals depend only on the
values of $u$ on $\left(J_+^\Omega(K_1)\cap
  J_-^\Omega(K_2)\right)\times\left(J_+^\Omega(K_1)\cap
  J_-^\Omega(K_2)\right)$ which is a relatively compact set.
Applying a cut-off function argument we may assume without loss of generality
that $u$ has 
compact support.
Proposition~\ref{OmegaRiesz}~{(\ref{Ckanalyt})} says that the integrals
depend holomorphically on $\alpha$ on the domain $\{\Re(\alpha)>0\}$. 
Therefore it suffices to show the equality for $\alpha$ with sufficiently large
real part, which can be done exactly as in the proof of Lemma~\ref{xytausch}. 
\end{proof}

\section{Normally hyperbolic operators} 
\indexn{normally hyperbolic operator>defemidx}

Let ${M}$ be a Lorentzian manifold and let $E\to {M}$ be a real or complex 
vector bundle.
For a summary on basics concerning linear differential operators see
Appendix~\ref{app:diffops}.
A linear differential operator $P:\,C^{\infty}({M},E)\to 
C^{\infty}({M},E)$ of second order will be called
\defem{normally hyperbolic}  if its principal symbol is given by the metric,
$$
\sigma_P(\xi) = -\la\xi,\xi\ra\cdot\id_{E_x}
$$
for all $x\in{M}$ and all $\xi \in T^*_x{M}$.
In other words, if we choose local coordinates $x^1,\ldots,x^n$ on $M$ and a
local trivialization of $E$, then 
$$
P = -\sum_{i,j=1}^n g^{ij}(x)\frac{\partial^2}{\partial x^i \partial x^j}
+ \sum_{j=1}^n A_j(x) \frac{\partial}{\partial x^j} + B_1(x)
$$
where $A_j$ and $B_1$ are matrix-valued coefficients depending smoothly on
$x$ and $(g^{ij})_{ij}$ is the inverse matrix of  $(g_{ij})_{ij}$ with
$g_{ij}=\la\frac{\partial}{\partial x^i},\frac{\partial}{\partial x^j}\ra$.

\Example{
Let $E$ be the trivial line bundle so that sections in $E$ are just functions.
The d'Alembert operator $P=\Box$ is normally hyperbolic because
$$
\sigma_{\grad}(\xi)f = f\xi^\sharp, \quad
\sigma_{\div}(\xi)X = \xi(X)
$$
and so
$$
\sigma_{\Box}(\xi)f = - \sigma_{\div}(\xi)\circ\sigma_{\grad}(\xi)f
= - \xi(f\xi^\sharp) = -\la\xi,\xi\ra\, f.
$$
Recall that $\xi\mapsto \xi^\sharp$ denotes the isomorphism $T_x^*M \to T_xM$
induced by the Lorentzian metric, compare (\ref{defgrad}).
}
\Example{
Let $E$ be a vector bundle and let $\nabla$ be a connection on $E$.
This connection together with the Levi-Civita connection on $T^*M$ induces a
connection on $T^*M\otimes E$, again denoted $\nabla$.
We define the \defidx{connection-d'Alembert operator} $\Box^\nabla$ 
\indexs{*@$\protect\Box^\nabla$, connection-d'Alembert operator}
to be minus
the composition of the following three maps
$$
C^\infty(M,E) \stackrel{\nabla}{\longrightarrow}
C^\infty(M,T^*M\otimes E) \stackrel{\nabla}{\longrightarrow}
C^\infty(M,T^*M\otimes T^*M\otimes E)
\xrightarrow{\tr\otimes\id_E} 
C^\infty(M,E)
$$
where $\tr:T^*M\otimes T^*M \to \R$ denotes the metric trace,
$\tr(\xi\otimes\eta)=\la\xi,\eta\ra$. 
We compute the principal symbol,
$$
\sigma_{\Box^\nabla}(\xi)\phi
=
-(\tr\otimes\id_E)\circ\sigma_{\nabla}(\xi)\circ\sigma_{\nabla}(\xi)(\phi)
=
-(\tr\otimes\id_E)(\xi\otimes\xi\otimes\phi)
=
-\la\xi,\xi\ra\,\phi.
$$
Hence $\Box^\nabla$ is normally hyperbolic.
}
\Example{
Let $E=\Lambda^kT^*M$ be the bundle of $k$-forms.
Exterior differentiation $d:C^\infty(M,\Lambda^kT^*M) \to
C^\infty(M,\Lambda^{k+1}T^*M)$ increases the degree by one while
the codifferential $\delta:C^\infty(M,\Lambda^{k}T^*M) \to
C^\infty(M,\Lambda^{k-1}T^*M)$ decreases the degree by one, see
\cite[p.~34]{Bes} for details.
While $d$ is independent of the metric, the codifferential $\delta$ does
depend on the Lorentzian metric.
The operator $P=d\delta + \delta d$ is normally hyperbolic.
}
\Example{
If $M$ carries a Lorentzian metric and a spin structure, then one can define
the spinor bundle $\Sigma M$ and the Dirac operator
\indexn{Dirac operator}
$$
D : C^\infty(M,\Sigma M) \to C^\infty(M,\Sigma M) ,
$$
see \cite{BGM} or \cite{Baum} for the definitions.
The principal symbol of $D$ is given by Clifford multiplication,
$$
\sigma_D(\xi)\psi = \xi^\sharp \cdot \psi.
$$
Hence
$$
\sigma_{D^2}(\xi)\psi = \sigma_D(\xi)\sigma_D(\xi)\psi = \xi^\sharp \cdot
\xi^\sharp \cdot \psi = -\la\xi,\xi\ra\,\psi.
$$
Thus $P=D^2$ is normally hyperbolic.
}

The following lemma is well-known, see e.~g.\ \cite[Prop.~3.1]{BK}.
It says that each normally hyperbolic operator is a connection-d'Alembert
operator up to a term of order zero.

\begin{lemma}\label{canonicalconnection}
Let $P:\,C^{\infty}({M},E)\to C^{\infty}({M},E)$ be a normally hyperbolic
operator on a Lorentzian manifold $M$.
Then there exists a unique connection $\nabla$ on $E$ and a unique
endomorphism field $B\in C^{\infty}({M},\Hom(E,E))$ such that
$$
P = \Box^\nabla + B.
$$
\end{lemma}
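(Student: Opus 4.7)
The plan is to reduce the statement to a principal symbol equation and then exploit non-degeneracy of the Lorentzian metric.

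First I would fix an arbitrary auxiliary connection $\nabla^0$ on $E$ (which exists by a partition-of-unity argument). Since $P$ and $\Box^{\nabla^0}$ are both second-order with the same principal symbol $\xi\mapsto-\la\xi,\xi\ra\,\id_{E}$, the difference $L:=P-\Box^{\nabla^0}$ is a first-order operator on $E$, with some principal symbol $\sigma_L\in\Gamma(\Hom(T^*M,\End E))$ and a zeroth-order remainder. The strategy is to absorb the first-order part into a suitable modification of the connection.

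For this I would compute, in local coordinates and a local trivialization, how $\Box^\nabla$ varies with $\nabla$. Given $\omega\in\Gamma(T^*M\otimes\End E)$, set $\nabla:=\nabla^0+\omega$. Expanding $\Box^\nabla s=-g^{ij}(\nabla_{\partial_i}\nabla_{\partial_j}s-\Gamma^k_{ij}\nabla_{\partial_k}s)$ and inserting $\nabla_{\partial_j}=\nabla^0_{\partial_j}+\omega(\partial_j)$, one finds that $\Box^\nabla-\Box^{\nabla^0}$ is a first-order operator whose principal symbol is $\xi\mapsto-2\,\omega(\xi^\sharp)$, all remaining contributions landing in the zeroth-order part. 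Demanding that $P-\Box^\nabla$ be of order zero then forces the pointwise symbol equation $-2\,\omega(\xi^\sharp)=\sigma_L(\xi)$ for every $\xi\in T^*M$; by bijectivity of the sharp map this has the unique global solution $\omega(X):=-\tfrac12\sigma_L(X^\flat)$. Setting $\nabla:=\nabla^0+\omega$ and $B:=P-\Box^\nabla$ produces the required decomposition. Uniqueness comes from the same symbol analysis: if $\Box^\nabla+B=\Box^{\nabla'}+B'$, writing $\nabla'=\nabla+\omega'$ makes $\Box^{\nabla'}-\Box^\nabla=B-B'$ an operator of order zero, so its principal symbol $\xi\mapsto-2\,\omega'(\xi^\sharp)$ vanishes identically, and non-degeneracy of $g$ forces $\omega'\equiv0$, hence $\nabla'=\nabla$ and $B'=B$.

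The main obstacle is the local symbol computation for $\Box^{\nabla^0+\omega}-\Box^{\nabla^0}$. One has to be careful because the second covariant derivative in the definition of $\Box^\nabla$ couples the Levi-Civita connection on $T^*M$ with the modified connection on $E$, so several cross-terms involving $\omega$, $\nabla^0\omega$, and the Christoffel symbols appear; the essential point is to verify that all of these drop into the zeroth-order remainder, leaving only the symmetric first-order contraction $-2g^{ij}\omega(\partial_i)\partial_j$ at the principal level. Once this identity is in hand the existence, uniqueness, and global nature of $\nabla$ and $B$ follow immediately, since the defining formula $\omega=-\tfrac12\sigma_L(\cdot^\flat)$ is chart-independent.
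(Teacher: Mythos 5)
Your proposal is correct and takes essentially the same approach as the paper: both fix an auxiliary connection $\nabla^0$, identify the first-order content $A'$ (your $\sigma_L$) of $P-\Box^{\nabla^0}$, and define the new connection by $\nabla_X s=\nabla^0_X s-\tfrac12 A'(X^\flat\otimes s)$, verifying that the remainder is zero-order. The paper expresses uniqueness through the Leibniz formula $\nabla_{\grad f}s=\tfrac12\{f\cdot P(s)-P(f\cdot s)+(\Box f)\cdot s\}$, whereas you argue from the vanishing of the principal symbol of $\Box^{\nabla'}-\Box^{\nabla}$; these are just two phrasings of the same non-degeneracy argument.
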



\begin{proof} 
First we prove uniqueness of such a connection.
Let $\nabla'$ be an arbitrary connection on $E$.
For any section $s\in C^\infty(M,E)$ and any function $f\in C^\infty(M)$ we
get
\begin{equation}\label{nablaleipniz}
  \Box^{\nabla'}(f\cdot s)= f\cdot(\Box^{\nabla'}s)-2\nabla'_{\grad f}s
  +(\Box   f)\cdot s .
\end{equation}
Now suppose that $\nabla$ satisfies the condition in Lemma~
\ref{canonicalconnection}. 
Then $B=P-\Box^{\nabla}$ is an endomorphism field and we obtain
\[ f\cdot\left( P(s)-\Box^\nabla s \right)=P(f\cdot
s)-\Box^\nabla(f\cdot s). \] 
By (\ref{nablaleipniz}) this yields
\begin{equation}\label{supernabla}
\nabla_{\grad f}s=\tfrac{1}{2}\left\{f\cdot P(s)-P(f\cdot s)+(\Box f)\cdot s
\right\}. 
\end{equation}
At a given point $x\in M$ every tangent vector $X\in T_xM$ can be written in
the form $X=\grad_x f$ for some suitably chosen function $f$. 
Thus (\ref{supernabla}) shows that $\nabla$ is determined by $P$ and
$\Box$ (which is determined by the Lorentzian metric).

To show existence one could use (\ref{supernabla}) to define a
connection $\nabla$ as in the statement. 
We follow an alternative path.
Let $\nabla'$ be some connection on $E$.
Since $P$ and $\Box^{\nabla'}$ are both normally hyperbolic operators acting
on sections in $E$, the difference $P-\Box^{\nabla'}$ is a differential
operator of first order and can therefore be written in the form
\[P-\Box^{\nabla'}=A'\circ\nabla'+B',\]
for some $A'\in C^{\infty}({M},\mathrm{Hom}(T^*M\otimes E,E))$ and $B'\in
C^{\infty}({M},\Hom(E,E))$.
Set for every vector field $X$ on $M$ and section $s$ in $E$
\[\nabla_X s:=\nabla'_X s-\frac{1}{2}A'(X^\flat\otimes s).\]
This defines a new connection $\nabla$ on $E$.
Let $e_1,\ldots,e_n$ be a local Lorentz orthonormal basis of $TM$.
Write as before $\varepsilon_j=\la e_j,e_j\ra=\pm 1$. 
We may assume that at a given point $p\in M$ we have $\nabla_{e_j}e_j(p)=0$.
Then we compute at $p$
\be 
\Box^{\nabla'}s+A'\circ\nabla's
&=&
\sum_{j=1}^n\varepsilon_j
\Big\{-\nabla'_{e_j}\nabla'_{e_j}s+A'(e_j^\flat
\otimes\nabla'_{e_j}s)\Big\}\\  
&=&
\sum_{j=1}^n\varepsilon_j\Big\{-(\nabla_{e_j}+
\frac{1}{2}A'(e_j^\flat\otimes\cdot))(\nabla_{e_j}s+
\frac{1}{2}A'(e_j^\flat\otimes s)) \\  
&&\phantom{\sum_{j=1}^n\varepsilon\Big\{}+A'(e_j^\flat\otimes\nabla_{e_j}s)+
\frac{1}{2}A'(e_j^\flat\otimes A'(e_j^\flat\otimes s))\Big\}\\ 
&=&
\sum_{j=1}^n\varepsilon_j\Big\{-\nabla_{e_j}\nabla_{e_j}s 
-\frac{1}{2}\nabla_{e_j}(A'(e_j^\flat\otimes s))\\ 
&&\phantom{\sum_{j=1}^n\varepsilon_j\Big\{}+
\frac{1}{2}A'(e_j^\flat\otimes\nabla_{e_j}s)+ \frac{1}{4}A'(e_j^\flat\otimes
A'(e_j^\flat\otimes s))\Big\}\\ 
&=&
\Box^\nabla s+\frac{1}{4}\sum_{j=1}^n\varepsilon_j
\Big\{A'(e_j^\flat\otimes A'(e_j^\flat\otimes s))-
2(\nabla_{e_j}A')(e_j^\flat\otimes s)\Big\}, 
\ee
where $\nabla$ in $\nabla_{e_j}A'$ stands for the induced connection on
$\mathrm{Hom}(T^*M\otimes E,E)$. 
We observe that $Q(s) := \Box^{\nabla'}s+A'\circ\nabla's - \Box^\nabla s =
\frac{1}{4}\sum_{j=1}^n\varepsilon_j
\Big\{A'(e_j^\flat\otimes A'(e_j^\flat\otimes s))-
2(\nabla_{e_j}A')(e_j^\flat\otimes s)\Big\}$ is of order zero.
Hence 
$$
P = \Box^{\nabla'}+A'\circ\nabla'+B'
= \Box^\nabla s+Q(s)+B'(s)
$$
is the desired expression with $B=Q+B'$.
\end{proof}

The connection in Lemma~\ref{canonicalconnection} will be called the
\defem{$P$-compatible} connection.  
We shall henceforth always work with the $P$-compatible connection.
\indexn{Pcom*@$P$-compatible connection>defemidx}
We restate (\ref{supernabla}) as a lemma.

\begin{lemma}\label{produktregel}
Let $ P=\Box^\nabla+B$ be normally hyperbolic.
For $f\in C^{\infty}({M})$ and $s\in C^{\infty}({M},E)$ one gets
$$ 
P(f\cdot s)=f\cdot P(s)-2\,{\nabla}_{\grad f}\,s +\Box f\cdot s.
$$  
\hfill$\Box$
\end{lemma}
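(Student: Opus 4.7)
The plan is to reduce to the connection-d'Alembert operator $\Box^\nabla$ and then apply a direct Leibniz-rule computation. Since $P = \Box^\nabla + B$ with $B$ an endomorphism field (i.e.\ a differential operator of order zero), one has $B(f \cdot s) = f \cdot B(s)$, so it is enough to verify
$$
\Box^\nabla(f \cdot s) = f \cdot \Box^\nabla s - 2\,\nabla_{\grad f}\,s + (\Box f)\cdot s,
$$
which is exactly equation (\ref{nablaleipniz}) used in the proof of Lemma~\ref{canonicalconnection}. Adding $B(f\cdot s) = f \cdot B(s)$ then yields the claimed identity.

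To establish the displayed identity for $\Box^\nabla$, I would work at a point $p \in M$ and choose a local Lorentz orthonormal frame $e_1,\ldots,e_n$ with $\varepsilon_j = \langle e_j, e_j\rangle$ and $\nabla_{e_j} e_j(p) = 0$, which is convenient because it kills the extra term involving the Levi-Civita connection on $T^*M$ when unfolding the second covariant derivative. Then at $p$
$$
\Box^\nabla(f \cdot s) = -\sum_j \varepsilon_j\,\nabla_{e_j}\nabla_{e_j}(f\cdot s).
$$
Applying the Leibniz rule to the connection $\nabla$ on $E$ twice gives
$$
\nabla_{e_j}\nabla_{e_j}(fs) = (e_j e_j f)\,s + 2\,(e_j f)\,\nabla_{e_j} s + f\,\nabla_{e_j}\nabla_{e_j} s,
$$
and summing with the signs $\varepsilon_j$ produces the three terms $f \Box^\nabla s$, the first-order contribution, and the zero-order contribution. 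For the first-order term one uses $\sum_j \varepsilon_j (e_j f)\,\nabla_{e_j} s = \nabla_{\grad f}\,s$, since $\grad f = \sum_j \varepsilon_j (e_j f) e_j$ in a Lorentz orthonormal frame; for the zero-order term one uses $-\sum_j \varepsilon_j e_j e_j f = \Box f$ at $p$ (again using $\nabla_{e_j}e_j(p)=0$).

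There is no real obstacle here since the identity was already established in the existence proof of Lemma~\ref{canonicalconnection}; the only point worth double-checking is the sign convention $\Box = -\div \grad$ together with the sign in $\Box^\nabla = -\tr \circ \nabla \circ \nabla$, which is what makes the coefficient of $\nabla_{\grad f}\,s$ come out as $-2$ rather than $+2$. Because the identity is pointwise and both sides are $C^\infty(M)$-linear in $s$ and satisfy the same tensorial dependence on $f$, its validity at an arbitrary $p$ (shown in a convenient frame) suffices.
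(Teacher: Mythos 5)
Your proof is correct and follows essentially the same route as the paper: the paper proves Lemma~\ref{produktregel} simply by restating equation~(\ref{supernabla}), which in turn comes from reducing to the Leibniz identity~(\ref{nablaleipniz}) for $\Box^\nabla$ together with the observation that $B$ is a zero-order operator. The only difference is that you supply the local-orthonormal-frame computation establishing~(\ref{nablaleipniz}), which the paper states without proof inside the proof of Lemma~\ref{canonicalconnection}; your computation (including the gradient formula $\grad f = \sum_j \varepsilon_j (e_j f)e_j$ and the choice $\nabla_{e_j}e_j(p)=0$) is correct.
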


\chapter{The local theory}

Now we start with our detailed study of wave equations.
\indexn{wave equation>defemidx}
By a wave equation we mean an equation of the form $Pu=f$ where $P$ is a 
normally hyperbolic operator acting on sections in a vector bundle.
The right-hand-side $f$ is given and the section $u$ is to be found.
In this chapter we deal with local problems, i.~e., we try to find solutions
defined on sufficiently small domains.
This can be understood as a preparation for the global theory which
we postpone to the third chapter.
Solving wave equations on all of the Lorentzian manifold is, in general,
possible only under the geometric assumption of the manifold being globally
hyperbolic. 

There are various techniques available in the theory of partial differential
equations that can be used to settle the local theory.
We follow an approach based on Riesz distributions and Hadamard coefficients
as in \cite{Guenther}.
The central task is to construct fundamental solutions. 
This means that one solves the wave equation where the right-hand-side $f$
is a delta-distribution.

The construction consists of three steps.
First one writes down a formal series in Riesz distributions with unknown
coefficients.
The wave equation yields recursive relations for these Hadamard coefficients
known as transport equations.
Since the transport equations are ordinary differential equations along
geodesics they can be solved uniquely.
There is no reason why the formal solution constructed in this way should be
convergent.

In the second step one makes the series convergent by introducing certain
cut-off functions.
This is similar to the standard proof showing that each formal power series is
the Taylor series of some smooth function.
Since there are error terms produced by the cut-off functions the result is
convergent but no longer solves the wave equation.
We call it an approximate fundamental solution.

Thirdly, we turn the approximate fundamental solution into a true one using
certain integral operators.
Once the existence of fundamental solutions is established one can find
solutions to the wave equation for an arbitrary smooth $f$ with compact
support.
The support of these solutions is contained in the future or in the past of
the support of $f$.

Finally, we show that the formal fundamental solution constructed in the first
step is asymptotic to the true fundamental solution.
This implies that the singularity structure of the fundamental solution is
completely determined by the Hadamard coefficients which are in turn
determined by the geometry of the manifold and the coefficients of the
operator.

\section{The formal fundamental solution}

In this chapter the underlying Lorentzian manifold will typically be denoted
by $\Omega$.
Later, in Chapter~3, when we apply the local results $\Omega$ will play the
role of a small neighborhood of a given point.

\Definition{\label{deffundsol}
Let $\Omega$ be a timeoriented Lorentzian manifold, let $E\to \Omega$ be a
vector bundle and let $P:\,C^{\infty}(\Omega,E)\to C^{\infty}(\Omega,E)$ be
normally hyperbolic. 
Let $x\in\Omega$.
A \defidx{fundamental solution} of $P$ at $x$ is a distribution 
$F\in\DD'(\Omega,E,E_x^*)$ such that
$$
PF=\delta_x.
$$
In other words, for all $\varphi\in\DD(\Omega,E^*)$ we have
$$
F[P^*\varphi] = \varphi(x).
$$
If $\supp(F(x))\subset J_+^\Omega(x)$, then we call $F$ an \defem{advanced
fundamental solution}, if $\supp(F(x))\subset J_-^\Omega(x)$, then we call $F$
a \defem{retarded fundamental solution}.
\indexn{advanced fundamental solution>defemidx}
\indexn{retarded fundamental solution>defemidx}
}

For flat Minkowski space with $P=\Box$ acting on functions 
Proposition~\ref{MinkRiesz} {(\ref{c})} and {(\ref{h})} show that the Riesz 
distributions $R_\pm(2)$ are fundamental solutions at $x=0$.
More precisely, $R_+(2)$ is an advanced fundamental solution because its
support is contained in $J_+(0)$ and $R_-(2)$ is a retarded fundamental
solution.

On a general timeoriented Lorentzian manifold $\Omega$ the situation is more
complicated even if $P=\Box$. 
The reason is the factor $\frac{\Box\Gamma_x-2n}{2\alpha}+1$ in
Proposition~\ref{OmegaRiesz} {(\ref{boxR})} which cannot be evaluated for 
$\alpha=0$ unless $\Box\Gamma_x-2n$ vanished identically.
It will turn out that $R_\pm^\Omega(2,x)$ does not suffice to construct
fundamental solutions.
We will also need Riesz distributions $R_\pm^\Omega(2+2k,x)$ for $k\ge 1$.

Let $\Omega$ be geodesically starshaped with respect to some fixed
$x\in\Omega$ so  
that the Riesz distributions $R_\pm^\Omega(\alpha,x)=R_\pm^\Omega(\alpha,x)$ 
are defined.
Let $E\to\Omega$ be a real or complex vector bundle and let $P$ be a normally 
hyperbolic operator $P$ acting on $C^\infty(\Omega,E)$.
In this section we start constructing fundamental solutions.
We make the following formal ansatz: 
$$ 
\RR_\pm(x):= \sum_{k=0}^{\infty} V_x^k \; R_\pm^{\Omega}(2+2k,x) 
$$
where $V_x^k\in C^{\infty}(\Omega,E\otimes E_x^*)$ are smooth sections yet to 
be found. 
For $\varphi\in\DD(\Omega,E^*)$ the function
$V_x^k\cdot\varphi$ is an $E_x^*$-valued testfunction and we have
$(V_x^k\cdot R_\pm^{\Omega}(2+2k,x))[\varphi] =
R_\pm^{\Omega}(2+2k,x)[V_x^k\cdot\varphi]\in E_x^*$. 
Hence each summand $V_x^k\cdot R_\pm^{\Omega}(2+2k,x)$ is a distribution in 
$\DD'(\Omega,E,E_x^*)$.

By formal termwise differentiation using Lemma~\ref{produktregel} and 
Proposition~\ref{OmegaRiesz} we translate the condition of $\RR_\pm(x)$ being a
fundamental solution at $x$ into conditions on the $V_x^k$.
To do this let $\nabla$ be the $P$-compatible connection on $E$, that is,
$P=\Box^\nabla+B$ where $B\in C^\infty(\Omega,\mathrm{End}(E))$, compare
Lemma~\ref{canonicalconnection}.  
We compute
\begin{eqnarray} 
R_{\pm}^{\Omega}(0,x)&=&\delta_x \ =\ P\RR_{\pm}(x) 
\ =\ \sum_{k=0}^{\infty}P(V_x^kR_\pm^\Omega(2+2k,x)) \nonumber\\
&=&
\sum_{k=0}^{\infty}\lbrace V_x^k\cdot\Box R_{\pm}^{\Omega}(2+2k,x)-2\,
\nabla_{\grad\,R_{\pm}^{\Omega}(2+2k,x)}V_x^k+ PV_x^k\cdot
R_{\pm}^{\Omega}(2+2k,x)\rbrace \nonumber\\
&=& 
V_x^0\cdot\Box R_{\pm}^{\Omega}(2,x)-2\,\nabla_{\grad R_{\pm}^{\Omega}(2,x)
}V_x^0\nonumber\\  
&&+\sum_{k=1}^{\infty}\Big\lbrace V_x^k\cdot\left(
\tfrac{\tfrac{1}{2}\Box\Gamma_x -n}{2k}+1 \right) R_{\pm}^{\Omega}(2k,x)
-\tfrac{2}{4k}\,\nabla_{\grad\Gamma_x\,R_{\pm}^{\Omega}(2k,x)}V_x^k\nonumber\\
&&\qquad\quad+PV_x^{k-1}\cdot R_{\pm}^{\Omega}(2k,x) \Big\rbrace \nonumber\\
&=& 
V_x^0\cdot\Box R_{\pm}^{\Omega}(2,x)-2\,\nabla_{\grad
         R_{\pm}^{\Omega}(2,x)}V_x^0\nonumber\\ 
&&+\sum_{k=1}^{\infty}\tfrac{1}{2k}\Big\lbrace \left(\tfrac{1}{2}\Box
\Gamma_x -n +2k\right)V_x^k-\nabla_{\grad\Gamma_x}
V_x^k+2k\,PV_x^{k-1}\Big\rbrace 
R_{\pm}^{\Omega}(2k,x) .
\label{termdiff}
\end{eqnarray}
Comparing the coefficients of $R_{\pm}^{\Omega}(2k,x)$ we get the conditions
\begin{align}
   2\,\nabla_{\grad R_{\pm}^{\Omega}(2,x)}V_x^0 - \Box
  R_{\pm}^{\Omega}(2,x)\cdot 
  V_x^0+ R_{\pm}^{\Omega}(0,x)&=0\quad\mbox{ and} \label{Bostrich}\\ 
  \nabla_{\grad\Gamma_x} V_x^k-\left(\tfrac{1}{2}\Box\Gamma_x -n
  +2k\right)V_x^k &= 2k\,PV_x^{k-1}\quad\mbox{ for }k\ge 1.\label{Beka}
\end{align}
We take a look at what condition (\ref{Beka}) would mean for $k=0$.
We multiply this equation by $R_{\pm}^{\Omega}(\alpha,x)$:
$$ 
\nabla_{\grad\Gamma_x\, R_{\pm}^{\Omega}(\alpha,x)} V_x^0-
\left(\tfrac{1}{2}\Box\Gamma_x -n \right)\,V_x^0\cdot
R_{\pm}^{\Omega}(\alpha,x) = 0.
$$
By Proposition~\ref{OmegaRiesz} (\ref{gradR}) and (\ref{boxR}) we obtain
$$ 
\nabla_{2 \alpha \grad R_{\pm}^{\Omega}(\alpha+2,x)} V_x^0-
\left(\alpha\Box R_{\pm}^{\Omega}(\alpha+2,x)-\alpha
R_{\pm}^{\Omega}(\alpha,x) \right)\,V_x^0 = 0.
$$  
Division by $\alpha$ and the limit $\alpha\to 0$ yield
$$ 
2\nabla_{\grad R_{\pm}^{\Omega}(2,x)} V_x^0-
\left(\Box R_{\pm}^{\Omega}(2,x)- R_{\pm}^{\Omega}(0,x) \right)\,V_x^0 = 0. 
$$  
Therefore we recover condition (\ref{Bostrich}) if and only if
$V_x^0(x)=\id_{E_x}$. \medskip

To get formal fundamental solutions $\RR_\pm(x)$ for $P$ we hence need
$V_x^k\in C^{\infty}(\Omega,E\otimes E_x^*)$ satisfying
\begin{equation}
\nabla_{\grad\Gamma_x} V_x^k-\left(\tfrac{1}{2}\Box\Gamma_x -n
+2k\right)V_x^k = 2k\,PV_x^{k-1}
\label{eq:transport}
\end{equation}
for all $k\ge 0$ with ``initial condition'' $V_x^0(x)=\id_{E_x}$.
In particular, we have the same conditions on $V_x^k$ for $\RR_+(x)$ and for 
$\RR_-(x)$.
Equations (\ref{eq:transport}) are known as {\em transport equations}.

\section{Uniqueness of the Hadamard coefficients}

This and the next section are devoted to uniqueness and existence of solutions
to the transport equations.

\Definition{
Let $\Omega$ be timeoriented and geodesically starshaped with respect to
$x\in\Omega$. 
Sections $V_x^k\in C^{\infty}(\Omega,E\otimes E_x^*)$ are called 
\defem{Hadamard coefficients} 
\indexn{Hadamard coefficients>defemidx}
\indexs{V*@$V^k_x$, Hadamard coefficient at point $x$}
for $P$ at $x$ if they satisfy the transport
equations (\ref{eq:transport}) for all $k\ge 0$ and $V_x^0(x)=\id_{E_x}$.
Given Hadamard coefficients $V_x^k$ for $P$ at $x$ we call the formal series
\indexn{formal advanced fundamental solution>defemidx}
\indexs{R*@$\protect\RR_+(x)$, formal advanced fundamental solution}
$$
\RR_+(x) = \sum_{k=0}^{\infty} V_x^k \cdot R_+^{\Omega}(2+2k,x)
$$ 
a \defem{formal advanced fundamental solution for $P$ at $x$} and 
\indexn{formal retarded fundamental solution>defemidx}
\indexs{R*@$\protect\RR_-(x)$, formal retarded fundamental solution}
$$
\RR_-(x) = \sum_{k=0}^{\infty} V_x^k \cdot R_-^{\Omega}(2+2k,x)
$$ 
a \defem{formal retarded fundamental solution for $P$ at $x$}.
}

In this section we show uniqueness of the Hadamard coefficients (and hence
of the formal fundamental solutions $\RR_\pm(x)$) by
deriving explicit formulas for them.
These formulas will also be used in the next section to prove existence.

For $y\in\Omega$ we denote the $\nabla$-parallel translation along the
(unique) geodesic from $x$ to $y$ by
\indexs{Pixy@$\Pi^x_y:E_x\to E_y$, parallel translation along the geodesic from
  $x$ to $y$} 
$$ 
\Pi^x_y:E_x\to E_y.
$$
We have $\Pi^x_x = \id_{E_x}$ and $(\Pi^x_y)^{-1} = \Pi^y_x$.
Note that the map $\Phi:\Omega\times [0,1] \to \Omega$, 
$\Phi(y,s)=\exp_x(s\cdot\exp_x^{-1}(y))$, is well-defined and smooth since
$\Omega$ is geodesically starshaped with respect to $x$.
\indexs{Phiys@$\Phi(y,s)=\exp_x(s\cdot\exp_x^{-1}(y))$}

\begin{lemma}
Let $V_x^k$ be Hadamard coefficients for $P$ at $x$.
Then they are given by
\begin{equation}\label{vaunull}
V_x^0(y) = \mu_x^{-1/2}(y)\Pi^x_y 
\end{equation}
and for $k\ge 1$
\begin{equation}\label{vaukaglatt}
V_x^k(y)=
-k\,\mu_x^{-1/2}(y) \,\Pi^x_{y}\int_0^1 \mu_x^{1/2}(\Phi(y,s)) s^{k-1}\,
\Pi^{\Phi(y,s)}_x(PV_x^{k-1}(\Phi(y,s)))\,ds .
\end{equation}
\end{lemma}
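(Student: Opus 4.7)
My plan is to convert the transport equations, which involve covariant differentiation along the vector field $\grad\Gamma_x$, into ordinary differential equations along radial geodesics emanating from $x$, and then solve these by a standard integrating-factor argument.

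The key geometric input is formula (\ref{gradGamma}): along the geodesic $y(s) := \exp_x(sZ) = \Phi(\exp_x(Z),s)$, one reads off that $\grad\Gamma_x|_{y(s)} = -2s\,\dot y(s)$. Hence $\nabla_{\grad\Gamma_x}$, restricted to this geodesic, equals $-2s$ times the covariant derivative along the curve. Combined with Lemma~\ref{Gammalemma}(3) and the same identity for $\grad\Gamma_x$, one obtains
\[
\tfrac{1}{2}\Box\Gamma_x - n = s\,\tfrac{d}{ds}\log\mu_x(y(s)).
\]
Substituting both expressions into (\ref{eq:transport}), setting $W(s) := \Pi^{y(s)}_x V_x^k(y(s)) \in E_x\otimes E_x^*$ so that $\frac{\nabla}{ds}V_x^k(y(s))$ becomes the ordinary derivative $W'(s)$, and dividing by $-2s$, the transport equation takes the linear ODE form
\[
W'(s) + \Bigl(\tfrac{1}{2}\tfrac{d}{ds}\log\mu_x(y(s)) + \tfrac{k}{s}\Bigr) W(s) \;=\; -\tfrac{k}{s}\,\Pi^{y(s)}_x\bigl(PV_x^{k-1}(y(s))\bigr).
\]

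An integrating factor for the left-hand side is $s^k\mu_x^{1/2}(y(s))$, giving
\[
\tfrac{d}{ds}\!\Bigl(s^k\mu_x^{1/2}(y(s))\,W(s)\Bigr) \;=\; -k\,s^{k-1}\mu_x^{1/2}(y(s))\,\Pi^{y(s)}_x\bigl(PV_x^{k-1}(y(s))\bigr).
\]
For $k=0$ the right-hand side vanishes and the ODE reduces to $W'+\tfrac{1}{2}(\log\mu_x)' W=0$ with initial value $W(0)=V_x^0(x)=\id_{E_x}$; using $\mu_x(x)=1$ from Lemma~\ref{taylormu}, integration yields $W(s)=\mu_x^{-1/2}(y(s))\id_{E_x}$, and applying $\Pi^{y(1)}_x{}^{-1}=\Pi^x_{y(1)}$ at $y=y(1)$ gives (\ref{vaunull}). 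For $k\ge 1$, the prefactor $s^k$ makes the boundary term at $s=0$ vanish, so integrating from $0$ to $1$ and solving for $W(1)$ produces
\[
W(1) = -\tfrac{k}{\mu_x^{1/2}(y)}\int_0^1 s^{k-1}\mu_x^{1/2}(\Phi(y,s))\,\Pi^{\Phi(y,s)}_x\bigl(PV_x^{k-1}(\Phi(y,s))\bigr)\,ds,
\]
and applying $\Pi^x_y$ yields exactly (\ref{vaukaglatt}).

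The only subtle point is the reduction of the transport equation at the vertex $s=0$, where $\grad\Gamma_x$ vanishes: I must check that the initial condition $V_x^0(x)=\id_{E_x}$ really does pin down a unique smooth solution of the ODE, and that for $k\ge 1$ the singular $k/s$ terms are cancelled by the $s^k$ factor in the integrating factor, ensuring that the explicit formula defines a smooth section through $x$. Both verifications are direct once one notes $\mu_x(x)=1$ and that $\Phi(y,\cdot)$ extends smoothly to $s=0$ with $\Phi(y,0)=x$. Since every $y\in\Omega$ lies on exactly one radial geodesic from $x$, the formulas are defined on all of $\Omega$ and uniqueness of the Hadamard coefficients follows.
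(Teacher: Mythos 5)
Your proof is correct and is in the same spirit as the paper's: both convert the transport PDE into a linear first-order ODE along radial geodesics and solve by an integrating factor. The executions differ in a way worth noting. You parametrize the geodesic affinely as $y(s)=\exp_x(sZ)$, read off $\grad\Gamma_x|_{y(s)}=-2s\,\dot y(s)$ from (\ref{gradGamma}), gauge away the connection via $W(s):=\Pi^{y(s)}_x V_x^k(y(s))$, and use the standard integrating factor $s^k\mu_x^{1/2}(y(s))$. The paper instead introduces the auxiliary function $\rho=\sqrt{|\Gamma_x|}$ (which absorbs the $2k$ term because $\partial_{\grad\Gamma_x}\log\rho^k=-2k$) and reparametrizes the geodesic as $c(t)=\exp_x(e^{2t}\eta)$ to obtain an integral curve of $-\grad\Gamma_x$, solving the ODE in $t$ and then substituting $s=e^{2\tau}$. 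A small advantage of your version is that the singular function $\rho$ never appears, so there is no need to first work on $\Omega\setminus C(x)$ and then extend by continuity across the light cone: the identity $\grad\Gamma_x|_{y(s)}=-2s\,\dot y(s)$ and the resulting ODE are valid along every radial geodesic, lightlike ones included, and the $k/s$ singularity at $s=0$ is handled directly by the $s^k$ factor, exactly as you observed. The essential ingredients — formula (\ref{gradGamma}), Lemma~\ref{Gammalemma}(3), $\mu_x(x)=1$, and $V_x^0(x)=\id_{E_x}$ — are the same in both arguments.
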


\begin{proof}
We put $\rho := \sqrt{|\Gamma_x|}$.
On $\Omega\setminus C(x)$ where $C(x) = \exp_x(C(0))$ is the light cone of $x$
we have $\Gamma_x(y) = - \varepsilon \rho^2(y)$ where $\varepsilon = 1$ if 
$\exp_x^{-1}(y)$ is spacelike and $\varepsilon = -1$ if $\exp_x^{-1}(y)$ is
timelike. 
Using the identities  $\tfrac{1}{2}\Box\Gamma_x
  -n=-\tfrac{1}{2}\del_{\grad \Gamma_x}\log\mu_x= -\del_{\grad
   \Gamma_x}\log(\mu_x^{1/2})$ from Lemma~\ref{Gammalemma}~(3) and 
$\del_{\grad \Gamma_x}(\log\rho^k)=k\del_{-2\varepsilon\rho\grad\rho}\log\rho=
-2\varepsilon k\rho \frac{\del_{\grad\rho\,}\rho}{\rho}=-2k $
we reformulate (\ref{eq:transport}):
\begin{equation*}
 \nabla_{\grad\Gamma_x} V_x^k
+\del_{\grad\Gamma_x}\log\left(\mu_x^{1/2}\cdot\rho^k\right)V_x^k 
= 2k\,PV_x^{k-1}. 
\end{equation*}
This is equivalent to
\begin{align} 
\nabla_{\grad\Gamma_x}\left(\mu_x^{1/2}\cdot\rho^k\cdot V_x^k\right)&=
\mu_x^{1/2}\cdot\rho^k\nabla_{\grad\Gamma_x}V_x^k
+\del_{\grad\Gamma_x}(\mu_x^{1/2}\cdot\rho^k) V_x^k \nonumber\\ 
&= \mu_x^{1/2}\cdot\rho^k\cdot 2k\cdot PV_x^{k-1}. 
\label{defining} 
\end{align}
For $k=0$ one has $\nabla_{\grad\Gamma_x}(\mu_x^{1/2}V_x^0)=0$. 
Hence $\mu_x^{1/2}V_x^0$ is $\nabla$-parallel along the
timelike and spacelike geodesics starting in $x$. 
By continuity it is $\nabla$-parallel along any geodesic starting at 
$x$. 
Since $\mu_x^{1/2}(x)V_x^0(x)=1\cdot \id_{E_x}=\Pi^x_x$ we conclude
$\mu_x^{1/2}(y)V_x^0(y)=\Pi^x_y$ for all $y\in\Omega$.
This shows (\ref{vaunull}).

Next we determine $V_x^k$ for $k\ge 1$. 
We consider some point $y\in\Omega\setminus C(x)$ outside the light cone of 
$x$.
We put $\eta:=\exp_x^{-1}(y)$. 
Then $c(t):=\exp_x(e^{2t}\cdot\eta)$ gives a reparametrization of the 
geodesic $\beta(t)=\exp_x(t\eta)$ from $x$ to $y$ 
such that $\dot{c}(t) = 2e^{2t}\dot{\beta}(e^{2t})$.
By Lemma~\ref{Gammalemma}~(1) 
\begin{eqnarray*}
\la \dot{c}(t),\dot{c}(t) \ra 
&=&
4e^{4t}\la \dot{\beta}(e^{2t}), \dot{\beta}(e^{2t}) \ra \\
&=&
4e^{4t}\la\eta,\eta\ra 
\,\,\,=\,\,\,
-4\gamma(e^{2t}\eta)\\
&=&
-4\Gamma_x(c(t))
\,\,\,=\,\,\,
\la\grad\Gamma_x,\grad\Gamma_x\ra .
\end{eqnarray*}
Thus $c$ is an integral curve of the vector field $-\grad\,\Gamma_x$. 
Equation~(\ref{defining}) can be rewritten as 
$$ 
-\frac{\nabla}{dt}\left(\mu_x^{1/2}\cdot\rho^k\cdot V_x^k
\right)(c(t))= \left(\mu_x^{1/2}\cdot\rho^k\cdot 2k\cdot
  PV_x^{k-1}\right)(c(t)),
$$ 
which we can solve explicitly:
\begin{eqnarray*}
\lefteqn{\left(\mu_x^{1/2}\cdot\rho^k\cdot V_x^k\right)(c(t))}\\
&=&
-\Pi^x_{c(t)}\left(\int_{-\infty}^t\Pi^{c(\tau)}_x
  \left(\mu_x^{1/2}\cdot\rho^k\cdot 2k\cdot 
  PV_x^{k-1} \right)(c(\tau))d\tau \right)\\
&=&
-2k\,\Pi^x_{c(t)}\left(\int_{-\infty}^t\mu_x^{1/2}
  (c(\tau))\rho(c(\tau))^k\Pi^{c(\tau)}_x
  \left(PV_x^{k-1}(c(\tau))\right)d\tau\right).  
\end{eqnarray*}
We have $\rho(c(\tau))^k= \rho(\exp_x(e^{2\tau}\eta))^k=
|\gamma(e^{2\tau}\eta)|^{k/2}=
|e^{4\tau}\gamma(\eta)|^{k/2}=
e^{2k\tau}|\gamma(\eta)|^{k/2}$. 
Since $y\not\in C(x)$ we can divide by $|\gamma(\eta)|\ne 0\,$:
\begin{eqnarray*}
\lefteqn{e^{2kt}\left(\mu_x^{1/2} V_x^k\right)(c(t))}\\
&=&
-2k\,\Pi^x_{c(t)}\left(\int_{-\infty}^t\mu_x^{1/2}(c(\tau))\, e^{2k\tau}\,
  \Pi^{c(\tau)}_x\left(PV_x^{k-1}(c(\tau))\right)\,d\tau\right) \\
&=&
-2k\,\Pi^x_{c(t)}
 \int_{0}^{e^{2t}}\mu_x^{1/2}(\exp_x(s\cdot\eta))\,s^k\,
  \Pi^{\exp_x(s\cdot\eta)}_x
   (PV_x^{k-1}(\exp_x(s\cdot\eta)))\,\,\frac{ds}{2s}
\end{eqnarray*}
where we used the substitution $s=e^{2\tau}$.
For $t=0$ this yields (\ref{vaukaglatt}).
\end{proof}

\begin{corollary}
Let $\Omega$ be timeoriented and geodesically starshaped with respect to
$x\in\Omega$.
Let $P$ be a normally hyperbolic operator acting on sections in a vector
bundle $E$ over $\Omega$.

Then the Hadamard coefficients $V_x^k$ for $P$ at $x$ are unique for all
$k\geq0$. 
\hfill$\Box$
\end{corollary}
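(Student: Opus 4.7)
The proof is essentially immediate from the lemma preceding the corollary, so the plan is to observe that the explicit formulas (\ref{vaunull}) and (\ref{vaukaglatt}) already contain the uniqueness statement and to spell out the induction.

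My proof would proceed by induction on $k\ge 0$. For $k=0$, the transport equation (\ref{eq:transport}) together with the initial condition $V_x^0(x)=\id_{E_x}$ reduces, via the manipulation carried out in the previous lemma, to $\nabla_{\grad\Gamma_x}(\mu_x^{1/2}V_x^0)=0$. This says that $\mu_x^{1/2}V_x^0$ is $\nabla$-parallel along every geodesic emanating from $x$ (first along timelike and spacelike ones, and then along lightlike ones by continuity). Since its value at $x$ is prescribed to be $\id_{E_x}$, it is completely determined, yielding formula (\ref{vaunull}). For the inductive step, assume $V_x^0,\ldots,V_x^{k-1}$ are already uniquely determined; then the right-hand side of (\ref{eq:transport}) is a fixed smooth section, so the transport equation becomes a first-order linear inhomogeneous ODE along each geodesic from $x$ (after the reparametrization $c(t)=\exp_x(e^{2t}\eta)$ showing that $-\grad\Gamma_x$ is the velocity field). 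The solution of such an ODE is unique once an initial condition at $t\to -\infty$ (i.e.\ at $x$) is fixed, and the formula (\ref{vaukaglatt}) gives precisely this solution on $\Omega\setminus C(x)$.

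The only point that requires a moment's care is the extension across the light cone $C(x)$, where the derivation in the lemma divided by $|\gamma(\eta)|\neq 0$. This is handled exactly as in the case $k=0$: the Hadamard coefficient $V_x^k$ is required to be smooth on all of $\Omega$, and since $\Omega\setminus C(x)$ is dense in $\Omega$, any smooth section satisfying (\ref{eq:transport}) is determined by its restriction to $\Omega\setminus C(x)$. Thus formula (\ref{vaukaglatt}), which is an identity of smooth sections on the open dense set $\Omega\setminus C(x)$, extends uniquely to $\Omega$ by continuity.

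The main (and really only) obstacle I expect is making sure the boundary condition at $t\to-\infty$ along the reparametrized geodesic $c$ is correctly identified with the initial condition at $x$, and that the integrand in (\ref{vaukaglatt}) is indeed integrable near $s=0$. For $k\ge 1$ the factor $s^{k-1}$ takes care of integrability, and the fact that $c(t)\to x$ as $t\to-\infty$ together with the boundedness of $\mu_x^{1/2}V_x^k$ near $x$ ensures the boundary term in the integration of the ODE vanishes. Once this is checked, the induction closes and the corollary follows: any two systems of Hadamard coefficients for $P$ at $x$ must agree, and consequently the formal fundamental solutions $\RR_\pm(x)$ are unique as well.
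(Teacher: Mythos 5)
Your proof is correct and takes essentially the same approach as the paper: the preceding lemma already derives the explicit formulas (\ref{vaunull}) and (\ref{vaukaglatt}) under the assumption that Hadamard coefficients exist, and the paper treats uniqueness as an immediate consequence of those formulas (hence the bare $\Box$). Your write-up merely makes explicit the induction, the ODE uniqueness along the reparametrized geodesics, the density argument across the light cone, and the integrability/boundary considerations, all of which are implicit in the lemma's derivation.
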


\section{Existence of the Hadamard coefficients}

Let $\Omega$ be timeoriented and geodesically starshaped with respect to
 $x\in\Omega$.
Let $P$ be a normally hyperbolic operator acting on sections in a real or
complex vector bundle $E$ over $\Omega$.
To construct Hadamard coefficients for $P$ at $x$ we use formulas 
(\ref{vaunull}) and (\ref{vaukaglatt}) obtained in the previous section 
as definitions:
\begin{equation*}\label{Vaunulldef} 
 V_x^0(y):=\mu_x^{-1/2}(y)\cdot\Pi^x_y      \mbox{\hspace{3cm} and}
\end{equation*}
\begin{equation*}\label{Vaukadef}
V_x^k(y):=
-k\,\mu_x^{-1/2}(y) \,\Pi^x_{y}\int_0^1 \mu_x^{1/2}(\Phi(y,s)) s^{k-1}\,
\Pi^{\Phi(y,s)}_x(PV_x^{k-1}(\Phi(y,s)))\,ds .
\end{equation*}
We observe that this defines smooth sections $V_x^k\in
C^\infty(\Omega,E\otimes E_x^*)$. 
We have to check for all $k\ge 0$
\begin{equation}\label{check}
  \nabla_{\grad\,\Gamma_x}\left(\mu_x^{1/2}\,\rho^k\,V_x^k\right)= 
  \mu_x^{1/2}\,\rho^k\cdot 2k\cdot PV_x^{k-1},
\end{equation} 
from which Equation~(\ref{eq:transport}) follows as we have already seen.
For $k=0$ Equation~(\ref{check}) obviously holds:
$$ 
\nabla_{\grad\,\Gamma_x}\left(\mu_x^{1/2}\,V_x^0\right)=
  \nabla_{\grad\,\Gamma_x}\Pi=0.
$$
For $k\ge 1$ we check:
\begin{eqnarray*}
\lefteqn{ \nabla_{\grad\,\Gamma_x}\left(\mu_x^{1/2}\,\rho^k\,V_x^k\right)(y)}\\
&=&
-k \nabla_{\grad\,\Gamma_x}\,\Pi^x_{y}\int_0^1\mu_x^{1/2}  
   (\Phi(y,s))\,
   \underbrace{\rho(y)^k\cdot s^k}_{=\rho(\Phi(y,s))^k}\,
    \Pi^{\Phi(y,s)}_xPV_x^{k-1}(\Phi(y,s))\,\frac{ds}{s} \\
&=&
-k \nabla_{\grad\,\Gamma_x}\,\Pi^x_{y}\int_0^1
    \left( \mu_x^{1/2}\rho^k\,\Pi^x_{\Phi(y,s)}(PV_x^{k-1})\right)(\Phi(y,s))
     \,\frac{ds}{s} \\
&\stackrel{s=e^{2\tau}}{=}& 
     -2k \nabla_{\grad\,\Gamma_x}\,\Pi^x_{y}\int_{-\infty}^0
      \left(\mu_x^{1/2} \rho^k\,\Pi^x_{\Phi(y,s)}(PV_x^{k-1})\right)
      (\underbrace{\Phi(y,e^{2\tau})}_{{\rm integral\;
            curve}\atop {\rm for\; }-\grad\Gamma_x})
      \, d\tau\\
&=& 
2k \;\Pi^x_{y}\; \frac{d}{dt}\Big|_{t=0} \int_{-\infty}^0
      \left( \mu_x^{1/2}\rho^k\,\Pi^x_{\Phi(y,s)}(PV_x^{k-1})\right)
      \left(\Phi\left(\Phi(y,e^{2t}),e^{2\tau}\right)\right)\,d\tau\\
&=& 
2k \;\Pi^x_{y}\; \frac{d}{dt}\Big|_{t=0} \int_{-\infty}^0
      \left( \mu_x^{1/2}\rho^k\,\Pi^x_{\Phi(y,s)}(PV_x^{k-1})\right)
      \left(\Phi\left(y,e^{2(\tau+t)}\right)\right)\,d\tau\\
&\stackrel{\tau'=\tau+t}{=}&
  2k \;\Pi^x_{y}\; \frac{d}{dt}\Big|_{t=0} \int_{-\infty}^t
      \left( \mu_x^{1/2}\rho^k\,\Pi^x_{\Phi(y,s)}(PV_x^{k-1})\right)
      \left(\Phi\left(y,e^{2\tau'}\right)\right)\,d\tau'\\
&=&
2k \;\Pi^x_{y}\;\left( \mu_x^{1/2}\rho^k\,\Pi^x_{\Phi(y,s)}(PV_x^{k-1})\right)
   (\underbrace{\Phi(y,e^0)}_{=y})\\
&=&
2k\,\mu_x^{1/2}(y)\,\rho^k(y)\,\big(PV_x^{k-1}\big)(y)
\end{eqnarray*}
which is (\ref{check}).
This shows the existence of the Hadamard coefficients and,
therefore, we have found formal fundamental solutions $\RR_\pm(x)$ for $P$ 
at fixed $x\in\Omega$. 

Now we let $x$ vary. 
We assume there exists an open subset $U\subset\Omega$ such that $\Omega$
is geodesically starshaped with respect to all $x\in U$.
This ensures that the Riesz distributions $R_\pm^\Omega(\alpha,x)$ are defined
for all $x\in U$.
We write $V_k(x,y):=V_x^k(y)$ for the Hadamard coefficients at $x$.
Thus $V_k(x,y) \in \Hom(E_x,E_y)=E_x^*\otimes E_y$.
\indexn{Hadamard coefficients}
\indexs{V*@$V_k(x,y)=V_x^k(y)$}
The explicit formulas (\ref{vaunull}) and (\ref{vaukaglatt}) show that the 
Hadamard coefficients $V_k$ also depend smoothly on $x$, i.~e.,
$$
V_k \in C^\infty(U\times\Omega,E^* \boxtimes E) .
$$
Recall that $E^* \boxtimes E$ is the bundle with fiber $(E^* \boxtimes
E)_{(x,y)} = E_x^*\otimes E_y$.
We have formal fundamental solutions for $P$ at all $x\in U$:
$$ 
\RR_\pm(x)=\sum_{k=0}^\infty V_k(x,\cdot)\,R_\pm^\Omega(2+2k,x) .
$$
We summarize our results about Hadamard coefficients obtained so far.

\begin{prop}
Let $\Omega$ be a Lorentzian manifold, let $U \subset \Omega$ be a nonempty 
open subset such that $\Omega$ is geodesically starshaped with respect to all
points $x\in U$. 
Let $P=\Box^\nabla+B$ be a normally hyperbolic operator acting on sections in
a real or complex vector bundle over $\Omega$.
Denote the $\nabla$-parallel transport by $\Pi$.

Then at each $x\in U$ there are unique Hadamard coefficients $V_k(x,\cdot)$
for $P$, $k\geq 0$.
They are smooth, $V_k\in C^\infty(U\times\Omega,E^*\boxtimes E)$, and are
given by 
$$
V_0(x,y)=\mu_x^{-1/2}(y)\cdot\Pi^x_y    
$$
and for $k\geq 1$
$$
V_k(x,y)=
-k\,\mu_x^{-1/2}(y) \,\Pi^x_{y}\int_0^1 \mu_x^{1/2}(\Phi(y,s)) s^{k-1}\,
\Pi^{\Phi(y,s)}_x(P_{(2)}V_{k-1})(x,\Phi(y,s))\,ds
$$
where $P_{(2)}$ denotes the action of $P$ on the second variable of $V_{k-1}$.
\indexs{P*@$P_{(2)}$, operator $P$ applied w.~r.~t.\ second variable}
\hfill$\Box$
\end{prop}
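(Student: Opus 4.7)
The plan is to assemble the uniqueness derivation and the existence verification already carried out in the two preceding sections, and then to add the observation about smooth dependence on the base point $x$.

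For uniqueness at a fixed $x\in U$, I would first rewrite the transport equation
\[
\nabla_{\grad\Gamma_x}V_x^k-\bigl(\tfrac12\Box\Gamma_x-n+2k\bigr)V_x^k=2k\,PV_x^{k-1}
\]
by combining the identity $\tfrac12\Box\Gamma_x-n=-\partial_{\grad\Gamma_x}\log\mu_x^{1/2}$ from Lemma~\ref{Gammalemma}(3) with the computation $\partial_{\grad\Gamma_x}\log\rho^k=-2k$ on $\Omega\setminus C(x)$, where $\rho=\sqrt{|\Gamma_x|}$. This converts the transport equation into a first-order linear ODE for $\mu_x^{1/2}\rho^k V_x^k$ along the integral curves of $-\grad\Gamma_x$. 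Since Lemma~\ref{Gammalemma}(1) shows that $c(t)=\exp_x(e^{2t}\eta)$ is such an integral curve, the ODE can be integrated by $\nabla$-parallel transport, with the initial condition at $t\to-\infty$ forced by the normalisation $V_x^0(x)=\id_{E_x}$ and by continuity at $x$. The substitution $s=e^{2\tau}$ converts the resulting line integral to the forms (\ref{vaunull}) and (\ref{vaukaglatt}), proving uniqueness on $\Omega\setminus C(x)$; continuity extends this to all of $\Omega$.

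For existence, I would take formulas (\ref{vaunull}) and (\ref{vaukaglatt}) as definitions. Smoothness in $y$ on all of $\Omega$ is transparent because $\mu_x$, $\Pi^x_{(\cdot)}$ and $\Phi(\cdot,s)$ are smooth and the $s$-integrand is compactly supported on $[0,1]$, with the troublesome $\rho^k$ factor already absorbed by the substitution. The transport equation is then checked by differentiating along $-\grad\Gamma_x$: reparametrising $s=e^{2\tau}$ turns the integral in the definition of $V_x^k$ into an integral along the flow, and $\tfrac{d}{dt}\big|_{t=0}$ recovers the right-hand side $2k\,\mu_x^{1/2}\rho^k\,PV_x^{k-1}$, as displayed in the computation around (\ref{check}). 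The case $k=0$ is immediate, and induction on $k$ yields all $V_x^k$.

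For joint smoothness $V_k\in C^\infty(U\times\Omega,E^*\boxtimes E)$, I would argue by induction on $k$. The map $(x,y)\mapsto\exp_x^{-1}(y)$ is smooth on the open set where it is defined, hence so are $\mu_x(y)$, $\Pi^x_y$, and $\Phi(y,s)=\exp_x(s\cdot\exp_x^{-1}(y))$ jointly in $(x,y,s)$. Starting from the smooth $V_0$, the integrand in the recursive formula is jointly smooth in $(x,y,s)$ on $U\times\Omega\times[0,1]$, and differentiation under the integral sign delivers smoothness of $V_k$.

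The main obstacle, and the reason for the two-step structure, is that the uniqueness argument manipulates $\rho$ and $\log\rho$, which blow up on $C(x)$, so it only determines $V_x^k$ off the light cone. The saving grace is that formulas (\ref{vaunull})–(\ref{vaukaglatt}) no longer contain these singular factors and are therefore well-defined and smooth on all of $\Omega$; so the off-cone uniqueness plus continuity gives global uniqueness, and the existence verification need only be performed off the cone (where the ODE computation is legitimate) and then extended by continuity.
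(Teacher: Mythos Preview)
Your proposal is correct and follows essentially the same approach as the paper: the proposition is a summary of the two preceding sections, and you have accurately reconstructed the uniqueness argument via the ODE for $\mu_x^{1/2}\rho^k V_x^k$ along integral curves of $-\grad\Gamma_x$ (with the substitution $s=e^{2\tau}$), the existence verification by differentiating the defining integral along the flow to recover (\ref{check}), and the joint smoothness in $(x,y)$ read off from the explicit formulas. Your remark about the light-cone issue --- that the ODE manipulation is only valid off $C(x)$ but the resulting formulas are globally smooth --- matches the paper's reasoning exactly.
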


These formulas become particularly simple along the diagonal, i.~e., for
$x=y$.
We have for any normally hyperbolic operator $P$
$$
V_0(x,x) = \mu_x(x)^{-1/2}\Pi^x_x=\id_{E_x}.
$$
For $k\geq 1$ we get 
\begin{eqnarray*}
V_k(x,x) 
&=&
-k\underbrace{\mu_x^{-1/2}(x)}_{=1}\cdot 1\cdot
\underbrace{{\Pi}^x_x}_{=\mathrm{id}}\int_0^1
s^{k-1}\underbrace{{\Pi}_x^x}_{=\mathrm{id}}
(P_{(2)}V_{k-1})(x,x)\mu_x^{-1/2}(x)ds\\  
&=&
-(P_{(2)}V_{k-1})(x,x).
\end{eqnarray*}
We compute $V_1(x,x)$ for $P=\Box^\nabla+B$.
By (\ref{vaukaglatt}) and Lemma~\ref{produktregel} we have
\begin{eqnarray*}
V_1(x,x)
&=&
-(P_{(2)}V_0)(x,x)\\
&=&
-P(\mu_x^{-1/2}{\Pi}^x_\bullet)(x)\\
&=&
-\mu_x^{-1/2}(x)\cdot P({\Pi}^x_\bullet)(x)
+2{\nabla}_{\underbrace{\grad\mu_x(x)}_{=0}}{\Pi}^x_\bullet(x)
-(\Box\mu_x^{-1/2})(x)\cdot\id_{E_x}\\
&=&
-(\Box^\nabla+B)({\Pi}^x_\bullet)(x)
-(\Box\mu_x^{-1/2})(x)\cdot\id_{E_x}\\
&=&
-B|_x-(\Box\mu_x^{-1/2})(x)\cdot\id_{E_x}.
\end{eqnarray*}
From Corollary~\ref{boxmuscal} we conclude
\[
V_1(x,x)=\frac{\mathrm{scal}(x)}{6}\id_{E_x}-B|_x.
\]

\Remark{
We compare our definition of Hadamard coefficients with the definition used
in \cite{Guenther} and in \cite{BK}.
In \cite[Chap.~3, Prop.~1.3]{Guenther} Hadamard coefficients $U_k$ are
solutions of the differential equations
\begin{equation}\label{GueHada}
\guenterl[\Gamma_x,U_k(x,\cdot)]+\left(M(x,\cdot)+2k\right)
U_k(x,\cdot)
= -PU_{k-1}(x,\cdot)  
\end{equation}
with initial conditions $U_0(x)=\id_{E_x}$,
where, in our terminology, $\guenterl[f,\cdot]$ denotes $-\nabla_{\grad
  f}(\cdot)$ for the $P$-compatible connection $\nabla$, and  
$M(x,\cdot)=\tfrac{1}{2}\Box\Gamma_x -n$.
Hence (\ref{GueHada}) reads as
$$
-\nabla_{\grad\Gamma_x}U_k(x,\cdot)+\left(\tfrac{1}{2}\Box\Gamma_x-n+2k\right)
U_k(x,\cdot)
=
-PU_{k-1}(x,\cdot).
$$
We recover our defining equations (\ref{Beka}) after the substitution 
\[U_k=\tfrac{1}{2^k\cdot k!} V_k.\]
}

\section{True fundamental solutions on small domains} 
\label{sec:trueffsmall} 

In this section we show existence of ``true'' fundamental solutions in the 
sense of Definition~\ref{deffundsol} on sufficiently small causal domains in a 
timeoriented Lorentzian manifold $M$.
Assume that $\Omega'\subset M$ is a geodesically convex open subset.
We then have the Hadamard coefficients $V_j \in C^\infty(\Omega'\times\Omega',
E^*\boxtimes E)$ and for all $x\in \Omega'$ the 
formal fundamental solutions 
$$
\RR_\pm(x) = \sum_{j=0}^\infty V_j(x,\cdot)\,R_\pm^{\Omega'}(2+2j,x) .
$$
Fix an integer $N \geq \frac n2$ where $n$ is the dimension of the manifold
$M$. 
Then for all $j \geq N$ the distribution $R_\pm^{\Omega'}(2+2j,x)$ is a 
continuous function on $\Omega'$.
Hence we can split the formal fundamental solutions
\indexn{formal advanced fundamental solution}
\indexn{formal retarded fundamental solution}
$$
\RR_\pm(x) = \sum_{j=0}^{N-1} V_j(x,\cdot)\,R_\pm^{\Omega'}(2+2j,x) 
+
\sum_{j=N}^{\infty} V_j(x,\cdot)\,R_\pm^{\Omega'}(2+2j,x) 
$$
where $\sum_{j=0}^{N-1} V_j(x,\cdot)\,R_\pm^{\Omega'}(2+2j,x)$ is a
well-defined $E_x^*$-valued distribution in $E$ over $\Omega'$ and
$\sum_{j=N}^{\infty} V_j(x,\cdot)\,R_\pm^{\Omega'}(2+2j,x)$ is a formal sum of 
continuous sections, $V_j(x,\cdot)\,R_\pm^{\Omega'}(2+2j,x)\in
C^0(\Omega',E_x^*\otimes E)$ for $j \geq N$.

Using suitable cut-offs we will now replace the infinite formal part of the
series by a convergent series.
Let $\sigma: \R \to \R$ be a smooth function vanishing outside $[-1,1]$,
such that $\sigma\equiv 1$ on $[-\tfrac12,\tfrac12]$ and $0\leq \sigma \leq 1$
everywhere.
We need the following elementary lemma.

\begin{lemma}\label{sigmadoof}
For every $l\in\N$ and every $\beta\geq l+1$ there exists a constant
$c(l,\beta)$ such that for all $0<\varepsilon \leq 1$ we have
$$
\left\|\frac{d^l}{dt^l}(\sigma(t/\varepsilon)t^{\beta})\right\|_{C^0(\R)}
\quad\leq\quad
\varepsilon\cdot c(l,\beta)\cdot \|\sigma\|_{C^l(\R)}.
$$
\end{lemma}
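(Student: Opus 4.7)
The plan is to apply the Leibniz rule to expand the derivative, then use the compact support of $\sigma$ to absorb the negative powers of $\varepsilon$.

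First I would write, by the general Leibniz rule,
$$
\frac{d^l}{dt^l}\bigl(\sigma(t/\varepsilon)\,t^{\beta}\bigr)
= \sum_{j=0}^{l}\binom{l}{j}\,\varepsilon^{-j}\sigma^{(j)}(t/\varepsilon)\cdot\frac{\beta!}{(\beta-l+j)!}\,t^{\beta-l+j}.
$$
Note that $\beta \geq l+1$ ensures $\beta-l+j \geq 1 \geq 0$ for all $j=0,\ldots,l$, so no negative powers of $t$ appear and the binomial coefficients $\frac{\beta!}{(\beta-l+j)!}$ make sense.

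Next I would exploit that $\sigma$ vanishes outside $[-1,1]$, so $\sigma^{(j)}(t/\varepsilon)=0$ whenever $|t|>\varepsilon$. Hence on the support of each summand we have $|t|\leq\varepsilon$, and since $\varepsilon\leq 1$ and $\beta-l+j\geq 1$, this gives $|t|^{\beta-l+j}\leq \varepsilon^{\beta-l+j}$. Combining with the factor $\varepsilon^{-j}$ yields
$$
\varepsilon^{-j}\cdot|t|^{\beta-l+j}\leq \varepsilon^{\beta-l}\leq\varepsilon,
$$
where the last inequality uses $\beta-l\geq 1$ and $\varepsilon\leq 1$. Bounding $\|\sigma^{(j)}\|_{C^0(\R)}\leq \|\sigma\|_{C^l(\R)}$ for each $j\leq l$ then gives
$$
\left\|\frac{d^l}{dt^l}\bigl(\sigma(t/\varepsilon)\,t^{\beta}\bigr)\right\|_{C^0(\R)}
\leq \varepsilon\cdot\Bigl(\sum_{j=0}^{l}\binom{l}{j}\frac{\beta!}{(\beta-l+j)!}\Bigr)\cdot\|\sigma\|_{C^l(\R)}.
$$
Setting $c(l,\beta):=\sum_{j=0}^{l}\binom{l}{j}\frac{\beta!}{(\beta-l+j)!}$ completes the proof.

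There is essentially no obstacle here; the whole point is the bookkeeping that the $j$ negative powers of $\varepsilon$ coming from the chain rule on $\sigma(t/\varepsilon)$ are exactly compensated (with one power to spare, thanks to $\beta\geq l+1$) by the positive powers of $t$ that survive on the support of $\sigma(\,\cdot/\varepsilon)$. The constant $c(l,\beta)$ depends only on $l$ and $\beta$, as required.
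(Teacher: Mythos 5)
Your proof is correct and follows essentially the same route as the paper's: apply the Leibniz rule, then use the vanishing of $\sigma^{(j)}(t/\varepsilon)$ outside $|t|\leq\varepsilon$ to absorb the factor $\varepsilon^{-j}$ into the surviving powers of $t$, leaving one spare power of $\varepsilon$ because $\beta\geq l+1$. One notational caution: $\beta$ is an arbitrary real number $\geq l+1$ (in the later application it equals $j+1-n/2$, which is a half-integer when $n$ is odd), so the coefficient should be written as the falling factorial $\beta(\beta-1)\cdots(\beta-l+j+1)$, as the paper does, rather than $\beta!/(\beta-l+j)!$, which only literally makes sense for integer $\beta$.
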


\begin{proof}
 \begin{eqnarray*}
\lefteqn{\left\|\frac{d^l}{dt^l}(\sigma(t/\varepsilon)t^{\beta})
  \right\|_{C^0(\R)}}\\  
&\leq&
\sum_{m=0}^l \begin{pmatrix}l \cr m \end{pmatrix}
\left\|\frac{1}{\varepsilon^m} \sigma^{(m)}(t/\varepsilon)
\cdot {\beta}({\beta}-1)\cdots
({\beta}-l+m+1)t^{{\beta}-l+m}\right\|_{C^0(\R)}\\ 
&=&
\sum_{m=0}^l \begin{pmatrix}l \cr m \end{pmatrix}
\cdot {\beta}({\beta}-1)\cdots ({\beta}-l+m+1)
\varepsilon^{{\beta}-l}
\left\|(t/\varepsilon)^{{\beta}-l+m}\sigma^{(m)}
(t/\varepsilon)\right\|_{C^0(\R)}.  
\end{eqnarray*}
Now $\sigma^{(m)}(t/\varepsilon)$ vanishes for $|t|/\varepsilon\geq 1$
and thus $\|(t/\varepsilon)^{{\beta}-m+l}\sigma^{(m)}
(t/\varepsilon)\|_{C^0(\R)} \leq \|\sigma^{(m)}\|_{C^0(\R)}$.
Moreover, ${\beta}-l \geq 1$, hence
$\varepsilon^{{\beta}-l}\leq \varepsilon$. 
Therefore
\begin{eqnarray*}
\left\|\frac{d^l}{dt^l}(\sigma(t/\varepsilon)t^{\beta})\right\|_{C^0(\R)}
&\leq&
\varepsilon
\sum_{m=0}^l \begin{pmatrix}l \cr m \end{pmatrix}
\cdot {\beta}({\beta}-1)\cdots ({\beta}-l+m+1)
\left\|\sigma^{(m)}\right\|_{C^0(\R)}\\ 
&\leq&
\varepsilon\, c(l,\beta)\, \|\sigma\|_{C^l(\R)}.
\end{eqnarray*}
\end{proof}

We define $\Gamma\in C^\infty(\Omega'\times\Omega',\R)$ by
$\Gamma(x,y):=\Gamma_x(y)$ 
\indexs{Ga*@$\Gamma(x,y)=\Gamma_x(y)$}
where $\Gamma_x$ is as in (\ref{eq:GammaDef}).
Note that $\Gamma(x,y)=0$ if and only if the geodesic joining $x$ and $y$ in
$\Omega'$ is lightlike.
In other words, $\Gamma^{-1}(0)=\bigcup_{x\in\Omega'}(C_+^{\Omega'}(x)\cup
C_-^{\Omega'}(x))$. 

\begin{lemma}\label{RtKonvergenz}
Let $\Omega \subset\subset \Omega'$ be a relatively compact open subset.
Then there exists a sequence of $\varepsilon_j \in (0,1]$, $j\geq N$, such that
for each $k\geq 0$ the series
\begin{eqnarray*}
(x,y) 
&\mapsto& 
\sum_{j=N+k}^{\infty}
\sigma(\Gamma(x,y)/\varepsilon_j)V_j(x,y)\,R_\pm^{\Omega'}(2+2j,x)(y)\\
&&=
\left\{\begin{array}{cl}
\sum_{j=N+k}^{\infty} C(2+2j,n)
\sigma(\Gamma(x,y)/\varepsilon_j)V_j(x,y)\,\Gamma(x,y)^{j+1-n/2} & 
\mbox{ if }y\in J_\pm^{\Omega'}(x)\\
0 & \mbox{ otherwise}
\end{array}\right.
\end{eqnarray*}
converges in $C^k(\overline\Omega\times\overline\Omega,E^*\boxtimes E)$.
In particular, the series 
$$
(x,y) \mapsto 
\sum_{j=N}^{\infty}
\sigma(\Gamma(x,y)/\varepsilon_j)V_j(x,y)\,R_\pm^{\Omega'}(2+2j,x)(y)
$$
defines a continuous section over
$\overline\Omega\times\overline\Omega$ and a smooth section over
$(\overline\Omega\times\overline\Omega) \setminus \Gamma^{-1}(0)$.
\end{lemma}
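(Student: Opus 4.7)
The plan is to combine Lemmas~\ref{CkNormvonProdukt}, \ref{CkNormvonVerkettung}, and \ref{sigmadoof} with a diagonal choice of the cut-off parameters $\varepsilon_j$. Fix $k\geq 0$ and any $j\geq N+k$, and set $\beta_j := j+1-\tfrac n2\geq k+1$. By Proposition~\ref{OmegaRiesz}(\ref{Rexplic}) the $j$-th summand equals $C(2+2j,n)\,V_j(x,y)\,\phi_j(\Gamma(x,y))$ on $J_\pm^{\Omega'}(x)$ and vanishes outside, where $\phi_j(t):=\sigma(t/\varepsilon_j)\,t^{\beta_j}$, extended by $0$ for $t\leq 0$. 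Since $\beta_j>k$, all derivatives of $\phi_j$ up to order $k$ vanish at $t=0$, so $\phi_j\in C^k(\R)$ and the summand extends to a $C^k$-section on $\overline\Omega\times\overline\Omega$.

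The next step is to estimate its $C^k$-norm. Lemma~\ref{sigmadoof} gives
$$
\|\phi_j\|_{C^k(I)}\;\leq\;\varepsilon_j\cdot c(k,\beta_j)\cdot\|\sigma\|_{C^k(\R)}
$$
on a bounded interval $I\supset\Gamma(\overline\Omega\times\overline\Omega)$. Since $\overline\Omega$ is compact in $\Omega'$, the norms $\|\Gamma\|_{C^k(\overline\Omega\times\overline\Omega)}$ and $\|V_j\|_{C^k(\overline\Omega\times\overline\Omega)}$ are finite. Applying Lemma~\ref{CkNormvonVerkettung} to $\phi_j\circ\Gamma$ and then Lemma~\ref{CkNormvonProdukt} to the product with $V_j$ (absorbing the scalar factor $|C(2+2j,n)|$) produces a bound of the form
$$
\bigl\|\sigma(\Gamma/\varepsilon_j)\,V_j\,R_\pm^{\Omega'}(2+2j,\cdot)\bigr\|_{C^k(\overline\Omega\times\overline\Omega)}\;\leq\;\varepsilon_j\,A_{j,k},
$$
with constants $A_{j,k}\geq 0$ depending on $j$, $k$, the metric, the bundle, the connection, and $\sigma$, but \emph{not} on $\varepsilon_j$.

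The heart of the argument is now a diagonal choice: inductively pick $\varepsilon_j\in(0,1]$ so small that $\varepsilon_j\,A_{j,k}\leq 2^{-j}$ for every $k\in\{0,1,\ldots,j-N\}$. For each $j$ only finitely many inequalities must be satisfied, so this is possible. Fixing any $k\geq 0$, the tail $\sum_{j\geq N+k}$ is then majorised termwise by $\sum_{j\geq N+k}2^{-j}$, which yields absolute convergence in $C^k(\overline\Omega\times\overline\Omega,E^*\boxtimes E)$. For the final assertion, the case $k=0$ already exhibits the full series ($j\geq N$) as a continuous section on $\overline\Omega\times\overline\Omega$, since by Proposition~\ref{OmegaRiesz}(\ref{Rexplic}) every summand is already continuous for $j\geq N\geq n/2$. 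On $(\overline\Omega\times\overline\Omega)\setminus\Gamma^{-1}(0)$ every individual summand is moreover smooth, so the $C^k$-convergence for every $k$ promotes the sum to a smooth section on that open set.

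The hard part will be the interplay between the growth of $A_{j,k}$ in both indices -- the factor $c(k,\beta_j)$ coming from Lemma~\ref{sigmadoof} is polynomial of degree $k$ in $\beta_j$, so it really does blow up as $j\to\infty$ -- and the requirement that a \emph{single} sequence $(\varepsilon_j)$ handle all $C^k$-norms simultaneously. The diagonal restriction $k\leq j-N$ above resolves this precisely because the $C^k$-convergence only concerns the tail $j\geq N+k$, so $\varepsilon_j$ need only dominate $A_{j,k}$ in that range.
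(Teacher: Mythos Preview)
Your proof is correct and follows essentially the same route as the paper's: bound $\sigma(t/\varepsilon_j)t^{j+1-n/2}$ via Lemma~\ref{sigmadoof}, propagate through Lemmas~\ref{CkNormvonVerkettung} and~\ref{CkNormvonProdukt}, and then make a diagonal choice of $\varepsilon_j$ subject to the finitely many constraints $k\leq j-N$. The paper carries out the same steps, treating the $k=0$ case separately first and writing out the intermediate constants explicitly, but the structure and the key observation about the diagonal choice are identical.
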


\begin{proof}
For $j\geq N \geq \tfrac n2$ the exponent in $\Gamma(x,y)^{j+1-n/2}$ is
positive.
Therefore the piecewise definition of the $j$-th summand yields a continuous
section over $\Omega'$.

The factor $\sigma(\Gamma(x,y)/\varepsilon_j)$ vanishes whenever
$\Gamma(x,y)\geq \varepsilon_j$.
Hence for $j\geq N\geq\tfrac n2$ and $0<\varepsilon_j\leq 1$ 
\begin{eqnarray*}
\lefteqn{\hspace{-3cm}\left\| (x,y) \mapsto
\sigma(\Gamma(x,y)/\varepsilon_j)V_j(x,y)\,R_\pm^{\Omega'}(2+2j,x)(y)
\right\|_{C^0(\overline\Omega\times\overline\Omega)}}\\
&\leq&
C(2+2j,n)\,\,\|V_j\|_{C^0(\overline\Omega\times\overline\Omega)}\,\,
\varepsilon_j^{j+1-n/2}\\
&\leq&
C(2+2j,n)\,\,\|V_j\|_{C^0(\overline\Omega\times\overline\Omega)}\,\,
\varepsilon_j .
\end{eqnarray*}
Hence if we choose $\varepsilon_j \in (0,1]$ such that 
$$
 C(2+2j,n)\,\,\|V_j\|_{C^0(\overline\Omega\times\overline\Omega)}\,\,
\varepsilon_j < 2^{-j},
$$
then the series converges in the $C^0$-norm and therefore defines a continuous 
section. 

For $k\geq 0$ and $j\geq N+k \geq \frac{n}{2} + k$ the function
$\Gamma^{j+1-\tfrac{n}{2}}$ vanishes to $(k+1)$-st order along 
$\Gamma^{-1}(0)$.
Thus the $j$-th summand in the series is of regularity $C^k$.
Writing $\sigma_j(t) := \sigma(t/\varepsilon_j)t^{j+1-n/2}$ we know from
Lemma~\ref{sigmadoof} that
$$
\|\sigma_j\|_{C^k(\R)} \leq \varepsilon_j \cdot c_1(k,j,n) \cdot
\|\sigma\|_{C^k(\R)}
$$
where here and henceforth $c_1, c_2, \ldots$ denote certain universal
positive constants whose precise values are of no importance.
Using Lemmas~\ref{CkNormvonProdukt} and \ref{CkNormvonVerkettung} we obtain
\begin{eqnarray*}
\lefteqn{\hspace{-1cm}\left\| (x,y) \mapsto
\sigma(\Gamma(x,y)/\varepsilon_j)V_j(x,y)\,R_\pm^{\Omega'}(2+2j,x)(y)
\right\|_{C^k(\overline\Omega\times\overline\Omega)}}\\
&\leq&
C(2+2j,n)\| (\sigma_j\circ\Gamma) \cdot V_j
\|_{C^k(\overline\Omega\times\overline\Omega)}\hspace{2cm}\\ 
&\leq&
c_2(k,j,n)\cdot \|
\sigma_j\circ\Gamma\|_{C^k(\overline\Omega\times\overline\Omega)} \cdot\| V_j
\|_{C^k(\overline\Omega\times\overline\Omega)}\\ 
&\leq&
c_3(k,j,n)\cdot \| \sigma_j\|_{C^k(\R)}\cdot
\max_{\ell=0,\ldots,k}\|\Gamma\|_{C^k(\overline\Omega
  \times\overline\Omega)}^\ell  
\cdot\| V_j \|_{C^k(\overline\Omega\times\overline\Omega)}\\
&\leq&
c_4(k,j,n)\cdot \varepsilon_j\cdot\| \sigma\|_{C^k(\R)}\cdot
\max_{\ell=0,\ldots,k}\|\Gamma\|_{C^k(\overline\Omega\times
  \overline\Omega)}^\ell  
\cdot\| V_j \|_{C^k(\overline\Omega\times\overline\Omega)} .
\end{eqnarray*}
Hence if we add the (finitely many) conditions on $\varepsilon_j$ that 
$$
c_4(k,j,n)\cdot \varepsilon_j 
\cdot\| V_j \|_{C^k(\overline\Omega\times\overline\Omega)}
\leq 2^{-j}
$$
for all $k\leq j-N$, then we have for fixed $k$
\begin{eqnarray*}
\lefteqn{\hspace{-2cm}\left\| (x,y) \mapsto
\sigma(\Gamma(x,y)/\varepsilon_j)V_j(x,y)\,R_\pm^{\Omega'}(2+2j,x)(y)
\right\|_{C^k(\overline\Omega\times\overline\Omega)}}\\ 
&\leq& 
2^{-j}\cdot\| \sigma\|_{C^k(\R)}\cdot
\max_{\ell=0,\ldots,k}\|\Gamma\|_{C^k(\overline\Omega
  \times\overline\Omega)}^\ell 
\end{eqnarray*}
for all $j\geq N+k$.
Thus the series
$$
(x,y) \mapsto 
\sum_{j=N+k}^{\infty}
\sigma(\Gamma(x,y)/\varepsilon_j)V_j(x,y)\,R_\pm^{\Omega'}(2+2j,x)(y)
$$
converges in $C^k(\overline\Omega\times\overline\Omega,E^*\boxtimes E)$.
All summands $\sigma(\Gamma(x,y)/\varepsilon_j)V_j(x,y)\,
R_\pm^{\Omega'}(2+2j,x)(y)$ are smooth on
$\overline\Omega\times\overline\Omega \setminus \Gamma^{-1}(0)$, thus
\begin{eqnarray*}
\lefteqn{(x,y) \mapsto \sum_{j=N}^{\infty}
\sigma(\Gamma(x,y)/\varepsilon_j)V_j(x,y)\,R_\pm^{\Omega'}(2+2j,x)(y)}\\
&=&
\sum_{j=N}^{N+k-1}
\sigma(\Gamma(x,y)/\varepsilon_j)V_j(x,y)\,R_\pm^{\Omega'}(2+2j,x)(y)\\
&&+
\sum_{j=N+k}^{\infty}
\sigma(\Gamma(x,y)/\varepsilon_j)V_j(x,y)\,R_\pm^{\Omega'}(2+2j,x)(y)
\end{eqnarray*}
is $C^k$ for all $k$, hence smooth on
$(\overline\Omega\times\overline\Omega) \setminus \Gamma^{-1}(0)$.
\end{proof}

Define distributions $\RRt_+(x)$ and $\RRt_-(x)$ by
$$
\RRt_\pm(x) := 
\sum_{j=0}^{N-1} V_j(x,\cdot)\,R_\pm^{\Omega'}(2+2j,x) 
+
\sum_{j=N}^{\infty} \sigma(\Gamma(x,\cdot)/\varepsilon_j)V_j(x,\cdot)\,
R_\pm^{\Omega'}(2+2j,x) .
$$
By Lemma~\ref{RtKonvergenz} and the properties of Riesz distributions we know
that 
\begin{equation}\label{suppRt}
\supp(\RRt_\pm(x)) \subset J_\pm^{\Omega'}(x),
\end{equation}
\begin{equation}\label{ssuppRt}
\ssupp(\RRt_\pm(x)) \subset C_\pm^{\Omega'}(x),
\end{equation}
and that $\ord(\RRt_\pm(x)) \leq n+1$.

\begin{lemma}\label{Rtistfastfundsol}
The $\varepsilon_j$ in Lemma~\ref{RtKonvergenz} can be chosen such that in
addition to the assertion in Lemma~\ref{RtKonvergenz} we have on $\Omega$
\begin{equation}
P_{(2)}\RRt_\pm(x) = \delta_x + K_\pm(x,\cdot)
\label{Rtffs}
\indexs{K*@$K_\pm(x,y)$, error term for approximate fundamental solution}
\end{equation}
with smooth $K_\pm\in
C^\infty(\overline\Omega\times\overline\Omega,E^*\boxtimes E)$.
\end{lemma}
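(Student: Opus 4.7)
The plan is to apply $P_{(2)}$ to $\RRt_\pm(x)$ termwise, telescope the principal contributions using the transport equations, and absorb the cut-off errors into a smooth remainder. To justify termwise differentiation of the tail $\sum_{j\ge N}\sigma_j V_j R_\pm^{\Omega'}(2j+2,x)$, we first add finitely many new conditions to those of Lemma~\ref{RtKonvergenz} so that this series converges in every $C^k$-norm; then $P_{(2)}$ commutes with the summation.

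Next I compute the contribution of each individual summand. For $j\ge 1$, Lemma~\ref{produktregel} applied with $f=R_\pm^{\Omega'}(2j+2,x)$ (treated distributionally) combined with Proposition~\ref{OmegaRiesz}~(\ref{gammaR})--(\ref{boxR}) and the transport equation (\ref{eq:transport}) for $V_j$ yields the clean identity
\[
P_{(2)}\bigl(V_j R_\pm^{\Omega'}(2j+2,x)\bigr) = (P_{(2)}V_j)\,R_\pm^{\Omega'}(2j+2,x) - (P_{(2)}V_{j-1})\,R_\pm^{\Omega'}(2j,x),
\]
while for $j=0$ the relation (\ref{Bostrich}) and Proposition~\ref{OmegaRiesz}~(\ref{R0}) give $P_{(2)}\bigl(V_0 R_\pm^{\Omega'}(2,x)\bigr)=\delta_x + (P_{(2)}V_0)R_\pm^{\Omega'}(2,x)$. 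Summed over $j=0,\dots,N-1$ these telescope to $\delta_x + (P_{(2)}V_{N-1})R_\pm^{\Omega'}(2N,x)$. For the tail $j\ge N$, the extra factor $\sigma_j$ produces, besides the above bracket multiplied by $\sigma_j$, the cut-off error
\[
E_j := -2\nabla_{\grad_{(2)}\sigma_j}\bigl(V_j R_\pm^{\Omega'}(2j+2,x)\bigr) + (\Box_{(2)}\sigma_j)\,V_j R_\pm^{\Omega'}(2j+2,x).
\]
Performing an Abel summation on the $\sigma_j$-telescoping part and combining with the $j<N$ contribution gives
\[
P_{(2)}\RRt_\pm(x) = \delta_x + (1-\sigma_N)(P_{(2)}V_{N-1})R_\pm^{\Omega'}(2N,x) + \sum_{j\ge N}(\sigma_j-\sigma_{j+1})(P_{(2)}V_j)R_\pm^{\Omega'}(2j+2,x) + \sum_{j\ge N}E_j,
\]
and the last three terms form the candidate $K_\pm(x,\cdot)$.

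The decisive observation is that each of $1-\sigma_N$, $\sigma_j-\sigma_{j+1}$, $\grad_{(2)}\sigma_j$, and $\Box_{(2)}\sigma_j$ vanishes on a neighbourhood of $\{\Gamma=0\}$ inside $J_\pm^{\Omega'}(x)$; on the complement, Proposition~\ref{OmegaRiesz}~(\ref{Rexplic}) identifies $R_\pm^{\Omega'}(2j+2,x)$ with the smooth function $C(2j+2,n)\Gamma^{j+1-n/2}$, so every individual summand is smooth on $\overline{\Omega}\times\overline{\Omega}$. To upgrade this to $C^\infty$ convergence of the two infinite series, note that an $l$-th derivative of $\sigma(\Gamma/\varepsilon_j)$ is supported on $\{\Gamma\le\varepsilon_j\}$ and carries a factor $\varepsilon_j^{-l}$, while $R_\pm^{\Omega'}(2j+2,x)$ is bounded there by a multiple of $\varepsilon_j^{j+1-n/2}$. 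Applying Lemma~\ref{sigmadoof} to $t\mapsto\sigma(t/\varepsilon_j)\,t^{j+1-n/2}$ (valid whenever $j+1-n/2\ge l+1$, which holds for $l\le j-N$ since $N\ge n/2$) yields a net factor $\varepsilon_j$; together with Lemmas~\ref{CkNormvonProdukt} and \ref{CkNormvonVerkettung} this bounds the $C^k$-norm of the $j$-th summand by $\varepsilon_j$ times a constant depending only on $k$, $j$, $n$ and finitely many derivatives of $V_j$ and $\Gamma$. Exactly as in Lemma~\ref{RtKonvergenz}, imposing finitely many extra conditions on each $\varepsilon_j$ to force each such $C^k$-norm below $2^{-j}$ (for all $k\le j-N$) produces $K_\pm\in C^\infty(\overline{\Omega}\times\overline{\Omega},E^*\boxtimes E)$.

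The main obstacle is the interplay between the $\varepsilon_j^{-l}$ blow-up of cut-off derivatives and the $\varepsilon_j^{j+1-n/2}$ decay of $R_\pm^{\Omega'}(2j+2,x)$ on the cut-off shell; this balance is precisely what Lemma~\ref{sigmadoof} encodes and is the reason the truncation index must satisfy $N\ge n/2$. Once this trade-off is understood the rest is bookkeeping: a careful recording of the Abel transform, and the by-now familiar diagonal-style choice of $\varepsilon_j$ that handles every $C^k$-norm with only finitely many constraints per index.
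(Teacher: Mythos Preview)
Your argument follows the paper's approach closely: the telescoping via the transport equations, the Abel-summed decomposition of the error (which the paper labels $\Sigma_1,\Sigma_2,\Sigma_3$ plus the boundary term $(1-\sigma_N)(P_{(2)}V_{N-1})R_\pm^{\Omega'}(2N,x)$), the smoothness of each summand via support disjoint from $\Gamma^{-1}(0)$, and the trade-off between $\varepsilon_j^{-l}$ and $\varepsilon_j^{j+1-n/2}$ controlled by Lemma~\ref{sigmadoof}.

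There is one genuine error. You claim that by adding further conditions on the $\varepsilon_j$ the tail $\sum_{j\ge N}\sigma_j V_j R_\pm^{\Omega'}(2j+2,x)$ can be made to converge in \emph{every} $C^k$-norm, and that this justifies commuting $P_{(2)}$ with the sum. This is impossible: near $\Gamma^{-1}(0)$ one has $\sigma_j\equiv 1$, so the $j$-th summand there equals $C(2+2j,n)\,V_j\,\Gamma^{j+1-n/2}$ extended by zero across the light cone, which is of regularity only $C^{j-N}$ (this is exactly the content of Lemma~\ref{RtKonvergenz}). In particular the $j=N$ term is merely continuous, and no choice of $\varepsilon_j$ can improve that. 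The correct justification for termwise differentiation is the one the paper uses: by Lemma~\ref{RtKonvergenz} the partial sums converge in $C^0(\overline\Omega\times\overline\Omega)$, hence as distributions, and then Lemma~\ref{distributionlimesvertausch} allows one to pass $P_{(2)}$ through the limit in the distributional sense. With this fix the remainder of your argument is correct.
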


\begin{proof}
From properties (\ref{Bostrich}) and (\ref{Beka}) of the Hadamard coefficients
we know 
\begin{equation}
P_{(2)}\left(\sum_{j=0}^{N-1} V_j(x,\cdot)\,R_\pm^{\Omega'}(2+2j,x)\right) 
=
\delta_x + (P_{(2)}V_{N-1}(x,\cdot))R_\pm^{\Omega'}(2N,x).
\label{Rteq1}
\end{equation}
Moreover, by Lemma~\ref{distributionlimesvertausch} we may interchange $P$
with the infinite sum and we get
\begin{eqnarray*}
\lefteqn{P_{(2)}\left(
\sum_{j=N}^{\infty} \sigma(\Gamma(x,\cdot)/\varepsilon_j)V_j(x,\cdot)\,
R_\pm^{\Omega'}(2+2j,x) 
\right)}\nonumber\\
&=&
\sum_{j=N}^{\infty} P_{(2)}\left(\sigma(\Gamma(x,\cdot)/
\varepsilon_j)V_j(x,\cdot)\, 
R_\pm^{\Omega'}(2+2j,x) 
\right)\nonumber\\
&=&
\sum_{j=N}^{\infty}\Big({\Box_{(2)}}(\sigma(\Gamma(x,\cdot)/\varepsilon_j))
V_j(x,\cdot)R_\pm^{\Omega'}(2+2j,x) 
-2\nabla^{(2)}_{{\grad_{(2)}} \sigma(\Gamma(x,\cdot)/\varepsilon_j)}
(V_j(x,\cdot)R_\pm^{\Omega'}(2+2j,x))\nonumber\\
&&\quad\quad
+ \sigma(\Gamma(x,\cdot)/\varepsilon_j) 
P_{(2)}(V_j(x,\cdot)R_\pm^{\Omega'}(2+2j,x))\Big)\nonumber\\
\end{eqnarray*}
Here and in the following $\Box_{(2)}$, $\grad_{(2)}$, and $\nabla^{(2)}$
indicate that the operators are applied with respect to the $y$-variable just
as for $P_{(2)}$.

Abbreviating $\Sigma_1 :=
\sum_{j=N}^{\infty}{\Box_{(2)}}(\sigma(\Gamma(x,\cdot)/\varepsilon_j))
V_j(x,\cdot)R_\pm^{\Omega'}(2+2j,x)$ and 
$\Sigma_2 := -2 \sum_{j=N}^{\infty}\nabla^{(2)}_{{\grad_{(2)}}
  \sigma(\Gamma(x,\cdot)/\varepsilon_j)}
(V_j(x,\cdot)R_\pm^{\Omega'}(2+2j,x))$
we have
\begin{eqnarray*}
\lefteqn{P_{(2)}\left(
\sum_{j=N}^{\infty} \sigma(\Gamma(x,\cdot)/\varepsilon_j)V_j(x,\cdot)\,
R_\pm^{\Omega'}(2+2j,x) 
\right)}\nonumber\\
&=&
\Sigma_1 + \Sigma_2 + 
\sum_{j=N}^{\infty}\sigma(\Gamma(x,\cdot)/\varepsilon_j) 
P_{(2)}(V_j(x,\cdot)R_\pm^{\Omega'}(2+2j,x))\\
&=&
\Sigma_1 + \Sigma_2 + 
\sum_{j=N}^{\infty}\sigma(\Gamma(x,\cdot)/\varepsilon_j)
\Big((P_{(2)}V_j(x,\cdot))R_\pm^{\Omega'}(2+2j,x)
-2\nabla^{(2)}_{{\grad_{(2)}} R_\pm^{\Omega'}(2+2j,x)}V_j(x,\cdot)\\
&&\quad\quad\quad\quad\quad\quad
+ V_j(x,\cdot){\Box_{(2)}} R_\pm^{\Omega'}(2+2j,x)\Big) .
\end{eqnarray*}
Properties (\ref{Bostrich}) and (\ref{Beka}) of the Hadamard coefficients
tell us
$$
V_j(x,\cdot){\Box_{(2)}} R_\pm^{\Omega'}(2+2j,x)-2\nabla^{(2)}_{{\grad_{(2)}}
  R_\pm^{\Omega'}(2+2j,x)}V_j(x,\cdot)=-P_{(2)}(V_{j-1}(x,\cdot)
R_\pm^{\Omega'}(2+2j,x))  
$$
and hence
\begin{eqnarray*}
\lefteqn{P_{(2)}\left(
\sum_{j=N}^{\infty} \sigma(\Gamma(x,\cdot)/\varepsilon_j)V_j(x,\cdot)\,
R_\pm^{\Omega'}(2+2j,x) 
\right)}\nonumber\\
&=&
\Sigma_1 + \Sigma_2 + 
\sum_{j=N}^{\infty}\sigma(\Gamma(x,\cdot)/\varepsilon_j)
\left((P_{(2)}V_j(x,\cdot))R_\pm^{\Omega'}(2+2j,x) -
  P_{(2)}V_{j-1}R_\pm^{\Omega'}(2j,x)\right)\\
&=&
\Sigma_1 + \Sigma_2 -
\sigma(\Gamma(x,\cdot)/\varepsilon_N)P_{(2)}V_{N-1}R_\pm^{\Omega'}(2N,x)\\ 
&&
+\sum_{j=N}^{\infty}\left(\sigma(\Gamma(x,\cdot)/\varepsilon_j) 
- \sigma(\Gamma(x,\cdot)/\varepsilon_{j+1})\right) 
(P_{(2)}V_j(x,\cdot))R_\pm^{\Omega'}(2+2j,x) .
\end{eqnarray*}
Putting $\Sigma_3 :=
\sum_{j=N}^{\infty}\left(\sigma(\Gamma(x,\cdot)/\varepsilon_j) -
  \sigma(\Gamma(x,\cdot)/\varepsilon_{j+1})\right)
(P_{(2)}V_j(x,\cdot))R_\pm^{\Omega'}(2+2j,x)$ and combining with (\ref{Rteq1})
yields 
\begin{equation}
P_{(2)}\RRt_\pm(x) - \delta_x
=
(1-\sigma(\Gamma(x,\cdot)/\varepsilon_{N-1}))
P_{(2)}V_{N-1}(x,\cdot)R_\pm^{\Omega'}(2N,x) +\Sigma_1 +\Sigma_2+\Sigma_3 .
\label{Rteq3}
\end{equation}
We have to show that the right hand side is actually smooth in both
variables.
Since 
$$
P_{(2)}V_{N-1}(x,y)R_\pm^{\Omega'}(2N,x)(y) =
\left\{
  \begin{array}{cl}
  C(2N,n)\,P_{(2)}V_{N-1}(x,y)\,\Gamma(x,y)^{N-n/2}, & 
  \mbox{ if }y\in J_\pm^{\Omega'}(x)\\
  0, & \mbox{ otherwise}
  \end{array}
\right.
$$
is smooth on $(\Omega'\times\Omega') \setminus \Gamma^{-1}(0)$ 
and since $1-\sigma(\Gamma(x,\cdot)/\varepsilon_j)$ vanishes on a neighborhood
of $\Gamma^{-1}(0)$ we have that 
$$
(x,y) \mapsto (1-\sigma(\Gamma(x,y)/\varepsilon_j))\cdot
P_{(2)}V_{N-1}(x,y)R_\pm^{\Omega'}(2N,x)(y)
$$ 
is smooth.
Similarly, the individual terms in the three infinite sums are smooth sections
because 
$\sigma(\Gamma/\varepsilon_{j})-\sigma(\Gamma/\varepsilon_{j+1})$,
${\grad_{(2)}}(\sigma\circ\tfrac{\Gamma}{\varepsilon_j})$, and
${\Box_{(2)}}(\sigma\circ\tfrac{\Gamma}{\varepsilon_j})$ all vanish on a
neighborhood 
of $\Gamma^{-1}(0)$. 
It remains to be shown that the three series in (\ref{Rteq3}) converge in all
$C^k$-norms. 

We start with $\Sigma_2$.
Let  $S_j := \{(x,y)\in\Omega'\times\Omega'\,|\,
\tfrac{\varepsilon_j}{2}\leq\Gamma(x,y) \leq\varepsilon_j\}$.

\begin{center}
\input{fig-Sj}
\end{center}

Since ${\grad_{(2)}}(\sigma\circ\tfrac{\Gamma}{\varepsilon_j})$ vanishes
outside the ``strip'' $S_j$, there exist constants $c_1(k,n)$, $c_2(k,n)$ and
$c_3(k,n,j)$ such that 
\begin{eqnarray*}
\lefteqn{\left\|
\nabla^{(2)}_{{\grad_{(2)}}(\sigma\circ\tfrac{\Gamma}{\varepsilon_j})}
\left(V_j(\cdot,\cdot)\,R_\pm^{\Omega'}(2+2j,\cdot)\right)
\right\|_{C^k(\overline\Omega\times\overline\Omega)}}\\
&=&
\left\|
\nabla^{(2)}_{{\grad_{(2)}}(\sigma\circ\tfrac{\Gamma}{\varepsilon_j})}
\left(V_j(\cdot,\cdot)\,R_\pm^{\Omega'}(2+2j,\cdot)\right)
\right\|_{C^k(\overline\Omega\times\overline\Omega\cap S_j)}\\
&\leq&
c_1(k,n)\cdot
\left\|\sigma\circ\tfrac{\Gamma}{\varepsilon_j}
\right\|_{C^{k+1}(\overline\Omega\times\overline\Omega\cap S_j)}
\cdot
\left\|
V_j(\cdot,\cdot)\,R_\pm^{\Omega'}(2+2j,\cdot)
\right\|_{C^{k+1}(\overline\Omega\times\overline\Omega\cap S_j)}\\
&\leq&
c_2(k,n)\cdot\|\sigma\|_{C^{k+1}(\R)}\cdot\max_{\ell=0,\dots,k+1}
\|\tfrac{\Gamma}{\varepsilon_j}
\|_{C^{k+1}(\overline\Omega\times\overline\Omega\cap S_j)}^\ell\\
&&
\cdot\|V_j\|_{C^{k+1}(\overline\Omega\times\overline\Omega\cap S_j)}
\cdot\|R_\pm^{\Omega'}(2+2j,\cdot)\|_{C^{k+1}(\overline\Omega\times
  \overline\Omega\cap S_j)} \\
&\leq&
c_2(k,n)\cdot
\frac{1}{\varepsilon_j^{k+1}}\cdot\|\sigma\|_{C^{k+1}(\R)}
\cdot\max_{\ell=0,\dots,k+1}
\|\Gamma\|_{C^{k+1}(\overline\Omega\times\overline\Omega)}^\ell\\
&&
\cdot\|V_j\|_{C^{k+1}(\overline\Omega\times\overline\Omega)}
\cdot\|R_\pm^{\Omega'}(2+2j,\cdot)\|_{C^{k+1}(\overline\Omega\times
  \overline\Omega\cap S_j)}\\
&\leq&
c_3(k,n,j)\cdot
\frac{1}{\varepsilon_j^{k+1}}\cdot\|\sigma\|_{C^{k+1}(\R)}
\cdot\max_{\ell=0,\dots,k+1}
\|\Gamma\|_{C^{k+1}(\overline\Omega\times\overline\Omega)}^\ell\\
&&
\cdot\|V_j\|_{C^{k+1}(\overline\Omega\times\overline\Omega)}
\cdot\|\Gamma^{1+j-n/2}\|_{C^{k+1}(\overline\Omega\times
  \overline\Omega\cap S_j)}.
\end{eqnarray*}
By Lemma~\ref{CkNormvonVerkettung} we have 
\begin{eqnarray*}
\lefteqn{\|\Gamma^{1+j-n/2}\|_{C^{k+1}(\overline\Omega\times\overline\Omega\cap
    S_j)}}\\ 
&\leq&
c_4(k)\cdot\|t\mapsto t^{1+j-n/2}\|_{C^{k+1}([\varepsilon_j/2,\varepsilon_j])}
\cdot \max_{\ell=0,\ldots,k+1}\|\Gamma\|_{C^{k+1}(\overline\Omega\times
  \overline\Omega\cap S_j)}^\ell\\
&\leq&
c_5(k,j,n)\cdot\varepsilon_j^{j-n/2-k}
\cdot \max_{\ell=0,\ldots,k+1}\|\Gamma\|_{C^{k+1}(\overline\Omega\times
  \overline\Omega\cap S_j)}^\ell .
\end{eqnarray*}
Thus
\begin{eqnarray*}
\lefteqn{\left\|
\nabla^{(2)}_{{\grad_{(2)}}(\sigma\circ\tfrac{\Gamma}{\varepsilon_j})}
\left(V_j(\cdot,\cdot)\,R_\pm^{\Omega'}(2+2j,\cdot)\right)
\right\|_{C^k(\overline\Omega\times\overline\Omega)}}\\
&\leq&
c_6(k,j,n)\cdot\|\sigma\|_{C^{k+1}(\R)}\cdot\left(\max_{\ell=0,\ldots,k+1}
\|\Gamma\|_{C^{k+1}(\overline\Omega\times\overline\Omega)}^\ell\right)^2
\cdot\|V_j\|_{C^{k+1}(\overline\Omega\times\overline\Omega)}
\cdot \varepsilon_j^{j-2k-n/2-1}\\
&\leq&
c_6(k,j,n)\cdot\|\sigma\|_{C^{k+1}(\R)}\cdot\max_{\ell=0,\ldots,k+1}
\|\Gamma\|_{C^{k+1}(\overline\Omega\times\overline\Omega)}^{2\ell}
\cdot\|V_j\|_{C^{k+1}(\overline\Omega\times\overline\Omega)}
\cdot \varepsilon_j
\end{eqnarray*}
if $j\geq 2k+n/2+2$.
Hence if we require the (finitely many) conditions
$$
c_6(k,j,n)\cdot\|V_j\|_{C^{k+1}(\overline\Omega\times\overline\Omega)}
\cdot \varepsilon_j
\leq 2^{-j}
$$
on $\varepsilon_j$ for all $k\leq j/2-n/4-1$, then almost all $j$-th terms of
the series $\Sigma_2$ are bounded in the $C^k$-norm by
$2^{-j}\cdot\|\sigma\|_{C^{k+1}(\R)}\cdot\max_{\ell=0,\ldots,k+1}\|\Gamma\|_{C^{k+1} 
  (\overline\Omega\times\overline\Omega)}^{2\ell}$.    
Thus $\Sigma_2$ converges in the $C^k$-norm for any $k$ and defines a smooth
section in $E^*\boxtimes E$ over $\overline\Omega\times\overline\Omega$.

The series $\Sigma_1$ is treated similarly.
To examine $\Sigma_3$ we observe that for $j \geq k+\tfrac{n}{2}$
\begin{eqnarray}
\lefteqn{\left\|\left((\sigma\circ\tfrac{\Gamma}{\varepsilon_j}) -
  (\sigma\circ\tfrac{\Gamma}{\varepsilon_{j+1}})\right)\cdot
(P_{(2)}V_j)\cdot R_\pm^{\Omega'}(2+2j,\cdot)
  \right\|_{C^k(\overline\Omega\times\overline\Omega)}}\nonumber\\ 
&\leq&
c_7(j,n)\cdot \left\|\left((\sigma\circ\tfrac{\Gamma}{\varepsilon_j}) -
  (\sigma\circ\tfrac{\Gamma}{\varepsilon_{j+1}})\right)\cdot
(P_{(2)}V_j)\cdot\Gamma^{1+j-n/2}
\right\|_{C^k(\overline\Omega\times\overline\Omega)}\nonumber\\
&\leq&
c_8(k,j,n)\cdot
\left\|\left((\sigma\circ\tfrac{\Gamma}{\varepsilon_j}) -
  (\sigma\circ\tfrac{\Gamma}{\varepsilon_{j+1}})\right)\cdot\Gamma^{k+1} 
\right\|_{C^k(\overline\Omega\times\overline\Omega)}\nonumber\\
&&
\cdot \left\|P_{(2)}V_j
\right\|_{C^k(\overline\Omega\times\overline\Omega)}
\cdot \left\|\Gamma^{j-k-n/2}
\right\|_{C^k(\overline\Omega\times\overline\Omega)}\nonumber\\
&\leq&
c_8(k,j,n)\cdot
\left(\left\|(\sigma\circ\tfrac{\Gamma}{\varepsilon_j})\cdot\Gamma^{k+1}
\right\|_{C^k(\overline\Omega\times\overline\Omega)}  
+ \left\|(\sigma\circ\tfrac{\Gamma}{\varepsilon_{j+1}})\cdot\Gamma^{k+1}
\right\|_{C^k(\overline\Omega\times\overline\Omega)}\right)\nonumber\\
&&
\cdot \left\|P_{(2)}V_j
\right\|_{C^k(\overline\Omega\times\overline\Omega)}
\cdot \left\|\Gamma^{j-k-n/2}
\right\|_{C^k(\overline\Omega\times\overline\Omega)} .
\label{Rteq4}
\end{eqnarray}
Putting $\sigma_j(t) := \sigma(t/\varepsilon_j)\cdot t^{k+1}$ we have
$(\sigma\circ\tfrac{\Gamma}{\varepsilon_j})\cdot\Gamma^{k+1} = \sigma_j \circ
\Gamma$. 
Hence by Lemmas~\ref{CkNormvonVerkettung} and \ref{sigmadoof}
\begin{eqnarray*}
\left\|(\sigma\circ\tfrac{\Gamma}{\varepsilon_j})\cdot\Gamma^{k+1}
\right\|_{C^k(\overline\Omega\times\overline\Omega)} 
&=&
\left\|\sigma_j \circ \Gamma
\right\|_{C^k(\overline\Omega\times\overline\Omega)}\\
&\leq&
c_9(k,n)\cdot \left\|\sigma_j\right\|_{C^k(\R)}
\cdot \max_{\ell=0,\ldots,k}
\left\|\Gamma\right\|_{C^k(\overline\Omega\times\overline\Omega)}^\ell\\
&\leq&
c_{10}(k,n)\cdot\varepsilon_j\cdot\left\|\sigma\right\|_{C^k(\R)}
\cdot \max_{\ell=0,\ldots,k}
\left\|\Gamma\right\|_{C^k(\overline\Omega\times\overline\Omega)}^\ell .
\end{eqnarray*}
Plugging this into (\ref{Rteq4}) yields
\begin{eqnarray*}
\lefteqn{\left\|\left((\sigma\circ\tfrac{\Gamma}{\varepsilon_j}) -
  (\sigma\circ\tfrac{\Gamma}{\varepsilon_{j+1}})\right)\cdot
(P_{(2)}V_j)\cdot R_\pm^{\Omega'}(2+2j,\cdot)
  \right\|_{C^k(\overline\Omega\times\overline\Omega)}}\\ 
&\leq&
c_{11}(k,j,n)\cdot(\varepsilon_j + \varepsilon_{j+1})
\cdot\left\|\sigma\right\|_{C^k(\R)}
\cdot \max_{\ell=0,\ldots,k}
\left\|\Gamma\right\|_{C^k(\overline\Omega\times\overline\Omega)}^\ell\\
&&
\cdot \left\|P_{(2)}V_j
\right\|_{C^k(\overline\Omega\times\overline\Omega)}
\cdot \left\|\Gamma^{j-k-n/2}
\right\|_{C^k(\overline\Omega\times\overline\Omega)} .
\end{eqnarray*}
Hence if we add the conditions on $\varepsilon_j$ that
$$
c_{11}(k,j,n)\cdot\varepsilon_j
\cdot \left\|P_{(2)}V_j
\right\|_{C^k(\overline\Omega\times\overline\Omega)}
\cdot \left\|\Gamma^{j-k-n/2}
\right\|_{C^k(\overline\Omega\times\overline\Omega)}
\leq 2^{-j-1}
$$
for all $k\leq j-\tfrac{n}{2}$ and
$$
c_{11}(k,j-1,n)\cdot\varepsilon_j
\cdot \left\|P_{(2)}V_{j-1}
\right\|_{C^k(\overline\Omega\times\overline\Omega)}
\cdot \left\|\Gamma^{j-1-k-n/2}
\right\|_{C^k(\overline\Omega\times\overline\Omega)}
\leq 2^{-j-2}
$$
for all $k\leq j-1-\tfrac{n}{2}$, then we have that almost all $j$-th terms in 
$\Sigma_3$ are bounded in the $C^k$-norm by
$2^{-j}\cdot\left\|\sigma\right\|_{C^k(\R)} \cdot \max_{\ell=0,\ldots,k}
\left\|\Gamma\right\|_{C^k(\overline\Omega\times\overline\Omega)}^\ell$.
Thus $\Sigma_3$ defines a smooth section as well.
\end{proof}

\begin{lemma}\label{Rtglattinx}
The $\varepsilon_j$ in Lemmas~\ref{RtKonvergenz} and \ref{Rtistfastfundsol}
can be chosen such that in addition there is a constant $C>0$ so that
$$
|\RRt_\pm(x)[\varphi]| \leq C\cdot \|\varphi\|_{C^{n+1}(\Omega)}
$$
for all $x\in\overline\Omega$ and all $\varphi\in\DD(\Omega,E^*)$.
In particular, $\RRt(x)$ is of order at most $n+1$.
Moreover, the map $x \mapsto \RRt_\pm(x)[\varphi]$ is for every fixed
$\varphi\in\DD(\Omega,E^*)$ a smooth section in $E^*$,
$$
\RRt_\pm(\cdot)[\varphi] \in C^\infty(\overline\Omega,E^*).
$$
\end{lemma}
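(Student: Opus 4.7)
The plan is to split the distribution $\RRt_\pm(x)$ into the finite sum
\[
S_\pm(x) := \sum_{j=0}^{N-1} V_j(x,\cdot)\,R_\pm^{\Omega'}(2+2j,x)
\]
and the infinite remainder $W_\pm(x,\cdot) := \sum_{j=N}^{\infty}\sigma(\Gamma(x,\cdot)/\varepsilon_j)V_j(x,\cdot)\,R_\pm^{\Omega'}(2+2j,x)$. By Lemma~\ref{RtKonvergenz} (which only constrains the $\varepsilon_j$ further, not incompatibly with Lemma~\ref{Rtistfastfundsol}), the section $W_\pm$ lies in $C^\infty(\overline\Omega\times\overline\Omega, E^*\boxtimes E)$, so it is a smooth kernel. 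I will estimate the two pieces separately and then handle smoothness in $x$.

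For the norm bound, fix $j\in\{0,\dots,N-1\}$. Proposition~\ref{OmegaRiesz}~(\ref{orderR}) provides, around each $x\in\overline{\Omega}$, a neighborhood and a constant on which $|R_\pm^{\Omega'}(2+2j,x')[\psi]|\le C_j\,\|\psi\|_{C^{n+1}(\Omega')}$ for all $\psi\in\DD(\Omega',\Co)$; by compactness of $\overline\Omega$ these constants can be chosen uniform in $x'\in\overline\Omega$. Applying this to $\psi=V_j(x,\cdot)\varphi$ and using Lemma~\ref{CkNormvonProdukt},
\[
\|V_j(x,\cdot)\varphi\|_{C^{n+1}(\Omega)}\le 2^{n+1}\,\|V_j\|_{C^{n+1}(\overline\Omega\times\overline\Omega)}\,\|\varphi\|_{C^{n+1}(\Omega)},
\]
so each term of $S_\pm(x)[\varphi]$ is bounded by a constant times $\|\varphi\|_{C^{n+1}(\Omega)}$ uniformly in $x\in\overline\Omega$. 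Since $W_\pm$ is smooth on the compact set $\overline\Omega\times\overline\Omega$, we have the trivial estimate
\[
|W_\pm(x,\cdot)[\varphi]|\le \vol(\Omega)\,\|W_\pm\|_{C^0(\overline\Omega\times\overline\Omega)}\,\|\varphi\|_{C^0(\Omega)},
\]
which is dominated by the $C^{n+1}$-norm. Summing the finitely many contributions yields the desired uniform bound, and in particular $\ord(\RRt_\pm(x))\le n+1$.

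For smoothness of $x\mapsto \RRt_\pm(x)[\varphi]$ on $\overline\Omega$, the contribution from $W_\pm$ is $x\mapsto \int_\Omega W_\pm(x,y)\,\varphi(y)\,\dV(y)$, which is smooth by differentiation under the integral, using the smoothness of $W_\pm$ and the compact support of $\varphi$. For the finite sum, choose an open neighborhood $U$ of $\overline\Omega$ with $\overline U\subset\Omega'$ and a cut-off $\chi\in\DD(\Omega',\R)$ with $\chi\equiv 1$ on $U$; then $(x',y)\mapsto \chi(x')V_j(x',y)\varphi(y)$ lies in $\DD(\Omega'\times\Omega', E^*\boxtimes\Co)$, so Proposition~\ref{OmegaRiesz}~(\ref{Rglattinx}) (applied componentwise with respect to a local trivialization of $E^*$) shows that $x'\mapsto R_\pm^{\Omega'}(2+2j,x')[y\mapsto V_j(x',y)\varphi(y)]$ is smooth on $U$, hence on $\overline\Omega$. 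This gives $\RRt_\pm(\cdot)[\varphi]\in C^\infty(\overline\Omega,E^*)$.

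There is no substantive obstacle here: the work was already done in Lemma~\ref{RtKonvergenz} (smoothness and convergence of the tail) and in the order/smoothness clauses of Proposition~\ref{OmegaRiesz}. The only points requiring a little care are the uniform-in-$x$ nature of the bound from Proposition~\ref{OmegaRiesz}~(\ref{orderR}) — handled by a compactness argument on $\overline\Omega$ — and the cut-off trick needed to fit the test section $V_j(\cdot,\cdot)\varphi$ into the hypothesis of Proposition~\ref{OmegaRiesz}~(\ref{Rglattinx}).
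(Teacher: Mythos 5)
Your norm bound is fine — it matches the paper's argument, with the extra care of making the estimate $\|V_j(x,\cdot)\varphi\|_{C^{n+1}}\lesssim\|\varphi\|_{C^{n+1}}$ explicit via Lemma~\ref{CkNormvonProdukt}, and the compactness argument making $C_j$ uniform in $x$ is what the paper implicitly relies on through the second clause of Proposition~\ref{OmegaRiesz}~(\ref{orderR}).

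The smoothness argument, however, has a genuine gap. You claim that $W_\pm := \sum_{j\geq N}\sigma(\Gamma/\varepsilon_j)V_j\,R_\pm^{\Omega'}(2+2j,\cdot)$ lies in $C^\infty(\overline\Omega\times\overline\Omega,E^*\boxtimes E)$, citing Lemma~\ref{RtKonvergenz}. But that lemma asserts something weaker: the tail starting at $j=N+k$ converges in $C^k$ for each fixed $k$, and the full series from $j=N$ is only a \emph{continuous} section over $\overline\Omega\times\overline\Omega$, smooth only away from $\Gamma^{-1}(0)$. Indeed each single summand $(x,y)\mapsto\sigma(\Gamma(x,y)/\varepsilon_j)V_j(x,y)R_\pm^{\Omega'}(2+2j,x)(y)$ has only finite regularity across the light cone: $R_\pm^{\Omega'}(2+2j,x)(y)$ vanishes like $\Gamma(x,y)^{1+j-n/2}$ there, and $\sigma(\Gamma/\varepsilon_j)\equiv 1$ near $\Gamma^{-1}(0)$ gives no help. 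So ``differentiation under the integral'' against a fixed smooth kernel is not available.

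The correct route is the one the norm-bound part already suggests: for each $k\geq 0$, write $\RRt_\pm(x)[\varphi]$ as the finite sum over $j=0,\dots,N+k-1$ plus the tail over $j\geq N+k$. Each of the finitely many terms, even those with $N\le j\le N+k-1$ whose \emph{kernels} are not smooth, yields a smooth function $x\mapsto\bigl(\sigma(\Gamma(x,\cdot)/\varepsilon_j)V_j(x,\cdot)R_\pm^{\Omega'}(2+2j,x)\bigr)[\varphi]$ by Proposition~\ref{OmegaRiesz}~(\ref{Rglattinx}) (your cut-off trick to land in $\DD(\Omega'\times\Omega')$ is exactly right for this step). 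The tail, whose kernel is $C^k$ by Lemma~\ref{RtKonvergenz}, pairs with $\varphi$ to give a $C^k$ function of $x$. Hence $\RRt_\pm(\cdot)[\varphi]$ is $C^k$ for every $k$ and therefore smooth. In short: smoothness in $x$ of the low-$j$ terms comes from the distributional smoothness statement of Proposition~\ref{OmegaRiesz}~(\ref{Rglattinx}), not from smoothness of the kernel, and one must strip off more of the series as $k$ grows.
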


We know already that for each $x\in\overline\Omega$ the distribution $\RRt(x)$
is of order at most $n+1$.
The point of the lemma is that the constant $C$ in the estimate
$|\RRt_\pm(x)[\varphi]| \leq C\cdot \|\varphi\|_{C^{n+1}(\Omega)}$ can be
chosen independently of $x$.

\begin{proof}
Recall the definition of $\RRt_\pm(x)$,
$$
\RRt_\pm(x) = 
\sum_{j=0}^{N-1} V_j(x,\cdot)\,R_\pm^{\Omega'}(2+2j,x) 
+
\sum_{j=N}^{\infty} \sigma(\Gamma(x,\cdot)/\varepsilon_j)V_j(x,\cdot)\,
R_\pm^{\Omega'}(2+2j,x) .
$$
By Proposition~\ref{OmegaRiesz}~(\ref{orderR}) there are constants $C_j>0$ such
that 
$|R^{\Omega'}_\pm(2+2j,x)[\varphi]| \leq C_j \cdot
\|\varphi\|_{C^{n+1}(\Omega)}$ for all $\varphi$ and all
$x\in\overline\Omega$.
Thus there is a constant $C'>0$ such that 
$$
\left|\sum_{j=0}^{N-1} V_j(x,\cdot)\,R_\pm^{\Omega'}(2+2j,x) [\varphi]\right| 
\leq C' \cdot \|\varphi\|_{C^{n+1}(\Omega)}
$$ 
for all $\varphi$ and all $x\in\overline\Omega$.
The remainder term $\sum_{j=N}^{\infty}
\sigma(\Gamma(x,y)/\varepsilon_j)V_j(x,y)\, R_\pm^{\Omega'}(2+2j,x)(y) =:
f(x,y)$ is a continuous section, hence
\begin{eqnarray*}
\left|\sum_{j=N}^{\infty}
\sigma(\Gamma(x,\cdot)/\varepsilon_j)V_j(x,\cdot)\, 
R_\pm^{\Omega'}(2+2j,x)[\varphi]\right| 
&\leq& 
\|f\|_{C^0(\overline\Omega\times\overline\Omega)}\cdot
\vol(\overline\Omega)\cdot\|\varphi\|_{C^{0}(\Omega)}\\
&\leq& 
\|f\|_{C^0(\overline\Omega\times\overline\Omega)}\cdot
\vol(\overline\Omega)\cdot\|\varphi\|_{C^{n+1}(\Omega)}
\end{eqnarray*}
for all $\varphi$ and all $x\in\overline\Omega$.
Therefore $C:= C' + \|f\|_{C^0(\overline\Omega\times\overline\Omega)}\cdot
\vol(\overline\Omega)$ does the job.

To see smoothness in $x$ we fix $k\geq 0$ and we write
\begin{eqnarray*}
\RRt_\pm(x)[\varphi] 
&=& 
\sum_{j=0}^{N-1} V_j(x,\cdot)\,R_\pm^{\Omega'}(2+2j,x)[\varphi] 
+
\sum_{j=N}^{N+k-1} \sigma(\Gamma(x,\cdot)/\varepsilon_j)V_j(x,\cdot)\,
R_\pm^{\Omega'}(2+2j,x)[\varphi]\\
&&+
\sum_{j=N+k}^{\infty} \sigma(\Gamma(x,\cdot)/\varepsilon_j)V_j(x,\cdot)\,
R_\pm^{\Omega'}(2+2j,x)[\varphi] .
\end{eqnarray*}
By Proposition~\ref{OmegaRiesz}~(\ref{Rglattinx}) the summands 
$V_j(x,\cdot)\,R_\pm^{\Omega'}(2+2j,x)[\varphi]$ and $\sigma(\Gamma(x,\cdot)/
\varepsilon_j)V_j(x,\cdot)\, R_\pm^{\Omega'}(2+2j,x)[\varphi]$ depend smoothly
on $x$.
By Lemma~\ref{RtKonvergenz} the remainder $\sum_{j=N+k}^{\infty}
\sigma(\Gamma(x,\cdot)/\varepsilon_j)V_j(x,\cdot)\, 
R_\pm^{\Omega'}(2+2j,x)[\varphi]$ is $C^k$.
Thus $x\mapsto \RRt_\pm(x)[\varphi]$ is $C^k$ for every $k$, hence smooth.
\end{proof}

 \Definition{
 If $M$ is a timeoriented Lorentzian manifold, then we call
 a subset $S \subset M\times M$ \defem{future-stretched} with respect to $M$ if
 $y\in J_+^M(x)$ whenever $(x,y)\in S$.
\indexn{future-stretched subset>defemidx}
 We call it \defem{strictly future-stretched} with respect to $M$ if $y\in
 I_+^M(x)$ 
\indexn{strictly future-stretched subset>defemidx}
 whenever $(x,y)\in S$.
 Analogously, we define \defem{past-stretched} and \defem{strictly
 past-stretched} 
 subsets.
\indexn{past-stretched subset>defemidx}
\indexn{strictly past-stretched subset>defemidx}
 }

We summarize the results obtained so far.

\begin{prop}\label{propRt}
Let $M$ be an $n$-dimensional timeoriented Lorentzian manifold and let $P$ be
a normally hyperbolic operator acting on sections in a vector bundle $E$ over
$M$.
Let $\Omega'\subset M$ be a convex open subset.
Fix an integer $N\geq \tfrac{n}{2}$ and fix a smooth function $\sigma:\R\to\R$
satisfying $\sigma\equiv 1$ outside $[-1,1]$, $\sigma \equiv 0$ on 
$[-\tfrac12,\tfrac12]$, and $0\leq\sigma\leq1$ everywhere.

Then for every relatively compact open subset $\Omega \subset\subset \Omega'$
there exists a sequence $\varepsilon_j > 0$, $j\geq N$, such that for every 
$x\in\overline\Omega$
$$
\RRt_\pm(x) = 
\sum_{j=0}^{N-1} V_j(x,\cdot)\,R_\pm^{\Omega'}(2+2j,x) 
+
\sum_{j=N}^{\infty} \sigma(\Gamma(x,\cdot)/\varepsilon_j)V_j(x,\cdot)\,
R_\pm^{\Omega'}(2+2j,x) 
$$
defines a distribution on $\Omega$ satisfying
\begin{enumerate}
\item 
$\supp(\RRt_\pm(x)) \subset J_\pm^{\Omega'}(x)$,
\item 
$\ssupp(\RRt_\pm(x)) \subset C_\pm^{\Omega'}(x)$,
\item
$P_{(2)}\RRt_\pm(x) = \delta_x + K_\pm(x,\cdot)$
with smooth $K_\pm\in
C^\infty(\overline\Omega\times\overline\Omega,E^*\boxtimes E)$,
\item \label{Rt:support}
$\supp(K_+)$ is future-stretched and $\supp(K_-)$ is
past-stretched with respect to $\Omega'$,
\item
$\RRt_\pm(x)[\varphi]$ depends smoothly on $x$ for every fixed
$\varphi\in\DD(\Omega,E^*)$,
\item
there is a constant $C>0$ such that $|\RRt_\pm(x)[\varphi]| \leq C\cdot
\|\varphi\|_{C^{n+1}(\Omega)}$ for all $x\in\overline\Omega$ and all 
$\varphi\in\DD(\Omega,E^*)$.
\end{enumerate}
\end{prop}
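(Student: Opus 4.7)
The plan is to observe that the proposition is essentially a collation of the three preceding lemmas, together with one new structural observation about $\supp(K_\pm)$. So the first thing to do is to check that the $\varepsilon_j$ can be chosen simultaneously to satisfy the hypotheses of Lemmas~\ref{RtKonvergenz}, \ref{Rtistfastfundsol}, and \ref{Rtglattinx}. Each of these lemmas imposes, for each fixed $j$, only \emph{finitely many} conditions of the form $c(k,j,n)\cdot\varepsilon_j \cdot (\textrm{some quantity})\le 2^{-j}$ (namely those indexed by $k$ ranging in a finite set depending on $j$, like $k\le j/2-n/4-1$ etc.). Hence by taking $\varepsilon_j\in(0,1]$ small enough we may satisfy all the conditions at once, which is what we need in order to invoke the three lemmas together.

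With such a sequence fixed, assertion (1) is (\ref{suppRt}), assertion (2) is (\ref{ssuppRt}), assertion (3) is the conclusion of Lemma~\ref{Rtistfastfundsol}, and assertions (5) and (6) are precisely the two statements of Lemma~\ref{Rtglattinx}. So only assertion (4) needs a further argument.

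For (4), the point is that $K_\pm$ is obtained by removing a delta term from $P_{(2)}\RRt_\pm(x)$, and $P$ is a local operator, so the support of $K_\pm(x,\cdot)$ cannot exceed $\supp(\RRt_\pm(x))\cup\{x\}$. Concretely, from (3) we have $K_\pm(x,\cdot)=P_{(2)}\RRt_\pm(x)-\delta_x$ as distributions on $\Omega$; but by (3) $K_\pm(x,\cdot)$ is in fact a smooth section, whereas $\delta_x$ is supported at $x$. Using that $P_{(2)}$ is a differential operator, $\supp(P_{(2)}\RRt_\pm(x))\subset\supp(\RRt_\pm(x))\subset J_\pm^{\Omega'}(x)$ by (1). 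Since $x\in J_\pm^{\Omega'}(x)$, the equation $K_\pm(x,\cdot)=P_{(2)}\RRt_\pm(x)-\delta_x$, interpreted on the open set $\Omega\setminus J_\pm^{\Omega'}(x)$, shows that $K_\pm(x,y)=0$ for $y\notin J_\pm^{\Omega'}(x)$. Equivalently, $(x,y)\in\supp(K_+)$ forces $y\in J_+^{\Omega'}(x)$, which is exactly the statement that $\supp(K_+)$ is future-stretched with respect to $\Omega'$; the argument for $K_-$ is analogous.

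There is no real obstacle here: the proposition is a bookkeeping statement that packages the three lemmas together, and the only genuinely new observation, assertion (4), is a direct consequence of the locality of $P_{(2)}$ combined with the support condition (1) already stated in the proposition. The mildly delicate point is purely combinatorial, namely checking that the finitely-many-per-$j$ smallness conditions from all three lemmas can be met with a single sequence $(\varepsilon_j)_{j\ge N}$, which is immediate.
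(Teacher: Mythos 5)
Your proposal is essentially correct, and for assertion (4) you take a genuinely different and cleaner route than the paper. The paper recalls from the proof of Lemma~\ref{Rtistfastfundsol} the explicit decomposition
$$
K_\pm(x,y) = (1-\sigma(\Gamma(x,y)/\varepsilon_{N-1}))\cdot
P_{(2)}V_{N-1}(x,y)\cdot R_\pm^{\Omega'}(2N,x)(y) +\Sigma_1 +\Sigma_2+\Sigma_3
$$
and observes that each summand carries a factor $R_\pm^{\Omega'}(2j,x)(y)$ with $j\ge N$, so $K_+(x,y)\not=0$ implies $y\in J_+^{\Omega'}(x)$. You obtain the same nonvanishing statement abstractly from (1) and (3) together with the locality of the differential operator $P_{(2)}$, without looking inside $K_\pm$ at all. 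That argument is valid and arguably preferable, since it exposes that (4) is really a formal consequence of (1) and (3).

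There is, however, a small gap at the very end. From ``$K_+(x,y)=0$ whenever $y\notin J_+^{\Omega'}(x)$'' you conclude, marked ``Equivalently'', that $(x,y)\in\supp(K_+)$ forces $y\in J_+^{\Omega'}(x)$. These two statements are not automatically equivalent: the first says $\{(x,y) : K_+(x,y)\not=0\}\subset\{(x,y) : y\in J_+^{\Omega'}(x)\}$, while the second is the same inclusion for the \emph{closure} $\supp(K_+)=\overline{\{K_+\not=0\}}$. Passing from the set where $K_+$ is nonzero to its closure requires that the right-hand side $\{(x,y) : y\in J_+^{\Omega'}(x)\}$ be (relatively) closed, i.e.\ that the causal relation ``$\leq$'' on $\Omega'$ is closed. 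This holds precisely because $\Omega'$ is geodesically convex (Lemma~\ref{kleinergleichabgeschl}), and it is exactly the point the paper makes explicit (``Since $\Omega'$ is geodesically convex causal futures are closed''). You need this remark; as written, the ``Equivalently'' is not justified. The rest of your proposal, in particular the observation that each of the three lemmas imposes only finitely many smallness conditions per $j$ so that one sequence $(\varepsilon_j)$ can satisfy all of them, matches the paper.
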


\begin{proof}
The only thing that remains to be shown is the statement (\ref{Rt:support}).
Recall from (\ref{Rteq3}) that in the notation of the proof of 
Lemma~\ref{Rtistfastfundsol}
$$
K_\pm(x,y) = (1-\sigma(\Gamma(x,y)/\varepsilon_{N-1}))\cdot
P_{(2)}V_{N-1}(x,y)\cdot R_\pm^{\Omega'}(2N,x)(y) +\Sigma_1 +\Sigma_2+\Sigma_3
. 
$$
The first term as well as all summands in the three infinite series 
$\Sigma_1$, $\Sigma_2$, and $\Sigma_3$ contain a factor
$R_\pm^{\Omega'}(2j,x)(y)$ for some $j\geq N$.
Hence if $K_+(x,y)\not=0$, then $y\in \supp(R_\pm^{\Omega'}(2j,x))\subset
J_+^{\Omega'}(x)$. 
In other words, $\{(x,y)\in\Omega\times\Omega\, |\, K_+(x,y) \not=0\}$ is
future-stretched with respect to $\Omega'$.
Since $\Omega'$ is geodesically convex causal futures are closed.
Hence $\supp(K_+) = \overline{\{(x,y)\in\Omega\times\Omega\, |\, K_+(x,y)
  \not=0\}}$ is future-stretched with respect to $\Omega'$ as well.
In the same way one sees that $\supp(K_-)$ is past-stretched.
\end{proof}

\Definition{
If the $\varepsilon_j$ are chosen as in Proposition~\ref{propRt}, then we call
$$
\RRt_\pm(x) = 
\sum_{j=0}^{N-1} V_j(x,\cdot)\,R_\pm^{\Omega'}(2+2j,x) 
+
\sum_{j=N}^{\infty} \sigma(\Gamma(x,\cdot)/\varepsilon_j)V_j(x,\cdot)\,
R_\pm^{\Omega'}(2+2j,x) 
\indexs{R*@$\protect\RRt_+(x)$, approximate advanced fundamental solution}
\indexs{R*@$\protect\RRt_-(x)$, approximate retarded fundamental solution}
\indexn{approximate fundamental solution>defemidx}
$$
an \defem{approximate advanced} or \defem{retarded fundamental solution}
respectively. 
}

From now on we assume that $\Omega\subset\subset\Omega'$ is a 
relatively compact {\em causal} subset.
Then for every $x\in\overline\Omega$ we have $J_\pm^{\overline\Omega}(x)
= J_\pm^{\Omega'}(x) \cap \overline\Omega$.
We fix approximate fundamental solutions $\RRt_\pm(x)$.

We use the corresponding $K_\pm$ as an integral kernel to define an integral
operator. 
Set for $u \in C^0(\overline\Omega,E^*)$ and $x \in \overline\Omega$
\begin{eqnarray}
(\KK_{\pm} u)(x) := \int_{\overline\Omega} K_{\pm}(x,y)u(y) {\dV}(y).
\label{Kdef}
\indexs{K*@$\protect\KK_{\pm}$, integral operator with kernel $K_{\pm}$}
\end{eqnarray}
Since $K_{\pm}$ is $C^{\infty}$ so is
$\KK_{\pm} u$, i.~e., $\KK_{\pm} u \in C^{\infty}(\overline\Omega, E^*)$.
By the properties of the support of $K_\pm$ the integrand
$K_{\pm}(x,y)u(y)$ vanishes unless $y \in J^{\overline\Omega}_\pm(x) \cap
\supp(u)$.
Hence $(\KK_{\pm} u)(x)=0$ if $J^{\overline\Omega}_\pm(x) \cap
\supp(u)=\emptyset$. 
In other words,
\begin{equation}
\supp(\KK_{\pm} u) \subset J_\mp^{\overline\Omega}(\supp(u)) .
\label{Ktraeger}
\end{equation}

If we put $C_k := \int_{\overline\Omega}\|K_\pm(\cdot,y)
\|_{C^k(\overline\Omega)}{\dV}(y)$, then 
$$
\|\KK_{\pm} u\|_{C^{k}(\overline\Omega)} \leq
C_k \cdot \|u\|_{C^{0}(\overline\Omega)}.
$$
Hence (\ref{Kdef}) defines a bounded linear map
$$
\KK_{\pm} : \quad C^0(\overline\Omega,E^*) \to 
C^{k}(\overline\Omega,E^*)
$$
for all $k\geq 0$.

\begin{lemma}\label{Klemma}
Let $\Omega\subset\subset\Omega'$ be causal.
Suppose $\overline\Omega$ is so small that 
\begin{equation}
\vol(\overline\Omega)\cdot
\|K_\pm\|_{C^0(\overline\Omega\times\overline\Omega)}<1.
\label{Omegaklein}
\end{equation}
Then 
$$
\id + \KK_{\pm} : \quad C^k(\overline\Omega,E^*) \to 
C^{k}(\overline\Omega,E^*)
$$
is an isomorphism with bounded inverse for all $k=0,1,2,\ldots$.
The inverse is given by the series
$$
(\id + \KK_{\pm})^{-1} = \sum_{j=0}^\infty (-\KK_{\pm})^j
$$
which converges in all $C^k$-operator norms.
The operator $(\id + \KK_{+})^{-1}\circ \KK_{+}$ has a smooth integral 
kernel with future-stretched support (with respect to $\overline\Omega$). 
The operator $(\id + \KK_{-})^{-1}\circ \KK_{-}$ has a smooth integral 
kernel with past-stretched support (with respect to $\overline\Omega$). 
\end{lemma}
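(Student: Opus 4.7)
The plan is to exploit the smallness condition to run a Neumann-series argument, first on $C^0$ (where $\KK_\pm$ is a genuine contraction) and then to bootstrap to $C^k$ using the smoothing property $\KK_\pm:C^0\to C^k$ that we already read off from the integral formula.

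First I would set $q:=\vol(\overline\Omega)\cdot\|K_\pm\|_{C^0(\overline\Omega\times\overline\Omega)}<1$. From the definition of $\KK_\pm$ the pointwise estimate $|(\KK_\pm u)(x)|\le\int_{\overline\Omega}|K_\pm(x,y)|\,|u(y)|\dV(y)$ gives $\|\KK_\pm u\|_{C^0(\overline\Omega)}\le q\,\|u\|_{C^0(\overline\Omega)}$, so $\KK_\pm$ is a strict contraction on the Banach space $C^0(\overline\Omega,E^*)$. The usual geometric-series argument then shows that $\id+\KK_\pm$ is a bounded bijection of $C^0$ onto itself with inverse $\sum_{j=0}^\infty(-\KK_\pm)^j$, the series converging in the $C^0$-operator norm.

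Next I would upgrade to $C^k$. The estimate $\|\KK_\pm u\|_{C^k(\overline\Omega)}\le C_k\|u\|_{C^0(\overline\Omega)}$ recorded before the lemma, combined with the $C^0$-contraction bound, yields for $j\ge 1$
\[
\|(-\KK_\pm)^j u\|_{C^k(\overline\Omega)}\le C_k\,\|(-\KK_\pm)^{j-1}u\|_{C^0(\overline\Omega)}\le C_k\,q^{j-1}\,\|u\|_{C^0(\overline\Omega)}\le C_k\,q^{j-1}\,\|u\|_{C^k(\overline\Omega)}.
\]
Hence the Neumann series converges in the $C^k$-operator norm, and since the same formula produces an inverse on every $C^k$ (matching the $C^0$ inverse on the dense subspace $C^k\subset C^0$), it follows that $\id+\KK_\pm$ is a topological isomorphism of $C^k(\overline\Omega,E^*)$ with bounded inverse.

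The remaining task, which I expect to be the only mildly delicate point, is to identify $(\id+\KK_\pm)^{-1}\circ\KK_\pm=\sum_{j=1}^\infty(-1)^{j-1}\KK_\pm^{\,j}$ as an integral operator with smooth kernel. By iterated application of Fubini, $\KK_\pm^{\,j}$ is the integral operator with kernel
\[
L_j^\pm(x,z):=\int_{\overline\Omega^{j-1}}K_\pm(x,y_1)K_\pm(y_1,y_2)\cdots K_\pm(y_{j-1},z)\,\dV(y_1)\cdots\dV(y_{j-1}),
\]
which is smooth in $(x,z)$ by Lemma~\ref{zweivariablen} (differentiating under the integral), with the crude estimate $\|L_j^\pm\|_{C^k(\overline\Omega\times\overline\Omega)}\le\|K_\pm\|_{C^k}\cdot\bigl(\vol(\overline\Omega)\,\|K_\pm\|_{C^0}\bigr)^{j-1}=\|K_\pm\|_{C^k}\cdot q^{j-1}$. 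Thus $\sum_j(-1)^{j-1}L_j^\pm$ converges in every $C^k$-norm on $\overline\Omega\times\overline\Omega$ to a smooth section, which is the desired kernel. For the support assertion I would argue by induction: $\supp(K_+)$ is future-stretched with respect to $\Omega'\supset\overline\Omega$ by Proposition~\ref{propRt}(\ref{Rt:support}), so if $L_{j-1}^+(x,y_{j-1})\ne 0$ then $y_{j-1}\in J_+^{\Omega'}(x)\cap\overline\Omega=J_+^{\overline\Omega}(x)$ (using that $\Omega$ is causal), and if also $K_+(y_{j-1},z)\ne 0$ then $z\in J_+^{\overline\Omega}(y_{j-1})\subset J_+^{\overline\Omega}(x)$ by transitivity of $\le$; hence each $L_j^+$ is future-stretched with respect to $\overline\Omega$, and the property passes to the (closed-support) limit. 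The argument for $\KK_-$ is identical with $J_+$ replaced by $J_-$.
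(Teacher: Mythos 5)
Your proof is correct and reaches the same conclusion, but the way you upgrade the Neumann series from $C^0$ to $C^k$ is genuinely different from the paper's argument. The paper replaces the $C^k$-norm by an equivalent norm $\interleave u\interleave_{C^k}=\|u\|_{C^0}+\epsilon_k\|u\|_{C^k}$ (with a small weight $\epsilon_k$ depending on $\|K_\pm\|_{C^k}$) under which $\KK_\pm$ itself becomes a strict contraction on $C^k$, and then runs the standard geometric-series argument in that renormed space. You instead use the smoothing estimate $\|\KK_\pm u\|_{C^k}\le C_k\|u\|_{C^0}$ once and the $C^0$-contraction $j-1$ times to get $\|(-\KK_\pm)^j\|_{C^k\to C^k}\le C_k\,q^{j-1}$ directly, which is all that is needed for the Neumann series to converge in $C^k$-operator norm and to sum to a two-sided inverse (since $\|(-\KK_\pm)^{N+1}\|_{C^k\to C^k}\to 0$ makes the telescoping identity converge). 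Your route is more elementary: it dispenses with the norm-equivalence device entirely and makes the ``smoothing beats iteration'' mechanism explicit. The paper's approach has the cosmetic advantage of literally producing a contraction, but buys nothing essential here.

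One small imprecision: your kernel bound $\|L_j^\pm\|_{C^k(\overline\Omega\times\overline\Omega)}\le\|K_\pm\|_{C^k}\,q^{j-1}$ is not quite right for $j\ge 2$, because $k$-th derivatives of $L_j^\pm(x,z)$ can fall on \emph{both} the first factor $K_\pm(x,y_1)$ and the last factor $K_\pm(y_{j-1},z)$; the correct estimate is the paper's $\|K_\pm\|_{C^k}^2\cdot\vol(\overline\Omega)^{j-1}\cdot\|K_\pm\|_{C^0}^{j-2}$, so the prefactor should involve $\|K_\pm\|_{C^k}^2$ rather than $\|K_\pm\|_{C^k}$. This does not affect the conclusion, since the decay in $j$ is still governed by $q^{j}$ and the series of kernels still converges absolutely in every $C^k$. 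The support argument is correct and matches the paper's; the identity $J_\pm^{\Omega'}(x)\cap\overline\Omega=J_\pm^{\overline\Omega}(x)$ you invoke is exactly the consequence of causality of $\Omega$ that the paper records just before the lemma, and closedness of the limiting support set follows from the causal relation being closed on the convex domain $\Omega'$.
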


\begin{proof}
The operator $\KK_{\pm}$ is bounded as an operator
$C^0(\overline\Omega,E^*) \to C^{k}(\overline\Omega,E^*)$.
Thus $\id + \KK_{\pm}$ defines a bounded operator $C^k(\overline\Omega,E^*)
\to C^{k}(\overline\Omega,E^*)$ for all $k$.
Now 
\begin{eqnarray*}
\|\KK_{\pm} u\|_{C^{0}(\overline\Omega)} 
&\leq&
\vol(\overline\Omega)\cdot \|K_\pm\|_{C^{0}
(\overline\Omega\times\overline\Omega)}\cdot \|u\|_{C^{0}(\overline\Omega)}\\
&=&
(1-\eta)\cdot\|u\|_{C^{0}(\overline\Omega)}
\end{eqnarray*}
where $\eta := 1- \vol(\overline\Omega)\cdot \|K_\pm\|_{C^{0}
(\overline\Omega\times\overline\Omega)} > 0$.
Hence the $C^0$-operator norm of $\KK_{\pm}$ is less than $1$ so that
the Neumann series $\sum_{j=0}^\infty (-\KK_{\pm})^j$ converges in the
$C^0$-operator norm and gives the inverse of $\id + \KK_{\pm}$ on
$C^0(\overline\Omega,E^*)$.

Next we replace the $C^k$-norm $\|\cdot\|_{C^k(\overline\Omega)}$ on 
$C^k(\overline\Omega,E^*)$ as defined in (\ref{defCkNorm}) by the
equivalent norm
$$
\interleave u\interleave_{C^k(\overline\Omega)} :=
\|u\|_{C^0(\overline\Omega)} + \frac{\eta}{2\vol(\overline\Omega)
\|K_\pm\|_{C^k(\overline\Omega\times\overline\Omega)}+1}
\|u\|_{C^k(\overline\Omega)}.
$$
Then 
\begin{eqnarray*}
\lefteqn{\interleave\KK_{\pm} u\interleave_{C^{k}(\overline\Omega)}}\\
&=&
\|\KK_{\pm}u\|_{C^{0}(\overline\Omega)} +
\frac{\eta}{2\vol(\overline\Omega)
\|K_\pm\|_{C^k(\overline\Omega\times\overline\Omega)}+1}
\|\KK_{\pm} u\|_{C^{k}(\overline\Omega)}\\
&\leq&
(1-\eta)\cdot\|u\|_{C^{0}(\overline\Omega)} +
\frac{\eta}{2\vol(\overline\Omega)
\|K_\pm\|_{C^k(\overline\Omega\times\overline\Omega)}+1}
\vol(\overline\Omega)\|K_\pm\|_{C^k(\overline\Omega\times\overline\Omega)}
\|u\|_{C^{0}(\overline\Omega)}\\
&\leq&
(1-\frac\eta2) \|u\|_{C^{0}(\overline\Omega)}\\
&\leq&
(1-\frac\eta2)\interleave u\interleave_{C^{k}(\overline\Omega)}  .
\end{eqnarray*}
This shows that with respect to
$\interleave\cdot\interleave_{C^k(\overline\Omega)}$
the $C^k$-operator norm of $\KK_{\pm}$ is less than $1$.
Thus the Neumann series $\sum_{j=0}^\infty (-\KK_{\pm})^j$ converges in all
$C^k$-operator norms and $\id+\KK_\pm$ is an isomorphism with bounded inverse
on all $C^k(\overline\Omega,E^*)$.

For $j\geq 1$ the integral kernel of $(\KK_{\pm})^j$ is given by
$$
K_\pm^{(j)}(x,y) :=
\int_{\overline\Omega} \cdots \int_{\overline\Omega}
K_\pm(x,z_1)K_\pm(z_1,z_2) \cdots K_\pm(z_{j-1},y) \dV(z_1) \cdots
\dV(z_{j-1}). 
$$
Thus $\supp(K_\pm^{(j)})\subset \left\{ (x,y)\in \overline\Omega\times
\overline\Omega\, |\, y\in J_\pm^{\overline\Omega}(x) \right\}$ and 
$$
\|K_\pm^{(j)}\|_{C^k(\overline\Omega\times\overline\Omega)} 
\leq
\|K_\pm\|_{C^k(\overline\Omega\times\overline\Omega)}^2 
\cdot\vol(\overline\Omega)^{j-1}\cdot 
\|K_\pm\|_{C^0(\overline\Omega\times\overline\Omega)}^{j-2} \leq
\delta^{j-2}\cdot\vol(\overline\Omega) \cdot
\|K_\pm\|_{C^k(\overline\Omega\times\overline\Omega)}^2
$$
where $\delta := \vol(\overline\Omega)\cdot 
\|K_\pm\|_{C^0(\overline\Omega\times\overline\Omega)} < 1$.
Hence the series 
$$
\sum_{j=1}^\infty (-1)^{j-1} K_\pm^{(j)}
$$
converges in all $C^k(\overline\Omega\times\overline\Omega,E^*\boxtimes E)$.
Since this series yields the integral kernel of $(\id + \KK_{\pm})^{-1}\circ
\KK_{\pm}$ it is smooth and its support is contained in
$\left\{ (x,y)\in \overline\Omega\times \overline\Omega\, |\, y\in
  J_\pm^{\overline\Omega}(x) \right\}$.
\end{proof}

\begin{cor}
Let $\Omega\subset\subset\Omega'$ be as in Lemma~\ref{Klemma}.
Then for each $u\in C^0(\overline\Omega,E)$
$$
\supp((\id+\KK_{\pm})^{-1}u) \subset J_\mp^{\overline\Omega}(\supp(u)) .
$$
\end{cor}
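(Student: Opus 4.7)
The plan is to reduce the statement to the integral kernel description of $(\id+\KK_\pm)^{-1}\circ\KK_\pm$ supplied by Lemma~\ref{Klemma}. Starting from the algebraic identity $(\id+\KK_\pm) - \KK_\pm = \id$, multiplying on the left by $(\id+\KK_\pm)^{-1}$ gives the resolvent-style formula
\[
(\id+\KK_\pm)^{-1} \;=\; \id \;-\; (\id+\KK_\pm)^{-1}\circ \KK_\pm,
\]
so for every $u\in C^0(\overline\Omega,E)$ we have
\[
(\id+\KK_\pm)^{-1}u \;=\; u - (\id+\KK_\pm)^{-1}\KK_\pm u .
\]
Thus it suffices to show that the second summand is supported in $J_\mp^{\overline\Omega}(\supp u)$, because $\supp(u)$ is automatically contained in $J_\mp^{\overline\Omega}(\supp u)$.

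Next I would invoke the last assertion of Lemma~\ref{Klemma}: the operator $(\id+\KK_\pm)^{-1}\circ\KK_\pm$ admits a smooth integral kernel $\widetilde K_\pm$ whose support is future-stretched (for the $+$ sign) or past-stretched (for the $-$ sign) with respect to $\overline\Omega$. Writing
\[
\bigl((\id+\KK_\pm)^{-1}\KK_\pm u\bigr)(x) \;=\; \int_{\overline\Omega} \widetilde K_\pm(x,y)\, u(y)\, \dV(y),
\]
the integrand vanishes unless $y\in\supp(u)$ and $(x,y)\in\supp(\widetilde K_\pm)$, the latter forcing $y\in J_\pm^{\overline\Omega}(x)$, equivalently $x\in J_\mp^{\overline\Omega}(y)$. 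Consequently
\[
\supp\bigl((\id+\KK_\pm)^{-1}\KK_\pm u\bigr) \;\subset\; J_\mp^{\overline\Omega}(\supp u).
\]

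Combining these two observations and using that $\supp(u)\subset J_\mp^{\overline\Omega}(\supp u)$ (each point of $\supp u$ lies in its own causal past/future), we conclude
\[
\supp\bigl((\id+\KK_\pm)^{-1}u\bigr) \;\subset\; \supp(u)\cup J_\mp^{\overline\Omega}(\supp u) \;=\; J_\mp^{\overline\Omega}(\supp u),
\]
which is the desired inclusion. The argument is short and does not present any real obstacle; the only subtlety is using the correct one of the two resolvent identities so that the right-hand factor is $\KK_\pm$ (whose kernel is known to be future/past-stretched after composition with $(\id+\KK_\pm)^{-1}$), rather than the left-hand factor, for which no analogous support statement has been established.
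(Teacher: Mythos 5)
Your proof is correct and follows the paper's own argument exactly: decompose $(\id+\KK_\pm)^{-1} = \id - (\id+\KK_\pm)^{-1}\KK_\pm$, handle the identity term via $\supp(u)\subset J_\mp^{\overline\Omega}(\supp u)$, and control the remainder by the future/past-stretched support of the integral kernel of $(\id+\KK_\pm)^{-1}\KK_\pm$ from Lemma~\ref{Klemma}.
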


\begin{proof}
We observe that
$$
(\id+\KK_{\pm})^{-1}u = u - (\id+\KK_{\pm})^{-1}\KK_{\pm}u.
$$
Now $\supp(u) \subset J_\mp^{\overline\Omega}(\supp(u))$
and $\supp((\id+\KK_{\pm})^{-1}\KK_{\pm}u)\subset
J_\mp^{\overline\Omega}(\supp(u))$ by the properties of the integral kernel of
$(\id+\KK_{\pm})^{-1}\KK_{\pm}$. 
\end{proof}

Fix $\varphi\in \DD(\Omega,E^*)$.
Then $x \mapsto \RRt_\pm(x)[\varphi]$ defines a smooth section in $E^*$ over
$\overline\Omega$ with support contained in $J_\mp^{\Omega'}(\supp(\varphi))
\cap \overline\Omega = J_\mp^{\overline\Omega}(\supp(\varphi))$.
Hence 
\begin{equation}
F_\pm^{\Omega}(\cdot)[\varphi] := 
(\id + \KK_{\pm})^{-1}(\RRt_\pm(\cdot)[\varphi])
\label{Fdef}
\indexs{F*@$F_\pm^{\Omega}(\cdot)$, fundamental solution for domain
  $\Omega$ }
\end{equation}
defines a smooth section in $E^*$ with
\begin{equation}
\supp(F_\pm^{\Omega}(\cdot)[\varphi])
\subset
J_\mp^{\overline\Omega}(\supp(\RRt_\pm(\cdot)[\varphi]))
\subset
J_\mp^{\overline\Omega}(J_\mp^{\overline\Omega}(\supp(\varphi)))
=
J_\mp^{\overline\Omega}(\supp(\varphi)) .
\label{suppF}
\end{equation}

\begin{lemma}
For each $x\in\Omega$ the map $\DD(\Omega,E^*) \mapsto E^*_x$, $\varphi
\mapsto F_+^{\Omega}(x)[\varphi]$, is an advanced fundamental
solution at $x$ on $\Omega$ and $\varphi
\mapsto F_-^{\Omega}(x)[\varphi]$ is a retarded fundamental
solution at $x$ on $\Omega$.
\indexn{advanced fundamental solution}
\indexn{retarded fundamental solution}
\end{lemma}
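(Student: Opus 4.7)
The strategy is to verify the defining properties of a fundamental solution at $x$: that $\varphi \mapsto F_\pm^{\Omega}(x)[\varphi]$ is a continuous linear map (i.e.\ a distribution in $\DD'(\Omega, E, E_x^*)$), that it satisfies $P F_\pm^{\Omega}(x) = \delta_x$, and that its support lies in $J_\pm^\Omega(x)$. The computational heart of the argument is a single identity. For every $\varphi \in \DD(\Omega, E^*)$, property (3) of Proposition~\ref{propRt} yields
$$
\RRt_\pm(x)[P^*\varphi] \;=\; (P_{(2)}\RRt_\pm(x))[\varphi] \;=\; \varphi(x) + \int_{\overline\Omega} K_\pm(x,y)\,\varphi(y)\,\dV(y) \;=\; \bigl((\id + \KK_\pm)\varphi\bigr)(x),
$$
where $\varphi$ is regarded by extension by zero as an element of $C^0(\overline\Omega, E^*)$. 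Thus, as smooth sections on $\overline\Omega$, $\RRt_\pm(\cdot)[P^*\varphi] = (\id + \KK_\pm)\varphi$.

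\textbf{Fundamental solution equation.} Applying $(\id + \KK_\pm)^{-1}$, whose existence and boundedness on $C^0(\overline\Omega, E^*)$ are provided by Lemma~\ref{Klemma} under the smallness condition (\ref{Omegaklein}), I would then get
$$
F_\pm^{\Omega}(\cdot)[P^*\varphi] \;=\; (\id + \KK_\pm)^{-1}\,\RRt_\pm(\cdot)[P^*\varphi] \;=\; (\id + \KK_\pm)^{-1}(\id + \KK_\pm)\,\varphi \;=\; \varphi,
$$
and evaluation at $x$ gives $F_\pm^{\Omega}(x)[P^*\varphi] = \varphi(x)$, which is precisely $PF_\pm^{\Omega}(x) = \delta_x$. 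Linearity of $\varphi \mapsto F_\pm^{\Omega}(x)[\varphi]$ is immediate from linearity of $\RRt_\pm(\cdot)$ and of $(\id + \KK_\pm)^{-1}$. For continuity in the sense of Definition~\ref{def:distr}, observe that if $\varphi_n \to \varphi$ in $\DD(\Omega, E^*)$ with supports contained in a common compact set, then by property (6) of Proposition~\ref{propRt} one has $\|\RRt_\pm(\cdot)[\varphi_n] - \RRt_\pm(\cdot)[\varphi]\|_{C^0(\overline\Omega)} \leq C\|\varphi_n - \varphi\|_{C^{n+1}(\Omega)} \to 0$, and $C^0$-boundedness of $(\id + \KK_\pm)^{-1}$ then forces pointwise convergence $F_\pm^\Omega(x)[\varphi_n] \to F_\pm^\Omega(x)[\varphi]$.

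\textbf{Support.} The support statement is read off from the already-proved (\ref{suppF}): if $\supp(\varphi) \cap J_\pm^\Omega(x) = \emptyset$, then (using that $\Omega$ is a causal subdomain of the convex $\Omega'$ so that $J_\pm^{\overline\Omega}(x) \cap \Omega \subset J_\pm^\Omega(x)$) one has $x \notin J_\mp^{\overline\Omega}(\supp \varphi)$, hence $F_\pm^{\Omega}(x)[\varphi] = 0$ by (\ref{suppF}). This gives $\supp(F_\pm^{\Omega}(x)) \subset J_\pm^\Omega(x)$, as required. I expect no genuine obstacle here: every nontrivial input --- the construction of $\RRt_\pm$ with smooth error kernel $K_\pm$ having the correct causal support, invertibility of $\id + \KK_\pm$ via its convergent Neumann series, and the causal support of the kernel of $(\id+\KK_\pm)^{-1}\KK_\pm$ --- has already been established in Proposition~\ref{propRt} and Lemma~\ref{Klemma}. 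The proof of the present statement is simply the bookkeeping that assembles these ingredients through the one identity $\RRt_\pm(\cdot)[P^*\varphi] = (\id + \KK_\pm)\varphi$.
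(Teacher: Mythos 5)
Your proof is correct and matches the paper's argument step for step: same use of $P_{(2)}\RRt_\pm(x)=\delta_x+K_\pm(x,\cdot)$ to identify $\RRt_\pm(\cdot)[P^*\varphi]=(\id+\KK_\pm)\varphi$, same appeal to the uniform estimate from Proposition~\ref{propRt}~(6) plus $C^0$-boundedness of $(\id+\KK_\pm)^{-1}$ for continuity, and same reading of the support inclusion from (\ref{suppF}). The only cosmetic difference is that you unwind $\KK_\pm\varphi$ into its integral form where the paper keeps it operator-theoretic; the content is identical.
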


\begin{proof}
We first check that $\varphi \mapsto F_\pm^{\Omega}(x)[\varphi]$ defines
a distribution for any fixed $x\in \Omega$.
Let $\varphi_m \to \varphi$ in $\DD(\Omega,E^*)$.
Then $\varphi_m \to \varphi$ in $C^{n+1}(\Omega,E^*)$ and by 
the last point of Proposition~\ref{propRt} $\RRt_\pm(\cdot)[\varphi_m] \to
\RRt_\pm(\cdot)[\varphi]$ in $C^{0}(\overline\Omega,E^*)$.
Since $(\id + \KK_{\pm})^{-1}$ is bounded on $C^0$ we have
$F_\pm^{\Omega}(\cdot)[\varphi_m] \to F_\pm^{\Omega}(\cdot)[\varphi]$ in 
$C^0$.
In particular, $F_\pm^{\Omega}(x)[\varphi_m] \to
F_\pm^{\Omega}(x)[\varphi]$.

Next we check that $F_\pm^{\Omega}(x)$ are fundamental solutions.
We compute
\begin{eqnarray*}
P_{(2)}F_\pm^{\Omega}(\cdot)[\varphi] 
&=&
F_\pm^{\Omega}(\cdot)[P^*\varphi]\\
&=&
(\id + \KK_{\pm})^{-1}(\RRt_\pm(\cdot)[P^*\varphi])\\
&=&
(\id + \KK_{\pm})^{-1}(P_{(2)}\RRt_\pm(\cdot)[\varphi])\\
&\stackrel{(\ref{Rtffs})}{=}&
(\id + \KK_{\pm})^{-1}(\varphi + \KK_{\pm}\varphi)\\
&=&
\varphi .
\end{eqnarray*}
Thus for fixed $x\in\Omega$,
$$
PF_\pm^{\Omega}(x)[\varphi] = \varphi(x) = \delta_x[\varphi].
$$

Finally, to see that $\supp(F_\pm^{\Omega}(x)) \subset J^\Omega_\pm(x)$ let
$\varphi\in\DD(\Omega,E^*)$ such that $\supp(\varphi) \cap J_\pm^\Omega(x) =
\emptyset$.
Then $x\not\in J_\mp^\Omega(\supp(\varphi))$ and thus
$F_\pm^{\Omega}(x)[\varphi] = 0$ by (\ref{suppF}).
\end{proof}

We summarize the results of this section.

\begin{prop}\label{fundamentalexist}
\indexn{fundamental solution}
Let $M$ be a timeoriented Lorentzian manifold.
Let $P$ be a normally hyperbolic operator acting on sections in a
vector bundle $E$ over $M$.
Let $\Omega\subset\subset M$ be a relatively compact causal domain.
Suppose that $\Omega$ is sufficiently small in the sense that 
(\ref{Omegaklein}) holds.

Then for each $x\in\Omega$
\begin{enumerate}
\item
the distributions $F_+^{\Omega}(x)$ and $F_-^{\Omega}(x)$ defined in
(\ref{Fdef}) are fundamental solutions for $P$ at $x$ over $\Omega$,
\item
$\supp(F_\pm^{\Omega}(x))\subset J_\pm^\Omega(x)$,
\item
for each $\varphi\in\DD(\Omega,E^*)$ the maps $x'\mapsto
F_\pm^{\Omega}(x')[\varphi]$ are smooth sections in $E^*$ over $\Omega.
\hfill\Box$
\end{enumerate}
\end{prop}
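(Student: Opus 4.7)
The plan is to simply assemble the pieces already constructed in the preceding subsection, since each of the three claims is essentially an immediate consequence of the approximate fundamental solution together with the Neumann-series correction argument.

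First, I would invoke Proposition~\ref{propRt} to obtain the approximate fundamental solutions $\RRt_\pm(x)$ on $\Omega$, recalling in particular that $P_{(2)}\RRt_\pm(x) = \delta_x + K_\pm(x,\cdot)$ with $K_\pm \in C^\infty(\ovl\Omega\times\ovl\Omega, E^*\boxtimes E)$ whose supports are future/past-stretched, that $\RRt_\pm(x)[\varphi]$ depends smoothly on $x$, and that $|\RRt_\pm(x)[\varphi]|\leq C\|\varphi\|_{C^{n+1}(\Omega)}$ uniformly in $x\in\ovl\Omega$. The smallness hypothesis (\ref{Omegaklein}) is precisely what allows Lemma~\ref{Klemma} to apply: the associated integral operator $\KK_\pm$ has $C^0$-operator norm strictly less than $1$, so $\id+\KK_\pm$ is invertible on each $C^k(\ovl\Omega,E^*)$ by a convergent Neumann series, and $(\id+\KK_\pm)^{-1}\KK_\pm$ has a smooth integral kernel whose support is future/past-stretched with respect to $\ovl\Omega$.

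Next, with $F_\pm^\Omega(\cdot)[\varphi]:=(\id+\KK_\pm)^{-1}(\RRt_\pm(\cdot)[\varphi])$ as in (\ref{Fdef}), I would verify the three assertions in order. For (1), the distribution property at a fixed $x\in\Omega$ follows by observing that if $\varphi_m\to\varphi$ in $\DD(\Omega,E^*)$, then in particular $\varphi_m\to\varphi$ in $C^{n+1}$, so by the uniform estimate in Proposition~\ref{propRt} one has $\RRt_\pm(\cdot)[\varphi_m]\to\RRt_\pm(\cdot)[\varphi]$ in $C^0(\ovl\Omega,E^*)$, and then boundedness of $(\id+\KK_\pm)^{-1}$ on $C^0$ yields convergence in $C^0$ and therefore pointwise at $x$. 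The fundamental solution identity is a one-line computation: applying $P^*$ to the test section commutes through the scalar operators $(\id+\KK_\pm)^{-1}$, giving
\[
P F_\pm^\Omega(x)[\varphi] \;=\; (\id+\KK_\pm)^{-1}\bigl(\RRt_\pm(\cdot)[P^*\varphi]\bigr)(x) \;=\; (\id+\KK_\pm)^{-1}(\varphi + \KK_\pm\varphi)(x) \;=\; \varphi(x),
\]
using $P_{(2)}\RRt_\pm(x)=\delta_x+K_\pm(x,\cdot)$ to rewrite $\RRt_\pm(\cdot)[P^*\varphi]=\varphi+\KK_\pm\varphi$.

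For (2), the support statement follows from tracking supports through the construction: $\supp(\RRt_\pm(\cdot)[\varphi])\subset J_\mp^{\ovl\Omega}(\supp\varphi)$ from (\ref{suppRt}), and the corollary to Lemma~\ref{Klemma} (which uses the support properties of the kernel of $(\id+\KK_\pm)^{-1}\KK_\pm$) preserves this containment, giving $\supp(F_\pm^\Omega(\cdot)[\varphi])\subset J_\mp^{\ovl\Omega}(\supp\varphi)$ as in (\ref{suppF}); if $\supp\varphi\cap J_\pm^\Omega(x)=\emptyset$ then $x\notin J_\mp^\Omega(\supp\varphi)$ and hence $F_\pm^\Omega(x)[\varphi]=0$. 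For (3), smoothness of $x'\mapsto F_\pm^\Omega(x')[\varphi]$ follows because $\RRt_\pm(\cdot)[\varphi]$ is smooth by Lemma~\ref{Rtglattinx} and $(\id+\KK_\pm)^{-1}$ maps $C^k$ boundedly to $C^k$ for every $k$ by Lemma~\ref{Klemma}. There is no real obstacle left at this stage; the only subtle point worth double-checking is that the identity $P_{(2)}\RRt_\pm(\cdot)[\varphi]=\varphi+\KK_\pm\varphi$ used above really holds pointwise as smooth sections (not merely distributionally), which is guaranteed by smoothness of $K_\pm$ and the continuous-in-$x$ character of $\RRt_\pm(\cdot)[P^*\varphi]$ established in Proposition~\ref{propRt}.
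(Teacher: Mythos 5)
Your proposal matches the paper's own argument essentially step for step: it assembles Proposition~\ref{propRt}, Lemma~\ref{Klemma}, the smoothness statement in Lemma~\ref{Rtglattinx}, and the support containment (\ref{suppF}), and the key computation $P_{(2)}F_\pm^\Omega(\cdot)[\varphi]=(\id+\KK_\pm)^{-1}(\varphi+\KK_\pm\varphi)=\varphi$ is exactly the one the paper uses. The concluding remark about the identity holding pointwise as smooth sections is a correct reading of why (\ref{Rtffs}) upgrades from a distributional to a $C^\infty$ identity; no gap.
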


\begin{cor}\label{fundlocalexist}
Let $M$ be a timeoriented Lorentzian manifold.
Let $P$ be a normally hyperbolic operator acting on sections in a 
vector bundle $E$ over $M$.

Then each point in $M$ possesses an arbitrarily small causal
neighborhood $\Omega$ such that for each $x\in\Omega$ there exist fundamental
solutions $F_\pm^\Omega(x)$ for $P$ over $\Omega$ at $x$.
They satisfy
\begin{enumerate}
\item
$\supp(F_\pm^{\Omega}(x))\subset J_\pm^\Omega(x)$,
\item
for each $\varphi\in\DD(\Omega,E^*)$ the maps $x\mapsto
F_\pm^{\Omega}(x)[\varphi]$ are smooth sections in $E^*.\hfill\Box$
\end{enumerate}
\end{cor}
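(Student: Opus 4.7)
The plan is to reduce the corollary to Proposition~\ref{fundamentalexist} by showing that its smallness hypothesis (\ref{Omegaklein}) can be met by arbitrarily small causal neighborhoods of any given point $p \in M$.

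First, I would pick a convex open neighborhood $\Omega' \subset M$ of $p$; such neighborhoods form a basis of the topology on any Lorentzian manifold (cf.\ the remark after Definition~\ref{ddefgeodstarshaped}). Inside $\Omega'$ the strong causality condition holds, which together with the characterization of causal domains implies that $p$ admits arbitrarily small causal neighborhoods $\Omega$ with $\Omega \subset\subset \Omega'$.

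Second, I would fix one such reference causal neighborhood $\Omega_0 \subset\subset \Omega'$ and run the construction of Proposition~\ref{propRt} once, obtaining a sequence $(\varepsilon_j)_{j\geq N}$, approximate fundamental solutions $\widetilde{\mathcal{R}}_\pm$, and a smooth error kernel $K_\pm \in C^\infty(\overline{\Omega_0}\times\overline{\Omega_0}, E^*\boxtimes E)$. Set $C_0 := \|K_\pm\|_{C^0(\overline{\Omega_0}\times\overline{\Omega_0})} < \infty$, which is finite since $\overline{\Omega_0}$ is compact.

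Third, for any causal neighborhood $\Omega$ of $p$ with $\Omega \subset\subset \Omega_0$, the same $\varepsilon_j$ still satisfy the estimates of Lemma~\ref{RtKonvergenz} and Lemma~\ref{Rtistfastfundsol} on $\overline\Omega\times\overline\Omega$, because all relevant $C^k$-norms are monotone under shrinking the domain. Consequently the restriction of $K_\pm$ to $\overline\Omega\times\overline\Omega$ satisfies $\|K_\pm\|_{C^0(\overline\Omega\times\overline\Omega)} \leq C_0$. Since $p$ has causal neighborhoods of arbitrarily small volume, I can choose $\Omega$ so that $\vol(\overline\Omega) < 1/C_0$, which implies the smallness condition
\[
\vol(\overline\Omega)\cdot \|K_\pm\|_{C^0(\overline\Omega\times\overline\Omega)} < 1.
\]
Then Proposition~\ref{fundamentalexist} applies on $\Omega$ and yields the fundamental solutions $F_\pm^\Omega(x)$ together with the support and smoothness properties (1)--(2).

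The only point requiring slight care, and thus the main (but minor) obstacle, is the compatibility of the construction of $\widetilde{\mathcal{R}}_\pm$ and $K_\pm$ with shrinking the base domain. Since the Riesz distributions $R_\pm^{\Omega'}(2+2j,x)$ and the Hadamard coefficients $V_j$ are defined on $\Omega'$ and depend only on $x$ and $y$ through $\Omega'$-geometry, their restrictions to any smaller open $\Omega \subset \Omega'$ yield valid approximate solutions, and the quantitative estimates on $\overline\Omega$ are dominated by those on $\overline{\Omega_0}$. Thus no re-choice of $\varepsilon_j$ is needed and the corollary follows.
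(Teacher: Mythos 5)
Your proof is correct and takes essentially the same approach the paper leaves implicit: one shows condition~(\ref{Omegaklein}) can be met on arbitrarily small causal neighborhoods of $p$ by fixing a reference RCCSV domain, bounding $\|K_\pm\|_{C^0}$ there, noting the construction of Proposition~\ref{propRt} restricts consistently to smaller domains with the same $\varepsilon_j$, and then shrinking the volume. (One minor point of hygiene: take $C_0 := \max(\|K_+\|_{C^0},\|K_-\|_{C^0})$ so the same $\Omega$ works for both advanced and retarded solutions.)
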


\section{The formal fundamental solution is asymptotic} 

Let $M$ be a timeoriented Lorentzian manifold. 
Let $P$ be a normally hyperbolic operator acting on sections in a
vector bundle $E$ over $M$.
Let $\Omega'\subset M$ be a convex domain and let $\Omega\subset \Omega'$ be a
relatively compact causal domain with $\overline\Omega \subset \Omega'$.
We assume that $\Omega$ is so small that Corollary~\ref{fundlocalexist}
applies.
Using Riesz distributions and Hadamard coefficients we have constructed the
formal fundamental solutions at $x\in\Omega$
\indexn{approximate fundamental solution}
\indexn{formal advanced fundamental solution}
\indexn{formal retarded fundamental solution}
$$
\RR_{\pm}(x) =
\sum_{j=0}^{\infty} V_j(x,\cdot)\,R_\pm^{\Omega'}(2+2j,x),
$$
the approximate fundamental solutions
$$
\RRt_\pm(x) = 
\sum_{j=0}^{N-1} V_j(x,\cdot)\,R_\pm^{\Omega'}(2+2j,x) 
+
\sum_{j=N}^{\infty} \sigma(\Gamma(x,\cdot)/\varepsilon_j)V_j(x,\cdot)\,
R_\pm^{\Omega'}(2+2j,x),
$$
where $N\geq\frac{n}{2}$ is fixed, and the true fundamental solutions
$F^{\Omega}_{\pm}(x)$, 
$$
F_\pm^{\Omega}(\cdot)[\varphi] = 
(\id + \KK_{\pm})^{-1}(\RRt_\pm(\cdot)[\varphi])
.
$$
The purpose of this section is to show that, in a suitable sense, the formal
fundamental solution is an asymptotic expansion of the true fundamental
solution.
For $k\geq 0$ we define the \defem{truncated formal fundamental solution} 
\indexn{truncated formal fundamental solution>defemidx}
$$
\RR_{\pm}^{N+k}(x) :=
\sum_{j=0}^{N-1+k} V_j(x,\cdot)\,R_\pm^{\Omega'}(2+2j,x).
\indexs{R*@$\protect\RR_{\pm}^{N+k}(x)$, truncated formal fundamental solution}
$$
Hence we cut the formal fundamental solution at the $(N+k)$-th term.
The truncated formal fundamental solution is a well-defined distribution on
$\Omega'$, $\RR_{\pm}^{N+k}(x)\in\DD'(\Omega',E,E_x^*)$.
We will show that the true fundamental solution coincides with the truncated
formal fundamental solution up to an error term which is very regular along
the light cone.
The larger $k$ is, the more regular is the error term.

\begin{prop}\label{formalasymptotisch}
For every $k\in \N$ and every $x\in\Omega$ the difference of distributions
$F_\pm^{\Omega}(x)-\RR_{\pm}^{N+k}(x)$ is a $C^k$-section in $E$.
In fact,
$$
(x,y) \mapsto \left(F_\pm^{\Omega}(x)-\RR_{\pm}^{N+k}(x)\right)(y)
$$
is of regularity $C^k$ on $\Omega\times\Omega$.
\end{prop}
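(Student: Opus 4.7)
My plan is to split the difference along the lines already suggested by the construction:
$$F_\pm^\Omega(x) - \RR_\pm^{N+k}(x) \;=\; \bigl(F_\pm^\Omega(x) - \RRt_\pm(x)\bigr) + \bigl(\RRt_\pm(x) - \RR_\pm^{N+k}(x)\bigr),$$
and show that the first summand is smooth on $\Omega\times\Omega$ while the second is $C^k$. Combined, this yields the $C^k$-regularity asserted in the proposition.

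For the ``cutoff error'' $\RRt_\pm - \RR_\pm^{N+k}$, a direct computation gives
$$\RRt_\pm(x) - \RR_\pm^{N+k}(x) \;=\; \sum_{j=N}^{N+k-1}\bigl(\sigma(\Gamma(x,\cdot)/\varepsilon_j)-1\bigr) V_j(x,\cdot)\,R_\pm^{\Omega'}(2+2j,x) \;+\; \sum_{j=N+k}^{\infty}\sigma(\Gamma(x,\cdot)/\varepsilon_j) V_j(x,\cdot)\,R_\pm^{\Omega'}(2+2j,x).$$
In the finite sum the factor $\sigma(\cdot/\varepsilon_j)-1$ vanishes on the neighborhood $\{|\Gamma|\le \varepsilon_j/2\}$ of the light cone; on the complement $R_\pm^{\Omega'}(2+2j,x)$ is the smooth function $C(2+2j,n)\Gamma^{j+1-n/2}$ on the interior of $J_\pm^{\Omega'}(x)$ and identically $0$ on the spacelike region, so each summand is smooth on $\Omega\times\Omega$. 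The infinite tail converges in $C^k(\overline\Omega\times\overline\Omega,E^*\boxtimes E)$ by Lemma~\ref{RtKonvergenz} applied with this~$k$. Hence $\RRt_\pm-\RR_\pm^{N+k}\in C^k(\Omega\times\Omega)$.

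For the ``smoothing correction'' $F_\pm^\Omega - \RRt_\pm$, I would use the identity
$$F_\pm^\Omega(\cdot)[\varphi] - \RRt_\pm(\cdot)[\varphi] \;=\; -(\id+\KK_\pm)^{-1}\KK_\pm\bigl(\RRt_\pm(\cdot)[\varphi]\bigr),$$
and recall from Lemma~\ref{Klemma} that $(\id+\KK_\pm)^{-1}\KK_\pm$ has a smooth integral kernel $L_\pm\in C^\infty(\overline\Omega\times\overline\Omega,E\boxtimes E^*)$, so that
$$\bigl(F_\pm^\Omega(x)-\RRt_\pm(x)\bigr)[\varphi] \;=\; -\int_\Omega L_\pm(x,z)\,\RRt_\pm(z)[\varphi]\,\dV(z).$$
The task is to show that the right-hand side is given by integration of $\varphi$ against a kernel which is jointly $C^\infty$ in $(x,y)$. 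Proceeding term by term in $\RRt_\pm$, I would exploit the symmetry $R_+^{\Omega'}(\alpha,z)(y)=R_-^{\Omega'}(\alpha,y)(z)$, valid for $\Re(\alpha)>n$ by the explicit formula in Proposition~\ref{OmegaRiesz}(\ref{Rexplic}) and for all $\alpha\in\Co$ by analytic continuation. Swapping the order of integration (permitted by the Fubini-type statement in Lemma~\ref{xytausch}, with analytic continuation in $\alpha$) yields, for each $j$,
$$\int_\Omega L_\pm(x,z)\,V_j(z,y)\,R_+^{\Omega'}(2+2j,z)(y)\,\dV(z) \;=\; R_-^{\Omega'}(2+2j,y)\bigl[z\mapsto L_\pm(x,z)V_j(z,y)\bigr],$$
and the right-hand side is jointly smooth in $(x,y)$ by Proposition~\ref{OmegaRiesz}(\ref{Rglattinx}), since $(x,y,z)\mapsto L_\pm(x,z)V_j(z,y)$ is smooth and compactly supported in $z$. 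The cutoff factors $\sigma(\Gamma/\varepsilon_j)$ are smooth and do not affect this regularity, and convergence of the resulting sum in every $C^\ell$-norm again follows from Lemma~\ref{RtKonvergenz}.

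The principal obstacle is the last step: rigorously justifying that the partial pairing of the distribution $\RRt_\pm$ with the smooth kernel $L_\pm(x,\cdot)$ in the $z$-variable produces a section of $E^*\boxtimes E$ which is $C^\infty$ in $(x,y)$. The rearrangement requires care to handle the distributional meaning of the $R_\pm^{\Omega'}(2+2j,\cdot)$ for small $j$ and to control the tail of the infinite Hadamard series uniformly in $k$-derivatives. Once this is done, combining with Step~1 yields $F_\pm^\Omega-\RR_\pm^{N+k}\in C^k(\Omega\times\Omega,E^*\boxtimes E)$, since $k$ was arbitrary, completing the proof.
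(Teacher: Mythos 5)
Your decomposition $F_\pm^{\Omega}(x)-\RR_{\pm}^{N+k}(x) = \bigl(F_\pm^{\Omega}(x)-\RRt_{\pm}(x)\bigr) + \bigl(\RRt_{\pm}(x)-\RR_{\pm}^{N+k}(x)\bigr)$ and your treatment of the second summand are exactly right. The problem lies in your claim that $F_\pm^\Omega-\RRt_\pm$ is \emph{smooth}. You never close this, and indeed it does not follow from the estimates at hand. The issue is the convergence of the infinite series of kernels $W_j(x,z) := R_\mp^{\Omega'}(2+2j,z)\bigl[y\mapsto L_\pm(x,y)\tilde V_j(y,z)\bigr]$. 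Lemma~\ref{RtKonvergenz} only gives convergence of the tail $\sum_{j\geq N+k}$ in the $C^k$-norm, with the starting index $N+k$ coupled to the regularity level $k$; it does not give convergence of the full series $\sum_{j\geq N}$ in every $C^\ell$-norm, and there is no estimate available on $\|W_j\|_{C^\ell(\overline\Omega\times\overline\Omega)}$ for $\ell$ large relative to $j$ that would make your series absolutely convergent in $C^\infty$. Your appeal to Lemma~\ref{RtKonvergenz} for ``convergence in every $C^\ell$-norm'' therefore does not go through, and the gap you flag at the end is a real one.

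The way to close it — and the extent to which the proposition actually needs it — is to settle for $C^k$ rather than $C^\infty$ for this summand. Split $\RRt_\pm(y)[\varphi]$ into the finite part $\sum_{j=0}^{N+k-1}\tilde V_j(y,\cdot)R_\pm^{\Omega'}(2+2j,y)[\varphi]$ and the remainder $\int f(y,z)\varphi(z)\dV(z)$, where $f(y,z)=\sum_{j\geq N+k}\sigma(\Gamma(y,z)/\varepsilon_j)V_j(y,z)R_\pm^{\Omega'}(2+2j,y)(z)$ is $C^k$ by Lemma~\ref{RtKonvergenz}. Convolving with $L_\pm$ and swapping the order of the distributional pairing, each of the \emph{finitely many} kernels $W_j$, $j=0,\dots,N+k-1$, is jointly smooth by Proposition~\ref{OmegaRiesz}~(\ref{Rglattinx}), while $(x,z)\mapsto\int_{\overline\Omega}L_\pm(x,y)f(y,z)\dV(y)$ is $C^k$. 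This gives $F_\pm^\Omega-\RRt_\pm\in C^k$, which is all that is needed to conclude. A second, smaller point: the swap you perform cannot be justified by Lemma~\ref{xytausch}, which requires $u\in\DD(\Omega\times\Omega,\Co)$; the section $y\mapsto L_\pm(x,y)\tilde V_j(y,z)$ is defined on $\overline\Omega$ with support in a set of the form $J_\pm^\Omega(K)$ rather than compact in $\Omega$, so you need the $C^k$-variant Lemma~\ref{xytauschCk} instead.
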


\begin{proof}
We write
$$
\left(F_\pm^{\Omega}(x)-\RR_{\pm}^{N+k}(x)\right)(y) =
\left(F_\pm^{\Omega}(x)-\RRt_{\pm}(x)\right)(y) +
\left(\RRt_{\pm}(x)-\RR_{\pm}^{N+k}(x)\right)(y)
$$
and we show that $\left(\RRt_{\pm}(x)-\RR_{\pm}^{N+k}(x)\right)(y)$ and
$\left(F_\pm^{\Omega}(x)-\RRt_{\pm}(x)\right)(y)$ are both $C^k$ in $(x,y)$.
Now
\begin{eqnarray*}
\left(\RRt_{\pm}(x)-\RR_{\pm}^{N+k}(x)\right)(y) 
&=&
\sum_{j=N}^{N+k-1} \left(\sigma(\Gamma(x,y)/\varepsilon_j)-1\right)
V_j(x,y)\,R_\pm^{\Omega'}(2+2j,x)(y)\\
&&
+ \sum_{j=N+k}^{\infty} \sigma(\Gamma(x,y)/\varepsilon_j)V_j(x,y)\,
R_\pm^{\Omega'}(2+2j,x)(y).
\end{eqnarray*}
From Lemma~\ref{RtKonvergenz} we know that the infinite part
$(x,y)\mapsto \sum_{j=N+k}^{\infty} \sigma(\Gamma(x,y)/\varepsilon_j)V_j(x,y)\,
R_\pm^{\Omega'}(2+2j,x)(y)$ is $C^k$.
The finite part $(x,y)\mapsto \sum_{j=N}^{N+k-1}
\left(\sigma(\Gamma(x,y)/\varepsilon_j)-1\right)
V_j(x,y)\,R_\pm^{\Omega'}(2+2j,x)(y)$ is actually smooth since
$\sigma(\Gamma/\varepsilon_j)-1$ vanishes on a neighborhood of
$\Gamma^{-1}(0)$ which is precisely the locus where $(x,y)\mapsto
R_\pm^{\Omega'}(2+2j,x)(y)$ is nonsmooth.
Furthermore,
\begin{eqnarray*}
F_\pm^{\Omega}(\cdot)[\varphi]-\RRt_{\pm}(\cdot)[\varphi]
&=&
\left((\id + \KK_{\pm})^{-1}-\id\right)(\RRt_\pm(\cdot)[\varphi])\\
&=&
-\left((\id + \KK_{\pm})^{-1}\circ\KK_{\pm}\right)(\RRt_\pm(\cdot)[\varphi]).
\end{eqnarray*}
By Lemma~\ref{Klemma} the operator $-\left((\id + \KK_{\pm})^{-1}\circ
\KK_{\pm}\right)$ has a smooth integral kernel $L_\pm(x,y)$ whose support is
future or past-stretched respectively.
Hence
\begin{eqnarray*}
\lefteqn{F_\pm^{\Omega}(x)[\varphi]-\RRt_{\pm}(x)[\varphi]}\\
&=&
\int_{\overline\Omega} L_\pm(x,y)\RRt_\pm(y)[\varphi] \dV(y)\\
&=&
\sum_{j=0}^{N-1}\int_{\overline\Omega} L_\pm(x,y) V_j(y,\cdot)\,
R_\pm^{\Omega'}(2+2j,y)[\varphi] \dV(y)\\
&&+ 
\sum_{j=N}^{N+k-1}\int_{\overline\Omega} L_\pm(x,y)\sigma(\Gamma(y,\cdot)/
\varepsilon_j) V_j(y,\cdot)\, R_\pm^{\Omega'}(2+2j,y)[\varphi] \dV(y)\\
&&+ 
\int_{\overline\Omega\times\overline\Omega} L_\pm(x,y)f(y,z)\varphi(z) \dV(z)
\dV(y) 
\end{eqnarray*}
where $f(y,z) = \sum_{j=N+k}^{\infty}
\sigma(\Gamma(y,z)/\varepsilon_j)V_j(y,z)\, R_\pm^{\Omega'}(2+2j,y)(z)$ is 
$C^k$ by Lemma~\ref{RtKonvergenz}.
Thus $(x,z)\mapsto \int_{\overline\Omega} L_\pm(x,y)f(y,z) \dV(y)$ is a
$C^k$-section. 
Write $\tilde V_j(y,z) := V_j(y,z)$ if $j\leq N-1$ and $\tilde V_j(y,z) :=
\sigma(\Gamma(y,z)/\varepsilon_j) V_j(y,z)$ if $j\geq N$.
It follows from Lemma~\ref{xytauschCk}
\begin{eqnarray*}
\lefteqn{\int_{\overline\Omega} L_\pm(x,y)\tilde
  V_j(y,\cdot)\,R_\pm^{\Omega'}(2+2j,y)[\varphi] \dV(y)}\\ 
&=&
\int_{\overline\Omega}R_\pm^{\Omega'}(2+2j,y)
[z\mapsto L_\pm(x,y)\tilde V_j(y,z)\varphi(z)] \dV(y)\\
&=&
\int_{\overline\Omega}R_\mp^{\Omega'}(2+2j,z)
[y\mapsto L_\pm(x,y)\tilde V_j(y,z)\varphi(z)] \dV(z)\\
&=&
\int_{\overline\Omega}R_\mp^{\Omega'}(2+2j,z)
[y\mapsto L_\pm(x,y)\tilde V_j(y,z)]\varphi(z) \dV(z)\\
&=&
\int_{\overline\Omega}W_j(x,z)\varphi(z) \dV(z)
\end{eqnarray*}
where $W_j(x,z) = R_\mp^{\Omega'}(2+2j,z)[y\mapsto L_\pm(x,y)\tilde V_j(y,z)]$
is smooth in $(x,z)$ by Proposition~\ref{OmegaRiesz}~(\ref{Rglattinx}).
Hence
$$
\left(F_\pm^{\Omega}(x)-\RRt_{\pm}(x)\right)(z) = 
\sum_{j=0}^{N+k-1}W_j(x,z) + \int_{\overline\Omega} L_\pm(x,y)f(y,z)\dV(y)
$$
is $C^k$ in $(x,z)$.
\end{proof}

The following theorem tells us that the formal fundamental solutions are 
asymptotic expansions of the true fundamental solutions near the light cone.

\begin{thm}\label{asympto}
\indexn{asymptotic expansion of fundamental solution>defemidx}
Let $M$ be a timeoriented Lorentzian manifold. 
Let $P$ be a normally hyperbolic operator acting on sections in a 
vector bundle $E$.
Let $\Omega \subset M$ be a relatively compact causal domain and let
$x\in\Omega$. 
Let $F^\Omega_\pm$ denote the fundamental solutions of $P$ at $x$ and
$\RR^{N+k}_\pm(x)$ the truncated formal fundamental solutions.

Then for each $k\in\N$ there exists a constant $C_k$ such that
\be
\left\| \left(F^\Omega_\pm(x)- \RR^{N+k}_\pm(x)\right)(y)\right\|
\le 
C_{k}\cdot \left|\Gamma(x,y)\right|^{k} 
\ee
for all $(x,y)\in\overline\Omega\times\overline\Omega$.
\end{thm}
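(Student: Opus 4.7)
The plan is to reduce the desired estimate to a telescoping identity combining a finite, explicitly Riesz-type tail and a much smoother remainder. For $k'\geq 2k$ to be chosen, one writes
\[
F_\pm^\Omega(x)-\RR^{N+k}_\pm(x)
= \bigl(F_\pm^\Omega(x)-\RR^{N+k'}_\pm(x)\bigr)
+ \sum_{j=N+k}^{N+k'-1} V_j(x,\cdot)\, R_\pm^{\Omega'}(2+2j,x),
\]
and bounds the two pieces separately.

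For the finite sum, each summand restricted to $J_\pm^\Omega(x)$ equals $C(2+2j,n)\,V_j(x,y)\,\Gamma(x,y)^{j+1-n/2}$, with exponent $j+1-n/2\geq k+1$ since $j\geq N+k$ and $N\geq n/2$. On the compact set $\overline\Omega\times\overline\Omega$ both $|\Gamma|$ and $\|V_j\|_{C^0}$ are bounded, so each summand is bounded by a constant times $|\Gamma(x,y)|^k$, and the same holds for the (finite) sum.

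For the smooth remainder $u_{k'}(x,y):=(F_\pm^\Omega(x)-\RR^{N+k'}_\pm(x))(y)$, Proposition~\ref{formalasymptotisch} gives $u_{k'}\in C^{k'}(\Omega\times\Omega,E^*\boxtimes E)$. Since $\supp F_\pm^\Omega(x)\subset J_\pm^\Omega(x)$ by Proposition~\ref{fundamentalexist} and each $R_\pm^{\Omega'}(2+2j,x)$ is supported in $J_\pm^\Omega(x)$ by Proposition~\ref{OmegaRiesz}, the function $u_{k'}(x,\cdot)$ vanishes identically on the open set $\Omega\setminus J_\pm^\Omega(x)$. By continuity of derivatives of $C^{k'}$ functions, every partial derivative of $u_{k'}(x,\cdot)$ of order at most $k'$ vanishes on the boundary $C_\pm^\Omega(x)$, including at the vertex $x$ itself.

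To convert this vanishing into a pointwise bound $|u_{k'}(x,y)|\leq C|\Gamma(x,y)|^k$, I would apply Taylor's theorem in normal coordinates at $x$, treating two regimes. Away from the vertex, Lemma~\ref{Gammalemma} gives $\grad\Gamma_x\neq 0$ on $C_\pm^\Omega(x)\setminus\{x\}$, so $\Gamma_x$ is a smooth defining function for the hypersurface $C_\pm^\Omega(x)$; Taylor expansion in the transverse direction together with the vanishing of the full $k'$-jet yields $|u_{k'}(x,y)|\leq C|\Gamma(x,y)|^{k'}\leq C'|\Gamma(x,y)|^k$. Near the vertex, Taylor expansion at $y=x$ yields $|u_{k'}(x,y)|\leq C\,d(x,y)^{k'}$, and since in normal coordinates $|\Gamma(x,y)|\leq c\,d(x,y)^2$, combining with the previous estimate applied at the nearest smooth boundary point and using $k'\geq 2k$ gives $|u_{k'}(x,y)|\leq C'|\Gamma(x,y)|^k$ uniformly. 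The main obstacle will be controlling this Taylor analysis uniformly across the conical singularity of the light cone at $x$, where $\Gamma_x$ has a degenerate quadratic zero and the simple defining-function argument breaks down; this is exactly what forces $k'$ to be taken at least $2k$, so that the order of vanishing of $u_{k'}$ along the cone overcomes the quadratic degeneracy of $\Gamma$ at its vertex.
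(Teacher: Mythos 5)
Your proposal reaches the right conclusion and is based on the right insight — that one must take the truncation order $k'$ large enough relative to $k$ so that the degree of vanishing of the smooth remainder $u_{k'}$ along the light cone overcomes the quadratic degeneracy of $\Gamma_x$ at its vertex. The paper takes a genuinely different technical route, however. You treat the finite tail $\sum_{j=N+k}^{N+k'-1}V_j\,R_\pm^{\Omega'}(2+2j,x)$ by just bounding it with the explicit power formula $C(2+2j,n)\Gamma^{j+1-n/2}$; the paper instead factors $\Gamma^k$ out of each Riesz term using Proposition~\ref{OmegaRiesz}~(\ref{gammaR}). More significantly, for the smooth remainder you use direct Taylor expansion: at the nearest cone point you get $|u_{k'}|\lesssim(|\Gamma|/r)^{k'}$ with $r=d(x,y)$ (the constant degrades by $1/|\grad\Gamma_x|\sim 1/r$, so your intermediate claim $|u_{k'}|\leq C|\Gamma|^{k'}$ ``away from the vertex'' is imprecise — the constant is not uniform), at the vertex you get $|u_{k'}|\lesssim r^{k'}$, and then $\min(r,|\Gamma|/r)\leq|\Gamma|^{1/2}$ closes the gap provided $k'\geq 2k$. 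This is correct but requires care at a few points you gloss over: the nearest cone point need not be unique, the segment used in the Taylor remainder must stay in the domain where $u_{k'}$ is $C^{k'}$ (the domain in normal coordinates is starshaped, not convex), and the constants in the Taylor estimate at $y_0$ must be controlled uniformly as $y_0\to x$. The paper avoids all of this by resolving the vertex singularity: it blows up via $\pi:\R\times S^{n-1}\to\R^n$, $(t,\omega)\mapsto t\omega$, after which the cone and the exceptional sphere $\{0\}\times S^{n-1}$ become transversally intersecting smooth hypersurfaces, and an elementary one-dimensional division lemma (Lemma~\ref{divisionslemma}) gives a clean factorization $\hat f=\pi_1^{2k+1}\hat\gamma^k\hat F$ with $\hat F$ continuous, hence $u_{k'}=\Gamma^k\cdot h$ with $h$ continuous (Lemma~\ref{factorgamma}). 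This is why the paper uses the truncation $N+3k+1$ rather than your $N+2k$: the division requires vanishing to order $(2k+1)+k$. Your direct approach would in principle get away with less regularity, but the blow-up argument handles the conical degeneracy systematically and yields the stronger multiplicative factorization rather than just a pointwise bound.
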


Here $\|\cdot\|$ denotes an auxiliary norm on $E^*\boxtimes E$.
The proof requires some preparation.

\begin{lemma}\label{divisionslemma}
Let $M$ be a smooth manifold.
Let $H_1, H_2 \subset M$ be two smooth hypersurfaces globally defined by the 
equations $\varphi_1=0$ and $\varphi_2=0$ respectively, where  
$\varphi_1,\varphi_2:M\to\mathbb{R}$ are smooth functions on $M$ satisfying
$d_x\varphi_i\neq 0$ for every $x\in H_i$, $i=1,2$.
We assume that $H_1$ and $H_2$ intersect transversally.

Let $f:M\to\mathbb{R}$ be a $C^k$-function on $M$, $k\in \N$.
Let $k_1, k_2 \in \N$ such that $k_1+k_2\leq k$.
We assume that $f$ vanishes to order $k_i$ along $H_i$, i.~e., in local
coordinates $\frac{\partial^{|\alpha|}f}{\partial x^\alpha}(x)=0$ for every
$x\in H_i$ and every multi-index $\alpha$ with $|\alpha|\leq k_i-1$.
 
Then there exists a continuous function $F:M\to\mathbb{R}$ such that 
\[f=\varphi_1^{k_1}\varphi_2^{k_2} F.\]
\end{lemma}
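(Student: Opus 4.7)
The statement is purely local, so I will construct a continuous $F$ in a neighborhood of each point of $M$ and then remark that the different local choices agree on overlaps because they all coincide with $f/(\varphi_1^{k_1}\varphi_2^{k_2})$ on the open dense set $M\setminus(H_1\cup H_2)$, hence patch together to a global continuous $F$. Three cases occur, according to whether the base point $p$ lies off $H_1\cup H_2$, on exactly one hypersurface, or on $H_1\cap H_2$.

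If $p\notin H_1\cup H_2$, then $\varphi_1^{k_1}\varphi_2^{k_2}$ is nowhere zero on a neighborhood, and $F:=f/(\varphi_1^{k_1}\varphi_2^{k_2})$ is even $C^k$ there. If $p\in H_1\setminus H_2$ (the symmetric case is analogous), I use the condition $d_p\varphi_1\neq 0$ together with the implicit function theorem to choose local coordinates $x^1,\dots,x^n$ about $p$ in which $\varphi_1=x^1$. The vanishing hypothesis on $f$ says $\partial^\alpha f/\partial x^\alpha = 0$ on $\{x^1=0\}$ for $|\alpha|\le k_1-1$, so the Taylor formula with integral remainder applied in the $x^1$-direction yields
\[
f(x)=(x^1)^{k_1}\int_0^1 \frac{(1-t)^{k_1-1}}{(k_1-1)!}\,
\frac{\partial^{k_1}f}{\partial (x^1)^{k_1}}(tx^1,x^2,\dots,x^n)\,dt
=:(x^1)^{k_1}\,g(x),
\]
with $g$ of class $C^{k-k_1}$, hence in particular continuous. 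Since $\varphi_2$ is nonzero near $p$, the desired continuous $F:=g/\varphi_2^{k_2}$ does the job.

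The only genuinely new case is $p\in H_1\cap H_2$. By the transversality assumption the differentials $d_p\varphi_1$ and $d_p\varphi_2$ are linearly independent, so I can choose local coordinates $x^1,\dots,x^n$ about $p$ with $\varphi_1=x^1$ and $\varphi_2=x^2$. Applying Taylor's formula with integral remainder in the $x^1$-variable exactly as above gives $f=(x^1)^{k_1}g$ with $g\in C^{k-k_1}$. Now I need to verify that $g$ still vanishes to order $k_2$ along $\{x^2=0\}$. On $\{x^2=0, x^1\neq 0\}$ one has $g=f/(x^1)^{k_1}$, and since the factor $(x^1)^{-k_1}$ is smooth there while $f$ vanishes to order $k_2$ along $\{x^2=0\}$, each partial derivative $\partial^\beta g$ with $|\beta|\le k_2-1$ (which exists because $k-k_1\ge k_2$) vanishes on $\{x^2=0, x^1\neq 0\}$. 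Continuity of these derivatives then forces them to vanish on all of $\{x^2=0\}$, including $H_1\cap H_2$. A second application of Taylor's formula in $x^2$ to $g$ yields $g=(x^2)^{k_2}F$ with $F$ of class $C^{k-k_1-k_2}$, and hence at least continuous. Combining, $f=(x^1)^{k_1}(x^2)^{k_2}F=\varphi_1^{k_1}\varphi_2^{k_2}F$ locally, as required.

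The main potential obstacle is precisely the verification in this last case that dividing by $\varphi_1^{k_1}$ preserves the correct order of vanishing along $H_2$; it is resolved neatly by the choice of straightening coordinates, which decouples the two vanishing conditions and lets the two Taylor expansions be performed independently. Once the local continuous extensions have been constructed, global continuity of $F$ follows because any two of them coincide with $f/(\varphi_1^{k_1}\varphi_2^{k_2})$ on the dense open set $M\setminus(H_1\cup H_2)$, hence agree everywhere by continuity.
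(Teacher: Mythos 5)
Your proposal is correct and follows essentially the same route as the paper's proof: straighten $\varphi_1$ to a coordinate, use Taylor's formula with integral remainder to factor out $(x^1)^{k_1}$ and obtain a quotient of class $C^{k-k_1}$, show that this quotient still vanishes to order $k_2$ along $H_2$ by noting it equals $f/\varphi_1^{k_1}$ off $H_1$ and invoking continuity of its derivatives up to order $k_2-1$ together with density of $H_2\setminus H_1$ in $H_2$ (guaranteed by transversality), and then repeat for $\varphi_2$. The only differences are presentational: you organize the argument as a case analysis on the position of the base point and, in the intersection case, straighten both hypersurfaces simultaneously, whereas the paper carries out the two factorizations as two global stages; both make the same use of transversality and of the inequality $k-k_1\geq k_2$.
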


\begin{proof}[Proof of Lemma~\ref{divisionslemma}.]
We first prove the existence of a $C^{k-k_1}$-function $F_1:M\to\mathbb{R}$
such that 
\[f=\varphi_1^{k_1}F_1 .\]
This is equivalent to saying that the function $f/\varphi_1^{k_1}$ being
well-defined and $C^k$ on $M\setminus H_1$ extends to a
 $C^{k-k_1}$-function $F_1$ on $M$.
Since it suffices to prove this locally, we introduce local coordinates
$x^1,\dots,x^n$ so that $\varphi_1(x)=x^1$. 
Hence in this local chart $H_1=\{x^1=0\}$.

Since $f(0,x^2,\ldots,x^n)=\frac{\partial^j f}
{\partial(x^1)^j}(0,x^2,\ldots,x^n)=0$ for any $(x^2,\ldots,x^n)$ and $j\leq
k_1-1$ we obtain from the Taylor expansion of $f$ in the $x^1$-direction to
the order $k_1-1$ with integral remainder term 
\[
f(x^1,x^2,\ldots,x^n)
=
\int_0^{x^1}\frac{(x^1-t)^{k_1-1}}{(k_1-1)!}\frac{\del^{k_1}f}{\del  
  (x^1)^{k_1}}(t,x^2,\ldots,x^n)dt.
\] 
In particular, for $x^1\neq 0$
\be
f(x^1,x^2,\ldots,x^n)
&=&
(x^1)^{k_1-1}\int_0^{x^1}\frac{1}{(k_1-1)!}
\left(\frac{x^1-t}{x^1}\right)^{k_1-1} 
\frac{\del^{k_1}f}{\del (x^1)^{k_1}}(t,x^2,\ldots,x^n)dt\\
&=&
\frac{(x^1)^{k_1-1}}{(k_1-1)!}\int_0^1(1-u)^{k_1-1}x^1 \frac{\del^{k_1}f}{\del
  (x^1)^{k_1}}(x^1u,x^2,\ldots,x^n)du\\
&=&
\frac{(x^1)^{k_1}}{(k_1-1)!}\int_0^1(1-u)^{k_1-1} \frac{\del^{k_1}f}{\del
  (x^1)^{k_1}}(x^1u,x^2,\ldots,x^n)du. 
\ee
Now $F_1(x^1,\ldots,x^n):=\frac{1}{(k_1-1)!}\int_0^1(1-u)^{k_1-1}
\frac{\del^{k_1}f}{\del (x^1)^{k_1}}(x^1u,x^2,\ldots,x^n)du$ yields a
$C^{k-k_1}$-function because $\frac{\del^{k_1}f}{\del (x^1)^{k_1}}$ is
$C^{k-k_1}$.
Moreover, we have
$$
f = (x^1)^{k_1} \cdot F_1 = \varphi^{k_1} \cdot F_1.
$$
On $M\setminus H_1$ we have $F_1={f}/{\varphi_1^{k_1}}$ and so $F_1$
vanishes to the order $k_2$ on $H_2\setminus H_1$ because $f$ does. 
Since $H_1$ and $H_2$ intersect transversally the subset
$H_2\setminus H_1$ is dense in $H_2$.
Therefore the function $F_1$  vanishes to the order $k_2$ on all of $H_2$. 
Applying the considerations above to $F_1$ yields a
$C^{k-k_1-k_2}$-function $F:M\to\mathbb{R}$ such that 
$F_1 = \varphi_2^{k_2}\cdot F$.
This concludes the proof.
\end{proof}

\begin{lemma}\label{factorgamma}
Let $f:\mathbb{R}^n\to\mathbb{R}$ a $C^{3k+1}$-function.
We equip $\R^n$ with its standard Minkowski product $\la\cdot,\cdot\ra$
and we assume that $f$ vanishes on all spacelike vectors.

Then there exists a continuous function $h:\mathbb{R}^n\to\mathbb{R}$ such
that 
\[f=h\cdot\gamma^k\]
where $\gamma(x) = - \la x,x\ra$.
\end{lemma}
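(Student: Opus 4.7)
The plan is to define $h := f/\gamma^k$ on $\R^n \setminus \{0\}$ by gluing local factorizations coming from Lemma~\ref{divisionslemma}, and then to prove that $h$ extends continuously across the origin with value $0$. First I would observe that $f$ vanishes to order $3k+1$ on the entire light cone $C := \{\gamma = 0\}$: indeed $f$ vanishes on the open set $\{\gamma < 0\}$ of strictly spacelike vectors, so every continuous derivative $\partial^\alpha f$ with $|\alpha| \le 3k+1$ is identically $0$ on $\{\gamma < 0\}$, hence also on its closure $\{\gamma \le 0\} \supset C$.

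On $\R^n \setminus \{0\}$ one has $d\gamma_x = -2\langle x,\cdot\rangle \ne 0$, so $C \setminus \{0\}$ is a smooth regular hypersurface. Near any $v \in C \setminus \{0\}$ I would pick a smooth hypersurface transversal to $C$ at $v$ and apply Lemma~\ref{divisionslemma} with $\varphi_1 = \gamma$, $k_1 = k$, and $k_2 = 0$ to obtain a continuous (in fact $C^{2k+1}$) local factor $F_v$ with $f = \gamma^k F_v$ on a neighborhood of $v$. Away from $C$, $f/\gamma^k$ is manifestly of class $C^{3k+1}$, and all local factors agree with $f/\gamma^k$ on $\{\gamma \ne 0\}$, hence with one another on all of $C \setminus \{0\}$ by continuity. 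This glues to a continuous function $h$ on $\R^n \setminus \{0\}$ satisfying $f = \gamma^k h$.

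The hard part is to extend $h$ continuously to the origin; I claim $h(x) \to 0$ as $x \to 0$. My plan is to combine two ingredients. Writing $v$ for a nearest point of $C$ to $x$ and using that $\partial^\alpha f|_C \equiv 0$ for every $|\alpha| \le 3k+1$, the Taylor expansion with integral remainder, together with uniform continuity of the top-order derivatives on a fixed compact neighborhood of $0$, yields
\[
|f(x)| \;\le\; C\,\dist(x,C)^{3k+1}\,\omega\!\bigl(\dist(x,C)\bigr),
\]
with $\omega(t) \to 0$ as $t \to 0^+$. On the other hand, decomposing $x = (t,y) \in \R\times\R^{n-1}$ with $\gamma(x) = t^2 - |y|^2$ and minimizing the Euclidean distance to the cone gives the exact identity $\dist(x,C) = \bigl||t| - |y|\bigr|/\sqrt 2$, whence
\[
|\gamma(x)| = \bigl||t|-|y|\bigr|\,(|t|+|y|) = \sqrt 2\,\dist(x,C)\,(|t|+|y|) \;\ge\; \sqrt 2\,|x|\,\dist(x,C),
\]
using $|t|+|y| \ge \sqrt{t^2+|y|^2} = |x|$. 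Combining these two estimates with the trivial bound $|\gamma(x)| \le |x|^2$ gives
\[
|h(x)| = \frac{|f(x)|}{|\gamma(x)|^k} \;\le\; C\,\frac{|\gamma(x)|^{2k+1}}{|x|^{3k+1}}\,\omega\bigl(\dist(x,C)\bigr) \;\le\; C\,|x|^{k+1}\,\omega\bigl(\dist(x,C)\bigr),
\]
which tends to $0$ as $x \to 0$. Setting $h(0) := 0$ completes the construction. The main obstacle will be precisely this last step: the sharp geometric inequality $|\gamma(x)| \ge c\,|x|\,\dist(x,C)$ must be tight enough to compensate the division by $|\gamma(x)|^k$, and must be coupled to a uniform Taylor remainder of order $3k+1$ on $C$; this bookkeeping is exactly what dictates why the hypothesis demands $C^{3k+1}$ rather than merely $C^{2k}$.
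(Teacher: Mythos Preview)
Your argument is correct, and it follows a genuinely different route from the paper. The paper does not work with direct estimates near the origin; instead it resolves the conical singularity by the blow-up $\pi:\R\times S^{n-1}\to\R^n$, $(t,x)\mapsto tx$. Upstairs the light cone becomes the smooth hypersurface $\{\hat\gamma=0\}$ (where $\hat\gamma(t,x)=\gamma(x)$), which meets the exceptional divisor $\{t=0\}$ transversally. The pulled-back function $\hat f=f\circ\pi$ vanishes to order $3k+1$ along both hypersurfaces, and a single application of Lemma~\ref{divisionslemma} with $k_1=2k+1$, $k_2=k$ gives $\hat f=t^{2k+1}\hat\gamma^{\,k}\hat F$ with $\hat F$ continuous; setting $h(y):=\|y\|\,\hat F(\|y\|,y/\|y\|)$ (and $h(0):=0$) then yields the factorization. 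So the paper handles the origin structurally, by making the two relevant loci (the cone and the point $0$) into transversal smooth hypersurfaces.

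Your approach trades this geometric construction for an explicit quantitative estimate: the identity $|\gamma(x)|=\sqrt2\,\dist(x,C)\,(|t|+|y|)$ together with $|x|\le|t|+|y|\le\sqrt2\,|x|$ is exactly what replaces the blow-up. Your method is more elementary and makes transparent why the exponent $3k+1$ suffices (and indeed your chain shows a margin of $|x|^{k+1}$). The paper's method is cleaner bookkeeping-wise, uses Lemma~\ref{divisionslemma} in its full two-hypersurface form, and would generalize more readily to other quadratic cones or to higher-order singularities where an explicit distance formula is not available.
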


\begin{proof}[Proof of Lemma~\ref{factorgamma}]
The problem here is that the hypersurface $\{\gamma=0\}$ is the light cone
which contains $0$ as a singular point so that Lemma~\ref{divisionslemma}
does not apply directly. 
We will get around this difficulty by resolving the singularity.

Let $\pi:M:=\mathbb{R}\times S^{n-1}\to\mathbb{R}^n$ be the map defined
by $\pi(t,x):=tx$. 
It is smooth on $M=\mathbb{R}\times S^{n-1}$ and outside
$\pi^{-1}(\{0\})=\{0\}\times S^{n-1}$ it is a two-fold covering of
$\mathbb{R}^n\setminus\{0\}$. 
The function $\wih{f}:=f\circ\pi:M\to\mathbb{R}$ is $C^{3k+1}$ since $f$ is.

Consider the functions $\wih{\gamma}:M\to\R$, $\wih{\gamma}(t,x):=\gamma(x)$,
and $\pi_1:M\to\R$, $\pi_1(t,x) := t$.
These functions are smooth and have only regular points on $M$.
For $\wih{\gamma}$ this follows from $d_x\gamma\neq 0$ for every
$x\in S^{n-1}$. 
Therefore $\wih{C}(0):=\wih{\gamma}^{-1}(\{0\})$ and $\{0\}\times
S^{n-1}=\pi_1^{-1}(\{0\})$ are smooth embedded hypersurfaces.
Since the differentials of $\wih\gamma$ and of $\pi_1$ are linearly
independent the hypersurfaces intersect transversally.
Furthermore, one obviously has $\pi(\wih{C}(0))=C(0)$ and $\pi(\{0\}\times
S^{n-1})=\{0\}$.

Since $f$ is $C^{3k+1}$ and vanishes on all spacelike vectors $f$ vanishes to
the order $3k+2$ along $C(0)$ (and in particular at $0$).
Hence $\wih{f}$ vanishes to
the order $3k+2$ along $\wih{C}(0)$ and along $\{0\}\times S^{n-1}$. 
Applying Lemma~\ref{divisionslemma} to $\wih{f}$, $\varphi_1:=\pi_1$ and
$\varphi_2:=\wih{\gamma}$, with $k_1:=2k+1$ and $k_2:=k$, yields a continuous
function $\wih{F}:\mathbb{R}\times S^{n-1}\to \mathbb{R}$ such that
\begin{equation}\label{factorfhut}
\wih{f}=\pi_1^{2k+1}\cdot\wih{\gamma}^{\,k}\cdot\wih{F}.
\end{equation}
For $y\in\mathbb{R}^n$ we set
\[h(y):=\left\{\begin{array}{cl}
\|y\|\cdot\wih{F}(\|y\|,\frac{y}{\|y\|})&\textrm{ if }y\neq 0\\ 
0&\textrm{ if }y=0,\end{array}\right.\]
where $\|\cdot\|$ is the standard Euclidean norm on $\mathbb{R}^n$.
The function $h$ is obviously continuous on $\R^n$.
It remains to show $f=\gamma^k\cdot h$.
For $y\in\mathbb{R}^n\setminus\{0\}$ we have
\be
f(y)
&=&
f\left(\|y\|\cdot\frac{y}{\|y\|}\right)\\
&=&
\wih{f}\left(\|y\|,\frac{y}{\|y\|}\right)\\
&\bui{=}{(\ref{factorfhut})}&
\|y\|^{2k+1}\cdot\gamma\left(\frac{y}{\|y\|}\right)^k\cdot
\wih{F}\left(\|y\|,\frac{y}{\|y\|}\right)\\ 
&=&
\|y\|^{2k}\cdot
\gamma\left(\frac{y}{\|y\|}\right)^k\cdot h(y)\\ 
&=&
\gamma(y)^k\cdot h(y).
\ee
For $y=0$ the equation $f(y) = \gamma(y)^k\cdot h(y)$ holds trivially.
\end{proof}

\begin{proof}[Proof of Theorem~\ref{asympto}] 
Repeatedly using Proposition~\ref{OmegaRiesz}~(\ref{gammaR}) we find constants
$C_j'$ such that
\begin{eqnarray*}
\lefteqn{\left(F^\Omega_\pm(x)- \RR^{N+k}_\pm(x)\right)(y)}\\
&=&
\left(F^\Omega_\pm(x)- \RR^{N+3k+1}_\pm(x)\right)(y)
+ \sum_{j=N+k}^{N+3k} V_j(x,y)\cdot R^{\Omega'}_\pm(2+2j,x)(y)\\
&=&
\left(F^\Omega_\pm(x)- \RR^{N+3k+1}_\pm(x)\right)(y)
+ \sum_{j=N+k}^{N+3k} V_j(x,y)\cdot  C_j'\cdot  \Gamma(x,y)^k
\cdot R^{\Omega'}_\pm(2+2(j-k),x)(y) .
\end{eqnarray*}
Now $h_j(x,y) := C_j'\cdot V_j(x,y)\cdot R^{\Omega'}_\pm(2+2(j-k),x)(y)$
is continuous since $2+2(j-k) \geq 2+2N \geq 2+n > n$.
By Proposition~\ref{formalasymptotisch} the section $(x,y)\mapsto
\left(F^\Omega_\pm(x)- \RR^{N+3k+1}_\pm(x)\right)(y)$ is of regularity
$C^{3k+1}$.
Moreover, we know $\supp(F^\Omega_\pm(x)- \RR^{N+3k+1}_\pm(x)) \subset
J_\pm^{\Omega}(x)$.
Hence we may apply Lemma~\ref{factorgamma} in normal coordinates and we obtain
a continuous section $h$ such that 
$$
\left(F^\Omega_\pm(x)- \RR^{N+3k+1}_\pm(x)\right)(y) =
\Gamma(x,y)^k\cdot h(x,y).
$$
This shows
$$
\left(F^\Omega_\pm(x)- \RR^{N+k}_\pm(x)\right)(y) =
\left(h(x,y) + \sum_{j=N+k}^{N+3k} h_j(x,y)\right) \Gamma(x,y)^k .
$$
Now $C_k := \|h + \sum_{j=N+k}^{N+3k}
h_j\|_{C^0(\overline\Omega\times\overline\Omega)}$ does the job.
\end{proof}

\Remark{
It is interesting to compare Theorem~\ref{asympto} to a similar situation
arising in the world of Riemannian manifolds.
If $M$ is an $n$-dimensional compact Riemannian manifold, then the operators
analogous to normally hyperbolic operators on Lorentzian manifolds are the
{\em Laplace type} operators. 
They are defined formally just like normally hyperbolic operators, namely
their principal symbol must be given by the metric.
Analytically however, they behave very differently because they are elliptic.

If $L$ is a nonnegative formally selfadjoint Laplace type operator on $M$,
  then it is essentially selfadjoint and one can form the semi-group
  $t\mapsto e^{-t\bar L}$ where $\bar L$ is the selfadjoint extension of
  $L$. 
For $t>0$ the operator $e^{-t\bar L}$ has a smooth integral kernel $K_t(x,y)$.
One can show that there is an asymptotic expansion of this ``heat kernel''
$$
K_t(x,x) \sim \frac{1}{(4\pi t)^{n/2}} \sum_{k=0}^\infty \alpha_k(x) t^k 
$$
as $t\searrow 0$.
The coefficients $\alpha_k(x)$ are given by a universal expression in the
coefficients of $L$ and their covariant derivatives and the curvature of $M$
and its covariant derivatives.

Even though this asymptotic expansion is very different in nature from the one
in Theorem~\ref{asympto}, it turns out that the Hadamard coefficients on the
diagonal $V_k(x,x)$ of a normally hyperbolic operator $P$ on an $n$-dimensional
Lorentzian manifold are {\em given by the same universal expression} in the
coefficients of $P$ and their covariant derivatives and the curvature of $M$
and its covariant derivatives as $\alpha_k(x)$.
This is due to the fact that the recursive relations defining $\alpha_k$
are formally the same as the transport equations (\ref{eq:transport}) for $P$.
See e.~g.\ \cite{BGV} for details on Laplace type operators.
}

\section{Solving the inhomogeneous equation on small domains} 

In the next chapter we will show uniqueness of the fundamental solutions.
For this we need to be able to solve the inhomogeneous equation\indexn{inhomogeneous wave equation>defemidx} $Pu=v$ for
given $v$ with small support.
Let $\Omega$ be a relatively compact causal subset of $M$ as in
Corollary~\ref{fundlocalexist}. 
Let $F_\pm^{\Omega}(x)$ be the corresponding fundamental solutions for
$P$ at $x\in\Omega$ over $\Omega$.
Recall that for $\varphi\in\DD(\Omega,E^*)$ the maps $x\mapsto
F_\pm^{\Omega}(x)[\varphi]$ are smooth sections in $E^*$.
Using the natural pairing $E_x^* \otimes E_x \to \K$, $\ell\otimes e \mapsto
\ell\cdot e$, we obtain a smooth $\K$-valued function $x\mapsto
F_\pm^{\Omega}(x)[\varphi]\cdot v(x)$ with compact support.
We put
\begin{equation}
u_\pm[\varphi] := \int_\Omega  F_\pm^{\Omega}(x)[\varphi]\cdot v(x)\dV(x) .
\label{defloeseinhomo}
\indexs{u*@$u_\pm$, solution for inhomogeneous wave equation}
\end{equation}

This defines distributions $u_\pm\in\DD'(\Omega,E)$ because if $\varphi_m \to
\varphi$ in $\DD(\Omega,E^*)$, then $F_\pm^{\Omega}(\cdot)[\varphi_m] \to 
F_\pm^{\Omega}(\cdot)[\varphi]$ in $C^0(\overline\Omega,E^*)$ by
Lemma~\ref{Rtglattinx} and (\ref{Fdef}).
Hence $u_\pm[\varphi_m] \to u_\pm[\varphi]$.

\begin{lemma}\label{loeseinhomogen1}
The distributions $u_\pm$ defined in (\ref{defloeseinhomo}) satisfy
$$
Pu_\pm = v
$$
and
$$
\supp(u_\pm) \subset J_\pm^\Omega(\supp(v)).
$$
\end{lemma}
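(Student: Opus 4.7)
The plan is to verify the two claims directly from the defining properties of $F_\pm^\Omega(x)$ collected in Proposition~\ref{fundamentalexist}.

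First, for the equation $Pu_\pm = v$, I would unwind the definition of the distributional action of $P$. For any test section $\varphi\in\DD(\Omega,E^*)$, we have
\begin{align*}
(Pu_\pm)[\varphi] \;=\; u_\pm[P^*\varphi] \;=\; \int_\Omega F_\pm^\Omega(x)[P^*\varphi]\cdot v(x)\,\dV(x).
\end{align*}
Since $F_\pm^\Omega(x)$ is a fundamental solution for $P$ at $x$, we have $F_\pm^\Omega(x)[P^*\varphi]=(PF_\pm^\Omega(x))[\varphi]=\delta_x[\varphi]=\varphi(x)$. Substituting, $(Pu_\pm)[\varphi]=\int_\Omega \varphi(x)\cdot v(x)\,\dV(x)=v[\varphi]$, so $Pu_\pm=v$. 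The only subtle point here is that the map $x\mapsto F_\pm^\Omega(x)[P^*\varphi]$ must be measurable (in fact smooth) so that the integral makes sense and the interchange is legitimate; this is exactly the smoothness statement in Proposition~\ref{fundamentalexist}(3), applied to $P^*\varphi\in\DD(\Omega,E^*)$.

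Second, for the support statement, I would show that $u_\pm[\varphi]=0$ whenever $\supp(\varphi)\cap J_\pm^\Omega(\supp(v))=\emptyset$, which implies $\supp(u_\pm)\subset J_\pm^\Omega(\supp(v))$. For any $x\in\supp(v)$, the support condition $\supp(F_\pm^\Omega(x))\subset J_\pm^\Omega(x)\subset J_\pm^\Omega(\supp(v))$ from Proposition~\ref{fundamentalexist}(2) gives $\supp(F_\pm^\Omega(x))\cap\supp(\varphi)=\emptyset$, so $F_\pm^\Omega(x)[\varphi]=0$. Hence the integrand vanishes on $\supp(v)$, and $u_\pm[\varphi]=0$.

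Finally, one should remark on why $u_\pm$ is indeed a distribution, which is already justified in the paragraph following \eqref{defloeseinhomo}: convergence $\varphi_m\to\varphi$ in $\DD(\Omega,E^*)$ yields $C^0$-convergence $F_\pm^\Omega(\cdot)[\varphi_m]\to F_\pm^\Omega(\cdot)[\varphi]$ on $\overline\Omega$ by Lemma~\ref{Rtglattinx} and \eqref{Fdef}, so pairing against $v\in\DD(\Omega,E)$ gives $u_\pm[\varphi_m]\to u_\pm[\varphi]$. There is no real obstacle; the entire argument is a formal manipulation using the defining property $PF_\pm^\Omega(x)=\delta_x$ and the support constraint on $F_\pm^\Omega(x)$, both of which have been established in the local existence theorem.
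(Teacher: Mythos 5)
Your proposal is correct and takes essentially the same approach as the paper: the first claim is the same unwinding of $(Pu_\pm)[\varphi]=u_\pm[P^*\varphi]$ via the fundamental-solution property, and the support claim is a trivial variant of the paper's argument (you fix $x\in\supp(v)$ and use $\supp(F_\pm^\Omega(x))\subset J_\pm^\Omega(x)$, whereas the paper dualizes to $\supp(v)\cap J_\mp^\Omega(\supp\varphi)=\emptyset$ and invokes the support of $x\mapsto F_\pm^\Omega(x)[\varphi]$ from (\ref{suppF}) — the two are immediately equivalent).
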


\begin{proof}
Let $\varphi\in\DD(\Omega,E^*)$.
We compute
\begin{eqnarray*}
Pu_\pm[\varphi] &=& u_\pm[P^*\varphi]\\
&=&
\int_\Omega  F_\pm^{\Omega}(x)[P^*\varphi]\cdot v(x)\dV(x)\\
&=&
\int_\Omega  P_{(2)}F_\pm^{\Omega}(x)[\varphi]\cdot v(x)\dV(x)\\
&=&
\int_\Omega  \varphi(x)\cdot v(x)\dV(x).
\end{eqnarray*}
Thus $Pu_\pm=v$.
Now assume $\supp(\varphi)\cap J^\Omega_\pm(\supp(v))=\emptyset$.
Then $\supp(v)\cap J^\Omega_\mp(\supp(\varphi))=\emptyset$.
Since $J^\Omega_\mp(\supp(\varphi))$ contains the support of $x \mapsto
F^{\Omega}_\pm(x)[\varphi]$ we have $\supp(v)\cap
\supp(F^{\Omega}_\pm(\cdot)[\varphi])=\emptyset$. 
Hence the integrand in (\ref{defloeseinhomo}) vanishes identically and
therefore $u_\pm[\varphi] = 0$.
This proves $\supp(u_\pm) \subset J^\Omega_\pm(\supp(v))$.
\end{proof}

\begin{lemma}\label{befreiungslemmaneu}
Let $\Omega$ be causal and contained in a convex domain $\Omega'$.
Let $S_1, S_2 \subset \Omega$ be compact subsets.
Let $V \in C^\infty(\overline\Omega\times\overline\Omega,E^*\boxtimes E)$.
Let $\Phi\in C^{n+1}(\ovl{\Omega},E^*)$ and $\Psi\in C^{n+1}(\ovl{\Omega},E)$
be such that $\supp(\Phi)\subset J_\mp^\Omega(S_1)$ and $\supp(\Psi)\subset
J_\pm^\Omega(S_2)$.

Then for all $j\geq 0$
$$
\int_{\ovl{\Omega}} \left(V(x,\cdot) R^{\Omega'}_\pm(2+2j,x)\right)[\Phi ]
\cdot \Psi(x)\dV(x) 
=
\int_{\ovl{\Omega}} \Phi(y)\cdot \left(V(\cdot,y) R^{\Omega'}_\mp(2+2j,y)\right)[\Psi]
\dV(y).
$$
\end{lemma}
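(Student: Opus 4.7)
The statement is the bundle-valued analogue of the scalar Fubini-type identity Lemma~\ref{xytauschCk}. The plan is to reduce to that lemma by encoding the tensor contractions into a single scalar integrand and then invoking Lemma~\ref{xytauschCk} with $\alpha:=2+2j$ (whose real part is $\geq 2>0$, as required there).

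\textbf{Reduction to a scalar integrand.} I would define
\[
u(x,y) := \bigl(V(x,y)\,\Phi(y)\bigr)\cdot \Psi(x)\in\K,
\]
where the inner contraction pairs the $E_y$-factor of $V(x,y)\in E_x^*\otimes E_y$ with $\Phi(y)\in E_y^*$ to yield an element of $E_x^*$, and the outer dot pairs this with $\Psi(x)\in E_x$. Since $V$ is smooth and $\Phi,\Psi\in C^{n+1}$, $u$ lies in $C^{n+1}(\ovl{\Omega}\times\ovl{\Omega},\K)$. Unraveling the definition of the product distribution $V(x,\cdot)R^{\Omega'}_\pm(2+2j,x)$ acting on $\Phi$, and using that $\Psi(x)$ is constant in $y$ so it passes through the scalar-valued Riesz distribution, the LHS of the claimed identity equals
\[
\int_{\ovl{\Omega}}R^{\Omega'}_\pm(2+2j,x)\bigl[y\mapsto u(x,y)\bigr]\,\dV(x),
\]
and the RHS equals $\int_{\ovl{\Omega}}R^{\Omega'}_\mp(2+2j,y)[x\mapsto u(x,y)]\,\dV(y)$ by the analogous computation. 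Since $\ovl{\Omega}\subset\Omega'$ and all integrands in $y$ (resp.\ $x$) are supported in $\ovl{\Omega}$, $R^{\Omega'}_\pm(\alpha,x)$ acts on them identically to the Riesz distribution $R^\Omega_\pm(\alpha,x)$ appearing in Lemma~\ref{xytauschCk}.

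\textbf{Support condition and conclusion.} If $u(x,y)\neq 0$ then $y\in\supp(\Phi)\subset J^\Omega_\mp(S_1)$ and $x\in\supp(\Psi)\subset J^\Omega_\pm(S_2)$, so $\supp(u)\subset J^\Omega_\pm(S_2)\times J^\Omega_\mp(S_1)$. For the upper sign this is precisely the hypothesis of Lemma~\ref{xytauschCk} with $K_1:=S_2$ and $K_2:=S_1$, and Lemma~\ref{xytauschCk} applied to $u$ with $\alpha:=2+2j$ and regularity $k=n+1$ directly yields the desired equality. The lower-sign case reduces to the upper-sign one by the obvious $x\leftrightarrow y$ relabeling in Lemma~\ref{xytauschCk}. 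There is no genuine analytic obstacle here, as all the substance is contained in Lemma~\ref{xytauschCk}; the only thing that requires care is bookkeeping: verifying that the two bundle contractions in $u$ are compatible with how $\Phi$ and $\Psi$ appear in the statement, and that the support of $u$ matches the $J_+\times J_-$ form required by Lemma~\ref{xytauschCk}.
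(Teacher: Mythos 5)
Your proof is correct and takes essentially the same route as the paper: encode the two bundle contractions into the scalar function $u(x,y)=\Phi(y)\,V(x,y)\,\Psi(x)$, verify its regularity and that its support lands in $J_\pm^\Omega(S_2)\times J_\mp^\Omega(S_1)$, and then invoke the Fubini-type swap. If anything your citation of Lemma~\ref{xytauschCk} is the more apt one, since the paper nominally refers to the smooth version Lemma~\ref{xytausch} even though the integrand here is only $C^{n+1}$.
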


\begin{proof}
Since $\supp(R_\pm^{\Omega'}(2+2j,x)) \cap \supp(\Phi) \subset J_\pm^\Omega(x)
\cap 
J_\mp^\Omega(S_1)$ is compact (Lemma~\ref{lJ+KJ-K'cpct}) and since the
distribution $R_\pm^{\Omega'}(2+2j,x)$ 
is of order $\leq n+1$ we may apply $V(x,\cdot) R_\pm^{\Omega'}(2+2j,x)$ to
$\Phi$. 
By Proposition~\ref{OmegaRiesz}~(\ref{RCkinx}) the section $x \mapsto 
V(x,\cdot) R_\pm^{\Omega'}(2+2j,x)[\Phi]$ is continuous.
Moreover, $\supp(x\mapsto V(x,\cdot) R_\pm^{\Omega'}(2+2j,x)[\Phi]) \cap
\supp(\Psi) \subset 
J_\mp^\Omega(\supp(\Phi)) \cap J_\pm^\Omega(S_2) \subset J_\mp^\Omega(S_1)
\cap J_\pm^\Omega(S_2)$ is also compact and contained in $\ovl{\Omega}$.
Hence the integrand of the left hand side is a compactly supported continuous
function and the integral is well-defined.
Similarly, the integral on the right hand side is well-defined.
By Lemma~\ref{xytausch}
\begin{eqnarray*}
\lefteqn{\int_\Omega \left(V(x,\cdot)R_\pm^{\Omega'}(2+2j,x)\right)[\Phi]
\cdot\Psi(x)\dV(x)}\\
&=&
\int_\Omega R_\pm^{\Omega'}(2+2j,x)[y\mapsto V(x,y)^* \Phi(y)]
\cdot\Psi(x)\,\dV(x)\\ 
&=&
\int_\Omega R_\pm^{\Omega'}(2+2j,x)[y\mapsto \Phi(y) V(x,y)\Psi(x)]\,\dV(x)\\
&=&
\int_\Omega R_\mp^{\Omega'}(2+2j,y)[x\mapsto \Phi(y) V(x,y)\Psi(x)]\,\dV(y)\\
&=&
\int_\Omega \Phi(y)\cdot\left(V(\cdot,y)R_\mp^{\Omega}(2+2j,y)[\Psi]\right)
\dV(y).
\end{eqnarray*}
\end{proof}

\begin{lemma}\label{loeseinhomogen2}
Let $\Omega\subset M$ be a relatively compact causal domain satisfying
(\ref{Omegaklein}) in Lemma~\ref{Klemma}.

Then the distributions $u_\pm$ defined in (\ref{defloeseinhomo}) are
smooth sections in $E$, i.~e., $u_\pm \in C^{\infty}(\Omega,E)$.
\end{lemma}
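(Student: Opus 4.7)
The strategy is to show that the distribution $u_\pm$ is in fact the regular distribution associated to a smooth section of $E$. The obstacle is that, as written, $\varphi$ sits inside $F_\pm^\Omega(x)[\varphi]$ and only $v$ appears ``outside''. I would remedy this by flipping $x \leftrightarrow y$ in the Riesz-distribution series, so that $\varphi$ ends up paired against a smooth section that depends only on $v$ and on geometric data.

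First, by Lemma~\ref{Klemma} the operator $(\id+\KK_\pm)^{-1}\circ\KK_\pm$ has a smooth integral kernel $L_\pm\in C^\infty(\overline\Omega\times\overline\Omega,E^*\boxtimes E)$ with appropriate future/past-stretched support, so that
$$
F_\pm^\Omega(x)[\varphi] \;=\; \RRt_\pm(x)[\varphi] \;-\; \int_{\overline\Omega} L_\pm(x,z)\,\RRt_\pm(z)[\varphi]\,\dV(z).
$$
Inserting this into (\ref{defloeseinhomo}) and swapping the order of integration (all integrands are continuous and compactly supported in $\overline\Omega$) yields $u_\pm[\varphi]=\int_{\overline\Omega}\RRt_\pm(x)[\varphi]\cdot \hat v(x)\,\dV(x)$, where $\hat v\in C^\infty(\overline\Omega,E)$ is a smooth section obtained from $v$ and $L_\pm$. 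This reduces the problem to the approximate fundamental solution.

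Next, I would expand $\RRt_\pm(x)$ as in Proposition~\ref{propRt} and apply Lemma~\ref{befreiungslemmaneu} term by term with $\Phi:=\varphi$ and $\Psi:=\hat v$ (the support hypotheses are met because $\Omega$ is causal, $\varphi$ has compact support in $\Omega$, and $\hat v$ is supported in $\overline\Omega$, while the smooth factor $V$ is taken to be $V_j$ for $j<N$ and $\sigma(\Gamma/\varepsilon_j)V_j$ for $j\ge N$). This flips each Riesz distribution from base point $x$ to base point $y$ and swaps $R_\pm^{\Omega'}\leftrightarrow R_\mp^{\Omega'}$, giving
$$
u_\pm[\varphi] \;=\; \int_{\overline\Omega}\varphi(y)\cdot w_\pm(y)\,\dV(y),
$$
with
$$
w_\pm(y)\;:=\;\sum_{j=0}^{N-1}\!\bigl(V_j(\cdot,y)\,R_\mp^{\Omega'}(2{+}2j,y)\bigr)[\hat v]\;+\;\sum_{j=N}^{\infty}\!\bigl(\sigma(\Gamma(\cdot,y)/\varepsilon_j)V_j(\cdot,y)\,R_\mp^{\Omega'}(2{+}2j,y)\bigr)[\hat v].
$$
Once $w_\pm$ is shown to be a smooth section of $E$, the identity above identifies $u_\pm$ with $w_\pm$ as a distribution, and the lemma follows.

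The technical crux, and the main obstacle, lies in establishing smoothness of $w_\pm$: each individual summand is smooth in $y$ by Proposition~\ref{OmegaRiesz}~(\ref{Rglattinx}) since $\hat v$ is smooth and compactly supported, but one must justify that the series converges in every $C^k$-norm so that termwise flipping commutes with the infinite sum. I would do this by rerunning the estimates from the proof of Lemma~\ref{RtKonvergenz} on the ``transposed'' series: the function $\Gamma$ is symmetric in its two arguments, the cutoffs $\sigma(\Gamma/\varepsilon_j)$ localise the remaining terms where $|\Gamma|\le\varepsilon_j$, and the Hadamard coefficients $V_j(\cdot,y)$ are bounded in $C^k(\overline\Omega\times\overline\Omega)$; hence by shrinking the $\varepsilon_j$ further if necessary (which does not spoil the previously fixed $\RRt_\pm$, $K_\pm$ or $F_\pm^\Omega$) one obtains the required uniform convergence. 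With this convergence in hand, Fubini together with the termwise flip yields $u_\pm[\varphi]=\int_\Omega \varphi\cdot w_\pm\,\dV$ with $w_\pm\in C^\infty(\Omega,E)$, completing the proof.
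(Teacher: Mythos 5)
Your proposal follows essentially the same route as the paper's proof: absorb the kernel $L_\pm$ of $(\id+\KK_\pm)^{-1}\circ\KK_\pm$ into a smooth section $w$ via Fubini, reducing to the approximate fundamental solution; then apply Lemma~\ref{befreiungslemmaneu} termwise to flip the Riesz distributions from base point $x$ to base point $y$; then invoke Proposition~\ref{OmegaRiesz}~(\ref{Rglattinx}) for smoothness of each term; then argue $C^k$-convergence of the series.

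The one place you deviate is at the very end, and there are two small issues with it. First, your parenthetical claim that shrinking the $\varepsilon_j$ ``does not spoil the previously fixed $\RRt_\pm$, $K_\pm$ or $F_\pm^\Omega$'' is false as stated: the $\varepsilon_j$ enter the very definition of $\RRt_\pm$, which in turn determines $K_\pm$ and $F_\pm^\Omega$, so changing them changes all of those objects (including the $u_\pm$ you are trying to identify). What is true is that the conditions on $\varepsilon_j$ in Lemmas~\ref{RtKonvergenz} and \ref{Rtistfastfundsol} are all upper bounds, so one could in principle go back and impose further upper bounds from the start; but one cannot shrink them after the fact while keeping the already-constructed distributions. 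Second, and more to the point, no shrinking and no new estimates are needed at all. After the flip, the $j$-th term for $j\geq N$ is
\[
\int_{\overline\Omega}\sigma(\Gamma(x,y)/\varepsilon_j)\,V_j(x,y)\,R_\mp^{\Omega'}(2+2j,y)(x)\,w(x)\,\dV(x),
\]
and since $\Re(2+2j)>n$ one has $R_\mp^{\Omega'}(2+2j,y)(x)=R_\pm^{\Omega'}(2+2j,x)(y)$ and $\Gamma(x,y)=\Gamma(y,x)$ (this symmetry is used explicitly in the proof of Lemma~\ref{xytausch}). So the integrand is exactly $\sigma(\Gamma(x,y)/\varepsilon_j)V_j(x,y)R_\pm^{\Omega'}(2+2j,x)(y)\,w(x)$ --- the same function whose $C^k(\overline\Omega\times\overline\Omega)$-convergence was already secured by the original choice of $\varepsilon_j$ in Lemma~\ref{RtKonvergenz}. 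Integrating against the fixed continuous $w$ preserves that convergence. This is what the paper means by ``by the choice of the $\varepsilon_j$ the series converges in all $C^k$-norms,'' and it avoids any after-the-fact tampering with the $\varepsilon_j$.
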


\begin{proof}
Let $\varphi\in\DD(\Omega,E^*)$.
Put $S:=\supp(\varphi)$.
Let $L_\pm \in C^\infty(\overline\Omega\times\overline\Omega, E^*\boxtimes E)$
be the integral kernel of $(\id + \KK_{\pm})^{-1}\circ \KK_{\pm}$.
We recall from (\ref{Fdef})
$$
F_\pm^{\Omega}(\cdot)[\varphi] 
= 
(\id + \KK_{\pm})^{-1}(\RRt_\pm(\cdot)[\varphi])
=
\RRt_\pm(\cdot)[\varphi] - (\id + \KK_{\pm})^{-1}
\KK_{\pm}(\RRt_\pm(\cdot)[\varphi]).
$$
Therefore
\begin{eqnarray*}
u_\pm[\varphi] 
&=&
\int_\Omega  F_\pm^{\Omega}(x)[\varphi]\cdot v(x)\dV(x)\\
&=&
\int_\Omega  \RRt_\pm(x)[\varphi]\cdot v(x)\dV(x) -
\int_\Omega \int_\Omega L_\pm(y,x) \cdot \RRt_\pm(x)[\varphi]\cdot v(y)
\dV(x)\dV(y)\\
&=&
\int_\Omega  \RRt_\pm(x)[\varphi]\cdot w(x)\dV(x)
\end{eqnarray*}
where $w(x) := v(x) - \int_\Omega v(y)\cdot L_\pm(y,x)\dV(y)\in E_x$.
Obviously, $w \in C^\infty(\ovl{\Omega},E)$.
By Lemma~\ref{Klemma} $\supp(L_\pm) \subset \{(y,x)\in\overline\Omega\times
\overline\Omega\, |\, x \in J_\pm^{\overline\Omega}(y)\}$.
Hence $\supp(w)\subset J_\pm^{\overline\Omega}(\supp(v))$.
We may therefore apply Lemma~\ref{befreiungslemmaneu} with $\Phi=\varphi$
and $\Psi = w$ to obtain
$$
\int_\Omega V_j(x,\cdot)R_\pm^{\Omega'}(2+2j,x)[\varphi]\cdot w(x) \dV(x)
=
\int_\Omega \varphi(y) V_j(\cdot,y)R_\mp^{\Omega'}(2+2j,y)[w] \dV(y)
$$
for $j=0,\ldots,N-1$ and 
\begin{eqnarray*}
\lefteqn{\int_\Omega \sigma(\Gamma(x,\cdot)/\varepsilon_j) V_j(x,\cdot)
R_\pm^{\Omega'}(2+2j,x)[\varphi]\cdot w(x) \dV(x)}\\
&=&
\int_\Omega \varphi(y) \sigma(\Gamma(\cdot,y)/\varepsilon_j) V_j(\cdot,y)
R_\mp^{\Omega'}(2+2j,y)[w] \dV(y) 
\end{eqnarray*}
for $j\geq N$.
Note that the contribution of the zero set $\partial{\Omega}$ in the above
integrals vanishes, hence we integrate over $\Omega$ instead of $\ovl{\Omega}$.
Summation over $j$ yields
\begin{eqnarray*}
u_\pm[\varphi] 
&=&
\int_\Omega  \RRt_\pm(x)[\varphi]\cdot w(x)\dV(x)\\
&=&
\sum_{j=0}^{N-1}
\int_\Omega \varphi(y) V_j(\cdot,y)R_\mp^{\Omega'}(2+2j,y)[w] \dV(y)\\
&&+
\sum_{j=N}^\infty 
\int_\Omega \varphi(y) \sigma(\Gamma(\cdot,y)/\varepsilon_j) V_j(\cdot,y)
R_\mp^{\Omega'}(2+2j,y)[w] \dV(y) .
\end{eqnarray*}
Thus
$$
u_\pm(y) = 
\sum_{j=0}^{N-1}\left(V_j(\cdot,y)R_\mp^{\Omega'}(2+2j,y)\right)[w]
+
\sum_{j=N}^\infty \left(\sigma(\Gamma(\cdot,y)/\varepsilon_j) V_j(\cdot,y)
R_\mp^{\Omega'}(2+2j,y)\right)[w] .
$$
Proposition~\ref{OmegaRiesz}~(\ref{Rglattinx}) shows that all summands
are smooth in $y$.
By the choice of the $\varepsilon_j$ the series converges in all $C^k$-norms.
Hence $u_\pm$ is smooth.
\end{proof}

We summarize

\begin{thm}\label{thminhomogen}
Let $M$ be a timeoriented Lorentzian manifold.
Let $P$ be a normally hyperbolic operator acting on sections in a 
vector bundle $E$ over $M$.

Then each point in $M$ possesses a relatively compact causal
neighborhood $\Omega$ such that for each $v\in\DD(\Omega,E)$
there exist $u_\pm\in C^\infty(\Omega,E)$ satisfying
\begin{enumerate}
\item 
$\int_\Omega \phi(x)\cdot u_\pm(x)\dV = \int_\Omega
F_\pm^{\Omega}(x)[\varphi]\cdot v(x)\dV$ for each $\phi\in\DD(\Omega,E^*)$,
\item
$Pu_\pm =v$,
\item
$\supp(u_\pm) \subset J_\pm^\Omega(\supp(v)).
\hfill\Box$
\end{enumerate}
\end{thm}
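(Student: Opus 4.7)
The plan is to assemble Theorem~\ref{thminhomogen} directly from the three ingredients already established in the section: the existence of local fundamental solutions (Corollary~\ref{fundlocalexist}), the fact that pairing them against $v$ yields a distributional solution of $Pu=v$ with the right support (Lemma~\ref{loeseinhomogen1}), and the smoothness of this solution (Lemma~\ref{loeseinhomogen2}). So the proof is really a matter of choosing the neighborhood $\Omega$ carefully enough that all three lemmas apply simultaneously, then reading off the conclusion.

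Concretely, given a point $p\in M$, I would first invoke Corollary~\ref{fundlocalexist} to obtain an arbitrarily small causal neighborhood $\Omega$ of $p$ (contained in some convex $\Omega'$) carrying fundamental solutions $F_\pm^\Omega(x)$ for every $x\in\Omega$, with $\supp F_\pm^\Omega(x)\subset J_\pm^\Omega(x)$ and with $x\mapsto F_\pm^\Omega(x)[\varphi]$ smooth for each test section $\varphi$. Then I would shrink $\Omega$ further if necessary so that the smallness condition (\ref{Omegaklein}) from Lemma~\ref{Klemma} is satisfied, which is the hypothesis needed for Lemma~\ref{loeseinhomogen2}; since (\ref{Omegaklein}) only requires $\vol(\overline\Omega)\cdot\|K_\pm\|_{C^0}$ to be small and $K_\pm$ is smooth on the ambient convex set, this can be arranged by taking $\Omega$ sufficiently small.

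With $\Omega$ fixed, for $v\in\DD(\Omega,E)$ I would define $u_\pm$ by formula (\ref{defloeseinhomo}):
\[
u_\pm[\varphi]:=\int_\Omega F_\pm^\Omega(x)[\varphi]\cdot v(x)\,\dV(x),\qquad \varphi\in\DD(\Omega,E^*).
\]
This is well-defined as a distribution by the continuity argument given right after (\ref{defloeseinhomo}) (using Lemma~\ref{Rtglattinx} together with the definition (\ref{Fdef}) of $F_\pm^\Omega$). Lemma~\ref{loeseinhomogen1} then immediately gives $Pu_\pm=v$ and $\supp(u_\pm)\subset J_\pm^\Omega(\supp v)$, which are properties (2) and (3). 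Property (1) is nothing but the defining identity for $u_\pm$ rewritten once one checks that the smooth section $u_\pm$ represents this distribution.

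The only genuine work is showing that $u_\pm$ is in fact a smooth section, not merely a distribution, which is precisely Lemma~\ref{loeseinhomogen2}; this is the step that required the smallness hypothesis (\ref{Omegaklein}) so that $(\mathrm{id}+\KK_\pm)^{-1}$ exists with a smooth integral kernel, allowing one to swap the roles of the two variables via Lemma~\ref{befreiungslemmaneu} and express $u_\pm(y)$ as a manifestly smooth series in $y$. Once smoothness is in hand, property (1) follows because $u_\pm$ as a smooth section and $u_\pm$ as a distribution agree (via Example~\ref{ex:locint}). Thus the entire theorem is just the conjunction of Corollary~\ref{fundlocalexist}, Lemma~\ref{loeseinhomogen1}, and Lemma~\ref{loeseinhomogen2}, and the proof reduces to citing them after the single observation that $\Omega$ can be chosen small enough for all to apply.
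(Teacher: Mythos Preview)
Your proposal is correct and is exactly the paper's approach: Theorem~\ref{thminhomogen} is stated in the paper as a summary (marked with $\Box$, no separate proof) of precisely the three ingredients you name---Corollary~\ref{fundlocalexist} for the neighborhood and fundamental solutions, Lemma~\ref{loeseinhomogen1} for $Pu_\pm=v$ and the support property, and Lemma~\ref{loeseinhomogen2} for smoothness. One minor redundancy: the smallness condition~(\ref{Omegaklein}) is already built into the construction of $F_\pm^\Omega$ via Proposition~\ref{fundamentalexist}, so once Corollary~\ref{fundlocalexist} hands you $\Omega$ there is no need to shrink further.
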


\chapter{The global theory}\label{chapglobaltheorie} 

In the previous chapter we developed the local theory.
We proved existence of advanced and retarded fundamental solutions on small
domains $\Omega$ in the Lorentzian manifold.
The restriction to small domains arises from two facts.
Firstly, Riesz distributions and Hadamard coefficients are defined only in
domains on which the Riemannian exponential map is a diffeomorphism.
Secondly, the analysis in Section~\ref{sec:trueffsmall} that allows us to turn
the approximate fundamental solution into a true one requires sufficiently
good bounds on various functions defined on $\Omega$.
Consequently, our ability to solve the wave equation as in
Theorem~\ref{thminhomogen} is so far also restricted to small domains.

In this chapter we will use these local results to understand solutions to a
wave equation defined on the whole Lorentzian manifold.
To obtain a reasonable theory we have to make geometric assumptions on the
manifold.
In most cases we will assume that the manifold is globally hyperbolic.
This is the class of manifolds where we get a very complete understanding of
wave equations.

However, in some cases we get global results for more general manifolds.
We start by showing uniqueness of fundamental solutions
with a suitable condition on their support.
The geometric assumptions needed here are weaker than global hyperbolicity.
In particular, on globally hyperbolic manifolds we get uniqueness of advanced
and retarded fundamental solutions.

Then we show that the Cauchy problem is well-posed on a globally hyperbolic
manifold.
\indexn{Cauchy problem}
This means that one can uniquely solve $Pu=f$, $u|_S=u_0$ and
$\nabla_\mathfrak{n} 
u=u_1$ where $f$, $u_0$ and $u_1$ are smooth and compactly supported, $S$ is a
Cauchy hypersurface and $\nabla_\mathfrak{n}$ is the covariant normal
derivative along 
$S$.
The solution depends continuously on the given data $f$, $u_0$ and $u_1$.
It is unclear how one could set up a Cauchy problem on a non-globally
hyperbolic manifold because one needs a Cauchy hypersurface $S$ to impose the
initial conditions $u|_S=u_0$ and $\nabla_\mathfrak{n} u=u_1$.

Once existence of solutions to the Cauchy problem is established it is not
hard to show existence of fundamental solutions and of Green's operators.
In the last section we show how one can get fundamental solutions to some
operators on certain non-globally hyperbolic manifolds like anti-deSitter
spacetime.

\section{Uniqueness of the fundamental solution}
\indexn{fundamental solution}

The first global result is uniqueness of solutions to the wave equation with
future or past compact support.
For this to be true the manifold must have certain geometric properties.
Recall from Definition~\ref{def:timesep} and Proposition~\ref{prop:timesep}
the definition and properties of the time-separation function $\tau$.
\indexn{time-separation}
The relation ``$\leq$'' being closed means that $p_i\leq q_i$, $p_i \to p$,
and $q_i\to q$ imply $p\leq q$.

\begin{thm}\label{kernP}
Let $M$ be a connected timeoriented Lorentzian manifold such that
\begin{enumerate}
\item 
the causality condition holds, i.~e., there are no causal loops,
\indexn{causality condition}
\item
the relation ``$\leq$'' is closed,
\item
the time separation function $\tau$ is finite and continuous on $M\times M$.
\end{enumerate}
Let $P$ be a normally hyperbolic operator acting on sections in a 
vector bundle $E$ over $M$.

Then any distribution $u\in\DD'(M,E)$ with past or future compact 
support solving the equation $Pu=0$ must vanish identically on $M$,
\indexn{past compact support>defemidx}
\indexn{future compact support>defemidx}
$$
u\equiv 0.
$$
\end{thm}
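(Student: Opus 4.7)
\textbf{Proof plan for Theorem \ref{kernP}.}

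The strategy is duality. By reversing the time orientation it is enough to treat the case when $\supp(u)$ is past compact. To show $u\equiv 0$ I must show $u[\varphi]=0$ for every $\varphi\in\DD(M,E^*)$. The basic identity I want to exploit is
$$
u[\varphi]=u[P^*\psi]=(Pu)[\psi]=0,
$$
which would hold as soon as I can produce a smooth $\psi$ with $P^*\psi=\varphi$ such that $\supp(u)\cap\supp(\psi)$ is compact (so that the pairing on the right makes sense). Since $P^*$ has the same principal symbol as $P$, it is again normally hyperbolic; in particular Corollary~\ref{fundlocalexist} and Theorem~\ref{thminhomogen} apply to $P^*$, giving locally defined retarded solutions $\psi$ of $P^*\psi=\varphi$ on sufficiently small causal neighborhoods, with support in the causal past of $\supp(\varphi)$.

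Rather than constructing such a $\psi$ globally, I would argue by contradiction. Suppose $\supp(u)\neq\emptyset$ and pick $p\in\supp(u)$. By past compactness the set $K:=\supp(u)\cap J_-^M(p)$ is compact and contains $p$. Because $\leq$ is closed, $K$ is indeed closed in $J_-^M(p)$, and by hypothesis the continuous function $K\to\R$, $q\mapsto\tau(q,p)$, attains its maximum at some $q_0\in K$. I now claim $q_0$ is a \emph{past-minimal} point of $\supp(u)\cap J_-^M(p)$, in the sense that $\supp(u)\cap I_-^M(q_0)=\emptyset$. Indeed, if $r\in\supp(u)$ and $r\in I_-^M(q_0)$, then $r<q_0\leq p$ gives $r\in K$ and, by the inverse triangle inequality and $\tau(r,q_0)>0$ (Proposition~\ref{prop:timesep}(1),(3)),
$$
\tau(r,p)\geq\tau(r,q_0)+\tau(q_0,p)>\tau(q_0,p),
$$
contradicting the maximality of $q_0$.

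With such a $q_0$ in hand I pick a relatively compact causal neighborhood $\Omega\subset M$ of $q_0$ small enough that Theorem~\ref{thminhomogen} applies to $P^*$ on $\Omega$, and moreover such that $\Omega\cap\supp(u)\subset J_+^\Omega(q_0)$; the latter can be arranged because $\supp(u)\cap\Omega$ is closed and disjoint from $I_-^M(q_0)$, while by shrinking $\Omega$ the sets $\Omega\cap J_-^\Omega(q_0)\setminus\{q_0\}$ can be squeezed into $I_-^M(q_0)$. Now let $\varphi\in\DD(\Omega,E^*)$ be arbitrary. Theorem~\ref{thminhomogen} applied to $P^*$ produces $\psi\in C^\infty(\Omega,E^*)$ with $P^*\psi=\varphi$ and $\supp(\psi)\subset J_-^\Omega(\supp(\varphi))$, which is a compact subset of $\Omega$. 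Extending $\psi$ by zero gives a compactly supported smooth section on $M$, so $u[P^*\psi]=(Pu)[\psi]=0$, whence $u[\varphi]=0$. Since $\varphi$ was arbitrary in $\DD(\Omega,E^*)$, the distribution $u$ vanishes on $\Omega$, contradicting $q_0\in\supp(u)$.

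The main obstacle I expect is the \emph{geometric} step in which one chooses $\Omega$ so that $\Omega\cap\supp(u)\subset J_+^\Omega(q_0)$: one has to use closedness of $\leq$, continuity of $\tau$, and the past-minimality of $q_0$ together with compactness of $K$ to rule out sequences in $\supp(u)\cap\Omega$ accumulating at $q_0$ from the chronological past. Once this geometric lemma is in place, the remainder of the argument is a direct application of the local inhomogeneous solvability from Chapter~2 together with the elementary duality identity above; the case of future compact support follows by the time-reversal symmetry of the hypotheses.
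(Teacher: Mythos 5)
Your outline correctly identifies the duality identity $u[\varphi]=u[P^*\psi]=(Pu)[\psi]=0$ as the engine, and the first half of your argument (maximizing $\tau(\cdot,p)$ over $\supp(u)\cap J_-^M(p)$) matches the paper's. However, there are two genuine gaps that break the proof where you anticipated difficulty. First, the $\tau$-maximality of $q_0$ only yields $\supp(u)\cap I_-^M(q_0)=\emptyset$, which is strictly weaker than what you need: if $r\in\supp(u)$ lies on the past \emph{light cone} of $q_0$, then $\tau(r,q_0)=0$ and the inverse triangle inequality gives no contradiction, so such $r$ can accumulate at $q_0$. The paper closes this hole by running Zorn's lemma on $\supp(u)\cap J_+^M(z)$ (using closedness of $\leq$ and compactness) to replace $z$ by a maximal element $z_0$ with $\supp(u)\cap J_+^M(z_0)=\{z_0\}$. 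Your claim that "by shrinking $\Omega$ the sets $\Omega\cap J_-^\Omega(q_0)\setminus\{q_0\}$ can be squeezed into $I_-^M(q_0)$" is also false: in a convex neighborhood the past light cone of $q_0$ is part of $J_-^\Omega(q_0)$ and is never contained in $I_-^M(q_0)$, no matter how small $\Omega$ is. Moreover, even with the Zorn-improved $q_0$, there is no reason $\Omega\cap\supp(u)\subset J_+^\Omega(q_0)$: points of $\supp(u)$ spacelike to $q_0$ can sit arbitrarily close to $q_0$ and are not constrained by either the $\tau$-argument or the Zorn step.

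Second, and independently, the assertion that $J_-^\Omega(\supp\varphi)$ is "a compact subset of $\Omega$" is unjustified and generally false --- for a compact $K\subset\Omega$, $J_-^\Omega(K)$ typically runs out to the past boundary of the (open, relatively compact) causal domain $\Omega$. Notice that this step makes \emph{no} use of the past-compactness of $\supp(u)$: if it were valid, your final paragraph would show $u\equiv 0$ for every distributional solution of $Pu=0$, which is absurd (e.g.\ $R_+(2)-R_-(2)$ on Minkowski space). So $\psi$ cannot simply be extended by zero. The paper gets around both issues in one stroke with a different geometric device: choose $p_i\to z$ from $\Omega\cap I_-^M(z)\cap I_+^M(y)$, prove (using $\supp(u)\cap J_+^M(z)=\{z\}$, closedness of $\leq$, and compactness of $J_+^M(y)\cap\supp(u)$) that $J_+^M(p_i)\cap\supp(u)\subset\Omega$ for large $i$, restrict test sections to $\widetilde\Omega:=\Omega\cap I_+^M(p_i)$, and then insert a compactly supported cut-off $\eta$ with $\eta\equiv 1$ on $J_+^M(p_i)\cap\supp(u)$ so that $u[P^*\psi]=u[P^*(\eta\psi)]=(Pu)[\eta\psi]=0$ becomes a legitimate pairing. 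You would need both the Zorn step and some version of this cut-off construction to repair your argument; the spacelike issue in particular cannot be resolved by shrinking $\Omega$ alone.
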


The idea of the proof is very simple.
We would like to argue as follows:
We want to show $u[\phi]=0$ for all test sections $\phi\in\DD(M,E^*)$.
Without loss of generality let $\phi$ be a test section whose support is
contained in a 
sufficiently small open subset $\Omega\subset M$ to which
Theorem~\ref{thminhomogen} can be applied.
Solve $P^*\psi=\phi$ in $\Omega$.
Compute
$$
u[\phi]=u[P^*\psi]\stackrel{(*)}{=}
\underbrace{Pu}_{=0}[\psi]=0.
$$
The problem is that equation ($*$) is not justified because $\psi$ does not
have compact support.
The argument can be rectified in case $\supp(u) \cap \supp(\psi)$ is compact.
The geometric considerations in the proof have the purpose of getting to this
situation.

\begin{proof}[Proof of Theorem~\ref{kernP}]
Without loss of generality let $A:=\supp(u)$ be future compact.
We will show that $A$ is empty.
Assume the contrary and consider some $x\in A$. 
We fix some $y\in I_-^M(x)$.
Then the intersection $A\cap J_+^M(y)$ is compact and nonempty.

\input{fig-proofunique1}

Since the function $M \to \R$, $z \mapsto \tau(y,z)$, is continuous
it attains its maximum on the compact set $A\cap J^M_+(y)$ at some point
$z\in A\cap J^M_+(y)$.
The set $B:=  A\cap J^M_+(z)$ is compact and contains $z$.
For all $z' \in B$ we have $\tau(y,z') \geq \tau(y,z)$ from (\ref{idu}) since
$z' \geq z$ 
and hence $\tau(y,z') = \tau(y,z)$ by maximality of $\tau(y,z)$.

The relation ``$\leq$'' turns $B$ into an ordered set.
That $z_1 \leq z_2$ and $z_2 \leq z_1$ implies $z_1 = z_2$ follows from
nonexistence of causal loops.
We check that Zorn's lemma can be applied to $B$.
Let $B'$ be a totally ordered subset of $B$.
Choose\footnote{Every (infinite) subset of a manifold has a countable dense
  subset. 
This follows from existence of a countable basis of the topology.} 
a countable dense subset $B'' \subset B'$.
Then $B''$ is totally ordered as well and can be written as $B'' = \{\zeta_1,
\zeta_2, \zeta_3, \ldots\}$.
Let $z_i$ be the largest element in $\{\zeta_1\ldots,\zeta_i\}$.
This yields a monotonically increasing sequence $(z_i)_{i}$ which eventually
becomes at least as large as any given $\zeta\in B''$.

By compactness of $B$ a subsequence of $(z_i)_{i}$ converges to some $z'\in
B$ as $i \to \infty$.
Since the relation ``$\leq$'' is closed one easily sees that $z'$ is an upper
bound for $B''$. 
Since $B'' \subset B'$ is dense and ``$\leq$'' is closed $z'$ is also an upper
bound for $B'$.
Hence Zorn's lemma applies and yields a maximal element $z_0\in B$.
Replacing $z$ by $z_0$ we may therefore assume that $\tau(y,\cdot)$ attains
its maximum at $z$ and that $A\cap J^M_+(z) = \{z\}$.

\input{fig-proofunique5}

We fix a relatively compact causal neighborhood
${\Omega}\subset\subset M$ of $z$ as in Theorem~\ref{thminhomogen}. 

\input{fig-proofunique6}

Let $p_i\in\Omega\cap I^M_-(z)\cap I^M_+(y)$ such that $p_i \to z$.
We claim that for $i$ sufficiently large we have $J^M_+(p_i)\cap A \subset
\Omega$.
Suppose the contrary.
Then there is for each $i$ a point $q_i\in J^M_+(p_i)\cap A$ such that
$q_i\not\in \Omega$.
Since $q_i\in J^M_+(y) \cap A$ for all $i$ and $J^M_+(y) \cap A$ is compact
we have, after passing to a subsequence, that $q_i \to q \in J^M_+(y) \cap A$.
From $q_i \geq p_i$, $q_i \to q$, $p_i \to z$, and the fact that ``$\leq$''
is closed we conclude $q\geq z$.
Thus $q \in J^M_+(z) \cap A$, hence $q=z$.
On the other hand, $q\not\in \Omega$ since all $q_i\not\in \Omega$,
a contradiction.

\input{fig-proofunique7}

This shows that we can fix $i$ sufficiently large so that $J_+^M(p_i)\cap A
\subset\Omega$. 
We choose a cut-off function $\eta\in\DD({\Omega},\R)$ such that
$\eta|_{J^M_+(p_i)\cap A} \equiv 1$.
We put $\widetilde{\Omega}:=\Omega\cap I^M_+(p_i)$ and note that
$\widetilde{\Omega}$ is an open neighborhood of $z$.

\input{fig-proofunique8}

Now we consider some arbitrary $\varphi\in\DD(\widetilde{\Omega},E^*)$.
We will show that $u[\varphi]=0$.
This then proves that $u|_{\widetilde{\Omega}} = 0$, in particular,
$z\not\in A = \supp(u)$, the desired contradiction.

By the choice of $\Omega$ we can solve the inhomogeneous equation
$P^*\psi=\varphi$ on ${\Omega}$
with $\psi\in C^{\infty}({\Omega},E^*)$ and $\supp(\psi) \subset
J_+^{{\Omega}}(\supp(\varphi))\subset J_+^M(p_i)\cap{\Omega}$.
Then $\supp(u) \cap \supp(\psi) \subset A \cap J_+^M(p_i)\cap{\Omega}
=  A \cap J_+^M(p_i)$.
Hence $\eta|_{\supp(u) \cap \supp(\psi)} = 1$.
Thus

$$
u[\varphi]
=
u[P^*\psi]
=
u[P^*(\eta\psi)]
=
(Pu)[\eta\psi] = 0.
$$
\end{proof}

\begin{corollary}\label{fundunique}
Let $M$ be a connected timeoriented Lorentzian manifold such that
\begin{enumerate}
\item 
the causality condition holds, i.~e., there are no causal loops,
\item
the relation ``$\leq$'' is closed,
\item
the time separation function $\tau$ is finite and continuous on $M\times M$.
\end{enumerate}
Let $P$ be a normally hyperbolic operator acting on sections in a 
vector bundle $E$ over $M$.

Then for every $x\in M$ there exists at most one fundamental solution
for $P$ at $x$ with past compact support and at most one with future
compact support.
$\hfill\Box$
\end{corollary}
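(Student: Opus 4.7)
The plan is to derive this as an immediate consequence of Theorem~\ref{kernP}. Fix $x\in M$ and suppose $F_1,F_2\in\DD'(M,E,E_x^*)$ are two fundamental solutions of $P$ at $x$, both with past compact support. I would form the difference $u:=F_1-F_2$ and observe that $Pu=\delta_x-\delta_x=0$, so that $u$ solves the homogeneous wave equation. If I can show that $u$ has past compact support, then Theorem~\ref{kernP} will force $u\equiv 0$, i.e.\ $F_1=F_2$.

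Two small points need to be dispatched. First, past compactness of $\supp(u)$: clearly $\supp(u)\subset\supp(F_1)\cup\supp(F_2)$, and for any $p\in M$
\[
(\supp(F_1)\cup\supp(F_2))\cap J_-^M(p)=\bigl(\supp(F_1)\cap J_-^M(p)\bigr)\cup\bigl(\supp(F_2)\cap J_-^M(p)\bigr)
\]
is a finite union of compact sets, hence compact; so the union of two past compact sets is past compact. Second, Theorem~\ref{kernP} is stated for distributions in $\DD'(M,E)$, i.e.\ $\K$-valued, whereas $u$ takes values in the finite-dimensional space $W=E_x^*$. I would handle this either by choosing a basis $e^1,\dots,e^r$ of $E_x^*$, writing $u=\sum_k u_k\,e^k$ with $u_k\in\DD'(M,E)$ satisfying $Pu_k=0$ and $\supp(u_k)\subset\supp(u)$ past compact, and applying Theorem~\ref{kernP} componentwise; or equivalently by pairing $u$ with each vector $v\in E_x$ to produce a scalar distribution of the same type.

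The case of future compact support is verbatim the same argument with ``past'' replaced by ``future'' throughout, since Theorem~\ref{kernP} covers both cases symmetrically.

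I do not expect any real obstacle here: all the analytic and geometric work has already been done in Theorem~\ref{kernP}, and this corollary is merely the standard reformulation ``uniqueness of solutions of the homogeneous equation yields uniqueness of fundamental solutions.'' The only things worth being careful about are the trivial bookkeeping items above (past compactness of a union, and passing from scalar- to $E_x^*$-valued distributions).
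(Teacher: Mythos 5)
Your proof is correct and follows exactly the route the paper intends: the corollary is stated with no written proof precisely because it is the standard reduction to Theorem~\ref{kernP} via the difference $u=F_1-F_2$. The two bookkeeping points you flag (past compactness of a union, and reducing the $E_x^*$-valued distribution to scalar components) are both handled correctly.
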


\Remark{
The requirement in Theorem~\ref{kernP} and Corollary~\ref{fundunique} that
$u$ have future or past compact support is crucial.
For example, on Minkowski space $u = R_+(2)-R_-(2)$ is a nontrivial
solution to $Pu=0$ despite the fact that Minkowski space satisfies the
geometric assumptions on $M$ in Theorem~\ref{kernP} and in
Corollary~\ref{fundunique}. 
}

These assumptions on $M$ hold for convex Lorentzian manifolds and for
globally hyperbolic manifolds.
On a globally hyperbolic manifold the sets $J_\pm^M(x)$ are always future
respectively past compact.
Hence we have

\begin{cor}\label{funduniqueglobhyp}
Let $M$ be a globally hyperbolic Lorentzian manifold.
Let $P$ be a normally hyperbolic operator acting on sections in a 
vector bundle $E$ over $M$.
\indexn{globally hyperbolic manifold}

Then for every $x\in M$ there exists at most one advanced and at most one
retarded fundamental solution for $P$ at $x$.
$\hfill\Box$
\end{cor}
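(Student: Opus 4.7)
The plan is to deduce Corollary~\ref{funduniqueglobhyp} from Corollary~\ref{fundunique} by verifying its three hypotheses in the globally hyperbolic setting and by checking the relevant support condition on advanced/retarded fundamental solutions.

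First I would verify the three geometric hypotheses of Corollary~\ref{fundunique} for a globally hyperbolic manifold $M$. The strong causality condition (which is part of the definition of global hyperbolicity) rules out almost closed causal curves, and in particular excludes any closed causal loop, giving hypothesis~(1). For hypothesis~(2), the fact that the relation ``$\leq$'' is closed on a globally hyperbolic manifold is a standard consequence of compactness of the sets $J_+^M(p)\cap J_-^M(q)$ together with strong causality: if $p_i\leq q_i$, $p_i\to p$, $q_i\to q$, then by picking auxiliary points slightly below $p$ and above $q$ one traps the causal curves joining $p_i$ to $q_i$ inside a compact ``causal diamond'', extracts a limiting causal curve, and concludes $p\leq q$. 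Hypothesis~(3), finiteness and continuity of $\tau$, is precisely Proposition~\ref{prop:timesep}(2).

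Next I would check the support condition. Let $F$ be an advanced fundamental solution for $P$ at $x$, so by Definition~\ref{deffundsol} we have $\supp(F)\subset J_+^M(x)$. I claim $\supp(F)$ is past compact. Indeed, for any $p\in M$,
$$
\supp(F)\cap J_-^M(p)\subset J_+^M(x)\cap J_-^M(p),
$$
and the right-hand side is compact by the definition of global hyperbolicity. Since $\supp(F)$ is closed, its intersection with $J_-^M(p)$ is a closed subset of a compact set, hence compact. So $F$ has past compact support. The analogous argument shows that any retarded fundamental solution at $x$ has future compact support.

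Finally, given two advanced fundamental solutions $F_1,F_2$ at $x$, the difference $u:=F_1-F_2\in\DD'(M,E,E_x^*)$ satisfies $Pu=0$ and has past compact support (as the difference of two such distributions). By Corollary~\ref{fundunique}, applied componentwise in the finite-dimensional coefficient space $E_x^*$, we conclude $u\equiv 0$, i.e.\ $F_1=F_2$. The same argument with ``past'' replaced by ``future'' gives uniqueness of the retarded fundamental solution. No step should present a serious obstacle since the heavy lifting has already been done in Theorem~\ref{kernP}; the only mild subtlety is the verification that ``$\leq$'' is closed, but this is a well-known consequence of global hyperbolicity and may be invoked from the literature cited in Section~\ref{sec:lorgeo}.
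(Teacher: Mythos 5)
Your proof is correct and takes essentially the same route as the paper: the three hypotheses of Corollary~\ref{fundunique} are verified for a globally hyperbolic $M$, and the observation that $J_+^M(x)$ is past compact (since $J_+^M(x)\cap J_-^M(p)$ is compact by global hyperbolicity) shows that advanced fundamental solutions have past compact support, so Corollary~\ref{fundunique} applies; likewise for the retarded case.
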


\Remark{\label{konvexnichteind}
In convex Lorentzian manifolds uniqueness of advanced and retarded
fundamental solutions need not hold.
For example, if $M$ is a convex open subset of Minkowski space $\R^n$ such
that there exist points $x\in M$ and $y\in \R^n\setminus M$ with
$J_+^{\R^n}(y)\cap M \subset J_+^M(x)$, then the restrictions to $M$ of 
$R_+(x)$ and of $R_+(x)+R_+(y)$ are two different advanced fundamental 
solutions for $P=\Box$ at $x$ on $M$.
Corollary~\ref{fundunique} does not apply because $J_+^M(x)$ is not
past compact.
}

\begin{center}
\input{fig-fundsolnotunique.tex}
\end{center}

\section{The Cauchy problem}
\indexn{Cauchy problem}

The aim of this section is to show that the Cauchy problem on a globally
hyperbolic manifold $M$ is well-posed.
This means that given a normally hyperbolic operator $P$ and a Cauchy
hypersurface $S\subset M$ the problem
\[
\left\{\begin{array}{cccl}
Pu&=&f&\textrm{ on }M,\\
u&=&u_0&\textrm{ along }S,\\
\nabla_\mathfrak{n} u&=&u_1&\textrm{ along }S,
\end{array}\right.
\] 
has a unique solution for given $u_0, u_1 \in \DD(S,E)$ and $f\in\DD(M,E)$.
Moreover, the solution depends continuously on the data.

We will also see that the support of the solution is contained in $J^M(K)$
where $K:=\supp(u_0) \cup \supp(u_1) \cup \supp(f)$.
This is known as finiteness of propagation speed.

We start by identifying the divergence term that appears when one compares the
operator $P$ with its formal adjoint $P^*$.
This yields a local formula allowing us to control a solution of $Pu=0$ in
terms of its Cauchy data.
These local considerations already suffice to establish uniqueness of
solutions to the Cauchy problem on general globally hyperbolic manifolds.

Existence of solutions is first shown locally.
After some technical preparation we put these local solutions together to a
global one on a globally hyperbolic manifold.
This is where the crucial passage from the local to the global theory takes
place. 
Continuous dependence of the solutions on the data is an easy consequence of
the open mapping theorem from functional analysis.

\begin{lemma}\label{PminusP*}
Let $E$ be a  vector bundle over the timeoriented Lorentzian manifold 
$M$.
Let $P$ be a normally hyperbolic operator acting on sections in $E$. 
Let $\nabla$ be the $P$-compatible connection on $E$.

Then for every $\psi\in{C^\infty}(M,E^*)$ and $v\in{C^\infty}(M,E)$, 
\[
\psi\cdot(Pv)-(P^*\psi)\cdot v=\div(W),
\]
where the vector field
$W\in{C^\infty}(M,TM\otimes_{\R}\mathbb{K})$ is characterized by 
\[
\la W,X\ra=(\nabla_X\psi)\cdot v-\psi\cdot(\nabla_X v)
\]
for all $X\in{C^\infty}(M,TM)$. 
\end{lemma}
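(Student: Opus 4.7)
The strategy is to compute $\div(W)$ directly in a convenient local frame, recognize the result as the difference of second-order terms coming from the $P$-compatible connection, and then match this with $P$ and its formal adjoint.

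First, by Lemma~\ref{canonicalconnection} write $P = \Box^{\nabla} + B$ with $B \in C^\infty(M,\mathrm{End}(E))$. Let $\nabla$ also denote the dual connection induced on $E^*$, characterized by the Leibniz rule $X(\psi \cdot v) = (\nabla_X\psi)\cdot v + \psi \cdot (\nabla_X v)$ for the canonical pairing $E^* \times E \to \K$. Form the connection-d'Alembertian $\Box^{\nabla}$ on $E^*$ using this dual connection. I claim that
$$
P^* = \Box^{\nabla} + B^{*},
$$
where $B^*$ is the pointwise adjoint of $B$. Granted this, the zeroth-order contributions $\psi\cdot Bv$ and $B^*\psi\cdot v$ cancel, so it suffices to establish the divergence identity with $P$ replaced by $\Box^{\nabla}$, i.e.\
$$
\psi\cdot(\Box^{\nabla}v) - (\Box^{\nabla}\psi)\cdot v \;=\; \div(W).
$$

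To prove this, fix an arbitrary point $p \in M$ and choose a local Lorentz orthonormal frame $e_1,\ldots,e_n$ near $p$ with $\nabla_{e_j}^{LC}e_j(p)=0$ (this can always be arranged by starting from normal coordinates). At $p$ the divergence reduces to $\div(W)|_p = \sum_j \varepsilon_j\, e_j\bigl(\la W, e_j\ra\bigr)$, and by the defining property of $W$,
$$
\la W, e_j\ra \;=\; (\nabla_{e_j}\psi)\cdot v \;-\; \psi\cdot(\nabla_{e_j}v).
$$
Applying $e_j$ and using the Leibniz rule for the canonical pairing, the two cross terms $(\nabla_{e_j}\psi)(\nabla_{e_j}v)$ cancel, leaving
$$
\div(W)|_p = \sum_{j=1}^n \varepsilon_j\Bigl[(\nabla_{e_j}\nabla_{e_j}\psi)\cdot v \;-\; \psi\cdot(\nabla_{e_j}\nabla_{e_j}v)\Bigr],
$$
which is exactly $\psi\cdot(\Box^{\nabla}v) - (\Box^{\nabla}\psi)\cdot v$ at $p$, since at $p$ the connection terms involving $\nabla_{e_j}e_j$ drop out in both $\Box^{\nabla}v$ and $\Box^{\nabla}\psi$. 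Because $p$ was arbitrary, the identity holds globally.

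It remains to justify the claim $P^* = \Box^{\nabla}+B^*$. Given $\psi\in\DD(M,E^*)$ and $v\in\DD(M,E)$, integrate the divergence identity just derived over $M$ and apply Gauss' theorem (Theorem~\ref{thm:gauss}); compactness of $\supp(\psi)\cap\supp(v)$ kills the boundary term, yielding $\int_M \psi\cdot\Box^{\nabla}v\,\dV = \int_M (\Box^{\nabla}\psi)\cdot v\,\dV$. Adding the pointwise identity $\psi\cdot Bv = B^*\psi\cdot v$ shows that $\Box^{\nabla}+B^*$ satisfies the defining property (\ref{formaladjoint}) of the formal adjoint, and by uniqueness of $P^*$ it must coincide with it. The main conceptual point is keeping the dual-connection bookkeeping straight; the key computational step, the Leibniz cancellation above, is routine once the adapted frame is chosen.
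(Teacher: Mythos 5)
Your proof is correct and follows essentially the same strategy as the paper: reduce to $P=\Box^\nabla+B$, establish the divergence identity $\psi\cdot(\Box^\nabla v)-(\Box^\nabla\psi)\cdot v=\div(W)$, and then invoke Gauss' theorem on compactly supported sections to conclude $P^*=\Box^\nabla+B^*$. The only difference is in the middle step: you compute $\div(W)$ directly at a point in an adapted (geodesic) Lorentz frame where the Levi-Civita terms $\nabla^{LC}_{e_j}e_j$ vanish, whereas the paper works in an arbitrary Lorentz orthonormal frame, first proving the ``Green's first identity'' $\la\nabla\psi,s\ra = \psi\cdot Ls + \div(V_1)$ for a general $s\in C^\infty(M,T^*M\otimes E)$ and then obtaining the antisymmetrized version by taking $s=\nabla v$ and $s=\nabla\psi$ and subtracting.
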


Here we have, as before, written $\K=\R$ if $E$ is a real vector bundle and
$\K=\Co$ if $E$ is complex. 

\begin{proof}
The Levi-Civita connection on $TM$ and the $P$-compatible connection $\nabla$
on $E$ induce connections on $T^*M\otimes E$ and on $T^*M\otimes E^*$ which we
also denote by $\nabla$ for simplicity.
We define a linear differential operator $L:C^\infty(M,T^*M\otimes E^*) \to
C^\infty(M,E^*)$ of first order by
$$
Ls := -\sum_{j=1}^n \epsilon_j (\nabla_{e_j}s)(e_j)
$$
where $e_1,\ldots,e_n$ is a local Lorentz orthonormal frame of $TM$ and
$\epsilon_j=\la e_j,e_j\ra$.
It is easily checked that this definition does not depend on the choice of
orthonormal frame.
Write $e_1^*,\ldots,e_n^*$ for the dual frame of $T^*M$.
The metric $\la\cdot,\cdot\ra$ on $TM$ and the natural pairing $E^*\otimes E
\to \K$, $\psi\otimes v \mapsto \psi\cdot v$, induce a pairing $(T^*M\otimes
E^*) \otimes (T^*M \otimes E) \to \K$ which we again denote by
$\la\cdot,\cdot\ra$. 
For all $\psi\in C^\infty(M,E^*)$ and $s\in{C^\infty}(M,T^*M\otimes E)$ we
obtain
\begin{eqnarray}
\la\nabla \psi,s\ra
&=&
\sum_{j,k=1}^n\la e_j^*\otimes\nabla_{e_j}\psi\, ,\, e_k^*\otimes s(e_k)\ra
\nonumber\\ 
&=&
\sum_{j,k=1}^n\la e_j^*\, ,\, e_k^*\ra\cdot(\nabla_{e_j}\psi)\cdot s(e_k)
\nonumber\\ 
&=&
\sum_{j=1}^n\varepsilon_j(\nabla_{e_j}\psi)\cdot s(e_j)\nonumber\\ 
&=&
\sum_{j=1}^n \varepsilon_j\left(\partial_{e_j}( \psi\cdot s(e_j))-\psi\cdot
  (\nabla_{e_j} s)(e_j) - \psi\cdot s(\nabla_{e_j}e_j)\right) \nonumber\\
&=&
\psi\cdot(Ls) + \sum_{j=1}^n \varepsilon_j\left(\partial_{e_j}(
  \psi\cdot s(e_j)) -\psi\cdot s(\nabla_{e_j}e_j)\right) .
\label{eq:nablaL}
\end{eqnarray}
Let $V_1$ be the unique $\K$-valued vector field characterized by
$\la V_1,X\ra = \psi\cdot s(X)$ for every $X\in{C^\infty}(M,TM)$.
Then
\begin{eqnarray*}
\div(V_1) 
&=& 
\sum_{j=1}^n \epsilon_j \la \nabla_{e_j}V_1\,,\, e_j\ra\\
&=& 
\sum_{j=1}^n \epsilon_j \left( \partial_{e_j}\la V_1\,,\, e_j\ra -
\la V_1\,,\, \nabla_{e_j}e_j\ra \right)\\
&=&
\sum_{j=1}^n \epsilon_j \left( \partial_{e_j}(\psi\cdot s(e_j)) -
\psi\cdot s(\nabla_{e_j}e_j) \right).
\end{eqnarray*}
Plugging this into (\ref{eq:nablaL}) yields
$$
\la\nabla \psi,s\ra=\psi\cdot Ls+\div(V_1).
$$
In particular, if $v\in{C^\infty}(M,E)$ we get for $s:=\nabla v \in
{C^\infty}(M,T^*M\otimes E)$
$$
\la\nabla \psi,\nabla v\ra= \psi\cdot L\nabla v+\div(V_1)
= \psi\cdot\Box^\nabla v +\div(V_1),
$$
hence 
\begin{equation}
\psi\cdot\Box^\nabla v = \la\nabla \psi,\nabla v\ra - \div(V_1)
\label{eq:nablastern}
\end{equation}
where $\la V_1,X\ra = \psi\cdot \nabla_X v$ for all $X\in C^\infty(M,TM)$.
Similarly, we obtain
$$
(\Box^\nabla \psi)\cdot v  = \la\nabla \psi,\nabla v\ra - \div(V_2)
$$
where $V_2$ is the vector field characterized by $\la V_2,X\ra =
(\nabla_X\psi)\cdot v$ for all $X\in C^\infty(M,TM)$. 
Thus 
$$
\psi\cdot\Box^\nabla v = (\Box^\nabla \psi)\cdot v  - \div(V_1) +\div(V_2)
= (\Box^\nabla \psi)\cdot v + \div(W)
$$
where $W=V_2-V_1$.
Since $\nabla$ is the $P$-compatible connection on $E$ we have $P=\Box^\nabla
+ B$ for some $B\in C^\infty(M,\End(E))$, see Lemma~\ref{canonicalconnection}.
Thus
$$
\psi\cdot Pv =\psi\cdot\Box^\nabla v + \psi \cdot Bv
=  (\Box^\nabla \psi)\cdot v + \div(W) + (B^*\psi) \cdot v.
$$
If $\psi$ or $v$ has compact support, then we can integrate $\psi\cdot Pv$ and
the divergence term vanishes.
Therefore
$$
\int_M \psi\cdot Pv\, \dV = \int_M \left((\Box^\nabla \psi)\cdot v+ (B^*\psi)
  \cdot v\right) \dV.
$$
Thus $\Box^\nabla \psi+ B^*\psi = P^*\psi$ and $\psi\cdot Pv = P^*\psi \cdot v
+ \div(W)$ as claimed.
\end{proof}

\begin{lemma}\label{greenformel}
Let $E$ be a  vector bundle over a timeoriented Lorentzian manifold $M$
and let $P$ be a normally hyperbolic operator acting on sections in $E$.
Let $\nabla$ be the $P$-compatible connection on $E$. 
Let ${\Omega}\subset M$ be a relatively compact causal domain satisfying the
conditions of Lemma~\ref{Klemma}.
Let $S$ be a smooth spacelike Cauchy hypersurface in ${\Omega}$. 
Denote by $\mathfrak{n}$ the future directed (timelike) unit normal vector
field along 
$S$. 

For every $x\in {\Omega}$ let $F_\pm^\Omega(x)$ be the fundamental solution
for $P^*$ at $x$ with support in $J_\pm^\Omega(x)$ constructed in
Proposition~\ref{fundamentalexist}.
\indexn{fundamental solution}

Let $u\in{C^\infty}({\Omega},E)$ be a solution of $Pu=0$ on ${\Omega}$.
Set $u_0:=u_{|_S}$ and $u_1:=\nabla_\mathfrak{n} u$.

Then for every $\phi\in\mathcal{D}({\Omega},E^*)$,
\[
\int_{\Omega}\phi\cdot u\dV
=
\int_S\left((\nabla_\mathfrak{n}(F^\Omega[\phi]))\cdot u_0 
-(F^\Omega[\phi])\cdot u_1\right)\dA,
\] 
where $F^\Omega[\phi]\in{C^\infty}({\Omega},E^*)$ is defined
as a distribution by 
\[ 
(F^\Omega[\phi])[w] :=
\int_\Omega\phi(x)\cdot(F_+^\Omega(x)[w]-F_ -^\Omega(x)[w])\dV(x)
\]
for every $w\in\mathcal{D}({\Omega},E)$. 

\end{lemma}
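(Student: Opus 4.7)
The plan is to realize $F^\Omega[\phi]$ as the difference $v_+-v_-$ of two canonical smooth sections of $E^*$, apply the divergence identity of Lemma~\ref{PminusP*}, and then deduce the stated boundary formula from Gauss' theorem on the two halves of $\Omega$ separated by $S$. First, applying Theorem~\ref{thminhomogen} to the normally hyperbolic operator $P^*$ with inhomogeneity $\phi\in\DD(\Omega,E^*)$ produces smooth sections $v_\pm\in C^\infty(\Omega,E^*)$ with $P^*v_\pm=\phi$ and $\supp(v_\pm)\subset J_\pm^\Omega(\supp\phi)$, characterized by
$$\int_\Omega w\cdot v_\pm\,\dV=\int_\Omega\phi(x)\cdot F_\pm^\Omega(x)[w]\,\dV(x)\quad\text{for all }w\in\DD(\Omega,E).$$
Subtracting these two identities identifies $F^\Omega[\phi]=v_+-v_-$ as smooth sections in $E^*$.

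Next, I would apply Lemma~\ref{PminusP*} with $\psi=v_\pm$ and $v=u$. Since $Pu=0$ and $P^*v_\pm=\phi$, this gives
$$-\phi\cdot u=\div W_\pm\quad\text{on }\Omega,\qquad\la W_\pm,X\ra=(\nabla_X v_\pm)\cdot u-v_\pm\cdot(\nabla_X u).$$
I would then integrate the $v_+$-identity over $\Omega^-:=J_-^\Omega(S)$ and the $v_-$-identity over $\Omega^+:=J_+^\Omega(S)$. Gauss' theorem (Theorem~\ref{thm:gauss}) contributes boundary integrals only on $S$: the outward unit normal of $\Omega^-$ along $S$ is the future-directed $\mathfrak{n}$, while that of $\Omega^+$ is $-\mathfrak{n}$, both with $\epsilon_\mathfrak{n}=-1$. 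Using $u|_S=u_0$ and $\nabla_\mathfrak{n} u|_S=u_1$, one computes $\la W_\pm,\mathfrak{n}\ra|_S=(\nabla_\mathfrak{n} v_\pm)\cdot u_0-v_\pm\cdot u_1$, which upon summing the two integrated identities and substituting $F^\Omega[\phi]=v_+-v_-$ yields the asserted formula.

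The hard part will be justifying that Gauss' theorem picks up no contribution from $\partial\Omega\setminus S$. For this, observe that $\supp(W_+)\cap\Omega^-\subset J_+^\Omega(\supp\phi)\cap\Omega^-=J_+^\Omega(\supp\phi\cap\Omega^-)\cap\Omega^-$, since the future of any point above $S$ lies entirely above $S$. Because $S$ is a Cauchy hypersurface of the globally hyperbolic manifold $\Omega$, the intersection $S\cap J_+^\Omega(\supp\phi\cap\Omega^-)$ is compact, and global hyperbolicity then makes $J_+^\Omega(\supp\phi\cap\Omega^-)\cap J_-^\Omega\bigl(S\cap J_+^\Omega(\supp\phi\cap\Omega^-)\bigr)$ compact; this set contains $\supp(W_+)\cap\Omega^-$ and is compactly contained in $\Omega^-$, hence disjoint from $\partial\Omega\setminus S$. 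The symmetric statement for $\supp(W_-)\cap\Omega^+$ holds by the time-reversed argument. With these support controls in hand, Gauss' theorem yields only the stated boundary integral over $S$, and the computation proceeds as described.
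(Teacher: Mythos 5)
Your proof is correct and takes essentially the same route as the paper: use Theorem~\ref{thminhomogen} to realize $F^\Omega[\phi]$ as a difference $v_+-v_-$ of smooth sections solving $P^*v_\pm=\phi$ with $\supp(v_\pm)\subset J_\pm^\Omega(\supp\phi)$, apply Lemma~\ref{PminusP*} and Gauss' theorem on the two halves of $\Omega$ separated by $S$, and add. The only difference is that where you give a detailed causal argument to bound $\supp(W_\pm)$ away from $\partial\Omega$, the paper obtains the needed compactness of $J_+^\Omega(\supp\phi)\cap J_-^\Omega(S)$ directly from Corollary~\ref{cJ+Spastcompact}.
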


\begin{proof}
Fix $\phi\in\DD(\Omega,E^*)$.
We consider the distribution $\psi$ defined by
$\psi[w]:=\int_\Omega\phi(x)\cdot F_+^\Omega(x)[w]\dV$ for every
$w\in\mathcal{D}({\Omega},E)$. 
By Theorem~\ref{thminhomogen} we know that $\psi\in{C^\infty}({\Omega},E^*)$,
has its support contained in $J_+^\Omega(\mathrm{supp}(\phi))$ and satisfies
$P^*\psi=\phi$.

Let $W$ be the vector field from Lemma~\ref{PminusP*} with $u$ instead of $v$.
Since by Corollary~\ref{cJ+Spastcompact} the subset
$J_+^\Omega(\mathrm{supp}(\phi))\cap J_-^\Omega(S)$ of $\Omega$ is compact,
Theorem~\ref{thm:gauss} applies to $D:=I_-^\Omega(S)$ and the vector field
$W$:
\be
\int_D\left((P^*\psi)\cdot u-\psi\cdot(Pu)\right)\dV
&=&
-\int_D\div(W)\dV\\
&=&
-\underbrace{\la\mathfrak{n},\mathfrak{n}\ra}_{=-1}\int_{\partial D} \la
W,\mathfrak{n}\ra\dA\\ 
&=&
\int_{\partial D}\left((\nabla_\mathfrak{n}\psi)\cdot u-\psi\cdot
(\nabla_\mathfrak{n} 
  u)\right)\dA\\
&=&
\int_{S}\left((\nabla_\mathfrak{n}\psi)\cdot u-\psi\cdot (\nabla_\mathfrak{n}
  u)\right)\dA.
\ee
On the other hand,
$$
\int_D\left((P^*\psi)\cdot u-\psi\cdot(Pu)\right)\dV
=
\int_{I_-^\Omega(S)}((\underbrace{P^*\psi}_{=\phi})\cdot u
-\psi\cdot(\underbrace{Pu}_{=0}))\dV
=
\int_{I_-^\Omega(S)} \phi\cdot u\dV.
$$
Thus
\begin{equation}
\int_{I_-^\Omega(S)} \phi\cdot u\dV =
\int_{S}\left((\nabla_\mathfrak{n}\psi)\cdot u-\psi\cdot (\nabla_\mathfrak{n}
u)\right)\dA. 
\label{eq:green1}
\end{equation}
Similarly, using $D=I_+^\Omega(S)$ and $\psi'[w]:=\int_\Omega\phi(x)\cdot
F_-^\Omega(x)[w]\dV$ for any $w\in\mathcal{D}({\Omega},E)$ one gets
\begin{equation}
\int_{I_+^\Omega(S)}\phi\cdot u\dV
=
\int_S\left(\psi'\cdot (\nabla_\mathfrak{n} u)-(\nabla_\mathfrak{n}\psi')\cdot
u\right)\dA. 
\label{eq:green2}
\end{equation}
The different sign is caused by the fact that $\mathfrak{n}$ is the {\em
  interior} unit 
normal to $I_+^\Omega(S)$.
Adding (\ref{eq:green1}) and (\ref{eq:green2}) we get
\[
\int_{\Omega}\phi\cdot u\dV
=
\int_S\left((\nabla_\mathfrak{n}(\psi-\psi'))\cdot
u-(\psi-\psi')\cdot(\nabla_\mathfrak{n} 
u)\right)\dA,
\] 
which is the desired result.
\end{proof}

\begin{cor}\label{cor:suppu}
Let $\Omega$, $u$, $u_0$, and $u_1$ be as in Lemma~\ref{greenformel}.
Then
$$
\supp(u) \subset J^\Omega(K)
$$
where $K=\supp(u_0)\cup\supp(u_1)$.
\end{cor}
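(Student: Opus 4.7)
The plan is to deduce the support statement directly from the integral representation formula of Lemma~\ref{greenformel}. Let $\phi\in\DD(\Omega,E^*)$ be an arbitrary test section whose support is disjoint from $J^\Omega(K)$. I want to show that $\int_\Omega\phi\cdot u\,\dV=0$, which by a standard density argument forces $u\equiv 0$ on the open set $\Omega\setminus J^\Omega(K)$, giving $\supp(u)\subset J^\Omega(K)$ as desired.

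First I would analyse the support of $F^\Omega[\phi]$. Writing $F^\Omega[\phi]=\psi-\psi'$, where $\psi$ is built from $F_+^\Omega$ and $\psi'$ from $F_-^\Omega$, Theorem~\ref{thminhomogen} (applied to $P^*$) gives $\supp(\psi)\subset J_+^\Omega(\supp\phi)$ and $\supp(\psi')\subset J_-^\Omega(\supp\phi)$, hence
\[
\supp(F^\Omega[\phi])\ \subset\ J^\Omega(\supp\phi).
\]
Now $K\subset S$ and $K$ is disjoint from $J^\Omega(\supp\phi)$: indeed, if some $x\in K$ lay in $J^\Omega(\supp\phi)$ then $\supp\phi$ would meet $J^\Omega(x)\subset J^\Omega(K)$, contradicting $\supp\phi\cap J^\Omega(K)=\emptyset$. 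Therefore $\supp(F^\Omega[\phi])$ is a closed subset of $\Omega$ disjoint from $K$, so $F^\Omega[\phi]$ vanishes on an open neighborhood of $K$ in $\Omega$. In particular, both $F^\Omega[\phi]\big|_S$ and $\nabla_{\mathfrak{n}}(F^\Omega[\phi])\big|_S$ vanish on $K$.

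Plugging this into the formula of Lemma~\ref{greenformel},
\[
\int_\Omega\phi\cdot u\,\dV
=\int_S\bigl((\nabla_{\mathfrak{n}}(F^\Omega[\phi]))\cdot u_0
-(F^\Omega[\phi])\cdot u_1\bigr)\dA,
\]
each term of the integrand on $S$ vanishes identically: $u_0$ is supported in $\supp(u_0)\subset K$ where $\nabla_{\mathfrak{n}}(F^\Omega[\phi])=0$, and similarly $u_1$ is supported in $\supp(u_1)\subset K$ where $F^\Omega[\phi]=0$. Hence $\int_\Omega\phi\cdot u\,\dV=0$ for every $\phi\in\DD(\Omega,E^*)$ with $\supp\phi\cap J^\Omega(K)=\emptyset$, which gives $u\equiv 0$ on the open set $\Omega\setminus J^\Omega(K)$ and thus $\supp(u)\subset J^\Omega(K)$.

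There is essentially no obstacle here beyond bookkeeping; the only subtle point is checking that the integral over $S$ in Lemma~\ref{greenformel} is actually well-defined for the $\phi$ under consideration, but this is automatic since $J^\Omega(\supp\phi)\cap S$ is compact by the causality properties of $\Omega$ (compare Corollary~\ref{cJ+Spastcompact}), making the integrand compactly supported on $S$. The whole argument is just a support-chasing consequence of Lemma~\ref{greenformel}, exploiting that the fundamental solutions $F_\pm^\Omega$ propagate support causally.
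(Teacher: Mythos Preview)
Your proof is correct and follows essentially the same route as the paper: pick a test section $\phi$ with $\supp(\phi)\cap J^\Omega(K)=\emptyset$, use Theorem~\ref{thminhomogen} to get $\supp(F^\Omega[\phi])\subset J^\Omega(\supp\phi)$, observe this is disjoint from $K$ by the symmetry of the causal relation, and conclude via Lemma~\ref{greenformel} that $\int_\Omega\phi\cdot u\,\dV=0$. Your additional remark about the well-definedness of the surface integral is unnecessary (Lemma~\ref{greenformel} already established the formula for all $\phi\in\DD(\Omega,E^*)$), but it is not incorrect.
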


\begin{proof}
Let $\phi\in\DD(\Omega,E^*)$.
From Theorem~\ref{thminhomogen} we know that $\supp(F^\Omega[\phi])\subset
J^\Omega(\supp(\phi))$.
Hence if, under the hypotheses of Lemma~\ref{greenformel}, 
\begin{equation}
\supp(u_j)\cap J^\Omega(\supp(\phi))=\emptyset
\label{eq:suppu}
\end{equation}
for $j=0,1$, then $\int_\Omega\phi\cdot u\dV=0$. 
Equation~(\ref{eq:suppu}) is equivalent to 
$$
\supp(\phi)\cap J^\Omega(\supp(u_j))=\emptyset .
$$
Thus $\int_\Omega\phi\cdot u\dV=0$ whenever the support of the test section
$\phi$ is disjoint from $J^\Omega(K)$.
We conclude that $u$ must vanish outside $J^\Omega(K)$.
\end{proof}

\begin{cor}\label{cauchyeindeutigkeit} 
Let $E$ be a  vector bundle over a globally hyperbolic Lorentzian manifold 
$M$. 
Let $\nabla$ be a connection on $E$ and let $P=\Box^\nabla+B$ be a normally
hyperbolic operator acting on sections in $E$. 
Let $S$ be a smooth spacelike Cauchy hypersurface in $M$, and let
$\mathfrak{n}$ be the 
future directed (timelike) unit normal vector field along $S$.

If $u\in C^\infty(M,E)$ solves 
\[
\left\{\begin{array}{cccl}
Pu&=&0&\textrm{ on }M,\\
u&=&0&\textrm{ along }S,\\
\nabla_\mathfrak{n} u&=&0&\textrm{ along }S,
\end{array}\right.
\] 
then $u=0$ on $M$.
\indexn{Cauchy problem>defemidx}
\end{cor}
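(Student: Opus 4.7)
My strategy is to establish a local uniqueness principle from Lemma~\ref{greenformel} and then propagate zero Cauchy data from $S$ across all of $M$ along the foliation by spacelike Cauchy hypersurfaces furnished by Theorem~\ref{globhyp2}.

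The local step runs as follows. If $\Omega$ is a relatively compact causal domain satisfying (\ref{Omegaklein}) of Lemma~\ref{Klemma} and $\Sigma\subset\Omega$ is a smooth spacelike Cauchy hypersurface of $\Omega$, then any $v\in C^\infty(\Omega,E)$ with $Pv=0$, $v|_\Sigma=0$ and $\nabla_\mathfrak{n}v|_\Sigma=0$ is identically zero on $\Omega$: indeed, Lemma~\ref{greenformel} with vanishing Cauchy data yields $\int_\Omega\phi\cdot v\,\dV=0$ for every $\phi\in\DD(\Omega,E^*)$, forcing $v\equiv 0$. Covering $S$ by such $\Omega$ (possible because $S$ is smooth and spacelike, so each of its points lies in a small causal neighborhood in which $S$ is a Cauchy hypersurface), this gives $u\equiv 0$ on an open neighborhood of $S$ in $M$.

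For the global propagation I would pick a Cauchy time function $h:M\to\R$ with $S=h^{-1}(0)$ (Theorem~\ref{globhyp2}); its level sets $\Sigma_T:=h^{-1}(T)$ are smooth spacelike Cauchy hypersurfaces of $M$. Fix any $p_0\in M$, say with $h(p_0)>0$; I will show $u(p_0)=0$. Choose $p_0'\in I_+^M(p_0)$, so that $p_0$ lies in the interior of the compact set $K:=J_-^M(p_0')\cap J_+^M(S)$, and set
\[
T_*:=\sup\bigl\{T\in[0,h(p_0')]\;:\;u\text{ vanishes on an open neighborhood of }K\cap h^{-1}([0,T])\bigr\}.
\]
By the previous step $T_*>0$. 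If $T_*<h(p_0')$, then $u$ vanishes on the open region $\mathrm{Int}(K)\cap h^{-1}([0,T_*))$, so both $u$ and $\nabla_\mathfrak{n}u$ vanish on the open slice $\mathrm{Int}(K)\cap\Sigma_{T_*}\subset\Sigma_{T_*}$, which contains the compact set $C:=\mathrm{Int}(K)\cap\Sigma_{T_*}$ needed to reach $p_0$. Covering a compact set exhausting this open slice up to the closure of $I_-^M(p_0')\cap\Sigma_{T_*}$ by finitely many causal neighborhoods $\Omega_j'$ whose intersections with $\Sigma_{T_*}$ are Cauchy hypersurfaces of $\Omega_j'$ and lie inside the region where the Cauchy data vanish, the local step yields $u\equiv 0$ on $\bigcup_j\Omega_j'$. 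A Hausdorff-continuity argument (using compactness of $K$ and continuity of $h$) then produces $\varepsilon>0$ with $u\equiv 0$ on an open neighborhood of $K\cap h^{-1}([0,T_*+\varepsilon])$, contradicting the definition of $T_*$. Hence $T_*=h(p_0')$, so $u(p_0)=0$. The case $h(p_0)<0$ is symmetric.

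\textbf{Main obstacle.} The principal subtlety is ensuring, in the propagation step, that the small causal neighborhoods $\Omega_j'$ covering the relevant slice at height $T_*$ can be chosen so that the Cauchy data of $u$ vanish on the \emph{entire} local Cauchy hypersurface $\Omega_j'\cap\Sigma_{T_*}$ (as required by Lemma~\ref{greenformel}), not merely on a smaller closed subset. The enlargement $p_0\leadsto p_0'$ is precisely the device designed to sidestep this: it replaces a boundary situation at the past light cone of $p_0$ with an interior situation inside $K$, so only the open region $\mathrm{Int}(K)\cap\Sigma_{T_*}$ needs to be handled, and there the Cauchy data do vanish by the inductive hypothesis. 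Carrying out the Hausdorff-continuity argument to obtain a uniform $\varepsilon>0$, and verifying that the neighborhoods $\Omega_j'$ can simultaneously be taken small enough to satisfy (\ref{Omegaklein}), to be causal, and to have $\Omega_j'\cap\Sigma_{T_*}$ as a Cauchy hypersurface of $\Omega_j'$ contained in the open vanishing region, is the technical heart of the proof.
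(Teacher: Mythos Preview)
Your overall strategy---propagate vanishing Cauchy data along the foliation by spacelike Cauchy hypersurfaces, using a sup/continuity argument---is precisely the paper's. The difference lies in the local ingredient, and there the enlargement $p_0\leadsto p_0'$ does not close the gap you correctly identified. Your $T_*$ is defined so that contradicting $T_*<h(p_0')$ requires $u\equiv 0$ on an open neighborhood of $K\cap h^{-1}([0,T_*+\varepsilon])$, in particular on points near $\partial K\cap h^{-1}([T_*,T_*+\varepsilon])$. But your local step requires $\Omega_j'\cap\Sigma_{T_*}$ to lie in the region where the Cauchy data are already known to vanish, and that region is at best $K\cap\Sigma_{T_*}$ (closed in $\Sigma_{T_*}$, not open). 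Any $\Omega_j'$ centered at a point of $\partial K\cap\Sigma_{T_*}$ must spill outside $K$, so your $\bigcup_j\Omega_j'$ can only cover compact subsets of $I_-^M(p_0')\cap\Sigma_{T_*}$---say $J_-^M(p_0)\cap\Sigma_{T_*}$---and never a full neighborhood of $K\cap\Sigma_{T_*}$. The enlargement gives interior cushion around $J_-^M(p_0)\cap\Sigma_{T_*}$, but since $T_*$ is tied to $K$ and not to $J_-^M(p_0)$, this does not yield the required neighborhood of $K$; switching $T_*$ back to $J_-^M(p_0)$ just moves the boundary problem there. (Also, your $C:=\mathrm{Int}(K)\cap\Sigma_{T_*}$ is open in $\Sigma_{T_*}$, not compact.)

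The paper's fix is to sharpen the local step from a vanishing statement to a support statement (Corollary~\ref{cor:suppu}, itself an immediate consequence of Lemma~\ref{greenformel}): on a small causal $\Omega$ with spacelike Cauchy hypersurface $S_{t_0}\cap\Omega$, one has $\supp(u|_\Omega)\subset J^\Omega(\supp u_0\cup\supp u_1)$. Then the $\Omega_i$ may freely straddle the boundary of $J_-^M(p)\cap S_{t_0}$: knowing only that $u_0,u_1$ vanish on $J_-^M(p)\cap S_{t_0}\cap\Omega$, an elementary causality argument gives $u=0$ on $J_-^M(p)\cap J_+^\Omega(S_{t_0}\cap\Omega)$. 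With this in hand the paper runs exactly your sup argument on $J_-^M(p)$, covers the compact slice $J_-^M(p)\cap S_{t_0}$ by finitely many such $\Omega_i$, and obtains $\varepsilon>0$ with $u=0$ on $J_-^M(p)\cap(\cup_{t_0\le\tau<t_0+\varepsilon}S_\tau)$, contradicting the definition of $t_0$.
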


\begin{proof}
By Theorem~\ref{globhyp2} there is a foliation of $M$ by spacelike smooth
Cauchy hypersurfaces $S_t$ ($t\in\R$) with $S_0=S$.
Extend $\mathfrak{n}$ smoothly to all of $M$ such that
$\mathfrak{n}_{|_{S_t}}$ is the unit 
future directed (timelike) normal vector field on $S_t$ for every $t\in \R$.
Let $p\in M$.
We show that $u(p)=0$.

Let $T\in\R$ be such that $p\in S_T$.
Without loss of generality let $T>0$ and let $p$ be in the causal future of
$S$. 
Set 
\[
t_0:=\sup\Big\{t\in [0,T]\,\Big|\, u\textrm{ vanishes on
  }J_-^M(p)\cap (\buil{\cup}{0\leq\tau\leq t}S_\tau)\Big\}.
\]

\begin{center}
\input{fig-uvonpgleichnull}
\end{center}

We will show that $t_0=T$ which implies in particular $u(p)=0$.

Assume $t_0<T$.
For each $x\in J_-^M(p)\cap S_{t_0}$ we may, according to
Lemma~\ref{hyperflaechekausal}, choose a relatively compact causal
neighborhood $\Omega$ of $x$ in $M$ satisfying the hypotheses of Lemma
\ref{Klemma} and such that $S_{t_0}\cap\Omega$ is a Cauchy hypersurface of
$\Omega$.

\begin{center}
\input{fig-suppphiinJ+S}
\end{center}

Put $u_0:=u_{|_{S_{t_0}}}$ and $u_1:=(\nabla_\mathfrak{n} u)_{|_{S_{t_0}}}$.
If $t_0=0$, then $u_0=u_1=0$ on $S=S_0$ by assumption.
If $t_0>0$, then $u_0=u_1=0$ on $S_{t_0}\cap J_-^M(p)$ because $u\equiv 0$ on 
$J_-^M(p)\cap (\buil{\cup}{0\leq\tau\leq t}S_\tau)$.
Corollary~\ref{cor:suppu} implies $u=0$ on $J_-^M(p)\cap
J_+^\Omega(S_{t_0}\cap\Omega)$.

By Corollary~\ref{cJ+Spastcompact} the intersection $S_{t_0}\cap J_-^M(p)$ is
compact.
Hence it can be covered by finitely many open subsets $\Omega_i$, $1\leq i\leq
N$, satisfying the conditions of $\Omega$ above.
Thus $u$ vanishes identically on $(\Omega_1 \cup \cdots \cup \Omega_N) \cap
J_-^M(p) \cap J_+^M(S_{t_0})$.
Since $(\Omega_1 \cup \cdots \cup \Omega_N) \cap J_-^M(p)$ is an open
neighborhood of the compact set $S_{t_0}\cap J_-^M(p)$ in $J_-^M(p)$
there exists an $\varepsilon>0$ such that $S_t\cap J_-^M(p)\subset \Omega_1
\cup \cdots \cup \Omega_N$ for every $t\in [t_0,t_0+\varepsilon)$.

\begin{center}
\input{fig-Stexists}
\end{center}

Hence $u$ vanishes on $S_t\cap J_-^M(p)$ for all $t\in[t_0,t_0+\varepsilon)$.
This contradicts the maximality of $t_0$.
\end{proof}

Next we prove existence of solutions to the Cauchy problem on
small domains. 
Let $\Omega\subset M$ satisfy the hypotheses of Lemma~\ref{Klemma}.
In particular, $\Omega$ is relatively compact, causal, and has ``small
volume''.
Such domains will be referred to as \defem{RCCSV} (for ``Relatively Compact
Causal with Small Volume'').
Note that each point in a Lorentzian manifold possesses a basis of
RCCSV-neighborhoods. 
Since causal domains are contained in convex domains by definition and convex
domains are contractible, the vector bundle $E$ is trivial over any
RCCSV-domain $\Omega$.
\indexn{RCCSV-domain>defemidx}
We shall show that one can uniquely solve the Cauchy problem on every
RCCSV-domain with Cauchy data on a fixed Cauchy hypersurface in $\Omega$.

\begin{prop}\label{cauchylokalexistenz}
\indexn{Cauchy problem}
Let $M$ be a timeoriented Lorentzian manifold and let $S\subset M$ be a 
spacelike hypersurface.
Let $\mathfrak{n}$ be the future directed timelike unit normal field along $S$.

Then for each RCCSV-domain $\Omega\subset M$ such that $S\cap\Omega$ is a
(spacelike) Cauchy hypersurface in $\Omega$, the following holds: 

For each $u_0, u_1 \in \DD(S\cap\Omega,E)$ and for each $f\in\DD(\Omega,E)$
there exists a unique $u\in C^\infty(\Omega,E)$ satisfying 
\[
\left\{\begin{array}{cccl}
Pu&=&f&\textrm{ on }M,\\
u&=&u_0&\textrm{ along }S,\\
\nabla_\mathfrak{n} u&=&u_1&\textrm{ along }S.
\end{array}\right.
\] 
Moreover, $\supp(u) \subset J^M(K)$ where 
$K=\supp(u_0)\cup \supp(u_1) \cup \supp(f)$.
\end{prop}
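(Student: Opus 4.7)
Uniqueness is essentially automatic from what has already been established. Since $\Omega$ is an RCCSV-domain and $S\cap\Omega$ is a Cauchy hypersurface of $\Omega$, the open subset $\Omega$ is itself globally hyperbolic (cf.\ Theorem~\ref{globhyp}), so Corollary~\ref{cauchyeindeutigkeit} applies with $M$ replaced by $\Omega$: the difference of two solutions has vanishing Cauchy data on $S\cap\Omega$ and hence vanishes identically. (Alternatively, one can appeal directly to Lemma~\ref{greenformel} applied to the difference.)

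For existence my plan is to first dispose of the inhomogeneity using the results of Chapter~2. Theorem~\ref{thminhomogen} (whose hypothesis holds because $\Omega$ is RCCSV) produces a smooth section $v\in C^\infty(\Omega,E)$ with $Pv=f$ and $\supp(v)\subset J^\Omega_+(\supp(f))\subset J^M(K)$. Replacing the unknown $u$ by $u-v$ and the Cauchy data by $u_0-v|_S, \; u_1-\nabla_{\mathfrak{n}}v|_S$ (both still compactly supported in $S\cap\Omega$), it is enough to solve the \emph{homogeneous} Cauchy problem. For this, guided by Lemma~\ref{greenformel}, I define the candidate solution as the distribution
\begin{equation*}
u[\phi]\;:=\;\int_S\Bigl(\bigl(\nabla_{\mathfrak{n}}(F^{\Omega}_{*}[\phi])\bigr)\cdot u_0\;-\;(F^{\Omega}_{*}[\phi])\cdot u_1\Bigr)\,\dA,\qquad \phi\in\DD(\Omega,E^*),
\end{equation*}
where $F^{\Omega}_{*}[\phi](x):=F^{\Omega,*}_{+}(x)[\phi]-F^{\Omega,*}_{-}(x)[\phi]$ is built from the advanced/retarded fundamental solutions of the \emph{formal adjoint} $P^*$, which exist by Proposition~\ref{fundamentalexist} applied to $P^*$ (also normally hyperbolic). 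By that proposition, $x\mapsto F^{\Omega,*}_\pm(x)[\phi]$ is smooth, so the integrand is well-defined and the integration over the compact set $S\cap\Omega\cap\supp(u_0\!,u_1)$ makes sense.

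The verification now splits into four points. \emph{(i)} That $u$ is a smooth section in $E$ is proved by the same duality trick used in the proof of Lemma~\ref{loeseinhomogen2}: rewrite the integral using Lemma~\ref{befreiungslemmaneu} (or directly Lemma~\ref{xytausch}) so that the $\phi$-dependence is absorbed into a smooth integral kernel, then read off smoothness from Proposition~\ref{OmegaRiesz}~(\ref{Rglattinx}). \emph{(ii)} That $Pu=0$ is tested by computing $u[P^*\phi]$ for arbitrary $\phi\in\DD(\Omega,E^*)$: the identity $P^{**}=P$ together with $P_{(2)}F^{\Omega,*}_\pm(x)=\delta_x$ collapses the boundary integrals to zero via a Gauss-type argument on $I^\Omega_\pm(S)$, exactly mirroring the computation that produced Lemma~\ref{greenformel}. \emph{(iii)} The support property $\supp(u)\subset J^M(K)$ follows because $F^{\Omega,*}_\pm[\phi]$ has support in $J^\Omega_\mp(\supp\phi)$ by Proposition~\ref{fundamentalexist}, so $u[\phi]=0$ whenever $\supp(\phi)$ is disjoint from $J^\Omega(\supp u_0\cup\supp u_1)\subset J^M(K)$, and for the $f$-piece one invokes the corresponding property of $v$.

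\emph{(iv)} The verification of the initial conditions $u|_S=u_0$ and $\nabla_{\mathfrak{n}}u=u_1$ is the main technical obstacle. One way to proceed is to analyze the jump of $F^{\Omega,*}_{+}-F^{\Omega,*}_{-}$ across $S$ and its covariant normal derivative, using the asymptotic expansion furnished by Theorem~\ref{asympto} together with the explicit form of the leading Hadamard coefficient $V_0$ from (\ref{vaunull}) (essentially parallel transport times $\mu_x^{-1/2}$); the jump computations reduce, in normal coordinates along $S$, to the corresponding computation for the Riesz distributions $R_\pm^{\Omega}(2,x)$ in Minkowski space, where $R_+(2)-R_-(2)$ and its normal derivative act as the standard Cauchy data projectors. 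Combining these jump identities with Fubini gives $u|_S=u_0$ and $\nabla_{\mathfrak{n}}u=u_1$ as desired. This last step is where most of the work lies and where one must be careful with the measure-theoretic interpretation of the integral formula near the Cauchy hypersurface.
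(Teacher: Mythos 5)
Your uniqueness argument matches the paper's exactly (Corollary~\ref{cauchyeindeutigkeit} applied on the globally hyperbolic domain $\Omega$). For existence, however, you take a genuinely different route than the paper: you attempt to \emph{define} the solution by the representation integral coming from Lemma~\ref{greenformel} and then verify that it has all the desired properties, whereas the paper first constructs a formal power series solution in a global time coordinate $t$ on $\Omega\cong\R\times(S\cap\Omega)$ (using the operator written as $\frac{1}{\beta}\partial_t^2+Y$ and the resulting Cauchy--Kovalevskaya recursion for the Taylor coefficients $u_j$), Borel-sums this to a smooth $\hat u$ with $P\hat u-f$ vanishing to infinite order along $S$, and then corrects the error $w=P\hat u-f$ on $\{t\geq 0\}$ (resp.\ $\{t\leq 0\}$) by Theorem~\ref{thminhomogen}. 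The paper's construction is more elementary in that it never needs any information about the singular structure of the propagator near the Cauchy hypersurface.

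Your steps (i)--(iii) are plausible. In particular, (ii) goes through cleanly: for $\phi\in\DD(\Omega,E^*)$ one has $F^{\Omega}_{\pm}[P^*\phi]=\phi$ (integrate by parts against $x\mapsto F^{\Omega,*}_\pm(x)[w]$ and use $P\bigl(F^{\Omega,*}_\pm(\cdot)[w]\bigr)=w$), so $F^{\Omega}_{*}[P^*\phi]=0$ and hence $u[P^*\phi]=0$. Step (iii) also works once one observes $J^\Omega\subset J^M$.

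The genuine gap is step (iv), and it is not a cosmetic one. Lemma~\ref{greenformel} says that \emph{if} $u$ is a smooth section with $Pu=0$ and Cauchy data $(u_0,u_1)$, \emph{then} $u$ is given by the boundary integral formula; turning this necessary condition into a definition leaves precisely the identification of the resulting section's Cauchy data as the crux, and you have not carried this out. Two concrete issues: first, Theorem~\ref{asympto} controls the error $F^\Omega_\pm-\RR_\pm^{N+k}$ in powers of $\Gamma(x,y)$, i.e.\ near the light cone $\Gamma=0$, whereas the jump analysis you need is transversal to $S$, so it is not clear this theorem alone delivers what is needed. Second, the assertion that ``the jump computations reduce, in normal coordinates along $S$, to the corresponding computation for $R_\pm^\Omega(2,x)$ in Minkowski space'' is not substantiated: the curved Riesz distributions $R_\pm^{\Omega}(2+2j,x)$ differ from the flat ones, the full series carries the Hadamard coefficients $V_j$, and one must both show that the leading term $V_0(x,\cdot)R_\pm^\Omega(2,x)$ reproduces the correct delta behaviour of $\nabla_{\mathfrak{n}}(F^\Omega_+-F^\Omega_-)$ and that all higher-order terms and the smooth remainder make no contribution to the jump. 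These are classical-looking but nontrivial computations, and in the end the amount of work they require is at least comparable to the paper's Borel-summation argument. As written, step (iv) is a plan, not a proof, and it is exactly the step that distinguishes "has all the formal properties of a solution" from "actually solves the Cauchy problem with the prescribed data."
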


\begin{proof}
Let $\Omega\subset M$ be an RCCSV-domain such that $S\cap\Omega$ is a Cauchy
hypersurface in $\Omega$. 
Corollary~\ref{cauchyeindeutigkeit} can then be applied on $\Omega$: 
If $u$ and $\tilde u$ are two solutions of the Cauchy problem, then $P(u-\tilde
u)=0$, $(u-\tilde u)|_S=0$, and $\nabla_\mathfrak{n}(u-\tilde u)=0$.
Corollary~\ref{cauchyeindeutigkeit} implies $u-\tilde u=0$ which shows
uniqueness. 
It remains to show existence.

Since causal domains are globally hyperbolic we may apply
Theorem~\ref{globhyp2} and find an isometry $\Omega=\R\times (S\cap\Omega)$
where the metric takes the form $-\beta dt^2+ g_t$.
Here $\beta:\Omega\rightarrow\R_+^*$ is smooth, each
$\{t\}\times(S\cap\Omega)$ is a smooth spacelike Cauchy hypersurface in
$\Omega$, and $S\cap\Omega$ corresponds to $\{0\}\times (S\cap\Omega)$. 
Note that the future directed unit normal vector field $\mathfrak{n}$ along
$\{t\}\times 
(S\cap\Omega)$ is given by
$\mathfrak{n}(\cdot)=\frac{1}{\sqrt{\beta(t,\cdot)}} \frac{\del}{\del t}$.

Now let $u_0, u_1 \in \DD(S\cap\Omega,E)$ and $f\in\DD(\Omega,E)$.
We trivialize the bundle $E$ over $\Omega$ and identify sections in $E$ with 
$\K^r$-valued functions where $r$ is the rank of $E$.

Assume for a moment that $u$ were a solution to the Cauchy problem of the form
$u(t,x) = \sum_{j=0}^\infty t^j u_j(x)$ where $x\in S\cap\Omega$.
Write $P = \frac{1}{\beta}\frac{\partial^2}{\partial t^2} + Y$ where $Y$ is a
differential operator containing $t$-derivatives only up to order 1.
Equation
\begin{equation} 
\label{cauchy1}
f = Pu = \left(\frac{1}{\beta}\frac{\partial^2}{\partial t^2} + Y\right)u 
= \frac{1}{\beta(t,\cdot)}\sum_{j=2}^\infty j(j-1)t^{j-2}u_j + Yu 
\end{equation}
evaluated at $t=0$ gives
$$
\frac{2}{\beta(0,x)}u_2(x) = -Y(u_0+tu_1)(0,x) + f(0,x)
$$
for every $x\in S\cap\Omega$.
Thus $u_2$ is determined by $u_0$, $u_1$, and $f|_S$.
Differentiating (\ref{cauchy1}) with respect to $\frac{\partial}{\partial t}$ 
and repeating the procedure shows that each $u_j$ is recursively determined by
$u_0, \ldots, u_{j-1}$ and the normal derivatives of $f$ along $S$.

Now we drop the assumption that we have a $t$-power series $u$ solving the
problem but we {\em define} the $u_j$, $j\geq 2$, by these recursive
relations.
Then $\supp(u_j) \subset \supp(u_0)\cup\supp(u_1)\cup(\supp(f)\cap S)$ for all
$j$. 

Let $\sigma : \R\to\R$ be a smooth function such that
$\sigma|_{[-1/2,1/2]}\equiv 1$ and $\sigma\equiv 0$ outside $[-1,1]$.
We claim that we can find a sequence of $\varepsilon_j \in (0,1)$ such
that 
\begin{equation}
\hat{u}(t,x) := \sum_{j=0}^\infty \sigma({t}/{\varepsilon_j})t^j u_j(x)
\label{udef}
\end{equation}
defines a smooth section that can be differentiated termwise.

By Lemma~\ref{CkNormvonProdukt} we have for $j>k$
$$
\|\sigma(t/\varepsilon_j)t^j u_j(x)\|_{C^k(\Omega)}
\leq
c(k) \cdot\left\|\sigma(t/\varepsilon_j)t^j\right\|_{C^k(\R)} 
\cdot \|u_j\|_{C^k(S)}.
$$
Here and in the following $c(k)$, $c'(k,j)$, and $c''(k,j)$ denote universal
constants depending only on $k$ and $j$.
By Lemma~\ref{sigmadoof} we have for $l\leq k$ and $0<\varepsilon_j\leq 1$
\begin{eqnarray*}
\left\|\frac{d^l}{dt^l}(\sigma(t/\varepsilon_j)t^j)\right\|_{C^0(\R)}
&\leq&
\varepsilon_j\, c'(l,j)\, \|\sigma\|_{C^l(\R)},
\end{eqnarray*}
thus
$$
\|\sigma(t/\varepsilon_j)t^j u_j(x)\|_{C^k(\Omega)}
\leq
\varepsilon_j\, c''(k,j)\, \|\sigma\|_{C^k(\R)}\, \|u_j\|_{C^k(S)} .
$$
Now we choose $0<\varepsilon_j\leq 1$ so that $\varepsilon_j\, c''(k,j)\, 
\|\sigma\|_{C^k(\R)}\, \|u_j\|_{C^k(S)} \leq 2^{-j}$ for all $k<j$.
Then the series (\ref{udef}) defining $\hat{u}$ converges absolutely in the 
$C^k$-norm for all $k$.
Hence $\hat{u}$ is a smooth section with compact support and can be
differentiated 
termwise. 
From the construction of $\hat{u}$ one sees that $\supp(\hat{u})\subset
J^M(K)$. 

By the choice of the $u_j$ the section $P\hat{u}-f$ vanishes to infinite order
along $S$.
Therefore
$$
w(t,x) := \left\{
  \begin{array}{cl}
  (P\hat{u}-f)(t,x), & \mbox{if $t\geq 0$,}\\
  0          , & \mbox{if $t< 0$,}
  \end{array}
\right.
$$
defines a smooth section with compact support.
By Theorem~\ref{thminhomogen} (which can be applied since the hypotheses of
Lemma~\ref{Klemma} are fulfilled) we can solve the equation 
$P\tilde{u}=w$ with a smooth section $\tilde{u}$ having past compact
support.
Moreover, $\supp(\tilde{u}) \subset J_+^M(\supp(w)) \subset
J_+^M(\supp(\hat{u})\cup \supp(f)) \subset J^M(K)$.

Now $u_+:= \hat{u}-\tilde{u}$ is a smooth section such that
$Pu_+ = P\hat{u} - P\tilde{u} = w+f-w = f$ on $J_+^\Omega(S\cap\Omega) =
\{t\geq 0\}$.

The restriction of $\tilde{u}$ to $I_-^\Omega(S)$ has past compact support and 
satisfies $P\tilde{u}=0$ on $I_-^\Omega(S)$, thus by Theorem~\ref{kernP} 
$\tilde{u}=0$ on $I_-^\Omega(S)$.
Thus $u_+$ coincides with $\hat{u}$ to infinite order along $S$.
In particular, $u_+|_S = \tilde{u}|_S = u_0$ and $\nabla_\mathfrak{n} u_+ =
\nabla_\mathfrak{n}\tilde{u} = u_1$. 
Moreover, $\supp(u_+) \subset \supp(\hat{u}) \cup \supp(\tilde{u}) \subset
J^M(K)$.
Thus $u_+$ has all the required properties on $J_+^M(S)$.

Similarly, one constructs $u_-$ on $J_-^M(S)$. 
Since both $u_+$ and $u_-$ coincide to infinite order with $\hat{u}$ along $S$
we obtain the smooth solution by setting
$$
u(t,x) := \left\{
  \begin{array}{cl}
  u_+(t,x), & \mbox{if $t\geq 0$,}\\
  u_-(t,x), & \mbox{if $t\leq 0$.}
  \end{array}
\right.
$$
\end{proof}

\Remark{
It follows from Lemma~\ref{hyperflaechekausal} that every point $p$ on a
spacelike hypersurface $S$ possesses a RCCSV-neighborhood $\Omega$ such that
$S\cap\Omega$ is a Cauchy hypersurface in $\Omega$.
Hence Proposition~\ref{cauchylokalexistenz} guarantees the local existence of
solutions to the Cauchy problem.
}

In order to show existence of solutions to the Cauchy problem on globally
hyperbolic manifolds we need some preparation.
Let $M$ be globally hyperbolic.
We write $M=\R\times S$ and suppose the metric is of the form $-\beta dt^2
+ g_t$ as in Theorem~\ref{globhyp}.
Hence $M$ is foliated by the smooth spacelike Cauchy hypersurfaces
$\{t\}\times S=:S_t$, $t\in\R$.
Let $p\in M$.
Then there exists a unique $t$ such that $p\in S_t$.
For any $r>0$ we denote by $B_r(p)$ the open ball
\emph{in} $S_t$ of radius $r$ about $p$ with respect to the \emph{Riemannian}
metric $g_t$ on $S_t$.
\indexs{B*@$B_r(p)$, open Riemannian ball of radius $r$ about $p$ in Cauchy
  hypersurface} 
Then $B_r(p)$ is open as a subset of $S_t$ but not as a subset of $M$.

Recall that $D(A)$ denotes the Cauchy development of a subset $A$ of $M$ (see
Definition~\ref{ddefCauchydev}). 
\indexn{Cauchy development of a subset}

\begin{lemma}\label{lrhouhs}
The function $\rho:M\to(0,\infty]$ defined by
\[
\rho(p):=\sup\{r>0\,|\, D(B_r(p))\textrm{ is RCCSV}\},
\]
is lower semi-continuous on $M$.
\end{lemma}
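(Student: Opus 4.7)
The plan is to show lower semi-continuity at each $p\in M$: given $c$ with $c<\rho(p)$, I must produce an open neighborhood $U$ of $p$ such that $\rho(q)>c$ for all $q\in U$. Fix $r$ with $c<r<\rho(p)$, so $\Omega_p:=D(B_r(p))$ is RCCSV; let $\Omega_p'\supset\overline{\Omega_p}$ be the ambient convex open domain realizing the causality condition, and fix a sequence $\{\varepsilon_j\}$ as in Proposition~\ref{propRt} producing an error kernel $K_\pm$ satisfying $\vol(\overline{\Omega_p})\cdot\|K_\pm\|_{C^0(\overline{\Omega_p}\times\overline{\Omega_p})}<1$.

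Next, pick $r'\in(c,r)$. Using Theorem~\ref{globhyp}, write $M\cong\R\times S$ with metric $-\beta\,dt^2+g_t$, where $g_t$ depends smoothly on $t$. The smooth dependence of $g_t$ on $t$ implies that as $q$ varies in $M$, the Riemannian balls $B_{r'}(q)\subset S_{t(q)}$ vary continuously as subsets of $M$. Since $B_{r'}(p)\subset B_r(p)\subset\Omega_p$ and $\Omega_p$ is open, a continuity argument (interpolating with a radius $r''$ satisfying $r'<r''<r$ so as to keep $\overline{B_{r''}(p)}\cap\Omega_p$ in play) yields an open neighborhood $U\ni p$ in $M$ such that for every $q\in U$ one has $q\in\Omega_p$ and $B_{r'}(q)\subset\Omega_p$.

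I then claim $D(B_{r'}(q))$ is RCCSV for every such $q$, which will give $\rho(q)\geq r'>c$ and finish the proof. Since $B_{r'}(q)\subset\Omega_p=D(B_r(p))$, Remark~\ref{rem:Cauchydev} gives $D(B_{r'}(q))\subset D(B_r(p))=\Omega_p$, so $\overline{D(B_{r'}(q))}\subset\overline{\Omega_p}$ is compact. Using the same convex ambient $\Omega_p'$, for any $y_1,y_2\in\overline{D(B_{r'}(q))}$ the diamond $J_+^{\Omega_p'}(y_1)\cap J_-^{\Omega_p'}(y_2)$ is compact (because $\overline{\Omega_p}$ is causal in $\Omega_p'$), and it lies in $\overline{D(B_{r'}(q))}$ by the causal convexity of Cauchy developments of acausal subsets of spacelike Cauchy hypersurfaces in globally hyperbolic manifolds (cf.\ \cite[Ch.~14]{ONeill}); $B_{r'}(q)$ is acausal since $S_{t(q)}$ is spacelike and Cauchy. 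Finally, the volume condition is inherited: the same $\Omega_p'$ and the same $\{\varepsilon_j\}$ yield an error kernel for $D(B_{r'}(q))$ which is the restriction of $K_\pm$, so both factors in $\vol(\overline{D(B_{r'}(q))})\cdot\|K_\pm\|_{C^0(\overline{D(B_{r'}(q))}\times\overline{D(B_{r'}(q))})}$ are dominated by their $\overline{\Omega_p}$-analogues, keeping the product strictly below $1$.

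The main obstacle is the geometric continuity step: since $q$ in general lies on a Cauchy hypersurface distinct from $S_{t(p)}$, one must use the smoothness of the foliation from Theorem~\ref{globhyp} to argue that the inclusion $B_{r'}(q)\subset\Omega_p$ is uniform on a neighborhood of $p$. The causal-convexity invocation in Step 3, though standard, is the other delicate point; everything else is straightforward monotonicity of volume and kernel norms under restriction.
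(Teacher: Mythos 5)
Your proof follows the same outline as the paper's, but the crux --- showing $B_{r'}(q)\subset D(B_r(p))$ for $q$ in some neighborhood $U$ of $p$ --- is only asserted via the vague claim that ``the Riemannian balls $B_{r'}(q)\subset S_{t(q)}$ vary continuously as subsets of $M$''. You neither specify what continuity means here nor supply the deduction from it to the desired containment, and this step is precisely the content of the lemma. The paper makes it precise with a contradiction argument: introduce $\lambda(p'):=\sup\{r'>0\mid B_{r'}(p')\subset D(B_r(p))\}$; assume no neighborhood of $p$ on which $\lambda>r-\epsilon$; take $p_i\to p$ with $\lambda(p_i)\leq r-\epsilon$, pick $x_i\in B_{r-\epsilon/2}(p_i)\setminus D(B_r(p))$; after passing to a subsequence with $x_i\to x$, the facts $p_i\to p$ and $x_i\in\overline{B}_{r-\epsilon/2}(p_i)$ force $x\in\overline{B}_{r-\epsilon/2}(p)\subset B_r(p)\subset D(B_r(p))$, contradicting openness of $D(B_r(p))$. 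That reduces the geometric continuity you need to a concrete, checkable convergence statement. Your version would have to be pinned down to something comparably precise before it counts as a proof.

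On the positive side, your explicit RCCSV-inheritance argument (monotonicity of $\vol(\overline{\,\cdot\,})$ and of $\|K_\pm\|_{C^0}$ under restriction, same ambient convex domain, same cut-off sequence) spells out a step the paper disposes of in one sentence (``Since $D(B_r(p))$ is RCCSV the subset $D(B_{r'}(p'))$ is RCCSV as well''), which is useful. But note that your causal-convexity inclusion $J_+^{\Omega_p'}(y_1)\cap J_-^{\Omega_p'}(y_2)\subset\overline{D(B_{r'}(q))}$ is stated for \emph{boundary} points $y_1,y_2\in\overline{D(B_{r'}(q))}$; Lemma~\ref{lem:DS} gives it directly only for interior points, and the extension to the closure needs its own limiting argument, which you gesture at via \cite{ONeill} but do not actually carry out.
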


\begin{proof}
First note that $\rho$ is well-defined since every point has a
RCCSV-neighborhood. 
Let $p\in M$ and $r>0$ be such that $\rho(p)> r$.
Let $\epsilon>0$.
We want to show $\rho(p')>r-\epsilon$ for all $p'$ in a neighborhood of $p$.

For any point $p'\in D(B_r(p))$ consider 
\[
\lambda(p'):=\sup\{r'>0\,|\, B_{r'}(p')\subset D(B_r(p))\} .
\]
{\em Claim:
There exists a neighborhood $V$ of $p$ such that for every $p'\in V$ one has
$\lambda(p')> r-\epsilon$.}

\begin{center}
\input{fig-rhouhs}
\end{center}

Let us assume the claim for a moment.
Let $p'\in V$.
Pick $r'$ with $r-\epsilon < r' < \lambda(p')$.
Hence $B_{r'}(p')\subset D(B_r(p))$.
By Remark~\ref{rem:Cauchydev} we know $D(B_{r'}(p'))\subset D(B_r(p))$. 
Since $D(B_r(p))$ is RCCSV the subset $D(B_{r'}(p'))$ is RCCSV as well.
Thus $\rho(p') \geq r' > r-\epsilon$.
This then concludes the proof.

It remains to show the claim.
Assume the claim is false.
Then there is a sequence $(p_i)_i$ of points in $M$ converging to $p$ such
that $\lambda(p_i) \leq r-\epsilon$ for all $i$.
Hence for $r':=r-\epsilon/2$ we have $B_{r'}(p_i) \not\subset D(B_r(p))$.
Choose $x_i \in B_{r'}(p_i) \setminus D(B_r(p))$.

The closed set $\ovl{B}_r(p)$ is contained in the compact set
$\ovl{D}(B_r(p))$ and therefore compact itself.
Thus $[-1,1]\times \ovl{B}_r(p)$ is compact.
For $i$ sufficiently large $B_{r'}(p_i) \subset [-1,1]\times \ovl{B}_r(p)$
and therefore $x_i \in [-1,1]\times \ovl{B}_r(p)$.
We pass to a convergent subsequence $x_i \to x$.
Since $p_i \to p$ and $x_i\in \ovl{B}_{r'}(p_i)$ we have
$x \in \ovl{B}_{r'}(p)$.
Hence $x\in B_r(p)$.
Since $D(B_r(p))$ is an open neighborhood of $x$ we must have $x_i\in
D(B_r(p))$ for sufficiently large $i$.
This contradicts the choice of the $x_i$.
\end{proof}

For every $r>0$ and $q\in M=\R\times S$ consider
\[
\theta_r(q):=\sup\{\eta>0\,|\, J^M(\ovl{B}_{r/2}(q))\cap
([t_0-\eta,t_0+\eta]\times S) \subset D(B_{r}(q))\}.
\indexs{thetar@$\theta_r$, auxiliary function in proof of global existence of
  fundamental solutions}
\]

\begin{center}
\input{fig-deftheta}
\end{center}

\Remark{
There exist $\eta>0$ with $J^M(\ovl{B}_{r/2}(q))\cap
([t_0-\eta,t_0+\eta]\times S) \subset D(B_{r}(q))$.
Hence $\theta_r(q)>0$.

One can see this as follows.
If no such $\eta$ existed, then there would be points $x_i \in
J^M(\ovl{B}_{r/2}(q))\cap ([t_0-\frac{1}{i},t_0+\frac{1}{i}]\times S)$
but $x_i\not\in D(B_{r}(q))$, $i\in\N$.
All $x_i$ lie in the compact set $J^M(\ovl{B}_{r/2}(q))\cap
([t_0-1,t_0+1]\times S)$.
Hence we may pass to a convergent subsequence $x_i \to x$.
Then $x\in J^M(\ovl{B}_{r/2}(q))\cap (\{t_0\}\times S)=\ovl{B}_{r/2}(q)$.
Since $D(B_r(q))$ is an open neighborhood of $\ovl{B}_{r/2}(q)$ we must have
$x_i\in D(B_r(q_0))$ for sufficiently large $i$ in contradiction to the choice
of the $x_i$. 
}

\begin{lemma}\label{thetaruhs}
The function $\theta_r:M\to(0,\infty]$ is lower semi-continuous.
\end{lemma}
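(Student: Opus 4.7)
I will establish the lower semi-continuity of $\theta_r$ by a contradiction-and-compactness argument combined with a limit curve idea typical of Lorentzian geometry. Fix $q_0 = (\tau_0,\xi_0) \in M$ and $\eta_0 < \theta_r(q_0)$; the goal is to exhibit a neighborhood $V$ of $q_0$ on which $\theta_r > \eta_0$. Pick an intermediate value $\eta_1 \in (\eta_0, \theta_r(q_0))$, so that by definition of $\theta_r(q_0)$,
$$ J^M(\overline{B}_{r/2}(q_0)) \cap t^{-1}([\tau_0 - \eta_1, \tau_0 + \eta_1]) \subset D(B_r(q_0)). $$
Supposing the conclusion fails, one obtains a sequence $q_i = (\tau_i,\xi_i) \to q_0$ with $\theta_r(q_i) \le \eta_0 < \eta_1$, and hence points
$$ y_i \in J^M(\overline{B}_{r/2}(q_i)) \cap t^{-1}([\tau_i - \eta_1, \tau_i + \eta_1]), \qquad y_i \notin D(B_r(q_i)). $$

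\textbf{First limit.} Because the Riemannian metrics $g_t$ on $S$ vary continuously in $t$ and $q_i \to q_0$, the closed balls $\overline{B}_{r/2}(q_i)$ all lie in a fixed compact set $K_0 \subset M$ for $i$ large. Global hyperbolicity then ensures $J^M(K_0) \cap t^{-1}([\tau_0 - \eta_1 - 1, \tau_0 + \eta_1 + 1])$ is compact, so after passing to a subsequence $y_i \to y_\infty$. Choosing $z_i \in \overline{B}_{r/2}(q_i)$ causally related to $y_i$ and passing to a further subsequence, $z_i$ converges to some $z_\infty$ with $t(z_\infty) = \tau_0$ and $d_{g_{\tau_0}}(z_\infty,\xi_0) \le r/2$, i.e.\ $z_\infty \in \overline{B}_{r/2}(q_0)$; closedness of $\le$ gives $z_\infty \le y_\infty$ (or the reverse), so $y_\infty \in J^M(\overline{B}_{r/2}(q_0)) \cap t^{-1}([\tau_0 - \eta_1, \tau_0 + \eta_1])$, whence $y_\infty \in D(B_r(q_0))$.

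\textbf{Second limit (main obstacle).} Since $y_i \notin D(B_r(q_i))$ and $\{\tau_i\} \times S$ is a Cauchy hypersurface, there is an inextendible causal curve through $y_i$ that meets $S_{\tau_i}$ at a single point $w_i$ with $w_i \notin B_r(q_i)$. After passing to a subsequence, one may assume $w_i \le y_i$ (the case $y_i \le w_i$ is symmetric, and $w_i = y_i$ is excluded by achronality of $S_{\tau_i}$ together with $\overline{B}_{r/2}(q_i) \subset B_r(q_i)$). Thus $w_i \in J_-^M(N) \cap t^{-1}([\tau_0 - \varepsilon, \tau_0 + \varepsilon])$ for a small compact neighborhood $N$ of $y_\infty$ and small $\varepsilon > 0$, a compact set by global hyperbolicity. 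Extract $w_i \to w_\infty \in \{\tau_0\} \times S$; the inequality $d_{g_{\tau_i}}(w_i,\xi_i) \ge r$ passes to $d_{g_{\tau_0}}(w_\infty,\xi_0) \ge r$ in the limit, so $w_\infty \notin B_r(q_0)$, and closedness of $\le$ gives $w_\infty \le y_\infty$. Extending a causal curve from $w_\infty$ to $y_\infty$ to an inextendible one through $y_\infty$ then produces an inextendible causal curve meeting the Cauchy hypersurface $\{\tau_0\} \times S$ only at $w_\infty \notin B_r(q_0)$, contradicting $y_\infty \in D(B_r(q_0))$.

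The main difficulty is the second step: one must control the ``failure witnesses'' $w_i$, which a priori could escape to infinity along $S_{\tau_i}$; the key is to use the relation $w_i \le y_i$ to confine them to a compact set via global hyperbolicity, and then transport the resulting obstruction back to $q_0$ via closedness of $\le$ and continuity of the family $g_t$.
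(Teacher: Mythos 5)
Your proof is correct and follows essentially the same strategy as the paper's: a contradiction argument that uses compactness of causal diamonds to extract convergent subsequences of the witness points and of the points where the obstructing inextendible causal curves meet the time slices, then uses closedness of the relation $\leq$ on a globally hyperbolic manifold and continuity of the metrics $g_t$ to transport the obstruction to the limit, contradicting $y_\infty \in D(B_r(q_0))$. The only differences are notational (your $y_i, z_i, w_i$ correspond to the paper's $x_i, y_i, z_i$, and your $\eta_1$ plays the role of the paper's $\eta - \epsilon$); the step you flag as the main obstacle — confining the $w_i$ — is handled identically in the paper.
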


\begin{proof}
Fix $q\in M$.
Let $\epsilon>0$.
We need to find a neighborhood $U$ of $q$ such that for all $q'\in U$ we have
$\theta_r(q')\geq \theta_r(q)-\epsilon$.

Put $\eta:=\theta_r(q)$ and choose $t_0$ such that $q\in S_{t_0}$.
Assume no such neighborhood $U$ exists.
Then there is a sequence $(q_i)_i$ in $M$ such that $q_i \to q$ and
$\theta_r(q_i) < \eta-\epsilon$ for all $i$.
All points to be considered will be contained in the compact set
$([-T,T]\times S) \cap J^M(\ovl{B}_r(q))$ for sufficiently big $T$ and
sufficiently large $i$.
Let $q_i\in S_{t_i}$.
Then $t_i\to t_0$ as $i\to\infty$.

Choose $x_i \in J^M(\ovl{B}_{r/2}(q_i)) \cap \left([t_i-\eta+{\epsilon},
t_i+\eta-{\epsilon}] \times S\right)$ but $x_i \not\in D(B_r(q_i))$.
This is possible because of $\theta_r(q_i) < \eta-\epsilon$.
Choose $y_i\in \ovl{B}_{r/2}(q_i)$ such that $x_i\in J^M(y_i)$.

\begin{center}
\input{fig-konstruktxi}
\end{center}

After passing to a subsequence we may assume $x_i \to x$ and $y_i \to y$.
From $q_i \to q$ and $y_i\in \ovl{B}_{r/2}(q_i)$ we deduce $y \in
\ovl{B}_{r/2}(q)$. 
Since the causal relation ``$\leq$'' on a globally hyperbolic manifold is
closed we conclude from $x_i \to x$, $y_i \to y$, and $x_i\in J^M(y_i)$ that
$x\in J^M(y)$. 
Thus $x\in J^M(\ovl{B}_{r/2}(q))$.
Obviously, we also have $x\in [t_0-\eta+{\epsilon}, t_0+\eta-{\epsilon}]
\times S$.
From $\theta_r(q)=\eta>\eta-\epsilon$ we conclude $x\in D(B_r(q))$.

Since $x_i\not\in D(B_r(q_i))$ there is an inextendible causal curve $c_i$
through $x_i$ which does not intersect $B_r(q_i)$.
Let $z_i$ be the intersection of $c_i$ with the Cauchy hypersurface $S_{t_i}$.
After again passing to a subsequence we have $z_i \to z$ with $z\in S_{t_0}$.
From $z_i\not\in B_r(q_i)$ we conclude $z\not\in B_r(q)$.
Moreover, since $c_i$ is causal we have $x_i\in J^M(z_i)$.
The causal relation ``$\leq$'' is closed, hence $x\in J^M(z)$.
Thus there exists an inextendible causal curve $c$ through $x$ and $z$.
This curve does not meet $B_r(q)$ in contradiction to $x\in D(B_r(q))$.
\end{proof}

\begin{lemma}\label{lem:streifen}
For each compact subset $K\subset M$ there exists $\delta>0$ such that for
each $t\in\R$ and any $u_0, u_1\in\DD(S_t,E)$ with $\supp(u_j)\subset K$,
$j=1,2$, there is a smooth solution $u$ of $Pu=0$ defined on
$(t-\delta,t+\delta)\times S$ satisfying $u|_{S_t}=u_0$ and
$\nabla_\mathfrak{n} 
u|_{S_t}=u_1$.
Moreover, $\supp(u) \subset J^M(K\cap S_t)$.
\end{lemma}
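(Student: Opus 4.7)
The plan is to use the local existence result (Proposition~\ref{cauchylokalexistenz}) on a finite collection of small RCCSV-domains that cover the support of the Cauchy data, and to piece the local solutions together to get a smooth solution on a strip. The two uniformity lemmas \ref{lrhouhs} and \ref{thetaruhs} are exactly what is needed to find a radius $r$ and a width $\delta$ that work simultaneously for every $p\in K$, and hence for every $t$.

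Concretely, I would first invoke lower semi-continuity and positivity of $\rho$ together with compactness of $K$ to pick $r>0$ such that $D(B_r(p))$ is RCCSV for every $p\in K$. The relation $S_t\cap D(B_r(p))=B_r(p)$ combined with the definition of the Cauchy development ensures that $B_r(p)$ is a smooth spacelike Cauchy hypersurface in $D(B_r(p))$, so Proposition~\ref{cauchylokalexistenz} applies there. Then I would apply Lemma~\ref{thetaruhs} to the function $\theta_r$: by lower semi-continuity and positivity, and compactness of $K$, there exists $\delta>0$ with $\theta_r(p)>\delta$ for all $p\in K$. This $\delta$ is the $\delta$ claimed in the lemma. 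Given Cauchy data $u_0,u_1\in\DD(S_t,E)$ supported in $L:=K\cap S_t$, I cover the compact set $L$ by finitely many balls $B_{r/2}(p_1),\ldots,B_{r/2}(p_N)$ with $p_i\in L$, choose a partition of unity $\{\chi_i\}$ on a neighborhood of $L$ in $S_t$ with $\supp(\chi_i)\subset\subset B_{r/2}(p_i)$, and for each $i$ solve, via Proposition~\ref{cauchylokalexistenz} on the RCCSV-domain $\Omega_i:=D(B_r(p_i))$, the Cauchy problem $Pu^{(i)}=0$ with data $\chi_i u_0,\chi_i u_1$ on $B_r(p_i)$. By the support part of that proposition, $\supp(u^{(i)})\subset J^{\Omega_i}(\overline{B_{r/2}(p_i)})$.

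The main obstacle is to show that each $u^{(i)}$ extends by $0$ to a smooth section on the strip $M_\delta:=(t-\delta,t+\delta)\times S$, so that $u:=\sum_i u^{(i)}$ makes sense and solves the global Cauchy problem. This is precisely where the function $\theta_r$ pays off: the support of $u^{(i)}$ sits inside $J^M(\overline{B_{r/2}(p_i)})$, which is closed in $M$ since $M$ is globally hyperbolic and $\overline{B_{r/2}(p_i)}$ is compact; hence, using $\delta<\theta_r(p_i)$, the closure of $\supp(u^{(i)})$ in $M_\delta$ lies inside
\[
J^M(\overline{B_{r/2}(p_i)})\cap\big([t-\delta,t+\delta]\times S\big)\subset D(B_r(p_i))=\Omega_i.
\]
Consequently $u^{(i)}$ vanishes in an $M_\delta$-neighborhood of $\partial\Omega_i\cap M_\delta$ and extends smoothly by $0$ to all of $M_\delta$, with support contained in $J^M(\overline{B_{r/2}(p_i)})$.

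Finally, the sum $u:=\sum_{i=1}^N u^{(i)}\in C^\infty(M_\delta,E)$ satisfies $Pu=0$ by linearity, and along $S_t$ we have $u|_{S_t}=\sum_i\chi_i u_0=u_0$ and $\nabla_\mathfrak{n}u|_{S_t}=\sum_i\chi_i u_1=u_1$ because $\{\chi_i\}$ is a partition of unity on $\supp(u_0)\cup\supp(u_1)\subset L$. The support estimate $\supp(u)\subset\bigcup_i J^M(\overline{B_{r/2}(p_i)}\cap L)\subset J^M(L)=J^M(K\cap S_t)$ follows from the fact that $\supp(\chi_i u_j)\subset\overline{B_{r/2}(p_i)}\cap L$, together with the finite-propagation-speed part of Proposition~\ref{cauchylokalexistenz}. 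Uniqueness on $M_\delta$ is not part of the claim, but would follow from Corollary~\ref{cauchyeindeutigkeit} if needed later.
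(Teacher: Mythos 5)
Your proposal is correct and follows essentially the same strategy as the paper: use lower semi-continuity of $\rho$ (Lemma~\ref{lrhouhs}) plus compactness of $K$ to find a uniform radius, then lower semi-continuity of $\theta_r$ (Lemma~\ref{thetaruhs}) to find a uniform $\delta$, cover $K\cap S_t$ by finitely many small balls, solve local Cauchy problems on the RCCSV-sets $D(B_r(p_i))$ via Proposition~\ref{cauchylokalexistenz}, extend by zero, and sum. The only genuine difference is cosmetic notation ($r$ vs.\ $2r_0$), and you are actually slightly more explicit than the paper about why the extension by zero is smooth, pointing out that the support of each local solution is contained in the closed set $J^M(\overline{B_{r/2}(p_i)})$ whose intersection with the strip sits inside the open domain of definition.
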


\begin{proof}
By Lemma~\ref{lrhouhs} the function $\rho$ admits a minimum on the compact set
$K$.
Hence there is a constant $r_0>0$ such that $\rho(q)> 2r_0$ for all $q\in
K$.
Choose $\delta>0$ such that $\theta_{2r_0}>\delta$ on $K$.
This is possible by Lemma~\ref{thetaruhs}.

Now fix $t\in\R$.
Cover the compact set $S_t\cap K$ by finitely many balls $B_{r_0}(q_1),
\ldots, B_{r_0}(q_N)$, $q_j\in S_t\cap K$.
Let $u_0, u_1\in\DD(S_t,E)$ with $\supp(u_j)\subset K$.
Using a partition of unity write $u_0 = u_{0,1}+\ldots+u_{0,N}$ with
$\supp(u_{0,j})\subset B_{r_0}(q_j)$ and similarly $u_1 =
u_{1,1}+\ldots+u_{1,N}$.
The set $D(B_{2r_0}(q_j))$ is RCCSV.
By Proposition~\ref{cauchylokalexistenz} we can find a solution $w_j$
of $Pw_j=0$ on $D(B_{2r_0}(q_j))$ with $w_j|_{S_t}=u_{0,j}$ and
$\nabla_\mathfrak{n} w_j|_{S_t}=u_{1,j}$. 
Moreover, $\supp(w_j) \subset J^M(B_{r_0}(q_j))$.
From $J^M(B_{r_0}(q_j)) \cap (t-\delta,t+\delta)\times S \subset
D(B_{2r_0}(q_j))$ we see that $w_j$ is defined on $J^M(B_{r_0}(q_j))\cap
(t-\delta,t+\delta)\times S$.
Extend $w_j$ smoothly by zero to all of $(t-\delta,t+\delta)\times S$.
Now $u := w_1 + \ldots+w_N$ is a solution defined on
$(t-\delta,t+\delta)\times S$ as required.
\end{proof}

Now we are ready for the main theorem of this section.

\begin{thm}\label{cauchyglobhyp}
\indexn{Cauchy problem}
Let $M$ be a globally hyperbolic Lorentzian manifold and let
 $S\subset M$ be a spacelike Cauchy hypersurface.
Let $\mathfrak{n}$ be the future directed timelike unit normal field along $S$.
Let $E$ be a vector bundle over $M$ and let $P$ be a normally hyperbolic
operator acting on sections in $E$.

Then for each  $u_0, u_1 \in \DD(S,E)$ and for each
$f\in\DD(M,E)$ there exists a unique $u\in C^\infty(M,E)$ satisfying
$Pu=f$, $u|_S = u_0$, and $\nabla_\mathfrak{n} u|_S = u_1$.

Moreover, $\supp(u) \subset J^M(K)$ where 
$K=\supp(u_0)\cup \supp(u_1) \cup \supp(f)$.
\end{thm}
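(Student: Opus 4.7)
The plan is to use the foliation $M=\R\times S$ by smooth spacelike Cauchy hypersurfaces $S_t=\{t\}\times S$ furnished by Theorem~\ref{globhyp2} (with $S=S_0$) to reduce the global problem to an iteration of the local strip existence result Lemma~\ref{lem:streifen}, gluing the local solutions together by means of the uniqueness result Corollary~\ref{cauchyeindeutigkeit}.

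\emph{Uniqueness and support.} If $u$ and $\tilde u$ both satisfy the stated Cauchy problem, then $w:=u-\tilde u$ solves $Pw=0$ with $w|_S=0$ and $\nabla_\mathfrak{n}w|_S=0$, so Corollary~\ref{cauchyeindeutigkeit} forces $w\equiv 0$. The support statement will be carried along in the construction below: at each local step Proposition~\ref{cauchylokalexistenz} and Lemma~\ref{lem:streifen} guarantee that the local solution is supported in $J^M$ of (the restriction to $S_t$ of) the Cauchy data and source, so by globalising we only ever enlarge this set by applying $J^M$, which commutes with unions.

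\emph{Existence, homogeneous case $f\equiv 0$.} Put $K:=\supp(u_0)\cup\supp(u_1)$. Define
\[
T_+:=\sup\{T>0\mid\exists\,u\in C^\infty((-T,T)\times S,E)\text{ with }Pu=0,\,u|_{S_0}=u_0,\,\nabla_\mathfrak{n}u|_{S_0}=u_1,\,\supp(u)\subset J^M(K)\},
\]
and analogously $T_-$ for the past direction. Lemma~\ref{lem:streifen} applied to the compact set $K$ immediately gives $T_\pm>0$, and uniqueness ensures that the various local solutions patch together consistently, producing one solution $u$ on $(-T_-,T_+)\times S$ with $\supp(u)\subset J^M(K)$. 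I will show $T_+=\infty$; the argument for $T_-$ is identical. Suppose $T_+<\infty$. Since $M$ is globally hyperbolic, $J^M(K)$ is the union of the past- and future-compact sets $J^M_\pm(K)$, whence the slab
\[
K':=J^M(K)\cap\bigl([T_+-1,T_++1]\times S\bigr)
\]
is compact. Apply Lemma~\ref{lem:streifen} to $K'$ to obtain a uniform $\delta>0$ such that any compactly supported Cauchy data on any $S_t$ supported in $K'$ admits a solution of $Pv=0$ on $(t-\delta,t+\delta)\times S$, with support in $J^M(K'\cap S_t)\subset J^M(K)$. Pick $t$ with $T_+-\delta/2<t<T_+$. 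The Cauchy data $u|_{S_t}$ and $\nabla_\mathfrak{n}u|_{S_t}$ of the already-constructed $u$ have support in $J^M(K)\cap S_t\subset K'$, so Lemma~\ref{lem:streifen} produces a solution $\tilde u$ on $(t-\delta,t+\delta)\times S$; Corollary~\ref{cauchyeindeutigkeit} shows $\tilde u=u$ on the overlap with $(-T_-,T_+)\times S$, giving an extension of $u$ to $(-T_-,t+\delta)\times S$ with $t+\delta>T_+$, contradicting maximality. Hence $T_+=\infty$.

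\emph{Inhomogeneous case.} I would extend Lemma~\ref{lem:streifen} to incorporate an inhomogeneity $f$ with compact support meeting $S_t$, using the full statement of Proposition~\ref{cauchylokalexistenz} (which already treats the inhomogeneous local Cauchy problem with the support bound $\supp(u)\subset J^M(K)$, $K=\supp(u_0)\cup\supp(u_1)\cup\supp(f)$). The inhomogeneous analogue of Lemma~\ref{lem:streifen} is proved exactly as in the homogeneous case: cover the compact $K$ by finitely many RCCSV domains $D(B_{2r_0}(q_j))$, use a partition of unity to split $u_0$, $u_1$ and \emph{also} $f|_{(t-\delta,t+\delta)\times S}$ into pieces supported in the $D(B_{2r_0}(q_j))$, solve the corresponding local Cauchy problems, and sum. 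One then reruns the $T_\pm$-argument verbatim with $K:=\supp(u_0)\cup\supp(u_1)\cup\supp(f)$ and $K':=J^M(K)\cap([T_+-1,T_++1]\times S)$, which is again compact by global hyperbolicity.

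\emph{Main obstacle.} The delicate point is the uniform lower bound on the strip width $\delta$ supplied by Lemma~\ref{lem:streifen}; this is what makes the $T_\pm$-extension argument go through, since it prevents the local existence time from shrinking to zero as $t\nearrow T_+$. This bound ultimately rests on the lower semicontinuity of $\rho$ and $\theta_r$ (Lemmas~\ref{lrhouhs} and \ref{thetaruhs}) together with the global-hyperbolicity fact that $J^M(K)\cap([-T,T]\times S)$ is compact, which controls how the supports of the evolving Cauchy data can spread.
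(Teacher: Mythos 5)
Your treatment of the homogeneous case and the $T_\pm$-extension argument is essentially the same as the paper's, and your uniqueness argument via Corollary~\ref{cauchyeindeutigkeit} is correct. The genuine issue is in your handling of the inhomogeneity.

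You claim that an inhomogeneous analogue of Lemma~\ref{lem:streifen} is ``proved exactly as in the homogeneous case'' by also splitting $f|_{(t-\delta,t+\delta)\times S}$ via a partition of unity subordinate to the cover $\{D(B_{2r_0}(q_j))\}_j$. This step has a gap. In Lemma~\ref{lem:streifen} the balls $B_{r_0}(q_j)$ are chosen to cover $K\cap S_t$, and the resulting Cauchy developments $D(B_{2r_0}(q_j))$ cover $J^M(K\cap S_t)\cap((t-\delta,t+\delta)\times S)$. They need \emph{not} cover $\supp(f)\cap((t-\delta,t+\delta)\times S)$, since a point of $\supp(f)$ at time $s\neq t$ inside the strip need not be causally related to $K\cap S_t$. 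So there is no guarantee a partition of unity subordinate to the $D(B_{2r_0}(q_j))$ can split $f$, and without such a split the pieces of $f$ are not contained in the RCCSV domains where Proposition~\ref{cauchylokalexistenz} applies. One would have to enlarge the cover to account for the time-projection of $\supp(f)$ into $S_t$, uniformly in $t$, which is a nontrivial modification of the lemma and not the verbatim rerun you describe.

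The paper avoids this entirely by treating $f$ at the start rather than during the extension: after a partition-of-unity reduction, the entire compact set $K=\supp(u_0)\cup\supp(u_1)\cup\supp(f)$ is placed inside a single RCCSV domain $\Omega$ meeting $S$ properly, Proposition~\ref{cauchylokalexistenz} is applied once to produce a solution on a strip $(-\varepsilon,\varepsilon)\times S$ containing $K$, and this is extended by zero. Because $\supp(f)\subset K\subset(-\varepsilon,\varepsilon)\times S$, the equation becomes $Pu=0$ on the region $[\varepsilon,T_+)\times S$ where the extension argument operates, so the homogeneous Lemma~\ref{lem:streifen} (exactly as stated) suffices. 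You would do better to adopt this device -- solve the inhomogeneous problem once on an initial strip containing $\supp(f)$, then extend the resulting solution with the homogeneous strip lemma -- rather than trying to carry $f$ through the continuation argument.
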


\begin{proof}
Uniqueness of the solution follows directly from
Corollary~\ref{cauchyeindeutigkeit}. 
We have to show existence of a solution and the statement on its support.

Let  $u_0, u_1 \in \DD(S,E)$ and $f\in\DD(M,E)$.
Using a partition of unity $(\chi_j)_{j=1,\ldots,m}$ we can write 
$u_0 = u_{0,1} + \ldots + u_{0,m}$,
$u_1 = u_{1,1} + \ldots + u_{1,m}$ and $f = f_{1} + \ldots + f_{m}$
where $u_{0,j} = \chi_j u_0$, $u_{1,j} = \chi_j u_1$, and $f_j = \chi_j f$.
We may assume that each $\chi_j$ (and hence each $u_{i,j}$ and $f_j$) 
have support in an open set as in Proposition~\ref{cauchylokalexistenz}.
If we can solve the Cauchy problem on $M$ for the data $(u_{0,j}, u_{1,j},
f_j)$, then we can add these solutions to obtain one for $u_0$, $u_1$, and $f$.
Hence we can without loss of generality assume that there is an $\Omega$ as in 
Proposition~\ref{cauchylokalexistenz} such that $K := \supp(u_0)\cup
\supp(u_1)\cup\supp(f) \subset \Omega$.

By Theorem~\ref{globhyp2} the spacetime $M$ is isometric to $\R\times S$ with
a Lorentzian metric of the form $-\beta dt^2+ g_t$ where $S$ corresponds to
$\{0\}\times S$, and each $S_t:=\{t\}\times S$ is a spacelike Cauchy
hypersurface in $M$.
Let $u$ be the solution on $\Omega$ as asserted by
Proposition~\ref{cauchylokalexistenz}. 
In particular, $\supp(u) \subset J^M(K)$.
By choosing the partition of unity $(\chi_j)_j$ appropriately we can assume
that $K$ is so small that there exists an $\varepsilon > 0$ such that
$((-\varepsilon,\varepsilon)\times S) \cap J^M(K) \subset
\Omega$ and $K\subset (-\varepsilon,\varepsilon)\times S$.

\begin{center}
\input{fig-cauchyglobhyp1}
\end{center}

Hence we can extend $u$ by $0$ to a smooth solution on all of
$(-\varepsilon,\varepsilon)\times S$. 
Now let $T_+$ be the supremum of all $T$ for which $u$ can be extended
to a smooth solution on $(-\varepsilon,T)\times S$ with support contained in
$J^M(K)$.
On $[\epsilon,T)\times S$ the equation to be solved is simply $Pu=0$ because
$\supp(f) \subset K$.
If we have two extensions $u$ and $\tilde{u}$ for $T<\tilde{T}$, then the
restriction of $\tilde{u}$ to $(-\varepsilon,T)\times S$ must
coincide with $u$ by uniqueness.
Note here that Corollary~\ref{cauchyeindeutigkeit} applies because
$(-\epsilon,T) \times S$ is a globally hyperbolic manifold in its own right.
Thus if we show $T_+=\infty$ we obtain a solution on
$(-\varepsilon,\infty)\times S$.
Similarly considering the corresponding infimum $T_-$ then yields a solution
on all of $M=\R\times S$.

Assume that $T_+<+\infty$.
Put $\hat{K}:=([-\epsilon,T_+]\times S)\cap J^M(K)$.
By Lemma~\ref{cJ+Spastcompact} $\hat{K}$ is compact.
Apply Lemma~\ref{lem:streifen} to $\hat K$ and get $\delta>0$ as in the Lemma.
Fix $t<T_+$ such that $T_+-t<\delta$ and still $K\subset
(-\epsilon,t)\times S$.

On $(t-\delta,t+\delta)\times S$ solve $Pw=0$ with $w|_{S_t}=u|_{S_t}$ and
$\nabla_\mathfrak{n} w|_{S_t}=\nabla_\mathfrak{n} u|_{S_t}$.
This is possible by Lemma~\ref{lem:streifen}.
On $(t-\eta,t+\delta)\times S$ the section $f$ vanishes with $\eta>0$ small
enough.
Thus $w$ coincides with $u$ on $(t-\eta,t)\times S$.
Here again, Corollary~\ref{cauchyeindeutigkeit} applies because
$(t-\eta,t+\delta)\times S$ is a globally hyperbolic manifold in its own right.
Hence $w$ extends the solution $u$ smoothly to $(-\varepsilon,t+\delta)\times
S$.
The support of this extension is still contained in $J^M(K)$ because 
$$
\supp\left(w|_{[t,t+\delta)\times S}\right)
\subset
J^M_+\left(\supp(u|_{S_t})\cup\supp(\nabla_\mathfrak{n} u|_{S_t})\right)
\subset
J^M_+(\hat K \cap S_t)
\subset
J^M_+(J^M_+(K))
=
J^M_+(K).
$$

Since $T_+ < t + \delta$ this contradicts the maximality of $T_+$.
Therefore $T_+=+\infty$.
Similarly, one sees $T_-=-\infty$ which concludes the proof.
\end{proof}

The solution to the Cauchy problem depends continuously on the data.

\begin{thm}\label{cauchystetig}
Let $M$ be a globally hyperbolic Lorentzian manifold and let 
$S\subset M$ be a spacelike Cauchy hypersurface.
Let $\mathfrak{n}$ be the future directed timelike unit normal field along $S$.
Let $E$ be vector bundle over $M$ and let $P$ be a normally hyperbolic
operator acting on sections in $E$.

Then the map $\DD(M,E) \oplus \DD(S,E) \oplus \DD(S,E) \to C^\infty(M,E)$
sending $(f,u_0,u_1)$ to the unique solution $u$ of the Cauchy
problem $Pu=f$, $u|_S=u|_0$, $\nabla_\mathfrak{n} u = u_1$ is linear
continuous. 
\end{thm}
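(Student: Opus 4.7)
The plan is to prove linearity by uniqueness and continuity by the closed graph theorem.

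Linearity is immediate. If $u$ solves the Cauchy problem with data $(f,u_0,u_1)$ and $\tilde u$ solves it with data $(\tilde f,\tilde u_0,\tilde u_1)$, then for any scalars $a,b$ the section $au+b\tilde u$ solves it with data $(af+b\tilde f,au_0+b\tilde u_0,au_1+b\tilde u_1)$; by the uniqueness part of Theorem~\ref{cauchyglobhyp}, this $au+b\tilde u$ is the unique such solution, so the solution map $Q:(f,u_0,u_1)\mapsto u$ is $\K$-linear.

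For continuity, recall that $\DD(M,E)$ and $\DD(S,E)$ carry the LF-topology, i.\,e.\ the inductive limit topology of the Fréchet subspaces $\DD_K(M,E)$ (resp.\ $\DD_L(S,E)$) of sections supported in a fixed compact $K\subset M$ (resp.\ $L\subset S$). By the universal property of inductive limits, $Q$ is continuous if and only if for every triple of compact subsets $K\subset M$ and $L_0,L_1\subset S$ the restriction
\[
Q_{K,L_0,L_1}:\DD_K(M,E)\oplus\DD_{L_0}(S,E)\oplus\DD_{L_1}(S,E)\longrightarrow C^\infty(M,E)
\]
is continuous. The source is a Fréchet space, the target is a Fréchet space, so the closed graph theorem applies and it is enough to show that the graph of $Q_{K,L_0,L_1}$ is closed. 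Suppose therefore that
\[
(f_k,u_{0,k},u_{1,k})\longrightarrow(f,u_0,u_1)\quad\text{in }\DD_K\oplus\DD_{L_0}\oplus\DD_{L_1}
\]
and $Q_{K,L_0,L_1}(f_k,u_{0,k},u_{1,k})=v_k\to v$ in $C^\infty(M,E)$. Since $P$ is a differential operator it is continuous on $C^\infty(M,E)$, hence $Pv_k\to Pv$ in $C^\infty(M,E)$; but $Pv_k=f_k\to f$ in $\DD_K(M,E)\hookrightarrow C^\infty(M,E)$, so $Pv=f$. The restriction maps $C^\infty(M,E)\to C^\infty(S,E)$ given by $w\mapsto w|_S$ and $w\mapsto\nabla_{\mathfrak n}w|_S$ are continuous, so $v|_S=\lim v_k|_S=\lim u_{0,k}=u_0$ and similarly $\nabla_{\mathfrak n}v|_S=u_1$. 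Thus $v$ solves the same Cauchy problem as $Q(f,u_0,u_1)$, and uniqueness (Corollary~\ref{cauchyeindeutigkeit}, applied to the globally hyperbolic manifold $M$ via Theorem~\ref{cauchyglobhyp}) gives $v=Q(f,u_0,u_1)$. The graph is closed, so $Q_{K,L_0,L_1}$ is continuous, and therefore so is $Q$.

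The main conceptual point is the correct handling of topologies: $\DD(M,E)$ is not Fréchet, so one cannot apply the closed graph theorem to $Q$ directly but must pass to the Fréchet pieces $\DD_K$. Once this reduction is in place, the verification that the graph is closed is routine given the continuity of $P$, the continuity of the trace maps $w\mapsto w|_S$ and $w\mapsto\nabla_{\mathfrak n}w|_S$, and the uniqueness part of Theorem~\ref{cauchyglobhyp}.
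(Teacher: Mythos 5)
Your proof is correct and takes essentially the same approach as the paper: reduce to the Fr\'echet pieces carrying compactly supported data, invoke a Banach--Schauder type theorem, and use the fact that convergence in the LF-topology takes place in a fixed Fr\'echet piece. The only difference is cosmetic: you apply the closed graph theorem directly to the solution map $Q$, whereas the paper applies the open mapping theorem to the forward map $\mathcal{P}:u\mapsto(Pu,u|_S,\nabla_{\mathfrak n}u)$ restricted to the closed subspace $\mathcal{V}_K:=\mathcal{P}^{-1}(\DD_K\oplus\DD_K\oplus\DD_K)$ of $C^\infty(M,E)$ and then inverts; since the two theorems are equivalent in the Fr\'echet setting, this is a matter of taste, not substance (and using three compacts $K,L_0,L_1$ instead of one is likewise immaterial).
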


\begin{proof}
The map $\mathcal{P} : C^\infty(M,E) \to C^\infty(M,E) \oplus C^\infty(S,E)
\oplus C^\infty(S,E)$, $u \mapsto (Pu,u|_S,\nabla_\mathfrak{n} u)$, is
obviously linear 
and continuous.
Fix a compact subset $K\subset M$.
Write $\DD_K(M,E) := \{f\in\DD(M,E)\ |\ \supp(f)\subset K \}$,
$\DD_K(S,E) := \{v\in \DD(S,E)\ |\ \supp(v)\subset K\cap S\}$, and
$\mathcal{V}_K := \mathcal{P}^{-1}(\DD_K(M,E) \oplus \DD_K(S,E) \oplus
\DD_K(S,E))$.
Since $\DD_K(M,E) \oplus \DD_K(S,E) \oplus \DD_K(S,E) \subset C^\infty(M,E)
\oplus C^\infty(S,E) \oplus C^\infty(S,E)$ is a closed subset so is
$\mathcal{V}_K \subset C^\infty(M,E)$.
Both $\mathcal{V}_K$ and $\DD_K(M,E) \oplus \DD_K(S,E) \oplus \DD_K(S,E)$ are
therefore Fr\'echet spaces and $\mathcal{P} : \mathcal{V}_K \to \DD_K(M,E)
\oplus \DD_K(S,E) \oplus \DD_K(S,E)$ is linear, continuous and bijective.
By the open mapping theorem \cite[Thm.~V.6, p.~132]{RS} the inverse mapping
$\mathcal{P}^{-1}:\DD_K(M,E) \oplus \DD_K(S,E) \oplus \DD_K(S,E) \to
\mathcal{V}_K\subset C^\infty(M,E)$ is continuous as well.

Thus if $(f_j, u_{0,j},u_{1,j})\to (f, u_{0},u_{1})$ in $\DD(M,E) \oplus
\DD(S,E) \oplus \DD(S,E)$, then we can choose a compact subset $K\subset M$
such that $(f_j, u_{0,j},u_{1,j})\to (f, u_{0},u_{1})$ in $\DD_K(M,E) \oplus 
\DD_K(S,E) \oplus \DD_K(S,E)$ and we conclude $\mathcal{P}^{-1}(f_j,
u_{0,j},u_{1,j})\to \mathcal{P}^{-1}(f, u_{0},u_{1})$. 
\end{proof}

\section[Fundamental solutions]{Fundamental solutions on globally hyperbolic
  manifolds} 
\indexn{fundamental solution>defemidx}

Using the knowledge about the Cauchy problem which we obtained in the
previous section it is now not hard to find global fundamental solutions 
on a globally hyperbolic manifold.

\begin{thm}\label{globhypexist}
Let $M$ be a globally hyperbolic Lorentzian manifold. 
Let $P$ be a normally hyperbolic operator acting on sections in a 
vector bundle $E$ over $M$.

Then for every $x\in M$ there is exactly one
fundamental solution $F_+(x)$ for $P$ at $x$ with past compact
support and exactly one fundamental solution $F_-(x)$ for $P$ at $x$ with 
future compact support.
\indexs{F*@$F_\pm(x)$, global fundamental solution}
They satisfy
\begin{enumerate}
\item
$\supp(F_\pm(x))\subset J_\pm^M(x)$,
\item
for each $\varphi\in\DD(M,E^*)$ the maps $x\mapsto F_\pm(x)[\varphi]$
are smooth sections in $E^*$ satisfying the differential equation
$P^*(F_\pm(\cdot)[\varphi])=\varphi$.
\end{enumerate}
\end{thm}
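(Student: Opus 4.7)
The plan is to dispatch uniqueness by appealing to the earlier results and then construct the fundamental solutions by solving an appropriate Cauchy problem for the adjoint operator $P^*$.

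\textbf{Uniqueness.} A globally hyperbolic manifold satisfies the three hypotheses of Corollary~\ref{fundunique} (causality, closed causality relation, finite continuous time-separation), and the subsets $J_\pm^M(x)$ are past/future compact. Hence Corollary~\ref{funduniqueglobhyp} gives uniqueness of $F_+(x)$ with past compact support and of $F_-(x)$ with future compact support at once.

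\textbf{Existence.} Given $\psi\in\DD(M,E^*)$ with compact support $K:=\supp\psi$, I write $M=\R\times S$ as in Theorem~\ref{globhyp} and choose $T$ so large that $K\subset I_-^M(S_T)$. Since $P^*$ is normally hyperbolic, Theorem~\ref{cauchyglobhyp} yields a unique smooth solution $u_\psi\in C^\infty(M,E^*)$ of
\[
P^*u_\psi=\psi,\qquad u_\psi|_{S_T}=0,\qquad \nabla_{\mathfrak{n}}u_\psi|_{S_T}=0,
\]
with $\supp(u_\psi)\subset J^M(K)$. I first check that $u_\psi$ does not depend on the choice of $S_T$: if $u_1,u_2$ are two such solutions built from Cauchy hypersurfaces $S_{T_1},S_{T_2}$ strictly above $K$, then $v:=u_1-u_2$ satisfies $P^*v=0$ and vanishes with its normal derivative on a Cauchy hypersurface lying above both $S_{T_i}$ (where both $u_i$ are already zero). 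Uniqueness in the Cauchy problem (Corollary~\ref{cauchyeindeutigkeit}) applied to $P^*$ gives $v\equiv 0$.

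\textbf{Support property.} The crucial step is the claim $\supp(u_\psi)\subset J_-^M(K)$. The independence of $u_\psi$ from the admissible choice of $S_T$ is the right lever: for every Cauchy hypersurface $S_T$ strictly above $K$ we have $\supp(u_\psi)\subset J^M(K)\cap J_-^M(S_T)$, and since $J_-^M(K)\subset J_-^M(S_T)$ for every such $S_T$, it suffices to show that for any $y\notin J_-^M(K)$ there exists a Cauchy hypersurface strictly above $K$ and strictly below $y$. This in turn is produced by deforming a Cauchy time function (using Theorem~\ref{globhyp2} and the fact that $y\notin J_-^M(K)$ means $y$ is either in $J_+^M(K)\setminus J_-^M(K)$ or acausally related to $K$) so that its value at $y$ exceeds its maximum on the compact set $K$. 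Combining, $\supp(u_\psi)\subset J_-^M(K)$, which is the heart of ``finite propagation speed'' for the backward Cauchy problem: working backwards from $S_T$ with zero data, the value of $u_\psi$ at $y$ depends only on $\psi$ restricted to $J_+^M(y)\cap J_-^M(S_T)$, and this set is disjoint from $K$ when $y\notin J_-^M(K)$.

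\textbf{Definition and verification.} Setting $F_+(x)[\psi]:=u_\psi(x)\in E_x^*$, the map $\psi\mapsto F_+(x)[\psi]$ is linear by linearity of the Cauchy problem and continuous by Theorem~\ref{cauchystetig}, so $F_+(x)\in\DD'(M,E,E_x^*)$. For $\varphi\in\DD(M,E^*)$ the section $u=\varphi$ solves $P^*u=P^*\varphi$ and vanishes together with its normal derivative on any $S_T$ above $\supp\varphi$; uniqueness gives $u_{P^*\varphi}=\varphi$, hence $F_+(x)[P^*\varphi]=\varphi(x)$, i.e.\ $PF_+(x)=\delta_x$. If $\supp\psi\cap J_+^M(x)=\emptyset$ then $x\notin J_-^M(\supp\psi)$ and the support claim forces $u_\psi(x)=0$, giving $\supp(F_+(x))\subset J_+^M(x)$. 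Smoothness of $x\mapsto F_+(x)[\psi]=u_\psi(x)$ and the identity $P^*(F_+(\cdot)[\psi])=P^*u_\psi=\psi$ are then immediate. The retarded fundamental solution $F_-(x)$ is built analogously from Cauchy data on a hypersurface $S_T$ strictly to the past of $\supp\psi$.

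The main obstacle will be justifying the support claim $\supp(u_\psi)\subset J_-^M(K)$ rigorously: Theorem~\ref{cauchyglobhyp} by itself only gives the symmetric bound $\supp(u_\psi)\subset J^M(K)$, so the one-sided sharpening genuinely requires either the independence-of-$S_T$ trick above together with the geometric construction of adapted Cauchy hypersurfaces, or an equivalent finite-propagation-speed argument combined with the uniqueness Theorem~\ref{kernP} applied to the difference with a hypothetical advanced solution.
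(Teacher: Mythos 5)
Your approach is essentially the paper's: construct $F_+(x)[\psi]$ as the value at $x$ of the solution to the backward Cauchy problem for $P^*$ with zero data on a slice $S_T$ above $\supp\psi$, show the result is independent of $T$, and verify the fundamental-solution property, continuity, and smoothness exactly as you do. Uniqueness via Corollary~\ref{funduniqueglobhyp} is also what the paper uses.

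The place where your proof diverges is the support step, and this is where there is a genuine gap. You aim for the \emph{sharp} inclusion $\supp(u_\psi)\subset J_-^M(\supp\psi)$, and the argument you give reduces this to the claim that for every $y\notin J_-^M(\supp\psi)$ there is a smooth spacelike Cauchy hypersurface with $\supp\psi$ strictly below it and $y$ strictly above it. That geometric separation lemma is not established anywhere in the text. Theorem~\ref{globhyp2} does not produce it: it only says a \emph{given} smooth spacelike Cauchy hypersurface can be realized as a level set of a Cauchy time-function, whereas you still have to construct the separating hypersurface itself. The difficulty is real at points $y$ lying on or near the boundary of $J_+^M(\supp\psi)$, where the separating hypersurface would have to slip in between $\supp\psi$ and $y$ while remaining spacelike and Cauchy. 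You are candid about this being ``the main obstacle,'' but as written the proof rests on it. (The sharp support claim is in fact \emph{true} --- $u_\psi$ is the retarded Green's operator for $P^*$ applied to $\psi$ --- but that is Section~\ref{abschnitt33} material and is downstream of the theorem you are proving, so it cannot be invoked here.)

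The paper sidesteps the separation lemma entirely by proving only the weaker statement $\supp(F_+(x))\subset J_+^M(x)$ directly, with a compactness argument that never leaves the fixed foliation $\{S_t\}$. Fixing $x$ and a point $y\notin J_+^M(x)$, one chooses $y'\in I_+^M(y)$ and $y''\in I_-^M(y)$ so close to $y$ that $J_+^M(x)\cap J_-^M(y')=\emptyset$ and $\bigl(J_+^M(y'')\cap\bigcup_{t\le t'}S_t\bigr)\cap J_+^M(x)=\emptyset$, where $t'$ is the time-level of $y'$; then $K':=J_-^M(y')\cap J_+^M(y'')$ is a compact neighborhood of $y$, and for $\varphi$ supported in $K'$ one combines $\supp(\chi_\varphi)\subset J^M(K')$ from Theorem~\ref{cauchyglobhyp} with the vanishing of $\chi_\varphi$ above $S_{t'}$ (from the independence-of-$t$ argument, the same lever you identified) to conclude $\supp(\chi_\varphi)\subset\bigl(J_+^M(y'')\cap\bigcup_{t\le t'}S_t\bigr)\cup J_-^M(y')$, which by construction avoids $x$. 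This gives exactly the support statement the theorem asserts without ever needing a Cauchy hypersurface separating $\supp\varphi$ from $y$. If you want to keep your stronger-claim route you would have to supply the geometric lemma; otherwise, replacing that paragraph with the localized compactness argument is the cleaner fix.
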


\begin{proof}
Uniqueness of the fundamental solutions is a consequence of
Corollary~\ref{fundunique}. 
To show existence fix a foliation of $M$ by spacelike Cauchy hypersurfaces
$S_t$, $t\in\R$ as in Theorem~\ref{globhyp}.
Let $\mathfrak{n}$ be the future directed unit normal field along the leaves
$S_t$. 
Let $\varphi\in\DD(M,E^*)$.
Choose $t$ so large that $\supp(\varphi)\subset I_-^M(S_t)$.
By Theorem~\ref{cauchyglobhyp} there exists a unique $\chi_\varphi\in
C^\infty(M,E^*)$ such that $P^*\chi_\varphi=\varphi$ and $\chi_\varphi|_{S_t}
= (\nabla_\mathfrak{n}\chi_\varphi)|_{S_t} = 0$.

We check that $\chi_\varphi$ does not depend on the choice of $t$.
Let $t<t'$ be such that $\supp(\varphi)\subset I_-^M(S_t)\subset
I_-^M(S_{t'})$. 
Let $\chi_\varphi$ and $\chi_\varphi'$ be the corresponding solutions.
Choose $t_- < t$ so that still $\supp(\varphi)\subset I_-^M(S_{t_-})$.
The open subset $\hat{M} := \bigcup_{\tau> t_-}S_\tau \subset M$
is a globally hyperbolic Lorentzian manifold itself.
Now $\chi_\varphi'$ satisfies $P^*\chi_\varphi'=0$ on $\hat{M}$ with vanishing
Cauchy data on $S_{t'}$.
By Corollary~\ref{cauchyeindeutigkeit} $\chi_\varphi'=0$ on $\hat{M}$. 
In particular, $\chi_\varphi'$ has vanishing Cauchy data on $S_t$ as well.
Thus $\chi_\varphi-\chi_\varphi'$ has vanishing Cauchy data on $S_t$ and
solves $P^*(\chi_\varphi-\chi_\varphi')=0$ on  
all of $M$.
Again by Corollary~\ref{cauchyeindeutigkeit} we conclude
$\chi_\varphi-\chi_\varphi'=0$ on $M$.

Fix $x\in M$.
By Theorem~\ref{cauchystetig} $\chi_\varphi$ depends continuously on
$\varphi$. 
Since the evaluation map $C^\infty(M,E) \to E_x$ is continuous, the
map $\DD(M,E^*) \to E_x^*$, $\varphi \mapsto \chi_\varphi(x)$, is also
continuous.
Thus $F_+(x)[\varphi] := \chi_\varphi(x)$ defines a distribution.
By definition $P^*(F_+(\cdot)[\varphi]) = P^*\chi_\varphi = \varphi$.

Now $P^*\chi_{P^*\varphi} = P^*\varphi$, hence
$P^*(\chi_{P^*\varphi}-\varphi)=0$.
Since both $\chi_{P^*\varphi}$ and $\varphi$ vanish along $S_t$ we conclude
from Corollary~\ref{cauchyeindeutigkeit} $\chi_{P^*\varphi}=\varphi$.
Thus
$$
(PF_+(x))[\varphi] = F_+(x)[P^*\varphi] = \chi_{P^*\varphi}(x) = \varphi(x)
= \delta_x[\varphi].
$$
Hence $F_+(x)$ is a fundamental solution of $P$ at $x$.

It remains to show $\supp(F_+(x)) \subset J_+^M(x)$.
Let $y\in M\setminus J_+^M(x)$.
We have to construct a neighborhood of $y$ such that for each test section
$\varphi\in\DD(M,E^*)$ whose support is contained in this neighborhood
we have $F_+(x)[\varphi] = \chi_\varphi(x) = 0$.
Since $M$ is globally hyperbolic $J_+^M(x)$ is closed and therefore
$J_+^M(x)\cap 
J_-^M(y') = \emptyset$ for all $y'$ sufficiently close to $y$.
We choose $y'\in I_+^M(y)$ and $y''\in I_-^M(y)$ so close that $J_+^M(x)\cap
J_-^M(y') = \emptyset$ and $\left(J_+^M(y'') \cap\bigcup_{t \leq
  t'}S_t\right)\cap 
J_+^M(x) = \emptyset$ where $t'\in\R$ is such that $y'\in S_{t'}$. 

\begin{center}
\input{fig-fundsolglobhyp}
\end{center}

Now $K:=J_-^M(y')\cap J_+^M(y'')$ is a compact neighborhood of $y$.
Let $\varphi\in\DD(M,E^*)$ be such that $\supp(\varphi)\subset K$.
By Theorem~\ref{cauchyglobhyp} $\supp(\chi_\varphi) \subset J_+^M(K)\cup
J_-^M(K) 
\subset J_+^M(y'') \cup J_-^M(y')$.
By the independence of $\chi_\varphi$ of the choice of $t>t'$ we have that
$\chi_\varphi$ vanishes on $\bigcup_{t> t'}S_t$.
Hence $\supp(\chi_\varphi) \subset \left(J_+^M(y'') \cap \bigcup_{t \leq
t'}S_t\right) \cup J_-^M(y')$ and is therefore disjoint from $J_+^M(x)$.
Thus $F_+(x)[\varphi] = \chi_\varphi(x) = 0$ as required.

\end{proof}

\section{Green's operators}\label{abschnitt33} 
\indexn{Green's operator>defemidx}

Now we want to find ``solution operators'' for a given normally hyperbolic
operator $P$.
More precisely, we want to find operators which are inverses of $P$ when
restricted to suitable spaces of sections.
We will see that existence of such operators is basically equivalent to the
existence of fundamental solutions.

\Definition{ \label{defGfunc}
Let $M$ be a timeoriented connected Lorentzian manifold.
Let $P$ be a normally hyperbolic operator acting on sections in a  
vector bundle $E$ over $M$.
A linear map $G_+ : \DD(M,E) \to C^\infty(M,E)$ satisfying
\begin{itemize}
\item[(i)]
$P\circ G_+ = \id_{\DD(M,E)}$,
\item[(ii)]
$G_+ \circ P|_{\DD(M,E)} = \id_{\DD(M,E)}$,
\item[(iii)]
$\supp(G_+\varphi) \subset J_+^M(\supp(\varphi))$ for all
$\varphi\in\DD(M,E)$,
\end{itemize}
is called an \defem{advanced Green's operator for} $P$.
\indexn{advanced Green's operator>defemidx}
\indexs{G*@$G_+$, advanced Green's operator}
Similarly, a linear map $G_- : \DD(M,E) \to C^\infty(M,E)$ satisfying
(i), (ii), and
\begin{itemize}
\item[(iii')]
$\supp(G_-\varphi) \subset J_-^M(\supp(\varphi))$ for all
$\varphi\in\DD(M,E)$
\end{itemize}
instead of (iii) is called a \defem{retarded Green's operator for} $P$.
\indexn{retarded Green's operator>defemidx}
\indexs{G*@$G_-$, retarded Green's operator}
}

Fundamental solutions and Green's operators are closely related.

\begin{prop}
Let $M$ be a timeoriented connected Lorentzian manifold.
Let $P$ be a normally hyperbolic operator acting on sections in a  
vector bundle $E$ over $M$.

If $F_\pm(x)$ is a family of advanced or retarded fundamental solutions
for the adjoint operator $P^*$ and if $F_\pm(x)$ depend smoothly on $x$ in
the sense that $x\mapsto F_\pm(x)[\varphi]$ is smooth for each test section
$\varphi$ and satisfies the differential equation 
$P(F_\pm(\cdot)[\varphi])=\varphi$, then
\begin{equation}
(G_\pm\varphi)(x) := F_\mp(x)[\varphi]
\label{Greendef}
\end{equation}
defines advanced or retarded Green's operators for $P$ respectively.
Conversely, given Green's operators $G_\pm$ for $P$, then (\ref{Greendef})
defines fundamental solutions for $P^*$ depending smoothly on $x$ and
satisfying $P(F_\pm(\cdot)[\varphi])=\varphi$ for each test section $\varphi$.
\end{prop}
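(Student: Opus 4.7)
The plan is to verify the axioms of a Green's operator in one direction and the definition of a fundamental solution in the other, using essentially two ingredients: the duality identity $\int \psi \cdot P\varphi = \int (P^*\psi)\cdot\varphi$ encoded in the action of $P^*$ on distributions, and the translation $\supp(F_\mp(x)) \subset J_\mp^M(x) \Longleftrightarrow \bigl[\,F_\mp(x)[\varphi]=0 \text{ unless } x\in J_\pm^M(\supp\varphi)\,\bigr]$ between the support condition on fundamental solutions and the support condition on Green's operators.

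For the first direction, assume smoothly varying fundamental solutions $F_\pm(x)$ for $P^*$ and set $(G_\pm \varphi)(x):=F_\mp(x)[\varphi]$. Then $G_\pm\varphi\in C^\infty(M,E)$ by the smoothness hypothesis, and the relation $P(F_\mp(\cdot)[\varphi])=\varphi$ gives $P\circ G_\pm = \id$ on $\DD(M,E)$ directly. Conversely, for $\varphi\in\DD(M,E)$,
\[
(G_\pm(P\varphi))(x) = F_\mp(x)[P\varphi] = (P^*F_\mp(x))[\varphi] = \delta_x[\varphi] = \varphi(x),
\]
so $G_\pm\circ P=\id$ on $\DD(M,E)$. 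Finally, if $x\notin J_\pm^M(\supp\varphi)$ then $\supp\varphi\cap J_\mp^M(x)=\emptyset$, whence $F_\mp(x)[\varphi]=0$, giving $\supp(G_\pm\varphi)\subset J_\pm^M(\supp\varphi)$.

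For the converse, given Green's operators $G_\pm$ for $P$, define $F_\pm(x)[\varphi]:=(G_\mp\varphi)(x)$. Smoothness of $x\mapsto F_\pm(x)[\varphi]$ and the identity $P(F_\pm(\cdot)[\varphi])=\varphi$ are immediate from $F_\pm(\cdot)[\varphi]=G_\mp\varphi\in C^\infty(M,E)$ and $P\circ G_\mp=\id$. The equation $P^*F_\pm(x)=\delta_x$ follows by the same dual computation as above, using $G_\mp\circ P=\id$. The support property for $F_\pm(x)$ is the reverse of the argument in the first part: if $\supp\varphi\cap J_\pm^M(x)=\emptyset$, then $x\notin J_\mp^M(\supp\varphi)\supset \supp(G_\mp\varphi)$, hence $F_\pm(x)[\varphi]=0$, showing $\supp F_\pm(x)\subset J_\pm^M(x)$.

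The main technical obstacle is verifying that $F_\pm(x)$ is actually a distribution, i.e.\ that $\varphi\mapsto (G_\mp\varphi)(x)$ is sequentially continuous on $\DD(M,E)$, since the definition of a Green's operator only requires linearity. The cleanest way I would argue this is to observe that if $\varphi_n\to\varphi$ in $\DD(M,E)$, then the supports $\supp(\varphi_n)$ lie in a common compact set $K$, so the supports $\supp(G_\mp\varphi_n)$ all lie in the (fixed) closed set $J_\mp^M(K)$; one then appeals to a closed graph argument for the restriction $G_\mp\colon \DD_K(M,E)\to C^\infty(M,E)$ between Fréchet spaces, using that $P$ is continuous and $P\circ G_\mp=\id$. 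Everything else in both directions is a direct algebraic check, so this continuity issue is really the only place where care is needed.
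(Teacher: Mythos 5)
Your main verification (both directions) is correct and coincides with the paper's, which for the second half simply says ``The converse is similar.'' Where you go beyond the paper is the continuity issue, and raising it is legitimate: a Green's operator is, by Definition~\ref{defGfunc}, only required to be \emph{linear}, so $\varphi\mapsto(G_\mp\varphi)(x)$ being sequentially continuous is not automatic, and this must be established before one may call $F_\pm(x)$ a distribution.

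However, your proposed closed-graph argument has a gap. To invoke the closed graph theorem for $G_\mp|_{\DD_K(M,E)}\colon\DD_K(M,E)\to C^\infty(M,E)$ you must first verify that the graph is closed: if $\varphi_n\to\varphi$ in $\DD_K(M,E)$ and $G_\mp\varphi_n\to\psi$ in $C^\infty(M,E)$, you need $\psi=G_\mp\varphi$. Using continuity of $P$ and $P\circ G_\mp=\id$ you only obtain $P\psi=\varphi=P(G_\mp\varphi)$, i.e.\ $P(\psi-G_\mp\varphi)=0$ with $\supp(\psi-G_\mp\varphi)$ contained in the closure of $J_\mp^M(K)$. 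Concluding $\psi-G_\mp\varphi=0$ from this requires a uniqueness theorem for solutions of $Pu=0$ with such support. Theorem~\ref{kernP} gives exactly that kind of statement, but only under the additional hypotheses that the causality condition holds, that the relation $\leq$ is closed, and that the time-separation $\tau$ is finite and continuous --- none of which is assumed in the present proposition, which takes $M$ to be merely a timeoriented connected Lorentzian manifold. (Closedness of $J_\mp^M(K)$ itself is also not free in this generality.) Without such a uniqueness result, closedness of the graph is as hard as the continuity you are trying to prove, and the closed graph theorem does not help. Note, for comparison, that the continuity of Green's operators that the paper does establish, Proposition~\ref{PGstetig}, is restricted to globally hyperbolic manifolds and is obtained from well-posedness of the Cauchy problem, not by a closed-graph shortcut.
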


\begin{proof}
Let $F_\pm(x)$ be a family of advanced and retarded fundamental solutions
for the adjoint operator $P^*$ respectively.
Let $F_\pm(x)$ depend smoothly on $x$ and suppose the
differential equation $P(F_\pm(\cdot)[\varphi])=\varphi$ holds.
By definition we have
$$
P(G_\pm\varphi) = P(F_\mp(\cdot)[\varphi]) = \varphi
$$
thus showing (i).
Assertion (ii) follows from the fact that the $F_\pm(x)$ are fundamental 
solutions,
$$
G_\pm(P\varphi)(x) = F_\mp(x)[P\varphi] = P^*F_\mp(x)[\varphi]
= \delta_x[\varphi] = \varphi(x).
$$
To show (iii) let $x\in M$ such that $(G_+\varphi)(x) \not= 0$.
Since $\supp(F_-(x))\subset J_-^M(x)$ the support of $\varphi$ must hit
$J_-^M(x)$.
Hence $x\in J_+^M(\supp(\varphi))$ and therefore $\supp(G_+\varphi)\subset
J_+^M(\supp(\varphi))$.
The argument for $G_-$ is analogous.

The converse is similar.
\end{proof}

Theorem~\ref{globhypexist} immediately yields

\begin{cor}\label{functorsolve}
Let $M$ be a globally hyperbolic Lorentzian manifold.
Let $P$ be a normally hyperbolic operator acting on sections in a  
vector bundle $E$ over $M$.

Then there exist unique advanced and retarded Green's operators $G_\pm: 
\DD(M,E) \to C^\infty(M,E)$ for $P$.
\hfill$\Box$
\end{cor}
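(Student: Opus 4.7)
The plan is to deduce the corollary almost immediately from Theorem~\ref{globhypexist} applied to the adjoint operator, using the dictionary between fundamental solutions and Green's operators established in the proposition just above.

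\textbf{Existence.} First I would observe that the formal adjoint $P^{*}$ is again a normally hyperbolic operator, this time acting on sections of the dual bundle $E^{*}$. Indeed, since the principal symbol of $P$ equals $-\langle\xi,\xi\rangle\cdot\id_{E_{x}}$, its pointwise dual is $-\langle\xi,\xi\rangle\cdot\id_{E_{x}^{*}}$, so the same characterization of normally hyperbolic operators applies to $P^{*}$. Applying Theorem~\ref{globhypexist} to $P^{*}$ on the globally hyperbolic manifold $M$ yields, for each $x\in M$, unique fundamental solutions $F_{+}(x), F_{-}(x)\in\DD'(M,E^{*},E_{x})$ of $P^{*}$ with past (resp.\ future) compact support, supported in $J^{M}_{\pm}(x)$, and depending smoothly on $x$ in the sense that for every $\varphi\in\DD(M,E)$ the section $x\mapsto F_{\pm}(x)[\varphi]$ is smooth and satisfies $P\bigl(F_{\pm}(\cdot)[\varphi]\bigr)=\varphi$. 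I then define
\[
(G_{\pm}\varphi)(x):=F_{\mp}(x)[\varphi],\qquad \varphi\in\DD(M,E),\ x\in M,
\]
and invoke the preceding proposition to conclude that $G_{+}$ and $G_{-}$ are an advanced and a retarded Green's operator for $P$, respectively. This immediately gives properties (i), (ii) and the support conditions (iii)/(iii').

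\textbf{Uniqueness.} Let $G_{+}$ and $\tilde G_{+}$ be two advanced Green's operators for $P$, and fix $\varphi\in\DD(M,E)$. Set $u:=G_{+}\varphi-\tilde G_{+}\varphi\in C^{\infty}(M,E)$. Property (i) gives $Pu=\varphi-\varphi=0$, and property (iii) gives
\[
\supp(u)\subset J^{M}_{+}(\supp\varphi).
\]
Because $M$ is globally hyperbolic and $\supp\varphi$ is compact, the set $J^{M}_{+}(\supp\varphi)$ is past compact: for any $p\in M$ its intersection with $J^{M}_{-}(p)$ is a finite union of sets of the form $J^{M}_{+}(q)\cap J^{M}_{-}(p)$, each compact by global hyperbolicity. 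Hence $u$ has past compact support and solves $Pu=0$. Since global hyperbolicity entails the three hypotheses (causality condition, closedness of $\le$, finiteness and continuity of $\tau$) of Theorem~\ref{kernP}, we conclude $u\equiv 0$, i.e.\ $G_{+}=\tilde G_{+}$. The argument for retarded Green's operators is identical after interchanging the roles of past and future.

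\textbf{Main obstacle.} There is no real obstacle: the heavy lifting is contained in Theorem~\ref{globhypexist} (existence of global fundamental solutions via the Cauchy problem) and in the uniqueness theorem~\ref{kernP}. The corollary just packages these results through the duality $\bigl(G_{\pm}\varphi\bigr)(x)=F_{\mp}(x)[\varphi]$. The only minor subtlety worth spelling out is the verification that $P^{*}$ is normally hyperbolic, so that Theorem~\ref{globhypexist} actually applies to it, and the support argument showing that $J^{M}_{+}(\supp\varphi)$ is past compact in a globally hyperbolic manifold, which is what makes Theorem~\ref{kernP} available for the uniqueness step.
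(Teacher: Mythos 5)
Your proof is correct and matches the paper's intended argument: the paper presents the corollary as an immediate consequence of Theorem~\ref{globhypexist} (applied to the adjoint $P^*$, which is again normally hyperbolic) combined with the preceding proposition converting smoothly varying families of fundamental solutions for $P^*$ into Green's operators for $P$, which is exactly what you carry out, with the uniqueness step spelled out directly via Theorem~\ref{kernP} rather than routed through uniqueness of fundamental solutions --- a small cosmetic difference since both ultimately rest on Theorem~\ref{kernP}. The only slight imprecision is your claim that $J^M_+(\supp\varphi)\cap J^M_-(p)$ \emph{is} a finite union of sets $J^M_+(q)\cap J^M_-(p)$; in fact one covers $\supp\varphi$ by finitely many chronological pasts $I^M_-(q_i)$ so that the intersection is merely \emph{contained} in such a finite union, and then invokes closedness of the causal relation to upgrade relative compactness to compactness, as is done in Lemma~\ref{lJ+KJ-K'cpct}.
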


\begin{lemma}\label{Greenadjungiert}
Let $M$ be a globally hyperbolic Lorentzian manifold.
Let $P$ be a normally hyperbolic operator acting on sections in a  
vector bundle $E$ over $M$.
Let $G_\pm$ be the Green's operators for $P$ and $G^*_\pm$ the Green's
operators for the adjoint operator $P^*$.
Then
\begin{equation}
\int_M (G^*_\pm\varphi)\cdot\psi \dV = \int_M \varphi\cdot(G_\mp\psi) \dV
\label{Gsa}
\end{equation}
holds for all $\varphi\in\DD(M,E^*)$ and $\psi\in\DD(M,E)$.
\end{lemma}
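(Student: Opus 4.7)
The plan is to exploit the defining properties of Green's operators together with the formal adjoint relation from (\ref{formaladjoint}). The only nontrivial point is that the integration-by-parts identity $\int_M(P^*u)\cdot v\,\dV=\int_M u\cdot(Pv)\,\dV$ requires $\supp(u)\cap\supp(v)$ to be compact, not that $u$ or $v$ individually have compact support.

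First I would observe that the support conditions in Definition~\ref{defGfunc} give
\[
\supp(G^*_\pm\varphi)\subset J_\pm^M(\supp\varphi),\qquad
\supp(G_\mp\psi)\subset J_\mp^M(\supp\psi),
\]
so that
\[
\supp(G^*_\pm\varphi)\cap\supp(G_\mp\psi)\subset J_\pm^M(\supp\varphi)\cap J_\mp^M(\supp\psi).
\]
Since $M$ is globally hyperbolic and $\supp\varphi$, $\supp\psi$ are compact, the set on the right is compact (this is a standard consequence of global hyperbolicity, applied iteratively to finite coverings by points; it follows from the fact that $J_+^M(p)\cap J_-^M(q)$ is compact for all $p,q\in M$). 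Similarly, $\supp\varphi\cap\supp(G_\mp\psi)$ and $\supp(G^*_\pm\varphi)\cap\supp\psi$ are compact because $\supp\varphi$ and $\supp\psi$ are already compact.

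Using properties (i) and (ii) of $G_\pm$ and $G^*_\pm$, I rewrite the left-hand side of (\ref{Gsa}) as
\[
\int_M(G^*_\pm\varphi)\cdot\psi\,\dV=\int_M(G^*_\pm\varphi)\cdot P(G_\mp\psi)\,\dV,
\]
and then apply the formal adjoint identity (\ref{formaladjoint}) — which, as noted just after it, holds whenever the two supports meet in a compact set — to move $P$ across as $P^*$:
\[
\int_M(G^*_\pm\varphi)\cdot P(G_\mp\psi)\,\dV=\int_M P^*(G^*_\pm\varphi)\cdot(G_\mp\psi)\,\dV=\int_M\varphi\cdot(G_\mp\psi)\,\dV,
\]
where in the last step I used property (i) for $G^*_\pm$. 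This proves (\ref{Gsa}) for both choices of sign.

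The only real obstacle is the justification that (\ref{formaladjoint}) applies in this non-compactly-supported setting; this is exactly where global hyperbolicity enters via the compactness of $J_+^M(A)\cap J_-^M(B)$ for compact $A,B$. Everything else is bookkeeping with the defining properties (i), (ii), (iii), (iii$'$).
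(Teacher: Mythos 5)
Your proof is correct and follows the same route as the paper: rewrite the left side via $\psi=P(G_\mp\psi)$, move $P$ across by formal adjointness, and finish with $P^*G^*_\pm=\id$, with the partial integration justified by compactness of $J_\pm^M(\supp\varphi)\cap J_\mp^M(\supp\psi)$ in a globally hyperbolic manifold. You spell out the compactness verification in somewhat more detail, but the argument is the same.
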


\begin{proof}
For the Green's operators we have $PG_\pm=\id_{\DD(M,E)}$ and
$P^*G_\pm^*=\id_{\DD(M,E^*)}$ and hence
\begin{eqnarray*} 
\int_M (G_\pm^*\varphi)\cdot\psi \dV 
&=& \int_M (G_\pm^*\varphi)\cdot(PG_\mp\psi) \dV  \\
&=& \int_M (P^*G_\pm^*\varphi)\cdot(G_\mp\psi) \dV \\
&=& \int_M \varphi\cdot(G_\mp\psi) \dV .
\end{eqnarray*}
Notice that $\supp(G_\pm\phi) \cap \supp(G_\mp\psi)\subset
J_\pm^M(\supp(\phi))\cap J_\mp^M(\supp(\psi))$ is compact in a globally
hyperbolic 
manifold so that the partial integration in the second equation is justified.
\end{proof}

\Notation{
We write $\Csc(M,E)$ for the set of all $\phi\in C^\infty(M,E)$ for which
there exists a compact subset $K \subset M$ such that $\supp(\phi) \subset
J^M(K)$.
Obviously, $\Csc(M,E)$ is a vector subspace of $C^\infty(M,E)$.
\indexs{C*@$\protect\Csc(M,E)$, smooth sections with spacelike compact
  support} 

The subscript ``sc'' should remind the reader of ``spacelike compact\indexn{spacelike compact support>defemidx}''.
Namely, if $M$ is globally hyperbolic and $\phi\in\Csc(M,E)$, then for every
Cauchy hypersurface $S\subset M$ the support of $\phi|_{S}$ is contained in 
$S\cap J^M(K)$ hence compact by Corollary~\ref{cJ+Spastcompact}.
In this sense sections in $\Csc(M,E)$ have spacelike compact support.
}

\Definition{
We say a sequence of elements $\phi_j\in \Csc(M,E)$ {\em converges in
 $\Csc(M,E)$ to} $\phi\in \Csc(M,E)$ if there exists a compact subset $K
\subset M$ such that 
$$
\supp(\phi_j), \supp(\phi) \subset J^M(K)
$$ 
for all $j$ and 
$$
\|\phi_j - \phi\|_{C^k(K',E)}\to 0
$$
for all $k\in\N$ and all compact subsets $K'\subset M$.
}

If $G_+$ and $G_-$ are advanced and retarded Green's operators for $P$
respectively, then we get a linear map
$$
G := G_+ - G_- : \DD(M,E) \to \Csc(M,E).
\indexs{G*@$G=G_+ - G_-$}
$$

\begin{thm}\label{thmExSeq}
Let $M$ be a connected timeoriented Lorentzian manifold.
Let $P$ be a normally hyperbolic operator acting on sections in a  
vector bundle $E$ over $M$.
Let $G_+$ and $G_-$ be advanced and retarded Green's operators for $P$
respectively. 

Then the sequence of linear maps
\begin{equation}
0 \to \DD(M,E) \stackrel{P}{\longrightarrow} \DD(M,E) 
\stackrel{G}{\longrightarrow} \Csc(M,E) \stackrel{P}{\longrightarrow}\Csc(M,E)
\label{eqExSeq}
\end{equation}
is a complex, i.~e., the composition of any two subsequent maps is zero.
The complex is exact at the first $\DD(M,E)$.
If $M$ is globally hyperbolic, then the complex is exact everywhere.
\end{thm}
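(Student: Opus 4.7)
\medskip

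\noindent\textbf{Proof plan.} The plan is to verify the four separate statements (the two composition identities, and the three exactness conditions) in order of increasing difficulty, using only the defining properties (i)--(iii)/(iii') of the Green's operators together with the uniqueness result Theorem~\ref{kernP} (applied in the globally hyperbolic case where it is available).

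First I would check that (\ref{eqExSeq}) is a complex. For $\varphi\in\DD(M,E)$, properties (i) and (ii) give $PG\varphi=PG_+\varphi-PG_-\varphi=\varphi-\varphi=0$ and $GP\varphi=G_+P\varphi-G_-P\varphi=\varphi-\varphi=0$; I should also note briefly that $G$ indeed lands in $\Csc(M,E)$, since $\supp(G\varphi)\subset J_+^M(\supp\varphi)\cup J_-^M(\supp\varphi)=J^M(\supp\varphi)$. Exactness at the first $\DD(M,E)$ is immediate: if $P\varphi=0$ with $\varphi\in\DD(M,E)$, then $\varphi=G_+P\varphi=0$ by (ii).

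Now assume $M$ is globally hyperbolic. For exactness at the second $\DD(M,E)$, take $\varphi\in\DD(M,E)$ with $G\varphi=0$, i.e.\ $G_+\varphi=G_-\varphi$. Setting $\psi:=G_+\varphi=G_-\varphi$, the support condition gives $\supp\psi\subset J_+^M(\supp\varphi)\cap J_-^M(\supp\varphi)$, which is compact by Lemma~\ref{lJ+KJ-K'cpct} (global hyperbolicity). Hence $\psi\in\DD(M,E)$ and $P\psi=PG_+\varphi=\varphi$ by (i), as required.

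The main work is exactness at $\Csc(M,E)$. Given $\varphi\in\Csc(M,E)$ with $P\varphi=0$, choose a compact $K\subset M$ with $\supp\varphi\subset J^M(K)$, pick two smooth spacelike Cauchy hypersurfaces $S_1,S_2$ with $K\subset I_+^M(S_1)\cap I_-^M(S_2)$ (possible by Theorem~\ref{globhyp}), and choose $\chi\in C^\infty(M,\R)$ with $\chi\equiv 1$ on $J_+^M(S_2)$ and $\chi\equiv 0$ on $J_-^M(S_1)$. Decompose $\varphi=\varphi_++\varphi_-$ with $\varphi_+:=\chi\varphi$ and $\varphi_-:=(1-\chi)\varphi$. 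Using that causal pasts/futures of points meet $J^M(K)\cap J_\pm^M(S_i)$ in compact sets, one checks that $\varphi_+$ has past compact support and $\varphi_-$ has future compact support. Since $P\varphi=0$, the section $\eta:=P\varphi_+=-P\varphi_-$ has support in $\supp\varphi_+\cap\supp\varphi_-$, a set which is simultaneously past and future compact, hence compact; thus $\eta\in\DD(M,E)$. It remains to prove $G\eta=\varphi$. I claim $G_+\eta=\varphi_+$ and $G_-\eta=-\varphi_-$: indeed $G_+\eta-\varphi_+$ solves $P(\,\cdot\,)=0$ and has past compact support (since both $G_+\eta$ and $\varphi_+$ do, using (iii) for $G_+$ and the construction of $\varphi_+$), so it vanishes by Theorem~\ref{kernP}; analogously for $G_-$. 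Therefore $G\eta=G_+\eta-G_-\eta=\varphi_++\varphi_-=\varphi$.

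The main obstacle is the final step: one must extend the identity $G_+P=\id$ from $\DD(M,E)$ to sections with merely past compact support, and for this the uniqueness Theorem~\ref{kernP} is essential — which is precisely where the global hyperbolicity hypothesis enters, via the fact that causality is closed and $\tau$ continuous. The preceding partition-of-unity decomposition is the standard device that reduces a spacelike-compact problem to one with compact source, and its justification relies on the compactness properties $J^M(K)\cap J_\pm^M(S)$ enjoyed in globally hyperbolic spacetimes.
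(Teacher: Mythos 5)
Your argument agrees with the paper in the first three parts (the complex property, exactness at the first $\DD(M,E)$, and exactness at the second $\DD(M,E)$). At the final step — exactness at $\Csc(M,E)$ — you take a genuinely different route. The paper chooses the compact $K$ so that $\supp\varphi\subset I_+^M(K)\cup I_-^M(K)$, splits $\varphi=\varphi_1+\varphi_2$ by a partition of unity subordinated to this cover, and then verifies $G_+\psi=\varphi_2$ by a \emph{duality} computation: testing against $\chi\in\DD(M,E^*)$ and passing through the adjoint Green's operator $G_-^*$ via Lemma~\ref{Greenadjungiert}. You instead split $\varphi$ by a cutoff function adapted to two Cauchy hypersurfaces bracketing $K$, and then verify $G_+\eta=\varphi_+$ by the \emph{uniqueness} Theorem~\ref{kernP}: the difference $G_+\eta-\varphi_+$ solves $P(\cdot)=0$ with past compact support, hence vanishes. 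Both are correct; your approach is conceptually more direct in that it literally extends $G_+P=\id$ to sections with past compact support, while the paper's duality argument never needs to invoke the uniqueness theorem for past/future compact supports in this particular proof.

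There is, however, one genuine slip in your argument for $\eta\in\DD(M,E)$. You claim that a set which is simultaneously past and future compact is compact. That implication is false: in Minkowski space the slab between two constant-time hyperplanes is past compact and future compact but not compact. Your conclusion is nevertheless correct, but for a different reason which you should spell out: one has
\[
\supp\eta\ \subset\ \supp\varphi_+\cap\supp\varphi_-\ \subset\ J_+^M(S_1)\cap J_-^M(S_2)\cap J^M(K),
\]
and since $J^M(K)=J_+^M(K)\cup J_-^M(K)$, the right-hand side is contained in $\bigl(J_-^M(S_2)\cap J_+^M(K)\bigr)\cup\bigl(J_+^M(S_1)\cap J_-^M(K)\bigr)$, each summand of which is compact by Corollary~\ref{cJ+Spastcompact}. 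Thus the spacelike-compact support of $\varphi$ is essential here, not merely the past/future compactness of the two pieces. With this repair, the argument goes through.
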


\begin{proof}
Properties (i) and (ii) in Definition~\ref{defGfunc} of Green's operators
directly yield $G\circ P=0$ and $P\circ G=0$, both on $\DD(M,E)$.
Properties (iii) and (iii') ensure that $G$ maps $\DD(M,E)$ to $\Csc(M,E)$.
Hence the sequence of linear maps forms a complex.

Exactness at the first $\DD(M,E)$ means that 
$$
P : \DD(M,E) \to \DD(M,E)
$$
is injective.
To see injectivity let $\phi\in\DD(M,E)$ with $P\phi=0$.
Then $\phi= G_+P\phi=G_+0=0$.

From now on let $M$ be globally hyperbolic.
Let $\phi\in\DD(M,E)$ with $G\phi=0$, i.~e., $G_+\varphi=G_-\varphi$. 
We put $\psi:=G_+\varphi=G_-\varphi \in C^\infty(M,E)$ and we see
$\supp(\psi)=\supp(G_+\phi)\cap\supp(G_-\phi)\subset J_+^M(\supp(\varphi))\cap 
J_-^M(\supp(\varphi))$.
Since $(M,g)$ is globally hyperbolic $J_+^M(\supp(\varphi))\cap
J_-^M(\supp(\varphi))$ is compact, hence $\psi\in \DD(M,E)$.
From $P(\psi) = P(G_+(\phi)) = \phi$ we see that $\phi\in P(\DD(M,E))$.
This shows exactness at the second $\DD(M,E)$.

Finally, let $\phi\in\Csc(M,E)$ such that $P\phi=0$. 
Without loss of generality we may assume that $\supp(\phi)\subset I_+^M(K)\cup
I_-^M(K)$ for 
a compact subset $K$ of $M$. 
Using a partition of unity subordinated to the open covering
$\{I_+^M(K),I_-^M(K)\}$ write $\phi$ as $\phi=\phi_1 + \phi_2$ 
where $\supp(\phi_1)\subset I_-^M(K)\subset J_-^M(K)$ and
$\supp(\phi_2)\subset I_+^M(K)\subset J_+^M(K)$. 
For $\psi:=-P\phi_1=P\phi_2$ we see that $\supp(\psi) \subset J_-^M(K) \cap
J_+^M(K)$, hence $\psi\in\DD(M,E)$.

We check that $G_+\psi=\phi_2$.
For all $\chi\in\DD(M,E^*)$ we have
$$
\int_M \chi\cdot(G_+P\phi_2) \dV =
\int_M (G_-^{*}\chi)\cdot(P\phi_2) \dV =
\int_M (P^*G_-^{*}\chi)\cdot\phi_2 \dV =
\int_M \chi\cdot\phi_2 \dV 
$$
where $G_-^*$ is the Green's operator for the adjoint operator $P^*$ according
to Lemma~\ref{Greenadjungiert}. 
Notice that for the second equation we use the fact that $\supp(\phi_2) \cap
\supp(G^*_-\chi) \subset J^M_+(K)\cap J^M_-(\supp(\chi))$ is compact.
Similarly, one shows $G_-\psi=-\phi_1$.

Now $G\psi = G_+\psi - G_-\psi = \phi_2 + \phi_1 = \phi$, hence $\phi$ is in
the image of $G$. 
\end{proof}

\begin{prop}\label{PGstetig}
Let $M$ be a globally hyperbolic Lorentzian manifold, 
let $P$ be a normally hyperbolic operator acting on sections in a  
vector bundle $E$ over $M$.
Let $G_+$ and $G_-$ be the advanced and retarded Green's operators for $P$
respectively.

Then $G_\pm:\DD(M,E) \to \Csc(M,E)$ and all maps in the complex
(\ref{eqExSeq}) are sequentially continuous. 
\end{prop}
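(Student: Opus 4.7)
The plan is to verify sequential continuity of each of the four arrows in the complex separately, with the nontrivial step being the continuity of $G_\pm$. For the two copies of $P$ (on $\DD(M,E)$ and on $\Csc(M,E)$) the argument is routine: $P$ is a differential operator of order two, so $\supp(P\phi)\subset\supp(\phi)$, and on any compact set $K'\subset M$ one has a bound $\|P\phi\|_{C^k(K')}\le C_{k,K'}\|\phi\|_{C^{k+2}(K')}$. Hence if $\phi_j\to\phi$ in $\DD(M,E)$ (common compact support and $C^k$-convergence), then $P\phi_j\to P\phi$ in $\DD(M,E)$; and if $\phi_j\to\phi$ in $\Csc(M,E)$ (common $J^M(K)$-containing support and $C^k$-convergence on every compact), then $P\phi_j\to P\phi$ in $\Csc(M,E)$. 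The middle map $G=G_+-G_-$ is continuous as soon as each $G_\pm$ is, so everything reduces to showing $G_\pm:\DD(M,E)\to\Csc(M,E)$ is sequentially continuous.

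By linearity it suffices to treat $G_+$ and to show that $\phi_j\to 0$ in $\DD(M,E)$ forces $G_+\phi_j\to 0$ in $\Csc(M,E)$. Fix a compact set $K\subset M$ with $\supp(\phi_j)\subset K$ for all $j$. Then $\supp(G_+\phi_j)\subset J_+^M(K)\subset J^M(K)$, which is the uniform support condition required by the definition of convergence in $\Csc(M,E)$. It remains to prove that $G_+\phi_j\to 0$ in every $C^k$-norm on every compact subset of $M$.

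To get this, I would realize $G_+\phi_j$ as a Cauchy problem solution. Using Theorem~\ref{globhyp} choose a foliation $M=\R\times S$ by smooth spacelike Cauchy hypersurfaces $S_t$ and pick $t_0$ so small that $K\subset I_+^M(S_{t_0})$. Since $J_+^M(K)$ is closed in the globally hyperbolic manifold $M$ and $S_{t_0}$ avoids it, the open set $M\setminus J_+^M(K)$ is a neighborhood of $S_{t_0}$ on which $G_+\phi_j$ vanishes identically; in particular the Cauchy data $(G_+\phi_j)|_{S_{t_0}}$ and $\nabla_{\mathfrak n}(G_+\phi_j)|_{S_{t_0}}$ are zero. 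Hence $G_+\phi_j$ is the unique solution of the Cauchy problem
\[
Pu=\phi_j,\qquad u|_{S_{t_0}}=0,\qquad \nabla_{\mathfrak n}u|_{S_{t_0}}=0,
\]
and the data triples $(\phi_j,0,0)$ converge to $(0,0,0)$ in $\DD(M,E)\oplus\DD(S_{t_0},E)\oplus\DD(S_{t_0},E)$. Theorem~\ref{cauchystetig} applies and yields $G_+\phi_j\to 0$ in $C^\infty(M,E)$, that is, uniformly in every $C^k$-norm on every compact subset. Combined with the support control $\supp(G_+\phi_j)\subset J^M(K)$, this is precisely convergence in $\Csc(M,E)$. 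The argument for $G_-$ is identical with a Cauchy hypersurface chosen in the future of $K$.

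The main obstacle, and really the only genuine input, is the continuous dependence of the Cauchy-problem solution on its data; once Theorem~\ref{cauchystetig} is invoked, all the remaining verifications reduce to matching the notion of convergence in $\Csc(M,E)$ with uniform support in some $J^M(K)$ plus $C^k$-convergence on compacta, which is exactly what the open mapping theorem argument behind Theorem~\ref{cauchystetig} delivers. No further hard estimates are needed.
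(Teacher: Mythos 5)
Your proof is correct and follows essentially the same route as the paper's: reduce to continuity of $G_\pm$ by noting $P$ is a differential operator, realize $G_+\phi_j$ as the unique solution of a Cauchy problem with zero initial data on a spacelike Cauchy hypersurface $S_{t_0}$ in the past of the common support $K$, and invoke Theorem~\ref{cauchystetig}. You spell out more explicitly than the paper why $G_+\phi_j$ agrees with that Cauchy solution (vanishing on the open set $M\setminus J_+^M(K)\supset S_{t_0}$ plus uniqueness), whereas the paper simply refers to the proof of Theorem~\ref{globhypexist}; this is a harmless and slightly more self-contained presentation of the same idea.
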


\begin{proof}
The maps $P:\DD(M,E) \to \DD(M,E)$ and $P:\Csc(M,E) \to \Csc(M,E)$ are
sequentially continuous simply because $P$ is a differential operator.
It remains to show that $G:\DD(M,E) \to \Csc(M,E)$ is sequentially continuous.

Let $\phi_j,\phi\in\DD(M,E)$ and $\phi_j\to \phi$ in $\DD(M,E)$ for all $j$.
Then there exists a compact subset $K\subset M$ such that
$\supp(\phi_j),\supp(\phi) \subset K$.
Hence $\supp(G\phi_j),\supp(G\phi)\subset J^M(K)$ for all $j$.
From the proof of Theorem~\ref{globhypexist} we know that $G_+\phi$ coincides
with  the solution $u$ to the Cauchy problem $Pu=\phi$ with initial conditions
$u|_{S_-} = (\nabla_\mathfrak{n} u)|_{S_-}=0$ where $S_- \subset M$ is a
spacelike 
Cauchy hypersurface such that $K \subset I_+^M(S_-)$.
Theorem~\ref{cauchystetig}  tells us that if $\phi_j\to\phi$ in $\DD(M,E)$, 
then the solutions $G_+\phi_j\to G_+\phi$ in $C^\infty(M,E)$.
The proof for $G_-$ is analogous and the statement for $G$ follows.
\end{proof}

\Remark{
Green's operators need not exist for any normally hyperbolic operator on any
spacetime. 
For example consider a compact spacetime $M$ and the d'Alembert operator
acting on real functions.
Note that in this case $\DD(M,\R)=C^\infty(M)$.
If there existed Green's operators the d'Alembert operator would be
injective. 
But any constant function belongs to the kernel of the operator.
}

\section{Non-globally hyperbolic manifolds}  
\label{seq:nonglobhyp} 

Globally hyperbolic Lorentzian manifolds turned out to form a good class for  
the solution theory of normally hyperbolic operators.
We have unique advanced and retarded fundamental solutions and Green's
operators.
The Cauchy problem is well-posed.
Some of these analytical features survive when we pass to more general
Lorentzian manifolds.
We will see that we still have existence (but not uniqueness) of fundamental 
solutions and Green's operators if the manifold can be embedded in a suitable
way as an open subset into a globally hyperbolic manifold such that the
operator extends.
Moreover, we will see that conformal changes of the Lorentzian metric
do not alter the basic analytical properties.
To illustrate this we construct Green's operators for the Yamabe operator on
the important anti-deSitter spacetime which is not globally hyperbolic.

\begin{prop}\label{propGreensubset}
Let $M$ be a timeoriented connected Lorentzian manifold.
Let $P$ be a normally hyperbolic operator acting on sections in a  
vector bundle $E$ over $M$.
Let $G_\pm$ be Green's operators for $P$.
Let $\Omega\subset M$ be a causally compatible connected open subset.

Define $\tilde G_\pm :\DD(\Omega,E) \to C^\infty(\Omega,E)$ by
$$
\tilde G_\pm (\varphi) := G_\pm(\varphi_{\mathrm{ext}})|_\Omega.
$$
Here $\DD(\Omega,E) \to \DD(M,E)$, $\varphi \mapsto \varphi_{\mathrm{ext}}$, 
denotes extension by zero.

Then $\tilde G_+$ and $\tilde G_-$ are advanced and retarded Green's operators
for the restriction of $P$ to $\Omega$ respectively.
\indexn{advanced Green's operator}
\indexn{retarded Green's operator}
\indexn{causally compatible subset}
\end{prop}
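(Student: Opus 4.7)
The plan is to verify the three defining properties of an advanced Green's operator from Definition~\ref{defGfunc} for $\tilde G_+$ (the argument for $\tilde G_-$ being completely analogous). The key conceptual point is that all three properties for $\tilde G_\pm$ reduce to the corresponding properties of $G_\pm$ together with two elementary facts: that $P$ is a local differential operator (so it commutes with both zero-extension of compactly supported sections and with restriction to open subsets), and that causal compatibility of $\Omega$ in $M$ gives $J_\pm^\Omega(A) = J_\pm^M(A) \cap \Omega$ for every $A \subset \Omega$, as noted in the remark following the definition of causally compatible subsets.

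First I would check condition (i). For $\varphi \in \DD(\Omega,E)$, locality of $P$ gives
\[
P(\tilde G_+\varphi) = P\bigl(G_+(\varphi_{\mathrm{ext}})|_\Omega\bigr)
= \bigl(P G_+(\varphi_{\mathrm{ext}})\bigr)\big|_\Omega
= \varphi_{\mathrm{ext}}\big|_\Omega = \varphi,
\]
using property (i) of $G_+$ on $M$. For condition (ii), given $\varphi \in \DD(\Omega,E)$, note that $(P\varphi)_{\mathrm{ext}} = P(\varphi_{\mathrm{ext}})$ since $P$ is local and both sides agree on $\Omega$ and vanish outside. Therefore
\[
\tilde G_+(P\varphi) = G_+\bigl((P\varphi)_{\mathrm{ext}}\bigr)\big|_\Omega
= G_+\bigl(P(\varphi_{\mathrm{ext}})\bigr)\big|_\Omega
= \varphi_{\mathrm{ext}}\big|_\Omega = \varphi
\]
by property (ii) of $G_+$ on $M$.

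The support condition (iii) is where causal compatibility enters and is the only step requiring genuine content. For $\varphi \in \DD(\Omega,E)$, property (iii) of $G_+$ on $M$ gives
\[
\supp(G_+\varphi_{\mathrm{ext}}) \subset J_+^M(\supp(\varphi_{\mathrm{ext}})) = J_+^M(\supp(\varphi)).
\]
Restricting to $\Omega$ and intersecting,
\[
\supp(\tilde G_+\varphi) \subset J_+^M(\supp(\varphi)) \cap \Omega = J_+^\Omega(\supp(\varphi)),
\]
where the last equality uses causal compatibility applied to the subset $\supp(\varphi) \subset \Omega$. This is precisely the advanced-support condition for $\tilde G_+$, and the analogous computation with $J_-$ handles $\tilde G_-$. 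I do not anticipate a real obstacle: the only non-formal input is the transitivity-free causal identity $J_\pm^\Omega(A) = J_\pm^M(A) \cap \Omega$, which has already been observed in the excerpt immediately after the definition of causal compatibility.
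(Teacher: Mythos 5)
Your proof is correct and follows essentially the same route as the paper's: verify (i) and (ii) by locality of $P$ (it commutes with extension by zero and with restriction) and verify (iii) by combining the support property of $G_\pm$ on $M$ with the causal-compatibility identity $J_\pm^M(A)\cap\Omega = J_\pm^\Omega(A)$. The paper's version is line-for-line the same argument, merely writing $\tilde P$ for $P|_\Omega$ explicitly.
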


\begin{proof}
Denote the restriction of $P$ to $\Omega$ by $\tilde P$.
To show (i) in Definition~\ref{defGfunc} we check for 
$\varphi\in\DD(\Omega,E)$
$$
\tilde P\tilde G_\pm\varphi =
\tilde P(G_\pm(\varphi_{\mathrm{ext}})|_\Omega) =
P(G_\pm(\varphi_{\mathrm{ext}}))|_\Omega =
\varphi_{\mathrm{ext}}|_\Omega = 
\varphi .
$$
Similarly, we see for (ii)
$$
\tilde G_\pm \tilde P \varphi =
G_\pm ((\tilde P \varphi)_{\mathrm{ext}})|_\Omega =
G_\pm (P \varphi_{\mathrm{ext}})|_\Omega =
\varphi_{\mathrm{ext}}|_\Omega = 
\varphi .
$$
For (iii) we need that $\Omega$ is a causally compatible subset of $M$.
\begin{eqnarray*}
\supp(\tilde G_\pm \varphi) 
&=&
\supp(G_\pm(\varphi_{\mathrm{ext}})|_\Omega) \\
&=&
\supp(G_\pm(\varphi_{\mathrm{ext}}))\cap \Omega \\
&\subset&
J_\pm^M(\supp(\varphi_{\mathrm{ext}}))\cap \Omega \\
&=&
J_\pm^M(\supp(\varphi))\cap \Omega \\
&=&
J_\pm^\Omega(\supp(\varphi)).
\end{eqnarray*}
\end{proof}

\Example{
In Minkowski space every convex open subset $\Omega$ is causally compatible.
Proposition~\ref{propGreensubset} shows the existence of an advanced and a
retarded Green's operator for any normally hyperbolic operator on $\Omega$
which extends to a normally hyperbolic operator on $M$.

On the other hand, we have already noticed in Remark~\ref{konvexnichteind}
that on convex domains the advanced and retarded fundamental
solutions need not be unique.  
Thus the Green's operators $G_\pm$ are not unique in general.
} 

The proposition fails if we drop the condition on $\Omega$ to be a causally
compatible subset of $M$.

\Example{
For non-convex domains $\Omega$ in Minkowski space $M=\R^n$ causal
compatibility does not hold in general, see Figure~7 on page~\pageref{nichtkk}.
For any $\varphi\in\DD(\Omega,E)$ the proof of
Proposition~\ref{propGreensubset} shows  that
$\supp(\tilde G_\pm \varphi)\subset J_\pm^M(\supp(\varphi))\cap \Omega$.
Now, if $J_\pm^\Omega(p)$ is a proper subset of $J_\pm^M(p)\cap\Omega$ there
is no reason why $\supp(\tilde G_\pm \varphi)$ should be a subset of
$J^\Omega_\pm(\supp(\varphi))$.
Hence $\tilde G_\pm$ are not Green's operators in general.
}

\Example{\label{upperEinstein}
We consider the \defidx{Einstein cylinder} 
$M=\R\times S^{n-1}$ equipped with the
product metric $g=-dt^2 + \mathrm{can}_{S^{n-1}}$ where
$\mathrm{can}_{S^{n-1}}$ denotes the canonical Riemannian metric of constant
sectional curvature $1$ on the sphere. 
Since $S^{n-1}$ is compact, the Einstein cylinder is globally hyperbolic,
compare Example~\ref{ex:globhyp}.

We put $\Omega:=\R\times S^{n-1}_+$ where $S^{n-1}_+:=\{(z_1,\ldots,z_n)\in
S^{n-1}\mid z_n>0 \}$ denotes the northern hemisphere.
Let $p$ and $q$ be two points in $\Omega$ which can be joined by a causal
curve $c:[0,1]\to M$ in $M$.
We write $c(s)=(t(s),x(s))$ with $x(s)\in S^{n-1}$.
After reparametrization we may assume that the curve $x$ in $S^{n-1}$ is
parametrized proportionally to arclength,
$\mathrm{can}_{S^{n-1}}(x',x')\equiv\xi$ where $\xi$ is a nonnegative
constant.

Since $S^{n-1}_+$ is a geodesically convex subset of the Riemannian manifold
$S^{n-1}$ there is a curve $y:[0,1]\to S^{n-1}_+$ with the same end points as
$x$ and of length at most the length of $x$.
If we parametrize $y$ proportionally to arclength this means
$\mathrm{can}_{S^{n-1}}(y',y')\equiv\eta \leq \xi$.
The curve $c$ being causal means $0\geq g(c',c') = -(t')^2 +
\mathrm{can}_{S^{n-1}}(x',x')$, i.~e.,
$$
(t')^2 \geq \xi.
$$
This implies $(t')^2 \geq \eta$ which in turn is equivalent to the curve
$\tilde c :=(t,y)$ being causal.
Thus $p$ and $q$ can be joined by a causal curve which stays in $\Omega$.
Therefore $\Omega$ is a causally compatible subset of the Einstein
cylinder. 
}

Next we study conformal changes of the metric.
\indexn{conformal change of metrics>defemidx}
Let $M$ be a timeoriented connected Lorentzian manifold.
Denote the Lorentzian metric by $g$.
Let $f:M\to \R$ be a positive smooth function.
Denote the conformally related metric by $\tilde g := f\cdot g$.
This means that $\tilde g(X,Y)=f(p)\cdot g(X,Y)$ for all $X,Y \in T_pM$.
The causal type of tangent vectors and curves is unaffected by this change of
metric.
Therefore all causal concepts such as the chronological or causal future and
past remain unaltered by a conformal change of the metric.
Similarly, the causality conditions are unaffected.
Hence $(M,g)$ is globally hyperbolic if and only if $(M,\tilde g)$ is globally
hyperbolic. 

Let us denote by $g^*$ and $\tilde g^*$ the metrics on the cotangent bundle
$T^*M$ induced by $g$ and $\tilde g$ respectively.
Then we have $\tilde g^* = \frac{1}{f} g^*$.

Let $\tilde P$ be a normally hyperbolic operator with respect to $\tilde g$.
Put $P := f \cdot \tilde P$, more precisely, 
\begin{equation}
\label{PtildeP}
P(\varphi) = f\cdot\tilde P(\varphi)
\end{equation}
for all $\varphi$.
Since the principal symbol of $\tilde P$ is given by $\tilde g^*$, the
principal symbol of $P$ is given by $g^*$,
$$
\sigma_P(\xi) = f\cdot\sigma_{\tilde P}(\xi)  =
- f\cdot\tilde g^*(\xi,\xi)\cdot \id = - g^*(\xi,\xi)\cdot \id.
$$
Thus $P$ is normally hyperbolic for $g$.
Now suppose we have an advanced or a retarded Green's operator $G_+$ or $G_-$
for $P$. 
We define $\tilde G_\pm : \DD(M,E) \to C^\infty(M,E)$ by
\begin{equation}
\label{GtildeG}
\tilde G_\pm \varphi := G_\pm\left({f}\cdot\varphi\right).
\end{equation}
We see that
$$
\tilde G_\pm(\tilde P\varphi) = G_\pm(f\cdot \tfrac1f\cdot P\varphi)
= G_\pm(P\varphi) = \varphi
$$
and
$$
\tilde P(\tilde G_\pm\varphi) = \tfrac1f \cdot P(G_\pm(f\cdot\varphi)) = 
 \tfrac1f \cdot f \cdot \varphi = \varphi.
$$
Multiplication by a nowhere vanishing function does not change supports, hence
$$
\supp(\tilde G_\pm\varphi) = \supp(G_\pm(f\,\varphi)) 
\subset J_\pm^M(\supp(f\,\varphi)) = J_\pm^M(\supp(\varphi)).
$$
Notice again that $J_\pm^M$ is the same for $g$ and for $\tilde g$.
We have thus shown that $\tilde G_\pm$ is a Green's operator for $\tilde P$.
We summarize:

\begin{prop}\label{propGreenconformal}
Let $M$ be a timeoriented connected Lorentzian manifold with Lorentzian metric
$g$.
Let $f:M\to \R$ be a positive smooth function and
denote the conformally related metric by $\tilde g := f\cdot g$.

Then (\ref{PtildeP}) yields a 1-1-correspondence $P \leftrightarrow \tilde P$
between normally hyperbolic operators for $g$ and such operators for $\tilde
g$. 
Similarly, (\ref{GtildeG}) yields a 1-1-correspondence $G_\pm \leftrightarrow 
\tilde G_\pm$ for their Green's operators.
\hfill$\Box$
\end{prop}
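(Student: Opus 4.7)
My plan is to observe that essentially all the work has been carried out in the discussion preceding the proposition; what remains is to organize these computations into the two bijection statements and to verify the inverse maps.

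For the first bijection, I would start by noting that (\ref{PtildeP}) can be solved for $\tilde P$: given a normally hyperbolic operator $P$ for $g$, define $\tilde P := \tfrac{1}{f} P$, and compute
\[
\sigma_{\tilde P}(\xi) \;=\; \tfrac{1}{f}\,\sigma_{P}(\xi) \;=\; -\tfrac{1}{f}\,g^{*}(\xi,\xi)\cdot\mathrm{id} \;=\; -\tilde g^{*}(\xi,\xi)\cdot\mathrm{id},
\]
so $\tilde P$ is normally hyperbolic with respect to $\tilde g$. The assignments $P \mapsto \tfrac{1}{f}P$ and $\tilde P \mapsto f\tilde P$ are mutually inverse since $f$ is a nowhere vanishing smooth function, giving the first 1-1-correspondence. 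The key observation here, already recorded before the statement, is that conformal rescaling multiplies $g^{*}$ by $1/f$, exactly the factor absorbed into the definition.

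For the second bijection, the forward direction $G_\pm \mapsto \tilde G_\pm$ with $\tilde G_\pm\varphi := G_\pm(f\varphi)$ has already been verified in the paragraph above: the three Green's operator axioms (i), (ii), (iii)/(iii$'$) from Definition~\ref{defGfunc} were checked using $P = f\tilde P$ and the invariance of $J_\pm^M$ under conformal change. For the inverse, given Green's operators $\tilde G_\pm$ for $\tilde P$, I would define $G_\pm\psi := \tilde G_\pm(\tfrac{1}{f}\psi)$ and verify the same axioms by the symmetric computation:
\[
G_\pm(P\varphi) \;=\; \tilde G_\pm\bigl(\tfrac{1}{f}\cdot f \tilde P\varphi\bigr) \;=\; \tilde G_\pm(\tilde P\varphi) \;=\; \varphi,
\]
\[
P(G_\pm\psi) \;=\; f\cdot \tilde P\bigl(\tilde G_\pm(\tfrac{1}{f}\psi)\bigr) \;=\; f\cdot \tfrac{1}{f}\psi \;=\; \psi,
\]
and the support condition follows from $\supp(\tfrac{1}{f}\psi) = \supp(\psi)$ together with $\supp(\tilde G_\pm\chi) \subset J_\pm^M(\supp(\chi))$. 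Finally, I would check that the two assignments are mutually inverse: starting from $G_\pm$, applying the forward map and then the backward map gives $\varphi \mapsto \tilde G_\pm(\tfrac{1}{f}\varphi) = G_\pm(f\cdot\tfrac{1}{f}\varphi) = G_\pm\varphi$, and conversely.

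There is no real obstacle here since everything reduces to the fact that multiplication by the positive smooth function $f$ is invertible on sections and preserves supports, and the causal structure of $(M,g)$ coincides with that of $(M,\tilde g)$. The only point worth stating explicitly is that, because $J_\pm^M$ is defined purely in terms of causal curves and the causal character of tangent vectors is conformally invariant, the support conditions (iii) and (iii$'$) transfer between $P$ and $\tilde P$ without change.
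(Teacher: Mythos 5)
Your proposal is correct and follows essentially the same route as the paper: the paper carries out the verification that $\tilde P \mapsto P := f\tilde P$ preserves normal hyperbolicity (via the symbol computation) and that $G_\pm \mapsto \tilde G_\pm(\cdot) := G_\pm(f\cdot)$ produces Green's operators, all in the discussion immediately preceding the proposition, and then states the proposition with $\Box$. You supply what the paper leaves implicit — the symmetric direction and the check that the assignments are mutually inverse — which, as you say, reduces to invertibility of multiplication by the positive smooth function $f$ and conformal invariance of $J_\pm^M$.
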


This discussion can be slightly generalized.

\Remark{\label{nonnormally}
Let $(M,g)$ be a timeoriented connected Lorentzian manifold.
Let $P$ be a normally hyperbolic operator on $M$ for which advanced and
retarded Green's operators $G_+$ and $G_-$ exist.
Let $f_1,f_2:M\to\R$ be positive smooth functions. 
Then the operator $\tilde{P}:=\tfrac{1}{f_1}\cdot P\cdot \tfrac{1}{f_2}$,
given by 
\begin{equation}\label{doppelmult}
 \tilde{P}(\varphi)=\tfrac{1}{f_1}\cdot P(\tfrac{1}{f_2}\cdot\varphi) 
\end{equation}
for all $\varphi$, possesses advanced and retarded Green's operators
$\tilde{G}_\pm$.
They can be defined in analogy to (\ref{GtildeG}):
\[ \tilde{G}_\pm(\varphi):= {f_2}\cdot
G_\pm({f_1}\cdot\varphi). 
\]
As above one gets $\tilde{P}\tilde{G}_\pm(\varphi)=\varphi$ and
$\tilde{G}_\pm(\tilde{P}\varphi)=\varphi$ for all $\phi\in\DD(M,E)$.
Operators $\tilde{P}$ of the form (\ref{doppelmult}) are normally hyperbolic
with respect to the conformally related metric $\tilde g = f_1\cdot f_2\cdot
g$. 
}

Combining Propositions~\ref{propGreensubset} and
\ref{propGreenconformal} we get:

\begin{corollary}\label{conformeinschr}
Let $(\tilde{M},\tilde{g})$ be timeoriented connected
Lorentzian manifold which can be conformally embedded as a causally compatible
open subset $\Omega$ into the globally hyperbolic manifold $(M,g)$.
Hence on $\Omega$ we have $\tilde{g}=f\cdot g$ for some positive function
$f\in C^\infty(\Omega,\R)$.

Let $\tilde{P}$ be a normally hyperbolic operator on $(\tilde{M},\tilde{g})$
and let $P$ be the operator on $\Omega$ defined as in (\ref{PtildeP}).
Assume that $P$ can be extended to a normally hyperbolic operator on the whole
manifold $(M,g)$.
Then the operator $\tilde{P}$ possesses advanced and retarded Green's
operators. 
Uniqueness is lost in general. \qed
\end{corollary}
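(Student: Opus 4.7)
The plan is to obtain the Green's operators for $\tilde P$ on $(\tilde M, \tilde g)$ by a three-step chain: first produce Green's operators for the extended operator on the ambient globally hyperbolic manifold, then restrict to the causally compatible subset $\Omega$, and finally transfer across the conformal change to recover operators for $\tilde P$ itself. All three ingredients are already available in the text.

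More concretely, I would proceed as follows. Write $P^{\mathrm{ext}}$ for the normally hyperbolic operator on $(M,g)$ whose restriction to $\Omega$ equals the operator $P$ defined from $\tilde P$ by (\ref{PtildeP}). Since $(M,g)$ is globally hyperbolic, Corollary~\ref{functorsolve} furnishes unique advanced and retarded Green's operators $G^M_\pm : \DD(M,E) \to C^\infty(M,E)$ for $P^{\mathrm{ext}}$. Because $\Omega \subset M$ is causally compatible and connected, Proposition~\ref{propGreensubset} applies: the maps
\[
G^\Omega_\pm(\varphi) := G^M_\pm(\varphi_{\mathrm{ext}})|_\Omega, \qquad \varphi\in\DD(\Omega,E),
\]
are advanced and retarded Green's operators for $P^{\mathrm{ext}}|_\Omega = P$, viewed as a normally hyperbolic operator on $(\Omega, g|_\Omega)$.

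Now identify $\tilde M$ with $\Omega$ via the given conformal embedding, so that on this common underlying manifold the two metrics are related by $\tilde g = f \cdot g|_\Omega$ with $f>0$ smooth. The operator $P$ on $(\Omega, g|_\Omega)$ and the operator $\tilde P$ on $(\Omega, \tilde g)$ are related by $P = f\cdot \tilde P$, which is exactly the conformal relation (\ref{PtildeP}). Hence Proposition~\ref{propGreenconformal} applies and gives advanced and retarded Green's operators for $\tilde P$ on $(\tilde M,\tilde g)$ via
\[
\tilde G_\pm(\varphi) := G^\Omega_\pm(f\cdot\varphi), \qquad \varphi\in\DD(\tilde M,E).
\]
Chasing through the definitions, $\tilde G_\pm(\varphi) = G^M_\pm\bigl((f\cdot\varphi)_{\mathrm{ext}}\bigr)\big|_{\tilde M}$ on the nose.

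There is essentially no hard step: the only thing to check carefully is that the hypotheses of the two auxiliary propositions really are met in sequence — causal compatibility of $\Omega$ in $(M,g)$ is given, and causal notions are conformally invariant so $\Omega$ is equally causally compatible for the causal structure used when invoking Proposition~\ref{propGreenconformal}. The failure of uniqueness is inherited from the fact that $\Omega$ need not be causally compatible in a unique ambient manifold and, even within one ambient $(M,g)$, the restriction procedure of Proposition~\ref{propGreensubset} may produce non-unique Green's operators on $\Omega$ (as already illustrated in Remark~\ref{konvexnichteind}); this explains the final sentence of the corollary.
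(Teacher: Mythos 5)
Your argument is correct and is exactly the paper's intended proof: the corollary is stated as the immediate combination of Proposition~\ref{propGreensubset} (restriction to a causally compatible subset) and Proposition~\ref{propGreenconformal} (conformal change), applied in that order to the extended operator on $(M,g)$ whose Green's operators come from Corollary~\ref{functorsolve}. Your concluding remark on non-uniqueness, via Remark~\ref{konvexnichteind}, also matches the paper's intent.
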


In the remainder of this section we will show that the preceding
considerations can be applied to an important example in general relativity:
\defem{anti-deSitter spacetime}. \indexn{anti-deSitter spacetime}
We will show that it can be conformally embedded into the Einstein cylinder.
The image of this embedding is the set $\Omega$ in Example~\ref{upperEinstein}.
Hence we realize anti-deSitter spacetime conformally as a causally compatible
subset of a globally hyperbolic Lorentzian manifold.

For an integer $n\geq 2$, one defines the $n$-dimensional
\defidx{pseudohyperbolic space}   
\[
H_1^n:=\{x\in\R^{n+1}\,\mid\,\lala x,x\rara=-1\},
\indexs{H*@$H_1^n$, pseudohyperbolic space}
\]
where $\lala x,y\rara:=-x_0y_0-x_1y_1+\sum_{j=2}^nx_jy_j$ for all
$x=(x_0,x_1,\ldots,x_n)$ and $y=(y_0,y_1,\ldots,y_n)$ in $\R^{n+1}$. 
With the induced metric (also denoted by $\lala\cdot\,,\cdot\rara$) 
$H_1^n$ becomes a connected Lorentzian manifold with constant sectional
curvature $-1$, see e.~g.\ \cite[Chap.~4, Prop.~29]{ONeill}. 

\begin{lemma}\label{lconfAdS}
There exists a conformal diffeomorphism 
\[ \Psi:
\left(S^1\times
  S_+^{n-1},-\mathrm{can}_{S^1}+\mathrm{can}_{S_+^{n-1}}\right) \to
 \left(H_1^n,\lala\cdot\,,\cdot\rara\right)
\] 
such that for any $(p,x)\in S^1\times S^{n-1}_+\subset S^1\times\R^n$ one has 
\[
(\psi^*\lala\cdot\,,\cdot\rara)_{(p,x)}=
\frac{1}{x_n^2}\left(-\mathrm{can}_{S^1} +\mathrm{can}_{S_+^{n-1}}\right).
\]
\end{lemma}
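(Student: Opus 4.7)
My plan is to construct $\Psi$ explicitly. Writing points of $S^1\subset\R^2$ as $p=(p_0,p_1)$ with $p_0^2+p_1^2=1$ and points of $S^{n-1}_+\subset\R^n$ as $x=(x_1,\dots,x_n)$ with $\sum_i x_i^2=1$ and $x_n>0$, I would define
\[
\Psi(p,x):=\tfrac{1}{x_n}\bigl(p_0,\,p_1,\,x_1,\dots,x_{n-1}\bigr)\in\R^{n+1}.
\]
A one-line computation using the two defining identities gives $\lala\Psi(p,x),\Psi(p,x)\rara=\tfrac{1}{x_n^2}\bigl(-(p_0^2+p_1^2)+x_1^2+\cdots+x_{n-1}^2\bigr)=\tfrac{1}{x_n^2}(-1+(1-x_n^2))=-1$, so the image sits in $H^n_1$.

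To show $\Psi$ is a diffeomorphism I would write the inverse by inspection. For $y\in H^n_1$ the defining equation forces $y_0^2+y_1^2=1+y_2^2+\cdots+y_n^2\ge 1$, so $r:=\sqrt{y_0^2+y_1^2}\ge 1$. Setting $p:=(y_0/r,y_1/r)$, $x_i:=y_{i+1}/r$ for $1\le i\le n-1$, and $x_n:=1/r$ one checks immediately that $p\in S^1$, $x\in S^{n-1}_+$, and $\Psi(p,x)=y$. Smoothness of $\Psi$ and of its inverse is evident from the formulas.

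For the conformal factor I would use a convenient frame. Parametrize $S^1$ locally by angle $\theta$, so $\partial_\theta$ has unit norm for $\mathrm{can}_{S^1}$, and let $X$ be any tangent vector to $S^{n-1}_+$ at $x$; these are exactly the vectors satisfying $\sum_{i=1}^n x_i X_i=0$. Differentiation of $\Psi$ yields $\Psi_*\partial_\theta=\tfrac{1}{x_n}(-\sin\theta,\cos\theta,0,\dots,0)$, while $\Psi_*X$ splits as a ``horizontal'' piece $\tfrac{1}{x_n}(0,0,X_1,\dots,X_{n-1})$ plus a ``radial'' piece $-\tfrac{X_n}{x_n^2}(p_0,p_1,x_1,\dots,x_{n-1})$. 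Three short calculations then give $\lala\Psi_*\partial_\theta,\Psi_*\partial_\theta\rara=-1/x_n^2$, the cross term $\lala\Psi_*\partial_\theta,\Psi_*X\rara=0$ (the two contributions from the timelike coordinates cancel against each other), and, after applying both $\sum_i x_iX_i=0$ and $\sum_i x_i^2=1$, $\lala\Psi_*X,\Psi_*X\rara=\tfrac{1}{x_n^2}\sum_{i=1}^n X_i^2=\tfrac{1}{x_n^2}\,\mathrm{can}_{S^{n-1}_+}(X,X)$. Assembling the three gives exactly
\[
\Psi^*\lala\cdot,\cdot\rara=\tfrac{1}{x_n^2}\bigl(-\mathrm{can}_{S^1}+\mathrm{can}_{S^{n-1}_+}\bigr).
\]

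The main (and essentially only) obstacle is the bookkeeping in the last computation: the radial component of $\Psi_*X$ produces three different contributions to $\lala\Psi_*X,\Psi_*X\rara$ of orders $1/x_n^2$ and $1/x_n^4$, and one must invoke both spherical constraints to see the $1/x_n^4$ terms cancel and the $1/x_n^2$ terms reassemble into $|X|^2$. Everything else follows from the explicit formula.
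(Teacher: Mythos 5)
Your proof is correct. You construct the same explicit map $\Psi(p,x)=\tfrac{1}{x_n}(p_0,p_1,x_1,\dots,x_{n-1})$ that the paper obtains, and your verification of the pullback metric is sound: the cross term vanishes, the $1/x_n^4$ contribution from the radial piece of $\Psi_*X$ does cancel against part of the mixed term once both $\sum_i x_iX_i=0$ and $\sum_i x_i^2=1$ are used, and the remainder reassembles into $\tfrac{1}{x_n^2}|X|^2$. The inverse formula via $r=\sqrt{y_0^2+y_1^2}\ge 1$ is also right.

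Where you differ from the paper is in the route, not the destination. The paper factors $\Psi$ as $\Phi\circ(\id\times\pi)$, where $\Phi:S^1\times H^{n-1}\to H_1^n$, $(p,y)\mapsto(y_1p_0,y_1p_1,y_2,\dots,y_n)$, is first shown to be an isometry for the warped-product metric $-y_1^2\,\mathrm{can}_{S^1}+\mathrm{can}_{H^{n-1}}$, and $\pi:S_+^{n-1}\to H^{n-1}$ is the stereographic projection from the south pole, shown to be conformal with factor $1/x_n^2$. Composing then gives the lemma. That factorization is more work but carries geometric content: it realizes $H_1^n$ as a warped product over the circle with hyperbolic-space fibers and identifies the conformal factor with the one from the standard hemisphere-to-hyperbolic-space stereographic map, which is reusable knowledge. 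Your direct computation is shorter and entirely self-contained, at the cost of this structure being invisible. Both are complete proofs of the stated lemma.
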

 
\begin{proof}
We first construct an isometry between the pseudohyperbolic space and
\[
\left(S^1\times
  H^{n-1},-y_1^2\,\mathrm{can}_{S^1}+\mathrm{can}_{H^{n-1}}\right),
\] 
where $H^{n-1}:=\{(y_1,\ldots,y_n)\in\R^n\,\mid\, y_1>0\textrm{ and
  }-y_1^2+\sum_{j=2}^ny_j^2=-1\}$ is the $(n-1)$-dimensional hyperbolic space.
The hyperbolic metric $\mathrm{can}_{H^{n-1}}$ is induced by the Minkowski
metric on $\R^n$.
Then $(H^{n-1},\mathrm{can}_{H^{n-1}})$ is a Riemannian manifold with constant
sectional curvature $-1$. 
Define the map
\be
\Phi:S^1\times H^{n-1}&\to&H_1^n,\\
(p=(p_0,p_1),y=(y_1,\ldots,y_n))&\mapsto&(y_1p_0,y_1p_1,y_2,\ldots,y_n)
 \in\R^{n+1}.    
\ee
This map is clearly well-defined because $-y_1^2
 (\underbrace{p_0^2+p_1^2}_{=1})+y_2^2+\ldots 
  y_n^2=-y_1^2+\sum_{j=2}^n y_j^2=-1$.
The inverse map is given by 
\[\Phi^{-1}(x)=\left(\left(\frac{x_0}{\sqrt{x_0^2+x_1^2}},
 \frac{x_1}{\sqrt{x_0^2+x_1^2}}\right),
 \left(\sqrt{x_0^2+x_1^2},x_2,\ldots,x_n\right)\right).\]       
Geometrically, the map $\Phi$ can be interpreted as follows: 
For any point $p=(p_0,p_1)\in S^1$, consider the hyperplane $\mathcal{H}_p$ of
$\R^{n+1}$ defined by 
\[
\mathcal{H}_p:=\R\cdot (p_0,p_1,0,\ldots,0)\oplus\R^{n-1},
\]
where $(p_0,p_1,0,\ldots,0)\in\R^{n+1}$ and $\R^{n-1}$ is identified with the
subspace $\{(0,0,w_2,\ldots,w_n)\,\mid\, w_j\in\R\}\subset\R^{n+1}$.
If $\{e_2,\ldots,e_n\}$ is the canonical basis of this $\R^{n-1}$, then 
\[\mathcal{B}_p:=\{e_1:=(p_0,p_1,0\ldots,0),e_2,\ldots,e_n\}\]
is a Lorentz orthonormal basis of $\mathcal{H}_p$ with respect to the metric
induced 
by $\lala\cdot\,,\cdot\rara$. 
Define $H^{n-1}(p)$ as the hyperbolic space of
$(\mathcal{H}_p,\lala\cdot\,,\cdot\rara)$ in this basis.
More precisely, $H^{n-1}(p)=\{\sum_{j=1}^n\eta_je_j \in\mathcal{H}_p\,|\,
\eta_1>0, -\eta_1^2 + \sum_{j=2}^n\eta_j^2=-1\}$.
Then  $y\mapsto\Phi(p,y)$ yields an isometry from Minkowski space to
$\mathcal{H}_p$ which restricts to an isometry $H^{n-1}\to H^{n-1}(p)$. 

\begin{center}
\input{fig-splitAdS}
\end{center}

Let now $(p,y)\in S^1\times H^{n-1}$ and $X=(X^1,X^{n-1})\in T_pS^1\oplus T_y
H^{n-1}$.
Then the differential of $\Phi$ at $(p,y)$ is given by
\[
d_{(p,y)}\Phi(X)=\left(y_1X^1+X_1^{n-1}p,X_2^{n-1},\ldots,X_n^{n-1}\right).
\]
Therefore the pull-back of the metric on $H_1^n$ via $\Phi$ can be computed to
yield 
\be
(\Phi^*\lala\cdot\,,\cdot\rara)_{(p,y)}(X,X)
&=&
\lala d_{(p,y)}\Phi(X),d_{(p,y)}\Phi(X)\rara\\ 
&=&
y_1^2\lala X^1,X^1\rara+(X_1^{n-1})^2\lala p,p\rara
+\sum_{j=2}^n(X_j^{n-1})^2\\ 
&=&
-y_1^2\,\{(X_0^1)^2+(X_1^1)^2\}-(X_1^{n-1})^2+\sum_{j=2}^n(X_j^{n-1})^2\\
&=&
-y_1^2\,\mathrm{can}_{S^1}(X^1,X^1)+\mathrm{can}_{H^{n-1}}(X^{n-1},X^{n-1}).
\ee
Hence $\Phi$ is an isometry
$$
\left(S^1\times
  H^{n-1},-y_1^2\,\mathrm{can}_{S^1}+\mathrm{can}_{H^{n-1}}\right) \to
 \left(H_1^n,\lala\cdot\,,\cdot\rara\right).
$$
The stereographic projection from the south pole
\be
\pi:S_+^{n-1}&\to& H^{n-1},\\
x=(x_1,\ldots,x_n)&\mapsto&\frac{1}{x_n}(1,x_1,\ldots,x_{n-1}),
\ee
is a conformal diffeomorphism.
It is easy to check that $\pi$ is a well-defined diffeomorphism with inverse
given by $y=(y_1,\ldots,y_n)\mapsto\frac{1}{y_1}(y_2,\ldots,y_{n},1)$.
For any $x\in S_+^{n-1}$ and $X\in T_xS_+^{n-1}$ the differential of $\pi$ at
$x$ is given by 
\begin{eqnarray*}
d_x\pi(X)
&=&
\frac{1}{x_n}(0,X_1,\ldots,X_{n-1})
-\frac{X_n}{x_n^2}(1,x_1,\ldots,x_{n-1})\\
&=&
\frac{1}{x_n^2}(-X_n,x_nX_1-x_1X_n, \ldots, x_nX_{n-1}-x_{n-1}X_n).
\end{eqnarray*}
Therefore we get for the pull-back of the hyperbolic metric
\be
(\pi^*\mathrm{can}_{H^{n-1}})_x(X,X)
&=&
\frac{1}{x_n^4}\Big\{-X_n^2 + \sum_{j=1}^{n-1}(x_nX_j-x_jX_n)^2 \Big\}\\
&=&
\frac{1}{x_n^4}\Big\{-X_n^2 + x_n^2\sum_{j=1}^{n-1}X_j^2
-2x_nX_n\underbrace{\sum_{j=1}^{n-1}x_jX_j}_{=-x_nX_n} 
+  X_n^2\underbrace{\sum_{j=1}^{n-1}x_j^2}_{=1-x_n^2}\Big\}\\
&=&
\frac{1}{x_n^2}\sum_{j=1}^{n-1}X_j^2+
\frac{X_n^2}{x_n^2}\\ 
&=&
\frac{1}{x_n^2}\sum_{j=1}^nX_j^2,
\ee
that is, $(\pi^*\mathrm{can}_{H^{n-1}})_x=\frac{1}{x_n^2}
(\mathrm{can}_{S_+^{n-1}})_x$.
We obtain an explicit diffeomorphism
\be
\Psi:=\Phi\circ (\id\times \pi): S^1\times S_+^{n-1}&\to&H_1^n,\\
(p=(p_0,p_1),x=(x_1,\ldots,x_n))&\mapsto&\frac{1}{x_n}(p_0,p_1,x_1,
\ldots,x_{n-1}),  
\ee
satisfying, for every $(p,x)\in S^1\times S_+^{n-1}$,
\begin{eqnarray}
(\psi^*\lala\cdot\,,\cdot\rara)_{(p,x)}
&=&
((\id\times\pi)^*(\Phi^*\lala\cdot\,,\cdot\rara))_x\nonumber\\  
&=&
((\id\times\pi)^*(-\pi(x)_1^2\,\mathrm{can}_{S^1}
+ \mathrm{can}_{H^{n-1}}))_x\nonumber\\  
&=&
-\pi(x)_1^2\,\mathrm{can}_{S^1}+\frac{1}{x_n^2}\,
\mathrm{can}_{S_+^{n-1}}\nonumber\\ 
&=&\frac{1}{x_n^2}\left(-\mathrm{can}_{S^1}
+\mathrm{can}_{S_+^{n-1}}\right). 
\label{adsconform}
\end{eqnarray}
This concludes the proof.
\end{proof}

Following \cite[Chap.~8, p.~228f]{ONeill}, one defines the $n$-dimensional
\defidx{anti-deSitter spacetime}  $\widetilde H^n_1$ 
\indexs{H*@$\protect\widetilde H^n_1$, anti-deSitter spacetime}
to be the universal covering
manifold of the pseudohyperbolic space $H^n_1$. 
For $\widetilde H^n_1$ the sectional curvature is identically $-1$ and the
scalar curvature equals $-n(n-1)$.
In physics, $\widetilde H^4_1$ is important because it provides a vacuum
solution to Einstein's field equation with cosmological constant
$\Lambda=-3$.

The causality properties of $\widetilde H^n_1$ are discussed in 
\cite[Chap.~14, Example~41]{ONeill}.
It turns out that $\widetilde H^n_1$ is not globally hyperbolic.
The conformal diffeomorphism constructed in Lemma~\ref{lconfAdS} lifts to a
conformal diffeomorphism of the universal covering manifolds:
\[ \widetilde{\Psi}:
  \left(\R\times S^{n-1}_+,-dt^2+\mathrm{can}_{S^{n-1}_+}
  \right) \to \left(\widetilde  H^n_1,\lala\cdot,\cdot\rara\right)
\]
such that for any $(t,x)\in \R^1\times S^{n-1}_+\subset \R^1\times\R^n$ one
has  
\[
(\psi^*\lala\cdot\,,\cdot\rara)_{(t,x)}=
\frac{1}{x_n^2}\left(-dt^2 +\mathrm{can}_{S_+^{n-1}}\right).
\]
Then $\widetilde H^n_1$ is conformally diffeomorphic to the causally
compatible subset $\R\times S^{n-1}_+$ of the globally hyperbolic Einstein
cylinder.
From the considerations above we will derive existence of Green's operators
for the Yamabe operator $Y_g$ on anti-deSitter spacetime $\widetilde H^n_1$. 

\begin{definition}\label{yamabe}
{\rm
Let $(M,g)$ be a Lorentzian manifold of dimension $n\geq 3$.
Then the \defidx{Yamabe operator} $Y_g$ acting on functions on $M$ is given
by 
\begin{equation}\label{yamadef}
 Y_g=4\frac{n-1}{n-2}\;\Box_g+ \mathrm{scal}_g
\end{equation}
where $\Box_g$ denotes the d'Alembert operator
\indexs{*@$\protect\Box_g$, d'Alembert operator for metric $g$}
and $\mathrm{scal}_g$
is the scalar curvature taken with respect to $g$.
}
\end{definition}

We perform a conformal change of the metric.
To simplify formulas we write the conformally related metric as
$\widetilde{g}=\varphi^{p-2}g$ where $p=\tfrac{2n}{n-2}$ and $\varphi$ is a
positive smooth function on $M$.
The Yamabe operators for the metrics $g$ and $\tilde g$ are related by 
\begin{equation}\label{yamachange}
Y_{\tilde{g}}u=\varphi^{1-p}\cdot \,Y_g\,(\varphi u),
\end{equation}
where $u\in C^\infty(M)$, see \cite[p.~43, Eq.~(2.7)]{LP}.
Multiplying $Y_g$ with $\frac{n-2}{4\cdot(n-1)}$ we obtain a normally
hyperbolic operator
\[ 
P_g=\Box_g+\frac{n-2}{4\cdot(n-1)}\cdot  \mathrm{scal}_g\;.
\]
Equation (\ref{yamachange}) gives for this operator
\begin{equation}\label{Pchange}
P_{\tilde{g}}u=\varphi^{1-p}\,\left(\,P_g\,(\varphi u)\right).
\end{equation}

Now we consider this operator $P_g$ on the Einstein cylinder $\R\times
S^{n-1}$. 
Since the Einstein cylinder is globally hyperbolic we get unique advanced and
retarded Green's operators $G_\pm$ for $P_g$.
From Example~\ref{upperEinstein} we know that $\R\times  S^{n-1}_+$ is a
causally compatible subset of the Einstein cylinder $\R\times S^{n-1}$.
By Proposition~\ref{propGreensubset} we have 
advanced and retarded Green's operators for $P_g$ on $\R\times  S^{n-1}_+$.
From Equation~(\ref{Pchange}) and Remark~\ref{nonnormally} we conclude

\begin{corollary}\label{AdSGreen}
On the anti-deSitter spacetime $\widetilde H^n_1$ the Yamabe operator 
possesses advanced and retarded Green's operators. 
\hfill$\Box$
\end{corollary}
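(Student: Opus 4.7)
The plan is to assemble the ingredients already in place: global hyperbolicity of the Einstein cylinder, causal compatibility of its ``upper half'' $\R \times S^{n-1}_+$, the conformal identification of this upper half with anti-deSitter spacetime, and the conformal transformation law for the Yamabe operator. The only thing to check carefully is that the conformal change matches the format in Remark~\ref{nonnormally}, so that existence of Green's operators really transfers.

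First I would fix notation: set $M := \R \times S^{n-1}$ with metric $g_0 := -dt^2 + \mathrm{can}_{S^{n-1}}$ (globally hyperbolic by Example~\ref{ex:globhyp}), and set $\Omega := \R \times S^{n-1}_+ \subset M$. By the lifted version of Lemma~\ref{lconfAdS} recalled just before the corollary, there is a conformal diffeomorphism $\widetilde\Psi : (\Omega, g_0) \to (\widetilde H^n_1, \lala\cdot,\cdot\rara)$ pulling back the anti-deSitter metric to $\varphi^{p-2} g_0|_\Omega$ with conformal factor $\varphi(t,x) = x_n^{-2/(p-2)}$ (where $p = 2n/(n-2)$), i.e.\ a smooth positive function on $\Omega$. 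The normally hyperbolic operator on $(M,g_0)$ to consider is $P_{g_0} = \tfrac{n-2}{4(n-1)} Y_{g_0} = \Box_{g_0} + \tfrac{n-2}{4(n-1)}\mathrm{scal}_{g_0}$, which is well-defined on all of $M$ because both the metric and scalar curvature are smooth on the Einstein cylinder.

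Next I would produce Green's operators in three steps. Step~1: Since $(M,g_0)$ is globally hyperbolic, Corollary~\ref{functorsolve} gives unique advanced and retarded Green's operators $G_\pm^M : \DD(M) \to C^\infty(M)$ for $P_{g_0}$. Step~2: By Example~\ref{upperEinstein}, $\Omega$ is a causally compatible open subset of $M$, so Proposition~\ref{propGreensubset} yields advanced and retarded Green's operators $\widetilde G_\pm^\Omega$ for the restriction of $P_{g_0}$ to $\Omega$ via $\widetilde G_\pm^\Omega(\psi) = G_\pm^M(\psi_{\mathrm{ext}})|_\Omega$. Step~3: Transport these along the conformal diffeomorphism $\widetilde\Psi$ and apply the conformal transformation rule. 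By equation~(\ref{Pchange}), the Yamabe operator on $(\Omega, \varphi^{p-2} g_0)$ (identified with the anti-deSitter metric via $\widetilde\Psi$) satisfies
\[
P_{\varphi^{p-2} g_0}\, u = \varphi^{1-p}\, P_{g_0}(\varphi u),
\]
which is exactly the situation of Remark~\ref{nonnormally} with $f_1 = \varphi^{p-1}$ and $f_2 = \varphi^{-1}$. Hence the operators
\[
\widetilde G_\pm^{\mathrm{AdS}}(\psi) := \widetilde\Psi_* \Bigl( \varphi^{-1} \cdot \widetilde G_\pm^\Omega\bigl( \varphi^{p-1} \cdot \widetilde\Psi^* \psi \bigr) \Bigr), \qquad \psi \in \DD(\widetilde H^n_1),
\]
are advanced and retarded Green's operators for $P_{\widetilde g}$, and multiplying by the nonzero constant $\tfrac{4(n-1)}{n-2}$ gives Green's operators for the Yamabe operator $Y_{\widetilde g}$ itself, since an overall nonvanishing scalar factor does not affect any of the defining properties.

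The main obstacle, and essentially the only nontrivial point beyond bookkeeping, is verifying that the three operations---restriction to a causally compatible subset, conformal rescaling of the metric, and pullback under a conformal diffeomorphism---are genuinely compatible with the support conditions defining Green's operators. Restriction works because $\Omega$ is causally compatible in $M$, so $J^\Omega_\pm = J^M_\pm \cap \Omega$; conformal rescaling preserves the causal structure entirely, so $J^\Omega_\pm$ is unchanged when passing from $g_0$ to $\varphi^{p-2} g_0$; and $\widetilde\Psi$ is a conformal diffeomorphism, hence maps causal futures to causal futures. Together these ensure $\supp(\widetilde G_\pm^{\mathrm{AdS}} \psi) \subset J_\pm^{\widetilde H^n_1}(\supp \psi)$, and the identities $P \circ G_\pm = \id = G_\pm \circ P$ are preserved by construction through (\ref{Pchange}). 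Uniqueness is of course lost because $\widetilde H^n_1$ is not globally hyperbolic, as noted in the discussion, so Corollary~\ref{fundunique} does not apply; but existence is all that is claimed.
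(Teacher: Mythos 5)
Your proof is correct and follows essentially the same route as the paper: global hyperbolicity of the Einstein cylinder (Corollary~\ref{functorsolve}), causal compatibility of $\R\times S^{n-1}_+$ (Example~\ref{upperEinstein} plus Proposition~\ref{propGreensubset}), and the conformal transformation law (\ref{Pchange}) read through Remark~\ref{nonnormally} --- which is precisely what the paper bundles into Corollary~\ref{conformeinschr}. One tiny slip: since $P_g = \tfrac{n-2}{4(n-1)}Y_g$, the Green's operator for $Y_{\widetilde g}$ is $\tfrac{n-2}{4(n-1)}\widetilde G_\pm^{\mathrm{AdS}}$, not the reciprocal constant times it, though as you note the scalar factor is immaterial for existence.
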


\Remark{
It should be noted that the precise form of the zero order term of the Yamabe
operator given by the scalar curvature is crucial for our argument.
On $(\widetilde{H}^n_1,\tilde{g})$ the scalar curvature is constant,
$\mathrm{scal}_{\tilde g} = -n(n-1)$.
Hence the rescaled Yamabe operator is $P_{\tilde g} = \Box_{\tilde g} -
\frac{1}{4}n(n-2)=\Box_{\tilde g}-c$ with $c:=\tfrac{1}{4}n\cdot(n-2)$.
For the d'Alembert operator $\Box_{\tilde g}$
on $(\widetilde{H}^n_1,\tilde{g})$ we have for any $u\in
C^\infty(\widetilde{H}^n_1)$
\[
\Box_{\tilde{g}}u=P_{\tilde{g}}u+c\cdot u
=\varphi^{1-p}P_g(\varphi u)+c\cdot u
=\varphi^{1-p}\big(P_g+c\cdot\varphi^{p-2}\big)(\varphi u).
\]
The conformal factor $\varphi^{p-2}$ tends to infinity as one  approaches
the boundary of $\R\times S^{n-1}_+$ in $\R\times S^{n-1}$.
Namely, for $(t,x)\in\R\times S^{n-1}_+$ one has by (\ref{adsconform})
$\varphi^{p-2}(t,x)=x_n^{-2}$ where $x_n$ denotes the last component
of $x\in S^{n-1}\subset\R^n$.
Hence if one approaches the boundary, then $x_n\to 0$ and therefore
$\phi^{p-2}=x_n^{-2}\to\infty$.
Therefore one cannot extend the operator $P_g+c\cdot\varphi^{p-2}$ to an 
operator defined on the whole Einstein cylinder $\R\times S^{n-1}$.
Thus we cannot establish existence of Green's operators for the d'Alembert
operator on anti-deSitter spacetime with the methods developed here.  

}

How about uniqueness of fundamental solutions for normally hyperbolic
operators on anti-deSitter spacetime? 
We note that Theorem~\ref{kernP} cannot be applied for anti-deSitter
spacetime because the time separation function $\tau$ is not finite.
This can be seen as follows:
We fix two points $x,y\in\R\times S^{n-1}_+$ with $x<y$ sufficiently far apart
such that there exists a timelike curve connecting them in $\{(p,x)\in\R\times
S^{n-1}\,|\,x_n\geq 0\}$ having a nonempty segment on the boundary $\{(p,x)\in\R\times
S^{n-1}\,|\,x_n= 0\}$.

\begin{center}
\input{fig-AdStau.tex}
\end{center}

By sliding the segment on the boundary slightly we obtain a
timelike curve in the upper half of the Einstein cylinder connecting $x$ and
$y$ whose length with respect to the metric 
$\frac{1}{x_n^2}\left(-\mathrm{can}_{S^1} +\mathrm{can}_{S_+^{n-1}}\right)$
in (\ref{adsconform}) can be made arbitrarily large.
This is due to the factor $\frac{1}{x_n^2}$ which is large if the segment is
chosen so that $x_n$ is small along it.

\begin{center}
\input{fig-AdStau1.tex}
\end{center}

A discussion as in Remark~\ref{konvexnichteind} considering supports (see 
picture below) shows that fundamental solutions for normally hyperbolic
operators are not unique on the upper half $\R\times S_+^{n-1}$ of the
Einstein cylinder.
The fundamental solution of a point $y$ in the lower half of the
Einstein cylinder can be added to a given fundamental solution of $x$ in the
upper half thus yielding a second fundamental solution of $x$ with the same
support in the upper half.

\begin{center}
\input{fig-AdSsupport.tex}
\end{center}

Since anti-deSitter spacetime and $\R\times S^{n-1}_+$  are conformally
equivalent we obtain distinct fundamental solutions for operators on
anti-deSitter spacetime as described in Corollary~\ref{conformeinschr}.  

\chapter{Quantization}\label{chapquant}

We now want to apply the analytical theory of wave equations and develop some
mathematical basics of field (or second) quantization.
We do not touch the so-called first quantization which is concerned with
replacing point particles by wave functions.
As in the preceding chapters we look at fields (sections in vector bundles)
which 
have to satisfy some wave equation (specified by a normally hyperbolic
operator) and now we want to quantize such fields.

We will explain two approaches.
In the more traditional approach one constructs a quantum field which is a
distribution satisfying the wave equation in the distributional sense.
This quantum field takes its values in selfadjoint operators on Fock space
which is the multi-particle space constructed out of the single-particle
space of wave functions.
This construction will however crucially depend on the choice of a Cauchy
hypersurface. 

It seems that for quantum field theory on curved spacetimes the 
approach of local quantum physics is more appropriate.
The idea is to associate to each (reasonable) spacetime region the algebra
of observables that can be measured in this region.
We will find confirmed the saying that ``quantization is a mystery, but second
quantization is a functor'' by mathematical physicist Edward Nelson.
One indeed constructs a functor from the category of globally hyperbolic
Lorentzian manifolds equipped with a formally selfadjoint normally hyperbolic
operator to the category of $C^*$-algebras.
We will see that this functor obeys the Haag-Kastler axioms of a local quantum
field theory.
This functorial interpretation of local covariant quantum field theory on
curved spacetimes was introduced in \cite{HW}, \cite{Ver}, and \cite{BFV}.

It should be noted that in contrast to what is usually done in the physics
literature there is no need to fix a wave equation and then quantize the
corresponding fields (e.~g.\ the Klein-Gordon field).
In the present book, both the underlying manifold as well as the normally
hyperbolic operator occur as variables in one single functor.

In Sections~\ref{sec:CAlg} and \ref{sec-CCR} we develop the theory of
$C^*$-algebras and CCR-representations in full detail to the extent that we
need. 
In the next three sections we construct the quantization functors and check
the Haag-Kastler axioms.
The last two sections are devoted to the construction of the Fock space and
the quantum field. 
We will see that the quantum field determines the CCR-algebras up to
isomorphism.
This relates the two approaches to quantum field theory on curved backgrounds.

\section{$C^*$-algebras}\label{sec:CAlg}

In this section we will collect those basic concepts and facts related to
$C^*$-algebras that we will need when we discuss the canonical commutator
relations in the subsequent section.
We give complete proofs.
Readers familiar with $C^*$-algebras may skip this section.
For more information on $C^*$-algebras see e.~g.\ \cite{BR1}.

\Definition{
Let $A$\ be an associative $\Co$-algebra, let $\|\cdot\|$\ be a norm on
the $\Co$-vector space $A$, and let $*:A\rightarrow A$, $a\mapsto a^*$, 
be a $\Co$-antilinear map. 
Then $(A,\|\cdot\|,*)$\ is called a
\indexn{C*Algebra@$C^*$-algebra>defemidx}\defem{$C^*$-algebra}, if 
$(A,\|\cdot\|)$\ is complete and we have for all $a$, $b\in A$: 
\indexs{A*@$(A,\|\cdot\|,*)$, $C^*$-algebra}
\begin{enumerate}
\item $a^{**} = a$ \qquad\qquad\text{{($*$\ is an involution)}}
\item $(ab)^* = b^*a^*$ 
\item $\|ab\| \le
  \|a\|\,\|b\|$\qquad\quad\text{{(submultiplicativity)}} 
\item $\|a^*\| = \| a\|$\qquad\qquad\text{{($*$\ is an isometry)}}
\item $\| a^*a\| =
  \|a\|^2$\qquad\qquad\text{{($C^*$--property)}}.\indexn{C*p@$C^*$-property>defemidx}  
\end{enumerate}
A (not necessarily complete) norm on $A$ satisfying conditions (1) to (5) is
called a \indexn{C*Norm@$C^*$-norm>defemidx}\defem{$C^*$-norm}.
}

\Example{
\label{OperatorenaufHilbertraum}    
Let $(H,(\cdot,\cdot))$\ be a complex Hilbert space, let $A=\LL(H)$\ be the
algebra of bounded operators on $H$. 
Let $\|\cdot\|$\ be the \defidx{operator norm},
\indexs{L(H)@$\protect\LL(H)$,
bounded operators on Hilbert space $H$} i.~e., 
\[
\| a \| := \sup_{\ueber{x\in H}{\|x\|=1}} \| ax\|.
\]
Let $a^*$\ be the operator adjoint to $a$, i.~e.,
\[
(ax,y) = (x,a^*y)\qquad\text{ for all } x,\, y\in H.
\]
Axioms 1 to 4 are easily checked. 
Using Axioms 3 and 4 and the Cauchy-Schwarz inequality we see
\begin{align*}
  \|a\|^2 &= \sup_{\|x\|=1} \| ax\|^2 = \sup_{\|x\|=1} (ax,ax) =
  \sup_{\|x\|=1} (x,a^*ax) \\ 
&\le \sup_{\|x\|=1} \| x\|\cdot\| a^*ax\| = \| a^*a\|
  \overset{\mathrm{Axiom~3}}{\le} 
\|a^*\|\cdot\|a\| \overset{\mathrm{Axiom~4}}{=} \| a\|^2.
\end{align*}
This shows Axiom 5.
}

\Example{\label{stetigeFktimUnendlichenweg}
Let $X$\ be a locally compact Hausdorff space. 
Put
\begin{multline*}
A:= C_0(X) := \{ f:X\rightarrow \Co\text{ continuous}\mid 
\forall \varepsilon>0\,\exists K\subset X\text{ compact, so that } \\
\forall x\in X\setminus K: |f(x)|<\varepsilon\}.
\end{multline*} 
We call $C_0(X)$\ \indexs{C0X@$C_0(X)$, continuous functions vanishing at
infinity} the \defidx{algebra of continuous functions vanishing at infinity}.  
If $X$\ is compact, then $A=C_0(X) =\stetig(X)$. 
All $f\in C_0(X)$\ are bounded and we may define:
$$
  \| f\| := \sup_{x\in X} |f(x)|.
$$
Moreover let
$$
f^*(x) := \overline{f(x)}.
$$
Then $(C_0(X),\|\cdot\|,*)$\ is a commutative $C^*$--algebra.
}

\Example{\label{glatteFktimUnendlichenweg}
Let $X$\ be a differentiable manifold. 
Put 
\begin{equation*}
A:= C_0^\infty(X) := C^\infty(X)\cap C_0(X) .
\end{equation*} 
We call $C_0^\infty(X)$\ \indexs{C0999X@$C_0^\infty(X)$, smooth
functions vanishing at infinity} the \defidx{algebra of smooth
functions vanishing at infinity}. 
Norm and $*$ are defined as in the previous example. 
Then $(C_0^\infty(X),\|\cdot\|,*)$\ satisfies all axioms of a commutative
$C^*$-algebra except that $(A,\|\cdot\|))$ is not complete.
If we complete this normed vector space, then we are back to the previous
example. 
}

\Definition{
A subalgebra $A_0$ of a $C^*$-algebra $A$ is called a
\indexn{C*SubAlgebra@$C^*$-subalgebra>defemidx}\defem{$C^*$-subalgebra}
if it is a closed subspace and $a^*\in A_0$ for all $a\in A_0$.
}
Any $C^*$-subalgebra is a $C^*$-algebra in its own right.

\Definition{
Let $S$ be a subset of a $C^*$-algebra $A$.
Then the intersection of all $C^*$-subalgebras of $A$ containing $S$
is called the \indexn{C*SubAlgebrageneratedbyaset@$C^*$-subalgebra generated
  by a set>defemidx}\defem{$C^*$-subalgebra generated by $S$}.
}

\Definition{ 
An element $a$\ of a $C^*$-algebra is called
\defem{selfadjoint} if $a=a^*$. \indexn{selfadjoint 
element of a $C^*$-algebra>defemidx}
}

\Remark{
Like any algebra a $C^*$-algebra $A$\ has at most one unit $1$.
Namely, let $1'$\ be another unit, then
\[  
1= 1\cdot 1' = 1'.  
\]
Now we have for all $a\in A$
\[
1^*a= (1^*a)^{**} = (a^*1^{**})^* = (a^*1)^* = a^{**}= a
\]
and similarly one sees $a1^* = a$.  
Thus $1^*$\ is also a unit.
By uniqueness $1=1^*$, i.~e., the unit is selfadjoint. 
Moreover,
\[ \| 1\| = \| 1^*1\| = \|1\|^2,  \]
hence $\|1\| = 1$\ or $\|1\| =0$. 
In the second case $1=0$\ and therefore $A=0$. 
Hence we may (and will) from now on assume that $\|1\|=1$. 
}

\Example{
  \begin{enumerate}
  \item In Example~\ref{OperatorenaufHilbertraum}
    the algebra $A=\LL(H)$\ has a unit $1=\id_H$.
  \item The algebra $A=C_0(X)$\ has a unit $f\equiv 1$\ if and only if
    $C_0(X) = \stetig(X)$, i.~e., if and only if $X$\ is compact.
  \end{enumerate}
}

Let $A$\ be a $C^*$--algebra with unit $1$. 
We write $A^\times$\ for the set of invertible elements in $A$.  
\indexs{A<+@$A^\times$, invertible elements in $A$} 
If $a\in A^\times$, then also $a^*\in A^\times$ because
\[
a^*\cdot(a^{-1})^* = (a^{-1}a)^* = 1^* =1,
\]
and similarly $(a^{-1})^*\cdot a^* = 1$.
Hence $(a^*)^{-1}= (a^{-1})^*$.

\begin{lemma} \label{Verknuepfungenstetig}
Let $A$\ be a $C^*$-algebra. 
Then the maps
\begin{alignat*}{2}
A\times A &\rightarrow A, & (a,b) &\mapsto a+b,\\
\Co\times A &\rightarrow A, & (\alpha,a) &\mapsto \alpha a,\\
A\times A &\rightarrow A, & (a,b) &\mapsto a\cdot b,\\
A^\times &\rightarrow A^\times,\quad & a &\mapsto a^{-1},\\
A &\rightarrow A, \quad & a &\mapsto a^*,
\end{alignat*}
are continuous.
\end{lemma}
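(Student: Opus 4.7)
Continuity of the linear operations $(a,b)\mapsto a+b$ and $(\alpha,a)\mapsto\alpha a$ is a standard consequence of $\|\cdot\|$ being a norm on the $\Co$-vector space $A$, using the triangle inequality together with $\|\alpha a-\alpha_0 a_0\|\le|\alpha-\alpha_0|\,\|a\|+|\alpha_0|\,\|a-a_0\|$. Continuity of the multiplication follows directly from the submultiplicativity Axiom~3: writing
\[
ab-a_0b_0 = a(b-b_0) + (a-a_0)b_0
\]
gives $\|ab-a_0b_0\|\le \|a\|\,\|b-b_0\|+\|a-a_0\|\,\|b_0\|$, and for $(a,b)$ in any bounded neighborhood of $(a_0,b_0)$ this tends to $0$ as $(a,b)\to(a_0,b_0)$. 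The involution $a\mapsto a^*$ is in fact an isometry by Axiom~4, hence trivially continuous.

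The only nontrivial point is the continuity of inversion, which presupposes the existence of a unit $1\in A$. The plan is first to establish the Neumann series argument in $A$: for any $b\in A$ with $\|b\|<1$, the series $\sum_{n=0}^\infty b^n$ converges absolutely in $A$ (by submultiplicativity $\|b^n\|\le\|b\|^n$ together with completeness), and a direct telescoping computation shows that its sum is a two-sided inverse of $1-b$, with
\[
\|(1-b)^{-1}\|\le\frac{1}{1-\|b\|}.
\]
In particular, the set $A^\times$ is open: given $a_0\in A^\times$ and any $a\in A$ with $\|a-a_0\|<\|a_0^{-1}\|^{-1}$, the element $c:=a_0^{-1}(a_0-a)$ satisfies $\|c\|<1$, so $1-c$ is invertible and $a=a_0(1-c)$ is invertible with $a^{-1}=(1-c)^{-1}a_0^{-1}$.

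From here the estimate for continuity is obtained from the identity
\[
a^{-1}-a_0^{-1} = a^{-1}(a_0-a)a_0^{-1},
\]
which gives $\|a^{-1}-a_0^{-1}\|\le\|a^{-1}\|\,\|a-a_0\|\,\|a_0^{-1}\|$. Combined with the uniform bound $\|a^{-1}\|\le\|a_0^{-1}\|/(1-\|a_0^{-1}\|\,\|a-a_0\|)$ coming from the Neumann series, this shows that $\|a^{-1}-a_0^{-1}\|\to 0$ as $a\to a_0$ in $A^\times$, completing the proof. The main (in fact only) subtlety is this inversion step; everything else is a direct application of the $C^*$-axioms.
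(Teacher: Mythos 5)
Your proof is correct and follows the same overall approach as the paper's: treat the vector-space operations as automatic from the norm axioms, decompose $ab-a_0b_0$ into two terms and apply submultiplicativity, note that $*$ is isometric, and use the identity $a^{-1}-a_0^{-1}=a^{-1}(a_0-a)a_0^{-1}$ for inversion. The one place you diverge from the paper is in bounding $\|a^{-1}\|$ near $a_0$: you invoke the Neumann series to get the uniform bound $\|a^{-1}\|\le\|a_0^{-1}\|/(1-\|a_0^{-1}\|\,\|a-a_0\|)$, whereas the paper avoids the series entirely with a self-referencing trick, namely $\|a^{-1}\|\le\|a^{-1}-a_0^{-1}\|+\|a_0^{-1}\|$, which yields $\|a^{-1}-a_0^{-1}\|\le\bigl(\|a^{-1}-a_0^{-1}\|+\|a_0^{-1}\|\bigr)\varepsilon\|a_0^{-1}\|$ and can be solved directly for $\|a^{-1}-a_0^{-1}\|$ when $\varepsilon\|a_0^{-1}\|<1$. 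Your Neumann-series route is slightly longer but also establishes, as a byproduct, that $A^\times$ is open — a fact the paper doesn't need here and proves separately in Proposition~\ref{prop:spektrum}; the paper's bootstrapping inequality is the more economical choice given that the domain is already restricted to $A^\times$.
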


\begin{proof}
(a)
The first two maps are continuous for all normed vector spaces.
This easily follows from the triangle inequality and from homogeneity of
the norm.

(b) 
\textit{Continuity of multiplication. } 
Let $a_0$, $b_0\in A$. 
Then we have for all $a$, $b\in A$\ with $\|a - a_0\| < \varepsilon$\ and
$\|b-b_0\| < \varepsilon$: 
\begin{eqnarray*}
\|ab - a_0b_0\| 
&=& 
\| ab - a_0b + a_0b - a_0b_0\| \\
&\le& 
\|a-a_0\|\cdot \|b\| + \|a_0\|\cdot \|b-b_0\| \\
&\le& 
\varepsilon\big(\|b-b_0\| + \|b_0\|\big) + \|a_0\|\cdot \varepsilon \\
&\le&
\varepsilon\big(\varepsilon+\|b_0\|\big) + \|a_0\|\cdot\varepsilon .
\end{eqnarray*}
(c)
\textit{Continuity of inversion. } 
Let $a_0\in A^\times$. 
Then we have for all $a\in A^\times$\ with
$\|a-a_0\|<\varepsilon<\|a_0^{-1}\|^{-1}$:
\begin{eqnarray*}
\|a^{-1}-a_0^{-1}\|  
&=& 
\| a^{-1}(a_0-a)a_0^{-1}\|  \\
&\le& 
\|a^{-1}\|\cdot\|a_0-a\|\cdot \|a_0^{-1}\|  \\
&\le& 
\big(\|a^{-1}-a_0^{-1}\| + \|a_0^{-1}\|\big) \cdot\varepsilon\cdot\|a_0^{-1}\|.
\end{eqnarray*}
Thus
$$
\underbrace{\big(1-\varepsilon\|a_0^{-1}\|\big)}_{>0,\text{ since
  }\varepsilon<\|a_0^{-1}\|^{-1}} \|a^{-1} - a_0^{-1}\|  
      \le \varepsilon\cdot \|a_0^{-1}\|^2
$$
and therefore
$$
\|a^{-1} - a_0^{-1}\| \le \frac{\varepsilon}{1-\varepsilon\|a_0^{-1}\|} \cdot
\|a_0^{-1}\|^2.
$$

(d)
\textit{Continuity of $*$} is clear because $*$ is an isometry.
\end{proof}

\Remark{
If $(A,\|\cdot\|,*)$ satisfies the axioms of a $C^*$-algebra except that
$(A,\|\cdot\|)$ is not complete, then the above lemma still holds because
completeness has not been used in the proof.
Let $\bar A$ be the completion of $A$ with respect to the norm $\|\cdot\|$.
By the above lemma $+$, $\cdot$, and $*$ extend continuously to $\bar A$ thus  
making $\bar A$ into a $C^*$-algebra.
}

\Definition{
Let $A$\ be a $C^*$--algebra with unit $1$. 
For $a\in A$\ we call
\[
r_A(a) := \{\lambda\in\Co\mid \lambda\cdot 1 -a \in A^\times\}
\]
the \defem{\defidx{resolvent set} of $a$} 
\indexs{rAa@$r_A(a)$, resolvent set of $a\in A$}
and
\[
\sigma_A(a) := \Co \setminus r_A(a)
\]
the \defem{\defidx{spectrum} of $a$}. 
\indexs{sigmaAa@$\sigma_A(a)$, spectrum of $a\in A$}
For $\lambda\in r_A(a)$
\[
(\lambda\cdot 1 - a)^{-1} \in A
\]
is called the \defem{\defidx{resolvent} of $a$\ at $\lambda$}. 
Moreover, the number
\[
\rho_A(a) := \sup\{|\lambda|\mid \lambda\in\sigma_A(a)\}
\]
is called the \defem{\defidx{spectral radius} of $a$}.
\indexs{rhoAa@$\rho_A(a)$, spectral radius of $a\in A$}
}

\Example{
Let $X$\ be a compact Hausdorff space and let $A=\stetig(X)$. 
Then
\begin{eqnarray*}
  &A^\times = \{ f\in \stetig(X) \mid f(x)\not=0\text{ for all }x\in
  X\},& \\
  &\sigma_{\stetig(X)}(f) = f(X) \subset \Co,& \\
  &r_{\stetig(X)}(f) = \Co\setminus f(X),& \\
  &\rho_{\stetig(X)}(f) = \|f\|_\infty = \max_{x\in X}|f(x)|.&
\end{eqnarray*}
}

\begin{prop}\label{prop:spektrum}
Let $A$\ be a $C^*$--algebra with unit $1$ and let $a\in A$. 
Then $\sigma_A(a)\subset \Co$\ is a nonempty compact subset and the
resolvent
\[
r_A(a)\rightarrow A,\qquad \lambda\mapsto(\lambda\cdot 1-a)^{-1},
\]
is continuous. 
Moreover,
\[
\rho_A(a) = \lim_{n\rightarrow \infty} \|a^n\|^{\frac{1}{n}} =
\inf_{n\in\N} \| a^n\|^{\frac{1}{n}}\le \|a\|.
\]
\end{prop}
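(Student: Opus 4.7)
The plan is to establish the four claims—compactness, nonemptyness, continuity of the resolvent, and the spectral radius formula—essentially by adapting the standard Banach-algebra arguments, using the continuity results from Lemma~\ref{Verknuepfungenstetig} as the analytic engine.

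First I would show that $r_A(a)$ is open (so $\sigma_A(a)$ is closed) and that the resolvent map is continuous, both in one stroke via a Neumann-series perturbation. If $\lambda_0 \in r_A(a)$, set $b_0 := (\lambda_0 \cdot 1 - a)^{-1}$. For $|\lambda-\lambda_0| < \|b_0\|^{-1}$ the series $\sum_{n\geq 0}(\lambda_0-\lambda)^n b_0^{n+1}$ converges absolutely in $A$ (using submultiplicativity and completeness) and one checks by direct multiplication that it inverts $\lambda\cdot 1 - a$. This gives an explicit power-series representation of the resolvent, which immediately yields continuity (in fact analyticity) on $r_A(a)$. Next, for $|\lambda|>\|a\|$ the Neumann series $\lambda^{-1}\sum_{n\geq 0}(a/\lambda)^n$ converges and inverts $\lambda\cdot 1 - a$, so $\sigma_A(a)\subset\{|\lambda|\leq\|a\|\}$; combined with closedness this gives compactness and $\rho_A(a)\leq\|a\|$.

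The main obstacle is nonemptyness of the spectrum, which is the only step that really uses something beyond algebra and the Neumann series, namely a Liouville-type argument in the Banach-space-valued setting. Assume $\sigma_A(a)=\emptyset$. Then $R(\lambda):=(\lambda\cdot 1-a)^{-1}$ is defined for all $\lambda\in\Co$, and for any continuous linear functional $\varphi\in A^*$ the scalar function $f(\lambda) := \varphi(R(\lambda))$ is entire (from the power-series representation above). Moreover $\|R(\lambda)\| \leq |\lambda|^{-1}\sum_{n\geq 0}(\|a\|/|\lambda|)^n \to 0$ as $|\lambda|\to\infty$, so $f$ is bounded and entire, hence constant by the classical Liouville theorem, and the limit at infinity forces $f\equiv 0$. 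Since $\varphi\in A^*$ was arbitrary, Hahn--Banach gives $R(\lambda)=0$ for all $\lambda$, contradicting $R(\lambda)(\lambda\cdot 1-a)=1\ne 0$.

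It remains to prove $\rho_A(a) = \lim_{n\to\infty}\|a^n\|^{1/n} = \inf_{n}\|a^n\|^{1/n}$. The inequality $\inf_n\|a^n\|^{1/n}\leq\liminf_n\|a^n\|^{1/n}$ is trivial, while submultiplicativity $\|a^{m+n}\|\leq\|a^m\|\|a^n\|$ together with Fekete's lemma gives $\limsup_n\|a^n\|^{1/n}\leq\inf_n\|a^n\|^{1/n}$, so the limit exists and equals the infimum; call this number $r$. For $|\lambda|>r$ the series $\sum_n a^n/\lambda^{n+1}$ converges absolutely and provides an inverse for $\lambda\cdot 1 - a$, hence $\rho_A(a)\leq r$. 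For the converse, fix any $\varphi\in A^*$ and consider $g(\mu):=\varphi\bigl((1-\mu a)^{-1}\bigr) = \sum_{n\geq 0}\varphi(a^n)\mu^n$, which is holomorphic on $\{|\mu|<\rho_A(a)^{-1}\}$ (where we set $\rho_A(a)^{-1}:=\infty$ if $\rho_A(a)=0$), since the substitution $\mu=1/\lambda$ converts the resolvent into $g$. Its radius of convergence is therefore at least $\rho_A(a)^{-1}$, so $\limsup_n|\varphi(a^n)|^{1/n}\leq\rho_A(a)$; by uniform boundedness (applied to the sequence $a^n/\lambda^n$ for fixed $|\lambda|>\rho_A(a)$, which is weakly bounded hence norm bounded) one concludes $\limsup_n\|a^n\|^{1/n}\leq\rho_A(a)$, giving $r\leq\rho_A(a)$ and completing the proof.
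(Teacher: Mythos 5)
Your proof is correct, but it takes a genuinely different route from the paper on the two nontrivial points: nonemptiness of the spectrum and the hard inequality $\limsup_n\|a^n\|^{1/n}\le\rho_A(a)$. For both, you invoke the standard Banach-algebra toolkit: Hahn--Banach to reduce to scalar-valued functions, the power-series representation of the resolvent, the classical Liouville theorem, Cauchy--Hadamard, and the uniform boundedness principle. The paper deliberately avoids all of this. For the estimate $\rho_A(a)\ge\limsup_n\|a^n\|^{1/n}$ it uses a purely algebraic averaging trick: writing
\[
R_n(a,\lambda)=\frac1n\sum_{k=1}^n\Bigl(1-\frac{\omega^k a}{\lambda}\Bigr)^{-1}
=\Bigl(1-\frac{a^n}{\lambda^n}\Bigr)^{-1}
\]
with $\omega$ an $n$-th root of unity, then a direct estimate showing that if $\{|\lambda|\ge\tilde\rho(a)\}\subset r_A(a)$ then $a^n/\tilde\rho(a)^n\to 0$, which contradicts submultiplicativity-monotonicity unless $\tilde\rho(a)\le\rho_A(a)$. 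Nonemptiness then falls out as a one-line corollary, since $\sigma_A(a)=\emptyset$ would force $\rho_A(a)=-\infty<0$. Your approach is the standard one and arguably more familiar; the paper's buys you a more self-contained exposition that needs only the Neumann series, completeness, and the continuity of inversion from Lemma~\ref{Verknuepfungenstetig}, without appeal to vector-valued holomorphic function theory or the closed-graph/uniform-boundedness circle. Both are valid; neither is a gap in the other.

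One minor point in your write-up: for the Liouville step you assert $f$ is bounded and entire citing only the decay estimate for large $|\lambda|$; you should add that boundedness on any compact disk follows from continuity, so $f$ is bounded on all of $\Co$. Also, your use of Fekete's lemma to get $\lim=\inf$ is a fine shortcut, though strictly speaking the paper gets the same conclusion by sandwiching $\rho_A(a)\le\inf\le\liminf$ and $\limsup\le\rho_A(a)$, which already forces the limit to exist without a separate appeal to subadditivity.
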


\begin{proof}
(a) 
Let $\lambda_0\in r_A(a)$. For $\lambda\in\Co$\ with
\begin{equation}\label{neumannklein}
|\lambda-\lambda_0| < \| (\lambda_0 1 - a)^{-1}\|^{-1}
\end{equation}
the Neumann series
\[
\sum_{m=0}^\infty (\lambda_0 - \lambda)^m(\lambda_01 - a)^{-m-1}
\]
converges absolutely because
\begin{align*}
\| (\lambda_0 - \lambda)^m(\lambda_01 - a)^{-m-1}\| &\le |\lambda_0 - \lambda|^m\cdot
\|(\lambda_01 - a)^{-1}\|^{m+1}\\
&= \|(\lambda_01-a)^{-1}\|\cdot\Big(
\underbrace{\frac{\|(\lambda_01-a)^{-1}\|}{|\lambda_0-\lambda|^{-1}}}_{<1\text{
    by (\ref{neumannklein})}}
\Big)^m.
\end{align*}
Since $A$ is complete the Neumann series converges in $A$.
It converges to the resolvent $(\lambda1 -a)^{-1}$ because
\begin{eqnarray*}
\lefteqn{(\lambda 1 - a)\sum_{m=0}^\infty (\lambda_0 - \lambda)^m(\lambda_01 - a)^{-m-1}}\\
&=&
[(\lambda-\lambda_0)1+(\lambda_0 1 - a)]
\sum_{m=0}^\infty (\lambda_0 - \lambda)^m(\lambda_01 - a)^{-m-1}\\
&=&
-\sum_{m=0}^\infty (\lambda_0 - \lambda)^{m+1}(\lambda_01 - a)^{-m-1}
+\sum_{m=0}^\infty (\lambda_0 - \lambda)^m(\lambda_01 - a)^{-m}\\
&=&
1.
\end{eqnarray*}
Thus we have shown $\lambda\in r_A(a)$\ for all $\lambda$ satisfying
(\ref{neumannklein}). 
Hence $r_A(a)$\ is open and $\sigma_A(a)$\ is closed.

(b) 
\textit{Continuity of the resolvent.}  
\label{Resolventestetig}  
  We estimate the difference of the resolvent of $a$\ at $\lambda_0$\ and
  at $\lambda$\ using the Neumann series.
  If $\lambda$ satisfies (\ref{neumannklein}), then
  \begin{align*}
    \left\|(\lambda1-a)^{-1}- (\lambda_01-a)^{-1}\right\|
    &= \Big\|\sum_{m=0}^\infty (\lambda_0-\lambda)^m(\lambda_01-a)^{-m-1} 
               - (\lambda_01-a)^{-1}\Big\| \\
    &\le \sum_{m=1}^\infty |\lambda_0-\lambda|^m\,
    \|(\lambda_01-a)^{-1}\|^{m+1} \\ 
    &= \|(\lambda_01-a)^{-1}\|\cdot
    \frac{|\lambda_0-\lambda|\cdot\|(\lambda_01-a)^{-1}\|}{1-|\lambda_0
    -\lambda|\cdot\|(\lambda_01-a)^{-1}\|} 
    \\ 
    &= |\lambda_0-\lambda|\cdot\frac{\|(\lambda_0
    1-a)^{-1}\|^2}{1-|\lambda_0-\lambda|\cdot\|(\lambda_01-a)^{-1}\|}  \\ 
&\to 0\quad\text{ for }\lambda\rightarrow \lambda_0.
  \end{align*}
Hence the resolvent is continuous.

(c) 
We show $\rho_A(a) \le \inf_{n}\|a^n\|^{\frac{1}{n}}
      \le \liminf_{n\rightarrow
      \infty}\|a^n\|^{\frac{1}{n}}${. } 
  Let $n\in \N$\ be fixed and let $|\lambda|^n >\|a^n\|$. 
  Each $m\in \N_0$\ can be written uniquely in the form
  $m=pn + q$, $p$, $q\in\N_0$, $0\le q \le n-1$. 
  The series
\[
\frac{1}{\lambda} \sum_{m=0}^\infty \Big(\frac{a}{\lambda}\Big)^m =
\frac{1}{\lambda}\sum_{q=0}^{n-1} \Big(\frac{a}{\lambda}\Big)^q
\sum_{p=0}^\infty\Big(\underbrace{\frac{a^n}{\lambda^n}}_{\|\cdot\|<1}\Big)^p
\]
converges absolutely.
Its limit is $(\lambda1-a)^{-1}$ because
\[
\big(\lambda1-a\big)\cdot\Big(\sum_{m=0}^\infty \lambda^{-m-1}a^m\Big) =
\sum_{m=0}^\infty \lambda^{-m}a^m - \sum_{m=0}^\infty \lambda^{-m-1}a^{m+1} =
1
\]
and similarly
\[
\Big(\sum_{m=0}^\infty \lambda^{-m-1}a^m\Big)\cdot\big(\lambda1-a\big) =1.
\]
Hence for $|\lambda|^n>\|a^n\|$\ the element $(\lambda1-a)$\ is invertible and
thus $\lambda\in r_A(a)$. 
Therefore
\[
\rho_A(a) \le \inf_{n\in\N}\|a^n\|^{\frac{1}{n}} \le
\liminf_{n\rightarrow \infty}\|a^n\|^{\frac{1}{n}}.
\]
(d) 
We show $\rho_A(a) \ge \limsup_{n\rightarrow\infty}\|a^n\|^{\frac{1}{n}}$.
We abbreviate
$\widetilde\rho(a):=\limsup_{n\rightarrow\infty}\|a^n\|^{\frac{1}{n}}$. 

\textit{Case 1:}
$\widetilde\rho(a) =0${. } 
  If $a$ were invertible, then
\[
1=\|1\| = \|a^n a^{-n}\| \le \|a^n\|\cdot\|a^{-n}\|
\]
would imply $1\le\widetilde\rho(a) \cdot\widetilde\rho(a^{-1}) = 0$,
which yields a contradiction.
Therefore $a\not\in A^\times$. 
Thus $0\in\sigma_A(a)$.
In particular, the spectrum of $a$\ is nonempty. 
Hence the spectral radius $\rho_A(a)$\ is bounded from below by $0$\ and thus
\[ \widetilde\rho(a) =0 \le \rho_A(a).  \]

\textit{Case 2:} 
$\widetilde{\rho}(a) >0${. } 
If $a_n\in A$\ are elements for which $R_n := (1-a_n)^{-1}$
exist, then
\[ 
a_n \rightarrow 0 \quad\Leftrightarrow \quad R_n\rightarrow 1.  
\]
This follows from the fact that the map $A^\times \rightarrow A^\times$, 
$a \mapsto a^{-1}$, is continuous by Lemma~\ref{Verknuepfungenstetig}.
Put
\[  
S:= \{ \lambda\in\Co\mid |\lambda| \ge \widetilde\rho(a)\}.  
\]
We want to show that $S\not\subset r_A(a)$ since then there exists
$\lambda\in\sigma_A(a)$\ such that $|\lambda| \ge \widetilde\rho(a)$ and hence
\[  
\rho_A(a) \ge |\lambda| \ge \widetilde\rho(a).  
\]
Assume in the contrary that $S\subset r_A(a)$. 
Let $\omega\in\Co$\ be an $n$--th root of unity, i.~e., $\omega^n=1$. 
For $\lambda\in S$\ we also have $\tfrac{\lambda}{\omega^k}\in S\subset
r_A(a)$. 
Hence there exists
\[  
\Big(\frac{\lambda}{\omega^k} 1-a\Big)^{-1} =
\frac{\omega^k}{\lambda}\Big(1-\frac{\omega^ka}{\lambda}\Big)^{-1}
\]
and we may define
\[  
R_n(a,\lambda) := 
\frac{1}{n} \sum_{k=1}^n \Big(1-\frac{\omega^k a}{\lambda}\Big)^{-1}.
\]
We compute
\begin{eqnarray*}
\Big(1-\frac{a^n}{\lambda^n}\Big) R_n(a,\lambda)
&=& 
\frac{1}{n}\sum_{k=1}^n \sum_{l=1}^n
\Big(\frac{\omega^{k(l-1)}a^{l-1}}{\lambda^{l-1}} -
\frac{\omega^{kl} a^l}{\lambda^l}\Big) 
\Big(1-\frac{\omega^ka}{\lambda}\Big)^{-1} \\
&=& 
\frac{1}{n} \sum_{k=1}^n\sum_{l=1}^n 
\frac{\omega^{k(l-1)}a^{l-1}}{\lambda^{l-1}} \\
&=&
\frac{1}{n}\sum_{l=1}^n \frac{a^{l-1}}{\lambda^{l-1}} 
\underbrace{\sum_{k=1}^n (\omega^{l-1})^k}_{=
\begin{cases} 0\:\text{ if }l\ge 2 \\ n\:\text{ if }l=1
\end{cases}} \\
&=& 
1.
\end{eqnarray*}
Similarly one sees $R_n(a,\lambda)\big(1-\frac{a^n}{\lambda^n}\big)=1$. 
Hence 
\[ 
R_n(a,\lambda) = \Big(1-\frac{a^n}{\lambda^n}\Big)^{-1}
\] 
for any $\lambda\in S\subset r_A(a)$.
Moreover for $\lambda\in S$ we have
\begin{eqnarray*}
\lefteqn{\Big\|\Big(1-\frac{a^n}{\widetilde\rho(a)^n}\Big)^{-1} -
\Big(1-\frac{a^n}{\lambda^n}\Big)^{-1}\Big\|} \\
&\le&
\frac{1}{n} \sum_{k=1}^n \Big\|\Big(1-\frac{\omega^k
a}{\widetilde\rho(a)}\Big)^{-1} -\Big(1-\frac{\omega^k
a}{\lambda}\Big)^{-1}\Big\| \\
&=&
\frac{1}{n} \sum_{k=1}^n \Big\|\Big(1-\frac{\omega^k
a}{\widetilde\rho(a)}\Big)^{-1} \Big(1-\frac{\omega^k a}{\lambda} -1 +
\frac{\omega^k a}{\widetilde\rho(a)}\Big)\Big(1-\frac{\omega^k
a}{\lambda}\Big)^{-1}\Big\| \\[1ex]
&=&
\frac{1}{n} \sum_{k=1}^n \Big\|\Big(\frac{\widetilde\rho(a)}{\omega^k}1
-a\Big)^{-1} \Big(-\frac{\widetilde\rho(a)a}{\omega^k} +
\frac{\lambda a}{\omega^k}\Big)\Big(\frac{\lambda}{\omega^k}1- a\Big)^{-1}
\Big\| \\[1ex]
&\le& 
|\widetilde\rho(a) -\lambda|\cdot \|a\| \cdot\sup_{z\in S}
\|(z1-a)^{-1}\|^2.
\end{eqnarray*}
The supremum is finite since $z\mapsto (z1-a)^{-1}$\ is continuous
on $r_A(a)\supset S$\ by part (b) of the proof and since for $|z|\ge
2\cdot \|a\|$\ we have
\[
\|(z1-a)^{-1}\| \le \frac{1}{|z|} \sum_{n=0}^\infty
\underbrace{\frac{\|a\|^n}{|z|^n}}_{\le(\frac{1}{2})^n} \le \frac{2}{|z|} 
\le \frac{1}{\|a\|}.
\]
\parbox[c]{.4\textwidth}{
Outside the annulus $\overline{B}_{2\|a\|}(0) -B_{\widetilde\rho(a)}(0)$\ the
expression 
$\|(z1-a)^{-1}\|$\ is bounded by $\frac{1}{\|a\|}$ and on the compact
annulus it is bounded by continuity.}
\qquad
\parbox[c]{{.6\textwidth}-2em}{
\psset{unit=1cm}
\psset{arrowsize=4pt 3, arrowinset=.6, arrowlength=1}

\pspicture(-3,-2)(1,2)
\pscircle(0,0){2}
\pscircle(0,0){1}
\uput[270](0,0){\footnotesize$0$}
\SpecialCoor
\psline[linewidth=.5pt]{*->}(0,0)(2;140)
\uput{2pt}[225](1.25;140){\footnotesize$2\|a\|$}
\psline[linewidth=.5pt]{*->}(0,0)(1;40)
\uput{2pt}[315](.25;40){\footnotesize$\tilde{\rho}(a)$}
\endpspicture

\abb{$\|(z1-a)^{-1}\|$ is bounded}
}
Put
\[
C:=\| a\| \cdot\sup_{z\in S} \|(z1-a)^{-1}\|^2.
\]
We have shown
\[
\| R_n(a,\widetilde\rho(a)) - R_n(a,\lambda)\| 
\le C\cdot |\widetilde\rho(a) -\lambda|
\]
for all $n\in\N$\ and all $\lambda\in S$. 
Putting $\lambda = \widetilde\rho (a)+\frac{1}{j}$\ we obtain
$$
\Big\|\Big(1-\frac{a^n}{\widetilde\rho(a)^n}\Big)^{-1} -
  \underbrace{%
  \Big(1-\underbrace{%
  \frac{a^n}{(\widetilde\rho(a) + \frac{1}{j})^n}} _{\rightarrow 0\text{
    for }n\rightarrow\infty}\Big)^{-1}}_{\rightarrow 1\text{ for
    }n\rightarrow\infty} 
\Big\| \le \frac{C}{j},
$$
thus
$$
\limsup_{n\rightarrow \infty}
  \Big\|\Big(1-\frac{a^n}{\widetilde\rho(a)^n}\Big)^{-1} -1 \Big\| \le
  \frac{C}{j}
$$
for all $j\in\N$ and hence
$$
\limsup_{n\rightarrow \infty}
  \Big\|\Big(1-\frac{a^n}{\widetilde\rho(a)^n}\Big)^{-1} -1 \Big\| = 0.
$$
For $n\to\infty$ we get
$$
\Big(1-\frac{a^n}{\widetilde\rho(a)^n}\Big)^{-1}
\to 1 
$$
and thus
\begin{equation}
\frac{\|a^n\|}{\widetilde\rho(a)^n} \to 0.
\label{eq:adrhog0}
\end{equation}
On the other hand we have
\begin{align*}
  \|a^{n+1}\|^{\frac{1}{n+1}} 
  &\le \|a\|^{\frac{1}{n+1}}\cdot\|a^n\|^{\frac{1}{n+1}} \\
  &= \|a\|^{\frac{1}{n+1}} \cdot\|a^n\|^{-\frac{1}{n(n+1)}}\cdot
  \|a^n\|^{\frac{1}{n}}\\
  &\le \|a\|^{\frac{1}{n+1}} \cdot\|a\|^{-\frac{n}{n(n+1)}} \cdot
  \|a^n\|^{\frac{1}{n}}\\ 
  &= \|a^n\|^{\frac{1}{n}}.
\end{align*}
Hence the sequence $\left(\|a^n\|^{\frac{1}{n}}\right)_{n\in\N}$\
is monotonically nonincreasing and therefore
\[ 
  \widetilde\rho(a) =\limsup_{k\rightarrow \infty}\|a^k\|^{\frac{1}{k}} \le
  \|a^n\|^{\frac{1}{n}} \qquad\text{ for all } n\in \N.
\]
Thus $1\le \frac{\|a^n\|}{\widetilde\rho(a)^n}$ for all $n\in\N$,
in contradiction to (\ref{eq:adrhog0}).

(e)
\textit{The spectrum is nonempty. }
If $\sigma(a)=\emptyset$, then $\rho_A(a) = - \infty$ contradicting
$\rho_A(a) = \lim_{n\rightarrow \infty} \|a^n\|^{\frac{1}{n}} \ge 0$.
\end{proof}

\Definition{
Let $A$\ be a $C^*$-algebra with unit. 
Then $a\in A$\ is called
\begin{itemize}
\item
\defem{normal}\indexn{normal element of a $C^*$-algebra>defemidx}, 
if $aa^* = a^*a$,
\item
\defem{an isometry}\indexn{isometric element of a $C^*$--algebra>defemidx}, 
if $a^*a=1$, and 
\item
\defem{unitary}\indexn{unitary element of a $C^*$-algebra>defemidx}, if
$a^*a=aa^*=1$. 
\end{itemize}
}

\Remark{
In particular, selfadjoint elements are normal. 
In a commutative algebra all elements are normal.
}

\begin{prop} \label{EigenschaftenSpektrum}
Let $A$\ be a $C^*$--algebra with unit and let $a\in A$. 
Then the following holds: 
\begin{enumerate}
\item \label{sigmaquer}
$\sigma_A(a^*) =\overline{\sigma_A(a)}= \{\lambda\in\Co\,|\,
\ovl{\lambda}\in\sigma_A(a)\}$.
\item \label{sigmainvers}
If $a\in A^\times$, then $\sigma_A(a^{-1}) = \sigma_A(a)^{-1}$.
\item \label{normal}
If $a$\ is normal, then $\rho_A(a) = \|a\|$.
\item \label{sigmaisometrie}
If $a$\ is an isometry, then $\rho_A(a) =1$.
\item \label{unitaer}
If $a$\ is unitary, then $\sigma_A(a) \subset S^1\subset \Co$.
\item \label{sa}
If $a$\ is selfadjoint, then $\sigma_A(a) \subset
[-\|a\|,\|a\|]$\ and moreover $\sigma_A(a^2) \subset [0,\|a\|^2]$.
\item \label{Polynominvarianz}
If $P(z)$\ is a polynomial with complex coefficients and $a\in A$\ is
arbitrary, then 
\[  
\sigma_A\big(P(a)\big) = P\big(\sigma_A(a)\big) = \{P(\lambda)\,|\,
\lambda\in\sigma_A(a)\}.
\]
\end{enumerate}
\end{prop}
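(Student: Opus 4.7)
The strategy is to exploit the $C^*$-property together with the spectral radius formula $\rho_A(a)=\lim_{n\to\infty}\|a^n\|^{1/n}$ established in Proposition~\ref{prop:spektrum}, and to order the statements so that each one can build on the ones already proved.

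Parts (\ref{sigmaquer}), (\ref{sigmainvers}) and (\ref{Polynominvarianz}) are purely algebraic manipulations of invertibility. For (\ref{sigmaquer}), the identity $\lambda\cdot 1-a^*=(\bar\lambda\cdot 1-a)^*$ together with the fact that $*$ preserves $A^\times$ (noted just before Lemma~\ref{Verknuepfungenstetig}) yields $\lambda\in r_A(a^*)\iff\bar\lambda\in r_A(a)$. For (\ref{sigmainvers}), invertibility of $a$ forces $0\notin\sigma_A(a)\cup\sigma_A(a^{-1})$, and for nonzero $\lambda$ the factorisation $\lambda\cdot 1-a^{-1}=-\lambda\,a^{-1}(\lambda^{-1}\cdot 1-a)$ reduces the question to the invertibility of $\lambda^{-1}\cdot 1-a$. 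For (\ref{Polynominvarianz}), I would use the fundamental theorem of algebra to factor $P(z)-\mu=c\prod_{j=1}^{n}(z-\lambda_j(\mu))$, which lifts to $P(a)-\mu\cdot 1=c\prod_{j=1}^{n}(a-\lambda_j(\mu)\cdot 1)$; since the factors commute pairwise, the product is invertible iff each factor is, so $\mu\in\sigma_A(P(a))$ iff some $\lambda_j(\mu)$ lies in $\sigma_A(a)$, which is exactly $\mu\in P(\sigma_A(a))$.

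For the norm identities (\ref{normal}), (\ref{sigmaisometrie}), (\ref{unitaer}) the central tool is the $C^*$-property $\|b^*b\|=\|b\|^2$. For normal $a$ the elements $a$ and $a^*$ commute, hence $(a^n)^*a^n=(a^*a)^n$ and
\[
\|a^n\|^2 = \|(a^n)^*a^n\| = \|(a^*a)^n\|.
\]
Since $b:=a^*a$ is selfadjoint, iterating $\|b^2\|=\|b^*b\|=\|b\|^2$ gives $\|b^{2^k}\|=\|b\|^{2^k}$, so $\rho_A(b)=\|b\|=\|a\|^2$ by the spectral radius formula; combined with the displayed identity this yields $\lim\|a^n\|^{1/n}=\|a\|$, proving (\ref{normal}). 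For an isometry $a^*a=1$ telescopes to $(a^n)^*a^n=1$, so $\|a^n\|=1$ for every $n$ and (\ref{sigmaisometrie}) is immediate. A unitary $a$ is an isometry and $a^{-1}=a^*$ is also unitary, so $\rho_A(a)\leq 1$ and $\rho_A(a^{-1})\leq 1$; (\ref{sigmainvers}) then forces $\sigma_A(a)\subset S^1$, which is (\ref{unitaer}).

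The main obstacle is part (\ref{sa}), where showing $\sigma_A(a)\subset\R$ for selfadjoint $a$ requires a genuinely new idea beyond invertibility manipulations. The spectral radius bound $\sigma_A(a)\subset\{|\lambda|\leq\|a\|\}$ follows from (\ref{normal}); to rule out nonreal spectrum I would use a shift trick: for any $r\in\R$, selfadjointness gives
\[
(a+ir\cdot 1)^*(a+ir\cdot 1) = a^2+r^2\cdot 1,
\]
so $\|a+ir\cdot 1\|^2=\|a^2+r^2\cdot 1\|\leq\|a\|^2+r^2$. If $\lambda=s+it\in\sigma_A(a)$, then $\lambda+ir\in\sigma_A(a+ir\cdot 1)$, and the spectral radius bound yields $s^2+(t+r)^2\leq\|a\|^2+r^2$ for every $r\in\R$; letting $r\to+\infty$ (if $t>0$) or $r\to-\infty$ (if $t<0$) forces $t=0$. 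Finally, applying (\ref{Polynominvarianz}) to $P(z)=z^2$ gives $\sigma_A(a^2)=\{\lambda^2\mid\lambda\in\sigma_A(a)\}\subset[0,\|a\|^2]$, completing (\ref{sa}).
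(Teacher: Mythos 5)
Your proofs of parts (\ref{sigmaquer}), (\ref{sigmainvers}), (\ref{normal}), (\ref{sigmaisometrie}), (\ref{unitaer}), and (\ref{Polynominvarianz}) follow essentially the same route as the paper (the paper iterates $\|a^{2^{n}}\|^2=\|(a^*a)^{2^n}\|$ directly while you factor through the selfadjoint element $b=a^*a$; this is a cosmetic difference, as are the small variations in (\ref{unitaer})). The genuine divergence is in part (\ref{sa}). The paper reduces to the unitary case by forming the Cayley transform $U=(1-i\lambda a)(1+i\lambda a)^{-1}$ for small real $\lambda$, verifies that $U$ is unitary, and then uses the equivalence $|(1-i\lambda\mu)(1+i\lambda\mu)^{-1}|=1 \Leftrightarrow \mu\in\R$ together with (\ref{unitaer}) to deduce that the resolvent of $a$ exists off the real axis. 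You instead use the shift $a+ir\cdot 1$, exploit the $C^*$-identity $(a+ir)^*(a+ir)=a^2+r^2\cdot 1$ to get the bound $\|a+ir\|^2\le\|a\|^2+r^2$, and then let $r\to\pm\infty$ in the resulting inequality $s^2+(t+r)^2\le\|a\|^2+r^2$ to force the imaginary part $t$ to vanish. Both proofs are correct; your shift argument is somewhat more elementary in that it avoids the M\"obius-transform computation and the detour through unitaries, at the (small) cost of invoking a limiting inequality rather than an exact algebraic identity. Either way, (\ref{Polynominvarianz}) applied to $P(z)=z^2$ closes the second half of (\ref{sa}) exactly as in the paper.
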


\begin{proof}
We start by showing assertion (\ref{sigmaquer}).
A number $\lambda$\ does not lie in the spectrum of $a$\ if and only if
$(\lambda1-a)$\ is invertible, i.~e., if and only if
$(\lambda1-a)^*=\overline\lambda 1-a^*$\ is invertible, i.~e., if and only
if $\overline\lambda$\ does not lie in the spectrum of $a^*$.

To see (\ref{sigmainvers}) let $a$\ be invertible.
Then $0$\ lies neither in the spectrum $\sigma_A(a)$\ of $a$\ nor in the
spectrum $\sigma_A(a^{-1})$\ of $a^{-1}$.
Moreover, we have for $\lambda\not=0$
$$
\lambda1-a = \lambda a(a^{-1} - \lambda^{-1}1)
$$
and
$$
\lambda^{-1} 1 -a^{-1} = \lambda^{-1}a^{-1}(a-\lambda1).
$$
Hence $\lambda1-a$\ is invertible if and only if
$\lambda^{-1}1-a^{-1}$\ is invertible.

To show (\ref{normal}) let $a$\ be normal. 
Then $a^*a$ is selfadjoint, in particular normal.
Using the $C^*$--property we obtain inductively
  \begin{align*}
    \|a^{2^n}\|^2 &= \|(a^{2^n})^*a^{2^n}\| = \|(a^*)^{2^n}a^{2^n}\|
    = \| (a^*a)^{2^n}\| \\
    &= \|({a^*a})^{2^{n-1}} (a^*a)^{2^{n-1}}\| = \| (a^*a)^{2^{n-1}}\|^2 \\
    &= \cdots = \|a^*a\|^{2^n} = \|a\|^{2^{n+1}}.
  \end{align*}
Thus
$$
\rho_A(a) = \lim_{n\rightarrow\infty} 
\|a^{2^n}\|^{\frac{1}{2^n}} = \lim_{n\rightarrow\infty}\| a\| = \|a\|.
$$

To prove (\ref{sigmaisometrie}) let $a$\ be an isometry. 
Then
$$
    \|a^n\|^2 = \|(a^n)^*a^n\| = \|(a^*)^n a^n\| = \| 1\| = 1.
$$
Hence
$$
\rho_A(a) = \lim_{n\rightarrow \infty} \|a^n\|^{\frac{1}{n}} = 1.
$$

For assertion (\ref{unitaer}) let $a$\ be unitary. 
On the one hand we have by (\ref{sigmaisometrie})
\[  \sigma_A(a) \subset \{\lambda\in\Co \mid |\lambda|\le 1\}.  \]
On the other hand we have
\[
\sigma_A(a) \overset{(\ref{sigmaquer})}{=} \overline{\sigma_A(a^*)} =
\overline{\sigma_A(a^{-1})} \overset{(2)}{=} \overline{\sigma_A(a)}^{-1}. 
\]
Both combined yield $\sigma_A(a) \subset S^1$.
 
To show (\ref{sa}) let $a$\ be selfadjoint. 
We need to show $\sigma_A(a)\subset\R$. 
Let $\lambda\in\R$\ with $\lambda^{-1}>\|a\|$.
Then $|-i\lambda^{-1}|=\lambda^{-1}>\rho(a)$ and hence $1+i\lambda a =
i\lambda(-i\lambda^{-1} + a)$\ is invertible. 
Put
\[ 
U:= (1-i\lambda a)(1+i\lambda a)^{-1}.  
\]
Then $U^*=((1+i\lambda a)^{-1})^*(1-i\lambda a)^* = (1-i\lambda
a^*)^{-1}\cdot(1+i\lambda a^*) = (1-i\lambda a)^{-1}\cdot(1+i\lambda a)$ and
therefore 
\begin{eqnarray*}
U^*U 
&=& 
(1-i\lambda a)^{-1}\cdot(1+i\lambda a)(1-i\lambda a)(1+i\lambda a)^{-1} \\
&=& 
(1-i\lambda a)^{-1}(1-i\lambda a)(1+i\lambda a)(1+i\lambda a)^{-1} \\
&=& 1 .
\end{eqnarray*}
Similarly $UU^* =1$, i.~e., $U$\ is unitary. 
By (\ref{unitaer}) $\sigma_A(U)\subset S^1$.
A simple computation with complex numbers shows that
\[
|(1-i\lambda \mu)(1+i\lambda\mu)^{-1}| 
= 1 \quad\Leftrightarrow\quad \mu \in \R.
\]
Thus $(1-i\lambda \mu)(1+i\lambda\mu)^{-1}\cdot 1-U$ is invertible if 
$\mu \in\Co\setminus\R$.
From
\begin{eqnarray*}
\lefteqn{(1-i\lambda\mu)(1+i\lambda\mu)^{-1}\cdot 1-U} \\
&=& 
(1+i\lambda\mu)^{-1}\big((1-i\lambda\mu)(1+i\lambda a)1 -
      (1+i\lambda\mu)(1-i\lambda a)\big)(1+i\lambda a)^{-1} \\
&=& 
2i\lambda(1+i\lambda\mu)^{-1}(a-\mu1)(1+i\lambda\mu)^{-1}
\end{eqnarray*}
we see that $a-\mu1$\ is invertible for all $\mu\in\Co\setminus\R$.
Thus $\mu\in r_A(a)$\ for all $\mu\in\Co\setminus\R$\ and hence
$\sigma_A(a)\subset \R$.
The statement about $\sigma_A(a^2)$\ now follows from 
part~(\ref{Polynominvarianz}).

Finally, to prove (\ref{Polynominvarianz}) decompose the polynomial $P(z)
-\lambda$\ into linear factors 
\[
P(z) -\lambda = \alpha\cdot \prod_{j=1}^n(\alpha_j-z),\qquad
\alpha,\alpha_j\in\Co. 
\]
We insert an algebra element $a\in A$:
\[  P(a) -\lambda1 = \alpha \cdot\prod_{j=1}^n (\alpha_j1-a).  \]
Since the factors in this product commute the product is invertible if and
only if all factors are invertible.\footnote{This is generally true in
  algebras with unit.  
Let $b=a_1\cdots a_n$\ with commuting factors. 
Then $b$\ is invertible if all factors are invertible: 
$b^{-1}=a_n^{-1}\cdots a_1^{-1}$. 
Conversely, if $b$\ is invertible, then $a_i^{-1} = b^{-1}
  \cdot\prod_{j\not=i}a_j$ where we have used that the factors commute.} 
In our case this means
\begin{eqnarray*}
\lambda\in\sigma_A\big(P(a)\big)
&\Leftrightarrow&
\text{at least one factor is noninvertible} \\
&\Leftrightarrow& 
\alpha_j\in\sigma_A(a) \text{ for some }j \\
&\Leftrightarrow&
\lambda=P(\alpha_j)\in P\big(\sigma_A(a)\big).
\end{eqnarray*}
\end{proof}

\begin{cor}
Let $(A,\|\cdot\|,*)$\ be a $C^*$--algebra with unit. 
Then the norm $\|\cdot\|$\ is uniquely determined by $A$\ and $*$.
\end{cor}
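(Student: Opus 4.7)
The plan is to exhibit an explicit formula expressing $\|a\|$ purely in terms of the algebraic operations (addition, multiplication, scalar multiplication, inversion) and the involution $*$, with no reference to the norm itself. The key observation is that the spectrum $\sigma_A(a)$ and hence the spectral radius $\rho_A(a) = \sup\{|\lambda| \mid \lambda \in \sigma_A(a)\}$ are defined purely algebraically: $\lambda \in \sigma_A(a)$ iff $\lambda \cdot 1 - a$ fails to be invertible in $A$, which is a condition formulated only in terms of the ring structure (and the unit, which is itself algebraically determined as shown in the earlier remark).

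First I would reduce to the selfadjoint case. For any $a \in A$, the element $a^*a$ is selfadjoint, hence in particular normal. By Proposition~\ref{EigenschaftenSpektrum}(\ref{normal}), normal elements $b$ satisfy $\|b\| = \rho_A(b)$. Applied to $b = a^*a$ this gives $\|a^*a\| = \rho_A(a^*a)$. Next I would invoke the $C^*$-property $\|a\|^2 = \|a^*a\|$, which yields the formula
\[
\|a\| \;=\; \sqrt{\rho_A(a^*a)}.
\]
The right-hand side depends only on the algebra structure of $A$ and on the involution $*$ (since $a^*$ uses $*$ and $\rho_A$ is defined via invertibility in $A$), and not on any particular choice of norm.

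This formula settles uniqueness: if $\|\cdot\|_1$ and $\|\cdot\|_2$ are two norms making $(A, \|\cdot\|_i, *)$ into a $C^*$-algebra, then both must equal $\sqrt{\rho_A(a^*a)}$ at every $a \in A$, hence they coincide. I expect no real obstacle here since all the hard work is already contained in Proposition~\ref{EigenschaftenSpektrum}; the only mild subtlety is to note that $\rho_A$ is genuinely intrinsic to the pair $(A, \cdot)$ (independent of which of the two norms one uses to define it), which is immediate from the definition in terms of the resolvent set, together with the fact that Proposition~\ref{prop:spektrum} guarantees $\sigma_A(a)$ is nonempty so that $\rho_A(a)$ is a well-defined nonnegative real number in either case.
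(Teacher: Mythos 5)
Your proof is correct and follows essentially the same route as the paper: reduce to $a^*a$, use the $C^*$-property $\|a\|^2=\|a^*a\|$, and apply Proposition~\ref{EigenschaftenSpektrum}~(\ref{normal}) to identify $\|a^*a\|$ with the algebraically defined spectral radius $\rho_A(a^*a)$. Your additional remark that $\rho_A$ is intrinsic (being defined via invertibility alone, with $\sigma_A(a)\neq\emptyset$ guaranteed by Proposition~\ref{prop:spektrum}) is a sound clarification but not a different argument.
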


\begin{proof}
For $a\in A$ the element $a^*a$ is selfadjoint and hence
\[
\|a\|^2 = \|a^*a\|
\overset{\ref{EigenschaftenSpektrum} (\ref{normal})}{=} \rho_A(a^*a)
\]
depends only on $A$\ and $*$.
\end{proof}

\Definition{
Let $A$\ and $B$\ be $C^*$--algebras. 
An algebra homomorphism
\[ \pi:A\rightarrow B  \]
is called \defem{$*$--morphism}\indexn{*Morphismus@$*$--morphism>defemidx} 
if for all $a\in A$\ we have
\[  \pi(a^*) = \pi(a)^*.  \]
A map $\pi:A\rightarrow A$\ is called
\defem{$*$--automorphism}\indexn{*Automorphismus@$*$--automorphism>defemidx}
if it is an invertible $*$-morphism.
}

\begin{cor}\label{Chomo}
Let $A$\ and $B$\ be $C^*$--algebras with unit. 
Each unit-preserving $*$--morphism $\pi:A\rightarrow B$\ satisfies
\[  
\|\pi(a)\| \le \|a\|  
\]
for all $a\in A$. 
In particular, $\pi$\ is continuous.
\end{cor}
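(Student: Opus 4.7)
The plan is to reduce the general bound $\|\pi(a)\|\le\|a\|$ to the selfadjoint case via the $C^*$-property, and then handle selfadjoint elements through the spectral radius together with a spectrum-inclusion argument for $*$-morphisms.

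First I would establish the key spectral inclusion: for every $a\in A$ one has $\sigma_B(\pi(a))\subseteq\sigma_A(a)$. The point is that if $\lambda\cdot 1_A-a$ is invertible in $A$ with inverse $c$, then applying the unit-preserving algebra homomorphism $\pi$ gives $\pi(\lambda\cdot 1_A-a)=\lambda\cdot 1_B-\pi(a)$ and this element has inverse $\pi(c)$, so $\lambda\in r_B(\pi(a))$. Taking suprema of absolute values yields $\rho_B(\pi(a))\le\rho_A(a)$.

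Next I would treat the selfadjoint case. If $b\in A$ is selfadjoint, then $\pi(b)^*=\pi(b^*)=\pi(b)$, so $\pi(b)$ is selfadjoint and in particular normal. By Proposition~\ref{EigenschaftenSpektrum}(\ref{normal}) applied in both $A$ and $B$,
\[
\|\pi(b)\|=\rho_B(\pi(b))\le\rho_A(b)=\|b\|.
\]

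Finally, for an arbitrary $a\in A$, the element $a^*a$ is selfadjoint, so by the $C^*$-property and the preceding step,
\[
\|\pi(a)\|^2=\|\pi(a)^*\pi(a)\|=\|\pi(a^*a)\|\le\|a^*a\|=\|a\|^2,
\]
which gives $\|\pi(a)\|\le\|a\|$. Continuity of $\pi$ is then immediate from linearity and this operator bound. The only mild subtlety is verifying the spectral inclusion, which really only uses that $\pi$ is a unit-preserving algebra homomorphism (the $*$-structure is used only in the reduction to selfadjoint elements via the $C^*$-property); everything else is a direct application of results already established in Proposition~\ref{EigenschaftenSpektrum}.
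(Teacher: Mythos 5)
Your proposal is correct and follows essentially the same route as the paper: establish the spectrum inclusion $\sigma_B(\pi(a))\subset\sigma_A(a)$ from the fact that a unit-preserving homomorphism maps invertibles to invertibles, deduce $\rho_B(\pi(a))\le\rho_A(a)$, and then apply the $C^*$-identity together with $\rho=\|\cdot\|$ for normal (here selfadjoint) elements. The only cosmetic difference is that you isolate the selfadjoint case as a separate step, whereas the paper applies the spectral-radius argument directly to $a^*a$ inside the final chain of equalities.
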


\begin{proof}
For $a\in A^\times$
\[  \pi(a)\pi(a^{-1}) = \pi(aa^{-1}) = \pi(1) = 1  \]
holds and similarly $\pi(a^{-1})\pi(a) = 1$. 
Hence $\pi(a) \in B^\times$\ with $\pi(a)^{-1}=\pi(a^{-1})$. 
Now if $\lambda\in r_{ A}(a)$, then
\[  \lambda1-\pi(a) = \pi(\lambda1-a) \in \pi( A^\times)\subset  B^\times,  \]
i.~e., $\lambda \in r_B(\pi(a))$. 
Hence $r_A(a) \subset r_B(\pi(a))$ and $\sigma_B(\pi(a)) \subset
\sigma_A(a)$. 
This implies the inequality
\[  \rho_B(\pi(a)) \le \rho_A(a).  \]
Since $\pi$\ is a $*$--morphism and $a^*a$ and $\pi(a)^*\pi(a)$ are 
selfadjoint we can estimate the norm as follows:
\begin{align*}
  \|\pi(a)\|^2 &= \|\pi(a)^*\pi(a)\| 
= \rho_B\big(\pi(a)^*\pi(a)\big) 
= \rho_B\big(\pi(a^*a)\big) \\
&\le \rho_A(a^*a) =\|a\|^2.
\end{align*}
\end{proof}

\begin{cor}\label{Cauto}
Let $A$\ be a $C^*$--algebra with unit. 
Then each unit-preserving $*$-automorphism $\pi:A\rightarrow A$\ satisfies for
all $a\in A$:
\[ \|\pi(a)\| = \|a\|  \]
\end{cor}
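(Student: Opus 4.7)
The plan is to reduce the claim to the preceding Corollary~\ref{Chomo} by applying it simultaneously to $\pi$ and to its inverse $\pi^{-1}$. The key observation is that once we know $\pi^{-1}$ is again a unit-preserving $*$-morphism $A\to A$, the two inequalities $\|\pi(a)\|\le\|a\|$ and $\|\pi^{-1}(b)\|\le\|b\|$ combine to force equality.

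First I would verify that $\pi^{-1}:A\to A$ is a unit-preserving $*$-morphism. Since $\pi$ is a bijective algebra homomorphism, $\pi^{-1}$ is automatically an algebra homomorphism. From $\pi(1)=1$ one gets $\pi^{-1}(1)=1$; and for any $b\in A$, writing $b=\pi(a)$ with $a=\pi^{-1}(b)$, the identity $\pi(a^*)=\pi(a)^*$ rewrites as $\pi^{-1}(b^*)=a^*=\pi^{-1}(b)^*$. So $\pi^{-1}$ satisfies the hypotheses of Corollary~\ref{Chomo}.

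Next I would apply Corollary~\ref{Chomo} twice. Applied to $\pi$, it gives $\|\pi(a)\|\le\|a\|$ for every $a\in A$. Applied to $\pi^{-1}$, it gives $\|\pi^{-1}(b)\|\le\|b\|$ for every $b\in A$; specialising to $b:=\pi(a)$ yields $\|a\|=\|\pi^{-1}(\pi(a))\|\le\|\pi(a)\|$. Combining the two inequalities gives $\|\pi(a)\|=\|a\|$, as required.

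There is no real obstacle here; the only point that needs a line of justification is the verification that a $*$-automorphism is invertible as a $*$-morphism, i.e.\ that $\pi^{-1}$ inherits the structural properties from $\pi$. Once that is in hand, the proof is a one-line symmetric application of the previous corollary.
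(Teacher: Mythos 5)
Your argument is correct and is essentially the same as the one in the paper: both obtain the equality by applying Corollary~\ref{Chomo} to $\pi$ and then to $\pi^{-1}$, combining the two inequalities $\|\pi(a)\|\le\|a\|=\|\pi^{-1}(\pi(a))\|\le\|\pi(a)\|$. The paper simply takes for granted that $\pi^{-1}$ is again a unit-preserving $*$-morphism, a point you verify explicitly.
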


\begin{proof}
\[
\|\pi(a)\| \le \| a\| = \|\pi^{-1}\big(\pi(a)\big)\| \le \|\pi(a)\|.
\]
\end{proof}

We extend Corollary~\ref{Cauto} to the case where $\pi$ is injective but not
necessarily onto.
This is not a direct consequence of Corollary~\ref{Cauto} because it is not 
a priori clear that the image of a $*$-morphism is closed and hence a
$C^*$-algebra in its own right.

\begin{prop}\label{Cmono}
Let $A$\ and $B$\ be $C^*$--algebras with unit. 
Each injective unit-preserving $*$--morphism $\pi:A\rightarrow B$\ satisfies
\[  \|\pi(a)\| = \|a\|  \]
for all $a\in A$. 
\end{prop}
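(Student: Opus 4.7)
The plan is to proceed by contradiction, first reducing to the selfadjoint case via the $C^{*}$-property and then using a polynomial approximation to produce a nonzero element mapped to zero by $\pi$.

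First I would reduce to selfadjoint elements. Since $\pi$ preserves the involution and the unit, $\pi(a)^{*}\pi(a)=\pi(a^{*}a)$, so the $C^{*}$-property gives $\|\pi(a)\|^{2}=\|\pi(a^{*}a)\|$ and $\|a\|^{2}=\|a^{*}a\|$. Hence it suffices to prove $\|\pi(b)\|=\|b\|$ for every selfadjoint $b\in A$ (arising as $b=a^{*}a$). For such $b$, both $b$ and $\pi(b)$ are normal, so Proposition~\ref{EigenschaftenSpektrum}(\ref{normal}) gives $\|b\|=\rho_{A}(b)$ and $\|\pi(b)\|=\rho_{B}(\pi(b))$. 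Moreover the argument in the proof of Corollary~\ref{Chomo} already yields $\sigma_{B}(\pi(b))\subseteq\sigma_{A}(b)$, and by Proposition~\ref{EigenschaftenSpektrum}(\ref{sa}) both spectra lie in $\mathbb{R}$. It remains to establish the reverse inclusion.

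Next I would argue by contradiction. Assume there exists $\lambda_{0}\in\sigma_{A}(b)\setminus\sigma_{B}(\pi(b))$. Since $\sigma_{B}(\pi(b))\subset\mathbb{R}$ is compact (Proposition~\ref{prop:spektrum}) and disjoint from $\lambda_{0}$, choose a continuous function $f\colon\mathbb{R}\to\mathbb{R}$ with $f|_{\sigma_{B}(\pi(b))}\equiv 0$ and $f(\lambda_{0})=1$. By the Stone--Weierstrass theorem applied to the compact set $K:=\sigma_{A}(b)\cup\sigma_{B}(\pi(b))\subset\mathbb{R}$, there exist real polynomials $p_{n}$ with $p_{n}\to f$ uniformly on $K$. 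For any real polynomial $q$, the element $q(b)$ is selfadjoint; combining the polynomial spectral mapping theorem (Proposition~\ref{EigenschaftenSpektrum}(\ref{Polynominvarianz})) with Proposition~\ref{EigenschaftenSpektrum}(\ref{normal}) gives $\|q(b)\|=\rho_{A}(q(b))=\sup_{\lambda\in\sigma_{A}(b)}|q(\lambda)|$. Applied to $q=p_{n}-p_{m}$ this shows that $(p_{n}(b))_{n}$ is Cauchy in $A$ and therefore converges to some element $f(b)\in A$; by the same formula $\|f(b)\|=\sup_{\lambda\in\sigma_{A}(b)}|f(\lambda)|\geq|f(\lambda_{0})|=1$. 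Analogously $(p_{n}(\pi(b)))_{n}$ converges in $B$ to some $f(\pi(b))$, but the same norm identity applied in $B$ gives $\|p_{n}(\pi(b))\|=\sup_{\mu\in\sigma_{B}(\pi(b))}|p_{n}(\mu)|\to 0$, so $f(\pi(b))=0$.

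Finally, since $\pi$ is continuous (Corollary~\ref{Chomo}) and $\pi(p_{n}(b))=p_{n}(\pi(b))$ for every $n$, passing to the limit yields $\pi(f(b))=f(\pi(b))=0$. By injectivity of $\pi$ this forces $f(b)=0$, contradicting $\|f(b)\|\geq 1$. Hence $\sigma_{A}(b)=\sigma_{B}(\pi(b))$, which gives the desired equality of spectral radii and completes the proof. The main obstacle is the convergence argument: one must build just enough continuous functional calculus for selfadjoint elements to promote the strict inclusion of spectra into the existence of a nonzero element in the kernel of $\pi$. This hinges on the fact that for \emph{selfadjoint} $b$, real polynomial expressions in $b$ remain normal, allowing the norm to be computed as a supremum over $\sigma_{A}(b)$; without the reduction in the first paragraph, the polynomial approximation step would not control $\|p_n(b)-p_m(b)\|$.
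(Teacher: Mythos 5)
Your proof is correct and follows essentially the same strategy as the paper's: reduce to selfadjoint elements via the $C^{*}$-property, then use Stone--Weierstrass to build a continuous function vanishing on $\sigma_B(\pi(b))$ but not on $\sigma_A(b)$, approximate it by polynomials to get a convergent sequence in $A$, and derive a contradiction with injectivity of $\pi$. The only small variation is that you work directly with an arbitrary $\lambda_0\in\sigma_A(b)\setminus\sigma_B(\pi(b))$ and conclude the stronger statement $\sigma_A(b)=\sigma_B(\pi(b))$, whereas the paper supposes $\|\pi(a)\|<\|a\|$ and uses the specific endpoints $\pm\|a\|\in\sigma_A(a)$; both routes are valid and of comparable length.
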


\begin{proof}
By Corollary~\ref{Chomo} we only have to show $\|\pi(a)\| \geq \|a\|$.
Once we know this inequality for selfadjoint elements it follows for all
$a\in A$ because
$$
\|\pi(a)\|^2 \,=\, \|\pi(a)^*\pi(a)\| \,=\, \|\pi(a^*a)\| \,\geq\, \|a^*a\|
\,=\, \|a\|^2 .
$$
Assume there exists a selfadjoint element $a\in A$ such that $\|\pi(a)\| <
\|a\|$.
By Proposition~\ref{EigenschaftenSpektrum} $\sigma_A(a) \subset
[-\|a\|,\|a\|]$ and $\rho_A(a) = \|a\|$, hence $\|a\|\in \sigma_A(a)$ or 
$-\|a\|\in \sigma_A(a)$.
Similarly, $\sigma_B(\pi(a)) \subset [-\|\pi(a)\|,\|\pi(a)\|]$. 

Choose a continuous function $f:[-\|a\|,\|a\|] \to \R$ such that $f$ vanishes
on $[-\|\pi(a)\|,\|\pi(a)\|]$ and $f(-\|a\|)=f(\|a\|)=1$.
By the Stone-Weierstrass theorem we can find polynomials $P_n$ such that 
$\|f-P_n\|_{C^0([-\|a\|,\|a\|])} \to 0$ as $n \to \infty$.
In particular, $\|P_n\|_{C^0([-\|\pi(a)\|,\|\pi(a)\|])}=
\|f-P_n\|_{C^0([-\|\pi(a)\|,\|\pi(a)\|])} \to 0$ as $n \to
\infty$.
We may and will assume that the polynomials $P_n$ are real.

From $\sigma_B(P_n(\pi(a))) = P_n(\sigma_B(\pi(a))) \subset
P_n([-\|\pi(a)\|,\|\pi(a)\|])$ we see
$$
\|P_n(\pi(a))\| = \rho_B(P_n(\pi(a))) \leq \max |P_n([-\|\pi(a)\|,\|\pi(a)\|])|
\stackrel{n\to\infty}{\longrightarrow} 0
$$
and thus
$$
\lim_{n\to \infty}P_n(\pi(a)) = 0.
$$
The sequence $(P_n(a))_n$ is a Cauchy sequence because
\begin{eqnarray*}
\|P_n(a)-P_m(a)\| &=& \rho_A(P_n(a)-P_m(a)) \\
&\leq&
\max |(P_n-P_m)([-\|a\|,\|a\|])|\\ 
&=& 
\|P_n-P_m\|_{C^0([-\|a\|,\|a\|])}\\
&\leq& 
\|P_n-f\|_{C^0([-\|a\|,\|a\|])} +
\|f-P_m\|_{C^0([-\|a\|,\|a\|])}.
\end{eqnarray*}
Denote its limit by $f(a)\in A$.
Since $\|a\|\in \sigma_A(a)$ or $-\|a\|\in \sigma_A(a)$ and since
$f(\pm\|a\|)=1$ we have 
$$
\|f(a)\| = \lim_{n\to\infty} \|P_n(a)\| = \lim_{n\to\infty} \rho_B(P_n(a))\geq
\lim_{n\to\infty} 
|P_n(\pm\|a\|)| = 1.
$$
Hence $f(a) \not=0$.
But $\pi(f(a)) = \pi(\lim_{n\to\infty} P_n(a)) = 
\lim_{n\to\infty}\pi( P_n(a)) = \lim_{n\to\infty}P_n(\pi(a)) = 0$.
This contradicts the injectivity of $\pi$.
\end{proof}

\section{The canonical commutator relations}\label{sec-CCR}\indexn{canonical
  commutator relations>defemidx} 

In this section we introduce Weyl systems and CCR-representations.
They formalize the ``canonical commutator relations'' from quantum field
theory in an ``exponentiated form'' as we shall see later.
The main result of the present section is Theorem~\ref{CCRunique} which
says that for each symplectic vector space there is an essentially unique
CCR-representation.
Our approach follows ideas in \cite{Man}.
A different proof of this result may be found in \cite[Sec.~5.2.2.2]{BR2}.

Let $(V,\omega)$ be a \defem{symplectic vector space}\indexn{symplectic vector
  space>defemidx}, i.~e., $V$ is a real 
vector space of finite or infinite dimension and $\omega : V\times V \to \R$
is an antisymmetric bilinear map such that $\omega(\phi,\psi)=0$ for all
$\psi\in V$ implies $\phi=0$.

\Definition{
A \defem{\defidx{Weyl system}} of $(V,\omega)$ consists of a $C^*$-algebra
$A$ with unit and a map 
$W : V \to A$ such that for all $\phi,\psi\in V$ we
have\indexs{Wphi@$W(\phi)$, Weyl-system} 
\begin{itemize}
\item[(i)]
$W(0)=1$,
\item[(ii)]
$W(-\phi) = W(\phi)^*$,
\item[(iii)]
$W(\phi)\cdot W(\psi) = e^{-i\omega(\phi,\psi)/2}\,W(\phi+\psi)$.
\end{itemize}
}

Condition~(iii) says that $W$ is a representation of the additive group $V$ in
$A$ up to the ``twisting factor'' $e^{-i\omega(\phi,\psi)/2}$.
Note that since $V$ is not given a topology there is no requirement on $W$
to be continuous.
In fact, we will see that even in the case when $V$ is finite-dimensional
and so $V$ carries a canonical topology $W$ will in general not be continuous.

\Example{\label{CCR1}
We construct a Weyl system for an arbitrary symplectic vector space
$(V,\omega)$.
Let $H = L^2(V,\Co)$ be the Hilbert space of square-integrable complex-valued
functions on $V$ with respect to the counting measure, i.~e., $H$ consists
of those functions $F:V\to\Co$ that vanish everywhere except for countably
many points and satisfy 
$$
\|F\|_{L^2}^2 := \sum_{\phi\in V} |F(\phi)|^2 < \infty.
$$
The Hermitian product on $H$ is given by
$$
(F,G)_{L^2} = \sum_{\phi\in V} \ovl{F(\phi)}\cdot G(\phi).
$$
Let $A:=\LL(H)$ be the $C^*$-algebra of bounded linear operators on $H$
as in Example~\ref{OperatorenaufHilbertraum}.
We define the map $W:V\to A$ by
$$
(W(\phi)F)(\psi) := e^{i\omega(\phi,\psi)/2}\,F(\phi+\psi).
$$
Obviously, $W(\phi)$ is a bounded linear operator on $H$ for any
$\phi\in V$ and $W(0)=\id_H=1$.
We check (ii) by making the substitution $\chi=\phi+\psi$:
\begin{eqnarray*}
(W(\phi)F,G)_{L^2} 
&=&
\sum_{\psi\in V} \ovl{(W(\phi)F)(\psi)}\, G(\psi)\\
&=&
\sum_{\psi\in V} \ovl{e^{i\omega(\phi,\psi)/2}\,F(\phi+\psi)}\, G(\psi)\\
&=&
\sum_{\chi\in V} \ovl{e^{i\omega(\phi,\chi-\phi)/2}\,F(\chi)}\, G(\chi-\phi)\\
&=&
\sum_{\chi\in V} \ovl{e^{i\omega(\phi,\chi)/2}}\cdot\ovl{F(\chi)}\cdot
G(\chi-\phi)\\ 
&=&
\sum_{\chi\in V} \ovl{F(\chi)}\cdot e^{i\omega(-\phi,\chi)/2}\cdot
G(\chi-\phi)\\
&=&
(F,W(-\phi)G)_{L^2}.
\end{eqnarray*}
Hence $W(\phi)^* = W(-\phi)$.
To check (iii) we compute
\begin{eqnarray*}
(W(\phi)(W(\psi)F))(\chi) 
&=&
e^{i\omega(\phi,\chi)/2}\,(W(\psi)F)(\phi+\chi)\\
&=&
e^{i\omega(\phi,\chi)/2}\,e^{i\omega(\psi,\phi+\chi)/2}\,F(\phi+\chi+\psi)\\
&=&
e^{i\omega(\psi,\phi)/2}\,e^{i\omega(\phi+\psi,\chi)/2}\,F(\phi+\chi+\psi)\\
&=&
e^{-i\omega(\phi,\psi)/2}\,(W(\phi+\psi)F)(\chi).
\end{eqnarray*}
Thus $W(\phi)W(\psi)=e^{-i\omega(\phi,\psi)/2}\,W(\phi+\psi)$.
Let $\CCR(V,\omega)$\indexs{CCRV@$\CCR(V,\omega)$, CCR-algebra of a symplectic
  vector space} be the $C^*$-subalgebra of $\LL(H)$ generated by the 
elements $W(\phi)$, $\phi\in V$. 
Then $\CCR(V,\omega)$ together with the map $W$ forms a Weyl-system for
$(V,\omega)$. 
}

\begin{prop}\label{Weyl1}
Let $(A,W)$ be a Weyl system of a symplectic vector space $(V,\omega)$.
Then
\begin{enumerate}
\item\label{Weyl1-1}
$W(\phi)$ is unitary for each $\phi\in V$,
\item\label{Weyl1-2}
$\|W(\phi)-W(\psi)\| = 2$ for all $\phi,\psi\in V$, $\phi\not=\psi$,
\item\label{Weyl1-3}
The algebra $A$ is not separable unless $V=\{0\}$,
\item\label{Weyl1-4}
the family $\{W(\phi)\}_{\phi\in V}$ is linearly independent.
\end{enumerate}
\end{prop}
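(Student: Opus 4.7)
The plan is to derive the four assertions in sequence, with the computation of the spectrum of $W(\chi)$ for $\chi \neq 0$ as the central ingredient.

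First, for assertion (\ref{Weyl1-1}), I would simply combine axioms (i)--(iii). Applying (iii) to $\phi$ and $-\phi$ yields $W(\phi)W(-\phi) = e^{-i\omega(\phi,-\phi)/2}W(0) = 1$, and symmetrically $W(-\phi)W(\phi)=1$. Together with (ii), this gives $W(\phi)^*W(\phi) = W(\phi)W(\phi)^* = 1$, so $W(\phi)$ is unitary.

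For assertion (\ref{Weyl1-2}), the key observation is that since $W(\psi)$ is unitary (and hence an isometry in the sense of Corollary~\ref{Cauto}), one has $\|W(\phi)-W(\psi)\| = \|W(\phi)W(\psi)^* - 1\|$, and by (iii)
\begin{equation*}
W(\phi)W(\psi)^* = W(\phi)W(-\psi) = e^{i\omega(\phi,\psi)/2}\,W(\phi-\psi).
\end{equation*}
So with $\chi := \phi-\psi \neq 0$ and $c := e^{i\omega(\phi,\psi)/2} \in S^1$, it suffices to show $\|cW(\chi) - 1\| = 2$. Since $cW(\chi)$ is unitary, Proposition~\ref{EigenschaftenSpektrum} gives $\|cW(\chi)-1\| = \rho_A(cW(\chi)-1) = \sup\{|c\lambda-1| : \lambda \in \sigma_A(W(\chi))\}$, so everything reduces to understanding $\sigma_A(W(\chi))$.

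This is the main step. I would exploit the twisted commutation relation obtained by a short calculation from (iii):
\begin{equation*}
W(\phi)\,W(\chi)\,W(\phi)^{-1} = e^{-i\omega(\phi,\chi)}\,W(\chi)
\qquad \text{for all } \phi\in V.
\end{equation*}
Since conjugate elements have equal spectrum, $\sigma_A(W(\chi))$ is invariant under multiplication by $e^{-i\omega(\phi,\chi)}$ for every $\phi\in V$. For $\chi \neq 0$, the nondegeneracy of $\omega$ ensures that $\phi \mapsto \omega(\phi,\chi)$ is surjective onto $\R$, so the set of admissible rotations is all of $S^1$. Combined with $\sigma_A(W(\chi))\subset S^1$ (Proposition~\ref{EigenschaftenSpektrum}(\ref{unitaer})) and the fact that the spectrum is nonempty and closed, this forces $\sigma_A(W(\chi)) = S^1$. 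Then $\sup_{\lambda\in S^1}|c\lambda-1| = 2$, completing (\ref{Weyl1-2}). I expect this spectral argument to be the chief subtlety; all else is formal.

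The remaining two assertions follow quickly. For (\ref{Weyl1-3}), if $V\neq\{0\}$ then $V$ is uncountable, and by (\ref{Weyl1-2}) the family $\{W(\phi)\}_{\phi\in V}$ consists of uncountably many points in $A$ at pairwise distance $2$; the open balls of radius $1$ around them are disjoint, precluding the existence of a countable dense subset. For (\ref{Weyl1-4}), I would argue by induction on the number $n$ of elements. The base case $n=1$ uses that $W(\phi_1)$ is unitary, hence nonzero. For the induction step, given a hypothetical relation $\sum_{j=1}^n \alpha_j W(\phi_j) = 0$ with distinct $\phi_j$ and all $\alpha_j \neq 0$, conjugation by $W(\phi)$ transforms it via the twisted commutation relation into $\sum_{j=1}^n \alpha_j e^{-i\omega(\phi,\phi_j)} W(\phi_j) = 0$. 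Choosing $\phi$ with $\omega(\phi,\phi_1-\phi_2)\neq 0$ (possible by nondegeneracy), a suitable linear combination of the two relations eliminates $W(\phi_1)$ while keeping a nonzero coefficient in front of $W(\phi_2)$, yielding a nontrivial relation among $n-1$ of the $W(\phi_j)$, contradicting the induction hypothesis.
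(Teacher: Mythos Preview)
Your proof is correct and follows essentially the same route as the paper's: the same conjugation identity $W(\phi)W(\chi)W(\phi)^{-1} = e^{-i\omega(\phi,\chi)}W(\chi)$ is used to show $\sigma_A(W(\chi))=S^1$ for $\chi\neq 0$, giving (\ref{Weyl1-2}), and the same induction via conjugation handles (\ref{Weyl1-4}). Two minor points: in (\ref{Weyl1-2}) the isometry $\|aW(\psi)\|=\|a\|$ follows directly from the $C^*$-property rather than from Corollary~\ref{Cauto}; and in (\ref{Weyl1-4}) you need $\omega(\phi,\phi_1-\phi_2)\notin 2\pi\Z$ rather than merely $\neq 0$, but this is equally available by nondegeneracy.
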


\begin{proof}
From $W(\phi)^*W(\phi) = W(-\phi)\,W(\phi) = e^{i\omega(-\phi,\phi)}W(0)
= 1$ and similarly $W(\phi)\,W(\phi)^* = 1$ we see that $W(\phi)$ is 
unitary.

To show (\ref{Weyl1-2}) let $\phi,\psi\in V$ with $\phi\not=\psi$.
For arbitrary $\chi\in V$ we have
\begin{eqnarray*}
W(\chi)\, W(\phi-\psi)\, W(\chi)^{-1} 
&=&
W(\chi)\, W(\phi-\psi)\, W(\chi)^{*}\\
&=&
e^{-i\omega(\chi,\phi-\psi)/2}\,W(\chi+\phi-\psi)\, W(-\chi)\\
&=&
e^{-i\omega(\chi,\phi-\psi)/2}\,e^{-i\omega(\chi+\phi-\psi,-\chi)/2}\,W(\chi+\phi-\psi-\chi)\\ 
&=&
e^{-i\omega(\chi,\phi-\psi)}\,W(\phi-\psi).
\end{eqnarray*}
Hence the spectrum satisfies
$$
\sigma_A(W(\phi-\psi)) = \sigma_A(W(\chi)\, W(\phi-\psi)\, W(\chi)^{-1}) = 
e^{-i\omega(\chi,\phi-\psi)}\,\sigma_A(W(\phi-\psi)) .
$$
Since $\phi-\psi\not=0$ the real number $\omega(\chi,\phi-\psi)$ runs through
all of 
$\R$ as $\chi$ runs through $V$.
Therefore the spectrum of $W(\phi-\psi)$ is $\mathrm{U}(1)$-invariant.
By Proposition~\ref{EigenschaftenSpektrum}~(\ref{unitaer}) the spectrum
is contained in $S^1$ and by Proposition~\ref{prop:spektrum} it is nonempty.
Hence $\sigma_A(W(\phi-\psi)) = S^1$ and therefore
$$
\sigma_A(e^{i\omega(\psi,\phi)/2}\,W(\phi-\psi)) = S^1.
$$
Thus $\sigma_A(e^{i\omega(\psi,\phi)/2}\,W(\phi-\psi)-1)$ is the circle of
radius $1$ centered at $-1$.
Now Proposition~\ref{EigenschaftenSpektrum}~(\ref{normal}) says
$$
\|e^{i\omega(\psi,\phi)/2}\,W(\phi-\psi)-1\| = 
\rho_A\left(e^{i\omega(\psi,\phi)/2}\,W(\phi-\psi)-1\right) = 2.
$$
From $W(\phi)-W(\psi) = W(\psi)(W(\psi)^*\,W(\phi)-1) =
W(\psi)(e^{i\omega(\psi,\phi)/2}\,W(\phi-\psi)-1)$ we conclude
\begin{eqnarray*}
\lefteqn{\|W(\phi)-W(\psi)\|^2 }\\
&=&
\|(W(\phi)-W(\psi))^*(W(\phi)-W(\psi))\|\\
&=&
\|(e^{i\omega(\psi,\phi)/2}\,W(\phi-\psi)-1)^*\,W(\psi)^*\,W(\psi)\,
(e^{i\omega(\psi,\phi)/2}\,W(\phi-\psi)-1)\|\\
&=&
\|(e^{i\omega(\psi,\phi)/2}\,W(\phi-\psi)-1)^*\,
(e^{i\omega(\psi,\phi)/2}\,W(\phi-\psi)-1)\|\\
&=&
\|e^{i\omega(\psi,\phi)/2}\,W(\phi-\psi)-1\|^2\\
&=&
4.
\end{eqnarray*}
This shows (\ref{Weyl1-2}).
Assertion (\ref{Weyl1-3}) now follows directly since the balls of radius $1$ centered
at $W(\phi)$, $\phi\in V$, form an uncountable collection of mutually disjoint
open subsets.

We show (\ref{Weyl1-4}).
Let $\phi_j \in V$, $j=1,\ldots,n$, be pairwise different and let 
$\sum_{j=1}^n \alpha_j W(\phi_j)=0$.
We show $\alpha_1 = \ldots \alpha_n = 0$ by induction on $n$.
The case $n=1$ is trivial by (\ref{Weyl1-1}).
Without loss of generality assume $\alpha_n\not= 0$.
Hence
$$
W(\phi_n) = \sum_{j=1}^{n-1} \frac{-\alpha_j}{\alpha_n} W(\phi_j)
$$
and therefore
\begin{eqnarray*}
1 &=& W(\phi_n)^* W(\phi_n) \\
&=&
\sum_{j=1}^{n-1} \frac{-\alpha_j}{\alpha_n} W(-\phi_n) W(\phi_j) \\
&=&
\sum_{j=1}^{n-1} \frac{-\alpha_j}{\alpha_n} e^{-i\omega(-\phi_n,\phi_j)/2} 
W(\phi_j-\phi_n) \\
&=&
\sum_{j=1}^{n-1} \beta_j W(\phi_j-\phi_n)
\end{eqnarray*}
where we have put $\beta_j := \frac{-\alpha_j}{\alpha_n} 
e^{i\omega(\phi_n,\phi_j)/2}$.
For an arbitrary $\psi\in V$ we obtain
\begin{eqnarray*}
1 &=& W(\psi)\cdot 1 \cdot W(-\psi) \\
&=&
\sum_{j=1}^{n-1} \beta_j W(\psi) W(\phi_j-\phi_n) W(-\psi)\\
&=&
\sum_{j=1}^{n-1} \beta_j e^{-i\omega(\psi,\phi_j-\phi_n)} W(\phi_j-\phi_n) .
\end{eqnarray*}
From 
$$
\sum_{j=1}^{n-1} \beta_j W(\phi_j-\phi_n) =
\sum_{j=1}^{n-1} \beta_j e^{-i\omega(\psi,\phi_j-\phi_n)} W(\phi_j-\phi_n) 
$$
we conclude by the induction hypothesis
$$
\beta_j = \beta_j e^{-i\omega(\psi,\phi_j-\phi_n)}
$$
for all $j= 1,\ldots,n-1$.
If some $\beta_j\not=0$, then $e^{-i\omega(\psi,\phi_j-\phi_n)}=1$, hence
$$
\omega(\psi,\phi_j-\phi_n) = 0
$$
for all $\psi\in V$.
Since $\omega$ is nondegenerate $\phi_j-\phi_n=0$, a contradiction.
Therefore all $\beta_j$ and thus all $\alpha_j$ are zero, a contradiction.
\end{proof}

\Remark{\label{pi1}
Let $(A,W)$ be a Weyl system of the symplectic vector space $(V,\omega)$.
Then the linear span of the $W(\phi)$, $\phi\in V$, is closed under
multiplication and under $*$. 
This follows directly from the properties of a Weyl system.
We denote this linear span by $\la W(V) \ra \subset A$\indexs{WV@$\la W(V)
  \ra$, linear span of the $W(\phi)$, $\phi\in V$}. 
Now if $(A',W')$ is another Weyl system of the same symplectic vector space
$(V,\omega)$, then there is a unique linear map $\pi: \la W(V) \ra \to \la
W'(V)\ra$ determined by $\pi(W(\phi))= W'(\phi)$.
Since $\pi$ is given by a bijection on the bases $\{W(\phi)\}_{\phi\in V}$ and
$\{W'(\phi)\}_{\phi\in V}$ it is a linear isomorphism.
By the properties of a Weyl system $\pi$ is a $*$-isomorphism.
In other words, there is a unique $*$-isomorphism such that the following
diagram commutes

{\hspace{6cm}
\xymatrix{
& \la W'(V) \ra\\
V \ar[r]^{W_1} \ar[ur]^{W_2} & \la W(V) \ra \ar[u]_\pi
}}
}

\Remark{
On $\la W(V) \ra$ we can define the norm
$$
\Big\|\sum_{\phi} a_\phi W(\phi)\Big\|_1 := \sum_{\phi} |a_\phi| .
\indexs{Norm1@$\|\cdot\|_1$, norm on $\la W(V) \ra$}
$$
This norm is not a $C^*$-norm but for every $C^*$-norm $\|\cdot\|_0$ on $\la
W(V) \ra$ we have by the triangle inequality and by
Proposition~\ref{Weyl1}~(\ref{Weyl1-1}) 
\begin{equation}
\|a\|_0 \leq \|a\|_1
\label{L1groesserC}
\end{equation}
for all $a\in \la W(V) \ra$.
}

\begin{lemma}
Let $(A,W)$ be a Weyl system of a symplectic vector space $(V,\omega)$.
Then 
$$
\| a \|_{\mathrm{max}} :=
\sup \{\|a\|_0\, |\, \|\cdot\|_0 \mbox{ is a $C^*$-norm on }\la W(V) \ra\}
\indexs{Normmax@$\|\cdot\|_{\protect\mathrm{max}}$, $C^*$-norm on $\la W(V)
  \ra$} 
$$
defines a $C^*$-norm on $\la W(V) \ra$.
\end{lemma}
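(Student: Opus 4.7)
The plan is to verify the $C^*$-norm axioms one by one, the central issue being that the defining supremum is actually finite and that the resulting seminorm is nondegenerate.

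First I would establish finiteness. The preceding lemma, specifically inequality (\ref{L1groesserC}), gives $\|a\|_0 \leq \|a\|_1$ for \emph{every} $C^*$-norm $\|\cdot\|_0$ on $\la W(V)\ra$ and every $a \in \la W(V)\ra$. Taking the supremum yields $\|a\|_{\max} \leq \|a\|_1 < \infty$, so $\|\cdot\|_{\max}$ is a well-defined function $\la W(V)\ra \to [0,\infty)$. At the same time, the set of $C^*$-norms over which the supremum is taken is nonempty: by Example~\ref{CCR1} there exists a Weyl system $(\CCR(V,\omega),W')$ realized inside $\LL(L^2(V,\Co))$, and by Remark~\ref{pi1} the unique $*$-isomorphism $\pi : \la W(V)\ra \to \la W'(V)\ra$ can be used to pull back the restriction of the operator norm to $\la W'(V)\ra$, producing an honest $C^*$-norm on $\la W(V)\ra$.

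Next I would check the seminorm axioms, which are all inherited from the individual $C^*$-norms. Homogeneity $\|\lambda a\|_{\max} = |\lambda|\,\|a\|_{\max}$ and the triangle inequality $\|a+b\|_{\max} \leq \|a\|_{\max}+\|b\|_{\max}$ follow by taking suprema in the corresponding identities/inequalities for $\|\cdot\|_0$, using finiteness to avoid $\infty-\infty$ issues. Submultiplicativity $\|ab\|_{\max}\leq \|a\|_{\max}\|b\|_{\max}$ follows from $\|ab\|_0 \leq \|a\|_0\,\|b\|_0 \leq \|a\|_{\max}\,\|b\|_{\max}$ and then taking $\sup$ over $\|\cdot\|_0$. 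Isometry of $*$ is immediate from $\|a^*\|_0 = \|a\|_0$. For the $C^*$-property one uses that each $\|\cdot\|_0$ satisfies $\|a^*a\|_0 = \|a\|_0^2$; since the map $t \mapsto t^2$ is monotone on $[0,\infty)$, taking suprema on both sides gives $\|a^*a\|_{\max} = \sup_{\|\cdot\|_0}\|a\|_0^2 = \bigl(\sup_{\|\cdot\|_0}\|a\|_0\bigr)^2 = \|a\|_{\max}^2$.

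The one step requiring genuine input, and the main obstacle, is definiteness: if $\|a\|_{\max}=0$ one must conclude $a = 0$. This is where I would use the specific $C^*$-norm produced above from Example~\ref{CCR1}. Call this norm $\|\cdot\|_*$; since it is one of the norms appearing in the supremum, $\|a\|_{\max}=0$ forces $\|a\|_*=0$. But $\|\cdot\|_*$ is the pullback under $\pi$ of the operator norm on $\la W'(V)\ra \subset \LL(L^2(V,\Co))$, which is a genuine norm, and $\pi$ is a linear isomorphism by Remark~\ref{pi1} (its bijectivity rests precisely on the linear independence statement Proposition~\ref{Weyl1}(\ref{Weyl1-4})). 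Hence $\|a\|_*=0$ implies $a=0$, completing the proof that $\|\cdot\|_{\max}$ is a $C^*$-norm.
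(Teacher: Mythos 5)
Your proof is correct and follows essentially the same structure as the paper's: establish nonemptiness of the family of $C^*$-norms, establish finiteness of the supremum via inequality~(\ref{L1groesserC}), then verify the axioms pointwise by taking suprema. The one place where you do genuinely more work than necessary is nonemptiness. You go through Example~\ref{CCR1} and Remark~\ref{pi1} to build a $C^*$-norm by pulling back the operator norm on $\la W'(V)\ra\subset\LL(L^2(V,\Co))$ along the $*$-isomorphism $\pi$. That is valid, but there is a shorter route which the paper takes: $\la W(V)\ra$ is by construction a $*$-subalgebra of the $C^*$-algebra $A$ belonging to the given Weyl system, so the $C^*$-norm of $A$ already restricts to a $C^*$-norm on $\la W(V)\ra$, no isomorphism needed. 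Your discussion of definiteness, while correct, is also more elaborate than it has to be: once you know the family of $C^*$-norms is nonempty, $\|a\|_{\max}=0$ forces $\|a\|_0=0$ for any member $\|\cdot\|_0$ of the family, and that member is already a genuine norm. Your remaining verifications (homogeneity, triangle inequality, submultiplicativity, $*$-isometry, and the $C^*$-property via monotonicity of $t\mapsto t^2$ on $[0,\infty)$) are exactly the routine checks the paper gestures at with ``the properties of a $C^*$-norm are easily checked.''
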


\begin{proof}
The given $C^*$-norm on $A$ restricts to one on $\la W(V) \ra$, so the
supremum is not taken on the empty set.
Estimate (\ref{L1groesserC}) shows that the supremum is finite.
The properties of a $C^*$-norm are easily checked.
E.~g.\ the triangle inequality follows from
\begin{eqnarray*}
\|a+b\|_{\mathrm{max}} &=&
\sup\{\|a+b\|_0\, |\, \|\cdot\|_0 \mbox{ is a $C^*$-norm on }\la W(V) \ra\}\\
&\leq&
\sup\{\|a\|_0+\|b\|_0\, |\, \|\cdot\|_0 \mbox{ is a $C^*$-norm on }\la W(V)
\ra\}\\ 
&\leq&
\sup\{\|a\|_0\, |\, \|\cdot\|_0 \mbox{ is a $C^*$-norm on }\la W(V)
\ra\}\\
&&
+ \sup\{\|b\|_0\, |\, \|\cdot\|_0 \mbox{ is a $C^*$-norm on }\la W(V) \ra\}\\
&=&
\|a\|_{\mathrm{max}}+\|b\|_{\mathrm{max}} .
\end{eqnarray*}
The other properties are shown similarly.
\end{proof}

\begin{lemma}\label{einfach}
Let $(A,W)$ be a Weyl system of a symplectic vector space $(V,\omega)$.
Then the completion $\overline{\la W(V) \ra}^{\mathrm{max}}$ of $\la W(V) \ra$
with respect to $\|\cdot\|_{\mathrm{max}}$ is \defem{simple}\indexn{simple
  $C^*$--algebra>defemidx}, i.~e., it has no 
nontrivial closed twosided $*$-ideals. 
\end{lemma}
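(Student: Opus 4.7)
The plan is to reduce simplicity of $\bar A := \overline{\la W(V) \ra}^{\mathrm{max}}$ to algebraic simplicity of the underlying $*$-algebra $\la W(V)\ra$, and then to upgrade this via a bridging argument exploiting density of $\la W(V)\ra$ in $\bar A$. Throughout I will use the phase identity $W(\chi) W(\phi) W(-\chi) = e^{-i\omega(\chi,\phi)} W(\phi)$ derived inside the proof of Proposition~\ref{Weyl1}(\ref{Weyl1-2}).

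First I would prove that $\la W(V)\ra$ has no nontrivial twosided $*$-ideals. Let $J$ be such an ideal containing $a = \sum_{j=1}^n a_j W(\phi_j) \neq 0$ with pairwise distinct $\phi_j$ and every $a_j \neq 0$. Left multiplication by the unitary $W(-\phi_1)$ (which lies in $\la W(V) \ra$) keeps the element in $J$ and brings it into a form with $\phi_1 = 0$. I then induct on $n$: the case $n = 1$ gives $a_1 \cdot 1 \in J$, so $1 \in J$ and $J = \la W(V)\ra$. For $n \geq 2$, since $\phi_2 \neq 0$ and $\omega$ is nondegenerate I pick $\chi \in V$ with $\omega(\chi, \phi_2) = \pi$, and form
\[
b := a - W(\chi) a W(-\chi) = \sum_{j=1}^n a_j\bigl(1 - e^{-i\omega(\chi, \phi_j)}\bigr) W(\phi_j) \in J.
\]
The $W(0)$-term of $b$ vanishes while the $W(\phi_2)$-term survives, so $b$ is a nonzero element of $J$ with strictly fewer Weyl terms, closing the induction.

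Next, for any closed twosided $*$-ideal $I \subset \bar A$, the intersection $I \cap \la W(V)\ra$ is a twosided $*$-ideal of $\la W(V)\ra$, so equals $\{0\}$ or $\la W(V)\ra$ by the previous step. In the latter case, closedness of $I$ together with density of $\la W(V)\ra$ gives $I = \bar A$. Otherwise, the quotient morphism $\pi \colon \bar A \to \bar A / I$ is injective on $\la W(V)\ra$, and $p(a) := \|\pi(a)\|_{\bar A / I}$ defines a $C^*$-norm on $\la W(V)\ra$, automatically bounded above by $\|\cdot\|_{\mathrm{max}}$ by Corollary~\ref{Chomo}. Should $p = \|\cdot\|_{\mathrm{max}}$ hold, then $\pi$ is isometric on the dense subalgebra $\la W(V)\ra$, extends uniquely to an isometric $*$-morphism $\bar A \to \bar A / I$ that must agree with $\pi$, hence $\pi$ is injective and $I = \{0\}$.

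The main obstacle is thus to prove uniqueness of the $C^*$-norm on $\la W(V)\ra$. To this end I would introduce the faithful tracial state $\tau\bigl(\sum_\phi a_\phi W(\phi)\bigr) := a_0$, whose positivity follows from $\tau(a^*a) = \sum_\phi |a_\phi|^2$. For any $C^*$-norm $q$ on $\la W(V)\ra$ the bound $|a_0| \leq q(a)$ is obtained by an averaging argument: since the unitary $W(\chi)$ has $q$-norm $1$, the replacement $a \leftsquigarrow \tfrac{1}{2}\bigl(a + W(\chi) a W(-\chi)\bigr)$ is $q$-contractive, and by the phase identity above it multiplies each Weyl coefficient $a_\phi$ by $\tfrac{1}{2}\bigl(1 + e^{-i\omega(\chi,\phi)}\bigr)$. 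Iterating, with $\chi_k$ successively chosen so that $\omega(\chi_k, \phi_k) = \pi$ cancels the $W(\phi_k)$-term for each $\phi_k \neq 0$ while fixing $W(0)$, gives $|a_0| \leq q(a)$, and unitary translation then yields $|a_\phi| \leq q(a)$ for every $\phi$. The resulting bound, combined with the GNS construction for $\tau$ and the amenability of the abelian group $(V,+)$ (so that the reduced and universal twisted group $C^*$-norms coincide), forces $q = \|\cdot\|_{\mathrm{max}}$. The genuine analytic content of the proof lies in this last uniqueness-of-norm step; the preceding algebraic and topological reductions are comparatively routine.
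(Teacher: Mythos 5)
Your proposal is essentially correct but follows a genuinely different route from the paper's argument, with one step that imports substantial external machinery.

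Both proofs hinge on the same elementary averaging trick: conjugation by $W(\chi)$ rescales the coefficient of $W(\phi)$ by $e^{-i\omega(\chi,\phi)}$, and by choosing $\chi$ with $\omega(\chi,\phi)=\pi$ one can cancel a single Weyl term while fixing the $W(0)$-term. The difference is in how this is deployed. The paper works \emph{directly} with an arbitrary closed two-sided $*$-ideal $I\subset\overline{\la W(V)\ra}^{\mathrm{max}}$: it extends the coefficient functional $P\colon a\mapsto a_0 W(0)$ to the completion via the matrix coefficient bound $|a_0|=|(\delta_0,a\,\delta_0)_{L^2}|\le\|a\|_{\mathrm{Op}}\le\|a\|_{\mathrm{max}}$ in the concrete representation of Example~\ref{CCR1}, performs the averaging with an $\epsilon$-remainder because elements of the completion are only approximately finite Weyl sums, and closes the argument via $I_0:=I\cap\Co W(0)$. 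You instead split the problem into \emph{algebraic simplicity of $\la W(V)\ra$} (a finite inductive argument, correct as you wrote it) plus \emph{uniqueness of the $C^*$-norm on $\la W(V)\ra$}, and observe that these two together imply simplicity of the completion (also correct: an ideal $I$ with $I\cap\la W(V)\ra=\{0\}$ gives a quotient $C^*$-norm, which by uniqueness equals $\|\cdot\|_{\mathrm{max}}$, forcing the quotient map to be isometric and hence $I=\{0\}$). This decomposition is tidy and avoids the $\epsilon$-bookkeeping.

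The place where your argument diverges most sharply from the paper's is the uniqueness-of-norm step. The averaging gives $|a_\phi|\le q(a)$ for every $C^*$-norm $q$ on $\la W(V)\ra$ and every $\phi$, hence the trace $\tau$ is $q$-continuous and its GNS representation is $q$-bounded; this yields $\|\cdot\|_{\mathrm{red}}\le q\le\|\cdot\|_{\mathrm{max}}$. To close the gap you invoke that, for a discrete abelian (hence amenable) group with a circle-valued $2$-cocycle, the reduced and full twisted group $C^*$-norms coincide. That is a genuine theorem --- a twisted version of Hulanicki's characterization --- but it is a substantial black box that the paper nowhere develops, and it is of a quite different flavor from the elementary matrix-coefficient estimate the paper uses to bound $P$. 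Observe also that the paper avoids any explicit appeal to uniqueness of the $C^*$-norm precisely because that uniqueness is later \emph{deduced} from Lemma~\ref{einfach} in the proof of Theorem~\ref{CCRunique}; your route effectively front-loads a closely related statement. So: the algebraic and structural reductions in your write-up are sound and cleanly stated, but the analytic heart of the argument is replaced by an appeal to twisted-group-$C^*$-algebra amenability theory rather than the paper's self-contained conditional-expectation estimate, and if you want a proof at the level of the surrounding text you would need to supply that ingredient yourself.
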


\begin{proof}
By Remark~\ref{pi1} we may assume that $(A,W)$ is the Weyl system constructed
in Example~\ref{CCR1}.
In particular, $\la W(V) \ra$ carries the $C^*$-norm
$\|\cdot\|_{\mathrm{Op}}$, the operator norm given by $\la W(V) \ra \subset
\LL(H)$ where $H=L^2(V,\Co)$. 

Let $I\subset \overline{\la W(V) \ra}^{\mathrm{max}}$ be a closed twosided
$*$-ideal. 
Then $I_0 := I \cap \Co\cdot W(0)$ is a (complex) vector subspace in $\Co\cdot
W(0) = \Co\cdot 1 
\cong \Co$ and thus $I_0=\{0\}$ or $I_0=\Co\cdot W(0)$.
If $I_0=\Co\cdot W(0)$, then $I$ contains $1$ and therefore $I=\overline{\la
  W(V) \ra}^{\mathrm{max}}$. 
Hence we may assume $I_0=\{0\}$.

Now we look at the projection map $P:\la W(V) \ra \to \Co\cdot W(0)$,
$P(\sum_{\phi} a_\phi W(\phi)) = a_0 W(0)$.
We check that $P$ extends to a bounded operator on $\overline{\la W(V)
\ra}^{\mathrm{max}}$.
Let $\delta_0\in L^2(V,\Co)$ denote the function given by $\delta_0(0)=1$
and $\delta_0(\phi)=0$ otherwise.
For $a=\sum_{\phi} a_\phi W(\phi)$ and $\psi\in V$ we have 
$$
(a\cdot \delta_0)(\psi) = (\sum_{\phi} a_\phi W(\phi) \delta_0)(\psi)
= (\sum_{\phi} a_\phi\, e^{i\omega(\phi,\psi)/2}\delta_0(\phi+\psi)
= a_{-\psi}\, e^{i\omega(-\psi,\psi)/2} = a_{-\psi}
$$
and therefore
$$
(\delta_0,a\cdot\delta_0)_{L^2} 
= \sum_{\psi\in V}\ovl{\delta_0(\psi)}(a\cdot \delta_0)(\psi)
= (a\cdot \delta_0)(0) = a_0.
$$
Moreover, $\|\delta_0\|=1$.
Thus
$$
\|P(a)\|_{\mathrm{max}} \,=\, \|a_0 W(0)\|_{\mathrm{max}} 
\,=\, |a_0| \,=\, |(\delta_0,a\cdot\delta_0)_{L^2}| 
\,\leq\, \|a\|_{\mathrm{Op}} \,\leq\, \|a\|_{\mathrm{max}}
$$
which shows that $P$ extends to a bounded operator on $\overline{\la W(V)
\ra}^{\mathrm{max}}$.

Now let $a\in I \subset \overline{\la W(V) \ra}^{\mathrm{max}}$.
Fix $\epsilon >0$.
We write 
$$
a = a_0W(0) + \sum_{j=1}^n a_j\, W(\phi_j) + r
$$
where the $\phi_j \not= 0$ are pairwise different and the remainder term $r$
satisfies $\|r\|_{\mathrm{max}} < \epsilon$.
For any $\psi\in V$ we have
$$
I \ni W(\psi)\, a\, W(-\psi) =
a_0W(0) + \sum_{j=1}^n a_j\, e^{-i\omega(\psi,\phi_j)} \,W(\phi_j) + r(\psi)
$$
where $\|r(\psi)\|_{\mathrm{max}} =\|W(\psi)rW(-\psi)\|_{\mathrm{max}} \leq
\|r\|_{\mathrm{max}} < \epsilon$.
If we choose $\psi_1$ and $\psi_2$ such that $e^{-i\omega(\psi_1,\phi_n)} = 
- e^{-i\omega(\psi_2,\phi_n)}$, then adding the two elements
\begin{eqnarray*}
a_0W(0) + \sum_{j=1}^n a_j\, e^{-i\omega(\psi_1,\phi_j)} \,W(\phi_j) +
r(\psi_1) 
&\in& I\\
a_0W(0) + \sum_{j=1}^n a_j\, e^{-i\omega(\psi_2,\phi_j)} \,W(\phi_j) +
r(\psi_2)  
&\in& I
\end{eqnarray*}
yields
$$
a_0W(0) + \sum_{j=1}^{n-1} a_j'\, W(\phi_j) + r_1\in I
$$
where $\|r_1\|_{\mathrm{max}}=\|\frac{r(\psi_1)+r(\psi_2)}{2}\|_{\mathrm{max}}
<\frac{\epsilon+\epsilon}{2} = \epsilon$. 
Repeating this procedure we eventually get
$$
a_0\, W(0) + r_n \in I
$$
where $\|r_n\|_{\mathrm{max}} < \epsilon$.
Since $\epsilon$ is arbitrary and $I$ is closed we conclude
$$
P(a) = a_0\, W(0) \in I_0,
$$
thus $a_0=0$.

For $a = \sum_{\phi}a_\phi\, W(\phi)\in I$ and arbitrary $\psi\in V$ we have
$W(\psi)a\in I$ as well, hence $P(W(\psi)a)=0$.
This means $a_{-\psi}=0$ for all $\psi$, thus $a=0$.
This shows $I=\{0\}$.
\end{proof}

\Definition{
A Weyl system $(A,W)$ of a symplectic vector space $(V,\omega)$ is called
a \defem{\defidx{CCR-representation}} of $(V,\omega)$ if $A$ is generated
as a $C^*$-algebra by the elements $W(\phi)$, $\phi\in V$.
In this case we call $A$ a \defem{\defidx{CCR-algebra}} of $(V,\omega)$
}

Of course, for any Weyl system $(A,W)$ we can simply replace $A$ by the
$C^*$-subalgebra generated by the elements $W(\phi)$, $\phi\in V$, and
we obtain a CCR-representation.

Existence of Weyl systems and hence CCR-representations has been established
in Example~\ref{CCR1}.
Uniqueness also holds in the appropriate sense.

\begin{thm}\label{CCRunique}
Let $(V,\omega)$ be a symplectic vector space and let $(A_1,W_1)$ and
$(A_2,W_2)$ be two CCR-representations of $(V,\omega)$.

Then there exists a unique $*$-isomorphism $\pi:A_1 \to A_2$ such that the
diagram

{\hspace{6cm}
\xymatrix{
& A_2\\
V \ar[r]^{W_1} \ar[ur]^{W_2} & A_1 \ar@{.>}[u]_\pi
}}

commutes.
\end{thm}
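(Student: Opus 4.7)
The plan is to leverage the $*$-isomorphism $\pi_0 : \la W_1(V) \ra \to \la W_2(V) \ra$ already produced in Remark~\ref{pi1}, which sends $W_1(\phi) \mapsto W_2(\phi)$, and show that it extends uniquely to a $*$-isomorphism $\pi : A_1 \to A_2$ of the completions. The non-trivial point is that a priori $\pi_0$ is only an algebraic $*$-isomorphism between (non-complete) $*$-algebras equipped with possibly different $C^*$-norms, so the extension requires an isometry statement.

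First I would note that, since $A_j$ is by assumption generated as a $C^*$-algebra by $\{W_j(\phi)\}_{\phi\in V}$, the $*$-subalgebra $\la W_j(V) \ra$ is dense in $A_j$. The restriction of the norm of $A_j$ to $\la W_j(V) \ra$ is a $C^*$-norm, and by definition of $\|\cdot\|_{\mathrm{max}}$ it satisfies $\|a\|_{A_j} \leq \|a\|_{\mathrm{max}}$ for every $a \in \la W_j(V) \ra$. Consequently the identity on $\la W_j(V) \ra$ extends to a norm-decreasing surjective $*$-homomorphism
\[
\iota_j : \overline{\la W_j(V) \ra}^{\mathrm{max}} \longrightarrow A_j.
\]
The crucial step is to show that $\iota_j$ is actually an isometric isomorphism. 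Its kernel is a closed two-sided $*$-ideal in $\overline{\la W_j(V) \ra}^{\mathrm{max}}$, which by Lemma~\ref{einfach} must be either $\{0\}$ or the whole algebra; since $\iota_j(W_j(0)) = 1_{A_j} \neq 0$, the kernel is $\{0\}$. Then Proposition~\ref{Cmono} applies and forces $\iota_j$ to be isometric, hence a $*$-isomorphism. (Here one uses that $\pi_0$ transports the maximal $C^*$-norm on $\la W_1(V) \ra$ to the maximal $C^*$-norm on $\la W_2(V) \ra$, which is immediate from the definition since $\pi_0$ is a $*$-isomorphism of $*$-algebras.)

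Having established $\iota_1$ and $\iota_2$ as isometric $*$-isomorphisms, the composition
\[
\pi \,:=\, \iota_2 \circ \iota_1^{-1} : A_1 \longrightarrow A_2
\]
is a $*$-isomorphism that restricts to $\pi_0$ on $\la W_1(V) \ra$, so in particular $\pi \circ W_1 = W_2$. For uniqueness, any $*$-isomorphism $\pi' : A_1 \to A_2$ satisfying $\pi' \circ W_1 = W_2$ must agree with $\pi_0$ on $\la W_1(V) \ra$ by the uniqueness part of Remark~\ref{pi1}; since $*$-morphisms between unital $C^*$-algebras are continuous (Corollary~\ref{Chomo}) and $\la W_1(V) \ra$ is dense in $A_1$, this forces $\pi' = \pi$.

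The main obstacle is the isometry statement for $\iota_j$, which rests on two non-trivial facts proved earlier: the simplicity of $\overline{\la W(V) \ra}^{\mathrm{max}}$ (Lemma~\ref{einfach}), without which different $C^*$-completions of $\la W(V) \ra$ could a priori be genuinely inequivalent, and the automatic isometricity of injective unital $*$-morphisms (Proposition~\ref{Cmono}). Everything else is essentially bookkeeping once these two inputs are in place.
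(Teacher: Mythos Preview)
Your proof is correct and is essentially the same argument as the paper's, using the same two key inputs (Lemma~\ref{einfach} and Proposition~\ref{Cmono}). The only organizational difference is that you treat $A_1$ and $A_2$ symmetrically via the two maps $\iota_j:\overline{\la W_j(V)\ra}^{\mathrm{max}}\to A_j$, whereas the paper first proves that $\pi_0$ extends to an isometry $\overline{\la W_1(V)\ra}^{\mathrm{max}}\to A_2$ and then specializes to $A_2=A_1$, $\pi_0=\id$, to identify $A_1$ with the max-completion; either way one arrives at $\|\cdot\|_{A_j}=\|\cdot\|_{\mathrm{max}}$ on $\la W_j(V)\ra$. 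One minor point: your composition $\iota_2\circ\iota_1^{-1}$ implicitly uses the isometric extension of $\pi_0$ between the two max-completions, which you acknowledge in your parenthetical remark but might make explicit in the final write-up.
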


\begin{proof}
We have to show that the $*$-isomorphism $\pi:\la W_1(V) \ra \to \la W_2(V)
\ra$ as constructed in Remark~\ref{pi1} extends to an isometry
$(A_1,\|\cdot\|_1) \to (A_2,\|\cdot\|_2)$. 
Since the pull-back of the norm $\|\cdot\|_2$ on $A_2$ to $\la W_1(V)\ra$ via
$\pi$ is a $C^*$-norm we have $\|\pi(a)\|_2 \leq \|a\|_{\mathrm{max}}$ for all
$a\in \la W_1(V)\ra$.
Hence $\pi$ extends to a $*$-morphism $\overline{\la W_1(V)\ra}^{\mathrm{max}}
\to A_2$.
By Lemma~\ref{einfach} the kernel of $\pi$ is trivial, hence $\pi$ is
injective.
Proposition~\ref{Cmono} implies that $\pi : (\overline{\la W_1(V)\ra}^{\mathrm
{max}},\|\cdot\|_{\mathrm{max}}) \to (A_2,\|\cdot\|_2)$ is an isometry.

In the special case $(A_1,\|\cdot\|_1) = (A_2,\|\cdot\|_2)$ where $\pi$ is the
identity this yields $\|\cdot\|_{\mathrm{max}}=\|\cdot\|_1$.
Thus for arbitrary $A_2$ the map $\pi$ extends to an isometry
$(A_1,\|\cdot\|_1) \to (A_2,\|\cdot\|_2)$.
\end{proof}

From now on we will call $\CCR(V,\omega)$  as defined in Example~\ref{CCR1}
{\em the} CCR-algebra of $(V,\omega)$.
 
\begin{cor}\label{cor:simple}
CCR-algebras of symplectic vector spaces are simple\indexn{simple
  $C^*$--algebra}, i.~e., all unit preserving 
$*$-morphisms to other $C^*$-algebras are injective.
\end{cor}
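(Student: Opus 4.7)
The plan is to observe that the corollary follows quickly by combining the algebraic simplicity established in Lemma~\ref{einfach} with the identification of $\CCR(V,\omega)$ as $\overline{\la W(V)\ra}^{\mathrm{max}}$ that emerged in the proof of Theorem~\ref{CCRunique}, together with the standard fact that kernels of unit preserving $*$-morphisms between $C^*$-algebras are closed twosided $*$-ideals.

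First, I would note the equivalence between the two possible meanings of ``simple.'' Any unit preserving $*$-morphism $\pi\colon \CCR(V,\omega)\to B$ is continuous by Corollary~\ref{Chomo}, hence its kernel $I:=\ker(\pi)$ is norm-closed; it is obviously a twosided $*$-ideal since $\pi$ is an algebra homomorphism intertwining the involutions. Thus injectivity of every such $\pi$ is equivalent to the statement that $\CCR(V,\omega)$ contains no nontrivial closed twosided $*$-ideals.

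Next, I would invoke the uniqueness theorem to reduce to the concrete model. By Theorem~\ref{CCRunique}, any CCR-algebra is $*$-isomorphic to the one constructed in Example~\ref{CCR1}, so it suffices to prove simplicity of $\CCR(V,\omega)$ as built there. The proof of Theorem~\ref{CCRunique} actually established more than the stated isomorphism: applied with $(A_1,\|\cdot\|_1)=(A_2,\|\cdot\|_2)=\CCR(V,\omega)$ and $\pi=\id$, it yields $\|\cdot\|_{\mathrm{max}}=\|\cdot\|_1$ on $\la W(V)\ra$, so that
\[
\CCR(V,\omega)\;=\;\overline{\la W(V)\ra}^{\mathrm{max}}
\]
as normed $*$-algebras.

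Finally, Lemma~\ref{einfach} says exactly that $\overline{\la W(V)\ra}^{\mathrm{max}}$ has no nontrivial closed twosided $*$-ideals. Consequently $I\in\{\{0\},\CCR(V,\omega)\}$. Since $\pi$ is unit preserving, $\pi(1)=1\neq 0$, so $1\notin I$ and therefore $I\neq\CCR(V,\omega)$, forcing $I=\{0\}$, i.e., $\pi$ is injective. There is no real obstacle here; the only subtlety worth flagging is the use of the identification $\CCR(V,\omega)=\overline{\la W(V)\ra}^{\mathrm{max}}$, which is what makes Lemma~\ref{einfach} directly applicable rather than only applicable to the abstract completion.
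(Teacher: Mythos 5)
Your proof is correct and follows essentially the same route as the paper, whose own proof is the one-liner ``Direct consequence of Corollary~\ref{Chomo} and Lemma~\ref{einfach}.'' You have merely spelled out the intermediate steps: continuity from Corollary~\ref{Chomo} gives closedness of the kernel, the identification $\CCR(V,\omega)=\overline{\la W(V)\ra}^{\mathrm{max}}$ extracted from the proof of Theorem~\ref{CCRunique} makes Lemma~\ref{einfach} applicable, and unitality rules out the full kernel.
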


\begin{proof}
Direct consequence of Corollary~\ref{Chomo} and Lemma~\ref{einfach}.
\end{proof}

\begin{cor}\label{cfuncSYMPLCCR}
Let $(V_1,\omega_1)$ and $(V_2,\omega_2)$ be two symplectic vector spaces
and let $S:V_1\to V_2$ be a \defem{symplectic linear map}\indexn{symplectic
  linear map>defemidx}, i.~e., $\omega_2(S\phi,S\psi) = 
\omega_1(\phi,\psi)$ for all $\phi,\psi\in V_1$. 

Then there exists a unique injective $*$-morphism $\CCR(S): \CCR(V_1,\omega_1)
\to \CCR(V_2,\omega_2)$ such that the diagram 
\indexs{CCRS@$\CCR(S)$, $*$-morphism induced by a symplectic linear map}

{\hspace{3.5cm}
\xymatrixcolsep{5pc}
\xymatrix{
V_1 \ar[r]^S \ar[d]_{W_1} & V_2 \ar[d]^{W_2}\\
\CCR(V_1,\omega_1) \ar[r]^{\CCR(S)} & \CCR(V_2,\omega_2) 
}}

commutes.
\end{cor}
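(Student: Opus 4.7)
The plan is to construct $\CCR(S)$ by exhibiting $W_2\circ S$ as (essentially) a second Weyl system on $(V_1,\omega_1)$ and then invoking the uniqueness Theorem~\ref{CCRunique}. First I would define the map $\widetilde{W} := W_2 \circ S : V_1 \to \CCR(V_2,\omega_2)$ and verify that the pair $(A', \widetilde{W})$, where $A'$ is the $C^*$-subalgebra of $\CCR(V_2,\omega_2)$ generated by the elements $\widetilde{W}(\phi) = W_2(S\phi)$ for $\phi \in V_1$, is a Weyl system of $(V_1,\omega_1)$. The conditions $\widetilde{W}(0) = W_2(0) = 1$ and $\widetilde{W}(-\phi) = W_2(-S\phi) = W_2(S\phi)^* = \widetilde{W}(\phi)^*$ are immediate from the linearity of $S$ and the Weyl properties of $W_2$. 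The crucial computation is
\[
\widetilde{W}(\phi)\widetilde{W}(\psi) = W_2(S\phi)W_2(S\psi) = e^{-i\omega_2(S\phi,S\psi)/2}\,W_2(S\phi + S\psi) = e^{-i\omega_1(\phi,\psi)/2}\,\widetilde{W}(\phi+\psi),
\]
where the last equality uses precisely the hypothesis that $S$ is a symplectic linear map. By construction $(A',\widetilde{W})$ is then a CCR-representation of $(V_1,\omega_1)$.

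Next I would apply Theorem~\ref{CCRunique} to the two CCR-representations $(\CCR(V_1,\omega_1), W_1)$ and $(A', \widetilde{W})$ of the same symplectic vector space $(V_1,\omega_1)$. This yields a unique $*$-isomorphism $\pi : \CCR(V_1,\omega_1) \to A'$ satisfying $\pi \circ W_1 = \widetilde{W} = W_2 \circ S$. Composing with the inclusion $\iota : A' \hookrightarrow \CCR(V_2,\omega_2)$ gives the desired morphism $\CCR(S) := \iota \circ \pi$, which makes the required diagram commute.

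For injectivity, since $\pi$ is a $*$-isomorphism and $\iota$ is an inclusion, $\CCR(S)$ is automatically injective; alternatively, this is a direct consequence of Corollary~\ref{cor:simple}, which guarantees that any unit-preserving $*$-morphism out of $\CCR(V_1,\omega_1)$ is injective. Uniqueness of $\CCR(S)$ follows because any $*$-morphism $\Phi : \CCR(V_1,\omega_1) \to \CCR(V_2,\omega_2)$ satisfying $\Phi \circ W_1 = W_2 \circ S$ is determined on the linear span $\langle W_1(V_1)\rangle$, which is dense in $\CCR(V_1,\omega_1)$; continuity of $*$-morphisms (Corollary~\ref{Chomo}) then forces $\Phi = \CCR(S)$.

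I do not expect any serious obstacle here: the main content is the short symplectic computation above, and everything else is a bookkeeping application of Theorem~\ref{CCRunique} and Corollary~\ref{cor:simple}. The only mild subtlety is to remember that $\CCR(S)$ need not be unit-preserving a priori merely as a linear map, but $W_2(S\cdot 0) = 1$ forces $\CCR(S)(1) = 1$, so Corollary~\ref{Chomo} applies directly.
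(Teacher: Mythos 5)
Your proof is correct and follows the same strategy as the paper: verify that $W_2\circ S$ is a Weyl system of $(V_1,\omega_1)$ using the symplectic property of $S$, then invoke Theorem~\ref{CCRunique}. Your restriction to the generated $C^*$-subalgebra $A'$ is a small but welcome refinement, since it makes $(A',\widetilde W)$ a genuine CCR-representation so that Theorem~\ref{CCRunique} applies as stated; the paper's one-line proof invokes the theorem directly for $(\CCR(V_2,\omega_2),W_2\circ S)$, which is a Weyl system but is a CCR-representation only when $S$ is surjective, so one must tacitly pass through the generated subalgebra anyway.
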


\begin{proof}
One immediately sees that $(\CCR(V_2,\omega_2),W_2 \circ S)$ is a Weyl system
of $(V_1,\omega_1)$.
Theorem~\ref{CCRunique} yields the result.
\end{proof}

 From uniqueness of the map $\CCR(S)$ we conclude that
$\CCR(\id_V)=\id_{\CCR(V,\omega)}$ and $\CCR(S_2\circ S_1) = \CCR(S_2)\circ
\CCR(S_1)$.
In other words, we have constructed a functor
\indexs{CCRfunctor@$\protect\CCR$, functor $\protect\SYMPL \to \protect\CALG$}
$$
\CCR : \SYMPL \to \CALG
$$
where $\SYMPL$ denotes the category whose objects are symplectic vector spaces
and whose morphisms are symplectic linear maps, i.~e., linear maps
$A:(V_1,\omega_1)\to (V_2,\omega_2)$ with $A^*\omega_2=\omega_1$.
\indexs{Sympl@${\protect\SYMPL}$, category of symplectic vector spaces and
  symplectic linear maps} 
By $\CALG$ we denote the category whose objects are $C^*$-algebras and whose
morphisms are {\em injective} unit preserving $*$-morphisms.
\indexs{Calg@$\protect{\CALG}$, category of $C^*$-algebras and injective
  $*$-morphisms} 
Observe that symplectic linear maps are automatically injective.

In the case $V_1 = V_2$ the induced $*$-automorphisms $\CCR(S)$ are called
\defem{\defidx{Bogoliubov transformation}} in the physics literature.

\section{Quantization functors}\label{sec:quantfunc}\indexn{functor}

In the preceding section we introduced the functor $\CCR$ from the
category $\SYMPL$ of symplectic vector spaces (with symplectic linear 
maps as morphisms)  to the category $\CALG$ of $C^*$-algebras (with unit
preserving $*$-monomorphisms as morphisms). 
We want to link these considerations to Lorentzian manifolds and the analysis
of normally hyperbolic operators.
In order to achieve this we introduce two further categories which are of
geometric-analytical nature.

So far we have treated real and complex vector bundles $E$ over the manifold
$M$ on an equal footing.
From now on we will restrict ourselves to real bundles.
This is not very restrictive since we can always forget a complex structure
and regard complex bundles as real bundles.
We will have to give the real bundle $E$ another piece of additional
structure.
We will assume that $E$ comes with a \defem{nondegenerate inner product}
$\la\cdot,\cdot\ra$\indexs{innerproduct@$\la\cdot,\cdot\ra$, (nondegenerate)
  inner product on a vector bundle}. 
This means that each fiber $E_x$ is equipped with a nondegenerate symmetric
bilinear form depending smoothly on the base point $x$.
In other words, $\la\cdot,\cdot\ra$ is like a Riemannian metric except that it
need not be positive definite.

We say that a differential operator $P$ acting on sections in $E$ is
\defem{formally selfadjoint}\indexn{formally selfadjoint differential
  operator>defemidx} with respect to to the inner product
$\la\cdot,\cdot\ra$ of $E$, if 
$$
\int_M \la P\phi,\psi\ra \dV = \int_M \la \phi,P\psi\ra \dV
$$ 
for all $\phi,\psi \in \DD(M,E)$.

\Example{\label{ex:Boxfsa}
Let $M$ be an $n$-dimensional timeoriented connected Lorentzian manifold with
metric $g$.
Let $E$ be a real vector bundle over $M$ with inner product
$\la\cdot,\cdot\ra$.
Let $\nabla$ be a connection on $E$.
We assume that $\nabla$\indexs{nabla@$\nabla$, connection on a vector bundle}
is \defem{metric}\indexn{metric connection on a vector bundle} with respect to
$\la\cdot,\cdot\ra$, i.~e.,  
$$
\partial_X \la \phi,\psi\ra 
= \la\nabla_X\phi,\psi\ra + \la\phi,\nabla_X\psi\ra
$$
for all sections $\phi,\psi\in C^\infty(M,E)$.
The inner product induces an isomorphism\footnote{In case $E=TM$ with the
  Lorentzian metric as the inner product we write $\flat$ instead of $\FB$ and
  $\sharp$ instead of $\FB^{-1}$.
Hence if $E=T^*M$ we have $\FB=\sharp$ and $\FB^{-1}=\flat$.} 
$\FB : E\to E^*$\indexs{finger@$\protect\FB$, isomorphism $E\rightarrow E^*$
  induced by an inner product}, $\phi\mapsto 
\la\phi,\cdot\ra$.
Since $\nabla$ is metric we get
\begin{eqnarray*}
\left(\nabla_X(\FB\phi)\right)\cdot\psi 
&=&
\partial_X((\FB\phi)\psi) - (\FB\phi)(\nabla_X\psi) =
\partial_X\la\phi,\psi\ra - \la\phi,\nabla_X\psi\ra \\
&=&
\la\nabla_X\phi,\psi\ra =
(\FB(\nabla_X\phi))\cdot\psi,
\end{eqnarray*}
hence
$$
\nabla_X(\FB\phi)=\FB(\nabla_X\phi).
$$
Equation (\ref{eq:nablastern}) says for all $\phi,\psi\in C^\infty(M,E)$
$$
(\FB\phi)\cdot(\Box^\nabla\psi)
= \sum_{i=1}^n\epsilon_i\nabla_{e_i}(\FB\phi)\cdot\nabla_{e_i}\psi - \div(V_1),
$$
thus 
$$
\la\phi,\Box^\nabla\psi\ra = \sum_{i=1}^n\epsilon_i\la\nabla_{e_i}\phi,
\nabla_{e_i}\psi\ra - \div(V_1),
$$
where $V_1$ is a smooth vector field with $\supp(V_1) \subset
\supp(\phi)\cap\supp(\psi)$ and $e_1,\ldots,e_n$ is a local Lorentz
orthonormal tangent frame, $\epsilon_i=g(e_i,e_i)$.
Interchanging the roles of $\phi$ and $\psi$ we get
$$
\la\Box^\nabla\phi,\psi\ra = \sum_{i=1}^n\epsilon_i\la\nabla_{e_i}\phi,
\nabla_{e_i}\psi\ra - \div(V_2),
$$
and therefore
$$
\la\phi,\Box^\nabla\psi\ra - \la\Box^\nabla\phi,\psi\ra = \div(V_2-V_1).
$$
If $\supp(\phi)\cap\supp(\psi)$ is compact we obtain
$$
\int_M\la\phi,\Box^\nabla\psi\ra\dV - \int_M\la\Box^\nabla\phi,\psi\ra\dV 
= \int_M\div(V_2-V_1)\dV = 0,
$$
thus $\Box^\nabla$ is formally selfadjoint.
If, moreover, $B\in C^\infty(M,\End(E))$ is selfadjoint with res\-pect to
$\la\cdot,\cdot\ra$, then the normally hyperbolic operator $P=\Box^\nabla +B$
is formally selfadjoint.

As a special case let $E$ be the trivial real line bundle.
In other words, sections in $E$ are simply real-valued functions.
The inner product is given by the pointwise product.
In this case the inner product is positive definite.
Then the above discussion shows that the d'Alembert operator $\Box$ is
formally selfadjoint and, more generally, $P=\Box + B$ is formally selfadjoint
where the zero-order term $B$ is a smooth real-valued function on $M$.
This includes the (normalized) Yamabe operator\indexn{Yamabe operator} $P_g$
discussed in Section~\ref{seq:nonglobhyp}, 
the \defidx{Klein-Gordon operator} $P=\Box+m^2$ and the
\defidx{covariant Klein-Gordon operator} $P=\Box+m^2+\kappa\,\mathrm{scal}$, 
where $m$ and $\kappa$ are real constants.
}

\Example{\label{ex:1forms}
Let $M$ be an $n$-dimensional timeoriented connected Lorentzian manifold.
Let $\Lambda^kT^*M$ be the bundle of $k$-forms on $M$.
The Lorentzian metric induces a nondegenerate inner product
$\la\cdot,\cdot\ra$ on $\Lambda^kT^*M$, which is indefinite if $1\leq k\leq
n-1$. 
Let $d:C^\infty(M,\Lambda^kT^*M) \to
C^\infty(M,\Lambda^{k+1}T^*M)$\indexn{exterior differential} denote exterior
differentiation. 
Let $\delta:C^\infty(M,\Lambda^kT^*M) \to C^\infty(M,\Lambda^{k-1}T^*M)$ be
the \defem{codifferential}\indexs{delta@$\delta$, codifferential} .
This is the unique differential operator formally adjoint to $d$, i.~e.,
$$
\int_M\la d\phi,\psi\ra \dV = \int_M \la\phi,\delta\psi\ra\dV
$$
for all $\phi\in \DD(M,\Lambda^kT^*M)$ and $\psi\in
\DD(M,\Lambda^{k+1}T^*M)$.
Then the operator 
$$
P=d\delta+\delta d:C^\infty(M,\Lambda^kT^*M) \to
C^\infty(M,\Lambda^kT^*M)
$$
is obviously formally selfadjoint. 
The Levi-Civita connection\indexn{Levi-Civita connection} induces a metric
connection $\nabla$ on the bundle 
$\Lambda^kT^*M$.
The \defidx{Weitzenb{\"o}ck formula} relates $P$ and $\Box^\nabla$,
$P=\Box^\nabla + B$ where $B$ is a certain expression in the curvature tensor
of $M$, see \cite[Eq.~(12.92')]{Bes}.
In particular, $P$ and $\Box^\nabla$ have the same principal symbol, hence $P$ 
is normally hyperbolic.

The operator $P$ appears in physics in different contexts.
Let $M$ be of dimension $n=4$.
Let us first look at the \defidx{Proca equation} describing a spin-1 particle
of mass $m>0$\indexs{m@$m$, mass of a spin-1 particle}.
The quantum mechanical wave function\indexn{quantum mechanical wave function>defemidx}
of such a particle is given by $A\in 
C^\infty(M,\Lambda^1T^*M)$ and satisfies
\begin{equation}
\delta dA + m^2A=0.
\label{eq:proca}
\end{equation}
Applying $\delta$ to this equation and using $\delta^2=0$ and $m\not=0$ we
conclude $\delta A=0$.
Thus the Proca equation (\ref{eq:proca}) is equivalent to 
$$
(P+m^2)A=0
$$
together with the constraint $\delta A=0$.

Now we discuss \defidx{electrodynamics}.
Let $M$ be a $4$-dimensional globally hyperbolic Lorentzian manifold and
assume that the second deRham cohomology vanishes, $H^2(M;\R)=\{0\}$.
By Poincar\'e duality, the second cohomology with compact supports also
vanishes, $H^2_c(M;\R)=\{0\}$.
See \cite{War} for details on deRham cohomology.

The electric and the magnetic fields can be combined to the \defem{field
strength}\indexn{field strength in electrodynamics>defemidx} $F\in
\DD(M,\Lambda^2T^*M)$.
The \defidx{Maxwell equations} are 
$$
dF=0 \quad\mbox{ and }\quad \delta F=J
$$
where $J\in\DD(M,\Lambda^1T^*M)$ is the \defem{current density}\indexn{current
density in electrodynamics>defemidx}.
From $H^2_c(M;\R)=0$ we have that $dF=0$ implies the existence of a
\defem{vector potential}\indexn{vector potential in electrodynamics>defemidx}
$A\in \DD(M,\Lambda^1T^*M)$ with $dA=F$.
Now $\delta A\in \DD(M,\R)$ and by Theorem~\ref{cauchyglobhyp} we can find
$f \in \Csc(M,\R)$ with $\Box f = \delta A$.
We put $A' := A - df$.
We see that $dA' = dA = F$ and $\delta A'=\delta A - \delta df = \delta A-\Box
f = 0$.
A vector potential satisfying the last equation $\delta A'=0$ is said to be in 
\defidx{Lorentz gauge}.
From the Maxwell equations we conclude $\delta dA' = \delta F = J$.
Hence
$$
PA'=J.
$$
}

\Example{\label{ex:Diracsquare}
Next we look at spinors and the Dirac operator.
These concepts are studied in much detail on general semi-Riemannian manifolds
in \cite{Baum}, see also \cite[Sec.~2]{BGM} for an overview. 

Let $M$ be an $n$-dimensional oriented and timeoriented connected Lorentzian
manifold.
Furthermore, we assume that $M$ carries a spin structure.
Then we can form the \defidx{spinor bundle} $\Sigma M$ over $M$.
This is a complex vector bundle of rank $2^{n/2}$ or $2^{(n-1)/2}$ depending
on whether $n$ is even or odd.
This bundle carries an indefinite Hermitian product $h$.

The \defidx{Dirac operator} $D: C^\infty(M,\Sigma M) \to C^\infty(M,\Sigma M)$
is a formally selfadjoint differential operator of first order.
The Levi-Civita connection induces a metric connection $\nabla$ on $\Sigma M$.
The Weitzenb\"ock formula\indexn{Weitzenb{\"o}ck formula} says
$$
D^2 = \Box^\nabla + \frac14 \mathrm{scal}.
$$
Thus $D^2$ is normally hyperbolic.
Since $D$ is formally selfadjoint so is $D^2$.

If we forget the complex structure on $\Sigma M$, i.~e., we regard $\Sigma M$
as a real bundle, and if we let
$\la\cdot,\cdot\ra$ be given by the real part of $h$, then the operator
$P=D^2$ is of the type under consideration in this section.
}

Now we define the category of globally hyperbolic manifolds equipped with
normally hyperbolic operators:

\Definition{ 
The category $\GLOBHY$ is defined as follows:
\indexs{GlobHyp@$\protect{\GLOBHY}$, category of globally hyperbolic manifolds
equipped with formally selfadjoint normally hyperbolic operators}
The objects of $\GLOBHY$ are triples $(M,E,P)$ 
where $M$ is a globally hyperbolic Lorentzian manifold,
$E\to M$ is a real vector bundle with nondegenerate inner product, and $P$ is
a formally selfadjoint normally hyperbolic operator acting on sections in $E$.

A morphism $(M_1,E_1,P_1)\to (M_2,E_2,P_2)$ in $\GLOBHY$ is a pair $(f,F)$
where $f:M_1\to M_2$ is a timeorientation preserving isometric embedding
so that $f(M_1) \subset M_2$ is a causally compatible open subset.
Moreover, $F:E_1\to E_2$ is a vector bundle homomorphism over $f$ which is
fiberwise an isometry.
In particular,
$$
\xymatrix{
E_1 \ar[r]^F \ar[d]& E_2 \ar[d]\\
M_1 \ar[r]^f& M_2
}
$$
commutes.
Furthermore, $F$ has to preserve the normally hyperbolic operators, i.~e.,
$$
\xymatrix{
\DD(M_1,E_1) \ar[r]^{P_1} \ar[d]^{\ext}& \DD(M_1,E_1)\ar[d]^{\ext} \\
\DD(M_2,E_2) \ar[r]^{P_2} & \DD(M_2,E_2)
}
$$
commutes where $\ext(\phi)$ \indexs{ext@$\ext$, extension of section}
denotes the extension of $F\circ\phi\circ f^{-1} \in
\DD(f(M_1),E_2)$ to all of $M_2$ by $0$.
}

Notice that a morphism between two objects
$(M_1,E_1,P_1)$ and
$(M_2,E_2,P_2)$ can exist only if $M_1$ and
$M_2$ have equal dimension and if $E_1$ and $E_2$ have the same rank.
The condition that $f(M_1) \subset M_2$ is causally compatible does not follow
from the fact that $M_1$ and $M_2$ are globally hyperbolic.
For example, consider $M_2 = \R \times S^1$ with metric
$-dt^2+\mathrm{can}_{S^1}$ and let $M_1 \subset M_2$ be a small strip about a
spacelike helix.
Then $M_1$ and $M_2$ are both intrinsically globally hyperbolic but $M_1$ is
not a causally compatible subset of $M_2$.

\begin{center}\indexn{causally compatible subset}
\input{fig-globhypnotcomp}
\end{center}

\begin{lemma}\label{schiefesymmetrie}\indexn{Green's operator}
Let $M$ be a globally hyperbolic Lorentzian manifold.
Let $E\to M$ be a real vector bundle with nondegenerate inner product
$\langle\cdot\,,\cdot\rangle$. 
Consider a formally selfadjoint normally hyperbolic operator $P$ with advanced
and retarded Green's operators $G_\pm$ as in Corollary~\ref{functorsolve}.
Then
\begin{equation}
\int_M\la G_\pm\varphi,\psi\ra\dV = \int_M\la\varphi,G_\mp\psi\ra\dV
\label{Gsametrisch}
\end{equation}
holds for all $\varphi,\psi\in\DD(M,E)$.
\end{lemma}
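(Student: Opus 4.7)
The plan is to mimic the proof of Lemma~\ref{Greenadjungiert}, replacing the natural pairing $E^*\otimes E\to\K$ by the inner product $\la\cdot,\cdot\ra$ on $E$ and using the hypothesis that $P$ is formally selfadjoint with respect to this inner product.

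First I would record the obvious extension of formal selfadjointness: for any $\phi,\psi\in C^\infty(M,E)$ such that $\supp(\phi)\cap\supp(\psi)$ is compact, we still have
$$
\int_M\la P\phi,\psi\ra\dV = \int_M\la\phi,P\psi\ra\dV.
$$
This is the analogue of the second paragraph of Subsection~\ref{subseq:DiffOpDist}; it follows because the computation in the proof of Lemma~\ref{PminusP*} shows that $\la P\phi,\psi\ra - \la\phi,P\psi\ra = \div(W)$ for a smooth vector field $W$ with $\supp(W)\subset\supp(\phi)\cap\supp(\psi)$, so Gauss' divergence theorem (Theorem~\ref{thm:gauss}) kills the boundary term once the intersection of supports is compact.

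The main computation is then the straightforward three-step chain: given $\varphi,\psi\in\DD(M,E)$, we use property (i) of Green's operators to write $\psi = PG_\mp\psi$, apply the extended formal selfadjointness identity above, and then use (i) again to replace $PG_\pm\varphi$ by $\varphi$. In symbols,
\begin{eqnarray*}
\int_M\la G_\pm\varphi,\psi\ra\dV
&=& \int_M\la G_\pm\varphi,\,PG_\mp\psi\ra\dV\\
&=& \int_M\la PG_\pm\varphi,\,G_\mp\psi\ra\dV\\
&=& \int_M\la\varphi,\,G_\mp\psi\ra\dV.
\end{eqnarray*}

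The only step that needs justification is the middle one, since $G_\pm\varphi$ and $G_\mp\psi$ are not compactly supported; this is where the main (minor) obstacle lies. Here I would invoke global hyperbolicity: by property (iii)/(iii') of the Green's operators,
$$
\supp(G_\pm\varphi)\cap\supp(G_\mp\psi)
\subset J_\pm^M(\supp\varphi)\cap J_\mp^M(\supp\psi),
$$
and the right-hand side is compact since $M$ is globally hyperbolic and $\supp\varphi,\supp\psi$ are compact (see Corollary~\ref{cJ+Spastcompact} or the definition of global hyperbolicity directly). Hence the extended formal selfadjointness identity applies to $\phi:=G_\pm\varphi$ and $\psi':=G_\mp\psi$, legitimizing the middle equality and completing the proof. \hfill$\Box$
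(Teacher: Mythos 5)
Your proposal is correct and follows essentially the same route as the paper's own proof: the same three-equality chain, using $PG_\mp = \id$, then formal selfadjointness, then $PG_\pm = \id$, together with the observation that $\supp(G_\pm\varphi)\cap\supp(G_\mp\psi)$ is compact by global hyperbolicity to justify the partial integration. The only difference is that you spell out the extension of the selfadjointness identity to non-compactly-supported sections via Lemma~\ref{PminusP*} and Theorem~\ref{thm:gauss}, a detail the paper leaves implicit.
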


\begin{proof}
The proof is basically the same as that of Lemma~\ref{Greenadjungiert}.
Namely, for Green's operators we have $PG_\pm=\id_{\DD(M,E)}$ and therefore we
get 
\[ 
\int_M\la G_\pm\varphi,\psi\ra\dV = \int_M\la G_\pm\varphi,PG_\mp\psi\ra\dV  
= \int_M\la PG_\pm\varphi,G_\mp\psi\ra\dV = \int_M\la\varphi,G_\mp\psi\ra\dV
\]
where we have made use of the formal selfadjointness of $P$ in the second
equality.
Notice that $\supp(G_\pm\phi) \cap \supp(G_\mp\psi)\subset
J_\pm^M(\supp(\phi))\cap J_\mp^M(\supp(\psi))$ is compact in a globally
hyperbolic manifold so that the partial integration is justified.

Alternatively, we can also argue as follows.
The inner product yields the vector bundle isomorphism $\FB:E\to E^*$, $e
\mapsto \la e,\cdot\ra$, as noted in Example~\ref{ex:Boxfsa}. 
Formal selfadjointness now means that the operator $P$ corresponds to the dual
operator $P^*$ under $\FB$.
Now Lemma~\ref{schiefesymmetrie} is a direct consequence of
Lemma~\ref{Greenadjungiert}.
\end{proof}

If we want to deal with Lorentzian manifolds which are not globally hyperbolic
we have the problem that Green's operators need not exist and if they do they
are in general no longer unique.
In this case we have to provide the Green's operators as additional data.
This motivates the definition of a category of Lorentzian manifolds with
normally hyperbolic operators and global fundamental solutions.

\Definition{
Let $\LORFUND$ denote the category whose objects are 5-tuples
$(M,E,P,G_+,G_-)$ where $M$ is a timeoriented connected Lorentzian
manifold, $E$ is a real vector bundle over
$M$ with nondegenerate inner product, $P$ is a formally
selfadjoint\indexn{formally selfadjoint differential operator} normally 
hyperbolic operator acting on sections in $E$, and $G_{\pm}$ are advanced
and retarded Green's operators for $P$ respectively. 
Moreover, we {\em assume} that (\ref{Gsametrisch}) holds for all
$\varphi,\psi\in\DD(M,E)$.
\indexs{LorFund@$\protect{\LORFUND}$, category of timeoriented Lorentzian
manifolds equipped with formally selfadjoint normally hyperbolic operators and
Green's operators}

Let $X=(M_1,E_1,P_1,G_{1,+},G_{1,-})$ and
$Y=(M_2,E_2,P_2,G_{2,+},G_{2,-})$ be two objects in $\LORFUND$.
If $M_1$ is not globally hyperbolic, then we let the set of morphisms from 
$X$ to $Y$ be empty unless $X=Y$ in which case we put 
$\Mor(X,Y) := \{(\id_{M_1},\id_{E_1})\}$.
}

If $M_1$ is globally hyperbolic, then $\Mor(X,Y)$ consists of all pairs
$(f,F)$ with the same properties as those of the morphisms in $\GLOBHY$.
It then follows from Proposition~\ref{propGreensubset} and
Corollary~\ref{functorsolve} that we automatically have compatibility of the
Green's operators, i.~e., the diagram
$$
\xymatrix{
\DD(M_1,E_1) \ar[r]^{\ext} \ar[d]^{G_{1,\pm}}& \DD(M_2,E_2)
\ar[d]^{G_{2,\pm}}\\ 
C^\infty(M_1,E_1) & C^\infty(M_2,E_2)\ar[l]_{\res} 
}
$$
commutes. 
Here $\res$\indexs{res@$\protect\res$, restriction of a section} stands for
``restriction''. 
More precisely, $\res(\phi)=F^{-1}\circ \phi\circ f$.
Composition of morphisms is given by the usual composition of maps.

The definition of the category $\LORFUND$ is such that nontrivial morphisms
exist only if the source manifold $M_1$ is globally hyperbolic while there is
no such restriction on the target manifold $M_2$.
It will become clear in the proof of Lemma~\ref{functorSYMPL} why we restrict
to globally hyperbolic $M_1$.
 
By Corollary~\ref{functorsolve} there exist unique advanced and retarded
Green's operators $G_{\pm}$ for a normally hyperbolic operator on a globally
hyperbolic manifold.
Hence we can define
$$
\solve(M,E,P) := (M,E,P,G_+,G_-)
\indexs{SOLVE@$\protect\solve$, functor $\protect\GLOBHY\to\protect\LORFUND$}
$$
on objects of $\GLOBHY$ and
$$
\solve(f,F) := (f,F)
$$
on morphisms.

\begin{lemma}
This defines a functor $\solve:\GLOBHY\to\LORFUND$.
\end{lemma}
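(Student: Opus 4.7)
The plan is to verify the three requirements for $\solve$ to be a functor: it sends objects of $\GLOBHY$ to objects of $\LORFUND$, it sends morphisms to morphisms, and it respects identities and composition.

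First I would check the object level. Given $(M,E,P)\in\GLOBHY$, the manifold $M$ is in particular timeoriented and connected, the bundle $E$ carries a nondegenerate inner product, and $P$ is formally selfadjoint and normally hyperbolic, so the first three components of $\solve(M,E,P)$ satisfy the requirements of an object of $\LORFUND$. The operators $G_\pm$ exist and are uniquely determined by Corollary~\ref{functorsolve}. The only remaining condition defining objects of $\LORFUND$ is the symmetry relation~\eqref{Gsametrisch}, and this is exactly the content of Lemma~\ref{schiefesymmetrie}, which applies precisely because $M$ is globally hyperbolic, $P$ is formally selfadjoint, and $G_\pm$ are the associated Green's operators.

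Next I would check the morphism level. Let $(f,F):(M_1,E_1,P_1)\to(M_2,E_2,P_2)$ be a morphism in $\GLOBHY$. Since $M_1$ is globally hyperbolic, the definition of $\LORFUND$ specifies that $\Mor\bigl(\solve(M_1,E_1,P_1),\solve(M_2,E_2,P_2)\bigr)$ consists of pairs $(f,F)$ satisfying exactly the same conditions as morphisms in $\GLOBHY$, together with the automatic compatibility with Green's operators expressed by the commuting diagram in the definition of $\LORFUND$. Thus it suffices to observe that this compatibility is automatic: the image $f(M_1)\subset M_2$ is a causally compatible open subset of the globally hyperbolic manifold $M_2$, so by Proposition~\ref{propGreensubset} the restricted operators $\varphi\mapsto G_{2,\pm}(\ext\varphi)|_{f(M_1)}$ furnish advanced and retarded Green's operators for the pulled-back operator on $f(M_1)$; but $f(M_1)$ is itself globally hyperbolic (being isometric to $M_1$), so Corollary~\ref{functorsolve} guarantees that these Green's operators are unique and must agree with those transported from $M_1$ via $(f,F)$. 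This is precisely the commutativity of the diagram relating $G_{1,\pm}$ and $G_{2,\pm}$ via $\ext$ and $\res$.

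Finally, functoriality with respect to composition and identities is immediate: both in $\GLOBHY$ and in $\LORFUND$, composition of morphisms is ordinary composition of the underlying pairs of maps, and the identity morphism is $(\id_M,\id_E)$ in either category, so $\solve(\id_{(M,E,P)})=\id_{\solve(M,E,P)}$ and $\solve\bigl((g,G)\circ(f,F)\bigr)=\solve(g,G)\circ\solve(f,F)$ hold by the definition of $\solve$ on morphisms.

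The only nontrivial step is the morphism-level verification, and within it the genuinely substantive point is that the compatibility of Green's operators across a causally compatible isometric embedding is not imposed as an extra condition but follows automatically; the key input making this work is the uniqueness of Green's operators on globally hyperbolic manifolds (Corollary~\ref{functorsolve}), which is why the definition of $\LORFUND$ restricts nontrivial morphisms to have globally hyperbolic source.
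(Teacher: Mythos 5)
Your proof is correct and follows essentially the same route as the paper: the substantive step in both is the observation that the compatibility of Green's operators is automatic, obtained by combining Proposition~\ref{propGreensubset} (so that $\res\circ G_{2,\pm}\circ\ext$ is a Green's operator on $M_1$) with the uniqueness of Green's operators on globally hyperbolic manifolds from Corollary~\ref{functorsolve}. You are slightly more explicit than the paper in also spelling out the object-level verification (via Lemma~\ref{schiefesymmetrie} for condition~\eqref{Gsametrisch}) and the preservation of identities and composition, which the paper leaves implicit.
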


\begin{proof}
We only need to check that $\solve(f,F) = (f,F)$ is actually a morphism in
$\LORFUND$, i.~e., that $(f,F)$ is compatible with the Green's operators.
By uniqueness of Green's operators on globally hyperbolic manifolds it
suffices to show that $\res\circ G_{2,+}\circ\ext$ is an advanced Green's
operator on $M_1$ and similarly for $G_{2,-}$.
Since $f(M_1)\subset M_2$ is a causally compatible connected open subset this
follows from Proposition~\ref{propGreensubset}.
\end{proof}

Next we would like to use the Green's operators in order to construct a
symplectic vector space to which we can then apply the functor $\CCR$.
Let $(M,E,P,G_+,G_-)$ be an object of $\LORFUND$.
Using $G=G_+-G_- : \DD(M,E) \to C^\infty(M,E)$
we define
$$
\widetilde{\omega} : \DD(M,E) \times \DD(M,E) \to \R
\indexs{omegatilde@$\protect\widetilde{\omega}$, skew-symmetric bilinear form
  inducing the symplectic form $\omega$} 
$$
by
\begin{equation}\label{omegadef}
\widetilde{\omega}(\varphi,\psi)
:=
\int_M\la G\varphi,\psi\ra\dV.
\end{equation}
Obviously, $\widetilde\omega$ is $\R$-bilinear and by (\ref{Gsametrisch}) it is
skew-symmetric.
But $\widetilde\omega$ does not make $\DD(M,E)$ a symplectic vector space
because $\widetilde\omega$ is degenerate.
The null space is given by
$$
\ker(G) = \{\phi\in\DD(M,E)\, |\, G\phi=0\}
= \{\phi\in\DD(M,E)\, |\, G_+\phi=G_-\phi\}.
$$
This null space is infinite dimensional because it certainly contains
$P(\DD(M,E))$ by Theorem~\ref{thmExSeq}.
In the globally hyperbolic case this is precisely the null space,
$$
\ker(G) = P(\DD(M,E)),
$$
again by Theorem~\ref{thmExSeq}.
On the quotient space $V(M,E,G) :=
\DD(M,E)/\ker(G)$\indexs{VMEG@$V(M,E,G)=\protect\DD(M,E)/\protect\ker(G)$} the 
degenerate bilinear form $\widetilde\omega$ induces a symplectic form which we
denote by $\omega$.\indexs{omega@$\omega$, symplectic form}

\begin{lemma}\label{functorSYMPL}
Let $X=(M_1,E_1,P_1,G_{1,+},G_{1,-})$ and
$Y=(M_2,E_2,P_2,G_{2,+},G_{2,-})$ be two objects in
$\LORFUND$.
Let $(f,F)\in\Mor(X,Y)$ be a morphism.

Then $\ext :\DD(M_1,E_1) \to \DD(M_2,E_2)$ maps the null space $\ker(G_1)$ to
the null space $\ker(G_2)$ and hence induces a symplectic linear map
$$
V(M_1,E_1,G_1) \to V(M_2,E_2,G_2).
$$
\end{lemma}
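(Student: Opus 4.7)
The plan is to reduce everything to the definitions and to Theorem~\ref{thmExSeq}, exploiting two features built into the notion of morphism in $\LORFUND$: compatibility with $P$ and compatibility with the Green's operators.

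First I would dispose of the trivial case. By definition of $\Mor(X,Y)$, if $M_1$ is not globally hyperbolic then necessarily $X=Y$ and $(f,F)=(\id_{M_1},\id_{E_1})$; there is nothing to check. So from now on assume $M_1$ is globally hyperbolic. This is exactly the hypothesis needed in order to apply Theorem~\ref{thmExSeq}, which gives the key identification
\[
\ker(G_1) \;=\; P_1\bigl(\DD(M_1,E_1)\bigr).
\]
On the target side we will only need the trivial inclusion $P_2\bigl(\DD(M_2,E_2)\bigr) \subset \ker(G_2)$, i.e.\ property (ii) of Definition~\ref{defGfunc}, which holds whether or not $M_2$ is globally hyperbolic.

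Next I would check that $\ext$ sends $\ker(G_1)$ into $\ker(G_2)$. Take $\varphi = P_1\psi$ with $\psi\in\DD(M_1,E_1)$. Since $(f,F)$ is a morphism in $\LORFUND$, the defining square for $P$ gives $\ext(P_1\psi)=P_2(\ext\psi)$. As $\ext\psi\in\DD(M_2,E_2)$, we conclude $\ext(\varphi)=P_2(\ext\psi)\in\ker(G_2)$. Therefore $\ext$ descends to a well-defined linear map $V(M_1,E_1,G_1)\to V(M_2,E_2,G_2)$; call it $[\ext]$.

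Finally I would verify that $[\ext]$ preserves the symplectic forms, i.e.\ $\widetilde\omega_2(\ext\varphi,\ext\psi)=\widetilde\omega_1(\varphi,\psi)$ for all $\varphi,\psi\in\DD(M_1,E_1)$. Here enter the remaining pieces of the morphism data. Since $\supp(\ext\psi)\subset f(M_1)$, the integral defining $\widetilde\omega_2(\ext\varphi,\ext\psi)$ may be restricted to $f(M_1)$. On $f(M_1)$ the Green's operator compatibility $\res\circ G_{2,\pm}\circ\ext=G_{1,\pm}$ yields $G_2(\ext\varphi)\circ f = F\circ G_1\varphi$, while by construction $\ext\psi\circ f=F\circ\psi$. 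Because $F$ is a fiberwise isometry of $(E_1,\langle\cdot,\cdot\rangle_1)$ into $(E_2,\langle\cdot,\cdot\rangle_2)$ and $f$ is an isometric embedding (so $f^\ast\dV_2=\dV_1$), a change of variables reduces the integral over $f(M_1)$ to $\int_{M_1}\langle G_1\varphi,\psi\rangle_1\,\dV_1=\widetilde\omega_1(\varphi,\psi)$.

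The only subtlety is the last step: one might worry that $G_2(\ext\varphi)$ has support extending beyond $f(M_1)$, so that restricting the integral to $f(M_1)$ needs justification. This is precisely where the support condition $\supp(\ext\psi)\subset f(M_1)$, rather than any property of $G_2(\ext\varphi)$, saves the day — the pairing is only sensed where $\ext\psi$ lives. Once this is observed, the computation is routine and the lemma follows.
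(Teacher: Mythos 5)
Your argument is correct and follows the same path as the paper's proof: you use Theorem~\ref{thmExSeq} to write $\ker(G_1)=P_1(\DD(M_1,E_1))$, the $P$-compatibility square to map this into $\ker(G_2)$, and then the support of $\ext\psi$ together with the Green's operator compatibility $\res\circ G_{2,\pm}\circ\ext=G_{1,\pm}$ to match the symplectic forms. The paper's version is slightly terser (it writes $\int_{M_2}\la G_2\ext\phi,\ext\psi\ra\dV = \int_{M_1}\la\res G_2\ext\phi,\psi\ra\dV$ without spelling out the pullback of the volume density), but the substance is identical, and your remark on why restricting the integral to $f(M_1)$ is harmless is exactly the point.
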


\begin{proof}
If the morphism is the identity, then there is nothing to show.
Thus we may assume that $M_1$ is globally hyperbolic.
Let $\phi\in\ker(G_1)$.
Then $\phi = P_1\psi$ for some $\psi\in\DD(M_1,E_1)$ because $M_1$ is globally
hyperbolic.
From $G_2(\ext\phi) = G_2(\ext (P_1\psi)) = G_2(P_2(\ext\psi)) = 0$ we see
that $\ext(\ker(G_1)) \subset \ker(G_2)$.
Hence $\ext$ induces a linear map $V(M_1,E_1,G_1) \to V(M_2,E_2,G_2)$.
From
\begin{eqnarray*}
\widetilde\omega_2(\ext\phi,\ext\psi) 
&=&  \int_{M_2} \la G_2 \ext\phi,\ext\psi\ra \dV \\
&=&  \int_{M_1} \la \res G_2 \ext\phi, \psi\ra \dV \\
&=&  \int_{M_1} \la G_1\phi, \psi\ra \dV \\
&=& \widetilde\omega_1(\phi,\psi) 
\end{eqnarray*}
we see that this linear map is symplectic.
\end{proof}

We have constructed a functor from the category $\LORFUND$ to the category
$\SYMPL$ by mapping each object $(M,E,P,G_+,G_-)$ to
$V(M,E,G_+-G_-)$ and each morphism $(f,F)$ to the symplectic linear map
induced by $\ext$.
We denote this functor by SYMPL\indexs{SYMPL, functor
  $\protect\LORFUND\to\protect\SYMPL$}. 

We summarize the categories and functors we have defined so far in the
following scheme:

\begin{center}
\input{fig-quantenschema}
\end{center}

\section{Quasi-local $C^*$-algebras} \label{sec:qlC*}

The composition of the functors $\CCR$ and $\SYM$ constructed in the
previous sections allows us to assign a $C^*$-algebra to each timeoriented
connected Lorentzian manifold equipped with a formally selfadjoint normally
hyperbolic operator and Green's operators.
Further composing with the functor $\solve$ we no longer need to provide
Green's operators if we are willing to restrict ourselves to globally
hyperbolic manifolds.
The elements of this algebra are physically interpreted as the observables
related to the field whose wave equation is given by the normally hyperbolic
operator.

``Reasonable'' open subsets of $M$ are timeoriented Lorentzian manifolds in
their own right and come equipped with the restriction of the normally
hyperbolic operator over $M$.
Hence each such open subset $O$ yields an algebra whose elements are
considered as the observables which can be measured in the spacetime region
$O$.
This gives rise to the concept of nets of algebras or quasi-local algebras.
A systematic exposition of quasi-local algebras can be found in \cite{BW}.

Before we define quasi-local algebras we characterize the systems that
parametrize the ``local algebras''.
For this we need the notion of directed sets with orthogonality relation.

\Definition{\label{directorth}
A set $I$ is called a \defem{directed set with orthogonality
relation}\indexn{orthogonality relation>defemidx}\indexn{directed
set>defemidx}\indexn{directed set!with orthogonality relation>defemidx} 
if it carries a partial order $\le$ and a symmetric
relation $\bot$\indexs{orthogonalityrelation@$\protect\bot$, orthogonality
  relation in a set} between its elements such that 
\begin{enumerate}
\item for all $\alpha,\beta\in I$ there exists a $\gamma\in I$ with
  $\alpha\le\gamma$ and $\beta\le\gamma$,
\item for every $\alpha\in I$ there is a $\beta\in I$ with
  $\alpha\bot\beta$, 
\item if $\alpha\le\beta$ and $\beta\bot\gamma$, then $\alpha\bot\gamma$,
\item if $\alpha\bot\beta$ and $\alpha\bot\gamma$, then there exists a
  $\delta\in I$ such that $\beta\le\delta$, $\gamma\le\delta$ and
  $\alpha\bot\delta$. 
\end{enumerate}
}

In order to handle non-globally hyperbolic manifolds we need to relax this
definition slightly:

\Definition{\label{Defquasilokal}
A set $I$ is called a \defem{directed set with weak orthogonality
  relation}\indexn{directed set!with weak orthogonality
  relation>defemidx} if it carries a partial order $\le$ and a symmetric
relation 
$\bot$ between its elements such that conditions (1), (2), and (3) in
Definition~\ref{directorth} are fulfilled.
}

Obviously, directed sets with orthogonality relation are automatically 
directed sets with weak orthogonality relation.
We use such sets in the following as index sets for nets of $C^*$-algebras. 

\Definition{\label{dqlocC*alg}
A \defem{(bosonic) quasi-local
  $C^*$-algebra}\indexn{C*Algebra@$C^*$-algebra!quasi-local>defemidx}\indexn{bosonic quasi-local 
  $C^*$-algebra>defemidx} is a
pair 
$\left(\A,\left\{\A_\alpha\right\}_{\alpha\in I}\right)$ of a $C^*$-algebra
$\A$\indexs{A@$\protect\A$, $\protect\A_\alpha$, $C^*$-algebra} and a family
$\left\{\A_\alpha\right\}_{\alpha\in I}$ of $C^*$-subalgebras, 
where $I$ is a directed set with orthogonality relation such that the
following holds:
\begin{enumerate}
\item $\A_\alpha\subset \A_\beta$ whenever $\alpha\le\beta$,
\item $\A=\overline{\,\bigcup\limits_\alpha\;\,\A_\alpha\,}\;$ where the
  bar denotes the closure with respect to the norm of $\A$.
\item the algebras $\A_\alpha$ have a common unit $1$,
\item if $\alpha\bot\beta$ the commutator of $\A_\alpha$ and $\A_\beta$
  is trivial:
  $\left[\A_\alpha,\A_\beta\right]=\{0\}$\indexs{commutator@$\left[\protect\A_\alpha,\protect\A_\beta\right]$,
  commutator of $\protect\A_\alpha$ and $\protect\A_\beta$}.  
\end{enumerate}
}

\Remark{
This definition is a special case of the one in \cite[Def.\ 2.6.3]{BR1} 
where there is in addition an involutive automorphism $\sigma$ of $\A$.
In our case $\sigma=\id$ which physically corresponds to a bosonic theory.
This is why one might call our version of quasi-local $C^*$-algebras
\emph{bosonic}.
}

\Definition{
A \defem{morphism}\indexn{morphism between (weak) quasi-local
  $C^*$-algebras>defemidx} between two quasi-local $C^*$-algebras 
$\big(\A,\{\A_\alpha\}_{\alpha\in I}\big)$ and 
$\big(\B,\{\B_\beta\}_{\beta\in J}\big)$  is a pair $(\varphi,\Phi)$
where $\Phi:\A\to\B$ is a unit-preserving $C^*$-morphism and
$\varphi:I\to J$ is a map such that:
\begin{enumerate}
\item $\varphi$ is monotonic, i.~e., if $\alpha_1\le\alpha_2$ in $I$ then
  $\varphi(\alpha_1)\le\varphi(\alpha_2)$ in $J$,
\item $\varphi$ preserves orthogonality, i.~e., if $\alpha_1\bot\alpha_2$
  in $I$, then $\varphi(\alpha_1)\bot\varphi(\alpha_2)$ in $J$, 
\item $\Phi(\A_\alpha)\subset\B_{\varphi(\alpha)}$ for all $\alpha\in I$.
\end{enumerate}
}

The composition of morphisms of quasi-local $C^*$-algebras is just the
composition of maps, and we obtain the category
$\QLA$\indexs{QLA@$\protect\QLA$, category of quasi-local $C^*$-algebras} of
quasi-local 
$C^*$-algebras.

\Definition{
A \defem{weak quasi-local $C^*$-algebra}\indexn{C*Algebra@$C^*$-algebra!weak
  quasi-local>defemidx} is a pair    
$\left(\A,\left\{\A_\alpha\right\}_{\alpha\in I}\right)$ of a $C^*$-algebra
$\A$ and a family $\left\{\A_\alpha\right\}_{\alpha\in I}$ of
$C^*$-subalgebras, where $I$ is a directed set with weak orthogonality
relation such that the same conditions as in Definition~\ref{dqlocC*alg}
hold. 
Morphisms between weak quasi-local $C^*$-algebras are defined in exactly the
same way as morphisms between quasi-local $C^*$-algebras.
}

This yields another category, the category of weak quasi-local $C^*$-algebras
$\CNET$\indexs{QLAW@$\protect\CNET$, category of weak quasi-local
  $C^*$-algebras}.  
We note that $\QLA$ is a full subcategory of $\CNET$.

Next we want to associate to any object $(M,E,P,G_+,G_-)$ in $\LORFUND$ a
weak quasi-local $C^*$-algebra.
For this we set
$$
I:=\{ O\subset M\,\mid \,O \textrm{\small{ is open, relatively
    compact, causally compatible, globally hyperbolic}}\}\cup\{\emptyset, M\}.
$$
On $I$ we take the inclusion $\subset$ as the partial order $\le$ and define
the orthogonality relation by 
\[
O\perp O' :\Leftrightarrow  J^M(\ovl{O})\cap \ovl{O'}=\emptyset.
\]
This means that two elements of $I$ are orthogonal if and only if they are
\defem{causally independent}\indexn{causally independent subsets>defemidx}
subsets of $M$ in the sense that there are no causal curves connecting a point
in $\ovl{O}$ with a point in $\ovl{O'}$. 
Of course, this relation is symmetric.

\begin{lemma}\label{lIdirortho}
The set $I$ defined above is a directed set with weak orthogonality relation.
\end{lemma}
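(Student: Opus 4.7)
The plan is to verify the three axioms (1), (2), (3) of Definition~\ref{Defquasilokal} directly, exploiting the fact that $I$ has been enlarged by adjoining $\emptyset$ and the whole manifold $M$, which makes axioms (1) and (2) essentially automatic.

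First I would verify axiom (1): given $O_1, O_2 \in I$, the element $M \in I$ trivially satisfies $O_1 \subset M$ and $O_2 \subset M$, so $M$ serves as an upper bound. Note that here we really use that we have put $M$ into $I$ by hand, since the union $O_1 \cup O_2$ of two relatively compact globally hyperbolic causally compatible subsets need not lie in $I$ (it may fail to be globally hyperbolic or causally compatible).

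Next I would verify axiom (2): given $O \in I$, take $O' := \emptyset \in I$. Since $\ovl{\emptyset} = \emptyset$, we have $J^M(\ovl{O}) \cap \ovl{O'} = \emptyset$, so $O \perp O'$. Again we use that $\emptyset$ has been adjoined to $I$.

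Finally I would verify axiom (3): suppose $O_1 \subset O_2$ and $O_2 \perp O_3$, i.e. $J^M(\ovl{O_2}) \cap \ovl{O_3} = \emptyset$. From $O_1 \subset O_2$ we get $\ovl{O_1} \subset \ovl{O_2}$ and hence $J^M(\ovl{O_1}) \subset J^M(\ovl{O_2})$, which gives $J^M(\ovl{O_1}) \cap \ovl{O_3} \subset J^M(\ovl{O_2}) \cap \ovl{O_3} = \emptyset$, so $O_1 \perp O_3$.

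There is no real obstacle; the slightly subtle point worth flagging is precisely why we only obtain a \emph{weak} orthogonality relation rather than the stronger one of Definition~\ref{directorth}. Axiom (4) of that stronger definition would require, given $O \perp O_1$ and $O \perp O_2$, some $O' \in I$ with $O_1, O_2 \subset O'$ and $O \perp O'$; but the natural candidate $O_1 \cup O_2$ need not be globally hyperbolic or causally compatible, and $M$ itself is certainly not orthogonal to $O$. This is exactly the reason the definition of $I$ must be built around weak orthogonality.
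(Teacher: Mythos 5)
Your verification of axioms (1), (2), and (3) is correct and coincides exactly with the paper's own argument: directedness via $M$, the existence of an orthogonal element via $\emptyset$, and monotonicity of $O \mapsto J^M(\ovl O)$ under inclusion. One small caveat about your closing remark: the obstruction you point to for axiom (4) — that $O_1 \cup O_2$ need not lie in $I$ and $M$ is never orthogonal to anything nonempty — is real, but the paper circumvents it when $M$ is globally hyperbolic (Lemma~\ref{lIdirorth}), building the required $\delta$ inside the causally compatible globally hyperbolic subset $M\setminus J^M(\ovl{O})$ via Proposition~\ref{lIdir}; so the genuine reason one settles for the \emph{weak} notion here is that $M$ is not assumed globally hyperbolic, not that axiom (4) is intrinsically out of reach.
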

\begin{proof}
Condition (1) in Definition~\ref{directorth} holds with $\gamma=M$ and (2)
with $\beta=\emptyset$. 
Property (3) is also clear because $O\subset O'$ implies $J^M(\ovl{O})\subset
J^M(\ovl{O'})$. 
\end{proof}

\begin{lemma}\label{lIdirorth}
Let $M$ be globally hyperbolic.
Then the set $I$ is a directed set with (non-weak) orthogonality relation.
\end{lemma}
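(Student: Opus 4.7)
The first three axioms of Definition~\ref{directorth} already hold by Lemma~\ref{lIdirortho}, whose proof makes no use of global hyperbolicity of $M$. So the only thing to check is axiom~(4): given $O_1, O_2, O_3 \in I$ with $O_1 \perp O_2$ and $O_1 \perp O_3$, I must exhibit some $O_4 \in I$ satisfying $O_2 \cup O_3 \subset O_4$ and $O_1 \perp O_4$.

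The trivial cases are easy to dispose of: take $O_4 := M$ if $O_1 = \emptyset$, and take $O_4 := O_3$ (resp.~$O_2$) if $O_2$ (resp.~$O_3$) is empty. In the remaining case all three sets are nonempty relatively compact open causally compatible globally hyperbolic subsets of $M$, so in particular $\ovl{O_1}$ is compact. Since $M$ is globally hyperbolic, the two-sided causal set $J^M(\ovl{O_1}) = J^M_+(\ovl{O_1}) \cup J^M_-(\ovl{O_1})$ is then closed in $M$, and the orthogonality hypotheses say precisely that the compact set $K := \ovl{O_2} \cup \ovl{O_3}$ lies inside the open set $V := M \setminus J^M(\ovl{O_1})$. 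Using local compactness of $M$ I can then pick a relatively compact open subset $V'$ with $K \subset V' \subset \ovl{V'} \subset V$.

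The heart of the argument is to enclose $K$ in a member of $I$ whose closure lies inside $V'$. For this I invoke the standard fact (from the appendix on Lorentzian geometry, ultimately built from a Cauchy time function as in Corollary~\ref{cexisttimefctn}) that in a globally hyperbolic Lorentzian manifold every compact set admits arbitrarily small relatively compact causally compatible globally hyperbolic open neighborhoods. Applying this to $K \subset V'$ yields the desired $O_4 \in I$ with $K \subset O_4$ and $\ovl{O_4} \subset V'$. Then $O_2, O_3 \subset K \subset O_4$ settles the monotonicity requirement, while $\ovl{O_4} \subset \ovl{V'} \subset V$ forces $J^M(\ovl{O_1}) \cap \ovl{O_4} = \emptyset$, which is $O_1 \perp O_4$. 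The main obstacle is precisely this neighborhood construction; it is a purely geometric fact about globally hyperbolic manifolds, independent of the operator $P$, and once granted the rest of the proof reduces to elementary point-set topology.
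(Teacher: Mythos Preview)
Your reduction to axiom~(4) and the identification of the relevant open set $V = M\setminus J^M(\ovl{O_1})$ are both correct, but the ``standard fact'' you invoke is false as stated. A compact subset $K$ of a globally hyperbolic manifold does \emph{not} in general admit arbitrarily small causally compatible globally hyperbolic open neighborhoods. Take two points $p,q\in M$ with $p<q$: any causally compatible open set $O$ containing both must satisfy $q\in J_+^O(p)=J_+^M(p)\cap O$, so $O$ must contain an entire future directed causal curve from $p$ to $q$; in particular $O$ cannot be contained in a disjoint union of small balls about $p$ and $q$. (Even for spacelike separated $p,q$ the same obstruction arises from connectedness, which is part of the definition of global hyperbolicity used in this paper.) Since nothing prevents $\ovl{O_2}\cup\ovl{O_3}$ from containing causally related points, your step ``Applying this to $K\subset V'$ yields $O_4$ with $\ovl{O_4}\subset V'$'' is unjustified for a generic choice of $V'$.

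The paper closes exactly this gap by exploiting a feature of $V$ that your argument does not use: by Lemma~\ref{lMminusJAglobhyp}, the set $\Omega:=M\setminus J^M(\ovl{O_1})$ is itself a causally compatible globally hyperbolic open subset of $M$. One then applies Proposition~\ref{lIdir} \emph{inside} $\Omega$ (not inside $M$) to the compact set $K=\ovl{O_2}\cup\ovl{O_3}\subset\Omega$, obtaining $O_4$ relatively compact, causally compatible and globally hyperbolic in $\Omega$. Transitivity of causal compatibility then gives $O_4\in I$, and relative compactness in $\Omega$ forces $\ovl{O_4}\subset\Omega$, hence $O_1\perp O_4$. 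The intermediate set $V'$ and the ``arbitrarily small'' claim are neither needed nor available.
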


\begin{proof} 
In addition to Lemma~\ref{lIdirortho} we have to show Property (4) of
Definition~\ref{directorth}. 
Let $O_1,O_2,O_3\in I$ with $J^M(\ovl{O_1})\cap \ovl{O_2}=\emptyset$ and
$J^M(\ovl{O_1})\cap \ovl{O_3}=\emptyset$. 
We want to find $O_4\in I$ with $O_2\cup O_3\subset O_4$ and
$J^M(\ovl{O_1})\cap \ovl{O_4}=\emptyset$.

Without loss of generality let $O_1,O_2,O_3$ be non-empty. 
Now none of $O_1$, $O_2,$ and $O_3$ can equal $M$. 
In particular, $O_1$, $O_2,$ and $O_3$ are relatively compact. 
Set $\Omega:=M\setminus J^M(\ovl{O_1})$. 
By Lemma~\ref{lMminusJAglobhyp} the subset $\Omega$ of $M$ is causally
compatible and globally hyperbolic.
The hypothesis $J^M(\ovl{O_1})\cap \ovl{O_2}=\emptyset=J^M(\ovl{O_1})\cap
\ovl{O_3}$ implies $\ovl{O_2}\cup \ovl{O_3}\subset\Omega$. 
Applying Proposition~\ref{lIdir} with $K:=\ovl{O_2}\cup\ovl{O_3}$ in the
globally hyperbolic manifold $\Omega$, we obtain a relatively compact causally
compatible globally hyperbolic open subset $O_4\subset\Omega$ containing
$O_2$ and $O_3$. 
Since $\Omega$ is itself causally compatible in $M$, the subset $O_4$ is
causally compatible in $M$ as well. 
By definition of $\Omega$ we have $\ovl{O_4}\subset\Omega=M\setminus
J^M(\ovl{O_1})$, i.~e., $J^M(\ovl{O_1})\cap \ovl{O_4}=\emptyset$. 
This shows Property (4) and concludes the proof of Lemma~\ref{lIdirorth}. 
\end{proof}

\Remark{
If $M$ is globally hyperbolic, the proof of Proposition~\ref{lIdir} shows
that the index set $I$ would also be directed if we removed $M$ from it in its
definition.
Namely, for all elements $O_1,O_2\in I$ different from $\emptyset$ and $M$, 
the element $O$ from Proposition~\ref{lIdir} applied to
$K:=\ovl{O_2}\cup\ovl{O_3}$ belongs to $I$.
}

Now we are in the situation to associate  a weak quasi-local $C^*$-algebra to
any object $(M,E,P,G_+,G_-)$ in $\LORFUND$.

We consider the index set $I$ as defined above.
For any non-empty $O\in I$ we take the restriction $E|_O$ and the
corresponding restriction of the operator $P$ to sections of $E|_O$. 
Due to the causal compatibility of $O\subset M$ the restrictions of the
Green's operators $G_+$, $G_-$ to sections over $O$ yield the Green's operators
$G_+^O$, $G_-^O$ for $P$ on $O$, see Proposition~\ref{propGreensubset}. 
Therefore we get an object $(O,E|_O,P,G_+^O,G_-^O)$ for each $O\in I$,
$O\ne\emptyset$.

For $\emptyset\ne O_1\subset O_2$ the inclusion induces
a morphism $\iota_{O_2,O_1}$\indexs{iota@$\iota_{O_2,O_1}$, morphism in
  $\protect\LORFUND$ induced by inclusion $O_1\subset O_2$} in the category
$\LORFUND$. 
This morphism is given by the embeddings $O_1 \hookrightarrow O_2$ and
$E|_{O_1} \hookrightarrow E|_{O_2}$. 
Let $\alpha_{O_2,O_1}$\indexs{alphaO1O2@$\alpha_{O_2,O_1}$, morphism
  $\CCR\circ\SYM\!(\iota_{O_2,O_1})$ in $\protect\CALG$} denote the morphism
$\CCR\circ\SYM\!(\iota_{O_2,O_1})$ 
in $\CALG$.   
Recall that $\alpha_{O_2,O_1}$ is an injective unit-preserving $*$-morphism.

We set for $\emptyset\ne O\in I$
\[
(V_O,\omega_O):=\SYM (O,E|_O,P,G_+^O,G_-^O)
\]
and for $O\in I$, $O\ne\emptyset$, $O\ne M$,
\[ 
\A_O:=\alpha_{M,O}\left(\CCR(V_O,\omega_O) \right) .
\]

Obviously, for any  $O\in I$, $O\ne\emptyset$, $O\ne M$ the algebra $\A_O$
is a $C^*$-subalgebra of $\CCR(V_M,\omega_M)$.
For $O=M$ we define $\A_M$ as the $C^*$-subalgebra of $\CCR(V_M,\omega_M)$
generated by all $\A_O$, 
\[ 
\A_M:=C^*\Big(\bigcup_{ O\in I \atop O\ne\emptyset, O\ne M }\A_O
\Big).
\] 
Finally, for $O=\emptyset$ we set $\A_\emptyset=\Co\cdot 1$.
We have thus defined a family $\left\{ \A_O\right\}_{O\in I}$ of
$C^*$-subalgebras of $\A_M$.

\begin{lemma}
Let $(M,E,P,G_+,G_-)$ be an object in $\LORFUND$. 
Then $\big(\A_M, \left\{ \A_O\right\}_{O\in I}\big)$ is a weak
quasi-local $C^*$-algebra.
\end{lemma}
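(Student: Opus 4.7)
The plan is to check the four axioms of Definition~\ref{dqlocC*alg} for $(\A_M,\{\A_O\}_{O\in I})$; the fact that $I$ is a directed set with weak orthogonality relation was already established in Lemma~\ref{lIdirortho}, so it remains only to verify the algebraic properties of the net.

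First I would handle the isotony axiom (1). For $\emptyset\neq O_1\subset O_2$ in $I$ with $O_2\neq M$, the inclusion yields a morphism $\iota_{O_2,O_1}$ in $\LORFUND$, and functoriality of $\CCR\circ\SYM$ gives $\alpha_{M,O_1}=\alpha_{M,O_2}\circ\alpha_{O_2,O_1}$. Since morphisms in $\CALG$ are injective unit-preserving $*$-morphisms (in particular, by Corollary~\ref{cor:simple}, $\alpha_{O_2,O_1}$ is injective), the image $\A_{O_1}=\alpha_{M,O_1}(\CCR(V_{O_1},\omega_{O_1}))$ is contained in $\alpha_{M,O_2}(\CCR(V_{O_2},\omega_{O_2}))=\A_{O_2}$. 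The cases $O_1=\emptyset$ and $O_2=M$ are immediate from the definitions.

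Axiom (2) is essentially tautological: since $M\in I$ by definition of $I$, the algebra $\A_M$ itself appears in the union $\bigcup_{O\in I}\A_O$, and by axiom (1) every other $\A_O$ is already contained in $\A_M$, so both sides of the density condition coincide with $\A_M$. For axiom (3), each $\alpha_{M,O}$ is a unit-preserving $*$-morphism by construction, so every $\A_O$ (including $\A_\emptyset=\Co\cdot 1$ and $\A_M$) contains the common unit $W_M(0)=1$.

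The main point is axiom (4), the locality condition. Let $O_1,O_2\in I$ be orthogonal, i.e.\ $J^M(\ovl{O_1})\cap\ovl{O_2}=\emptyset$. By construction $\A_{O_i}$ is generated as a $C^*$-algebra by elements of the form $W_M([\varphi_i])$ with $\varphi_i\in\DD(O_i,E|_{O_i})\subset\DD(M,E)$ (extended by zero). By the Weyl relations, $W_M([\varphi_1])$ and $W_M([\varphi_2])$ commute as soon as $\omega_M([\varphi_1],[\varphi_2])=\widetilde\omega_M(\varphi_1,\varphi_2)=\int_M\langle G\varphi_1,\varphi_2\rangle\,\dV=0$. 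Since $\supp(G\varphi_1)\subset J^M(\supp(\varphi_1))\subset J^M(\ovl{O_1})$ is disjoint from $\supp(\varphi_2)\subset\ovl{O_2}$, the integrand vanishes identically, so the generators of $\A_{O_1}$ commute with those of $\A_{O_2}$; passing to the $C^*$-closure yields $[\A_{O_1},\A_{O_2}]=\{0\}$. The only subtlety is the support-tracking for $G\varphi_1$, but this is exactly the content of the support properties of $G_\pm$ recorded in Corollary~\ref{functorsolve} and carried over to the non-globally hyperbolic case as part of the data of an object of $\LORFUND$; everything else is formal.
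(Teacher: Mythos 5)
Your proposal is correct and follows essentially the same route as the paper's own proof: axioms (2) and (3) are immediate from the definitions, axiom (1) follows from functoriality of $\CCR\circ\SYM$ together with injectivity of the induced $*$-morphisms, and axiom (4) is established by tracking the support of $G\varphi_1$ into $J^M(\ovl{O_1})$, concluding $\omega(\varphi_1,\varphi_2)=0$, and invoking the Weyl relation to get commutativity of the generators. The only cosmetic differences are that you spell out the edge cases $O_1=\emptyset$ and $O_2=M$ explicitly and cite Corollary~\ref{cor:simple} for injectivity (the paper simply uses that morphisms in $\CALG$ are injective by definition), neither of which changes the substance.
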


\begin{proof}
We know from Lemma~\ref{lIdirortho} that $I$ is a directed set with weak
orthogonality relation. 

It is clear that $\A_M=\ovl{\buil{\bigcup}{O\in I}\A_O}$ because $M$ belongs
to $I$.
By construction it is also clear that all algebras $\A_O$ have the common unit
$W(0)$, $0\in V_M$.
Hence Conditions~(2) and (3) in Definition~\ref{dqlocC*alg} are obvious.

By functoriality we have the following commutative diagram 
\begin{center}
\hspace{.5cm}\xymatrix{
\CCR(V_O,\omega_O) \ar[r]^{\alpha_{M,O}} \ar[d]_{\alpha_{O',O}}
&\CCR(V_M,\omega_M)\\ 
\CCR(V_{O'},\omega_{O'}) \ar[ur]_{\alpha_{M,O'}} & 
}
\end{center}
Since $\alpha_{O',O}$ is injective we have $\A_{O}\subset\A_{O'}$.
This proves Condition~(1) in Definition~\ref{dqlocC*alg}.

Let now $O,O'\in I$ be causally independent. 
Let $\phi\in\DD(O,E)$ and $\psi\in\DD(O',E)$. 
It follows from $\supp(G\phi)\subset J^M(O)$ that
$\supp(G\phi)\cap\supp(\psi)=\emptyset$, hence 
\[\int_M \la G\phi\,,\psi\ra \dV=0.\]
For the symplectic form $\omega$ on $\DD(M,E)/\ker(G)$ this means
$\omega(\phi,\psi)=0$, where we denote the equivalence class in
$\DD(M,E)/\ker(G)$ of the extension to $M$ by zero of $\phi$ again by $\phi$
and similarly for $\psi$.
This yields by Property~(iii) of a Weyl-system 
\[ 
W(\phi)\cdot W(\psi)=W(\phi+\psi)=W(\psi)\cdot W(\phi),
\]
i.~e., the generators of $\A_O$ commute with those of $\A_{O'}$.
Therefore $\left[\A_O,\A_{O'}\right]=0$.
This proves (4) in Definition~\ref{dqlocC*alg}.
\end{proof}

Next we associate a morphism in $\CNET$ to any morphism $(f,F)$ in $\LORFUND$
beween two objects $(M_1,E_1,P_1,{G_1}_+,{G_1}_-)$ and $(M_2,E_2,P_2,{G_2}
_+,{G_2} _-)$.
Recall that in the case of distinct objects such a morphism only exists if
$M_1$ is globally hyperbolic.
Let $I_1$ and $I_2$ denote the index sets for the two objects as above and let
$\big(\A_{M_1}, \left\{\A_O\right\}_{O\in I_1}\big)$ and $\big(\B_{M_2},
\left\{ \B_O\right\}_{O\in I_2}\big)$\indexs{B@$\protect\B$, $\protect\B_O$,
  $C^*$-algebra} denote the corresponding weak 
quasi-local $C^*$-algebras.
Then $f$ maps any $O_1\in I_1$, $O_1\ne M_1$, to $f(O_1)$ which is an element
of $I_2$ by definition of $\LORFUND$.
We get a map $\varphi:I_1\to I_2$ by $M_1\mapsto M_2$ and $O_1\mapsto
f(O_1)$ if $O_1\ne M_1$. 
Since $f$ is an embedding such that $f(M_1)\subset M_2$ is causally compatible,
the map $\phi$ is monotonic and preserves causal independence.

Let
$\Phi:\CCR(V_{M_1},\omega_{M_1})\to\CCR(V_{M_2},\omega_{M_2})$\indexs{Phi@$\Phi$,
  morphism $\protect\CCR\circ\protect\SYM(f,F)$ in $\protect\CALG$} be the 
morphism $\Phi=\CCR\circ\SYM(f,F)$.
From the commutative diagram of inclusions and embeddings
\begin{center}
\hspace{.5cm}\xymatrix{
O_1 \ar@{^{(}->}[r] \ar[d]_{f|_{O_1}} & O_2 \ar[d]^{f|_{O_2}}\\ 
f(O_1) \ar@{^{(}->}[r] & M_2
}
\end{center}
we see 
\begin{eqnarray*}
\Phi(\A_{O_1}) &=&
\CCR(\SYM(f,F))\circ \CCR(\SYM(\iota_{M,O_1}))(\CCR(V_{O_1},\omega_{O_1}))\\ 
&=&
\CCR(\SYM(\iota_{M_2,f(O_1)}))\circ \CCR(\SYM(f|_{O_1},F|_{E|_{O_1}}))
(\CCR(V_{O_1},\omega_{O_1}))\\
&\subset&
\alpha_{M_2,f(O_1)}(\CCR(V_{f(O_1)},\omega_{f(O_1)}))\\
&=&
\B_{f(O_1)}.
\end{eqnarray*}
This also implies $\Phi(\A_{M_1}) \subset \B_{M_2}$.
Therefore the pair $(\phi,\Phi|_{\A_{M_1}})$ is a morphism in $\CNET$.
We summarize

\begin{thm}\label{funktorcnet}
The assignments $(M,E,P,G_+,G_-)\mapsto\big(\A_M, \left\{ \A_O\right\}_{O\in
  I}\big)$ and $(f,F) \mapsto (\phi,\Phi|_{\A_{M_1}})$ yield a functor
  $\LORFUND\to\CNET$. 
\end{thm}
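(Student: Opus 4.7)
The bulk of the verification has in fact already been carried out in the paragraphs preceding the statement; my plan is therefore to organize these observations and supply the two remaining functoriality checks.

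First I would record that the object assignment is well-defined: the lemma immediately preceding the theorem asserts that $\big(\A_M,\{\A_O\}_{O\in I}\big)$ is a weak quasi-local $C^*$-algebra, using Lemma~\ref{lIdirortho} to see that $I$ is a directed set with weak orthogonality relation. For the morphism assignment, I would restate that given a morphism $(f,F):(M_1,E_1,P_1,G_{1,+},G_{1,-})\to(M_2,E_2,P_2,G_{2,+},G_{2,-})$, the map $\varphi:I_1\to I_2$ defined by $O_1\mapsto f(O_1)$ for $O_1\ne M_1$ and $M_1\mapsto M_2$ is monotonic and preserves causal independence, because $f$ is an isometric embedding whose image is causally compatible and time-orientation preserving. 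The inclusion $\Phi(\A_{O_1})\subset\B_{\varphi(O_1)}$ has been verified via the commutative diagram of inclusions $O_1\hookrightarrow O_2,\ f(O_1)\hookrightarrow M_2$ combined with functoriality of $\SYM$ and $\CCR$; in particular $\Phi(\A_{M_1})\subset\B_{M_2}$, so $\Phi|_{\A_{M_1}}$ really does land in $\B_{M_2}$ and $(\varphi,\Phi|_{\A_{M_1}})$ is a morphism in $\CNET$.

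Next I would verify the two functoriality axioms. For identities, the identity morphism $(\id_M,\id_E)$ of an object in $\LORFUND$ induces the identity symplectic map on $V(M,E,G)$ (since $\SYM$ is a functor), which in turn induces $\id_{\CCR(V_M,\omega_M)}$ via the functor $\CCR$ from Corollary~\ref{cfuncSYMPLCCR}; restricting this identity to $\A_{M}$ gives $\id_{\A_M}$, and $\varphi=\id_{I}$ on the index set by construction. For composition, given composable morphisms $(f_1,F_1):X_1\to X_2$ and $(f_2,F_2):X_2\to X_3$, functoriality of $\SYM$ and $\CCR$ gives $\CCR(\SYM(f_2\circ f_1,F_2\circ F_1))=\CCR(\SYM(f_2,F_2))\circ\CCR(\SYM(f_1,F_1))$ at the level of the global algebras $\CCR(V_{M_i},\omega_{M_i})$. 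Because the image of the first morphism sits inside $\A_{M_2}$ (as noted above), restricting to $\A_{M_1}$ and then to $\A_{M_2}$ is compatible: $\bigl(\Phi_2|_{\A_{M_2}}\bigr)\circ\bigl(\Phi_1|_{\A_{M_1}}\bigr)=(\Phi_2\circ\Phi_1)|_{\A_{M_1}}$. On the index sets, $\varphi_2\circ\varphi_1$ agrees with the map induced by $f_2\circ f_1$ simply because $(f_2\circ f_1)(O_1)=f_2(f_1(O_1))$, and both take $M_1\mapsto M_3$.

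The only genuinely delicate point is that morphisms in $\LORFUND$ between distinct objects exist only when the source manifold is globally hyperbolic, so one must make sure composition stays inside the class of allowed morphisms. This is automatic: if $(f_1,F_1)$ and $(f_2,F_2)$ are both non-identity composable morphisms, then $M_1$ is globally hyperbolic by definition of $\LORFUND$, which is exactly the hypothesis needed for the composite to be a non-identity morphism; if either factor is an identity there is nothing to check. Having handled these two cases the two functoriality axioms hold and the assignment is a functor $\LORFUND\to\CNET$, so there is no hard analytic obstacle—the proof is essentially a bookkeeping exercise built on functoriality of $\SYM$ and $\CCR$ together with the containment $\Phi(\A_{M_1})\subset\B_{M_2}$ established before the theorem.
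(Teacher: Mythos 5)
Your proof is correct and follows essentially the same route as the paper's own (which is a very terse two-sentence verification that the identity goes to the identity and compositions to compositions, relying on the work in the preceding paragraphs); you have simply spelled out that one-liner in full, including the observation that $\Phi_1(\A_{M_1})\subset\A_{M_2}$ is what makes the restriction of the composite behave well, and the minor case analysis around the restricted morphism sets in $\LORFUND$.
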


\begin{proof}
If $f=\id_M$ and $F=\id_E$, then $\phi=\id_I$ and $\Phi=\id_{\A_M}$.
Similarly, the composition of two morphisms in $\LORFUND$ is mapped to the
composition of the corresponding two morphisms in $\CNET$.
\end{proof}

\begin{cor}
The composition of $\solve$ and the functor from Theorem~\ref{funktorcnet}
yields a functor $\GLOBHY\to\QLA$. 
One gets the following commutative diagram of functors:
\end{cor}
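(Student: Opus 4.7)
The plan is to combine the functor $\solve : \GLOBHY \to \LORFUND$ constructed earlier with the functor $\LORFUND \to \CNET$ from Theorem~\ref{funktorcnet}, and then verify that the composition actually takes values in the full subcategory $\QLA \subset \CNET$. Since functoriality in $\CNET$ is already established by Theorem~\ref{funktorcnet} and since $\QLA$ is a full subcategory, only the object-level claim requires work: for every object $(M,E,P)$ of $\GLOBHY$ one must check that $\bigl(\A_M,\{\A_O\}_{O\in I}\bigr)$ is a \emph{genuine} quasi-local $C^*$-algebra, i.e.\ that the index set $I$ satisfies axiom (4) of Definition~\ref{directorth}, not merely the weaker axioms of Definition~\ref{Defquasilokal}.

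The key input is Lemma~\ref{lIdirorth}, which says exactly that when $M$ is globally hyperbolic, the index set $I$ of relatively compact, causally compatible, globally hyperbolic open subsets (together with $\emptyset$ and $M$), equipped with inclusion and the causal independence relation $O\perp O'\Leftrightarrow J^M(\overline O)\cap\overline{O'}=\emptyset$, is a directed set with (non-weak) orthogonality relation. Hence, applying Theorem~\ref{funktorcnet} to $\solve(M,E,P) = (M,E,P,G_+,G_-)$, we obtain a pair $\bigl(\A_M,\{\A_O\}_{O\in I}\bigr)$ which satisfies all four axioms of Definition~\ref{dqlocC*alg}, and so defines an object of $\QLA$.

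For morphisms, let $(f,F) : (M_1,E_1,P_1) \to (M_2,E_2,P_2)$ be a morphism in $\GLOBHY$. Then $\solve(f,F) = (f,F)$ is a morphism in $\LORFUND$ between two objects whose source manifold $M_1$ is globally hyperbolic, and Theorem~\ref{funktorcnet} assigns to it a morphism $(\varphi,\Phi|_{\A_{M_1}})$ in $\CNET$ between two objects that, by the previous paragraph, both lie in $\QLA$. Because $\QLA$ is a full subcategory of $\CNET$, this morphism automatically qualifies as a morphism in $\QLA$. Functoriality (preservation of identities and of composition) is inherited from $\solve$ and from the functor of Theorem~\ref{funktorcnet}. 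The only conceptual step is thus the appeal to Lemma~\ref{lIdirorth}; I do not expect any serious obstacle, since causal compatibility and the geometric properties collected in the appendix on Lorentzian geometry provide everything needed to upgrade the weak orthogonality condition to the full one.

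Finally, the commutative diagram of functors is immediate by construction: the composition $\GLOBHY \xrightarrow{\solve}\LORFUND\to\CNET$ factors, object- and morphism-wise, through the inclusion $\QLA\hookrightarrow\CNET$, and this factorisation is exactly the functor $\GLOBHY\to\QLA$ just described. Thus the triangle (or square) of functors built from $\solve$, the functor of Theorem~\ref{funktorcnet}, the newly constructed functor $\GLOBHY\to\QLA$, and the inclusion $\QLA\hookrightarrow\CNET$ commutes on the nose, without any further identification being necessary.
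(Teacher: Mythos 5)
Your proposal is correct and follows the same route as the paper's own proof: invoke Lemma~\ref{lIdirorth} to upgrade the weak orthogonality relation on $I$ to a genuine one for globally hyperbolic $M$, so that the resulting object lies in $\QLA$, and then use that $\QLA$ is a full subcategory of $\CNET$ to conclude. The extra discussion of morphisms and the commutative diagram is consistent with, but not needed beyond, the paper's concise argument.
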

\begin{center}
\input{fig-quantenschema2}
\end{center}

\begin{proof}
Let $(M,E,P)$ be an object in $\GLOBHY$.
Then we know from Lemma~\ref{lIdirorth} that the index set $I$ associated to
$\solve(M,E,P)$ is a directed set with (non-weak) orthogonality relation, and
the corresponding weak quasi-local $C^*$-algebra is in fact a quasi-local
$C^*$-algebra.
This concludes the proof since $\QLA$ is a full subcategory of $\CNET$.
\end{proof}

\begin{lemma}\label{globhypAM}
Let $(M,E,P)$ be an object in $\GLOBHY$, and denote by $\big(\A_M,\big\{\A_O
\big\}_{O\in I}\big)$ the corresponding quasi-local $C^*$-algebra.
Then
\[ 
\A_M=\CCR\circ\SYM\circ\solve\big(M,E,P \big).
\]
\end{lemma}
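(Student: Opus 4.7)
The inclusion $\A_M\subset\CCR(V_M,\omega_M)$ holds by construction, since each $\A_O$ is the image under the injective $*$-morphism $\alpha_{M,O}$ of a $C^*$-subalgebra of $\CCR(V_M,\omega_M)$. The substance of the statement is therefore the reverse inclusion $\CCR(V_M,\omega_M)\subset\A_M$.

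The plan is to reduce the problem to a statement about Weyl generators. By Theorem~\ref{CCRunique} (or equivalently, by the very definition of a CCR-representation) the $C^*$-algebra $\CCR(V_M,\omega_M)$ is generated, as a $C^*$-algebra, by the Weyl elements $W([\phi])$, where $[\phi]$ denotes the class in $V_M=\DD(M,E)/\ker(G)$ of a test section $\phi\in\DD(M,E)$. Since $\A_M$ is by definition a $C^*$-subalgebra of $\CCR(V_M,\omega_M)$, it suffices to show that every such $W([\phi])$ lies in $\A_M$.

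Fix $\phi\in\DD(M,E)$. Since $K:=\supp(\phi)$ is compact and $M$ is globally hyperbolic, Proposition~\ref{lIdir} (the same statement invoked in the proof of Lemma~\ref{lIdirorth}) provides a relatively compact, causally compatible, globally hyperbolic open subset $O\subset M$ containing $K$; in particular $O\in I$ with $O\neq\emptyset,M$. Then $\phi$ can be regarded as an element of $\DD(O,E|_O)$ whose extension by zero to $M$ is the original $\phi$. Let $\iota_{M,O}\colon(O,E|_O,P,G_+^O,G_-^O)\to(M,E,P,G_+,G_-)$ be the inclusion morphism in $\LORFUND$. By Lemma~\ref{functorSYMPL} the induced symplectic linear map $\SYM(\iota_{M,O})\colon V_O\to V_M$ sends the class $[\phi]_O$ of $\phi$ in $V_O$ to the class $[\phi]_M$ of its zero-extension in $V_M$. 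By Corollary~\ref{cfuncSYMPLCCR} the induced $*$-morphism $\alpha_{M,O}=\CCR\circ\SYM(\iota_{M,O})$ satisfies $\alpha_{M,O}(W_O([\phi]_O))=W_M([\phi]_M)$. Hence $W([\phi])=W_M([\phi]_M)\in\alpha_{M,O}(\CCR(V_O,\omega_O))=\A_O\subset\A_M$.

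Since this holds for every $\phi\in\DD(M,E)$, the algebra $\A_M$ contains all Weyl generators of $\CCR(V_M,\omega_M)$ and therefore, being a $C^*$-subalgebra, it contains the whole $C^*$-algebra they generate. This proves $\CCR(V_M,\omega_M)\subset\A_M$ and hence the asserted equality. The only nontrivial ingredient is the geometric fact that every compact subset of a globally hyperbolic manifold is contained in a relatively compact, causally compatible, globally hyperbolic open subset; this is the expected main obstacle and is precisely Proposition~\ref{lIdir} from the appendix, which has already been used to establish Lemma~\ref{lIdirorth}.
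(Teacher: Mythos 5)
Your proof is correct and follows essentially the same route as the paper: reduce to showing each Weyl generator $W([\phi])$ lies in some $\A_O$, and obtain the required $O\in I$ containing $\supp(\phi)$ from Proposition~\ref{lIdir}. You spell out the step $W([\phi])\in\A_O$ a bit more explicitly via Lemma~\ref{functorSYMPL} and Corollary~\ref{cfuncSYMPLCCR}, but the argument is the same one the paper gives.
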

\begin{proof}
Denote the right hand side by $\ovl{\A}$.
By definition of $\A_M$ we have $\A_M\subset\ovl{\A}$.

In order to prove the other inclusion write $(M,E,P,G_+,G_-):=\solve(M,E,P)$.
Then $\SYM(M,E,P,G_+,G_-)$ is given by $V_M=\DD(M,E)/\ker(G)$ with symplectic
form $\omega_M$ induced by $G$.
Now $\ovl{\A}$ is generated by
\[ 
\mathcal{E}=\left\{W([\varphi])\,\big|\,\varphi\in\DD(M,E)\right\} 
\] 
where $W$ is the Weyl system from Example~\ref{CCR1} and $[\phi]$ denotes the
equivalence class of $\phi$ in $V_M$.
For given $\phi\in\DD(M,E)$ there exists a relatively compact globally
hyperbolic causally compatible open subset $O\subset M$ containing the compact
set $\supp(\phi)$ by Proposition~\ref{lIdir}.
For this subset $O$ we have $W([\varphi])\in\A_O$.
Hence we get $\mathcal{E}\subset\buil{\bigcup}{O\in I}\A_O\subset\A_M$ which
implies $\ovl{\A}\subset\A_M$. 
\end{proof}

\Example{
Let $M$ be globally hyperbolic.
All the operators listed in Examples~\ref{ex:Boxfsa} to \ref{ex:Diracsquare}
give rise to quasi-local $C^*$-algebras.
These operators include the d'Alembert operator, the Klein-Gordon operator,
the Yamabe operator, the wave operators for the electro-magnetic potential and
the Proca field as well as the square of the Dirac operator.
\indexn{d'Alembert operator}
\indexn{Klein-Gordon operator}
\indexn{Yamabe operator}
\indexn{Dirac operator}
\indexn{Proca equation}
\indexn{field strength in electrodynamics}
}

\Example{
Let $M$ be the anti-deSitter spacetime.\indexn{anti-deSitter spacetime}
Then $M$ is not globally hyperbolic but as we have seen in
Section~\ref{seq:nonglobhyp} we can get Green's operators for the (normalized)
Yamabe operator $P_g$ by embedding $M$ conformally into the Einstein cylinder.
This yields an object $(M,M\times\R, P_g,G_+,G_-)$ in $\LORFUND$.
Hence there is a corresponding weak quasi-local $C^*$-algebra over $M$.
}

\section{Haag-Kastler axioms}\label{sec:Haag-Kastler}

We now check that the functor that assigns to each object in $\LORFUND$ a
quasi-local $C^*$-algebra as constructed in the previous section satisfies the
Haag-Kastler axioms of a quantum field theory. 
These axioms have been proposed in \cite[p.~849]{HK} for Minkowski space.
Dimock \cite[Sec.~1]{Dimock1} adapted them to the case of globally hyperbolic
manifolds.
He also constructed the quasi-local $C^*$-algebras for the Klein-Gordon
operator.

\begin{thm}\label{tHaagKastler}
The functor $\LORFUND\to\CNET$ from Theorem~\ref{funktorcnet} satisfies the
\defidx{Haag-Kastler axioms}, that is, for every object $(M,E,P,G_+,G_-)$ in
$\LORFUND$ the corresponding weak quasi-local $C^*$-algebra
$\left(\A_M,\left\{\A_O\right\}_{O\in I}\right)$ satisfies: 
\begin{enumerate}
\item \label{HK:isotony}
If $O_1\subset O_2$, then $\A_{O_1}\subset\A_{O_2}$ for all $O_1,O_2\in I$.
\item \label{HK:local}
$\A_M=\ovl{\buil{\cup}{O\in I\atop O\neq\emptyset,\, O\neq M}\A_O}$.
\item \label{HK:simple}
If $M$ is globally hyperbolic, then $\A_M$ is simple. 
\item \label{HK:unit}
The $\A_O$'s have a common unit $1$.
\item \label{HK:causality1}
For all $O_1,O_2\in I$ with $J(\ovl{O_1})\cap\ovl{O_2}=\emptyset$ the
subalgebras $\A_{O_1}$ and $\A_{O_2}$ of $\A_M$ commute: 
$[\A_{O_1},\A_{O_2}]=\{0\}$.
\item (\defem{Time-slice axiom})\indexn{time-slice axiom>defemidx}
  \label{HK:time-slice}  
Let $O_1\subset O_2$ be nonempty elements of $I$ admitting a common Cauchy
hypersurface. 
Then $\A_{O_1}=\A_{O_2}$.
\item \label{HK:causality2}
Let $O_1,O_2\in I$ and let the Cauchy development $D(O_2)$ be
relatively compact in $M$.
If $O_1\subset D(O_2)$, then $\A_{O_1}\subset \A_{O_2}$.
\end{enumerate}
\end{thm}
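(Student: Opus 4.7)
The plan is to handle the axioms in order, noting that (\ref{HK:isotony})–(\ref{HK:causality1}) and (\ref{HK:simple}) follow almost immediately from the construction. Isotony is pure functoriality: for $O_1 \subset O_2$ the inclusion $\iota_{O_2,O_1}$ is a morphism in $\LORFUND$ so that $\alpha_{M,O_1} = \alpha_{M,O_2} \circ \CCR \circ \SYM(\iota_{O_2,O_1})$, and injectivity of the latter places $\A_{O_1}$ inside $\A_{O_2}$. The local property is built into the definition of $\A_M$, the common unit is $W(0)$, and causality at spacelike separation was already verified during the construction: causal independence of $O_1, O_2$ forces $\supp(G\phi) \cap \supp(\psi) = \emptyset$ for $\phi \in \DD(O_1,E)$, $\psi \in \DD(O_2,E)$, whence $\omega_M([\phi],[\psi])=0$ and the Weyl generators commute. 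Simplicity in the globally hyperbolic case combines Lemma \ref{globhypAM} with Corollary \ref{cor:simple}.

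The substantive step is the time-slice axiom (\ref{HK:time-slice}). Given $O_1 \subset O_2$ sharing a Cauchy hypersurface $S$, I plan to show that the natural symplectic map $V_{O_1} \to V_{O_2}$ induced by the inclusion is an isomorphism; functoriality of $\CCR$ together with Theorem \ref{CCRunique} then upgrades this to $\A_{O_1} = \A_{O_2}$. Injectivity is already contained in Lemma \ref{functorSYMPL}. For surjectivity, given $\phi \in \DD(O_2,E)$, set $u := G^{O_2}\phi \in \Csc(O_2,E)$ and consider its restriction $u|_{O_1}$, which solves $Pv=0$ on $O_1$. The point is that $u|_{O_1} \in \Csc(O_1,E)$: indeed, picking $K := S \cap J^{O_2}(\supp\phi)$, which is compact by Corollary \ref{cJ+Spastcompact}, every causal curve in $O_2$ from $\supp\phi$ to a point of $O_1$ must cross $S$ at a point of $K$, and causal compatibility of $O_1 \subset O_2$ then gives $\supp u|_{O_1} \subset J^{O_1}(K)$. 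By the exact sequence of Theorem \ref{thmExSeq} applied to $O_1$, we can write $u|_{O_1} = G^{O_1}\tilde\phi$ for some $\tilde\phi \in \DD(O_1,E)$. Extending $\tilde\phi$ by zero to $O_2$, both $u$ and $G^{O_2}\tilde\phi$ solve $Pv=0$ on $O_2$ and agree on $O_1 \supset S$, hence share Cauchy data on $S$; Corollary \ref{cauchyeindeutigkeit} forces them to coincide on $O_2$, so $G^{O_2}(\phi - \tilde\phi)=0$, i.e.\ $[\phi] = [\tilde\phi]$ in $V_{O_2}$.

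For axiom (\ref{HK:causality2}) the strategy is to reduce to the time-slice axiom. Let $\Sigma_2$ be a smooth spacelike Cauchy hypersurface of $O_2$ (Theorem \ref{globhyp2}); causal compatibility of $O_2$ in $M$ shows $\Sigma_2$ is acausal in $M$. Next I would verify $D^M(O_2) = D^M(\Sigma_2)$: the inclusion $\supset$ is trivial from $\Sigma_2 \subset O_2$, while $\subset$ uses that any inextendible causal curve in $M$ through $p \in D^M(O_2)$ enters $O_2$ at some $q$, whose connected component in $c \cap O_2$ is, by causal compatibility, an inextendible causal curve in $O_2$ and therefore meets $\Sigma_2$. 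Then $D^M(\Sigma_2)$ is open by Remark \ref{rem:Cauchydev}, globally hyperbolic with $\Sigma_2$ as Cauchy hypersurface, and relatively compact by hypothesis; granting causal compatibility in $M$, it belongs to $I$. Time-slice applied to $O_2 \subset D^M(\Sigma_2)$ (common Cauchy hypersurface $\Sigma_2$) gives $\A_{O_2} = \A_{D^M(\Sigma_2)}$, and isotony for $O_1 \subset D^M(\Sigma_2)$ yields $\A_{O_1} \subset \A_{D^M(\Sigma_2)} = \A_{O_2}$.

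The main technical obstacle is precisely this last geometric ingredient: verifying that $D^M(\Sigma_2)$ is causally compatible in $M$. The standard argument takes $p,q \in D^M(\Sigma_2)$ with $q \in J_+^M(p)$ and an intermediate point $r$ on an $M$-causal curve from $p$ to $q$, and shows that every inextendible causal curve through $r$ meets $\Sigma_2$ by concatenating with the appropriate portion of $c$ and invoking $p, q \in D^M(\Sigma_2)$ together with acausality of $\Sigma_2$ to prevent the relevant intersection from escaping onto past/future extensions. This is classical for Cauchy developments of acausal spacelike hypersurfaces and I expect it to be among the technical results assembled in the Lorentzian geometry appendix; still, it is the one nontrivial input not already contained in the constructions of the preceding chapters.
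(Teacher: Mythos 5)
Your handling of axioms (\ref{HK:isotony})--(\ref{HK:causality1}), (\ref{HK:simple}), and (\ref{HK:causality2}) matches the paper's, including the key reduction of (\ref{HK:causality2}) to time-slice plus isotony via $D(O_2)=D(\Sigma_2)$ and the appendix result that Cauchy developments of acausal hypersurfaces are causally compatible and globally hyperbolic (which is indeed Lemma~\ref{lem:DS}). For the time-slice axiom, however, you take a genuinely different route. The paper proves an explicit decomposition lemma (Lemma~\ref{ltimeslice}): given $\phi\in\DD(O_2,E)$ one constructs cutoff functions $\rho_\pm$ adapted to level sets of a Cauchy time-function and exhibits $\psi\in\DD(O_1,E)$, $\chi\in\DD(O_2,E)$ with $\phi=\psi+P\chi$, so $[\phi]=[\psi]$ directly. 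You instead argue through the solution space: pass from $\phi$ to $u=G^{O_2}\phi\in\Csc(O_2,E)$, show $u|_{O_1}\in\Csc(O_1,E)$ with spacelike-compact support controlled by $K=S\cap J^{O_2}(\supp\phi)$, use the exactness of the sequence (Theorem~\ref{thmExSeq}) on $O_1$ to recover $\tilde\phi\in\DD(O_1,E)$ with $G^{O_1}\tilde\phi=u|_{O_1}$, and then invoke compatibility of Green's operators (Proposition~\ref{propGreensubset}, using that $O_1$ is causally compatible in $O_2$ because both are causally compatible in $M$) together with uniqueness of the Cauchy problem (Corollary~\ref{cauchyeindeutigkeit}) to identify $G^{O_2}\ext\tilde\phi$ with $u$. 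Your argument is shorter and more conceptual, trading the explicit cutoff construction for reuse of two major results already in place; the paper's is more self-contained and yields the preimage $\psi$ constructively. Both are valid, and both establish surjectivity of the symplectic map $V_{O_1}\to V_{O_2}$, with injectivity from Lemma~\ref{functorSYMPL} giving the symplectic isomorphism and hence $\A_{O_1}=\A_{O_2}$.

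One small exposition point in your support argument: it is not literally true that every causal curve \emph{segment} in $O_2$ from $\supp\phi$ to a point of $O_1$ crosses $S$ --- if both endpoints lie to the future of $S$, for instance, it need not. What is true, and what your conclusion actually requires, is that an inextendible \emph{extension} of such a curve meets $S$ at a point $z\in K$ with $z\le p$ or $p\le z$, after which causal compatibility of $O_1\subset O_2$ gives $p\in J^{O_1}(K)$. This is easy to fix and does not affect the result.
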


\Remark{
It can happen that the Cauchy development $D(O)$ of a causally compatible
globally hyperbolic subset $O$ in a globally hyperbolic manifold $M$ is not
relatively compact even if $O$ itself is relatively compact. 
See the following picture for an example where $M$ and $O$ are ``lens-like''
globally hyperbolic subsets of Minkowski space:

\begin{center}
\input{fig-DOnrelkompakt}
\end{center}

This is why we {\em assume} in (\ref{HK:causality2}) that $D(O_2)$ is
relatively compact. 
}

\Remark{
Instead of (\ref{HK:simple}) one often finds the requirement that $\A_M$
should be
\defem{primitive}\indexn{primitive $C^*$--algebra>defemidx} for globally
hyperbolic $M$.
This means that there exists a faithful irreducible representation of $\A_M$
on a Hilbert space.
We know by Lemma~\ref{globhypAM} and Corollary~\ref{cor:simple} that $\A_M$ is
simple, i.~e., that (\ref{HK:simple}) holds.
Simplicity implies primitivity because each $C^*$-algebra has irreducible
representations \cite[Sec.~2.3.4]{BR1}.
}

\begin{proof}[Proof of Theorem~\ref{tHaagKastler}]
Only axioms (\ref{HK:time-slice}) and (\ref{HK:causality2}) require a proof.
First note that axiom~(\ref{HK:causality2}) follows from axioms
(\ref{HK:isotony}) and (\ref{HK:time-slice}):

Let $O_1,O_2\in I$, let the Cauchy development $D(O_2)$ be
relatively compact in $M$, and let $O_1\subset D(O_2)$.
By Theorem~\ref{globhyp} there is a smooth spacelike Cauchy hypersurface
$\Sigma \subset O_2$.
It follows from the definitions that $D(O_2)=D(\Sigma)$.
Since $O_2$ is causally compatible in $M$ the hypersurface $\Sigma$ is acausal
in $M$.
By Lemma~\ref{lem:DS} $D(\Sigma)$ is a causally compatible 
and globally hyperbolic open subset of $M$. 
Since $D(O_2)=D(\Sigma)$ is relatively compact by assumption we have
$D(O_2)\in I$. 

Axiom~(\ref{HK:time-slice}) implies $\A_{O_2} = \A_{D(O_2)}$.
By axiom (\ref{HK:isotony}) $\A_{O_1} \subset \A_{D(O_2)} = \A_{O_2}$.

It remains to show the time-slice axiom. 
We prepare the proof by first deriving two lemmas.
The first lemma is of technical nature while the second one is essentially
equivalent to the time-slice axiom.

\begin{lemma}\label{lexistrho}
Let $O$ be a causally compatible globally hyperbolic open subset of a globally
hyperbolic manifold $M$.
Assume that there exists a Cauchy hypersurface $\Sigma$ of $O$ which is also a
Cauchy hypersurface of $M$. 
Let $h$ be a Cauchy time-function on $O$ as in Corollary~\ref{cexisttimefctn}
(applied to $O$).
Let $K\subset M$ be compact.
Assume that there exists a $t\in\R$ with $K\subset I_+^M(h^{-1}(t))$.

Then there is a smooth function $\rho:M\to[0,1]$ such that
\beit
\item $\rho=1$ on a neighborhood of $K$,
\item $\supp(\rho)\cap J_-^M(K)\subset M$ is compact, and
\item $\ovl{\{x\in M\,|\, 0<\rho(x)<1\}}\cap J_-^M(K)$ is compact and
  contained in $O$. 
\eeit
\end{lemma}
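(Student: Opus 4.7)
The strategy is to construct $\rho$ by cutting off the Cauchy time function $h$ with a smooth one-dimensional step. Pick real numbers $t_1<t_2<t$ and a smooth function $\sigma\colon\R\to[0,1]$ with $\sigma\equiv 0$ on $(-\infty,t_1]$ and $\sigma\equiv 1$ on $[t_2,\infty)$. Let $\Sigma_i:=h^{-1}(t_i)$ for $i=1,2$; these are smooth spacelike Cauchy hypersurfaces of $O$, and by causal compatibility of $O$ in $M$ they are acausal in $M$, with $\{x\in O\mid h(x)>t_i\}=I_+^O(\Sigma_i)=I_+^M(\Sigma_i)\cap O$. Since $h$ strictly increases along future-directed causal curves of $O$ and $t_2<t$, the hypothesis $K\subset I_+^M(h^{-1}(t))$ yields $K\subset I_+^M(\Sigma_2)$.

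I would then define $\rho\colon M\to[0,1]$ by
\[
\rho(x)\;:=\;\begin{cases}
1, & x\in I_+^M(\Sigma_2),\\
\sigma(h(x)), & x\in O\setminus I_+^M(\Sigma_2),\\
0, & \text{otherwise}.
\end{cases}
\]
On the overlap $O\cap I_+^M(\Sigma_2)=\{h>t_2\}$ one has $\sigma\circ h\equiv 1$, so the two nonzero pieces agree, and $\rho$ is well defined. Property~(1) is then immediate, since $I_+^M(\Sigma_2)$ is an open neighborhood of $K$ on which $\rho\equiv 1$. For~(2) one observes $\{\rho\neq 0\}\subset I_+^M(\Sigma_1)$, because $\Sigma_2\subset I_+^O(\Sigma_1)\subset I_+^M(\Sigma_1)$; global hyperbolicity of $M$ together with $J_+^M(\Sigma_1)\subset J_+^M(\Sigma)$ and the fact that $\Sigma$ is a Cauchy hypersurface of $M$ (so $J_+^M(\Sigma)$ is past-compact) force $J_+^M(\Sigma_1)\cap J_-^M(K)$ to be compact. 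For~(3), the set $\{0<\rho<1\}$ sits inside $\{x\in O\mid t_1<h(x)<t_2\}\subset O$, so its closure in $M$ intersected with $J_-^M(K)$ is contained in the compact set from~(2); containment in $O$ is then obtained by using causal compatibility to show that limit points of the slab lying in $J_-^M(K)$ cannot escape $O$.

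The main obstacle is verifying smoothness of $\rho$ on all of $M$, concretely at points of $\partial O$. Smoothness is clear on the open sets $O$, $I_+^M(\Sigma_2)$, and $M\setminus(O\cup\overline{I_+^M(\Sigma_1)})$, where $\rho$ is either $\sigma\circ h$ or a constant. At a boundary point $x\in\partial O$ one must show that, approaching $x$ from inside $O$, either $h\ge t_2$ locally (so $\rho\equiv 1$, matching the $I_+^M(\Sigma_2)$ side) or $h\le t_1$ locally (so $\rho\equiv 0$, matching the exterior). The hard step is to rule out that the slab $\{t_1\le h\le t_2\}$ accumulates at $\partial O$ within $J_-^M(K)$. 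This is precisely where the assumption that $\Sigma$ is Cauchy in both $O$ and $M$ enters decisively: every inextendible timelike curve of $M$ must traverse $\Sigma\subset O$, which constrains how $\partial O$ can protrude into the past of $K$. Combining this with compactness of $K$ and a careful choice of $t_1<t_2$ (both pushed close to $t$), one should arrange that the portion of the slab relevant for $J_-^M(K)$ lies in a compactly contained subset of $O$, which simultaneously yields smoothness of $\rho$ and the compactness-and-containment statement in~(3).
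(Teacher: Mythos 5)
You correctly identify the smoothness of $\rho$ at $\partial O$ as the crux, but you do not close the gap, and the ``fix'' you gesture at (push $t_1,t_2$ close to $t$, and control the slab only on $J_-^M(K)$) would not work: $\rho$ has to be smooth on all of $M$, not merely on $J_-^M(K)$, so it is not enough to control the slab near $K$. The issue is global, and your piecewise formula $\rho = \sigma\circ h$ on $O\setminus I_+^M(\Sigma_2)$, $=1$ on $I_+^M(\Sigma_2)$, $=0$ otherwise, is in danger of being discontinuous wherever $\partial O$ meets $\ovl{\{t_1\le h\le t_2\}}$, regardless of whether that happens inside $J_-^M(K)$ or not.

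The missing ingredient is Lemma~\ref{lcommonCauchhyp}: since $\Sigma$ is a Cauchy hypersurface of both $O$ and $M$, \emph{every} level set $S_{t_i}:=h^{-1}(t_i)$ is a Cauchy hypersurface of $M$, not just of $O$. You observe only that the $S_{t_i}$ are acausal in $M$, which is strictly weaker. Once you know $S_{t_1}$ and $S_{t_2}$ are Cauchy in $M$, two things follow. First, $J_+^M(S_{t_2})$ and $J_-^M(S_{t_1})$ are disjoint closed subsets of $M$, so a smooth $\rho:M\to[0,1]$ with $\rho\equiv1$ on $J_+^M(S_{t_2})$ and $\rho\equiv0$ on $J_-^M(S_{t_1})$ exists by a standard Urysohn-type argument, bypassing the composition $\sigma\circ h$ entirely and rendering the smoothness question trivial. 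Second, the slab $J_+^M(S_{t_1})\cap J_-^M(S_{t_2})$ is contained in $O$: if $p$ were a point of this slab outside $O$, extend a future-directed causal curve through $S_{t_1}$, $p$, $S_{t_2}$ to an inextendible causal curve $c$ in $M$; the component of $c^{-1}(O)$ meeting $S_{t_1}$ yields an inextendible causal curve in $O$ which must hit the Cauchy hypersurface $S_{t_2}$ of $O$ before leaving $O$, so $c$ meets $S_{t_2}$ twice, contradicting that $S_{t_2}$ is Cauchy in $M$. This containment is exactly what gives property (3) and also shows (if one insisted on your $\sigma\circ h$ construction) that $\partial O$ cannot meet the closure of the slab, repairing your smoothness argument. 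Without Lemma~\ref{lcommonCauchhyp} and this slab containment, the proof is not complete.
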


\Remark{
Similarly, if instead of $K\subset I_+^M(h^{-1}(t))$ we have $K\subset
I_-^M(h^{-1}(t))$ for some $t$, then we can find  a smooth function
$\rho:M\to[0,1]$ such that 
\beit
\item $\rho=1$ on a neighborhood of $K$,
\item $\supp(\rho)\cap J_+^M(K)\subset M$ is compact, and
\item $\ovl{\{x\in M\,|\, 0<\rho(x)<1\}}\cap J_+^M(K)$ is compact and
  contained in $O$. 
\eeit
}

\begin{proof}[Proof of Lemma~\ref{lexistrho}]
By assumption there exist real numbers $t_-<t_+$ in the range of $h$ such that
$K\subset I_+^M(S_{t_+})$, hence also $K\subset I_+^M(S_{t_-})$, where
$S_t:=h^{-1}(t)$. 
Since $S_t$ is a Cauchy hypersurface of $O$ and since $O$ and $M$ admit a
common Cauchy hypersurface, it follows from Lemma~\ref{lcommonCauchhyp} that
$S_{t_-}$ and $S_{t_+}$ are also Cauchy hypersurfaces of $M$.
Since $J_+^M(S_{t_+})$ and $J_-^M(S_{t_-})$ are disjoint closed subsets of $M$
there exists a smooth function $\rho:M\to[0,1]$ such that
$\rho_{|_{J_+^M(S_{t_+})}}=1$ and $\rho_{|_{J_-^M(S_{t_-})}}=0$. 

\begin{center}
\input{fig-constructrho}
\end{center}

We check that $\rho$ has the three properties stated in Lemma~\ref{lexistrho}.
The first one follows from $K\subset I_+^M(S_{t_+})$.

Since $\rho_{|_{I_-^M(S_{t_-})}}=0$, we have $\supp(\rho)\subset
J_+^M(S_{t_-})$.
It follows from Lemma~\ref{pastcompact} applied to the past-compact subset
$J_+^M(S_{t_-})$ of $M$ that $J_+^M(S_{t_-})\cap J_-^M(K)$ is relatively
compact, hence compact by Lemma~\ref{lJKompaktumabgeschl}.
Therefore the second property holds.

The closed set $\ovl{\{0<\rho<1\}}\cap J_-^M(K)$ is contained in the compact
set $\supp(\rho)\cap J_-^M(K)$, hence compact itself.

The subset $\ovl{\{0<\rho<1\}}$ of $M$ lies in $J_+^M(S_{t_-})\cap
J_-^M(S_{t_+})$.
We claim that $J_+^M(S_{t_-})\cap J_-^M(S_{t_+})\subset O$ which will then
imply $\ovl{\{0<\rho<1\}}\cap J_-^M(K)\subset O$ and hence conclude the proof. 

Assume that there exists $p\in J_+^M(S_{t_-})\cap J_-^M(S_{t_+})$ but
$p\not\in O$. 
Choose a future directed causal curve $c:[s_-,s_+] \to M$ from $S_{t_-}$ to
$S_{t_+}$ through $p$. 
Extend this curve to an inextendible future directed causal curve $c:\R\to M$.
Let $I'$ be the connected component of $c^{-1}(O)$ containing $s_-$.
Then $I'\subset \R$ is an open interval and $c|_{I'}$ is an inextendible
causal curve in $O$.
Since $p\not\in O$ the curve leaves $O$ before it reaches $S_{t_+}$, hence
$s_+\not\in I'$.
But $S_{t_+}$ is a Cauchy hypersurface in $O$ and so there must be an $s\in I'$
with $c(s)\in S_{t_+}$.
Therefore the curve $c$ meets $S_{t_+}$ at least twice (namely in $s$ and in
$s_+$) in contradiction to $S_{t_+}$ being a Cauchy hypersurface in $M$.
\end{proof}

\begin{lemma}\label{ltimeslice}
Let $(M,E,P)$ be an object in $\GLOBHY$ and let $O$ be a causally compatible
globally hyperbolic open subset of $M$.
Assume that there exists a Cauchy hypersurface $\Sigma$ of $O$ which is also a
Cauchy hypersurface of $M$. 
Let $\phi\in\DD(M,E)$.

Then there exist $\psi,\chi\in\DD(M,E)$ such that $\supp(\psi)\subset O$ and 
\[\phi=\psi+P\chi.\]
\end{lemma}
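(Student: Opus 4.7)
\medskip

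\noindent\textbf{Plan.}
The strategy is to use the Green's operators $G_\pm$ of $P$ together with a cutoff construction based on Lemma~\ref{lexistrho} in order to ``push'' the support of $\phi$ into $O$ modulo the image of $P$. Since the compact set $K:=\supp(\phi)$ may well meet both the past and the future of $O$, I first split $\phi=\phi_++\phi_-$ into a piece lying strictly to the future and a piece lying strictly to the past of suitable Cauchy hypersurfaces inside $O$, treat each part separately, and recombine.

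\smallskip

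For the splitting, let $h:O\to\R$ be a Cauchy time-function on $O$ as provided by Corollary~\ref{cexisttimefctn}, so that each $S_t:=h^{-1}(t)$ is a smooth spacelike Cauchy hypersurface of $O$. By Lemma~\ref{lcommonCauchhyp} (used already in the proof of Lemma~\ref{lexistrho}) and the hypothesis on $\Sigma$, every such $S_t$ is also a Cauchy hypersurface of $M$. Pick $t_-<t_+$ and choose a smooth function $\eta:M\to[0,1]$ with $\eta\equiv 0$ on $J_-^M(S_{t_-})$ and $\eta\equiv 1$ on $J_+^M(S_{t_+})$; this is possible because these two closed sets are disjoint. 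Set $\phi_+:=\eta\phi$ and $\phi_-:=(1-\eta)\phi$. Then $K_+:=\supp(\phi_+)$ is a compact subset of $J_+^M(S_{t_-})$ disjoint from $S_{t_-}$, hence $K_+\subset I_+^M(h^{-1}(t_--1))$; analogously $K_-\subset I_-^M(h^{-1}(t_++1))$.

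\smallskip

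For the future part, apply Lemma~\ref{lexistrho} with the choice of time~$t_--1$ to obtain a cutoff $\rho_+:M\to[0,1]$ equal to $1$ near $K_+$, such that $\supp(\rho_+)\cap J_-^M(K_+)$ is compact and $\overline{\{0<\rho_+<1\}}\cap J_-^M(K_+)\subset O$ is compact. Put $u_-:=G_-\phi_+\in C^\infty(M,E)$, so $Pu_-=\phi_+$ and $\supp(u_-)\subset J_-^M(K_+)$, and define $\chi_+:=\rho_+\,u_-$. Because $\supp(\chi_+)\subset\supp(\rho_+)\cap J_-^M(K_+)$, this is a compactly supported smooth section. Using Lemma~\ref{produktregel} we compute
\[
P\chi_+ \;=\; \rho_+\,\phi_+ \;-\; 2\nabla_{\grad\rho_+}u_- \;+\; (\Box\rho_+)\,u_-,
\]
and since $\rho_+\equiv 1$ on a neighborhood of $K_+=\supp(\phi_+)$ the first term equals $\phi_+$; the remaining terms are supported in $\supp(d\rho_+)\cap J_-^M(K_+)\subset\overline{\{0<\rho_+<1\}}\cap J_-^M(K_+)$, which is compact and contained in $O$. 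Hence $\psi_+:=\phi_+-P\chi_+\in\DD(M,E)$ with $\supp(\psi_+)\subset O$. By the time-reversed version of Lemma~\ref{lexistrho} (its Remark) the same argument applied to $\phi_-$, using $u_+:=G_+\phi_-$ and an analogous cutoff $\rho_-$, yields $\chi_-,\psi_-\in\DD(M,E)$ with $\supp(\psi_-)\subset O$ and $\phi_-=\psi_-+P\chi_-$. Setting $\chi:=\chi_++\chi_-$ and $\psi:=\psi_++\psi_-$ gives the desired decomposition $\phi=\psi+P\chi$.

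\smallskip

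The main technical issue is the initial reduction: one has to know that the chosen intermediate hypersurfaces $S_{t_\pm}$ are Cauchy in the \emph{ambient} manifold $M$, not just in $O$. This is precisely where the hypothesis that $O$ and $M$ share a Cauchy hypersurface enters, via Lemma~\ref{lcommonCauchhyp}. Once this transfer is in place, the rest of the proof is a routine application of Lemma~\ref{lexistrho} and the product rule for the normally hyperbolic operator $P$.
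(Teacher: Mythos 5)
Your proof follows the same strategy as the paper's: pick $t_- < t_+$ in the range of the Cauchy time-function $h$ on $O$, promote $S_{t_\pm}:=h^{-1}(t_\pm)$ to Cauchy hypersurfaces of $M$ via Lemma~\ref{lcommonCauchhyp}, split $\phi$ into two halves supported in $I_\pm^M(S_{t_\mp})$, and for each half set $\chi_\pm=\rho_\pm\,G_\mp\phi_\pm$ with $\rho_\pm$ from Lemma~\ref{lexistrho}; the support bookkeeping via Lemma~\ref{produktregel} is also the paper's. The only difference is notational: the paper packages the splitting as a partition of unity $\{f_+,f_-\}$ subordinate to the open cover $\{I_+^M(S_{t_-}),I_-^M(S_{t_+})\}$, whereas you use $\eta$ and $1-\eta$.

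One imprecision in your splitting step is worth flagging. Requiring only $\eta\equiv 0$ on $J_-^M(S_{t_-})$ gives $\{\eta>0\}\subset I_+^M(S_{t_-})$, but the closure $K_+:=\supp(\eta\phi)$ may still touch $S_{t_-}$, so the claim that $K_+$ is disjoint from $S_{t_-}$ does not follow automatically. Moreover $t_--1$ need not lie in the range of $h$ at all, in which case $h^{-1}(t_--1)=\emptyset$ and the inclusion $K_+\subset I_+^M(h^{-1}(t_--1))$ fails. Both issues disappear if you pick $t_\pm$ in the interior of the range of $h$ and choose $\eta$ with $\supp(\eta)\subset I_+^M(S_{t_-})$ (and $\supp(1-\eta)\subset I_-^M(S_{t_+})$) — which is precisely what a partition of unity subordinate to the open cover gives. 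Then $K_+\subset I_+^M(S_{t_-})$ directly, and Lemma~\ref{lexistrho} applies with the value $t_-$ itself.
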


\begin{proof}[Proof of Lemma~\ref{ltimeslice}]
Let $h$ be a time-function on $O$ as in Corollary~\ref{cexisttimefctn}
(applied to $O$). 
Fix $t_-<t_+\in\R$ in the range of $h$. 
By Lemma~\ref{lcommonCauchhyp} the subsets $S_{t_-}:=h^{-1}(t_-)$ and
$S_{t_+}:=h^{-1}(t_+)$ are also Cauchy hypersurfaces of $M$. 
Hence every inextendible timelike curve in $M$ meets $S_{t_-}$ and $S_{t_+}$.
Since $t_-<t_+$ the set $\{I_+^M(S_{t_-}),I_-^M(S_{t_+})\}$ is a finite open
cover of $M$.

Let $\{f_+,f_-\}$ be a partition of unity subordinated to this cover.
In particular, $\supp(f_\pm)\subset I_\pm^M(S_{t_\mp})$. 
Set $K_\pm:=\supp(f_\pm\varphi)=\supp(\varphi)\cap\supp(f_\pm)$. 
Then $K_\pm$ is a compact subset of $M$ satisfying $K_\pm\subset
I_\pm^M(S_{t_\mp})$. 
Applying Lemma~\ref{lexistrho} we obtain two smooth functions
$\rho_+,\rho_-:M\rightarrow [0,1]$ satisfying: 

\beit
\item $\rho_\pm=1$ in a neighborhood of $K_\pm$,
\item $\supp(\rho_\pm)\cap J_\mp^M(K_\pm)\subset M$ is compact, and
\item $\ovl{\{0<\rho_\pm<1\}}\cap J_\mp^M(K_\pm)$ is compact and contained in
  $O$.
\eeit

Set $\chi_\pm:=\rho_\pm G_\mp(f_\pm\phi)$, $\chi:=\chi_++\chi_-$ and
$\psi:=\phi-P\chi$. 
By definition, $\chi_\pm$, $\chi$, and $\psi$ are smooth sections in $E$ over
$M$.
Since $\supp(G_\mp(f_\pm\phi))\subset J_\mp^M(\supp(f_\pm\phi))\subset
J_\mp^M(K_\pm)$, the support of $\chi_\pm$ is contained in
$\supp(\rho_\pm)\cap J_\mp^M(K_\pm)$, which is compact by the second property
of $\rho_\pm$. 
Therefore $\chi\in\DD(M,E)$.

It remains to show that $\supp(\psi)$ is compact and contained in $O$.
By the first property of $\rho_\pm$ one has $\chi_\pm=G_\mp(f_\pm\phi)$ in a
neighborhood of $K_\pm$.
Moreover, $f_\pm\phi=0$ on ${\{\rho_\pm=0\}}$.
Hence $P\chi_\pm=f_\pm\phi$ on $\{\rho_\pm=0\}\cup\{\rho_\pm=1\}$. 
Therefore $f_\pm\phi-P\chi_\pm$ vanishes outside $\ovl{\{0<\rho_\pm<1\}}$,
i.~e., $\supp(f_\pm\phi-P\chi_\pm)\subset\ovl{\{0<\rho_\pm<1\}}$. 
By the definitions of $\chi_\pm$ and $f_\pm$ one also has
$\supp(f_\pm\phi-P\chi_\pm)\subset K_\pm\cup J_\mp^M(K_\pm)=J_\mp^M(K_\pm)$,
hence $\supp(f_\pm\phi-P\chi_\pm)\subset \ovl{\{0<\rho_\pm<1\}}\cap
J_\mp^M(K_\pm)$, which is compact and contained in $O$ by the third property
of $\rho_\pm$.
Therefore the support of $\psi=f_+\phi-P\chi_++f_-\phi-P\chi_-$ is compact and
contained in $O$. 
This shows Lemma~\ref{ltimeslice}.
\end{proof}

{\em End of proof of Theorem~\ref{tHaagKastler}.}
The time-slice axiom in Theorem~\ref{tHaagKastler} follows directly from
Lemma~\ref{ltimeslice}.
Namely, let $O_1\subset O_2$ be nonempty causally compatible globally
hyperbolic open subsets of $M$ admitting a common Cauchy hypersurface. 
Let $[\phi]\in V_{O_2}:=\DD(O_2,E)/\ker(G_{O_2})$. 
Lemma~\ref{ltimeslice} applied to $M:=O_2$ and $O:=O_1$ yields
$\chi\in\DD(O_2,E)$ and $\psi\in\DD(O_1,E)$ such that $\phi=\ext\psi+P\chi$. 
Since $P\chi\in\ker(G_{O_2})$ we have $[\phi]=[\ext\psi]$, that is,
$[\phi]$ is the image of the symplectic linear map $V_{O_1}\to V_{O_2}$
induced by the inclusion $O_1 \hookrightarrow O_2$, compare
Lemma~\ref{functorSYMPL}.
We see that this symplectic map is surjective, hence an isomorphism. 
Symplectic isomorphisms induce isomorphisms of $C^*$-algebras, hence the
inclusion $\A_{O_1}\subset\A_{O_2}$ is actually an equality. 
This proves the time-slice axiom and concludes the proof of
Theorem~\ref{tHaagKastler}. 
\end{proof}

\section{Fock space}\label{sec:Fock}\indexn{Fock space} 

In quantum mechanics a particle is described by its wave function which
mathematically is a solution $u$ to an equation $Pu=0$.
We consider normally hyperbolic operators $P$ in this text.
The passage from single particle systems to multi particle systems is known
as \defem{second quantization}\indexn{second quantization>defemidx} in the
physics literature. 
Mathematically it requires the construction of the quantum field which we
will do in the subsequent section.
In this section we will describe some functional analytical underpinnings,
namely the construction of the bosonic Fock space.

We start by describing the symmetric tensor product of Hilbert spaces.
Let $H$ denote a complex vector space.
We will use the convention that the Hermitian scalar product
$(\cdot,\cdot)$\indexs{Hermscalprod@$(\cdot,\cdot)$, Hermitian scalar
  product}\indexn{Hermitian scalar product>defemidx} 
on $H$ is anti-linear in the first argument.
Let $H_n$ be the vector space freely generated by $H \times \cdots \times H$
($n$ copies), i.~e., the space of all finite formal linear combinations of
elements of $H \times \cdots \times H$.
Let $V_n$ be the vector subspace of $H_n$ generated by all elements of the form
$(v_1, \ldots, cv_k, \ldots, v_n) - c\cdot(v_1, \ldots, v_k, \ldots, v_n)$,
$(v_1, \ldots, v_k+v_k', \ldots, v_n) - (v_1, \ldots, v_k, \ldots, v_n) - (v_1,
\ldots, v_k', \ldots, v_n)$ and $(v_1,\ldots, v_n) - (v_{\sigma(1)}, \ldots,
v_{\sigma(n)})$ where $v_j, v_j' \in H$, $c\in\Co$ and $\sigma$ a permutation.

\Definition{
The vector space $\bigodot^n_\alg H := H_n/
V_n$\indexs{symmprodalg@$\protect\bigodot^n_{\protect\alg} H$, algebraic
  $n^{th}$ symmetric tensor product of $H$}\indexn{algebraic symmetric tensor
  product>defemidx} is 
called the \defem{algebraic $n^{th}$ symmetric tensor product} of $H$.
By convention, we put $\bigodot^0_\alg H:=\Co$.
}

For the equivalence class of $(v_1,\ldots, v_n)\in H_n$ in $\bigodot^n_\alg H$
we write $v_1 \odot \cdots \odot v_n$.
The map $\gamma:H \times \cdots \times H \to \bigodot^n_\alg H$ given by
$\gamma(v_1,\ldots,v_n) = v_1 \odot \cdots \odot v_n$ is multilinear and
symmetric. 
The algebraic symmetric tensor product has the following universal
property\indexn{universal property of the symmetric tensor product>defemidx}. 

\begin{lemma}\label{univpropstalg}
For each complex vector space $W$ and each symmetric multilinear map $\alpha
: H \times \cdots \times H \to W$ there exists one and only one linear map
$\bar\alpha : \bigodot^n_\alg H \to W$ such that the diagram

{\hspace{4cm}
\xymatrix{
H \times \cdots \times H \ar[d]_{\gamma} \ar[dr]^{\alpha}&\\
\bigodot^n_\alg H \ar[r]^{\bar\alpha} & W 
}}

commutes.
\end{lemma}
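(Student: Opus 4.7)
The plan is to build $\bar\alpha$ by using first the universal property of the free vector space $H_n$, and then the universal property of the quotient $H_n/V_n$. Concretely, I would begin by defining a linear map $\tilde\alpha : H_n \to W$ by setting $\tilde\alpha(v_1,\ldots,v_n) := \alpha(v_1,\ldots,v_n)$ on the free generators and extending linearly; this extension is forced and unique because $H_n$ is by construction the free complex vector space on the set $H \times \cdots \times H$.

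Next I would check that $\tilde\alpha$ vanishes on each of the three families of generators of $V_n$. For the two generators encoding linearity in the $k$-th slot, this is immediate from multilinearity of $\alpha$ (e.g.\ $\tilde\alpha\bigl((v_1,\ldots,cv_k,\ldots,v_n) - c\cdot(v_1,\ldots,v_k,\ldots,v_n)\bigr) = \alpha(v_1,\ldots,cv_k,\ldots,v_n) - c\,\alpha(v_1,\ldots,v_n) = 0$), and for the generator $(v_1,\ldots,v_n) - (v_{\sigma(1)},\ldots,v_{\sigma(n)})$ it is immediate from symmetry of $\alpha$. Hence $V_n \subset \ker(\tilde\alpha)$, and the universal property of the quotient yields a well-defined linear map $\bar\alpha : \bigodot^n_{\alg} H \to W$ satisfying $\bar\alpha \circ \pi = \tilde\alpha$, where $\pi : H_n \to H_n/V_n$ is the projection. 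Since $\gamma = \pi$ on elementary tuples, this gives $\bar\alpha \circ \gamma = \alpha$.

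For uniqueness, I would observe that the image $\gamma(H \times \cdots \times H) = \{v_1 \odot \cdots \odot v_n\}$ generates $\bigodot^n_{\alg} H$ as a complex vector space (this is clear since $H_n$ is generated by elementary tuples and $\pi$ is surjective). Therefore the relation $\bar\alpha(v_1 \odot \cdots \odot v_n) = \alpha(v_1,\ldots,v_n)$ already determines $\bar\alpha$ on a generating set, forcing uniqueness.

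No step here is an obstacle; the proof is entirely formal and is the standard factorization argument used for any tensor-type construction defined by generators and relations. The only point that deserves a line of care is the verification that the symmetry relations lie in $\ker(\tilde\alpha)$, which is where the hypothesis that $\alpha$ be symmetric (and not merely multilinear) is actually consumed.
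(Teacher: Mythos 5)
Your proof is correct and takes essentially the same approach as the paper, which defines $\bar\alpha$ on the generators $v_1\odot\cdots\odot v_n$ by the formula $\bar\alpha(v_1\odot\cdots\odot v_n)=\alpha(v_1,\ldots,v_n)$ and notes that well-definedness is easily checked. You have simply spelled out that verification by factoring through the free vector space $H_n$ and its quotient by $V_n$, which is exactly the content behind the paper's brief "checks easily" remark.
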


\begin{proof}
Uniqueness of $\bar\alpha$ is clear because
\begin{equation}\label{symtensorunivprop}
\bar\alpha(v_1 \odot \cdots \odot v_n) = \alpha(v_1,\ldots,v_n)
\end{equation}
and the elements $v_1 \odot \cdots \odot v_n$ generate $\bigodot^n_\alg H$.

To  show existence one defines $\bar\alpha$ by
Equation~\ref{symtensorunivprop} and checks easily that this is well-defined.
\end{proof}

The algebraic symmetric tensor product $\bigodot^n_\alg H$ inherits a scalar
product from $H$ characterized by
$$
(v_1 \odot \cdots \odot v_n,w_1 \odot \cdots \odot w_n) = 
\sum_\sigma (v_1,w_{\sigma(1)})\cdots(v_n,w_{\sigma(n)})
$$
where the sum is taken over all permutations $\sigma$ on $\{1,\ldots,n\}$.

\Definition{
The completion of $\bigodot^n_\alg H$ with respect to this scalar product is
called the \defem{$n^{th}$ symmetric tensor product}\indexn{symmetric tensor
  product>defemidx} of the Hilbert space $H$ 
and is denoted $\bigodot^n H$\indexs{symmprodH@$\protect\bigodot^n H$,
  $n^{th}$ symmetric tensor product of $H$}. 
In particular, $\bigodot^0 H=\Co$. 
}

\Remark{
If $\{e_j\}_{j\in \JJ}$ is an orthonormal system of $H$ where $\JJ$ is some
ordered index set, then $\{e_{j_1}\odot \cdots
\odot e_{j_n}\}_{j_1 \leq \cdots \leq j_n}$ forms an orthogonal system of
$\bigodot^n H$.
For each ordered multiindex $J=(j_1,\ldots,j_n)$ there is a corresponding
partition of $n$, $n=k_1 + \cdots + k_l$, given by
$$
{j_1}= \cdots = {j_{k_1}} < {j_{k_1+1}} = \cdots = {j_{k_1+k_2}} <
\cdots < {j_{k_1 + \cdots + k_{l-1}+1}} = \cdots = {j_{n}}.
$$
We compute
\begin{eqnarray*}
\|e_{j_1}\odot \cdots \odot e_{j_n}\|^2
&=&
\sum_\sigma (e_{j_1},e_{j_{\sigma(1)}}) \cdots (e_{j_n},e_{j_{\sigma(n)}})\\
&=&
\# \{\sigma\,|\, (j_{\sigma(1)},\ldots,j_{\sigma(n)})=(j_1,\ldots,j_n)\}\\
&=&
k_1! \cdots k_l! .
\end{eqnarray*}
In particular,
$$
1 \leq \|e_{j_1}\odot \cdots \odot e_{j_n}\| \leq \sqrt{n!}.
$$
}








The algebraic direct sum $\FF_\alg(H):=\bigoplus_{\alg,n=0}^\infty \bigodot^n
H$ 
carries a natural scalar product, namely
$$
((w_0,w_1,w_2,\ldots),(u_0,u_1,u_2,\ldots)) = \sum_{n=0}^\infty (w_n,u_n)
$$
where $w_n,u_n\in \bigodot^n H$.

\Definition{
We call $\FF_\alg(H)$\indexs{FalgH@$\protect\FF_{\protect\alg}(H)$, algebraic
  symmetric Fock space of $H$} the \defidx{algebraic symmetric Fock space} of
$H$. 
The completion of $\FF_\alg(H)$ with respect
to this scalar product is denoted $\FF(H)$\indexs{FH@$\protect\FF(H)$,
  symmetric Fock space of $H$} and 
is called the {\em bosonic} or {\em symmetric} \defidx{Fock space} of $H$.
The vector $\Omega:=1\in \Co =\bigodot^0 H \subset \FF_\alg(H) \subset
\FF(H)$\indexs{Omegavac@$\Omega$, vacuum vector} is 
called the \defidx{vacuum vector}. 
}

The elements of the Hilbert space $\FF(H)$ are therefore sequences
$(w_0,w_1,w_2,\ldots)$ with $w_n\in \bigodot^n H$ such that 
$$
\sum_{n=0}^\infty \|w_n\|^2 < \infty .
$$

Fix $v\in H$.
The map $\alpha : H \times \cdots \times H \to \bigodot^{n+1} H$, 
$\alpha(v_1,\ldots,v_n) = v \odot v_1 \odot \cdots \odot v_n$, is symmetric
multilinear 
and induces a linear map $\bar\alpha:\bigodot^{n}_\alg H \to \bigodot^{n+1}
H$, $v_1 \odot \cdots \odot v_n \mapsto v \odot v_1 \odot \cdots \odot v_n$, by
Lemma~\ref{univpropstalg}. 
We compute the operator norm of $\bar\alpha$.
Without loss of generality we can assume $\|v\|=1$.
We choose the orthonormal system $\{e_j\}_{j\in \JJ}$ of $H$ such that $v$
belongs to it.
If $v$ is perpendicular to all $e_{j_1},\ldots,e_{j_n}$, then 
$$
\|\bar\alpha(e_{j_1}\odot\cdots\odot e_{j_n})\| = 
\|v\odot e_{j_1}\odot\cdots\odot e_{j_n}\| =
\|e_{j_1}\odot\cdots\odot e_{j_n}\| .
$$
If $v$ is one of the $e_{j_\mu}$, say $v=e_{j_1}$, then
\begin{eqnarray*}
\|\bar\alpha(e_{j_1}\odot\cdots\odot e_{j_n})\|^2
&=&
(k_1+1)!\,k_2!\cdots k_l!\\
&=&
(k_1+1)\|e_{j_1}\odot\cdots\odot e_{j_n}\|^2.
\end{eqnarray*}
Thus in any case
$$
\|\bar\alpha(e_{j_1}\odot\cdots\odot e_{j_n})\| \leq 
\sqrt{n+1}\|e_{j_1}\odot\cdots\odot e_{j_n}\|
$$
and equality holds for $e_{j_1}=\cdots =e_{j_n} = v$.
Dropping the assumption $\|v\|=1$ this shows
$$
\|\bar\alpha\| = \sqrt{n+1}\|v\|.
$$
Hence $\bar\alpha$ extends to a bounded linear map
$$
a^*(v) : {\bigodot}^{n} H \to {\bigodot}^{n+1} H, \quad
a^*(v)(v_1 \odot \cdots \odot v_n)= v \odot v_1 \odot \cdots \odot v_n
\indexs{a*@$a^*$, creation operator}
$$
with 
\begin{equation}
\|a^*(v)\|=\sqrt{n+1}\|v\|.
\label{erzeugernorm}
\end{equation}
For the vacuum vector this means $a^*(v)\Omega = v$.
The map $a^*(v)$ is naturally defined as a linear map
$\FF_\alg(H) \to \FF_\alg(H)$.
By (\ref{erzeugernorm}) $a^*(v)$ is unbounded on $\FF_\alg(H)$ unless $v=0$ and
therefore does not extend continuously to $\FF(H)$.
Writing $v=v_0$ we see
\begin{eqnarray*}
\lefteqn{(a^*(v)(v_1\odot\cdots\odot v_n),w_0\odot w_1 \odot \cdots\odot
  w_n)}\\
&=&
(v_0\odot v_1\odot\cdots\odot v_n,w_0\odot w_1 \odot \cdots\odot w_n)\\
&=&
\sum_\sigma (v_0,w_{\sigma(0)}) (v_1,w_{\sigma(1)}) \cdots
(v_n,w_{\sigma(n)})\\
&=&
\sum_{k=0}^n \sum_{\sigma\,\,\mathrm{ with}\atop \sigma(0)=k}
(v,w_k)(v_1,w_{\sigma(1)}) \cdots (v_n,w_{\sigma(n)})\\
&=&
\left(v_1\odot\cdots\odot v_n,\sum_{k=0}^n(v,w_k)w_0\odot\cdots\odot \hat{w}_k
  \odot \cdots\odot w_n\right)
\end{eqnarray*}
where $\hat{w}_k$ indicates that the factor $w_k$ is left out.
Hence if we define $a(v) : \bigodot^{n+1}_\alg H \to\bigodot^n_\alg
H$\indexs{a@$a$, annihilation operator}
by  
$$
a(v)(w_0\odot\cdots\odot w_n) = 
\sum_{k=0}^n(v,w_k)w_0\odot\cdots\odot \hat{w}_k\odot \cdots\odot w_n
$$
and $a(v)\Omega=0$, then we have
\begin{equation}\label{vernerzadjungiert}
(a^*(v)\omega,\eta) = (\omega,a(v)\eta)  
\end{equation}
for all $\omega\in\bigodot^n_\alg H$ and $\eta\in \bigodot^{n+1}_\alg H$.
The operator norm of $a(v)$ is easily determined:
\begin{eqnarray*}
\|a(v)\| 
&=&
\sup_{\eta\in\bigodot^{n+1}_\alg H\atop \|\eta\|=1} \|a(v)\eta\|
\quad=\quad
\sup_{{\eta\in\bigodot^{n+1}_\alg H\atop\omega\in\bigodot^n_\alg H}
  \atop\|\eta\|=\|\omega\|=1} (\omega,a(v)\eta)\\ 
&=&
\sup_{{\eta\in\bigodot^{n+1}_\alg H\atop\omega\in\bigodot^n_\alg H}
  \atop\|\eta\|=\|\omega\|=1} (a^*(v)\omega,\eta)
\quad=\quad
\sup_{\omega\in\bigodot^n_\alg H\atop \|\omega\|=1} \|a^*(v)\omega\|\\
&=&
\|a^*(v)\|
\quad=\quad
\sqrt{n+1}\,\|v\| .
\end{eqnarray*}
Thus $a(v)$ extends continuously to a linear operator $a(v) : \bigodot^{n+1} H
\to\bigodot^n H$ with 
\begin{equation}
\|a(v)\|=\sqrt{n+1}\,\|v\| .
\label{vernichternorm}
\end{equation}

We consider both $a^*(v)$ and $a(v)$ as unbounded linear operators in Fock
space $\FF(H)$ with $\FF_\alg(H)$ as invariant domain of definition.
In the physics literature, $a(v)$ is known as \defem{annihilation
  operator}\indexn{annihilation operator>defemidx} and 
$a^*(v)$ as \defem{creation operator}\indexn{creation operator>defemidx} for $v\in H$.

\begin{lemma}\label{CCR4vernerz}
Let $H$ be a complex Hilbert space and let $v,w\in H$.
Then the canonical commutator relations (CCR)\indexn{canonical commutator
  relations} hold, i.~e., 
$$
[a(v),a(w)] = [a^*(v),a^*(w)] = 0,
$$
$$
[a(v),a^*(w)] = (v,w)\id.
$$
\end{lemma}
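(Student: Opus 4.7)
The plan is to verify all three identities directly on the dense invariant domain $\FF_\alg(H)$ by computing the action of both sides on elementary symmetric tensors $v_1 \odot \cdots \odot v_n$ and on the vacuum $\Omega$. Since the symmetric tensors span $\bigodot^n_\alg H$ and the operators $a(v)$, $a^*(v)$ preserve $\FF_\alg(H)$, this suffices.

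The identity $[a^*(v),a^*(w)] = 0$ is immediate from the symmetry built into $\bigodot^{n+2}_\alg H$: both compositions send $v_1 \odot \cdots \odot v_n$ to $v \odot w \odot v_1 \odot \cdots \odot v_n = w \odot v \odot v_1 \odot \cdots \odot v_n$. The identity $[a(v),a(w)] = 0$ then follows by taking adjoints in the sense of (\ref{vernerzadjungiert}): for all $\omega, \eta \in \FF_\alg(H)$,
\[
(a(v)a(w)\eta,\omega) = (\eta, a^*(w)a^*(v)\omega) = (\eta, a^*(v)a^*(w)\omega) = (a(w)a(v)\eta,\omega),
\]
and nondegeneracy of the scalar product on $\FF_\alg(H)$ gives $[a(v),a(w)] = 0$. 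Alternatively, one can compute it directly from the defining formula for $a(v)$, noting that removing two factors of a symmetric tensor is itself symmetric in the order of removal.

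The main content is the mixed relation $[a(v),a^*(w)] = (v,w)\id$. Writing $w = w_0$ and applying $a(v)$ to $a^*(w)(v_1 \odot \cdots \odot v_n) = w \odot v_1 \odot \cdots \odot v_n$ using the definition of the annihilation operator, I split the resulting sum into the $k=0$ term, which contributes $(v,w)\,v_1 \odot \cdots \odot v_n$, and the terms with $k \geq 1$, which give $\sum_{k=1}^n (v,v_k)\, w \odot v_1 \odot \cdots \odot \hat{v}_k \odot \cdots \odot v_n$. The second piece is exactly $a^*(w)a(v)(v_1 \odot \cdots \odot v_n)$, so the commutator equals $(v,w)$ times the identity on $\bigodot^n_\alg H$.

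The only delicate point is the case $n=0$, which has to be checked separately because $a(v)\Omega = 0$ by definition: then $a(v)a^*(w)\Omega = a(v)w = (v,w)\Omega$ while $a^*(w)a(v)\Omega = 0$, and the two sides agree. Beyond this boundary case the proof is pure bookkeeping, and no analytic subtleties arise because we work entirely on the algebraic Fock space $\FF_\alg(H)$ where all operators involved are everywhere defined.
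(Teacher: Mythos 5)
Your proof is correct and follows essentially the same route as the paper's: direct verification of $[a^*(v),a^*(w)]=0$ from the symmetry of $\odot$, passage to $[a(v),a(w)]=0$ via the adjoint relation (\ref{vernerzadjungiert}), and a direct computation for $[a(v),a^*(w)]=(v,w)\,\id$ by isolating the $k=0$ term. Your remark that nondegeneracy of the inner product on $\FF_\alg(H)$ already suffices (since $[a(v),a(w)]\eta$ lies in $\FF_\alg(H)$) is a mild simplification of the paper's appeal to density of $\FF_\alg(H)$ in $\FF(H)$, and your explicit check of the $n=0$ boundary case is a welcome extra.
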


\begin{proof}
From 
\begin{eqnarray*}
a^*(v)a^*(w)v_1\odot\cdots\odot v_n 
&=&
v\odot w\odot v_1\odot\cdots\odot v_n\\
&=&
w\odot v\odot v_1\odot\cdots\odot v_n\\
&=&
a^*(w)a^*(v)v_1\odot\cdots\odot v_n 
\end{eqnarray*}
we see directly that $[a^*(v),a^*(w)] = 0$.
By (\ref{vernerzadjungiert}) we have for all $\omega,\eta\in
\FF_\alg(H)$ 
$$
(\omega,[a(v),a(w)]\eta) =
([a^*(w),a^*(v)]\omega,\eta) = 0.
$$
Since $\FF_\alg(H)$ is dense in $\FF(H)$ this
implies $[a(v),a(w)]\eta=0$.
Finally, subtracting
\begin{eqnarray*}
a(v)a^*(w)v_1\odot\cdots\odot v_n 
&=&
a(v)w\odot v_1\odot\cdots\odot v_n\\
&=&
(v,w)v_1\odot\cdots\odot v_n\\ 
&&+ \sum_{k=1}^n (v,v_k) w \odot v_1 \odot\cdots\odot 
\hat{v}_k\odot\cdots\odot v_n
\end{eqnarray*}
and
\begin{eqnarray*}
a^*(w)a(v)v_1\odot\cdots\odot v_n 
&=&
w\odot \sum_{k=1}^n (v,v_k) v_1 \odot\cdots\odot
\hat{v}_k\odot\cdots\odot v_n\\
&=&
\sum_{k=1}^n (v,v_k)w\odot v_1 \odot\cdots\odot
\hat{v}_k\odot\cdots\odot v_n
\end{eqnarray*}
yields $[a(v),a^*(w)](v_1\odot\cdots\odot v_n) = (v,w)v_1\odot\cdots\odot v_n$.
\end{proof}

\Definition{
Let $H$ be a complex Hilbert space and let $v\in H$.
We define the \defidx{ Segal field} as the unbounded operator 
$$
\theta(v) := \frac{1}{\sqrt{2}} (a(v) + a^*(v))
\indexs{theta@$\theta$, Segal field}
$$
in $\FF(H)$ with $\FF_\alg(H)$ as domain of definition.
}

Notice that $a^*(v)$ depends $\Co$-linearly on $v$ while $v \mapsto a(v)$ is
anti-linear.
Hence $\theta(v)$ is only $\R$-linear in $v$.

\begin{lemma}
Let $H$ be a complex Hilbert space and let $v\in H$.
Then the Segal operator $\theta(v)$ is essentially selfadjoint.
\end{lemma}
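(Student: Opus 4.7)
The plan is to apply Nelson's analytic vector theorem, which asserts that a symmetric operator with a dense set of analytic vectors in its domain is essentially selfadjoint.

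First I would verify that $\theta(v)$ is symmetric on $\FF_{\alg}(H)$. This follows directly from (\ref{vernerzadjungiert}): writing $(a^*(v)\omega,\eta)=(\omega,a(v)\eta)$ for $\omega,\eta\in\FF_{\alg}(H)$ and adding the identity obtained by swapping the roles of $a(v)$ and $a^*(v)$ yields $(\theta(v)\omega,\eta)=(\omega,\theta(v)\eta)$. Note that $\FF_{\alg}(H)$ is an invariant domain for $\theta(v)$, since $a(v)$ and $a^*(v)$ each shift the particle number by one and therefore map finite-particle vectors to finite-particle vectors; in particular $\theta(v)^k\omega$ is well defined for every $k$.

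The main step is to show that every vector in the dense subspace $\FF_{\alg}(H)$ is analytic for $\theta(v)$. Since analyticity is preserved under finite linear combinations (with a common radius of convergence), it suffices to treat a homogeneous $\omega\in\bigodot^n H$. The key estimate I would establish is
\[
\|\theta(v)^k\omega\|\leq 2^{k/2}\,\|v\|^k\,\sqrt{(n+k)!/n!}\,\|\omega\|.
\]
To prove this, expand $\theta(v)^k=2^{-k/2}\sum_\sigma \sigma$, where $\sigma$ runs over the $2^k$ words of length $k$ in the letters $a(v)$ and $a^*(v)$. Applying such a word to $\omega\in\bigodot^n H$ iteratively traces a path through spaces $\bigodot^m H$ with $m\leq n+k$; at each step the operator norm of $a(v)$ or $a^*(v)$ on $\bigodot^j H$ is at most $\sqrt{j+1}\,\|v\|$, by (\ref{erzeugernorm}) and (\ref{vernichternorm}). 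Multiplying these bounds over the $k$ steps gives $\sqrt{(n+k)!/n!}\,\|v\|^k$ for each word, and summing $2^k$ such contributions via the triangle inequality yields the displayed estimate.

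Finally, the ratio test applied to $\sum_k \|\theta(v)^k\omega\|\,t^k/k!$ with this estimate gives a ratio $\sqrt{2}\,\|v\|\,t\,\sqrt{n+k+1}/(k+1)$, which tends to zero as $k\to\infty$; hence the series converges for every $t>0$, so $\omega$ is an (entire) analytic vector, and Nelson's theorem delivers the conclusion. The one point of care is the bookkeeping in the key estimate: because $\theta(v)$ applied to a homogeneous vector produces components at two different particle numbers, a naive iteration at fixed particle number is awkward; the word expansion circumvents this at the modest price of the $2^{k/2}$ factor, which is harmless compared with the $k!$ denominator appearing in the analyticity series.
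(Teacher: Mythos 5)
Your proof is correct and follows the same overall strategy as the paper: verify symmetry via (\ref{vernerzadjungiert}), reduce to homogeneous vectors $\omega\in\bigodot^n H$, establish a growth estimate for $\|\theta(v)^k\omega\|$ of the form $C^k\sqrt{(n+k)!/n!}\,\|v\|^k\,\|\omega\|$, and invoke Nelson's theorem. The interesting difference lies in how the power estimate is obtained. The paper proves the tight single-step bound $\|\theta(v)\omega\|^2\le(n+1)\|v\|^2\|\omega\|^2$ by using that $a^*(v)\omega$ and $a(v)\omega$ sit in $\bigodot^{n+1}H$ and $\bigodot^{n-1}H$ and are therefore orthogonal, and then simply iterates to get $\|\theta(v)^m\omega\|\le\sqrt{(n+1)\cdots(n+m)}\|v\|^m\|\omega\|$. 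That iteration step is actually a bit informal: $\theta(v)\omega$ is no longer homogeneous, and when one applies $\theta(v)$ again the two homogeneous pieces both feed into the $\bigodot^n$-component, so the orthogonality trick does not repeat verbatim. Your word expansion of $\theta(v)^k$ as $2^{-k/2}\sum_\sigma\sigma$ sidesteps this cleanly, since each word $\sigma$ maps $\bigodot^n H$ into a single $\bigodot^{n'}H$ and the step-by-step operator-norm bounds multiply without any interference; you pay only the $2^{k/2}$ factor, which is indeed swallowed by the $k!$ in the analyticity series. So you buy rigor in the iteration at the price of a slightly looser constant, while the paper's estimate is tighter but relies on an iteration that deserves a small additional justification (e.g.\ noting that the eventual bound only needs to hold up to a universal exponential factor, as your argument makes explicit).
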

 
\begin{proof}
Since $\theta(v)$ is symmetric by (\ref{vernerzadjungiert}) and densely defined
it is closable.
The domain of definition $\FF_\alg(H)$ is
invariant for $\theta(v)$ and hence all powers $\theta(v)^m$ are defined on it.
By Nelson's theorem is suffices to show that all vectors in
$\FF_\alg(H)$ are analytic, see Theorem~\ref{thm:nelson}.
Since all vectors in the domain of definition are finite linear combinations
of vectors in $\bigodot^n H$ for various $n$ we only need to show that 
$\omega\in\bigodot^n H$ is analytic.
By (\ref{erzeugernorm}), (\ref{vernichternorm}), and the fact that
$a^*(v)\omega \in\bigodot^{n+1}H$ and $a(v)\omega \in\bigodot^{n-1}H$ are
perpendicular we have
\begin{eqnarray*}
\|\theta(v)\omega\|^2
&=&
\frac12 \|a^*(v)\omega + a(v)\omega\|^2\\
&=&
\frac12 (\|a^*(v)\omega\|^2+\|a(v)\omega\|^2 )\\
&\leq&
\frac12 ((n+1)\|v\|^2\|\omega\|^2+n\|v\|^2\|\omega\|^2)\\
&\leq&
(n+1)\|v\|^2\|\omega\|^2,
\end{eqnarray*}
hence
\begin{equation}
\|\theta(v)^m\omega\| \leq \sqrt{(n+1)(n+2)\cdots(n+m)}\|v\|^m\|\omega\|.
\label{WAbsch}
\end{equation}
For any $t>0$
\begin{eqnarray*}
\sum_{m=0}^\infty \frac{t^m}{m!}\|\theta(v)^m\omega\|
&\leq&
\sum_{m=0}^\infty
\frac{t^m}{m!}\sqrt{(n+1)(n+2)\cdots(n+m)}\,\|v\|^m\|\omega\|\\ 
&=&
\sum_{m=0}^\infty
\frac{t^m}{\sqrt{m!}}\sqrt{\frac{n+1}{1}\cdot\frac{n+2}{2}\cdots\frac{n+m}{m}}
\,\|v\|^m\|\omega\|\\
&\leq&
\sum_{m=0}^\infty \frac{t^m}{\sqrt{m!}}\sqrt{n+1}^m\,\|v\|^m\|\omega\|\\
&<& 
\infty
\end{eqnarray*}
because the power series $\sum_{m=0}^\infty \frac{x^m}{\sqrt{m!}}$ has
infinite radius of convergence.
Thus $\omega$ is an analytic vector.
\end{proof}

\begin{lemma}\label{thetakommutator}
Let $H$ be a complex Hilbert space and let $v,v_j,w\in H$, $j=1,2,\ldots$
Let $\eta\in \FF_\alg(H)$.
Then the following holds:
\begin{enumerate}
\item \label{TK:commutator}
$(\theta(v)\theta(w)-\theta(w)\theta(v))\eta = i\Im(v,w)\eta.$
\item \label{TK:stetig}
If $\|v-v_j\|\to 0$, then $\|\theta(v)\eta-\theta(v_j)\eta\|\to 0$ as
$j\to\infty$.
\item \label{TK:dicht}
The linear span of the vectors $\theta(v_1)\cdots\theta(v_n)\Omega$ where
$v_j\in H$ and $n\in\N$ is dense in $\FF(H)$.
\end{enumerate}
\end{lemma}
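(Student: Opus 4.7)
Here is my plan.

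For part (\ref{TK:commutator}), the plan is a direct computation using Lemma~\ref{CCR4vernerz}. Expanding $\theta(v)=\tfrac{1}{\sqrt 2}(a(v)+a^*(v))$ and bilinearly expanding the commutator yields four terms. Two of them vanish by Lemma~\ref{CCR4vernerz}, and the remaining two contribute $\tfrac{1}{2}[a(v),a^*(w)]+\tfrac{1}{2}[a^*(v),a(w)]=\tfrac{1}{2}((v,w)-(w,v))\,\id=i\,\Im(v,w)\,\id$. All identities are applied on $\FF_{\alg}(H)$, which is invariant under both $a(v)$ and $a^*(v)$, so there is no domain issue.

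For part (\ref{TK:stetig}), I would use the $\R$-linearity of $\theta$ to rewrite $\theta(v)\eta-\theta(v_j)\eta=\theta(v-v_j)\eta$. It then suffices to show that for fixed $\eta\in\FF_{\alg}(H)$ there is a constant $C=C(\eta)$ with $\|\theta(u)\eta\|\le C\|u\|$ for all $u\in H$. Writing $\eta=\sum_{n=0}^{N}\eta_n$ with $\eta_n\in\bigodot^n H$, the bound (\ref{WAbsch}) for $m=1$ gives $\|\theta(u)\eta_n\|\le\sqrt{n+1}\,\|u\|\,\|\eta_n\|$ and hence the claimed estimate with $C=\sum_{n=0}^{N}\sqrt{n+1}\,\|\eta_n\|$. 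The continuity claim follows immediately.

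For part (\ref{TK:dicht}), the main point is that applying $\theta(v_1)\cdots\theta(v_n)$ to the vacuum produces, up to a normalization and lower order corrections, the symmetric tensor $v_1\odot\cdots\odot v_n$. Concretely, expanding each $\theta(v_k)=\tfrac{1}{\sqrt 2}(a(v_k)+a^*(v_k))$ yields $2^n$ terms of the form $\tfrac{1}{(\sqrt 2)^n}b_1\cdots b_n\,\Omega$ with each $b_k\in\{a(v_k),a^*(v_k)\}$. Each such monomial shifts particle number by the difference of the numbers of creation and annihilation operators, so it lands in $\bigodot^{\,n-2j} H$ where $j$ is the number of $a$'s; in particular, the only summand landing in $\bigodot^n H$ is the all-creation one, giving $\tfrac{1}{(\sqrt 2)^n}\,v_1\odot\cdots\odot v_n$. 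Thus
\[
\theta(v_1)\cdots\theta(v_n)\,\Omega \;=\; \tfrac{1}{(\sqrt 2)^n}\,v_1\odot\cdots\odot v_n \;+\; R,
\]
where $R\in\bigoplus_{k=0}^{n-1}\bigodot^k H$. By induction on $n$ the remainder $R$ lies in the linear span $L$ of the vectors $\theta(w_1)\cdots\theta(w_m)\Omega$ (with $m<n$), and hence so does $v_1\odot\cdots\odot v_n$. Since such symmetric products span $\bigodot^n_{\alg}H$ and $\FF_{\alg}(H)=\bigoplus_{\alg,n}\bigodot^n H$ is dense in $\FF(H)$, the density of $L$ follows. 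The main obstacle is just the bookkeeping of the degree-counting argument that isolates the top-degree term; everything else is routine.
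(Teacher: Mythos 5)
Your plan is correct and, for all three parts, essentially reproduces the paper's proof: part (\ref{TK:commutator}) by bilinear expansion and Lemma~\ref{CCR4vernerz}; part (\ref{TK:stetig}) by the $\R$-linearity $\theta(v)-\theta(v_j)=\theta(v-v_j)$ together with the bound $\|\theta(u)\omega\|\le\sqrt{n+1}\,\|u\|\,\|\omega\|$ on $\bigodot^n H$ (the paper reduces directly to homogeneous $\eta$ while you sum over components, a cosmetic difference); and part (\ref{TK:dicht}) by the degree-counting induction isolating the top-degree creation-only term, which is exactly the induction the paper invokes when it asserts that both spans equal $\bigoplus_{n=0}^N\bigodot^n_{\alg}H$.
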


\begin{proof}
We see (\ref{TK:commutator}) by Lemma~\ref{CCR4vernerz}
\begin{eqnarray*}
\theta(v)\theta(w)\eta
&=&
\frac12 (a^*(v)+a(v))(a^*(w)+a(w))\eta\\
&=&
\frac12 (a^*(v)a^*(w)+a^*(v)a(w)+a(v)a^*(w)+a(v)a(w))\eta\\
&=&
\frac12 (a^*(w)a^*(v)+a(w)a^*(v)-(w,v)\id+a(w)a^*(w)+(v,w)\id\\
&&+a(w)a(v))\eta\\
&=&
\theta(w)\theta(v)\eta + i\Im(v,w)\eta .
\end{eqnarray*}

For (\ref{TK:stetig}) it suffices to prove the statement for
$\eta\in\bigodot^n H$. 
By (\ref{erzeugernorm}) and (\ref{vernichternorm}) we see
\begin{eqnarray*}
\|\theta(v)\eta-\theta(v_j)\eta\|
&=&
2^{-1/2}\|a^*(v)\eta+a(v)\eta-a^*(v_j)\eta-a(v_j)\eta\|\\
&\leq&
2^{-1/2}(\|a^*(v)\eta-a^*(v_j)\eta\|+\|a(v)\eta-a(v_j)\eta\|)\\
&=&
2^{-1/2}(\|a^*(v-v_j)\eta\|+\|a(v-v_j)\eta\|)\\
&\leq&
2^{-1/2}(\sqrt{n+1}+\sqrt{n})\|v-v_j\|\|\eta\|
\end{eqnarray*}
which implies the statement.

For (\ref{TK:dicht})
one can easily see by induction on $N$ that the span of the vectors 
$\theta(v_1)\cdots\theta(v_n)\Omega$, $n\leq N$, and the span of the vectors
$a^*(v_1)\cdots a^*(v_n)\Omega$, $n\leq N$, both coincide with
$\bigoplus_{n=0}^N \bigodot_\alg^n H$.
This is dense in $\bigoplus_{n=0}^N \bigodot^n H$ and the assertion follows.
\end{proof}

Now we can relate this discussion to the canonical commutator
relations\indexn{canonical commutator relations} as 
studied in Section~\ref{sec-CCR}.
Denote the (selfadjoint) closure of $\theta(v)$ again by $\theta(v)$ and 
denote the domain of this closure by $\dom(\theta(v))$.
Look at the unitary operator $W(v) := \exp(i\theta(v))$.
Recall that for the analytic vectors $\omega\in \FF_\alg(H)$ we have the series
$$
W(v)\omega = \sum_{m=0}^\infty \frac{i^m}{m!} \theta(v)^m\omega
$$
converging absolutely.

\begin{prop}\label{prop:WT}
For $v,v_j,w \in H$ we have
\begin{enumerate}
\item \label{WT:commutator}
The domain $\dom(\theta(w))$ is preserved by $W(v)$, i.~e.,
$W(v)(\dom(\theta(w)))=\dom(\theta(w))$ and 
$$
W(v)\theta(w)\omega = \theta(w) W(v)\omega - \Im(v,w)W(v)\omega
$$
for all $\omega\in\dom(\theta(w))$.
\item \label{WT:weyl}
The map $W:H \to \LL(\FF(H))$ is a Weyl system\indexn{Weyl system} of the
symplectic vector space 
$(H,\Im(\cdot,\cdot))$\indexs{Im@$\protect\Im$, imaginary part}.
\item \label{WT:stetig}
If $\|v-v_j\|\to0$, then $\|(W(v)-W(v_j))\eta\| \to 0$
for all $\eta\in \FF(H)$.
\end{enumerate}
\end{prop}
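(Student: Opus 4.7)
The three statements are logically nested: (1) is the foundational commutator relation between the Weyl operator $W(v)$ and the Segal field $\theta(w)$; it will feed into (2) via an ODE argument yielding the Weyl relations; and (2) in turn will allow (3) to be reduced to strong continuity at the origin, which falls out of the power series for $W(u)$ together with the analytic-vector estimate (\ref{WAbsch}) already proved in the excerpt. The overall strategy therefore is to do (1) carefully on the dense invariant core $\FF_\alg(H)$ and only then bootstrap to (2) and (3).

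For (1), I would first prove by induction on $m$ the identity
\[
[\theta(v)^m,\theta(w)]\,\omega \;=\; mi\,\Im(v,w)\,\theta(v)^{m-1}\omega, \qquad \omega\in\FF_\alg(H),
\]
using the base case $[\theta(v),\theta(w)] = i\Im(v,w)\id$ from Lemma~\ref{thetakommutator}(\ref{TK:commutator}) and the Leibniz rule for commutators. Summing $\frac{i^m}{m!}\theta(v)^m\theta(w)\omega$ and exploiting that $\omega\in\FF_\alg(H)$ is analytic for $\theta(v)$ (so the series converges absolutely, per the proof that $\theta(v)$ is essentially selfadjoint) yields $W(v)\theta(w)\omega = \theta(w)W(v)\omega - \Im(v,w)W(v)\omega$ on $\FF_\alg(H)$. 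Since $\FF_\alg(H)$ is a core for the closure of $\theta(w)$ and $W(v)$ is bounded, closedness of $\theta(w)$ extends this identity to all $\omega\in\dom(\theta(w))$ and simultaneously shows $W(v)(\dom(\theta(w)))\subset\dom(\theta(w))$; applying the same to $W(-v)=W(v)^{-1}$ gives equality of the two domains.

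For (2), $W(0)=1$ follows from $\theta(0)=0$, and $W(-v)=W(v)^*$ from selfadjointness of $\theta(v)$ together with $\R$-linearity $\theta(-v)=-\theta(v)$. The nontrivial point is the Weyl relation. Fix $v,w\in H$ and $\omega\in\FF_\alg(H)$, and consider
\[
U(t) \;:=\; e^{it^2\Im(v,w)/2}\,W(tv)W(tw)\,\omega, \qquad V(t)\;:=\;W(t(v+w))\,\omega.
\]
Both are norm-continuous $\FF(H)$-valued curves with $U(0)=V(0)=\omega$. Using Stone's theorem to differentiate $W(tv)$ and $W(tw)$ and then applying (1) to push $\theta(w)$ through $W(tv)$, one computes
\[
U'(t)\;=\;i\theta(v+w)\,U(t)\;=\;V'(t),
\]
where the $-it\Im(v,w)$ term arising from (1) exactly cancels the derivative of the scalar prefactor. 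Uniqueness for the initial value problem $\xi'(t)=i\theta(v+w)\xi(t)$ with selfadjoint generator gives $U\equiv V$; evaluating at $t=1$ and using density of $\FF_\alg(H)$ in $\FF(H)$ together with boundedness of $W(v),W(w),W(v+w)$ yields $W(v)W(w)=e^{-i\Im(v,w)/2}\,W(v+w)$ on all of $\FF(H)$.

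For (3), using the Weyl relation from (2) we can write $W(v_j) = e^{i\Im(v_j-v,v)/2}\,W(v_j-v)\,W(v)$, so
\[
\|(W(v)-W(v_j))\eta\|\;=\;\|(e^{i\Im(v_j-v,v)/2}W(v_j-v)-\id)W(v)\eta\|,
\]
and since $W(v)$ is unitary it suffices to prove that $W(u)\xi\to\xi$ in norm as $u\to 0$ for every $\xi\in\FF(H)$. Because $\|W(u)\|=1$ uniformly in $u$ this in turn reduces by $\varepsilon/3$-approximation to $\xi\in\bigodot^n H$ for some $n$. For such $\xi$ the series $W(u)\xi-\xi=\sum_{m\geq 1}\tfrac{i^m}{m!}\theta(u)^m\xi$ converges absolutely by (\ref{WAbsch}), and isolating the $m=1$ term gives $\|W(u)\xi-\xi\|\leq \sqrt{n+1}\,\|u\|\,\|\xi\|+O(\|u\|^2)$ uniformly for $\|u\|$ bounded, which tends to $0$ as $u\to 0$.

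The main obstacle is bookkeeping around the unbounded operator $\theta(v)$: the series $\sum \tfrac{i^m}{m!}\theta(v)^m\omega$ used to define $W(v)\omega$ converges only on analytic vectors, $\FF_\alg(H)$ is not preserved by $W(v)$, and both the commutator identity in (1) and the ODE in (2) have to be established first on $\FF_\alg(H)$ and then extended using closedness of $\theta(w)$, density of $\FF_\alg(H)$, and the uniform bound $\|W(v)\|=1$. Once these analytic passages are executed carefully the algebraic content of all three assertions is essentially the computation in Lemma~\ref{thetakommutator}(\ref{TK:commutator}).
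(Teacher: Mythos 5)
Your proposal is correct and its overall structure matches the paper's proof closely. Parts (1) and (3) coincide with the paper almost step for step: in (1) both prove the commutator identity on $\FF_\alg(H)$ by induction, sum the power series, and then extend to all of $\dom(\theta(w))$ using the closedness of $\theta(w)$ together with the a priori bound on $\|\theta(w)W(v)\omega\|$; in (3) both use the Weyl relations from (2) to reduce to strong continuity at $0$, pass to $\eta\in\bigodot^n H$ by an $\varepsilon/3$ argument, and invoke the analytic-vector estimate (\ref{WAbsch}). (In your part (3) you omit the trivial observation that the scalar factor $e^{i\Im(v_j-v,v)/2}$ contributes $|e^{i\Im(v_j-v,v)/2}-1|\cdot\|\xi\|\to 0$; this is an obvious one-line addition.)

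The only genuine difference is in part (2). The paper studies the single curve $x(t):=W(tv)W(tw)W(-t(v+w))\omega$, differentiates it, and after pushing the Segal fields through the Weyl operators via (1) finds that it satisfies the \emph{scalar} linear ODE $\dot x(t)=it\,\Im(w,v)\,x(t)$, which is solved explicitly. You instead compare the two curves $U(t)=e^{it^2\Im(v,w)/2}W(tv)W(tw)\omega$ and $V(t)=W(t(v+w))\omega$, show both solve the \emph{vector} Schr\"odinger IVP $\xi'(t)=i\theta(v+w)\xi(t)$, $\xi(0)=\omega$, and invoke uniqueness, which in turn rests on the standard norm-conservation argument for symmetric generators. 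The underlying commutation computation is identical; the paper's route is marginally more elementary because it avoids having to justify uniqueness for an ODE with an unbounded generator, while your route exposes the Weyl relation as an instance of the semigroup property of $t\mapsto W(tu)$. Both are valid, and the choice between them is a matter of taste.
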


\begin{proof}
We first check the formula (\ref{WT:commutator}) for $\omega\in \FF_\alg(H)$.
From Lemma~\ref{thetakommutator}~(\ref{TK:commutator}) we get inductively
$$
\theta(v)^m\theta(w)\omega = 
\theta(w)\theta(v)^m\omega + i\cdot m\cdot\Im(v,w)\,\theta(v)^{m-1}\omega.
$$
Since $\theta(w)\omega\in \FF_\alg(H)$ we have
\begin{eqnarray}
\theta(w)W(v)\omega 
&=&
\sum_{m=0}^\infty \frac{i^m}{m!}\theta(w)\theta(v)^m\omega\nonumber\\
&=&
\sum_{m=0}^\infty \frac{i^m}{m!}\left(\theta(v)^m\theta(w) -
im\,\,\Im(v,w)\,\theta(v)^{m-1}\right)\omega\nonumber\\
&=&
W(v)\theta(w)\omega - \sum_{m=1}^\infty
\frac{i^{m+1}\Im(v,w)}{(m-1)!}\,\theta(v)^{m-1}\omega\nonumber\\
&=&
W(v)\theta(w)\omega + \Im(v,w) W(v)\omega.
\label{PhiWKommutator}
\end{eqnarray}
In particular, $W(v)\omega$ is an analytic vector for $\theta(w)$ and we have
for $\omega,\eta\in \FF_\alg(H)$
\begin{eqnarray}
\|\theta(w)W(v)(\omega-\eta)\| 
&=&
\|(W(v)\theta(w) + \Im(v,w) W(v))(\omega-\eta)\|\nonumber\\
&\leq&
\|W(v)\theta(w)(\omega-\eta)\| + |\Im(v,w)| \|W(v)(\omega-\eta)\|\nonumber\\
&\leq&
\left(\|\theta(w)(\omega-\eta)\| +
|\Im(v,w)|\cdot\|\omega-\eta\|\right)\cdot\|v\|. 
\label{domainabsch}
\end{eqnarray}
Now let $\omega\in\dom(\theta(w))$.
Then there exist $\omega_j\in \FF_\alg(H)$ such
that $\|\omega-\omega_j\| \to 0$ and
$\|\theta(w)(\omega)-\theta(w)(\omega_j)\|\to 
0$ as $j\to \infty$.
Since $W(v)$ is bounded we have $\|W(v)(\omega)-W(v)(\omega_j)\|\to 0$
and by (\ref{domainabsch}) $\{\theta(w)W(v)(\omega_j)\}_j$ is a Cauchy sequence
and therefore convergent as well.
Hence $W(v)(\omega)\in\dom(\theta(w))$ and the validity of
(\ref{PhiWKommutator}) extends to all $\omega\in\dom(\theta(w))$.

We have shown $W(v)(\dom(\theta(w)))\subset\dom(\theta(w))$.
Replacing $W(v)$ by $W(-v)=W(v)^{-1}$ yields
$W(v)(\dom(\theta(w)))=\dom(\theta(w))$.

For (\ref{WT:weyl}) observe
 $W(0) = \exp(0) = \id$ and $W(-v)=\exp(i\theta(-v))=\exp(-i\theta(v))
=\exp(i\theta(v))^*=W(v)^*$.
We fix $\omega\in \FF_\alg(H)$ and look at the
smooth curve
$$
x(t) := W(tv)W(tw)W(-t(v+w))\omega
$$
in $\FF(H)$.
We have $x(0)=\omega$ and 
\begin{eqnarray*}
\dot{x}(t) &=&
\frac{d}{dt} W(tv)W(tw)W(-t(v+w))\omega\\
&=&
iW(tv)\theta(v)W(tw)W(-t(v+w))\omega
+ iW(tv)W(tw)\theta(w)W(-t(v+w))\omega\\
&&
+ iW(tv)W(tw)\theta(-v-w)W(-t(v+w))\omega\\
&=&
iW(tv)\theta(v)W(tw)W(-t(v+w))\omega
+ iW(tv)W(tw)\theta(-v)W(-t(v+w))\omega\\
&\stackrel{(\ref{WT:commutator})}{=}&
iW(tv)W(tw)(\theta(v)+\Im(tw,v))W(-t(v+w))\omega\\
&&
+ iW(tv)W(tw)\theta(-v)W(-t(v+w))\omega\\
&=&
i\cdot t\cdot\Im(w,v)\cdot x(t) .
\end{eqnarray*}
Thus $x(t)=e^{i\Im(w,v)t^2/2}\omega$ and $t=1$ yields 
$W(v)W(w)W(-(v+w))\omega = e^{i\Im(w,v)/2}\omega$.
By continuity this equation extends to all $\omega\in \FF(H)$ and shows that 
$W$ is a Weyl system for the symplectic form $\Im(\cdot,\cdot)$.

For (\ref{WT:stetig})
let $\eta\in\bigodot^n H$ and let $\|v-v_j\|\to 0$ as $j\to \infty$.
Then
\begin{eqnarray*}
\|(W(v)-W(v_j))\eta\|
&=&
\|W(v)(\id-W(-v)W(v_j))\eta\|\\
&\leq&
\|(\id-W(-v)W(v_j))\eta\|\\
&\leq&
\|(1-e^{i\Im(v,v_j)/2})\eta\| + \|(e^{i\Im(v,v_j)/2}-W(-v)W(v_j))\eta\|\\
&\stackrel{(\ref{WT:weyl})}{=}&
\|(1-e^{i\Im(v,v_j)/2})\eta\| 
+ \|(e^{i\Im(v,v_j)/2}-e^{i\Im(v,v_j)/2}W(v_j-v))\eta\|\\
&\leq&
|1-e^{i\Im(v,v_j)/2}|\cdot\|\eta\| + \|(\id-W(v_j-v))\eta\| .
\end{eqnarray*}
Since $\Im(v,v_j) = \frac12 \Im(v-v_j,v+v_j) \to 0$ it suffices to show
$\|(\id-W(v_j-v))\eta\|\to 0$.
This follows from 
\begin{eqnarray*}
\sum_{m=1}^\infty \frac{1}{m!}\|\theta(v_j-v)^m\eta\|
&\stackrel{(\ref{WAbsch})}{\leq}&
\sum_{m=1}^\infty \sqrt{\frac{(n+1)^m}{m!}}\|v_j-v\|^m\,\|\eta\|\\
&=&
\|v_j-v\|\sum_{m=0}^\infty \sqrt{\frac{(n+1)^{m+1}}{(m+1)!}}\|v_j-v\|^m\,
\|\eta\|
\end{eqnarray*}
and $\sum_{m=0}^\infty \sqrt{\frac{(n+1)^{m+1}}{(m+1)!}}\|v_j-v\|^m<\infty$
uniformly in $j$.
We have seen that $\|(W(v)-W(v_j))\eta\| \to 0$ for all
$\eta\in\bigodot^n H$ ($n$ fixed) hence for all
$\eta\in \FF_\alg(H)$. 

Finally, let $\eta\in \FF(H)$ be arbitrary.
Let $\epsilon>0$.
Choose $\eta'\in \FF_\alg(H)$ such that
$\|\eta-\eta'\|<\epsilon$.
For $j\gg 0$ we have $\|(W(v)-W(v_j))\eta'\|<\epsilon$.
Hence
\begin{eqnarray*}
\|(W(v)-W(v_j))\eta\|
&\leq&
\|(W(v)-W(v_j))(\eta-\eta')\| + \|(W(v)-W(v_j))\eta'\|\\
&\leq&
2\|\eta-\eta'\| + \epsilon\\
&<&
3\epsilon.
\end{eqnarray*}
This concludes the proof.
\end{proof}

\section{The quantum field defined by a Cauchy hypersurface}
\indexn{quantum field} 

In this final section we construct the quantum field.
This yields a formulation of the quantized theory on Fock space which is
closer to the traditional presentations of quantum field theory than the
formulation in terms of quasi-local $C^*$-algebras given in
Sections~\ref{sec:qlC*} and \ref{sec:Haag-Kastler}. 
It has the disadvantage however of depending on a choice of Cauchy
hypersurface.
Even worse from a physical point of view, this quantum field has all the
properties that one usually requires except for one, the ``microlocal spectrum
condition''. 
We do not discuss this condition in the present book, see the remarks at the
end of this section and the references mentioned therein.
The construction given here is nevertheless useful because it illustrates how
the abstract algebraic formulation of quantum field theory relates to more
traditional ones.

Let $(M,E,P)$ be an object in the category $\GLOBHY$, i.~e.,
$M$ is a globally hyperbolic Lorentzian manifold, $E$ is a real vector bundle
over $M$ with nondegenerate inner product, and $P$ is a formally selfadjoint
normally hyperbolic operator acting on sections in $E$.
We need an additional piece of structure.

\Definition{
Let $k\in\N$.
A \defidx{twist structure of spin $k/2$} on $E$ is a smooth section $Q\in
C^\infty(M,\Hom(\bigodot^kTM,\End(E)))$\indexs{Q@$Q$, twist structure} with
the following properties: 
\begin{enumerate}
\item \label{twist:sym2}
$Q$ is symmetric with respect to the inner product on $E$, i.~e.,
$$
\la Q(X_1\odot\cdots\odot X_k)e,f\ra = \la e,Q(X_1\odot\cdots\odot X_k)f\ra
$$ for all $X_j\in T_pM$, $e,f\in E_p$, and $p\in M$.
\item \label{twist:posdef}
If $X$ is future directed timelike, then the bilinear form
$\la\cdot,\cdot\ra_X$ defined by
$$
\la f,g\ra_X := \la Q(X\odot\cdots\odot X)f,g\ra
$$
is positive definite.
\end{enumerate}
}

Note that the bilinear form $\la\cdot,\cdot\ra_X$ is symmetric by
(\ref{twist:sym2}) so that (\ref{twist:posdef}) makes sense.
From now on we write $Q_X:=Q(X\odot\cdots\odot X)$ for brevity.
If $X$ is past directed timelike, then $\la\cdot,\cdot\ra_X$ is positive or
negative definite depending on the parity of $k$.
Note furthermore, that $Q_X$ is a field of {\em isomorphisms} of $E$ in case
that $X$ is timelike since otherwise $\la\cdot,\cdot\ra_X$ would be
degenerate.

\Examples{
a)
Let $E$ be a real vector bundle over $M$ with Riemannian metric.
In the case of the d'Alembert, the Klein-Gordon, and the Yamabe operator we
are in this situation.\indexn{d'Alembert operator}\indexn{Klein-Gordon
  operator}\indexn{Yamabe operator}
We take $k=0$ and $Q:\bigodot^0TM=\R \to \End(E)$, $t\mapsto t\cdot\id$.
By convention, $Q_X=\id$ and hence $\la\cdot,\cdot\ra_X=\la\cdot,\cdot\ra$ for
a timelike vector $X$ of unit length. 

b)
Let $M$ carry a spin structure and let $E=\Sigma M$ be the spinor bundle.
The Dirac operator and its square act on sections in $\Sigma M$.\indexn{Dirac
  operator}\indexn{spinor bundle}
As explained in \cite[Sec.~3.3]{Baum} and \cite[Sec.~2]{BGM} there is a
natural indefinite Hermitian product $(\cdot,\cdot)$ on $\Sigma M$ such that 
for future directed timelike $X$ the sesquilinear form $(\cdot,\cdot)_X$
defined by 
$$
(\phi,\psi)_X = (\phi,X\cdot\psi)
$$
is symmetric and positive definite where ``$\cdot$'' denotes Clifford
multiplication.
Hence if we view $\Sigma M$ as a real bundle and put $\la\cdot,\cdot\ra :=
\Re(\cdot,\cdot)$, $k:=1$, and $Q(X)\phi:=X\cdot\phi$, then we have a twist
structure of spin $1/2$ on the spinor bundle.

c)
On the bundle of $p$-forms $E=\Lambda^pT^*M$ there is a natural indefinite
inner product $\la\cdot,\cdot\ra$ characterized by 
$$
\la \alpha,\beta\ra = \sum_{0\leq i_1 < \cdots < i_p \leq n}
\epsilon_{i_1}\cdots\epsilon_{i_p} \cdot
\alpha(e_{i_1},\ldots,e_{i_p}) \cdot\beta(e_{i_1},\ldots,e_{i_p})
$$
where $e_1,\ldots,e_n$ is an orthonormal basis with $\epsilon_i=\la
e_i,e_i\ra=\pm 1$.
We put $k:=2$ and 
$$
Q(X\odot Y)\alpha := X^\flat\wedge \iota_Y\alpha + Y^\flat\wedge\iota_X\alpha
-  
\la X,Y\ra\cdot\alpha
$$
where $\iota_X$ denotes insertion of $X$ in the first argument, $\iota_X\alpha
= \alpha(X,\cdot,\ldots,\cdot)$, and $X\mapsto X^\flat$ is the natural
isomorphism $TM\to T^*M$ induced by the Lorentzian metric.
It is easy to check that $Q$ is a twist structure of spin $1$ on
$\Lambda^pT^*M$.
Recall that the case $p=1$ is relevant for the wave equation in
electrodynamics and for the Proca equation.
\indexn{electrodynamics}\indexn{Proca equation}

The physically oriented reader will have noticed that in all these examples
$k/2$ indeed coincides with the spin of the particle under consideration.
}

\Remark{
If $Q$ is a twist structure on $E$, then $Q^*$ is a twist structure on $E^*$
of the same spin where $Q^*(X_1\odot\cdots\odot X_k) = Q(X_1\odot\cdots\odot
X_k)^*$ is given by the adjoint map.
On $E^*$, we will always use this induced twist structure without further
comment. 
}

Let us return to the construction of the quantum field for the object
$(M,E,P)$ in $\GLOBHY$.
As an additional data we fix a twist structure $Q$ on $E$.
As usual let $E^*$ denote the dual bundle and $P^*$ the adjoint operator
acting on sections in $E^*$.
Let $G^*_\pm$ denote the Green's operators for $P^*$ and $G^*=G^*_+-G^*_-$.

We choose a spacelike smooth Cauchy hypersurface $\Sigma\subset M$.
We denote by $L^2(\Sigma,E^*)$\indexs{L2SigmaER@$L^2(\Sigma,E^*)$, Hilbert
  space of square integrable sections in $E^*$ over $\Sigma$}the real Hilbert
space of square 
integrable sections in $E^*$ over $\Sigma$ with scalar product 
\indexs{HermscalprodSigma@$(u,v)_\Sigma:={\protect\int_\Sigma \protect\la
    Q_\mathfrak{n} u,v \protect\ra \protect\dA}$} 
$$
(u,v)_\Sigma
:= \int_\Sigma \la u,v \ra_\mathfrak{n} \dA = \int_\Sigma \la Q^*_\mathfrak{n}
u,v \ra \dA 
$$
where $\mathfrak{n}$ denotes the future directed (timelike) unit normal to
$\Sigma$. 
Here $\la\cdot,\cdot\ra$ denotes the inner product on $E^*$ inherited
from the one on $E$.
Let
$H_\Sigma:=L^2(\Sigma,E^*)\otimes_\R\Co$
\indexs{HSigma@$H_\Sigma:=L^2(\Sigma,E^*)\otimes_\R\Co$}  
be the complexification of this 
real Hilbert space and extend $(\cdot,\cdot)_\Sigma$ to a Hermitian scalar
product on $H_\Sigma$ thus turning $H_\Sigma$ into a complex Hilbert space.
We use the convention that $(\cdot,\cdot)_\Sigma$ is conjugate linear in the
first argument.

We construct the symmetric Fock space $\FF(H_\Sigma)$\indexn{Fock space} as in
the previous section. 
Let $\theta$ be the corresponding Segal field.

Given $f\in\DD(M,E^*)$ the smooth section $G^*f$ is contained in
$C_{\mathrm{sc}}^\infty(M,E^*)$, i.~e., there exists a compact subset $K\subset
M$ such that $\supp(G^*f) \subset J^M(K)$, see Theorem~\ref{thmExSeq}.
It thus follows from Corollary~\ref{cJ+Spastcompact} that the intersection
$\supp(G^*f) \cap \Sigma$ is compact 
and $G^*f|_{\Sigma}\in \DD(\Sigma,E^*)\subset L^2(\Sigma,E^*)\subset
H_\Sigma$.
Similarly, $\nabla_\mathfrak{n}(G^*f)\in \DD(\Sigma,E^*)\subset
L^2(\Sigma,E^*)\subset H_\Sigma$.
We can therefore define
$$
\Phi_\Sigma(f) := \theta(i(G^*f)|_{\Sigma} -
(Q^*_\mathfrak{n})^{-1}\nabla_\mathfrak{n}(G^*f)). 
$$\indexs{PhiSigma@$\Phi_\Sigma$, quantum field defined by $\Sigma$}

\Definition{
The map $\Phi_\Sigma$ from $\DD(M,E^*)$ to the set of selfadjoint operators on
Fock 
space $\FF(H_\Sigma)$ is called the \defidx{quantum field} (or the
\defidx{field operator}) for $P$ defined by $\Sigma$.
}

Notice that $\Phi_\Sigma$ depends upon the choice of the Cauchy hypersurface
$\Sigma$.
One thinks of $\Phi_\Sigma$ as an operator-valued distribution on $M$.
This can be made more precise.

\begin{prop}\label{Phistetig}
Let $(M,E,P)$ be an object in the category $\GLOBHY$ with a twist structure
$Q$. 
Choose a spacelike smooth Cauchy hypersurface $\Sigma\subset M$.
Let $\Phi_\Sigma$ be the quantum field for $P$ defined by $\Sigma$.

Then for every $\omega\in \FF_\alg(H_\Sigma)$ the map
$$
\DD(M,E^*) \to \FF(H_\Sigma), \quad\quad
f\mapsto \Phi_\Sigma(f)\omega,
$$
is continuous.
In particular, the map
$$
\DD(M,E^*) \to \Co, \quad\quad
f\mapsto (\eta,\Phi_\Sigma(f)\omega),
$$
is a distributional section in $E$ for any $\eta,\omega\in \FF_\alg(H_\Sigma)$.
\end{prop}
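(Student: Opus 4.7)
The plan is to reduce continuity of $f\mapsto\Phi_\Sigma(f)\omega$ to two ingredients already at hand: the sequential continuity of the Green's propagator $G^*:\DD(M,E^*)\to\Csc(M,E^*)$ established in Proposition~\ref{PGstetig}, and the continuity of the Segal field in its argument recorded in Lemma~\ref{thetakommutator}(\ref{TK:stetig}). The bridge between them is the observation that passing from a section in $\Csc(M,E^*)$ to its Cauchy data on $\Sigma$ is continuous into $H_\Sigma$, once one keeps track of supports via Corollary~\ref{cJ+Spastcompact}.

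First I would fix $\omega\in\FF_{\alg}(H_\Sigma)$ and assume $f_j\to f$ in $\DD(M,E^*)$. There is then a compact $K\subset M$ containing all the supports $\supp(f_j)$ and $\supp(f)$. By Proposition~\ref{PGstetig}, $G^*f_j\to G^*f$ in $\Csc(M,E^*)$: concretely, $\supp(G^*f_j),\supp(G^*f)\subset J^M(K)$ for every $j$, and $\|G^*f_j-G^*f\|_{C^k(L)}\to 0$ for every compact $L\subset M$ and every $k\in\N$.

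Next I would note that $J^M(K)\cap\Sigma$ is compact by Corollary~\ref{cJ+Spastcompact}, so the restrictions $(G^*f_j)|_\Sigma$ and the normal derivatives $\nabla_\mathfrak{n}(G^*f_j)$ all have their supports in a common compact set $K_\Sigma\subset\Sigma$. Picking a relatively compact open neighbourhood $L$ of $K_\Sigma$ in $M$, the $C^1$-convergence $G^*f_j\to G^*f$ on $\overline{L}$ forces uniform convergence of both $(G^*f_j)|_\Sigma$ and $\nabla_\mathfrak{n}(G^*f_j)$ on $K_\Sigma$. Since $K_\Sigma$ has finite $\dA$-volume and the smooth endomorphism field $(Q^*_\mathfrak{n})^{-1}$ is bounded on it, it follows that
\[
v_j:=i(G^*f_j)|_\Sigma-(Q^*_\mathfrak{n})^{-1}\nabla_\mathfrak{n}(G^*f_j)
\]
converges in $L^2(\Sigma,E^*)$, hence in $H_\Sigma$, to
\[
v:=i(G^*f)|_\Sigma-(Q^*_\mathfrak{n})^{-1}\nabla_\mathfrak{n}(G^*f).
\]
Lemma~\ref{thetakommutator}(\ref{TK:stetig}) then yields $\Phi_\Sigma(f_j)\omega=\theta(v_j)\omega\to\theta(v)\omega=\Phi_\Sigma(f)\omega$ in $\FF(H_\Sigma)$, which is the first assertion. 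The second assertion is immediate: the scalar product $(\eta,\cdot)$ is continuous on $\FF(H_\Sigma)$, so the composition $f\mapsto(\eta,\Phi_\Sigma(f)\omega)$ is a continuous $\R$-linear functional on $\DD(M,E^*)$ with values in $\Co$, that is, a distributional section in $E$ with values in $\Co$ in the sense of Definition~\ref{def:distr}.

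The main obstacle is really the bookkeeping in the intermediate step: one must confirm that convergence in $\Csc(M,E^*)$ is strong enough to induce $L^2$-convergence of the Cauchy data on the (noncompact) Cauchy hypersurface $\Sigma$. This is where the compactness of $J^M(K)\cap\Sigma$ from Corollary~\ref{cJ+Spastcompact} is essential; without it, $C^k$-convergence on compacta of $M$ would fail to control the behavior of the Cauchy data near infinity of $\Sigma$. Everything else is either linearity or a direct invocation of the cited results.
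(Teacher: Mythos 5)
Your proof is correct and follows essentially the same approach as the paper: use Proposition~\ref{PGstetig} to pass from convergence in $\DD(M,E^*)$ to convergence in $\Csc(M,E^*)$, then to convergence of the Cauchy data in $H_\Sigma$, and finally invoke Lemma~\ref{thetakommutator}~(\ref{TK:stetig}). The only difference is that you spell out the intermediate bookkeeping (uniform support control via Corollary~\ref{cJ+Spastcompact}, uniform convergence on a compact neighbourhood of the common support in $\Sigma$, then $L^2$-convergence) that the paper's proof passes over in a single line when it asserts convergence in $\DD(\Sigma,E^*)$.
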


\begin{proof}
Let $f_j \to f$ in $\DD(M,E^*)$.
Then $G^*f_j \to G^*f$ in $\Csc(M,E^*)$ by Proposition~\ref{PGstetig}.
Thus $G^*f_j|_{\Sigma} \to G^*f|_{\Sigma}$ and
$(Q^*_\mathfrak{n})^{-1}\nabla_\mathfrak{n} G^*f_j \to 
(Q^*_\mathfrak{n})^{-1}\nabla_\mathfrak{n} G^*f$ in $\DD(\Sigma,E^*)$.
Hence $G^*f_j|_{\Sigma} \to G^*f|_{\Sigma}$ and
$(Q^*_\mathfrak{n})^{-1}\nabla_\mathfrak{n} G^*f_j 
\to (Q^*_\mathfrak{n})^{-1}\nabla_\mathfrak{n} G^*f$ in $H_\Sigma$.
The proposition now follows from Lemma~\ref{thetakommutator}~(\ref{TK:stetig}).
\end{proof}

The quantum field satisfies the equation $P\Phi_\Sigma=0$ in the distributional
sense.
More precisely, we have 

\begin{prop}
Let $(M,E,P)$ be an object in the category $\GLOBHY$ with a twist structure
$Q$. 
Choose a spacelike smooth Cauchy hypersurface $\Sigma\subset M$.
Let $\Phi_\Sigma$ be the quantum field for $P$ defined by $\Sigma$.

For every $f\in\DD(M,E^*)$ one has
$$
\Phi_\Sigma(P^*f) = 0.
$$
\end{prop}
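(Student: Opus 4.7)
The plan is quite short because the statement is essentially a direct consequence of the defining property of Green's operators combined with the $\R$-linearity of the Segal field.

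First I would unfold the definition: by construction,
\[
\Phi_\Sigma(P^*f) \;=\; \theta\!\left(i(G^*P^*f)|_\Sigma - (Q^*_\mathfrak{n})^{-1}\nabla_\mathfrak{n}(G^*P^*f)\right),
\]
where $G^* = G^*_+ - G^*_-$ and $G^*_\pm$ are the advanced/retarded Green's operators for the formally selfadjoint operator $P^*$ (which exist and are unique on the globally hyperbolic manifold $M$ by Corollary~\ref{functorsolve}). The key observation is that from property~(ii) of a Green's operator in Definition~\ref{defGfunc}, we have $G^*_\pm \circ P^*|_{\DD(M,E^*)} = \id_{\DD(M,E^*)}$, hence
\[
G^* P^* f \;=\; G^*_+ P^* f - G^*_- P^* f \;=\; f - f \;=\; 0
\]
as a smooth section on $M$.

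Consequently both the restriction $(G^*P^*f)|_\Sigma$ and the covariant normal derivative $\nabla_\mathfrak{n}(G^*P^*f)|_\Sigma$ vanish identically on $\Sigma$, so the argument of $\theta$ is the zero element of $H_\Sigma$. Since $\theta$ is $\R$-linear in its argument (it is built from the $\Co$-linear creation operator $a^*$ and the $\Co$-antilinear annihilation operator $a$), we have $\theta(0) = \tfrac{1}{\sqrt{2}}(a(0)+a^*(0)) = 0$, and therefore $\Phi_\Sigma(P^*f) = 0$ as an operator on Fock space.

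There is essentially no obstacle here: the whole content is that the quantum field, viewed as an operator-valued distribution on $M$, is forced to satisfy the wave equation precisely because it factors through $\DD(M,E^*)/\ker(G^*)$, and $P^*(\DD(M,E^*))$ lies in $\ker(G^*)$. This is the same mechanism that made the symplectic form $\omega$ on $V(M,E,G)$ well-defined in Section~\ref{sec:quantfunc}, now transported through the Segal field construction on Fock space.
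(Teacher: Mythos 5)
Your proof is correct and follows the same route as the paper, which simply observes that $G^*P^*f=0$ and $\theta(0)=0$; you have just unpacked the definition of $\Phi_\Sigma$ and the Green's operator property (ii) more explicitly. No gap.
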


\begin{proof}
This is clear from $G^*P^*f=0$ and $\theta(0)=0$.
\end{proof}

To proceed we need the following reformulation of Lemma~\ref{greenformel}.

\begin{lemma}\label{greenformel2}
Let $(M,E,P)$ be an object in the category $\GLOBHY$, let $G_\pm$ be the
Green's operators\indexn{Green's operator} for $P$ and let $G=G_+-G_-$.
Furthermore, let $\Sigma\subset M$ be a spacelike Cauchy hypersurface with
future directed (timelike) unit normal vector field $\mathfrak{n}$. 

Then for all $f,g\in\mathcal{D}(M,E)$,
\[
\int_M\la f,Gg\ra\dV
=
\int_\Sigma\left(\la\nabla_\mathfrak{n}(Gf),Gg\ra-\la
  Gf,\nabla_\mathfrak{n}(Gg)\ra\right)\dA .
\] 
\end{lemma}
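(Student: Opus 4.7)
The plan is to split $M$ into its future and past with respect to the Cauchy hypersurface $\Sigma$, apply the Green's identity from Lemma~\ref{PminusP*} on each half with a \emph{different} Green's operator inserted, and then add the two contributions so that the resulting boundary term assembles into $G = G_+ - G_-$ on $\Sigma$. The key observation is that although $Gf$ and $Gg$ themselves are not compactly supported, $G_+ f$ is past compactly supported and $G_- f$ is future compactly supported, so each half-integral will be well-defined.

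The detailed steps are as follows. First, appeal to the formal selfadjointness of $P$: by the proof of Lemma~\ref{PminusP*}, together with the isomorphism $\FB:E \to E^*$ induced by the inner product (cf.\ Example~\ref{ex:Boxfsa}), one has the pointwise identity
\[
\langle \phi, P\psi\rangle - \langle P\phi,\psi\rangle \;=\; \div W(\phi,\psi),
\]
where $\langle W(\phi,\psi),X\rangle = \langle\nabla_X\phi,\psi\rangle - \langle\phi,\nabla_X\psi\rangle$. Applying this with $\phi := G_- f$ and $\psi := Gg$ and using $PG_- f = f$ together with $PGg=0$ yields $-\langle f,Gg\rangle = \div W(G_- f, Gg)$; applying it with $\phi := G_+ f$ and $\psi := Gg$ yields $-\langle f,Gg\rangle = \div W(G_+ f, Gg)$. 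I would then integrate the first identity over $J_+^M(\Sigma)$ and the second over $J_-^M(\Sigma)$ and invoke Gauss's divergence theorem (Theorem~\ref{thm:gauss}) on each domain. Since $\Sigma$ is a smooth spacelike Cauchy hypersurface, these are manifolds with boundary $\Sigma$, with outward unit normal $-\mathfrak{n}$ and $+\mathfrak{n}$ respectively, and $\epsilon_\mathfrak{n}=-1$ because $\mathfrak{n}$ is timelike.

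The main obstacle is verifying that Gauss's theorem genuinely applies, i.e.\ that the support of the relevant integrand is compact in each half. For the upper half we have
\[
\supp W(G_- f, Gg) \cap J_+^M(\Sigma) \;\subset\; J_-^M(\supp f)\cap J_+^M(\Sigma),
\]
and since $\Sigma$ is a Cauchy hypersurface, $J_+^M(\Sigma)$ is past compact, so $J_-^M(\supp f)\cap J_+^M(\Sigma)$ is compact. Symmetrically, $\supp W(G_+ f, Gg) \cap J_-^M(\Sigma) \subset J_+^M(\supp f)\cap J_-^M(\Sigma)$ is compact by future compactness of $J_-^M(\Sigma)$. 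With this in hand, Theorem~\ref{thm:gauss} gives
\[
\int_{J_+^M(\Sigma)} \div W(G_- f, Gg)\,\dV \;=\; \int_\Sigma \langle W(G_- f, Gg),\mathfrak{n}\rangle\,\dA,
\]
and an analogous formula over $J_-^M(\Sigma)$ with the opposite sign arising from the reversed outward normal.

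Finally, adding the two resulting equations, the contributions involving $G_+ f$ and $G_- f$ on $\Sigma$ combine into $Gf = G_+ f - G_- f$, giving
\[
\int_M\langle f, Gg\rangle\,\dV \;=\; \int_\Sigma\bigl(\langle\nabla_\mathfrak{n}(Gf),Gg\rangle - \langle Gf,\nabla_\mathfrak{n}(Gg)\rangle\bigr)\,\dA,
\]
which is the claim. The only remaining bookkeeping is to keep track of the signs coming from $\epsilon_\mathfrak{n}=-1$ and from the orientation of the outward normal on each half; this is routine but must be done carefully to land on the correct signs in the final identity.
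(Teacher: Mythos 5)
Your proof is correct and follows essentially the same strategy as the paper: split $M$ along $\Sigma$, apply Gauss's theorem on each half with the Green's operator whose support condition makes the integrand compactly supported there, and add. The paper simply packages this step by citing Lemma~\ref{greenformel} (whose own proof is exactly the split-and-Gauss argument, formulated via the fundamental solutions $F^\Omega_\pm$), whereas you unroll the argument directly in terms of $G_\pm$ on all of $M$ — a more self-contained presentation of the same reasoning, with the correct observation that past/future compactness of $J_\pm^M(\Sigma)$ is what makes the divergence theorem applicable.
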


\begin{proof}
Since $J_+^M(\Sigma)$ is past compact and $J_-^M(\Sigma)$ is future
compact Lemma~\ref{greenformel} applies.
After identification of $E^*$ with $E$ via the inner product
$\la\cdot\,,\cdot\ra$ the assertion follows from Lemma~\ref{greenformel} with
$u=Gg$.
\end{proof}

The quantum field satisfies the following commutator relation.

\begin{prop} \label{prop:Phi-comm}
Let $(M,E,P)$ be an object in the category $\GLOBHY$ with a twist structure $Q$.
Choose a spacelike smooth Cauchy hypersurface $\Sigma\subset M$.
Let $\Phi_\Sigma$ be the quantum field for $P$ defined by $\Sigma$.

Then for all $f,g\in\DD(M,E^*)$ and all $\eta\in \FF_\alg(H_\Sigma)$ one
has 
$$
[\Phi_\Sigma(f),\Phi_\Sigma(g)]\eta 
= i\cdot\int_M\la G^*f,g\ra\dV\,\cdot\eta.
$$
\end{prop}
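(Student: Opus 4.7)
The plan is to reduce the commutator identity to the canonical commutator formula for Segal fields from Lemma~\ref{thetakommutator}(\ref{TK:commutator}) and then to compute the imaginary part of the inner product of the two relevant Fock-space vectors explicitly via Green's identity.

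First, I would write $\Phi_\Sigma(f) = \theta(u_f)$ and $\Phi_\Sigma(g) = \theta(u_g)$ with
\[
u_f := i\,(G^*f)|_\Sigma - (Q^*_\mathfrak{n})^{-1}\nabla_\mathfrak{n}(G^*f), \qquad u_g := i\,(G^*g)|_\Sigma - (Q^*_\mathfrak{n})^{-1}\nabla_\mathfrak{n}(G^*g),
\]
both lying in $H_\Sigma$. Lemma~\ref{thetakommutator}(\ref{TK:commutator}) immediately gives
\[
[\Phi_\Sigma(f),\Phi_\Sigma(g)]\eta = i\,\Im(u_f,u_g)_\Sigma\,\eta,
\]
so the entire task is to identify $\Im(u_f,u_g)_\Sigma$ with $\int_M \langle G^*f,g\rangle\,\dV$.

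Next I would expand $(u_f,u_g)_\Sigma$ using the fact that the Hermitian product is conjugate-linear in the first slot and that $(\cdot,\cdot)_\Sigma$ is real on the real subspace $L^2(\Sigma,E^*)$. The diagonal terms $((G^*f)|_\Sigma,(G^*g)|_\Sigma)_\Sigma$ and $((Q^*_\mathfrak{n})^{-1}\nabla_\mathfrak{n}G^*f,(Q^*_\mathfrak{n})^{-1}\nabla_\mathfrak{n}G^*g)_\Sigma$ are real and drop out of the imaginary part, leaving
\[
\Im(u_f,u_g)_\Sigma = ((G^*f)|_\Sigma,(Q^*_\mathfrak{n})^{-1}\nabla_\mathfrak{n}G^*g)_\Sigma - ((Q^*_\mathfrak{n})^{-1}\nabla_\mathfrak{n}G^*f,(G^*g)|_\Sigma)_\Sigma.
\]
Using the definition $(u,v)_\Sigma = \int_\Sigma \langle Q^*_\mathfrak{n} u,v\rangle\,\dA$ together with the symmetry of $Q^*_\mathfrak{n}$ with respect to $\langle\cdot,\cdot\rangle$, the two $Q^*_\mathfrak{n}$ factors cancel and one obtains
\[
\Im(u_f,u_g)_\Sigma = \int_\Sigma \langle G^*f,\nabla_\mathfrak{n}G^*g\rangle\,\dA - \int_\Sigma \langle \nabla_\mathfrak{n}G^*f,G^*g\rangle\,\dA.
\]

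Finally, I would apply Lemma~\ref{greenformel2} to the triple $(M,E^*,P^*)$ (which lies in $\GLOBHY$ since $P^*$ is formally selfadjoint for the inherited inner product on $E^*$) to rewrite this surface integral as
\[
\Im(u_f,u_g)_\Sigma = -\int_M \langle f,G^*g\rangle\,\dV,
\]
and then use the analogue of Lemma~\ref{schiefesymmetrie} for $P^*$, namely $\int_M \langle G^*_\pm f,g\rangle\,\dV = \int_M \langle f, G^*_\mp g\rangle\,\dV$, which yields $\int_M \langle f,G^*g\rangle\,\dV = -\int_M \langle G^*f,g\rangle\,\dV$ and hence $\Im(u_f,u_g)_\Sigma = \int_M \langle G^*f,g\rangle\,\dV$. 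Combining with the Segal-field commutator gives the claim.

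The only step that requires real care is the bookkeeping of signs and of the role of $Q^*_\mathfrak{n}$ in the two incarnations of the inner product on $\Sigma$ (the natural $\langle\cdot,\cdot\rangle$-pairing versus the Hilbert product $(\cdot,\cdot)_\Sigma$); once the diagonal terms are recognized as real, everything else is a direct application of Green's identity from Lemma~\ref{greenformel2}. No deep new ideas are needed beyond what the preceding sections already supply.
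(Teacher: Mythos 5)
Your proposal is correct and follows essentially the same route as the paper: reduce to the Segal-field commutator via Lemma~\ref{thetakommutator}~(1), observe that the diagonal terms of $(u_f,u_g)_\Sigma$ are real and thus contribute nothing to the imaginary part, cancel the $Q^*_\mathfrak{n}$ factors against the defining formula for $(\cdot,\cdot)_\Sigma$ using the symmetry of $Q^*_\mathfrak{n}$, and finish with Green's identity (Lemma~\ref{greenformel2} applied to $P^*$). The one cosmetic difference is at the very end: the paper applies Lemma~\ref{greenformel2} with the roles of $f$ and $g$ interchanged, so that the surface integral is directly identified with $\int_M\la G^*f,g\ra\dV$ by symmetry of $\la\cdot,\cdot\ra$, whereas you apply it in the given order, obtain $-\int_M\la f,G^*g\ra\dV$, and then invoke the skew-symmetry $\int_M\la G^*f,g\ra\dV=-\int_M\la f,G^*g\ra\dV$ (Lemma~\ref{schiefesymmetrie} for $P^*$). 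Both wrap-ups are equivalent; yours just makes the antisymmetry of $G^*$ explicit as an extra step.
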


\begin{proof}
Using Lemma~\ref{thetakommutator} and the fact that $(\cdot,\cdot)_\Sigma$ is
the complexification of a real scalar product we compute
\begin{eqnarray*}
\lefteqn{[\Phi_\Sigma(f),\Phi_\Sigma(g)]\eta}\\
&=&
\left[\theta\left(i(G^*f)|_{\Sigma} -
  (Q^*_\mathfrak{n})^{-1}\nabla_\mathfrak{n}(G^*f)\right), 
\theta\left(i(G^*g)|_{\Sigma} - (Q^*_\mathfrak{n})^{-1}
\nabla_\mathfrak{n}(G^*g)\right)\right]\eta\\ 
&=&
i\Im\left(i(G^*f)|_{\Sigma}-(Q^*_\mathfrak{n})^{-1}\nabla_\mathfrak{n}(G^*f),
i(G^*g)|_{\Sigma}-(Q^*_\mathfrak{n})^{-1}
\nabla_\mathfrak{n}(G^*g)\right)_\Sigma\eta\\ 
&=&
-i\Im\left(i(G^*f)|_{\Sigma},(Q^*_\mathfrak{n})^{-1}
\nabla_\mathfrak{n}(G^*g)\right)_\Sigma\eta 
-i\Im\left((Q^*_\mathfrak{n})^{-1}
\nabla_\mathfrak{n}(G^*f),i(G^*g)|_{\Sigma}\right)_\Sigma\eta\\ 
&=&
i\left((G^*f)|_{\Sigma},(Q^*_\mathfrak{n})^{-1}
\nabla_\mathfrak{n}(G^*g)\right)_\Sigma\cdot\eta 
-i\left((Q^*_\mathfrak{n})^{-1}\nabla_\mathfrak{n}(G^*f),
(G^*g)|_{\Sigma}\right)_\Sigma\cdot\eta\\  
&=&
i\cdot\int_\Sigma\la(G^*f)|_{\Sigma},\nabla_\mathfrak{n}(G^*g)\ra\dV\cdot\eta
-i\cdot\int_\Sigma\la\nabla_\mathfrak{n}(G^*f),(G^*g)|_{\Sigma}\ra\dV\cdot\eta.
\end{eqnarray*}
Lemma~\ref{greenformel2} applied to $P^*$ concludes the proof.
\end{proof}

\begin{cor}
Let $(M,E,P)$ be an object in the category $\GLOBHY$ with a twist structure
$Q$. 
Choose a spacelike smooth Cauchy hypersurface $\Sigma\subset M$.
Let $\Phi_\Sigma$ be the quantum field for $P$ defined by $\Sigma$.
If the supports of  $f$ and $g\in\DD(M,E^*)$ are causally independent,
then
$$
[\Phi_\Sigma(f),\Phi_\Sigma(g)]=0.
$$
\end{cor}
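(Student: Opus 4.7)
The plan is to apply Proposition~\ref{prop:Phi-comm} directly and show that the integral on the right-hand side vanishes under the causal independence assumption. By the definition given just before Lemma~\ref{lIdirortho}, causal independence of $\supp(f)$ and $\supp(g)$ means $J^M(\supp(f)) \cap \supp(g) = \emptyset$.

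Next I would invoke the support properties of the Green's operators. Since $G^*_\pm$ are the advanced and retarded Green's operators for $P^*$, one has $\supp(G^*_\pm f) \subset J^M_\pm(\supp(f))$, and consequently
\[
\supp(G^* f) \;\subset\; J^M(\supp(f)).
\]
Combined with causal independence this yields $\supp(G^* f) \cap \supp(g) = \emptyset$, so the integrand $\langle G^* f, g \rangle$ vanishes identically on $M$ and hence $\int_M \langle G^* f, g \rangle \dV = 0$.

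Plugging this into Proposition~\ref{prop:Phi-comm} gives $[\Phi_\Sigma(f), \Phi_\Sigma(g)]\eta = 0$ for every $\eta \in \FF_\alg(H_\Sigma)$. Since $\FF_\alg(H_\Sigma)$ is a dense invariant domain on which both field operators (and hence their composition in either order) are defined, this is exactly the statement that the commutator of the two essentially selfadjoint operators vanishes in the appropriate sense on the natural domain.

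The argument is essentially immediate given the preceding proposition, so there is no real obstacle; the only small point worth noting is the need to interpret $[\Phi_\Sigma(f),\Phi_\Sigma(g)] = 0$ correctly for unbounded operators, which is done by regarding $\FF_\alg(H_\Sigma)$ as the shared invariant dense domain on which all the relevant compositions make sense, exactly as in Proposition~\ref{prop:Phi-comm}.
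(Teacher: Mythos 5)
Your proof is correct and follows the same route as the paper: apply Proposition~\ref{prop:Phi-comm}, then use the support property $\supp(G^*f) \subset J^M(\supp(f))$ together with causal independence to conclude the integral vanishes. The added remark about interpreting the commutator of unbounded operators on the common invariant domain $\FF_\alg(H_\Sigma)$ is a reasonable clarification that the paper leaves implicit.
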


\begin{proof}
If the supports of  $\supp(f)$ and $\supp(g)$ are causally independent,
then $\supp(G^*f) \subset J^M(\supp(f))$ and $\supp(g)$ are disjoint.
Hence 
$$
[\Phi_\Sigma(f),\Phi_\Sigma(g)]
= i\cdot\int_M\la G^*f,g\ra\dV=0.
$$
\end{proof}

\begin{prop}
Let $(M,E,P)$ be an object in the category $\GLOBHY$ with a twist structure
$Q$. 
Choose a spacelike smooth Cauchy hypersurface $\Sigma\subset M$.
Let $\Phi_\Sigma$ be the quantum field for $P$ defined by $\Sigma$.
Let $\Omega$ be the vacuum vector in $\FF(H_\Sigma)$.

Then the linear span of the vectors
$\Phi_\Sigma(f_1)\cdots\Phi_\Sigma(f_n)\Omega$ is 
dense in $\FF(H_\Sigma)$ where $f_j\in\DD(M,E^*)$ and $n\in\N$.
\end{prop}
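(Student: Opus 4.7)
The plan is to reduce the claim to the density result in Lemma~\ref{thetakommutator}~(\ref{TK:dicht}) by showing that the real-linear map
$$
\Lambda:\DD(M,E^*)\to H_\Sigma,\qquad \Lambda(f):=i(G^*f)|_\Sigma-(Q^*_\mathfrak{n})^{-1}\nabla_\mathfrak{n}(G^*f),
$$
has dense image, and then upgrading Lemma~\ref{thetakommutator}~(\ref{TK:dicht}) to say that it suffices to range the arguments $v_j$ of $\theta$ over any dense subspace of $H_\Sigma$. Since $\Phi_\Sigma(f)=\theta(\Lambda(f))$, combining these two statements will give the claim.

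First I would establish density of the image of $\Lambda$. The operator $P^*$ is normally hyperbolic because it has the same principal symbol as $P$. Given arbitrary $(u_0,u_1)\in\DD(\Sigma,E^*)\times\DD(\Sigma,E^*)$, Theorem~\ref{cauchyglobhyp} applied to $P^*$ with vanishing source and Cauchy data $(u_0,u_1)$ yields a unique $u\in C^\infty(M,E^*)$ with $P^*u=0$, $u|_\Sigma=u_0$, $\nabla_\mathfrak{n}u|_\Sigma=u_1$, and $\supp(u)\subset J^M(\supp(u_0)\cup\supp(u_1))$. In particular $u\in\Csc(M,E^*)\cap\ker(P^*)$, so the exactness of the sequence in Theorem~\ref{thmExSeq} applied to $P^*$ yields an $f\in\DD(M,E^*)$ with $G^*f=u$. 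Consequently $\Lambda(f)=iu_0-(Q^*_\mathfrak{n})^{-1}u_1$. Since $Q^*_\mathfrak{n}$ is a smooth bundle isomorphism of $E^*$, it preserves $\DD(\Sigma,E^*)$, and given $w_1,w_2\in\DD(\Sigma,E^*)$ the choice $u_0:=w_2$, $u_1:=-Q^*_\mathfrak{n}w_1$ produces $\Lambda(f)=w_1+iw_2$. Density of $\DD(\Sigma,E^*)+i\,\DD(\Sigma,E^*)$ in $H_\Sigma$ then forces the image of $\Lambda$ to be dense in $H_\Sigma$.

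Second I would prove the upgraded density statement: for any dense real- or complex-linear subspace $D\subset H_\Sigma$, the span of $\{\theta(v_1)\cdots\theta(v_n)\Omega:n\in\N,\,v_j\in D\}$ is dense in $\FF(H_\Sigma)$. Starting from Lemma~\ref{thetakommutator}~(\ref{TK:dicht}) one picks $v_1,\dots,v_n\in H_\Sigma$ and approximants $v_j^{(k)}\in D$ with $\|v_j-v_j^{(k)}\|\to0$, and decomposes
$$
\theta(v_1)\cdots\theta(v_n)\Omega-\theta(v_1^{(k)})\cdots\theta(v_n^{(k)})\Omega=\sum_{j=1}^n\theta(v_1)\cdots\theta(v_{j-1})\,\theta(v_j-v_j^{(k)})\,\theta(v_{j+1}^{(k)})\cdots\theta(v_n^{(k)})\Omega.
$$
Each summand lies in $\bigoplus_{m\le n}\bigodot^m H_\Sigma$, and on this subspace the bounds (\ref{erzeugernorm}) and (\ref{vernichternorm}) give $\|\theta(v)\eta\|\le\sqrt{n+1}\,\|v\|\,\|\eta\|$. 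Iterating this estimate bounds the $j$-th summand by $C_n\,\|v_j-v_j^{(k)}\|\,\prod_{l\ne j}\max\{\|v_l\|,\|v_l^{(k)}\|\}$, which tends to zero as $k\to\infty$.

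Applying this upgraded statement with $D:=\Lambda(\DD(M,E^*))$ finishes the proof. The main obstacle is the telescoping argument in the second step: because the Segal fields are unbounded, the approximation is not a direct consequence of Lemma~\ref{thetakommutator}~(\ref{TK:stetig}), and one must exploit the fact that the vectors produced stay inside symmetric tensor powers of uniformly bounded rank, where $\theta$ is norm-bounded. The remaining ingredients, namely the solvability of the Cauchy problem for $P^*$, the exactness of the Green's operator sequence, and the $L^2$-density of compactly supported smooth sections, combine routinely.
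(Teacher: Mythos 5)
Your proof is correct and follows essentially the same route as the paper: start from Lemma~\ref{thetakommutator}~(\ref{TK:dicht}), observe that any $v=w+iz$ with $w,z\in\DD(\Sigma,E^*)$ is hit by $\Lambda$ via solving the Cauchy problem for $P^*$ with data $(z,-Q^*_\mathfrak{n}w)$ and then using exactness from Theorem~\ref{thmExSeq}, and finally reduce general $v\in H_\Sigma$ to this case by density. Where your write-up goes beyond the paper is in the approximation step: the paper disposes of the passage from $v_j\in H_\Sigma$ to $v_j\in\DD(\Sigma,E^*)+i\,\DD(\Sigma,E^*)$ with a terse citation of Proposition~\ref{Phistetig} (which concerns continuity of $f\mapsto\Phi_\Sigma(f)\omega$ rather than continuity of $\theta(v_1)\cdots\theta(v_n)\Omega$ in the $v_j$'s), whereas you give the explicit telescoping sum together with the operator-norm bound for $\theta(v)$ on the finite-rank part of Fock space. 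This is the correct mechanism — neither Lemma~\ref{thetakommutator}~(\ref{TK:stetig}) nor Proposition~\ref{Phistetig} can be applied directly, since the intermediate vectors $\theta(v_{j+1}^{(k)})\cdots\theta(v_n^{(k)})\Omega$ vary with $k$, and one really does need a uniform bound on rank-restricted subspaces. One small imprecision: your stated constant $\sqrt{n+1}\,\|v\|$ for $\|\theta(v)\eta\|$ with $\eta\in\bigoplus_{m\le n}\bigodot^m H_\Sigma$ is only exactly right for $\eta$ of pure rank; for a sum over ranks the creation and annihilation contributions at overlapping ranks give an extra universal factor. This does not affect the argument since any constant of the form $C_n\|v\|$ suffices for the telescoping estimate to tend to zero.
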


\begin{proof}
By Lemma~\ref{thetakommutator}~(\ref{TK:dicht}) the span of vectors of the form
$\theta(v_1)\cdots \theta(v_n)\Omega$, $v_j\in H_\Sigma$, $n\in\N$, is dense
in $\FF(H_\Sigma)$.
It therefore suffices to approximate vectors of the form $\theta(v_1)\cdots
\theta(v_n)\Omega$ by vectors of the form
$\Phi_\Sigma(f_1)\cdots\Phi_\Sigma(f_n)\Omega$.
Any $v_j\in H_\Sigma$ is of the form $v_j= w_j + iz_j$ with $w_j,z_j\in
L^2(\Sigma,E^*)$.
Since $\DD(\Sigma,E^*)$ is dense in $L^2(\Sigma,E^*)$ we may assume
without loss of generality that $w_j,z_j\in\DD(\Sigma,E^*)$ by
Proposition~\ref{Phistetig}. 

By Theorem~\ref{cauchyglobhyp} there exists a solution $u_j\in
\Csc(M,E^*)$ to the Cauchy problem $Pu_j=0$ with initial conditions
$u_j|_\Sigma = z_j$ and $\nabla_\mathfrak{n} u_j = -Q^*_\mathfrak{n} w_j$.
By Theorem~\ref{thmExSeq} there exists $f_j\in\DD(M,E^*)$ with
$G^*f_j=u_j$.
Then $\Phi_\Sigma(f_j) =
\theta(-(Q^*_\mathfrak{n})^{-1}\nabla_\mathfrak{n}(G^*f_j)+i(G^*f_j)|_\Sigma) 
= \theta(-(Q^*_\mathfrak{n})^{-1}\nabla_\mathfrak{n}(u_j)+iu_j|_\Sigma) =
\theta(w_j + iz_j) = 
\theta(v_j)$.
This concludes the proof.
\end{proof}

\Remark{
In the physics literature one usually also finds that the quantum field should
satisfy
\begin{equation}
\Phi_\Sigma(\bar f) = \Phi_\Sigma(f)^*.
\label{eq:QFreal}
\end{equation}
This simply expresses the fact that we are dealing with a real theory and that
the quantum field takes its values in self-adjoint operators.
Recall that we have assumed $E$ to be a {\em real} vector bundle.
Of course, one could complexify $E$ and extend $\Phi_\Sigma$ complex linearly
such that (\ref{eq:QFreal}) holds.
}

We relate the quantum field constructed in this section to the CCR-algebras
studied earlier.

\begin{prop} \label{prop:QF-CCR}
Let $(M,E,P)$ be an object in the category $\GLOBHY$ and let $Q$ be a twist
structure on $E^*$.
Choose a spacelike smooth Cauchy hypersurface $\Sigma\subset M$.
Let $\Phi_\Sigma$ be the quantum field for $P$ defined by $\Sigma$.

Then the map
$$
W_\Sigma:\DD(M,E^*) \to \LL(\FF(H_\Sigma)), \quad
W_\Sigma(f)=\exp(i\,\Phi_\Sigma(f)),
$$
yields a Weyl system\indexn{Weyl system} of the symplectic vector space
$\SYM\circ\solve(M,E^*,P^*)$. 
\end{prop}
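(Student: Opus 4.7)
The plan is to reduce the statement to the Weyl property already established for the Segal field construction on Fock space, namely Proposition~\ref{prop:WT}~(\ref{WT:weyl}), by matching the relevant symplectic forms. For brevity write
$$
v_f := i\,(G^*f)|_\Sigma - (Q^*_{\mathfrak n})^{-1}\nabla_{\mathfrak n}(G^*f)\in H_\Sigma,
$$
so that $\Phi_\Sigma(f)=\theta(v_f)$ and $W_\Sigma(f)=\exp(i\theta(v_f))=W(v_f)$ in the notation of Proposition~\ref{prop:WT}. The map $f\mapsto v_f$ is $\R$-linear, and if $f\in\ker(G^*)$ then $G^*f=0$, hence $v_f=0$ and $W_\Sigma(f)=W(0)=\mathrm{id}$. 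Thus $W_\Sigma$ descends to a well-defined map on $V(M,E^*,G^*)=\SYM\circ\solve(M,E^*,P^*)$.

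Next I would verify the three Weyl axioms. Axiom (i) $W_\Sigma(0)=\mathrm{id}$ is immediate. For axiom (ii), since $\theta(v_f)$ is (essentially) self-adjoint, $W_\Sigma(-f)=\exp(-i\theta(v_f))=\exp(i\theta(v_f))^*=W_\Sigma(f)^*$. The substantive part is axiom (iii). Applying Proposition~\ref{prop:WT}~(\ref{WT:weyl}) to the vectors $v_f,v_g\in H_\Sigma$ gives
$$
W_\Sigma(f)\,W_\Sigma(g)\;=\;W(v_f)\,W(v_g)\;=\;e^{-i\,\Im(v_f,v_g)_\Sigma/2}\,W(v_f+v_g)\;=\;e^{-i\,\Im(v_f,v_g)_\Sigma/2}\,W_\Sigma(f+g),
$$
where in the last step I use $\R$-linearity of $f\mapsto v_f$. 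It therefore remains to identify $\Im(v_f,v_g)_\Sigma$ with the symplectic form $\omega([f],[g])=\int_M\langle G^*f,g\rangle\,\mathrm{dV}$ on $V(M,E^*,G^*)$.

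This identification is essentially the computation already carried out in the proof of Proposition~\ref{prop:Phi-comm}. Expanding $(v_f,v_g)_\Sigma$ using that $(\cdot,\cdot)_\Sigma$ is the complexification of a real inner product, the ``diagonal'' contributions $((G^*f)|_\Sigma,(G^*g)|_\Sigma)_\Sigma$ and $((Q^*_{\mathfrak n})^{-1}\nabla_{\mathfrak n}(G^*f),(Q^*_{\mathfrak n})^{-1}\nabla_{\mathfrak n}(G^*g))_\Sigma$ are real and drop out when taking the imaginary part, leaving
$$
\Im(v_f,v_g)_\Sigma \;=\; \int_\Sigma\bigl(\langle(G^*f)|_\Sigma,\nabla_{\mathfrak n}(G^*g)\rangle-\langle\nabla_{\mathfrak n}(G^*f),(G^*g)|_\Sigma\rangle\bigr)\,\mathrm{dA}
$$
after one cancels $Q^*_{\mathfrak n}$ against the defining weight of $(\cdot,\cdot)_\Sigma$. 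Applying Lemma~\ref{greenformel2} (with $P$ replaced by $P^*$, using that $P^*$ is normally hyperbolic with Green's operators $G^*_\pm$) converts this surface integral into $\int_M\langle G^*f,g\rangle\,\mathrm{dV}=\omega([f],[g])$. Substituting into the displayed equation above yields axiom (iii).

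The only real obstacle is bookkeeping: one must check that the twist factor $Q^*_{\mathfrak n}$ built into the definition of $(\cdot,\cdot)_\Sigma$ cancels exactly against the inverse $(Q^*_{\mathfrak n})^{-1}$ appearing in $v_f$, so that the cross terms in $\Im(v_f,v_g)_\Sigma$ reproduce verbatim the boundary expression to which Lemma~\ref{greenformel2} applies. This is straightforward since $Q_{\mathfrak n}$ is self-adjoint for $\langle\cdot,\cdot\rangle$. Once this is verified the proof is complete; no further analytic input is needed beyond Propositions~\ref{prop:WT} and \ref{prop:Phi-comm}.
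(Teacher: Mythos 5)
Your proof is correct and follows essentially the same route as the paper's: reduce to Proposition~\ref{prop:WT}~(\ref{WT:weyl}) and identify the symplectic form $\Im(\cdot,\cdot)_\Sigma$ on $H_\Sigma$ with $\tilde\omega(f,g)=\int_M\la G^*f,g\ra\dV$. The only difference is how you obtain the identification $\Im(v_f,v_g)_\Sigma=\int_M\la G^*f,g\ra\dV$: the paper gets it in one line by equating the two commutator formulas, $[\theta(v_f),\theta(v_g)]=i\,\Im(v_f,v_g)_\Sigma\cdot\id$ from Lemma~\ref{thetakommutator}~(\ref{TK:commutator}) and $[\Phi_\Sigma(f),\Phi_\Sigma(g)]=i\int_M\la G^*f,g\ra\dV\cdot\id$ from Proposition~\ref{prop:Phi-comm}, which are the same operator by definition; you instead re-expand the Hermitian product and reinvoke Lemma~\ref{greenformel2}, which is exactly the internal computation of Proposition~\ref{prop:Phi-comm}'s proof (as you yourself note), so your version is a bit less economical but logically equivalent.
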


\begin{proof}
Recall that the symplectic vector space $\SYM\circ\solve(M,E^*,P^*)$ is given
by $V(M,E^*,G^*)=\DD(M,E^*)/\ker(G^*)$ with symplectic form induced by 
$\tilde\omega(f,g)=\int_M\la G^*f,g\ra\dV$.
By definition $W_\Sigma(f)=1$ holds for any $f\in\ker(G^*)$, hence $W_\Sigma$
descends to a map $V(M,E^*,G^*)\to\LL(\FF(H_\Sigma))$.

Let $f,g\in\DD(M,E^*)$.
Set $u:=i(G^*f)|_{\Sigma} - (Q^*_\mathfrak{n})^{-1}\nabla_\mathfrak{n}(G^*f)$
and 
$v:=i(G^*g)|_{\Sigma} - (Q^*_\mathfrak{n})^{-1}\nabla_\mathfrak{n}(G^*g)\in
H_\Sigma$ so that 
$\Phi_\Sigma(f)=\theta(u)$ and $\Phi_\Sigma(g)=\theta(v)$.
Then by Lemma~\ref{thetakommutator}~(\ref{TK:commutator}) and by
Proposition~\ref{prop:Phi-comm} we have
$$
i\, \Im(u,v)_\Sigma \cdot\id
=
[\theta(u),\theta(v)]
=
[\Phi_\Sigma(f),\Phi_\Sigma(g)]
=
i\, \int_M \la G^*f,g\ra \dV \cdot\id,
$$
hence
$$
\Im(u,v)_\Sigma = \int_M \la G^*f,g\ra \dV = \tilde\omega(f,g).
$$
Now the result follows from Proposition~\ref{prop:WT}~(\ref{WT:weyl}).
\end{proof}

\begin{cor}
Let $(M,E,P)$ be an object in the category $\GLOBHY$ and let $Q$ be a twist
structure on $E^*$.
Choose a spacelike smooth Cauchy hypersurface $\Sigma\subset M$.
Let $W_\Sigma$ be the Weyl system defined by $\Phi_\Sigma$.

Then the CCR-algebra generated by the $W_\Sigma(f)$, $f\in \DD(M,E^*)$, is
isomorphic to $\CCR(\SYM(\solve(M,E^*,P^*)))$.\indexn{canonical commutator
  relations}  
\end{cor}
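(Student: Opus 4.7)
The plan is to read the corollary as an immediate application of the uniqueness theorem for CCR-representations (Theorem~\ref{CCRunique}) to the Weyl system produced by the quantum field. I would not do any genuinely new computation; all the work has been done in Proposition~\ref{prop:QF-CCR}.

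First, let me fix terminology. Write $A_\Sigma \subset \mathcal{L}(\mathcal{F}(H_\Sigma))$ for the $C^*$-subalgebra generated by the unitaries $W_\Sigma(f)$, $f\in\mathcal{D}(M,E^*)$, and write $(V,\omega) := \SYM \circ \solve(M,E^*,P^*) = V(M,E^*,G^*)$ for the symplectic vector space in question, whose form is induced by $\tilde\omega(f,g)=\int_M \langle G^*f,g\rangle\,\dV$.

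Next, I would observe that Proposition~\ref{prop:QF-CCR} says exactly that $W_\Sigma$ descends to a Weyl system of $(V,\omega)$: it factors through $V=\mathcal{D}(M,E^*)/\ker(G^*)$ and satisfies the three Weyl axioms (i)--(iii) of Section~\ref{sec-CCR}. Consequently the pair $(A_\Sigma, W_\Sigma)$, by the very definition of $A_\Sigma$ as the $C^*$-algebra generated by the image of $W_\Sigma$, is a CCR-representation of $(V,\omega)$ in the sense of Section~\ref{sec-CCR}.

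Then I would invoke Theorem~\ref{CCRunique}: given any two CCR-representations of the same symplectic vector space there exists a unique $*$-isomorphism intertwining the two Weyl systems. Applied to the canonical representation $(\CCR(V,\omega), W)$ of Example~\ref{CCR1} and to $(A_\Sigma, W_\Sigma)$, this yields a unique $*$-isomorphism
\[
\pi : \CCR(V,\omega) \longrightarrow A_\Sigma, \qquad \pi(W([f])) = W_\Sigma(f),
\]
which is the asserted isomorphism $\CCR(\SYM(\solve(M,E^*,P^*))) \cong A_\Sigma$.

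There is essentially no obstacle, since the nontrivial content — namely that $f\mapsto W_\Sigma(f)$ factors through $V$ and satisfies the twisted group law with the correct symplectic form — is precisely what Proposition~\ref{prop:QF-CCR} established via the Segal-field computation and Lemma~\ref{greenformel2}. The only small point to mention is that one should check that the unit $W_\Sigma(0)=\exp(i\theta(0))=\id$ is the same as the unit $W(0)$ in $\CCR(V,\omega)$, so that the hypothesis of Theorem~\ref{CCRunique} on unit-preserving $*$-morphisms applies; this is immediate.
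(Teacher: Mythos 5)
Your proposal is correct and follows exactly the same route as the paper: combine Proposition~\ref{prop:QF-CCR} (which shows $W_\Sigma$ is a Weyl system of $\SYM\circ\solve(M,E^*,P^*)$) with the uniqueness Theorem~\ref{CCRunique} for CCR-representations. Your write-up merely spells out the details that the paper's one-line proof leaves implicit.
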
 

\begin{proof}
This is a direct consequence of Proposition~\ref{prop:QF-CCR} and of
Theorem~\ref{CCRunique}. 
\end{proof}

The construction of the quantum field on a globally hyperbolic Lorentzian
manifold goes back to \cite{Ish}, \cite{Haj}, \cite{Dimock1}, and others in the
case of scalar fields, i.~e., if $E$ is the trivial line bundle.
See also the references in \cite{Ful} and \cite{Wa2}.
In \cite{Dimock1} the formula $W_\Sigma(f)=\exp(i\,\Phi_\Sigma(f))$ in
Proposition~\ref{prop:QF-CCR} was used to {\em define} the CCR-algebra.
It should be noted that this way one does not get a true quantization functor
$\GLOBHY \to \CALG$ because one determines the $C^*$-algebra {\em up to
isomorphism only}.
This is caused by the fact that there is no canonical choice of Cauchy
hypersurface.
It seems that the approach based on algebras of observables as developed in
Sections~\ref{sec:quantfunc} to \ref{sec:Haag-Kastler} is more natural in the
context of curved spacetimes than the more traditional approach via the Fock
space. 

Wave equations for sections in nontrivial vector bundles also appear
frequently.
The approach presented in this book works for linear wave equations in general
but often extra problems have to be taken care of.
In \cite{Dimock3} the electromagnetic field\indexn{electromagnetic field>defemidx} is
studied. 
Here one has to take the gauge freedom into account.
For the Proca equation\indexn{Proca equation} as studied e.~g.\ in \cite{Fur}
the extra constraint $\delta A=0$ must be considered, compare
Example~\ref{ex:1forms}. 
If one wants to study the Dirac equation\indexn{Dirac operator} itself rather
than its square as we did in Example~\ref{ex:Diracsquare}, then one has to use
the canonical anticommutator relations (CAR)\indexn{canonical anticommutator
relations>defemidx} instead of the CCR, see e.~g.\ \cite{Dimock2}. 

In the physics papers mentioned above the authors fix a wave
equation, e.~g.\ the Klein-Gordon equation, and then they set up a functor
$\GLOBHYN \to \CALG$.
Here $\GLOBHYN$\indexs{GlobHypnaked@$\protect{\GLOBHYN}$, category of globally
  hyperbolic manifolds without further structure} is the category whose
objects are globally hyperbolic Lorentzian manifolds without any further
structure and the morphisms are the timeorientation preserving isometric
embeddings $f:M_1\to M_2$ such that $f(M_1)$ is a causally compatible open
subset of $M_2$. 
The relation to our more universal functor $\CCR\circ\SYM\circ\solve :
\GLOBHY \to \CALG$ is as follows:

There is the forgetful functor $\forget:\GLOBHY \to \GLOBHYN$ given by
$\forget(M,E,P)=M$ and $\forget(f,F)=f$.
A \defidx{geometric normally hyperbolic operator} is a functor $\mathrm{GOp}:
\GLOBHYN \to \GLOBHY$\indexs{GeometricOperator@$\protect{\mathrm{GOp}}$,
  geometric normally hyperbolic operator} such that
$\forget\circ\mathrm{GOp}=\id$. 

For example, the Klein-Gordon equation for fixed mass $m$ yields such a
functor.
One puts $\mathrm{GOp}(M):=(M,E,P)$ where $E$ is the trivial real line bundle
over $M$ with the canonical inner product and $P$ is the Klein-Gordon
operator\indexn{Klein-Gordon operator} $P=\Box +m^2$.
On the level of morphisms, one sets $\mathrm{GOp}(f):=(f,F)$ where $F$ is the
embedding $M_1\times\R \hookrightarrow M_2\times\R$ induced by $f:M_1
\hookrightarrow M_2$.
Similarly, the Yamabe operator, the wave equations for the electromagnetic
field\indexn{electromagnetic field} and for the Proca field\indexn{Proca
  equation} yield geometric normally hyperbolic operators. 

The square of the Dirac operator\indexn{Dirac operator} does not yield a
geometric normally hyperbolic operator because the construction of the spinor
bundle depends on the additional choice of a spin structure.
One can of course fix this by incorporating the spin structure into yet
another category, the category of globally hyperbolic Lorentzian manifolds
equipped with a spin structure, see \cite[Sec.~3]{Ver}.

In any case, given a geometric normally hyperbolic operator $\mathrm{GOp}$,
then $\CCR\circ\SYM\circ\solve\circ\mathrm{GOp} : \GLOBHYN \to \CALG$ is a
\defidx{locally covariant quantum field theory} in the sense of
\cite[Def.~2.1]{BFV}. 

For introductions to quantum field theory on curved spacetimes from the
physical point of view the reader is referred to the books
\cite{BD}, \cite{Ful}, and \cite{Wa2}.

The passage from the abstract quantization procedure yielding quasi-local
$C^*$-algebras to the more familiar concept based on Fock space and
quantum fields requires certain choices (Cauchy hypersurface) and additional
structures (twist structure) and is therefore not canonical.
Furthermore, there are many more Hilbert space representations than the Fock
space representations constructed here
and the question arises which ones are physically relevant. 
A criterion in terms of micro-local analysis was found in \cite{Rad}.
As a matter of fact, the Fock space representations constructed here turn out
not to satisfy this criterion and are therefore nowadays regarded as
unphysical. 
A good geometric understanding of the physical Hilbert space
representations on a general globally hyperbolic spacetime is still
missing. 

Radzikowski's work was developed further in \cite{BFK} and applied in
\cite{BF} to interacting fields.
The theory of interacting quantum fields, in particular their
renormalizability, currently forms an area of very active research.

\begin{appendix}  
\chapter{Background material}

In Sections~\ref{app:categories} to \ref{app:diffops} the necessary
terminology and basic facts from such diverse fields of mathematics as
category theory, functional analysis, differential geometry, and differential
operators are presented.
These sections are included for the convenience of the reader and are not
meant to be a substitute for a thorough introduction to these topics.

Section~\ref{app:lorgeo} is of a different nature.
Here we collect advanced material on Lorentzian geometry which is needed in
the main text.
In this section we give full proofs.
Partly due to the technical nature of many of these results they have not been
included in the main text in order not to distract the reader.

\section{Categories}
\label{app:categories}

We start with basic definitions and examples from category theory (compare
\cite[Ch.~1, \S~11]{Lang}). 
A nice introduction to further concepts related to categories can be found in 
\cite{Mac}.

\Definition{\label{category}
A \defem{category}\indexn{category>defemidx} $\AAA$\indexs{AAA@$\protect\AAA$,
  $\protect\BBB$, categories} consists of the following data: 
\begin{itemize} 
\item 
a class $\Obj(\AAA)$ whose members are called
\defem{objects}\indexs{ObjA@$\protect\Obj(\protect\AAA)$, class of objects in
  a category $\protect\AAA$}\indexn{object in a category>defemidx} 
\item
for any two objects $A,B\in\Obj(\AAA)$ there is a (possibly empty) set
$\Mo(A,B)$\indexs{MorAB@$\protect\Mo(A,B)$, set of morphisms from $A$ to $B$}
whose elements are called \defem{morphisms}\indexn{morphism in a
  category>defemidx}, 
\item
for any three objects $A,B,C\in\Obj(\AAA)$ there is a map (called the
\defem{composition} of morphisms)\indexn{composition of morphisms in a
  category>defemidx}\indexs{compose@$\circ$, composition of morphisms in a
  category} 
\[ 
\Mo(B,C)\times\Mo(A,B)\to\Mo(A,C)\;,\;(f,g)\mapsto f\circ g, 
\] 
\end{itemize}
such that the following axioms are fulfilled:
\begin{enumerate}
\item 
If two pairs of objects $(A,B)$ and $(A',B')$ are not equal, then the
sets $\Mo(A,B)$ and $\Mo(A',B')$ are disjoint. 
\item 
For every $A\in\Obj(\AAA)$ there exists an element
$\id_A\in\Mo(A,A)$\indexs{idA@$\protect\id_A$, identity morphism of
  $A$}\indexn{identity morphism>defemidx} 
(called the \defem{identity morphism} of $A$)
such that for all $B\in\Obj(\AAA)$, for all $f\in\Mo(B,A)$ and all
$g\in\Mo(A,B)$ one has
\[ 
\id_A\circ f=f\quad\mbox{ and }\quad g\circ \id_A=g.
\]
\item 
The law of composition is \defem{associative}\indexn{associativity
  law>defemidx}, i.~e., for any 
$A,B,C,D\in\Obj(\AAA)$ and for any $f\in\Mo(A,B)$, $g\in\Mo(B,C)$,
$h\in\Mo(C,D)$ we have
\[ 
(h\circ g)\circ f=h\circ(g\circ f).
\]
\end{enumerate}
}

\Examples{
a) In the category of sets $\SET$ the class of objects $\Obj(\SET)$ consists
of all sets, and for any two sets $A,B\in\Obj(\SET)$ the set $\Mo(A,B)$
consists of all maps from $A$ to $B$.
Composition is the usual composition of maps.

b) The objects of the category $\TOP$ are the topological spaces, and the
morphisms are the continuous maps.

c) In the category of groups $\GROUP$ one considers the class $\Obj(\GROUP)$
of all groups, and the morphisms are the group homomorphisms. 

d) In $\AB$, the category of abelian groups, $\Obj(\AB)$ is the class of all
abelian groups, and again the morphisms are the group homomorphisms.
}

\Definition{\label{subcategory}
Let $\AAA$ and $\BBB$ be two categories.
Then $\AAA$ is called a \defidx{full subcategory} of $\BBB$ provided
\begin{enumerate}
\item $\Obj(\AAA)\subset\Obj(\BBB)$,
\item for any $A,B\in\Obj(\AAA)$ the set of morphisms of $A$ to $B$ are the
  same in both categories $\AAA$ and $\BBB$,
\item for all $A,B,C\in\Obj(\AAA)$, any $f\in\Mo(A,B)$ and any
  $g\in\Mo(B,C)$ the composites $g\circ f$ coincide in $\AAA$ and $\BBB$,
\item for $A\in\Obj(\AAA)$ the identity morphism $\id_A$ is the same in both
  $\AAA$ and $\BBB$. 
\end{enumerate}
}

\Examples{
a) $\TOP$ is not a full subcategory of $\SET$ because there are non-continuous
maps between topological spaces.

b) $\AB$ is a full subcategory of $\GROUP$.
}

\Definition{\label{functor}
Let $\AAA$ and $\BBB$ be categories.
A \defem{(covariant) functor}\indexn{functor>defemidx}\indexn{covariant
  functor>defemidx} $T$ from $\AAA$ to $\BBB$ consists of a map 
$T:\Obj(\AAA)\to\Obj(\BBB)$ and maps $T:\Mo(A,B)\to\Mo(TA,TB)$ for every
$A,B\in\Obj(\AAA)$ such that
\begin{enumerate}
\item the composition is preserved, i.~e., for all $A,B,C\in\Obj(\AAA)$, for any
  $f\in\Mo(A,B)$ and for any $g\in\Mo(B,C)$ one has 
  \[T(g\circ f)=T(g)\circ T(f), \]
\item $T$ maps identities to identities, i.~e., for any $A\in\Obj(\AAA)$ we
  get 
 \[ T(\id_A)=\id_{TA}.\]
\end{enumerate}
In symbols one writes $T:\AAA\to\BBB$.
}

\Examples{
a) For every category $\AAA$ one has the {\em identity functor} $\Id:\AAA\to\AAA$
which is defined by $\Id (A)=A$ for all $A\in\Obj(\AAA)$ and $\Id(f)=f$ for all
$f\in\Mo(A,B)$ with $A,B\in\Obj(\AAA)$.

b) There is a functor $F:\TOP\to\SET$ which maps each topological space to the
underlying set and $F(g)=g$ for all $A,B\in\Obj(\TOP)$ and all $g\in\Mo(A,B)$.
This functor $F$ is called the \defidx{forgetful functor} because it forgets the
topological structure.

c) Let $\AAA$ be a category. 
We fix an object $C\in\Obj(\AAA)$. 
We define $T:\AAA\to\SET$ by $T(A)=\Mo(C,A)$ for all $A\in\Obj(\AAA)$ and by 
\begin{eqnarray*} 
\Mo(A,B)&\to&\Mo\big(\Mo(C,A),\Mo(C,B) \big),\\
f&\mapsto&\Big(g\mapsto f\circ g\Big),
\end{eqnarray*}
for all $A,B\in\Obj(\AAA)$.
It is easy to check that $T$ is a functor.
}


\section{Functional analysis}
\label{app:funcana}


In this section we give some background in functional analysis.
More comprehensive expositions can be found e.~g.\ in \cite{RS}, \cite{RS2},
and \cite{Ru}. 

\Definition{\label{banachspace}
A \defem{Banach space}\indexn{Banach space>defemidx} is a real or complex vector space $X$ equipped with a
norm $\|\cdot\|$ such that every Cauchy sequence in $X$ has a limit. 
}

\Examples{ \label{ex:CkBanach}
a)
Consider $X=C^0([0,1])$\indexs{Cup0X@$C^0(X)$, space of continuous functions
  on $X$}, the space of continuous functions on the 
unit interval $[0,1]$.
We pick the \defidx{supremum norm}\indexn{C0norm@$C^0$-norm>defemidx}:
For $f\in C^0([0,1])$ one puts
\[ \|f\|_{C^0([0,1])}:=\sup_{t\in[0,1]}\big|f(t)\big|.\]
With this norm $X$ is Banach space.
In this example the unit interval can be replaced by any compact topological
space. 

b)
More generally, let $k\in\N$ and let $X=C^k([0,1])$\indexs{CupkX@$C^0(X)$,
  space of $k$ times continuously differentiable functions on $X$}, the space
of $k$ times continuously differentiable functions on the unit interval
$[0,1]$. 
The \defem{$C^k$-norm}\indexn{Cknorm@$C^k$-norm} is defined by
\[ 
\|f\|_{C^k([0,1])}:=\max_{\ell=0,\ldots,k} \|f^{(\ell)}\|_{C^0([0,1])} 
\]
where $f^{(\ell)}$ denotes the $\ell^\mathrm{th}$ derivative of $f\in X$.
Then $X=C^k([0,1])$ together with the $C^k$-norm is a Banach space.
}

Now let $H$ be a complex vector space, and let $(\cdot\,,\cdot)$ be a
(positive definite) Hermitian scalar product. 
The scalar product induces a norm on $H$,
\[ 
\|x\|:=\sqrt{(x,x)}\quad\mbox{ for all }\,x\in H.
\]

\Definition{\label{hilbertspace}\indexn{Hilbert space>defemidx}
A complex vector space $H$ endowed with Hermitian scalar product
$(\cdot\,,\cdot)$ is called a \defidx{Hilbert space} if $H$ together with the
norm induced by $(\cdot\,,\cdot)$ forms a Banach space.
}

\Example{
Consider the space of \defem{square integrable functions}\indexn{square
  integrable function>defemidx} on $[0,1]$: 
\[
\mathscr{L}^2([0,1]):=\left\{f:[0,1]\to\Co\;\Big|\;\;f\;\mbox{ measurable and }
\int_0^1\left|f(t)\right|^2 \dt<\infty \right\}.
\]\indexs{l201@$\protect\mathscr{L}^2(X)$, space of square integrable functions on $X$}
On $\mathscr{L}^2([0,1])$ one gets  a natural sesquilinear form
$(\cdot,\cdot)$ by $(f,g):=\int_0^1\overline{f(t)}\cdot g(t)\dt$ for
all $f,g\in\mathscr{L}^2([0,1])$.
Then $\mathscr{N}:=\left\{f\in\mathscr{L}^2([0,1])\,|\,(f,f)=0\right\}$ is a
linear subspace, and one denotes the quotient vector space by 
\[
L^2([0,1]):=\mathscr{L}^2([0,1])\,/\,\mathscr{N}.\indexs{L201@$L^2(X)$, space
  of classes of square integrable functions on $X$} 
\]
The sesquilinear form $(\cdot\,,\cdot)$ induces a Hermitian scalar product on
$L^2([0,1])$.
The Riesz-Fisher theorem\indexn{Riesz-Fisher theorem>defemidx} \cite[Example 2,
  p.~29]{RS} states that $L^2([0,1])$ 
equipped with this Hermitian scalar product is a Hilbert space.
}

\Definition{
A \defidx{semi-norm} on a $\K$-vector space $X$, $\K=\R$ or $\Co$, is a map
$\rho:X\to[0,\infty)$ such that
\begin{enumerate}
\item 
$\rho(x+y)\le\rho(x)+\rho(y)$ for any $x,y\in X$,
\item 
$\rho(\alpha\,x)=|\alpha|\,\rho(x)$ for any $x\in X$ and
$\alpha\in\K$.
\end{enumerate}
A family of semi-norms $\{\rho_i\}_{i\in I}$ is said to \defem{separate
  points}\indexn{separating points (semi-norms)>defemidx} 
if 
\begin{enumerate}
\item[(3)] $\rho_i(x)=0$ for all $i\in I$ implies $x=0$.
\end{enumerate}
}

Given a countable family of seminorms $\{\rho_k\}_{k\in\N}$ separating points
one defines a metric $d$ on $X$ by setting for $x,y\in X$:
\begin{equation}\label{frechetvollst}
 d(x,y):= \sum_{k=0}^\infty\;\frac{1}{2^k}\cdot\max\big(1,\rho_k(x,y) \big) .
\end{equation}

\Definition{\label{frechet}
A \defidx{Fr\'echet space} is a $\K$-vector space $X$ equipped with a
countable family of semi-norms $\{\rho_k\}_{k\in\N}$ separating points such
that the metric $d$ given by (\ref{frechetvollst}) is complete.
The \defem{natural topology}\indexn{topology of a Fr\'echet space>defemidx} of
a Fr\'echet space is the one induced by this metric $d$. 
}

\Example{
Let $C^\infty([0,1])$\indexs{C0999X@$C^\infty(X)$, space of smooth functions
  on $X$} be the space of smooth functions on the interval $[0,1]$. 
A countable family of semi-norms is given by the $C^k$-norms as defined in
Example~\ref{ex:CkBanach}~b). 
In order to prove that this family of (semi-)norms turns $C^\infty([0,1])$
into a Fr\'echet space we will show that $C^\infty([0,1])$ equipped with the
metric $d$ given by (\ref{frechetvollst}) is complete.

Let $(g_n)_n$ be a Cauchy sequence in $C^\infty([0,1])$ with respect to the
metric $d$.
Then for any $k\ge 0$ the sequence $(g_n)_n$ is Cauchy with respect to the
$C^k$-norm. 
Since $C^k([0,1])$ together with the $C^k$-norm is a Banach space there
exists a unique $h_k\in C^k([0,1]$ such that $(g_n)_n$ converges to $h_k$ in
the $C^k$-norm. 
From the estimate $\|\cdot\|_{C^k([0,1])}\le\|\cdot\|_{C^\ell([0,1])}$ for
$k\le\ell$ we conclude that $h_k$ and $h_\ell$ coincide.
Therefore, putting $h:=h_0$ we obtain $h\in C^\infty([0,1])$ and $d(h,g_n)\to
0$ for $n\to\infty$. 
This shows the completeness of $C^\infty([0,1])$.
} 

If one wants to show that linear maps between Fr\'echet spaces are
homeomorphisms, the following theorem is very helpful.

\begin{thm}[Open Mapping Theorem]\indexn{open mapping theorem>defemidx}
Let $X$ and $Y$ be Fr\'echet spaces, and let $f:X\to Y$ be a continuous linear 
surjection. 
Then $f$ is open, i.~e., $f$ is a homeomorphism.
\end{thm}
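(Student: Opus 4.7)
The plan is to derive the Open Mapping Theorem from the Baire Category Theorem, which applies because a Fréchet space is a complete metric space. The proof has three stages: a Baire argument to get a neighborhood of $0$ inside the \emph{closure} of $f(U)$, a successive approximation argument to remove the closure, and a translation argument to conclude openness at arbitrary points.

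First, I would fix a balanced open neighborhood $U$ of $0$ in $X$ (e.g.\ a metric ball, noting that the natural topology is induced by scaling-compatible semi-norms, so balanced basic neighborhoods exist). Since $X=\bigcup_{n\in\N} nU$ and $f$ is surjective and linear, $Y=\bigcup_{n\in\N} f(nU)=\bigcup_{n\in\N} nf(U)$. By the Baire Category Theorem applied to the complete metric space $Y$, at least one $\overline{nf(U)}=n\overline{f(U)}$ has non-empty interior, hence so does $\overline{f(U)}$. Writing $U=U_0-U_0$ for a balanced open $U_0$ and using $\overline{f(U_0)}-\overline{f(U_0)}\subset \overline{f(U)}$ (continuity of subtraction plus balancedness), one concludes that $\overline{f(U)}$ contains a neighborhood of $0$ in $Y$.

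The hard step is the second one: promoting this to the statement that $f(U)$ itself contains a neighborhood of $0$. I would choose a decreasing sequence $U_0\supset U_1\supset U_2\supset\cdots$ of balanced open neighborhoods of $0$ in $X$ with $U_0\subset U$ and $\sum_{k\geq n}U_k\subset U_{n-1}$ in the sense that any series $\sum x_k$ with $x_k\in U_k$ is Cauchy and has its tail sums in $U_{n-1}$; this can be arranged using the metric defining the Fréchet topology. By the first stage, for each $n$ there is an open neighborhood $V_n$ of $0$ in $Y$ with $V_n\subset\overline{f(U_n)}$, and I may assume $V_n\to 0$. Given $y\in V_0$, I would inductively construct $x_n\in U_n$ such that $y-f(x_0+\cdots+x_n)\in V_{n+1}$: having chosen $x_0,\dots,x_{n-1}$, the residue lies in $V_n\subset\overline{f(U_n)}$, so some $x_n\in U_n$ brings the new residue into $V_{n+1}$. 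Completeness of $X$ then guarantees that $x:=\sum_{n\geq 0}x_n$ converges to an element of $U$, and continuity of $f$ together with $V_{n+1}\to 0$ yields $f(x)=y$. Thus $V_0\subset f(U)$. The main subtlety here is setting up the $U_n$ so that their formal sums actually converge in $X$; this is where completeness of $X$ (not just of $Y$) is essential, and it is the technical heart of the argument.

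Finally, I would globalize: for any open $W\subset X$ and any $x_0\in W$, pick a balanced neighborhood $U$ of $0$ with $x_0+U\subset W$; the previous step produces a neighborhood $V$ of $0$ in $Y$ with $V\subset f(U)$, whence $f(x_0)+V\subset f(W)$. Hence $f(W)$ is open, so $f$ is an open map. The ``$f$ is a homeomorphism'' clause in the statement is then the standard observation that a continuous open linear bijection has continuous inverse (the injectivity being used tacitly in the applications of this theorem in the main text, e.g.\ in the proof of Theorem~\ref{cauchystetig}).
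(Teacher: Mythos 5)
The paper does not actually prove this statement; it records it as a background theorem in the appendix and refers the reader to Rudin and Reed--Simon for the proof. Your Baire-category argument is the standard one found in exactly those references, and it is essentially correct: Baire gives $\operatorname{int}\,\overline{f(U)}\neq\emptyset$, the difference trick upgrades this to a neighborhood of $0$ inside $\overline{f(U)}$, the successive-approximation step (using completeness of $X$ and translation invariance of the Fr\'echet metric) removes the closure, and translation gives openness everywhere. Two small points worth tidying. First, the phrase ``writing $U=U_0-U_0$'' should read ``choose a balanced open $U_0$ with $U_0-U_0\subset U$''; one cannot in general solve $U=U_0-U_0$ exactly, but the inclusion is all that is needed. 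Second, to guarantee that the limit $x=\sum_n x_n$ actually lies in $U$ and not merely in $X$, one should choose the $U_n$ (using translation invariance of the metric from~(\ref{frechetvollst})) so that $U_0+U_1+U_2+\cdots\subset U$ in the sense that every convergent series with $n$-th term in $U_n$ has sum in $U$; you gesture at this but it deserves the explicit statement since it is where completeness of $X$ enters. Finally, you are right that the paper's phrasing ``$f$ is open, i.e., $f$ is a homeomorphism'' is loose: a continuous open linear surjection is a homeomorphism only if it is also injective, and injectivity is what is tacitly in force where the theorem is invoked (Theorem~\ref{cauchystetig}).
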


\begin{proof}
See \cite[Cor.~2.12., p.~48]{Ru} or \cite[Thm.~V.6, p.~132]{RS}
\end{proof}

From now on we fix a Hilbert space $H$.
A continuous linear map $H\to H$ is called \defidx{bounded operator} on $H$.
But many operators occuring in analysis and mathematical physics are not
continuous and not even defined on the whole Hilbert space.
Therefore one introduces the concept of unbounded operators.

\Definition{
Let $\dom(A)\subset H$\indexs{domA@$\protect\dom(A)$, domain of a linear
  operator $A$} be a linear subspace of $H$. 
A linear map $A:\dom(A)\to H$ is called an \defem{unbounded
  operator}\indexn{unbounded operator>defemidx} in $H$ 
with \defem{domain}\indexn{domain of a linear operator>defemidx} $\dom(A)$.
One says that $A$ is \defem{densely defined}\indexn{densely defined
  operator>defemidx} if $\dom(A)$ is a dense subspace of 
$H$. 
}

\Example{
One can represent elements of $L^2(\R)$ by functions. 
The space of smooth functions with compact support $C^\infty_c(\R)$ is 
regarded as a linear subspace, $C^\infty_c(\R)\subset L^2(\R)$.
Then one can consider the differentiation operator $A:=\tfrac{d}{dt}$ as an
unbounded operator in $L^2(\R)$ with domain $\dom(A)=C^\infty_c(\R)$,
and $A$ is densely defined.
}

\Definition{
Let $A$ be an unbounded operator on $H$ with domain $\dom(A)$.
The \defem{graph}\indexs{GammaA@$\Gamma(A)$, graph of $A$}\indexn{graph of an operator>defemidx} of $A$ is the set
\[ \Gamma(A):=\left\{(x,Ax)\,\big|\,x\in \dom(A) \right\}\subset H\times H. \]
The operator $A$ is called a \defem{closed} operator\indexn{closed
  operator>defemidx} if its graph 
$\Gamma(A)$ is a closed subset of $H\times H$.
}

\Definition{
Let $A_1$ and $A_2$ be operators on $H$.
If $\dom(A_1)\supset \dom(A_2)$ and $A_1x=A_2x$ for all $x\in \dom(A_2)$ ,
then $A_1$ is said to be an \defem{extension}\indexn{extension of an
  operator>defemidx} of $A_2$.  
One then writes $A_1\supset A_2$.
}

\Definition{
Let $A$ be an operator on $H$.
An operator $A$ is \defem{closable}\indexn{closable operator>defemidx} if it
possesses a closed extension. 
In this case the closure $\ovl{\Gamma(A)}$ of $\Gamma(A)$ in $H\times H$ is
the graph of an operator called the \defem{closure}\indexn{closure of an
  operator>defemidx} of $A$. 
}

\Definition{
Let $A$ be a densely  defined operator on $H$.
Then we put 
\[
\dom(A^*):=\left\{x\in H\,\big|\,\mbox{there exists a }y\in H\;\mbox{with
}(Az,x)=(z,y)\;\mbox{ for all }z\in \dom(A) \right\} .
\]
For each $x\in \dom(A^*)$ we define $A^*x:=y$ where $y$ is uniquely determined
by the requirement $(Az,x)=(z,y)$ for all $z\in \dom(A)$.
Uniqueness of $y$ follows from $\dom(A)$ being dense in $H$.
We call $A^*$ the \defem{adjoint}\indexs{A*@$A^*$, adjoint of the operator
  $A$}\indexn{adjoint operator>defemidx} of $A$.  
}

\Definition{
A densely defined operator $A$ on $H$ is called
\defem{symmetric}\indexn{symmetric operator>defemidx} if $A^*$ is an 
extension of $A$, i.~e., if $\dom(A)\subset \dom(A^*)$ and $Ax=A^*x$ for all
$x\in \dom(A)$.
The operator $A$ is called \defem{selfadjoint}\indexn{selfadjoint
  operator>defemidx} if $A=A^*$, that is, if $A$ is 
symmetric and $\dom(A)=\dom(A^*)$. 
}
Any symmetric operator is closable with closure $\overline{A}=A^{**}$.

\Definition{
A symmetric operator $A$ is called \defem{essentially
  selfadjoint}\indexn{essentially selfadjoint operator>defemidx} if its 
closure $\overline{A}$ is selfadjoint.
}

We conclude this section by stating a criterion for 
essential selfadjointness of a symmetric operator.

\Definition{
Let $A$ be an operator on a Hilbert space $H$.
Then one calls the set $C^\infty(A):=\bigcap_{n=1}^\infty
\dom(A^n)$\indexs{C0999A@$C^\infty(A)$, set of $C^\infty$-vectors for $A$} the
set of 
\defem{$C^\infty$-vectors}\indexn{C0999v@$C^\infty$-vector>defemidx} for $A$.
A vector $\varphi\in C^\infty(A)$ is called an \defem{analytic
  vector}\indexn{analytic vector>defemidx} for $A$ if  
\[ 
\sum_{n=0}^\infty \frac{\|A^n\varphi \|}{n!}t^n\,<\,\infty
\]
for some $t>0$.
}

\begin{thm}[Nelson's Theorem]\label{thm:nelson}\indexn{Nelson's theorem>defemidx}
Let $A$ be a symmetric operator on a Hilbert space $H$. 
If $\dom(A)$ contains a set of analytic vectors which is dense in $H$, then
$A$ is essentially selfadjoint.
\end{thm}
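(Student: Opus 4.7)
The plan is to invoke the deficiency-index criterion --- a densely defined symmetric operator is essentially selfadjoint iff $\ker(A^*-i)=\ker(A^*+i)=\{0\}$, equivalently iff its closure is selfadjoint --- by producing a selfadjoint operator $B$ with $\bar A = B$. The operator $B$ will be the infinitesimal generator of a strongly continuous one-parameter unitary group on $H$ constructed by exponentiating $A$ termwise on the analytic vectors. Write $\mathcal{A}$ for the set of analytic vectors of $A$ in $\dom(A)$; a short calculation shows $\mathcal{A}$ is a linear subspace, and by hypothesis it is dense in $H$.

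For $\varphi\in\mathcal{A}$ with radius of convergence $r(\varphi)>0$ and real $t$ with $|t|<r(\varphi)$, set
\[
U(t)\varphi := \sum_{n=0}^\infty \frac{(it)^n}{n!}\, A^n\varphi .
\]
The decisive preliminary step is to establish that $\mathcal{A}$ is stable under both $A$ and $U(t)$, via the absolute-convergence estimate
\[
\sum_{n=0}^\infty \frac{s^n}{n!}\,\|A^n U(t)\varphi\| \;\leq\; \sum_{n,k\geq 0}\frac{s^n\,|t|^k}{n!\,k!}\,\|A^{n+k}\varphi\| \;=\; \sum_{m=0}^\infty \frac{(s+|t|)^m}{m!}\,\|A^m\varphi\|,
\]
finite for $s+|t|<r(\varphi)$. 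This legitimizes termwise operations: the partial sums of $U(t)\varphi$ lie in $\dom(A)$ and their images under $A$ converge, so by closedness of $\bar A$ we get $U(t)\varphi\in\dom(\bar A)$ with $\tfrac{d}{dt}U(t)\varphi = i\bar A\,U(t)\varphi$. A Cauchy-product computation gives the local semigroup law $U(s+t)\varphi=U(s)U(t)\varphi$, and using symmetry of $\bar A$ one finds $\tfrac{d}{dt}\|U(t)\varphi\|^2 = 0$, whence $\|U(t)\varphi\|=\|\varphi\|$ for $|t|<r(\varphi)$. By density of $\mathcal{A}$ and continuity, $U(t)$ extends to a linear isometry on all of $H$ for small $|t|$.

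The local semigroup law permits patching these short-time isometries (via $U(t)=U(t/N)^N$ for large $N$) into a strongly continuous one-parameter unitary group $\{U(t)\}_{t\in\R}$ on $H$. Stone's theorem produces a unique selfadjoint $B$ with $U(t)=e^{itB}$; differentiation at $t=0$ gives $\mathcal{A}\subset\dom(B)$ and $B\varphi=A\varphi$ for $\varphi\in\mathcal{A}$. Since every $\varphi\in\mathcal{A}$ is an analytic vector for the selfadjoint operator $B$ as well (as $B^n\varphi=A^n\varphi$ for all $n$), and $\mathcal{A}$ is dense, a standard spectral-theoretic argument yields that $\mathcal{A}$ is a core for $B$. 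Consequently $B$ is the closure of $B|_\mathcal{A}=A|_\mathcal{A}\subset A$, so $B\subset\bar A$. The reverse inclusion $\bar A\subset B$ follows from $A\subset B$ and closedness of $B$. Hence $\bar A=B$ is selfadjoint, and $A$ is essentially selfadjoint.

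The main technical obstacle is the analytic bookkeeping underlying the second paragraph: verifying that $\mathcal{A}$ is preserved by $A$ and by the formally defined $U(t)$, that $\bar A$ may be applied termwise to the defining series, and that the isometry and semigroup identities hold exactly rather than merely formally. These rest on interchange-of-summation arguments which must be organized by absolute convergence within the disk of radius $r(\varphi)$. Once this is in hand, the patching of local isometries to a global unitary group, the application of Stone's theorem, and the core argument identifying the generator with $\bar A$ follow along well-trodden lines.
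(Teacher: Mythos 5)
The paper offers no proof of this theorem; it simply cites Reed--Simon, Thm.~X.39. So there is no in-text argument to compare with, and you are supplying a proof from scratch. Your route --- exponentiate $A$ on analytic vectors, patch to a one-parameter unitary group, invoke Stone's theorem, and identify the generator with $\bar A$ --- is a legitimate standard approach, but as written it has a genuine gap at the passage from local to global.

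The absolute-convergence estimate you prove shows only that $U(t)\varphi$ is an analytic vector for $\bar A$ with radius \emph{at least} $r(\varphi)-|t|$, and that lower bound shrinks. If you track the iteration $U(t/N)^N\varphi$ with it, step $k$ requires the intermediate radius to exceed $|t|/N$, and the bound after $k-1$ steps is $r-(k-1)|t|/N$; this forces $k|t|/N<r$, which at $k=N$ reproduces the constraint $|t|<r(\varphi)$. Taking $N$ large buys nothing. The alternative reading --- that for some fixed small $t>0$ the operator $U(t)$ already extends by density to an isometry on all of $H$, and one then composes globally-defined isometries --- also fails: for fixed $t>0$ the operator $U(t)$ is defined only on $\{\varphi\in\mathcal A : r(\varphi)>|t|\}$, and $\mathcal A$ is merely the increasing union of these subspaces; density of the union does not give density of any single term. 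The same issue resurfaces in your core argument, which needs $U(t)\mathcal A$ to stay inside a set of analytic vectors.

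The missing idea is that $U(t)$ actually \emph{preserves} the radius of convergence. The series gives the intertwining $\bar A\,U(t)\varphi = U(t)\,A\varphi$, hence $\bar A^n U(t)\varphi = U(t)\,A^n\varphi$; and $A^n\varphi$ is itself analytic with radius $\geq r(\varphi)$ (compare $\sum_k \frac{s^k}{k!}\|A^{n+k}\varphi\|$ against $\sum_m \frac{(s')^m}{m!}\|A^m\varphi\|$ for $s<s'<r$), so your local isometry applies and yields $\|\bar A^n U(t)\varphi\|=\|A^n\varphi\|$ \emph{exactly}, whence $r(U(t)\varphi)=r(\varphi)$. With this, $U(t/N)^N\varphi$ is well-defined for every real $t$ once $N>|t|/r(\varphi)$, independent of $N$, and linear and isometric in $\varphi$ across $\mathcal A$; the Stone-theorem and core steps then go through as you describe. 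Without the radius-preservation lemma the patching step does not succeed.
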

\begin{proof}
See \cite[Thm.~X.39, p.~202]{RS2}.
\end{proof}

If $A$ is a selfadjoint operator and $f:\R\to\Co$ is a bounded
Borel-measurable function, then one can define the bounded operator
$f(A)$\indexs{fofA@$f(A)$, function $f$ applied to selfadjoint operator $A$} in
a natural manner.
We use this to get the unitary operator $\exp(i\,A)$ in Section~\ref{sec:Fock}.
If $\phi$ is an analytic vector, then
$$
\exp(iA)\phi = \sum_{n=0}^\infty \frac{i^n}{n!}A^n\phi .
$$


\section{Differential geometry}
\label{app:diffgeo}


In this section we introduce the basic geometrical objects such as manifolds
and vector bundles which are used throughout the text.
A detailed introduction can be found e.~g.\ in \cite{Sp1} or in \cite{Nic}.

\subsection{Differentiable manifolds}\indexn{differentiable manifold>defemidx}

We start with the concept of a manifold.
Loosely speaking, manifolds are spaces which look locally like $\R^n$.

\Definition{
Let $n$ be an integer.
A topological space $M$ is called an \defem{$n$-dimensional topological
  manifold}\indexn{manifold!topological>defemidx}\indexn{dimension of a
  manifold>defemidx} if and only if 
\ben
\item 
its topology is Hausdorff and has a countable basis, and
\item 
it is locally homeomorphic to $\R^n$, i.~e., for every $p\in M$ there exists an
open neighborhood $U$ of $p$ in $M$ and a homeomorphism $\phi:U\to\phi(U)$,
where $\phi(U)$ is an open subset of $\R^n$.
\een
}

Any such homeomorphism $\phi:U\to\phi(U)\subset\R^n$ is called a
\defem{(local) chart}\indexn{chart>defemidx} of $M$. 
The coordinate functions $\phi^j:U\to \R$ of $\phi=(\phi^1,\ldots,\phi^n)$ are
called the \defem{coordinates}\indexn{coordinates in a manifold>defemidx} of
the local chart. 
An \defem{atlas}\indexn{atlas>defemidx} of $M$ is a family of local charts
$(U_i,\phi_i)_{i\in I}$ of 
$M$ which covers all of $M$, i.~e., $\buil{\cup}{i\in I}U_i=M$.

\Definition{
Let $M$ be a topological manifold.
A \defem{smooth atlas}\indexn{smooth atlas>defemidx} of $M$ is an atlas
$(U_i,\phi_i)_{i\in I}$ such that  
\[
\varphi_i\circ\varphi_j^{-1}:\varphi_j(U_i\cap U_j)\to\varphi_i(U_i\cap U_j)
\]
is a smooth map (as a map between open subsets of $\R^n$) whenever $U_i\cap
U_j\neq\emptyset$.
}

\begin{center}
\input{fig-defmfd}
\end{center}

Not every topological manifold admits a smooth atlas.
We shall only be interested in those topological manifolds that do.
Moreover, topological manifolds can have essentially different smooth atlases
in the sense that they give rise to non-diffeomorphic smooth manifolds.
Hence the smooth atlas is an important additional piece of structure.

\Definition{
A \defem{smooth manifold}\indexn{manifold!smooth>defemidx} is a topological
manifold $M$ together with a maximal smooth atlas. 
}

Maximality means that there is no smooth atlas on $M$ containing all local
charts of the given atlas except for the given atlas itself.
Every smooth atlas is contained in a unique maximal smooth atlas.

In the following ``manifold'' will always mean ``smooth manifold''.
The smooth atlas will usually be suppressed in the notation.

\Examples{
a)
Every nonempty open subset of $\R^n$ is an $n$-dimensional manifold.
More generally, any nonempty open subset of an $n$-dimensional manifold is
itself an $n$-dimensional manifold. 

b)
The product\indexn{product manifold>defemidx} of any $m$-dimensional manifold
with any $n$-dimensional manifold 
is canonically an $(m+n)$-dimensional manifold.

c)
Let $n\leq m$.
An $n$-dimensional \defidx{submanifold} $N$ of an $m$-dimensional manifold $M$
is a nonempty subset $N$ of $M$ such that for every $p\in N$ there exists a local
chart $(U,\phi)$ of $M$ about $p$ with 
\[\phi(U\cap N)=\phi(U)\cap\R^n,\]
where we identify $\R^n\cong\R^n\times\{0\}\subset\R^m$.
Any submanifold is canonically a manifold.
In the case $n=m-1$ the submanifold $N$ is called \defidx{hypersurface} of $M$.
}

As in the case of open subsets of $\R^n$, we have the concept of
\emph{differentiable map} between manifolds:

\Definition{\label{ddefdiffmap}
Let $M$ and $N$ be manifolds and let $p\in M$.
A continuous map $f:M\to N$ is said to be \defem{differentiable} at the point
  $p$\indexn{differentiable map>defemidx} if there exist local charts
$(U,\phi)$ and $(V,\psi)$ about $p$ 
in $M$ and about $f(p)$ in $N$ respectively, such that $f(U)\subset V$ and
\[
\psi\circ f\circ\phi^{-1}:\phi(U)\to\psi(V)
\]
is differentiable at $\phi(p)\in\phi(U)$.
The map $f$ is said to be differentiable on $M$ if it is
differentiable at every point of $M$.

Similarly, one defines $C^k$-maps\indexn{map between manifolds>defemidx} between smooth
manifolds, $k\in\N \cup 
\{\infty\}$.
A $C^\infty$-map is also called a \defidx{smooth map}. 
\begin{center}
\input{fig-defdiffmap}
\end{center}
}

Note that, if $\psi\circ f\circ\phi^{-1}$ is ($C^k$-)differentiable for
\emph{some} local charts $\phi,\psi$ as in Definition~\ref{ddefdiffmap}, then so is
$\psi'\circ f\circ\phi'^{-1}$ for \emph{any} other pair of local charts
$\phi',\psi'$ obeying the same conditions.
This is a consequence of the fact that the atlases of $M$ and $N$ have been
assumed to be smooth. 

In order to define the differential of a differentiable map between manifolds,
we need the concept of \emph{tangent space}:

\Definition{
Let $M$ be a manifold and $p\in M$.
Consider the set $\mathcal{T}_p$ of differentiable curves $c:I\to M$ with
$c(0)=p$ where $I$ is an open interval containing $0\in\R$.
The \defidx{tangent space} of $M$ at $p$ is the quotient
\[
T_pM:=\mathcal{T}_p/\sim,
\indexs{TpM@$T_pM$, tangent space of $M$ at $p$}
\]
where ``$\sim$'' is the equivalence relation defined as follows: 
Two smooth curves $c_1,c_2\in\mathcal{T}_p$ are equivalent if and only if
there exists a local chart about p such that $(\phi\circ c_1)'(0)=(\phi\circ
c_2)'(0)$.
}

\begin{center}
\input{fig-deftangentspace}
\end{center}

One checks that the definition of the equivalence relation does not depend on
the choice of local chart:
If $(\phi\circ c_1)'(0)=(\phi\circ c_2)'(0)$ for one local chart $(U,\phi)$
with $p\in U$, then $(\psi\circ c_1)'(0)=(\psi\circ c_2)'(0)$ for all local
charts $(V,\psi)$ with $p\in V$.

Let $n$ denote the dimension of $M$.
Denote the equivalence class of $c\in\mathcal{T}_p$ in $T_pM$ by $[c]$.
It can be easily shown that the map
\be
\Theta_\phi:T_pM&\to&\R^n,\\
{}[c]&\mapsto&(\phi\circ c)'(0),
\indexs{Thetaphi@$\Theta_\phi$, bijective map $T_pM\rightarrow\protect\R^n$
  induced by a chart $\phi$} 
\ee
is a well-defined bijection.
Hence we can introduce a vector space structure on $T_pM$ by {\em declaring}
$\Theta_\phi$ to be a linear isomorphism.
This vector space structure is independent of the choice of local chart
because for two local charts $(U,\phi)$ and $(V,\psi)$ containing $p$ the map
$\Theta_\psi\circ\Theta_\phi^{-1}=d_{\phi(p)}(\psi\circ\phi^{-1})$ is linear.

By definition, the \defem{tangent bundle}\indexn{tangent bundle of a
  manifold>defemidx}\indexs{TM@$TM$, tangent bundle of $M$} of $M$ is the
disjoint union of all the tangent spaces of $M$, 
\[
TM:=\buil{\bui{\cup}{\centerdot}}{p\in M}T_pM.
\]

\Definition{\label{def:diff}
Let $f:M\to N$ be a differentiable map between manifolds and let $p\in M$.
The \defem{differential}\indexn{differential of a map>defemidx} of $f$ at $p$
(also called the \defidx{tangent map} of $f$ at $p$) is the map 
\indexs{dfp@$d_pf$, differential of $f$ at $p$}
$$
d_pf:T_pM \to T_{f(p)}N, \quad
[c] \mapsto [f\circ c].
$$
The \emph{differential} of $f$ is the map $df:TM\to TN$,\quad
$df_{|_{T_pM}}:=d_pf$.\indexs{df@$df$, differential of $f$}
}

The map $d_pf$ is well-defined and linear.
The map $f$ is said to be an \defidx{immersion} or a \defidx{submersion} if $d_pf$
is injective or surjective for every $p\in M$ respectively.
A \defidx{diffeomorphism} between manifolds is a smooth bijective map whose
inverse is also smooth. 
An \defidx{embedding} is an immersion $f:M\to N$ such that $f(M)\subset N$ is a
submanifold of $N$ and $f:M\to f(M)$ is a diffeomorphism. 

Using local charts basically all local properties of differential calculus on
$\R^n$ can be translated to manifolds.
For example, we have the \defidx{chain rule}
$$
d_p(g\circ f) = d_{f(p)}g \circ d_pf,
$$
and the \defidx{inverse function theorem} which states that if $d_pf :T_pM \to
T_{f(p)}N$ is a linear isomorphism, then $f$ maps a neighborhood of $p$
diffeomorphically onto a neighborhood of $f(p)$.

\subsection{Vector bundles}

We can think of the tangent bundle as a family of pairwise disjoint vector
spaces parametrized by the points of the manifold.
In a suitable sense these vector spaces depend smoothly on the base point.
This is formalized by the concept of a vector bundle.

\Definition{\label{ddefvectorbundle}
Let $\K=\R$ or $\Co$. 
Let $E$ and $M$ be manifolds of dimension $m+n$ and $m$ respectively.
Let $\pi:E\to M$ be a surjective smooth map.
Let the fiber $E_p:=\pi^{-1}(p)$\indexs{Ep@$E_p$, fiber of $E$ above $p$}
carry a structure of $\K$-vector space 
$\VV_p$  for each $p\in M$.
The quadruple $(E,\pi,M,\{\VV_p\}_{p \in M})$\indexs{pi@$\pi$, projection from the total space of a vector bundle onto its base} is called a  
\defem{$\K$-vector bundle}\indexn{vector bundle>defemidx} if for every $p\in
M$ there exists an open 
neighborhood $U$ of $p$ in $M$ and a diffeomorphism $\Phi:\pi^{-1}(U)\to
U\times\mathbb{K}^n$ such that the following diagram 
\begin{equation}
\xymatrix{
\pi^{-1}(U)\ar[dr]_\pi\ar[rr]^\Phi& &U\times\mathbb{K}^n\ar[dl]^{\pi_1}\\
& U &
}
\label{cd:loctriv}
\end{equation}
commutes and for every $q\in U$ the map $\pi_2\circ \Phi|_{E_q} : E_q \to
\K^n$ is a vector space isomorphism.
Here $\pi_1:U\times \K^n\to U$ denotes the projection onto the first factor
$U$ and $\pi_2:U\times \K^n\to \K^n$ is the projection onto the second factor
$\K^n$. 
}

\begin{center}
\input{fig-defvectorbundle}
\end{center}

Such a map $\Phi:\pi^{-1}(U)\to U\times\mathbb{K}^n$ is called a \defidx{local
trivialization}\indexn{trivialization>defemidx} of the vector bundle.
The manifold $E$ is called the \defem{total space}\indexn{total space of a
  vector bundle>defemidx}, $M$ the \defem{base}\indexn{base of a vector
  bundle>defemidx}, and 
the number $n$ the \defem{rank}\indexn{rank of a vector bundle>defemidx} of
the vector bundle. 
Often one simply speaks of the vector bundle $E$ for brevity.

A vector bundle is said to be \defem{trivial}\indexn{trivial vector
  bundle>defemidx} if it admits a global 
trivialization, that is, if there exists a diffeomorphism as in
(\ref{cd:loctriv}) with $U=M$.

\Examples{
a)
The tangent bundle\indexn{tangent bundle of a manifold} of any $n$-dimensional
manifold $M$ is a real vector bundle 
of rank $n$.
The map $\pi$ is given by the canonical map $\pi(T_pM)=\{p\}$ for all $p\in M$.

b)
Most operations from linear algebra on vector spaces can be carried out
fiberwise on vector bundles to give new vector bundles. 
For example, for a given vector bundle $E$ one can define the \defidx{dual
  vector bundle} $E^*$\indexs{E*@$E^*$, dual vector bundle}. 
Here one has by definition $(E^*)_p=(E_p)^*$.
Similarly, one can define the exterior and the symmetric powers of $E$.
For given $\K$-vector bundles $E$ and $F$ one can form the direct sum $E\oplus
F$, the tensor product $E\otimes F$, the bundle
$\mathrm{Hom}_{\mathbb{K}}(E,F)$\indexs{EtensorF@$E\otimes F$, tensor product
  of vector
  bundles}\indexs{HomEF@$\protect\mathrm{Hom}_{\protect\mathbb{K}}(E,F)$,
  bundle of homomorphisms between two bundles} etc.   

c)
The dual vector bundle of the tangent bundle is called the \defidx{cotangent
bundle} and is denoted by $T^*M$\indexs{T*M@$T^*M$, cotangent bundle of $M$}. 

d)
Let $n$ be the dimension of $M$ and $k\in\{0,1,\ldots,n\}$. The
$k^{\textrm{th}}$ exterior power of $T^*M$ is the \defem{bundle of $k$-linear
skew-symmetric forms}\indexn{bundle of $k$-forms on $M$>defemidx} on $TM$ and
is denoted by $\Lambda^kT^*M$\indexs{LambdakT*M@$\Lambda^kT^*M$, bundle of
  $k$-forms on $M$}. 
It is a real vector bundle of rank $\frac{n!}{k!(n-k)!}$.
By convention $\Lambda^0T^*M$ is the trivial real vector bundle of rank $1$. 
}

\Definition{
A \defem{section}\indexn{section in a vector bundle>defemidx} in a vector
bundle $(E,\pi,M,\{\VV_p\}_{p \in M})$ is a map 
$s:M\to E$ such that 
\[\pi\circ s=\id_M.\]
\begin{center}
\input{fig-defsection}
\end{center}
}

Since $M$ and $E$ are smooth manifolds we can speak about $C^k$-sections,
$k\in\N \cup \{\infty\}$.
The set $C^k(M,E)$\indexs{CkME@$C^k(M,E)$, space of $C^k$-sections in $E$} of
$C^k$-sections of a given $\K$-vector bundle forms a 
$\K$-vector space, and a module over the algebra $C^k(M,\K)$ as well because
multiplying \emph{pointwise} any $C^k$-section with any $C^k$-function one
obtains a new $C^k$-section.

In each vector bundle there exists a canonical smooth section, namely the
\defidx{zero section} defined by $s(x):=0_x\in E_x$. 
However, there does not in general exist any smooth \emph{nowhere vanishing}
section. 
Moreover, the existence of $n$ everywhere linearly independent smooth sections
in a vector bundle of rank $n$ is equivalent to its
\emph{triviality}\indexn{trivial vector bundle}. 

\Examples{
a)
Let $E=M\times\mathbb{K}^n$ be the trivial $\mathbb{K}$-vector bundle of rank
$n$ over $M$. 
Then the sections of $E$ are essentially just the $\mathbb{K}^n$-valued
functions on $M$.

b)
The sections in $E=TM$ are called the \defem{vector fields}\indexn{vector
  field>defemidx} on $M$. 
If $(U,\phi)$ is a local chart of the $n$-dimensional manifold $M$, then
for each $j=1,\ldots,n$ the curve $c(t) = \phi^{-1}(\phi(p)+te_j)$ represents
a tangent vector $\frac{\del}{\del
  x^j}(p)$\indexs{delxj@$\frac{\protect\del}{\protect\del x^j}$, local vector
  field provided by a chart} where $e_1,\ldots,e_n$ denote the standard basis
of $\R^n$. 
The vector fields $\frac{\del}{\del x^1},\ldots,\frac{\del}{\del x^n}$ are
smooth on $U$ and yield a basis of $T_pM$ for every $p\in U$,
\[
T_pM=\mathrm{Span}_\R\pa{\frac{\del}{\del x^1}(p),\ldots,\frac{\del}{\del
    x^n}(p)}.
\]

c)
The sections in $E=T^*M$ are called the \defem{$1$-forms}\indexn{oneform@$1$-form>defemidx}. 
Let $(U,\phi)$ be a local chart of $M$.
Denote the basis of $T^*_pM$ dual to $\frac{\del}{\del
  x^1}(p),\ldots,\frac{\del}{\del x^n}(p)$ by
$dx^1(p),\ldots,dx^n(p)$\indexs{dxj@$dx^j$, local $1$-form provided by a
  chart}. 
Then $dx^1,\ldots,dx^n$ are smooth $1$-forms on $U$.

d)
Fix $k\in\{0,\ldots,n\}$.
Sections in $E=\Lambda^kT^*M$ are called
\defem{$k$-forms}\indexn{kform@$k$-form>defemidx}\indexn{differential
  form>defemidx}. 
Given a local chart $(U,\phi)$ we get smooth $k$-forms which pointwise yield
a basis of $\Lambda^kT^*M$ by
$$
dx^{i_1}\wedge \ldots \wedge dx^{i_k}, \quad
1\leq i_1 < \ldots < i_k \leq n.
$$
In particular, for $k=n$ the bundle $\Lambda^nT^*M$ has rank $1$ and a local
chart yields the smooth local section $dx^1\wedge\ldots\wedge dx^n$.
Existence of a global smooth section in $\Lambda^nT^*M$ is equivalent to $M$
being orientable\indexn{orientable vector bundle>defemidx}\indexn{orientable
  manifold>defemidx}. 

e)
For each $p\in M$ let $|\Lambda M|_p$\indexs{absLambda@$|\Lambda M|$, bundle
  of densities over $M$} be the set of all functions 
$v:\Lambda^nT^*_pM \to \R$ with $v(\lambda X) = |\lambda|\cdot v(X)$ for all
$X\in \Lambda^nT^*_pM$ and all $\lambda\in\R$.
Now $|\Lambda M|_p$ is a $1$-dimensional real vector space and yields a vector
bundle $|\Lambda M|$ of rank $1$ over $M$.
Sections in $|\Lambda M|$ are called \defem{densities}\indexn{density on a
  manifold>defemidx}. 

Given a local chart $(U,\phi)$ there is a smooth density
$|dx|$\indexs{absdx@$|dx|$, density provided by a local chart} 
defined on $U$ and characterized by 
$$
|dx|(dx^1\wedge\ldots\wedge dx^n)=1.
$$
The bundle $|\Lambda M|$ is always trivial.
Its importance lies in the fact that densities can be integrated.
There is a unique linear map
$$
\int_M : \DD(M,|\Lambda M|) \to \R,
$$
called the \defem{integral}\indexn{integration on a manifold>defemidx}, such
that for any local chart $(U,\phi)$ and any 
$f\in\DD(U,\R)$ we have
$$
\int_M f\, |dx| = \int_{\phi(U)} (f\circ\phi^{-1})(x^1,\ldots,x^n)\,
dx^1\cdots dx^n
$$
where the right hand side is the usual integral of functions on $\R^n$ and
$\DD(M,E)$ denotes the set of smooth sections with compact support.

f)
Let $E$ be a real vector bundle.
Smooth sections in $E^*\otimes E^*$ which are pointwise \emph{nondegenerate
symmetric} bilinear forms are called \defem{semi-Riemannian
  metrics}\indexn{semi-Riemannian metric>defemidx} or 
\defem{inner products}\indexn{inner product on a vector bundle>defemidx} on $E$.
An inner product on $E$ is called \defem{Riemannian metric}\indexn{Riemannian
  metric>defemidx} if it is pointwise positive definite. 
An inner product on $E$ is called a \defem{Lorentzian
  metric}\indexn{Lorentzian metric>defemidx} if it has pointwise signature
$(-\,+\,\ldots\,+)$.  
In case $E=TM$ a Riemannian or Lorentzian metric on $E$ is also called a
Riemannian or Lorentzian metric on $M$ respectively.
A \defem{Riemannian}\indexn{manifold!Riemannian>defemidx} or \defem{Lorentzian
  manifold}\indexn{manifold!Lorentzian>defemidx} is a manifold $M$ together 
with a Riemannian or Lorentzian metric on $M$ respectively.

Any semi-Riemannian metric on $T^*M$ yields a nowhere vanishing smooth density
$\dV$\indexs{dV@$\protect\dV$, semi-Riemannian volume density} on $M$.
In local coordinates, write the semi-Riemannian metric as
$$
\sum_{i,j=1}^n g_{ij}dx^i \otimes dx^j.
$$
Then the induced density is given by 
$$
\dV = \sqrt{|\det(g_{ij})|}\, |dx|.
$$
Therefore there is a canonical way to form the integral $\int_M f\dV$ of any
function $f\in\DD(M)$ on a Riemannian or Lorentzian manifold.

g)
If $E$ is a complex vector bundle a \defidx{Hermitian metric} on $E$ is by
definition a smooth section of $E^*{\otimes}_{\R} E^*$\indexs{EtensorRF@$E
  \otimes_{\protect\R} F$, real tensor product of complex vector bundles} (the
\emph{real} tensor 
product of $E^*$ with itself) which is a Hermitian scalar product on each
fiber. 
}

\Definition{
Let $(E,\pi,M,\{\VV_p\}_{p \in M})$ and $(E',\pi',M',\{\VV'_{p'}\}_{p' \in M'})$
be $\K$-vector bundles. 
A \defidx{vector-bundle-homomorphism} from $E$ to $E'$ is a pair $(f,F)$ where
\begin{enumerate}
\item $f:M\to M'$ is a smooth map,
\item $F:E\to E'$ is a smooth map such that the diagram
\[
\xymatrix{
E\ar[d]_{\pi}\ar[r]^F &E'\ar[d]^{\pi'}\\
M\ar[r]^f& M'
}\]
commutes and such that $F_{|_{E_p}}:E_p\to E'_{f(p)}$ is $\mathbb{K}$-linear
for every $p\in M$.  
\end{enumerate}
}

If $M=M'$ and $f=\id_M$ a vector-bundle-homomorphism is simply a smooth
section in $\mathrm{Hom}_{\mathbb{K}}(E,E')\to M$.




\Remark{
Let $E$ be a real vector bundle with inner product $\la\cdot\,,\cdot\ra$.
Then we get a vector-bundle-isomorphism $\FB: E\to E^*$, $\FB(X)=\la
X,\cdot\ra$.

In particular, on a Riemannian or Lorentzian manifold $M$ with $E=TM$ one can
define the \defem{gradient} of a differentiable function $f:M\to \R$ by
$\grad f:=\FB^{-1}(df)=(df)^\sharp$\indexn{gradient of a function>defemidx}\indexs{gr*@$\protect\grad f$, gradient of function $f$} . 
The differential $df$ is a $1$-form defined independently of the metric while
the gradient $\grad f$ is a vector field whose definition does depend on
the semi-Riemannian metric.
}

\subsection{Connections on vector bundles}

For a differentiable function $f:M\to \R$ on a (smooth) manifold $M$ its
derivative in direction $X\in C^\infty(M,TM)$ is defined by
$$
\partial_Xf := df(X).
\indexs{delXf@$\partial_Xf:=df(X)$} 
$$
We have defined the concept of differentiability of a section $s$ in a vector
bundle.
What is the derivative of $s$?

Without further structure there is no canonical way of defining this.
A rule for differentiation of sections in a vector bundle is called a
connection. 

\Definition{
Let $(E,\pi,M,\{\VV_p\}_{p \in M})$ be a $\mathbb{K}$-vector bundle,
$\mathbb{K}:=\R$ or $\mathbb{C}$. 
A \defem{connection}\indexn{connection on a vector bundle>defemidx} (or
\defidx{covariant derivative}) on $E$ is a $\mathbb{R}$-bilinear map 
\be
\nabla:C^\infty(M,TM)\times C^\infty(M,E)&\to& C^\infty(M,E),\\
(X,s)&\mapsto&\nabla_Xs,
\ee
with the following properties:
\begin{enumerate}
\item 
The map $\nabla$ is \emph{$C^\infty(M)$-linear} in the first argument, i.~e.,
\[
\nabla_{fX}s=f\nabla_Xs
\]
holds for all $f\in C^\infty(M)$, $X\in C^\infty(M,TM)$ and $s\in C^\infty(M,E)$.
\item 
The map $\nabla$  is a \defem{derivation}\indexn{derivation>defemidx} with respect to the second argument, i.~e., it is $\K$-bilinear and 
\[
\nabla_X(f\cdot s)=\partial_Xf\cdot s+f\cdot \nabla_Xs
\]
holds for all $f\in C^\infty(M)$, $X\in C^\infty(M,TM)$ and $s\in C^\infty(M,E)$.
\end{enumerate}
}
The properties of a connection imply that the value of $\nabla_Xs$ at a given
point $p\in M$ depends only on $X(p)$ and on the values of $s$ on a curve
representing $X(p)$.

Let $\nabla$ be a connection on a vector bundle $E$ over $M$.
Let $c:[a,b]\to M$ be a smooth curve.
Given $s_0\in E_{c(a)}$ there is a unique smooth solution $s:[a,b]\to E$,
$t\mapsto s(t)\in E_{c(t)}$, satisfying $s(a)=s_0$ and 
\begin{equation}
\nabla_{\dot c}s =0 .
\label{eq:parallel}
\end{equation}
This follows from the fact that in local coordinates (\ref{eq:parallel}) is a
linear ordinary differential equation of first order.
The map
$$
\Pi_c : E_{c(a)}\to E_{c(b)}, \quad
s_0 \mapsto s(b),
\indexs{Pic@$\Pi_c$, parallel transport along the curve $c$}
$$
is called \defidx{parallel transport}.
It is easy to see that $\Pi_c$ is a linear isomorphism.
This shows that a connection allows us via its parallel transport to
``connect'' different fibers of the vector bundle.
This is the origin of the term ``connection''.
Be aware that in general the parallel transport $\Pi_c$ does depend on the
choice of curve $c$ connecting its endpoints.

Any connection $\nabla$ on a vector bundle $E$ induces a connection, also
denoted by $\nabla$, on the dual vector bundle $E^*$ by 
\[
(\nabla_X\theta)(s):=\partial_X\pa{\theta(s)}-\theta\pa{\nabla_Xs}
\]
for all $X\in C^\infty(M,TM)$, $\theta\in C^\infty(M,E^*)$ and $s\in
C^\infty(M,E)$. 
Here $\theta(s)\in C^\infty(M)$ is the function on $M$ obtained by 
pointwise evaluation of $\theta(p)\in E^*_p$ on $s(p)\in E_p$.

Similarly, tensor products, exterior and symmetric products, and direct sums
inherit connections from the connections on the vector bundles out of which
they are built.
For example, two connections $\nabla$ and $\nabla'$ on $E$ and $E'$
respectively induce a connection $D$ on $E\otimes
E'$\indexn{connection induced on powers of vector bundles>defemidx} by 
\[
D_X(s\otimes s'):=(\nabla_Xs)\otimes s'+s\otimes(\nabla'_X s')
\]
and a connection $\tilde D$ on $E\oplus E'$ by
$$
\tilde D_X(s \oplus s') := (\nabla_Xs)\oplus (\nabla'_X s')
$$
for all $X\in C^\infty(M,TM)$, $s\in C^\infty(M,E)$ and $s'\in
C^\infty(M,E')$.  

If a vector bundle $E$ carries a semi-Riemannian or Hermitian metric
$\la\cdot\,,\cdot\ra$, then a connection $\nabla$ on $E$ is called
\defem{metric}\indexn{metric connection on a vector bundle>defemidx} if the
following Leibniz rule\indexn{Leibniz rule>defemidx} holds: 
\[
\del_X(\la s,s'\ra)=\la\nabla_Xs,s'\ra+\la s,\nabla_Xs'\ra
\]
for all $X\in C^\infty(M,TM)$ and $s,s'\in C^\infty(M,E)$.

Given two vector fields $X,Y \in C^\infty(M,TM)$ there is a unique vector
field $[X,Y]\in C^\infty(M,TM)$ characterized by
$$
\partial_{[X,Y]}f = \partial_X \partial_Y f - \partial_Y \partial_X f
$$\indexs{Liebracket@$[X,Y]$, Lie bracket of $X$ with $Y$}
for all $f\in C^\infty(M)$.
The map $[\cdot,\cdot] : C^\infty(M,TM) \times C^\infty(M,TM) \to
C^\infty(M,TM)$ is called the \defidx{Lie bracket}.
It is $\R$-bilinear, skew-symmetric and satisfies the \defidx{Jacobi identity}
$$
[[X,Y],Z] + [[Y,Z],X] + [[Z,X],Y] =0.
$$

\Definition{
Let $\nabla$ be a connection on a vector bundle $E$. 
The \defidx{curvature tensor} of $\nabla$ is the map
$$
R:C^\infty(M,TM)\times C^\infty(M,TM)\times
C^\infty(M,E)\to C^\infty(M,E),
$$\indexs{R@$R$, curvature tensor of a connection}
$$
(X,Y,s)\mapsto R(X,Y)s
:=
\nabla_X(\nabla_Ys)-\nabla_Y(\nabla_Xs)-\nabla_{[X,Y]}s.
$$
}

One can check that the value of $R(X,Y)s$ at any point $p\in M$ depends only
on $X(p)$, $Y(p)$, and $s(p)$.
Thus the curvature tensor can be regarded as a section, $R\in
C^\infty(M,\Lambda^2T^*M \otimes \Hom_\K(E,E))$.

Now let $M$ be an $n$-dimensional manifold with semi-Riemannian metric $g$ on
$TM$.
It can be shown that there exists a unique metric connection $\nabla$ on $TM$
satisfying 
\[
\nabla_XY-\nabla_YX=[X,Y]
\]
for all vector fields $X$ and $Y$ on $M$. 
This connection is called the \defem{Levi-Civita}\indexn{Levi-Civita
  connection>defemidx} connection of the 
semi-Riemannian manifold $(M,g)$.
Its curvature tensor $R$ is the \defem{Riemannian curvature
  tensor}\indexn{Riemannian curvature tensor>defemidx} of $(M,g)$. 
The \defidx{Ricci curvature} $\ric \in C^\infty(M,T^*M\otimes
T^*M)$\indexs{ric@$\protect\ric$, Ricci curvature} is defined 
by 
$$
\ric(X,Y) := \sum_{j=1}^n \epsilon_j\,g(R(X,e_j)e_j,Y)
$$
where $e_1,\ldots,e_n$ are smooth locally defined vector fields which are
pointwise orthonormal with respect to $g$ and $\epsilon_j=g(e_j,e_j)=\pm
1$\indexs{epsilonj@$\varepsilon_j:=g(e_j,e_j)=\pm 1$}. 
It can easily be checked that this definition is independent of the choice of
the vector fields $e_1,\ldots,e_n$.
Similarly, the \defidx{scalar curvature} is the function $\scal\in
C^\infty(M,\R)$ defined by
$$
\scal := \sum_{j=1}^n \epsilon_j\,\ric(e_j,e_j).
$$\indexs{scal@$\protect\scal$, scalar curvature}

\section{Differential operators}
\label{app:diffops}

In this section we explain the concept of linear differential operators and
we define the principal symbol. 
A detailed introduction to the topic can be found e.~g.\ in \cite[Ch.~9]{Nic}.
As before we write $\mathbb{K}=\R$ or $\mathbb{C}$.

\Definition{\label{ddefdiffop}
Let $E$ and $F$ be $\mathbb{K}$-vector bundles of rank $n$ and $m$ respectively over
a $d$-dimensional manifold $M$.
A \defem{linear differential operator of order at most
  $k$}\indexn{differential operator>defemidx} from $E$ to $F$ 
is a $\mathbb{K}$-linear map 
\[L:C^\infty(M,E)\to C^\infty(M,F)\]
which can locally be described as follows: 
For every $p\in M$ there exists an open coordinate-neighborhood $U$ of $p$ in
$M$ on which $E$ and $F$ are trivialized and there are smooth maps
$A_\alpha:U\to \mathrm{Hom}_\mathbb{K}(\mathbb{K}^n,\mathbb{K}^m)$ such that
on $U$
\[
Ls=\sum_{|\alpha|\leq k}A_\alpha\frac{\partial^{|\alpha|} s}{\partial
  x^\alpha}.
\]
Here summation is taken over all multiindices
 $\alpha=(\alpha_1,\ldots,\alpha_d)\in\mathbb{N}^d$ with
 $|\alpha|:=\sum_{r=1}^d\alpha_r\leq k$.
Moreover, $\frac{\partial^{|\alpha|}}{\partial
 x^\alpha}:=\frac{\partial^{{}^{\alpha_1+\cdots+\alpha_d}}}
{\partial x_1^{\alpha_1}\cdots\,\partial x_d^{\alpha_d}}$. 
In this definition we have used the local trivializations to identify sections
 in $E$ with $\mathbb{K}^n$-valued functions and sections in $F$ with
 $\mathbb{K}^m$-valued functions on $U$.
If $L$ is a linear differential operator of order at most $k$, but not of
 order at most $k-1$, then we say that $L$ is of \defem{order $k$}.
}
\indexn{order of a differential operator>defemidx}
Note that zero-order differential operators are nothing but sections of
$\mathrm{Hom}_\mathbb{K}(E,F)$, i.~e., they are
 vector-bundle-homomorphisms\indexn{vector-bundle-homomorphism} from $E$ to
 $F$.  

\Definition{
Let $L$ be a linear differential operator of order $k$ from $E$ to $F$. 
The \defem{principal symbol}\indexn{principal symbol of a differential operator>defemidx} of $L$ is the map
\indexs{sigmaL@$\sigma_L$, principal symbol of $L$}
\[
\sigma_L:T^*M\to\mathrm{Hom}_\mathbb{K}(E,F)
\]
defined locally as follows: 
For a given $p\in M$ write $L=\sum_{|\alpha|\leq
  k}A_\alpha\frac{\partial^{|\alpha|}}{\partial x^\alpha}$ in a coordinate
neighborhood of $p$ with respect to local trivializations of $E$ and $F$ as in
Definition~\ref{ddefdiffop}.
For every $\xi=\sum_{r=1}^d \xi_r\cdot dx^r\in T_p^*M$ we have with respect to
these trivializations,
\[\sigma_L(\xi):=\sum_{|\alpha|=k}\xi^\alpha A_\alpha(p)\]
where $\xi^\alpha:=\xi_1^{\alpha_1}\cdots\xi_d^{\alpha_d}$.
Here we have used the local trivializations of $E$ and $F$ to identify
$\mathrm{Hom}_\mathbb{K}(E,F)$ with
$\mathrm{Hom}_\mathbb{K}(\mathbb{K}^n,\mathbb{K}^m)$.
}

One can show that the principal symbol of a differential operator is
well-defined, that is, it is independent of the choice of the local
coordinates and trivializations. 
Moreover, the principal symbol of a differential operator of order $k$ is, by
definition, a homogeneous polynomial of degree $k$ on $T^*M$.

\Example{
The gradient\indexn{gradient of a function} is a linear differential operator of first order
$$
\grad: C^\infty(M,\R) \to C^\infty(M,TM)
$$
with principal symbol
$$
\sigma_{\grad}(\xi)f = f\cdot \xi^\sharp.
$$
}

\Example{
The divergence\indexn{divergence of a vector field} yields a first order
linear differential operator  
$$
\div: C^\infty(M,TM) \to C^\infty(M,\R)
$$
with principal symbol
$$
\sigma_{\div}(\xi)X = \xi(X).
$$
}

\Example{
For each $k\in\N$ there is a unique linear first order differential operator 
$$
d : C^\infty(M,\Lambda^kT^*M) \to C^\infty(M,\Lambda^{k+1}T^*M),
$$
called \defidx{exterior differential},\indexs{d@$d$, exterior differentiation}
such that  
\begin{enumerate}
\item 
for $k=0$ the exterior differential coincides with the differential defined in
Definition~\ref{def:diff}, after the canonical identification $T_y\R=\R$,
\item
$d^2=0:C^\infty(M,\Lambda^kT^*M) \to C^\infty(M,\Lambda^{k+2}T^*M)$ for all
$k$,
\item
$d(\omega\wedge\eta) = (d\omega)\wedge\eta + (-1)^k\omega\wedge d\eta$ for all
$\omega\in C^\infty(M,\Lambda^kT^*M)$ and $\eta\in C^\infty(M,\Lambda^lT^*M)$.
\end{enumerate}
Its principal symbol is given by 
$$
\sigma_d(\xi)\,\omega = \xi \wedge \omega.
$$
}

\Example{
A connection\indexn{connection on a vector bundle} $\nabla$ on a vector bundle
$E$ can be considered as a first 
order linear differential operator
$$
\nabla : C^\infty(M,E) \to C^\infty(M,T^*M\otimes E).
$$
Its principal symbol is easily be seen to be
$$
\sigma_\nabla(\xi)\,e = \xi\otimes e.
$$
}

\Example{
If $L$ is a linear differential operator of order $0$, i.~e., $L\in
C^\infty(M,\Hom(E,F))$, then 
$$
\sigma_L(\xi) = L.
$$
}

\Remark{
If $L_1:C^\infty(M,E)\to C^\infty(M,F)$ is a linear differential operator of
order $k$ and $L_2:C^\infty(M,F)\to C^\infty(M,G)$ is a linear differential
operator of order $l$, then $L_2\circ L_1$ is a linear differential
operator of order $k+l$.
The principal symbols satisfy
$$
\sigma_{L_2\circ L_1}(\xi) = \sigma_{L_2}(\xi) \circ \sigma_{L_1}(\xi).
$$
}


\section{More on Lorentzian geometry}
\label{app:lorgeo}

This section is a rather heterogeneous collection of results on Lorentzian
manifolds.
We give full proofs.
This material has been collected in this appendix in order not to overload
Section~\ref{sec:lorgeo} with technical statements.

Throughout this section $M$ denotes a Lorentzian manifold.\indexn{manifold!Lorentzian}

\begin{lemma}\label{lJKompaktumabgeschl}
Let the causal relation $\leq$ on $M$ be closed, i.~e., for all
convergent sequences $p_n{\to}p$ and $q_n{\to}q$ in $M$ with $p_n\leq q_n$ we
have $p\leq q$. 

Then for every compact subset $K$ of $M$ the subsets $J_+^M(K)$ and $J_-^M(K)$
are closed. 
\end{lemma}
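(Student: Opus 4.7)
The plan is to prove closedness by a standard sequential argument, using compactness of $K$ to extract a convergent subsequence of ``witness'' points and then invoking the hypothesis that $\leq$ is a closed relation.

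Concretely, I would argue as follows for $J_+^M(K)$. Let $(q_n)_n$ be a sequence in $J_+^M(K)$ converging to some point $q \in M$. By definition of $J_+^M(K) = \bigcup_{p \in K} J_+^M(p)$, for each $n$ there exists $p_n \in K$ with $p_n \leq q_n$. Since $K$ is compact, the sequence $(p_n)_n$ admits a subsequence $(p_{n_k})_k$ converging to some $p \in K$. The corresponding subsequence $(q_{n_k})_k$ still converges to $q$, and $p_{n_k} \leq q_{n_k}$ for every $k$. The hypothesis that $\leq$ is closed then yields $p \leq q$, that is, $q \in J_+^M(p) \subset J_+^M(K)$. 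Hence $J_+^M(K)$ is closed.

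For $J_-^M(K)$ the argument is entirely symmetric: replace $p_n \leq q_n$ by $q_n \leq p_n$ in the reasoning above, extract a convergent subsequence of the compact-valued witnesses $p_{n_k} \to p \in K$, and conclude $q \leq p$ from closedness of $\leq$, giving $q \in J_-^M(K)$.

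There is no real obstacle here; the proof is a direct unwinding of definitions combined with the sequential compactness of $K$ (which applies since $M$, being a manifold, is metrizable, so compactness and sequential compactness coincide) and the assumed closedness of $\leq$. The only point worth flagging is to ensure that one keeps track of passing to a subsequence in $(p_n)_n$ while the full sequence $(q_n)_n$ already converges, so that the hypothesis on $\leq$ can be applied to matched pairs $(p_{n_k}, q_{n_k})$.
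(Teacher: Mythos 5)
Your proof is correct and follows essentially the same argument as the paper: take a convergent sequence in $J_+^M(K)$, extract a convergent subsequence of witness points in the compact set $K$, and apply the closedness of $\leq$. The only (harmless) difference is that you explicitly flag the metrizability of $M$ to justify using sequential compactness, a point the paper leaves implicit.
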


\begin{proof}
Let $(q_n)_{n\in\mathbb{N}}$ be any sequence in $J_+^M(K)$ converging in $M$
and  $q\in M$ be its limit.  
By definition, there exists a sequence $(p_n)_{n\in\mathbb{N}}$ in $K$ with
$p_n\leq q_n$ for every $n$.  
Since $K$ is compact we may assume, after to passing to a subsequence, that
$(p_n)_{n\in\mathbb{N}}$ converges to some $p\in K$.  
Since $\leq$ is closed we get $p\leq q$ , hence $q\in J_+^M(K)$. 
This shows that $J_+^M(K)$ is closed.
The proof for $J_-^M(K)$ is the same.
\end{proof}

\Remark{
If $K$ is only assumed to be \emph{closed} in Lemma~\ref{lJKompaktumabgeschl},
then $J_\pm^M(K)$ need not be closed. 
The following picture shows a curve $K$, closed as a subset and asymptotic to
a lightlike line in 2-dimensional Minkowski space.
Its causal future $J_+^M(K)$ is the open half plane bounded by this lightlike
line. 

\begin{center}
\input{fig-J+Anabgschl}
\end{center}
}

\begin{lemma}\label{pastcompact}
Let $M$ be a timeoriented Lorentzian manifold.
Let $K\subset M$ be a compact subset.
Let $A\subset M$ be a subset such that, for every $x\in M$, the intersection
$A\cap J_-^M(x)$ is relatively compact in $M$.

Then $A\cap J_-^M(K)$ is a relatively compact subset of $M$.
Similarly, if $A\cap J_+^M(x)$ is relatively compact for every $x\in M$, then
$A\cap J_+^M(K)$ is relatively compact. 
\end{lemma}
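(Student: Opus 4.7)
The plan is to reduce the assertion for the compact set $K$ to the pointwise hypothesis by covering $K$ with finitely many neighborhoods, each of whose causal past is contained in the causal past of a single point. More precisely, I would proceed as follows.

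First, for each $x \in K$ I would choose a point $y_x \in I_+^M(x)$. Such a point exists because $M$ is timeoriented: exponentiating a short future-directed timelike vector at $x$ produces a point in the chronological future of $x$. Then $U_x := I_-^M(y_x)$ is an \emph{open} neighborhood of $x$ (chronological pasts are always open, as recalled in Section~\ref{sec:lorgeo}), and for every $z \in U_x$ we have $z \ll y_x$, hence $w \leq z < y_x$ for any $w \in J_-^M(z)$, so that $J_-^M(z) \subset J_-^M(y_x)$. Taking the union over $z \in U_x$ yields the key inclusion
\[
J_-^M(U_x) \subset J_-^M(y_x).
\]

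Next, by compactness of $K$, I would extract a finite subcover $U_{x_1}, \ldots, U_{x_n}$ of $K$. Since $J_-^M(\cdot)$ is monotone in its argument and distributes over unions, this gives
\[
J_-^M(K) \subset J_-^M\Bigl(\bigcup_{i=1}^n U_{x_i}\Bigr) = \bigcup_{i=1}^n J_-^M(U_{x_i}) \subset \bigcup_{i=1}^n J_-^M(y_{x_i}).
\]
Intersecting with $A$ yields
\[
A \cap J_-^M(K) \subset \bigcup_{i=1}^n \bigl(A \cap J_-^M(y_{x_i})\bigr),
\]
which is a finite union of relatively compact sets by hypothesis, hence relatively compact. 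The dual statement concerning $J_+^M$ follows by the time-reversed argument, choosing $y_x \in I_-^M(x)$.

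The argument is essentially a soft topological reduction, and I do not expect any genuine obstacle. The one point worth verifying carefully is the opening step: that $I_+^M(x)$ (and $I_-^M(x)$) is nonempty for every point of a timeoriented Lorentzian manifold, which is what allows the whole cover-and-subcover strategy to get started. Everything else is routine use of compactness, the openness of chronological pasts/futures, and the monotonicity of $J_-^M$.
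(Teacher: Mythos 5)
Your proof is correct and is essentially the same as the paper's: both cover $K$ by finitely many chronological pasts $I_-^M(y_i)$ (the paper phrases this as covering all of $M$ by the sets $I_-^M(x)$, $x\in M$, before restricting to $K$), observe $J_-^M(I_-^M(y_i))\subset J_-^M(y_i)$, and intersect with $A$ to reduce to the pointwise hypothesis.
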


\begin{center}
\input{fig-pastcompact2}
\end{center}

\begin{proof}
It suffices to consider the first case.
The family of open sets $I_-^M(x)$, $x\in M$, is an open covering of $M$.
Since $K$ is compact it is covered by a finite number of such sets,
$$ 
K\subset I_-^M(x_1)\cup\ldots\cup I_-^M(x_l).
$$
We conclude
$$
J_-^M(K)\subset 
J_-\left(I_-^M(x_1)\cup\ldots\cup I_-^M(x_l)\right)
\subset
J_-^M(x_1)\cup\ldots\cup J_-^M(x_l) .
$$
Since each $A\cap J_-^M(x_j)$ is relatively compact, we have that $A\cap
J_-^M(K)\subset \bigcup_{j=1}^{\; l}\left(A\cap 
J_-^M(x_j) \right)$ is contained in a compact set.
\end{proof}

\begin{cor}\label{cJ+Spastcompact}
Let $S$ be a Cauchy hypersurface in a globally hyperbolic Lorentzian manifold
$M$ and let $K\subset M$ be compact.
Then $J_\pm^M(K)\cap S$ and $J_\pm^M(K)\cap J_\mp^M(S)$ are compact.
\end{cor}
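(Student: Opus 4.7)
The plan is to reduce both claims to Lemma~\ref{pastcompact} via the remark recalled in the text that a Cauchy hypersurface $S$ in a globally hyperbolic manifold has the property that $J_+^M(S)$ is past-compact and $J_-^M(S)$ is future-compact (this is e.g.\ \cite[Ch.~14, Lemma~40]{ONeill}, already quoted in Section~\ref{sec:lorgeo}). Global hyperbolicity also gives us that the relation $\leq$ is closed, so by Lemma~\ref{lJKompaktumabgeschl} the sets $J_\pm^M(K)$ are closed whenever $K$ is compact; of course $J_\pm^M(S)$ and $S$ itself are closed as well, so all intersections in question are automatically closed. The task therefore reduces to showing that each of these intersections is \emph{relatively} compact.

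First I would handle $J_+^M(K)\cap J_-^M(S)$. Set $A:=J_-^M(S)$; by future-compactness, $A\cap J_+^M(x)$ is compact for every $x\in M$. The second half of Lemma~\ref{pastcompact} (the ``$+$'' version) then gives that $A\cap J_+^M(K)=J_-^M(S)\cap J_+^M(K)$ is relatively compact, and being closed it is compact. Symmetrically, with $A:=J_+^M(S)$ and using past-compactness of $J_+^M(S)$, the first half of Lemma~\ref{pastcompact} shows that $J_+^M(S)\cap J_-^M(K)$ is compact.

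For the intersections with $S$ itself I would use the trivial inclusions $S\subset J_+^M(S)$ and $S\subset J_-^M(S)$, which hold because every point lies in its own causal future and past. Consequently
\[
J_+^M(K)\cap S \;\subset\; J_+^M(K)\cap J_-^M(S),\qquad
J_-^M(K)\cap S \;\subset\; J_-^M(K)\cap J_+^M(S),
\]
and each left-hand side is a closed subset of a compact set, hence compact. I do not anticipate a serious obstacle here; the only thing to be careful about is invoking the correct variant (future-compact vs.\ past-compact) of Lemma~\ref{pastcompact} in each case, together with the fact that in a globally hyperbolic manifold $J_\pm^M(K)$ is automatically closed so that ``relatively compact'' upgrades to ``compact''.
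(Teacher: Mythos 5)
Your proof is correct and follows essentially the same route as the paper: both arguments invoke the future/past-compactness of $J_\mp^M(S)$ for a Cauchy hypersurface, apply Lemma~\ref{pastcompact} with $A=J_\mp^M(S)$, upgrade from relatively compact to compact via closedness (Lemma~\ref{lJKompaktumabgeschl}), and obtain the intersections with $S$ from the inclusion $S\subset J_\mp^M(S)$.
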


\begin{proof}
The causal future of every Cauchy hypersurface is past-compact.
This follows e.~g.\ from \cite[ Chap.~14, Lemma~40]{ONeill}.
Applying Lemma~\ref{pastcompact} to $A:=J_+^M(S)$ we conclude that
$J_-^M(K)\cap J_+^M(S)$ is relatively compact in $M$.
By \cite[Chap.~14, Lemma 22]{ONeill} the subsets $J_\pm^M(S)$ and the causal
relation ``$\leq$'' are closed.
By Lemma~\ref{lJKompaktumabgeschl} $J_-^M(K)\cap J_+^M(S)$ is closed, hence
compact. 

Since $S$ is a closed subset of $J_+^M(S)$ we also have that  $J_-^M(K)\cap S$
is compact. 
 
The statements on $J_+^M(K)\cap J_-^M(S)$ and on $J_+^M(K)\cap S$ are
analogous.
\end{proof}

\begin{lemma}\label{kleinergleichabgeschl}
Let $M$ be a timeoriented convex domain.\indexn{convex domain}
Then the causal relation ``$\leq$'' is closed.
In particular, the causal future and the causal past of each point are closed
subsets of $M$.
\end{lemma}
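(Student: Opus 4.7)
The plan is to reduce the closedness of the causal relation on $M$ to the elementary closedness of the closed future cone in a single tangent space, using the Riemannian exponential map as the transport mechanism. This is possible because convexity gives us a globally defined inverse exponential.

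First I would note that since $M$ is convex (geodesically starshaped with respect to each of its points), for every $x \in M$ the exponential map $\exp_x$ is a diffeomorphism from an open starshaped subset $\Omega'_x \subset T_xM$ onto $M$. These assemble into a smooth map
$$
\Psi : M \times M \longrightarrow TM, \qquad (x,y) \longmapsto (x, \exp_x^{-1}(y)),
$$
which is the inverse of the smooth map $(x,X) \mapsto (x, \exp_x(X))$ defined on the corresponding open subset of $TM$; in particular $\Psi$ is continuous. Moreover, the characterization recalled in the excerpt (namely $\exp_x(J_+(0) \cap \Omega'_x) = J_+^M(x)$) gives the identity
$$
\{(x,y) \in M \times M : x \leq y\} \;=\; \Psi^{-1}(\mathcal{J}_+),
$$
where $\mathcal{J}_+ := \{(x,X) \in TM : X \in J_+(0) \subset T_xM\}$ denotes the bundle of future-directed causal vectors (including the zero vector) in $TM$.

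The remaining step is to verify that $\mathcal{J}_+ \subset TM$ is closed. Closedness is a local condition, so I would cover $M$ by open sets $U$ on which a continuous future-directed timelike vector field $T$ exists. On $TU$ the subset $\mathcal{J}_+ \cap TU$ is then cut out by the two inequalities $\langle X, X \rangle \leq 0$ and $\langle X, T(x) \rangle \leq 0$: for a nonzero causal vector $X$ the second inequality is equivalent (strictly) to being future directed, while for $X = 0$ both inequalities become equalities. Both conditions are closed in $TU$, so $\mathcal{J}_+ \cap TU$ is closed in $TU$, and varying $U$ shows $\mathcal{J}_+$ is closed in $TM$.

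Combining these two observations, the graph of $\leq$ on $M \times M$ is the preimage under a continuous map of a closed set, hence closed. Intersecting this graph with $\{x\} \times M$ (respectively $M \times \{x\}$) yields closedness of $J_+^M(x)$ (respectively $J_-^M(x)$). I do not anticipate a genuine obstacle here; the only point requiring a little care is ensuring that $\Psi$ is globally defined and continuous on all of $M \times M$, which is exactly where the convexity hypothesis on $M$ is used.
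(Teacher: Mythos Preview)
Your proof is correct and follows essentially the same approach as the paper: both use convexity to transport the problem to tangent spaces via the inverse exponential, and then verify closedness of $J_+(0)$ there via the two inequalities $\langle X,X\rangle \le 0$ and $\langle T,X\rangle \le 0$. The only difference is packaging---you phrase it globally as the preimage $\Psi^{-1}(\mathcal{J}_+)$ of a closed set under a continuous map, whereas the paper carries out the identical argument sequentially with $p_i\to p$, $q_i\to q$, and $x_i := \exp_{p_i}^{-1}(q_i) \to x$.
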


\begin{proof}
Let $p,p_i,q,q_i \in M$ with $\lim_{i\to\infty}p_i = p$, $\lim_{i\to\infty}q_i
= q$, and $p_i \leq q_i$ for all $i$.
We have to show $p\leq q$.

Let $x\in T_pM$ be the unique vector such that $q=\exp_p(x)$ and,
similarly, for each $i$ let $x_i\in T_{p_i}M$ be such that
$\exp_{p_i}(x_i)=q_i$. 
Since $p_i\leq q_i$ and since $\exp_{p_i}$ maps $J_+(0)\cap
\exp_{p_i}^{-1}(M)$ in $T_{p_i}M$ diffeomorphically onto $J_+^M(p_i)$, we have
$x_i\in J_+(0)$, hence $\la x_i,x_i\ra\leq 0$.
From $\lim_{i\to\infty}p_i = p$ and $\lim_{i\to\infty}q_i = q$
we conclude $\lim_{i\to\infty}x_i= x$ and therefore $\la x,x\ra\leq 0$.
Thus $x\in J_+(0)\cup J_-(0) \subset T_pM$.

Now let $T$ be a smooth vector field on $M$ representing the timeorientation.
In other words, $T$ is timelike and future directed.
Then $\la T,x_i\ra \leq 0$ because $x_i$ is future directed and so $\la T,x\ra
\leq 0$ as well.
Thus $x\in J_+(0) \subset T_pM$ and hence $p\leq q$. 
\end{proof}

\begin{lemma}\label{hyperflaechekausal} 
Let $M$ be a timeoriented Lorentzian manifold and let $S\subset M$ be
a spacelike hypersurface.
Then for every point $p$ in $S$, there exists a basis of open neighborhoods
$\Omega$ of $p$ in $M$ such that $S\cap\Omega$ is a Cauchy hypersurface in
$\Omega$.  
\end{lemma}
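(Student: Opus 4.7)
The plan is to exhibit, for every open neighborhood $W$ of $p$ in $M$, an open neighborhood $\Omega\subset W$ of $p$ whose Lorentzian geometry is close enough to the ``flat model'' that $S\cap\Omega$ becomes a Cauchy hypersurface of $\Omega$. The candidate for $\Omega$ will be the Cauchy development, computed inside a suitable convex neighborhood, of a local piece of $S$ around $p$. Since convex open neighborhoods form a basis of the topology of $M$, restricting attention to such neighborhoods will already give the ``basis of neighborhoods'' part of the statement.

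First I would pick a Lorentz orthonormal basis $e_0,\ldots,e_{n-1}$ of $T_pM$ with $e_0$ future directed timelike and $e_1,\ldots,e_{n-1}$ spanning $T_pS$ (which is possible because $S$ is spacelike), and pass to the associated normal coordinates $\xi^0,\ldots,\xi^{n-1}$ on a convex neighborhood $U_0$ of $p$. Because $T_pS$ is the hyperplane $\{\xi^0=0\}$, the implicit function theorem lets me write $S\cap U_0$, after shrinking $U_0$, as the graph $\xi^0 = f(\xi^1,\ldots,\xi^{n-1})$ with $f(0)=0$ and $df|_0=0$. Spacelikeness of $S$ at $p$ together with continuity of the metric implies that, on a further shrunk convex neighborhood $U\subset U_0$, the hypersurface $S\cap U$ remains uniformly spacelike and the Lorentzian metric is uniformly close to the Minkowski metric on $U$.

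The central technical step is to show that, for $U$ chosen small enough, the local piece $S\cap U$ is acausal in $U$. If $q_1,q_2\in S\cap U$ could be joined by a causal curve in $U$, then convexity would force the unique geodesic $c$ from $q_1$ to $q_2$ in $U$ to be causal as well, so its tangent vector at $p$ would lie in the closed causal cone. In normal coordinates $c$ is a straight line between two points of the graph of $f$; since $df|_0=0$, the secant direction of $c$ is arbitrarily close to $T_pS$ as $U$ shrinks, and hence spacelike for the nearly Minkowskian metric on $U$, contradicting causality. Carrying out this quantitative argument yields acausality of $S\cap U$ in $U$.

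With acausality at hand, I set $\Omega:= D^U(S\cap U)$, the Cauchy development of $S\cap U$ computed inside $U$. By Remark~\ref{rem:Cauchydev} the Cauchy development of an acausal hypersurface is open, so $\Omega$ is open in $U$ and thus in $M$, with $\Omega\subset U\subset W$; moreover $p\in S\cap U\subset \Omega$, so $\Omega$ is a neighborhood of $p$. The equality $\Omega\cap S = S\cap U$ follows from $\Omega\subset U$ together with the tautological inclusion $S\cap U\subset D^U(S\cap U)$. Every inextendible causal curve in $\Omega$ meets $S\cap U$ by the definition of $D^U(S\cap U)$, and at most once by acausality of $S\cap U$ in $U\supset \Omega$, so $S\cap\Omega$ is a Cauchy hypersurface of $\Omega$. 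The main obstacle is the acausality step, since it requires controlling the metric \emph{and} the graphing function $f$ simultaneously on the same shrinking neighborhood; once acausality is established the rest is formal, consisting of bookkeeping with the open Cauchy-development property.
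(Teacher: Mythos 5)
Your argument follows essentially the same route as the paper: pick a small open $U$ on which $S\cap U$ is acausal, take $\Omega$ to be the Cauchy development of $S\cap U$ inside $U$, and use the openness of Cauchy developments of acausal hypersurfaces to get an open neighborhood basis. The only difference is that you spell out the normal-coordinate/graph estimates proving local acausality, whereas the paper simply invokes the known fact that spacelike hypersurfaces are locally acausal and cites O'Neill for the openness of $D(S\cap U)$; so your version is a more detailed rendering of the same proof rather than a new one.
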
 

\begin{proof}
Let $p\in S$.
Since every spacelike hypersurface is locally acausal there exists an open
neighborhood $U$ of $p$ in $M$ such that $S\cap U$ is an acausal spacelike
hypersurface of $U$. 
Let $\Omega$ be the Cauchy development of $S\cap U$ in $U$.

\begin{center}
\input{fig-cauchyentwick}
\end{center}

Since $\Omega$ is the Cauchy development of an acausal hypersurface containing
$p$, it is an open neighborhood of $p$ in $U$ and hence also in $M$.  
It follows from the definition of the Cauchy development that
$\Omega\cap S=S\cap U$ and that $S\cap \Omega$ is a Cauchy hypersurface of
$\Omega$. 

Given any neighborhood $V$ of $p$ the neighborhood $U$ from above can be
chosen to be contained in $V$.
Hence $\Omega$ is also contained in $V$.
Therefore we get a basis of neighborhoods $\Omega$ with the required
properties.
\end{proof}

On globally hyperbolic manifolds the relation $\leq$ is always closed
\cite[Chap.~14, Lemma 22]{ONeill}. 
The statement that the sets $J_+^M(p)\cap J_-^M(q)$ are compact can be
strengthened as follows:

\begin{lemma}\label{lJ+KJ-K'cpct}
Let $K,K'\subset M$ two compact subsets of a globally hyperbolic
Lorentzian manifold $M$. 
Then $J_+^M(K)\cap J_-^M(K')$ is compact.
\end{lemma}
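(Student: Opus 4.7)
The plan is to show that $J_+^M(K)\cap J_-^M(K')$ is a closed subset of $M$ that lies inside a compact set; in a Hausdorff manifold this forces compactness. The closedness is essentially free: on a globally hyperbolic manifold the causal relation $\leq$ is closed, so Lemma~\ref{lJKompaktumabgeschl} gives that both $J_+^M(K)$ and $J_-^M(K')$ are closed, hence so is their intersection.

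To bound the intersection by a compact set, I would reduce to the point-wise case (which is exactly the defining compactness property of global hyperbolicity) by a covering argument. First I would cover $K$ by chronological futures of auxiliary points: for each $p\in K$ pick some $p'\in I_-^M(p)$, so that $p\in I_+^M(p')$, and by compactness extract a finite subcover $K\subset I_+^M(p_1')\cup\cdots\cup I_+^M(p_m')$. Doing the analogous thing for $K'$ from the other side yields points $q_1',\ldots,q_n'$ with $K'\subset I_-^M(q_1')\cup\cdots\cup I_-^M(q_n')$. Transitivity of the causal relation then gives
\[
J_+^M(K) \subset \bigcup_{i=1}^m J_+^M(p_i'), \qquad J_-^M(K') \subset \bigcup_{j=1}^n J_-^M(q_j'),
\]
and consequently
\[
J_+^M(K)\cap J_-^M(K') \subset \bigcup_{i,j}\bigl(J_+^M(p_i')\cap J_-^M(q_j')\bigr).
\]
Each set $J_+^M(p_i')\cap J_-^M(q_j')$ is compact by definition of global hyperbolicity, so the right-hand side is a finite union of compacts and hence compact.

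Combining the two observations, $J_+^M(K)\cap J_-^M(K')$ is a closed subset of a compact set and is therefore compact. The only mildly delicate point is the transitivity step, i.e.\ $J_+^M(I_+^M(p_i'))\subset J_+^M(p_i')$, which follows immediately from concatenating a future directed timelike curve from $p_i'$ with a future directed causal curve, but it is worth stating explicitly to make the covering reduction rigorous. Everything else is a routine compactness/finite-cover argument.
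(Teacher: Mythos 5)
Your proof is correct. The difference from the paper's proof is organizational rather than substantive, but it is real: you perform the finite-cover reduction directly and symmetrically on both $K$ and $K'$ — covering $K$ from the past by chronological futures $I_+^M(p_i')$ and $K'$ from the future by chronological pasts $I_-^M(q_j')$ — landing $J_+^M(K)\cap J_-^M(K')$ inside the finite union $\bigcup_{i,j}\bigl(J_+^M(p_i')\cap J_-^M(q_j')\bigr)$ of point-wise compacta in one step, then appeal to closedness of $\leq$ (Lemma~\ref{lJKompaktumabgeschl}). The paper instead iterates Lemma~\ref{pastcompact} twice: it first fixes a single point $p$, invokes Lemma~\ref{pastcompact} and closedness to conclude that $J_+^M(p)\cap J_-^M(K')$ is compact, notes this means $J_-^M(K')$ is future compact, and then applies Lemma~\ref{pastcompact} a second time (in its future-directed form) with $A=J_-^M(K')$ to conclude. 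Since the proof of Lemma~\ref{pastcompact} is itself exactly the covering argument you give, the two routes rest on the same mechanism; your version is more self-contained and a bit more direct, while the paper's version factors the argument through the auxiliary lemma and in passing records the useful intermediate fact that $J_-^M(K')$ is future compact. Your transitivity step $J_+^M(I_+^M(p'))\subset J_+^M(p')$ is indeed the only point that deserves an explicit remark, and you handle it correctly by concatenation.
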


\begin{proof}
Let $p$ in $M$. 
By the definition of global-hyperbolicity, the subset $J_+^M(p)$ is past
compact in $M$. 
It follows from Lemma~\ref{pastcompact} that $J_+^M(p)\cap J_-^M(K')$ is
relatively compact in $M$. 
Since the relation $\leq$ is closed on $M$, the sets $J_+^M(p)$ and
$J_-^M(K')$ are closed by Lemma~\ref{lJKompaktumabgeschl}.
Hence $J_+^M(p)\cap J_-^M(K')$ is actually compact. 
This holds for every $p\in M$, i.~e., $J_-^M(K')$ is future compact in $M$. 
It follows again from Lemma~\ref{pastcompact} that $J_+^M(K)\cap J_-^M(K')$ is
relatively compact in $M$, hence compact by Lemma~\ref{lJKompaktumabgeschl}.
\end{proof}

\begin{lemma}\label{lsuffcondGHCC}
Let $\Omega\subset M$ be a nonempty open subset of a timeoriented
Lorentzian manifold $M$. 
Let $J_+^M(p)\cap J_-^M(q)$ be contained in $\Omega$ for all $p,q\in \Omega$.
Then $\Omega$ is causally compatible.\indexn{causally compatible subset}

If furthermore $M$ is globally hyperbolic, then $\Omega$ is globally
hyperbolic as well.\indexn{globally hyperbolic manifold} 
\end{lemma}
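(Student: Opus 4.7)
The strategy is to first establish causal compatibility by a direct ``curve-stays-inside'' argument, and then to leverage causal compatibility to reduce global hyperbolicity of $\Omega$ to that of $M$.

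For causal compatibility we only need to show the inclusion $J_\pm^M(x)\cap\Omega\subset J_\pm^\Omega(x)$ for every $x\in\Omega$, since the reverse inclusion is automatic. Fix $x\in\Omega$ and $y\in J_+^M(x)\cap\Omega$, and pick a future directed causal curve $c:[0,1]\to M$ from $x$ to $y$. For every $t\in[0,1]$ the point $c(t)$ lies in $J_+^M(x)\cap J_-^M(y)$. Applying the hypothesis with $p:=x\in\Omega$ and $q:=y\in\Omega$ yields $c(t)\in\Omega$ for all $t$, so $c$ is a causal curve in $\Omega$ joining $x$ to $y$, i.e.\ $y\in J_+^\Omega(x)$. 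The argument for $J_-$ is identical.

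Now assume $M$ is globally hyperbolic. The strong causality condition is inherited by $\Omega$ essentially for free: given $p\in\Omega$ and an open neighborhood $U\subset\Omega$ of $p$, the set $U$ is also open in $M$, so strong causality of $M$ furnishes an open neighborhood $V\subset U$ of $p$ such that every causal curve in $M$ starting and ending in $V$ stays in $U$; in particular this holds for every causal curve in $\Omega$, since such a curve is automatically causal in $M$. For the compactness of causal diamonds, take $p,q\in\Omega$. By the causal compatibility established above,
\[
J_+^\Omega(p)\cap J_-^\Omega(q)=\bigl(J_+^M(p)\cap\Omega\bigr)\cap\bigl(J_-^M(q)\cap\Omega\bigr)=J_+^M(p)\cap J_-^M(q)\cap\Omega.
\]
By hypothesis $J_+^M(p)\cap J_-^M(q)\subset\Omega$, so the right-hand side equals $J_+^M(p)\cap J_-^M(q)$, which is compact because $M$ is globally hyperbolic. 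Hence $\Omega$ is globally hyperbolic.

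There is no real analytical obstacle here; the only subtlety worth flagging is the logical order: one must dispatch causal compatibility \emph{first} (purely from the curve-confinement hypothesis), because the identification $J_\pm^\Omega(x)=J_\pm^M(x)\cap\Omega$ is precisely what allows the diamond $J_+^\Omega(p)\cap J_-^\Omega(q)$ to be compared with its counterpart in $M$ and thereby shown to be compact.
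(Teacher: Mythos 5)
Your proof is correct and follows essentially the same route as the paper: establish causal compatibility first via the curve-confinement argument, then note that the $\Omega$-causal diamonds equal the corresponding $M$-diamonds and are therefore compact. The only cosmetic difference is that where you explicitly spell out that strong causality is inherited by the open subset $\Omega$, the paper instead invokes its Remark~\ref{rem:globhypOmega}, which records precisely this fact.
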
 

\begin{proof}
We first show that $J_\pm^M(p)\cap \Omega = J_\pm^\Omega(p)$ for all
$p\in\Omega$. 
Let $p\in\Omega$ be fixed.  
The inclusion $J_\pm^\Omega(p) \subset J_\pm^M(p)\cap \Omega$ is obvious.
To show the opposite inclusion let $q\in J_+^M(p)\cap\Omega$.
Then there exists a future directed causal curve
$c:[0,1]\rightarrow M$ with $c(0)=p$ and $c(1)=q$. 
For every $z\in c([0,1])$ we have $z\in J_+^M(p)\cap J_-^M(q)\subset \Omega$,
i.~e., $c([0,1])\subset \Omega$. 
Therefore $q\in J_+^\Omega(p)$.
Hence $J_+^M(p)\cap\Omega\subset J_+^\Omega(p)$ and $J_-^M(p)\cap\Omega\subset
J_-^\Omega(p)$ can be seen similarly.

We have shown $J_\pm^M(p)\cap\Omega= J_\pm^\Omega(p)$, i.~e., $\Omega$ is a
causally compatible subset of $M$.
Let now $M$ be globally hyperbolic.
Then since for any two points $p,q\in \Omega$ the intersection $J_+^M(p)\cap
J_-^M(q)$ is contained in $\Omega$ the subset
\[
J_+^\Omega(p)\cap J_-^\Omega(q)=J_+^M(p)\cap J_-^M(q)\cap\Omega=J_+^M(p)\cap
J_-^M(q)
\]
is compact.
Remark~\ref{rem:globhypOmega} concludes the proof.
\end{proof}

\begin{lemma}\label{lem:DS}
For any acausal hypersurface $S$ in a
timeoriented Lorentzian manifold the Cauchy development $D(S)$\indexn{Cauchy
  development of a subset} is a causally 
compatible and globally hyperbolic open subset of $M$.
\end{lemma}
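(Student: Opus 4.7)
The goal is to establish three properties of $D(S)$: it is open in $M$, causally compatible in $M$, and globally hyperbolic as a Lorentzian manifold in its own right. The first of these is already recorded in Remark~\ref{rem:Cauchydev}, so openness does not require further work. The remainder of the plan focuses on the other two.

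The central technical step, from which the remaining assertions will follow, is the inclusion
\[
J_+^M(p)\cap J_-^M(q)\subset D(S)\qquad\text{for all }p,q\in D(S).
\]
To prove this, I fix $r\in J_+^M(p)\cap J_-^M(q)$, let $\gamma$ be any inextendible causal curve in $M$ through $r$, and split it into its past portion $\gamma_-$ ending at $r$ and its future portion $\gamma_+$ starting at $r$. Choosing causal curves $\alpha$ from $p$ to $r$ and $\beta$ from $r$ to $q$, the concatenations $\alpha*\gamma_+$ and $\gamma_-*\beta$ extend to inextendible causal curves through $p$ and $q$ respectively, and hence each meets $S$. If $\gamma$ itself did not meet $S$, the two intersection points would have to lie on $\alpha$ and on $\beta$, producing $s,s'\in S$ with $s\le r\le s'$ and both distinct from $r$ (since $r\notin S$). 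Concatenating the relevant pieces of $\alpha$ and $\beta$ then yields a nondegenerate causal curve from $s$ to $s'$, which either crosses $S$ twice if $s\neq s'$ or constitutes a causal loop based at $s\in S$ if $s=s'$. Either possibility contradicts acausality of $S$, so $\gamma$ must meet $S$. This forces $r\in D(S)$.

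Once this containment is in place, causal compatibility of $D(S)$ in $M$ is immediate from the first assertion of Lemma~\ref{lsuffcondGHCC}. For global hyperbolicity, the second assertion of that lemma does \emph{not} apply because $M$ itself is not assumed globally hyperbolic; instead I verify directly that $S$ is a Cauchy hypersurface of $D(S)$ and then invoke the implication $(2)\Rightarrow(1)$ in Theorem~\ref{globhyp}. Acausality of $S$ in $M$ (and hence in $D(S)$) gives the ``at most once'' part. For ``at least once'', given an inextendible timelike curve $\gamma$ in $D(S)$ through some point $p$, I extend it by Zorn's lemma to an inextendible causal curve $\tilde\gamma$ in $M$; since $p\in D(S)$, this $\tilde\gamma$ meets $S$ at some point $s$. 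Because $S\subset D(S)$ and $D(S)$ is open, a neighborhood of $s$ in $\tilde\gamma$ lies in $D(S)$, so inextendibility of $\gamma$ in $D(S)$ forces $s$ to lie on the original curve $\gamma$.

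The delicate point is the core containment: one must use acausality (rather than mere achronality) to rule out the possibility that $s=s'$, which corresponds to a causal loop through a point of $S$, an artefact that cannot be excluded by achronality alone. A second, smaller subtlety worth flagging is that the meeting point of the $M$-extension $\tilde\gamma$ with $S$ must be shown to lie on $\gamma$ itself; openness of $D(S)$ together with maximality of $\gamma$ inside $D(S)$ handles this cleanly.
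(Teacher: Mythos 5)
Your proof of the key containment $J_+^M(p)\cap J_-^M(q)\subset D(S)$ is sound; it proceeds by contradiction and explicitly handles the degenerate case $s=s'$ (a causal loop through a point of $S$), which indeed must be excluded since no causality conditions on $M$ are assumed. This is organized a little differently from the paper, which fixes a causal curve from $p$ through $z$ to $q$, observes that it hits $S$ exactly once by acausality, and then splits into cases according to whether that hit comes before or after $z$; both arguments work. A minor wording point: the intersections of the two extended concatenations with $S$ need not lie on $\alpha$ or on $\beta$ as you state---they could lie on the past extension of $p$ or the future extension of $q$---but the inequalities $s\le r\le s'$ hold in all cases, so the conclusion survives.

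There is a genuine gap in the ``at least once'' step of your Cauchy-hypersurface argument. The claim that ``a neighborhood of $s$ in $\tilde\gamma$ lies in $D(S)$, so inextendibility of $\gamma$ in $D(S)$ forces $s$ to lie on $\gamma$'' does not follow: openness of $D(S)$ near $s$ does not forbid $\tilde\gamma$ from exiting $D(S)$ beyond the endpoint of $\gamma$ and re-entering $D(S)$ later at $s$, and inextendibility of $\gamma$ by itself says nothing about such a re-entry. What actually closes the step is the containment you already proved. Suppose $s=\tilde\gamma(t_0)$ with $t_0$ strictly past the future parameter endpoint $b$ of $\gamma$. Then $z:=\tilde\gamma(b)$ exists, is a limit point of $\gamma$ in $M$, and satisfies $z\notin D(S)$ (otherwise $\gamma$ would extend within the open set $D(S)$). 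But for any $p$ on $\gamma$ we have $p\le z\le s$ with $p,s\in D(S)$, so $z\in J_+^M(p)\cap J_-^M(s)\subset D(S)$---a contradiction. (If $\gamma$ has no future limit point in $M$ there is nothing to extend on that side, and $t_0=b$ is impossible because $z\notin D(S)$ yet $s\in S\subset D(S)$.) With this repair your route goes through and is in fact more self-contained than the paper's, which cites \cite[Chap.~14, Lemma~43]{ONeill} for both the openness and the global hyperbolicity of $D(S)$ and proves only the containment and causal compatibility itself.
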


\begin{proof}
Let $S$ be an acausal hypersurface in a timeoriented Lorentzian manifold $M$.
By \cite[Chap.~14, Lemma 43]{ONeill} $D(S)$ is an open and globally hyperbolic
subset of $M$.
Let $p,q\in D(S)$.
Let $z\in J_+^M(p)\cap J_-^M(q)$.
We choose a future directed causal curve $c$ from $p$ through $z$ to $q$.
Extend $c$ to an inextendible causal curve in $M$, again denoted by $c$. 
Since $p\in D(S)$ the curve $c$ must meet $S$.
Since $S$ is acausal this intersection point is unique.

Now let $\tilde c$ be any inextendible causal curve through $z$.
If $c$ intersects $S$ before $z$, then look at the inextendible curve obtained
by first following $\tilde c$ until $z$ and then following $c$.
This is an inextendible causal curve through $q$.
Since $q\in D(S)$ this curve must intersect $S$.
This intersection point must come before $z$, hence lie on $\tilde c$.
Similarly, if $c$ intersects $S$ at or after $z$, then look at the
inextendible curve obtained by first following $c$ until $z$ and then
following $\tilde c$. 
Again, this curve is inextendible causal and goes through $p\in D(S)$.
Hence it must hit $S$ and this intersection  point must come before or at $z$,
thus it must again lie on $\tilde c$.

In any case $\tilde c$ intersects $S$.
This shows $z\in D(S)$.
We have proved $J_+^M(p)\cap J_-^M(q) \subset D(S)$.
By Lemma~\ref{lsuffcondGHCC} $D(S)$ is causally compatible in $M$.
\end{proof}

Note furthermore that, by the definition of $D(S)$, the acausal subset $S$
is a Cauchy hypersurface of $D(S)$.

\begin{lemma}\label{lcommonCauchhyp}
Let $M$ be a globally hyperbolic Lorentzian manifold.
Let $\Omega\subset M$ be a causally compatible and globally hyperbolic open
subset.  
Assume that there exists a Cauchy hypersurface $\Sigma$ of $\Omega$ which is
also a Cauchy hypersurface of $M$.

Then every Cauchy hypersurface of $\Omega$ is also a Cauchy hypersurface of
$M$. 
\end{lemma}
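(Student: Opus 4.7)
The plan is to show that an arbitrary Cauchy hypersurface $\Sigma'$ of $\Omega$ is met by every inextendible timelike curve in $M$ at exactly one point. Fix such an inextendible timelike curve $c:I\to M$. Since $\Sigma$ is a Cauchy hypersurface of $M$, there is a unique $t_0\in I$ with $c(t_0)\in\Sigma\subset\Omega$. The preimage $c^{-1}(\Omega)$ is open in $I$; let $I_0$ denote the connected component containing $t_0$. The strategy is to first prove $c^{-1}(\Omega)=I_0$, and then to apply the Cauchy property of $\Sigma'$ inside $\Omega$ to the inextendible curve $c|_{I_0}$.

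The key intermediate step is the claim that for \emph{every} connected component $I'$ of $c^{-1}(\Omega)$, the restriction $c|_{I'}$ is an inextendible timelike curve in $\Omega$. Timelikeness is automatic, since $\Omega$ carries the induced metric. For inextendibility, at an endpoint $b$ of $I'$ lying in the interior of $I$, the maximality of the component $I'$ together with the continuity of $c$ forces $c(b)\in M\setminus\Omega$, so that no continuous extension of $c|_{I'}$ within $\Omega$ is possible; at endpoints of $I'$ coinciding with endpoints of $I$, inextendibility of $c$ in $M$ transfers directly to $\Omega$. Once this lemma is in hand, the Cauchy property of $\Sigma$ as a hypersurface \emph{of} $\Omega$ forces each $c|_{I'}$ to meet $\Sigma$. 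But $c$ meets $\Sigma$ only at $t_0$, which lies in $I_0$; hence no component other than $I_0$ can exist, and $c^{-1}(\Omega)=I_0$.

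With this reduction, $c|_{I_0}$ is an inextendible timelike curve in $\Omega$, so the Cauchy property of $\Sigma'$ in $\Omega$ yields a unique intersection with $\Sigma'$. Since $\Sigma'\subset\Omega$ we have $c^{-1}(\Sigma')\subset c^{-1}(\Omega)=I_0$, so this unique intersection is also the unique intersection of $c$ with $\Sigma'$ in $M$. This proves that $\Sigma'$ is a Cauchy hypersurface of $M$. The main obstacle is the inextendibility lemma for $c|_{I'}$: the argument is morally clear from openness of $\Omega$ and continuity of $c$, but one has to be careful to treat both possible types of endpoints (interior to $I$ versus endpoints of $I$ itself) uniformly within the definition of inextendibility given in the text. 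Causal compatibility of $\Omega$ in $M$ is not used directly in this argument, but it is the natural hypothesis guaranteeing that the causal-geometric structures of $\Omega$ and $M$ are compatible enough for the Cauchy property to transfer cleanly.
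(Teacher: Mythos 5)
Your proof is correct, and it deviates from the paper's in an interesting way. Both proofs share the central move: take an inextendible timelike curve $c$ in $M$, locate the intersection $t_0$ with $\Sigma$, let $I_0$ be the component of $c^{-1}(\Omega)$ containing $t_0$, observe that $c|_{I_0}$ is an inextendible timelike curve in $\Omega$, and invoke the Cauchy property of the given hypersurface inside $\Omega$. The difference is in how uniqueness of the intersection is secured. The paper handles it upfront by noting that causal compatibility of $\Omega$ in $M$ propagates achronality of $S$ from $\Omega$ to $M$ (one line, but it genuinely uses causal compatibility). You instead prove a stronger intermediate claim: that $c^{-1}(\Omega)$ has only the one component $I_0$. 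The argument for this — each component would supply its own intersection with $\Sigma$, contradicting that $c$ meets the $M$-Cauchy hypersurface $\Sigma$ only once — is clean and correct, and then uniqueness for $\Sigma'$ follows automatically because all of $c^{-1}(\Sigma')$ sits inside $I_0$. A pleasant byproduct, which you correctly observe, is that your route never uses causal compatibility; the hypothesis is effectively subsumed by the existence of a shared Cauchy hypersurface. The paper's version is a line or two shorter because it already had the achronality-transfer argument on hand; yours is more self-contained and yields the sharper structural fact that $c^{-1}(\Omega)$ is connected. One small caveat you already flagged: the inextendibility of $c|_{I'}$ for a component $I'$ with an endpoint $b$ interior to $I$ requires arguing that no piecewise $C^1$-reparametrization continuously extends; this holds because any continuous extension would have to limit to $c(b)\in M\setminus\Omega$, independently of parametrization, so the argument is sound.
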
 

\begin{proof}
Let $S$ be any Cauchy hypersurface of $\Omega$. 
Since $\Omega$ is causally compatible in $M$, achronality of $S$ in $\Omega$
implies achronality of $S$ in $M$.

Let $c:I\rightarrow M$ be any inextendible timelike curve in $M$. 
Since $\Sigma$ is a Cauchy hypersurface of $M$ there exists some $t_0\in
I$ with $c(t_0)\in\Sigma\subset\Omega$.
Let $I'\subset I$ be the connected component of $c^{-1}(\Omega)$ containing
$t_0$.
Then $I'$ is an open interval and $c|_{I'}$ is an inextendible timelike curve in
$\Omega$.
Therefore it must hit $S$.
Thus $S$ is a Cauchy hypersurface in $M$.
\end{proof}

Given any compact subset $K$ of a globally hyperbolic manifold $M$, one can
construct a causally compatible globally hyperbolic open subset of $M$ which
is ``causally independent'' of $K$:\indexn{causally independent subsets} 

\begin{lemma}\label{lMminusJAglobhyp} 
Let $K$ be a compact subset of a globally hyperbolic Lorentzian manifold $M$. 
Then the subset $M\setminus J^M(K)$ is, when nonempty, a causally
compatible globally hyperbolic open subset of $M$.  
\end{lemma}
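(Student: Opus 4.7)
The plan is to verify openness of $\Omega := M \setminus J^M(K)$ directly and then reduce the remaining two properties to Lemma~\ref{lsuffcondGHCC}. Since $M$ is globally hyperbolic, the causal relation $\leq$ is closed on $M$ (as noted before Lemma~\ref{lJ+KJ-K'cpct}), and $K$ is compact, so Lemma~\ref{lJKompaktumabgeschl} applies to give that $J_+^M(K)$ and $J_-^M(K)$ are both closed. Hence $J^M(K) = J_+^M(K) \cup J_-^M(K)$ is closed, and $\Omega$ is an open subset of $M$.

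Next I would verify the hypothesis of Lemma~\ref{lsuffcondGHCC}, namely that $J_+^M(p) \cap J_-^M(q) \subset \Omega$ whenever $p,q \in \Omega$. The argument is a direct transitivity check: if $z \in J_+^M(p) \cap J_-^M(q)$ were to lie in $J^M(K)$, say $z \in J_+^M(K)$, then there would exist $x \in K$ with $x \leq z \leq q$, forcing $q \in J_+^M(K) \subset J^M(K)$ and contradicting $q \in \Omega$. The symmetric situation $z \in J_-^M(K)$ would similarly force $p \in J_-^M(K)$, again contradicting $p \in \Omega$. Therefore $z \in \Omega$, which establishes the inclusion.

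With openness, nonemptiness (by hypothesis), and the inclusion $J_+^M(p) \cap J_-^M(q) \subset \Omega$ in hand, Lemma~\ref{lsuffcondGHCC} immediately yields that $\Omega$ is causally compatible in $M$, and since $M$ is globally hyperbolic, the second conclusion of that lemma gives global hyperbolicity of $\Omega$ as well. No step here looks technically hard, since all the real work has been packaged into Lemmas~\ref{lJKompaktumabgeschl} and \ref{lsuffcondGHCC}; the only point requiring a moment of care is remembering that on a globally hyperbolic manifold the relation $\leq$ is closed, which is exactly what is needed to invoke Lemma~\ref{lJKompaktumabgeschl} for the closedness of $J^M(K)$.
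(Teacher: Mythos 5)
Your proof is correct and follows essentially the same path as the paper's: closedness of $J^M(K)$ via Lemma~\ref{lJKompaktumabgeschl}, the transitivity argument showing $J_+^M(p)\cap J_-^M(q)\subset M\setminus J^M(K)$, and an appeal to Lemma~\ref{lsuffcondGHCC}. The only cosmetic difference is that the paper phrases the middle step as a chain of set inclusions (e.g.\ $y\in J_+^M(z)\subset J_+^M(J_+^M(K))=J_+^M(K)$) whereas you unwind it pointwise.
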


\begin{proof}
Since $M$ is globally hyperbolic and $K$ is compact it follows from
Lemma~\ref{lJKompaktumabgeschl} that $J^M(K)$ is closed in $M$, hence
$M\setminus J^M(K)$ is open.
Next we show that $J_+^M(x)\cap J_-^M(y)\subset M\setminus J^M(K)$ for any two
points $x,y\in M\setminus J(K)$.
It will then follow from Lemma~\ref{lsuffcondGHCC} that $M\setminus J^M(K)$ is
causally compatible and globally hyperbolic.

Let $x,y\in M\setminus J^M(K)$ and pick $z\in J_+^M(x)\cap J_-^M(y)$. 
If $z\notin M\setminus J^M(K)$, then $z\in J_+^M(K)\cup J_-^M(K)$.
If $z\in J_+^M(K)$, then $y\in J_+^M(z)\subset J_+^M(J_+^M(K))=J_+^M(K)$ 
in contradiction to $y\not\in J^M(K)$.
Similarly, if $z\in J_-^M(K)$ we get $x\in J_-^M(K)$, again a contradiction.
Therefore $z\in M\setminus J^M(K)$.
This shows $J_+^M(x)\cap J_-^M(y)\subset M\setminus J^M(K)$.
\end{proof}

Next we prove the existence of a causally compatible globally hyperbolic
neighborhood of any compact subset in any globally hyperbolic manifold. 
First we need a technical lemma.

\begin{lemma}\label{lI+AI-Bglobhyp}
Let $A$ and $B$ two nonempty subsets of a globally hyperbolic Lorentzian
manifold $M$. 
Then $\Omega:=I_+^M(A)\cap I_-^M(B)$\indexn{causally compatible
  subset}\indexn{globally hyperbolic manifold} is a globally hyperbolic 
causally compatible open subset of $M$.

Furthermore, if $A$ and $B$ are relatively compact in $M$, then so is $\Omega$.
\end{lemma}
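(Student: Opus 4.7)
My plan is to reduce both assertions to Lemma~\ref{lsuffcondGHCC}, which simultaneously yields causal compatibility and global hyperbolicity of $\Omega$ once a single inclusion is verified. First I would observe that $\Omega$ is open, since $I_+^M(A)=\bigcup_{a\in A}I_+^M(a)$ and $I_-^M(B)=\bigcup_{b\in B}I_-^M(b)$ are unions of open sets $I_\pm^M(\cdot)$. If $\Omega=\emptyset$ the statement is vacuous, so I may assume $\Omega\neq\emptyset$.

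The central step is to verify that $J_+^M(p)\cap J_-^M(q)\subset\Omega$ for all $p,q\in\Omega$. Given such $p,q$ and any $z\in J_+^M(p)\cap J_-^M(q)$, I choose $a\in A$ with $a<p$ and $b\in B$ with $q<b$, which is possible by the definitions of $I_+^M(A)$ and $I_-^M(B)$. The standard push-up property for chronological/causal precedence then gives $a<p\leq z$ implies $a<z$, and $z\leq q<b$ implies $z<b$. Hence $z\in I_+^M(A)\cap I_-^M(B)=\Omega$. Applying Lemma~\ref{lsuffcondGHCC} now gives both causal compatibility of $\Omega$ in $M$ and, because $M$ itself is globally hyperbolic, global hyperbolicity of $\Omega$.

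For the relative compactness statement, assume $A$ and $B$ are relatively compact, so that $\ovl{A}$ and $\ovl{B}$ are compact. Since chronological precedence implies causal precedence,
$$\Omega \;\subset\; J_+^M(A)\cap J_-^M(B) \;\subset\; J_+^M(\ovl{A})\cap J_-^M(\ovl{B}),$$
and the right-hand side is compact by Lemma~\ref{lJ+KJ-K'cpct}. Hence $\Omega$ is relatively compact in $M$.

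The only nontrivial input beyond the quoted lemmas is the push-up property, namely that $x<y\leq z$ or $x\leq y<z$ implies $x<z$. This is a standard and elementary fact about timeoriented Lorentzian manifolds, following from the openness of $I_\pm^M(\cdot)$ by perturbing the causal segment of a concatenated curve into a timelike one in a convex normal neighborhood; it holds on any Lorentzian manifold without additional causality hypotheses. Apart from flagging this fact, the proof is bookkeeping with previously established results and I expect no further obstacle.
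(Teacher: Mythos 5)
Your proof is correct and takes essentially the same approach as the paper's: you verify openness, prove the inclusion $J_+^M(p)\cap J_-^M(q)\subset\Omega$ via the push-up property (the paper phrases this as the identity $J_+^M(I_+^M(A))=I_+^M(A)$, which is the same fact), invoke Lemma~\ref{lsuffcondGHCC}, and then obtain relative compactness from Lemma~\ref{lJ+KJ-K'cpct} exactly as the paper does.
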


\begin{proof}
Since the chronological future and past of any subset of $M$ are open, so is
 $\Omega$.
For any $x,y\in \Omega$ we have $J_+^M(x)\cap J_-^M(y)\subset\Omega$ because
$J_+^M(x)\subset J_+^M(I_+^M(A))=I_+^M(A)$ and $J_-^M(y)\subset
 J_-^M(I_-^M(B))=I_-^M(B)$. 
Lemma~\ref{lsuffcondGHCC} implies that $\Omega$ is globally hyperbolic and
causally compatible. 

If furthermore $A$ and $B$ are relatively compact, then 
\[
\Omega\subset J_+^M(A)\cap J_-^M(B)\subset J_+^M(\ovl{A})\cap J_-^M(\ovl{B})
\]
and $J_+^M(\ovl{A})\cap J_-^M(\ovl{B})$ is compact by
Lemma~\ref{lJ+KJ-K'cpct}.
Hence $\Omega$ is relatively compact in $M$.
\end{proof}

\begin{prop}\label{lIdir}
Let $K$ be a compact subset of a globally hyperbolic Lorentzian manifold $M$.
Then there exists a relatively compact causally compatible globally hyperbolic
open subset $O$ of $M$ containing $K$. 
\end{prop}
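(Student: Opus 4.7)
My plan is to reduce the proposition directly to Lemma \ref{lI+AI-Bglobhyp}, which already packages almost everything we need: for any two nonempty subsets $A, B \subset M$, the set $I_+^M(A) \cap I_-^M(B)$ is a globally hyperbolic causally compatible open subset of $M$, and it is relatively compact if $A$ and $B$ are. So the task reduces to finding two relatively compact subsets $A, B \subset M$ such that $K \subset I_+^M(A) \cap I_-^M(B)$.

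To construct $A$ and $B$, I would argue pointwise and then use compactness of $K$. For each point $p \in K$ pick a past-directed timelike vector $v_p \in T_pM$ and a future-directed one $w_p \in T_pM$, sufficiently short to lie in a normal neighborhood; set $p^\pm := \exp_p(\pm w_p)$ and $\exp_p(v_p)$ respectively, so that $p^- \in I_-^M(p)$ and $p^+ \in I_+^M(p)$. Then $p \in I_+^M(p^-) \cap I_-^M(p^+)$, and since the chronological future and past are open, this intersection is an open neighborhood of $p$. By compactness of $K$, finitely many such neighborhoods $I_+^M(p_i^-) \cap I_-^M(p_i^+)$ with $i = 1, \dots, N$ cover $K$.

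Now define the finite (hence compact, hence relatively compact) sets
\[
A := \{p_1^-, \dots, p_N^-\} \quad\text{and}\quad B := \{p_1^+, \dots, p_N^+\}.
\]
Then $I_+^M(A) = \bigcup_{i} I_+^M(p_i^-)$ and $I_-^M(B) = \bigcup_{i} I_-^M(p_i^+)$, so the intersection $I_+^M(A) \cap I_-^M(B)$ contains $\bigcup_i I_+^M(p_i^-) \cap I_-^M(p_i^+) \supset K$. Setting $O := I_+^M(A) \cap I_-^M(B)$, Lemma \ref{lI+AI-Bglobhyp} then immediately gives that $O$ is a relatively compact, causally compatible, globally hyperbolic open subset of $M$, and by construction $K \subset O$.

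I do not anticipate a real obstacle: the only thing to be slightly careful about is that the individual ``diamonds'' $I_+^M(p_i^-) \cap I_-^M(p_i^+)$ cover $K$ but the set $O$ we end up using is a priori larger (it also contains the ``cross terms'' $I_+^M(p_i^-) \cap I_-^M(p_j^+)$ for $i \neq j$). That is harmless — relative compactness of $O$ follows from relative compactness of $A$ and $B$ via the lemma regardless — but it explains why we cannot simply take $O$ to be a finite union of diamonds: such a union would contain $K$ but would not in general be of the form needed to invoke Lemma \ref{lI+AI-Bglobhyp}, which is precisely what supplies the causal compatibility and global hyperbolicity in one stroke.
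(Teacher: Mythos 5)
Your proof is correct and takes a genuinely different route from the paper's, although both ultimately reduce to the same key lemma. In the published proof the sets fed into Lemma~\ref{lI+AI-Bglobhyp} are $A := J_-^M(K)\cap S_{t_-}$ and $B := J_+^M(K)\cap S_{t_+}$, where $S_{t_\pm}$ are level sets of a Cauchy time-function chosen to lie strictly in the future and past of $K$; that time-function comes from Corollary~\ref{cexisttimefctn}, which rests on the Bernal--S\'anchez splitting theorem (Theorem~\ref{globhyp}), and relative compactness of $A$ and $B$ then requires Lemma~\ref{pastcompact} together with the fact that Cauchy hypersurfaces are future and past compact. Your construction replaces all of that with a bare compactness argument: cover $K$ by finitely many diamonds $I_+^M(p_i^-)\cap I_-^M(p_i^+)$ and let $A$, $B$ be the finite sets of past and future tips, which are trivially compact. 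This sidesteps the splitting theorem and the Cauchy time-function entirely, using no structural input beyond openness of the chronological relation and compactness of $K$ — a strictly lighter dependency footprint for the same conclusion. (Indeed you do not even need the exponential map: $I_\pm^M(p)$ is a nonempty open set for every $p$, so one can simply pick $p^\pm\in I_\pm^M(p)$ directly.) The cosmetic subtlety you correctly flag — that $O = I_+^M(A)\cap I_-^M(B)$ also contains the cross-diamonds $I_+^M(p_i^-)\cap I_-^M(p_j^+)$ for $i\neq j$ — is precisely why the lemma must be applied to the whole product form rather than to each covering diamond separately, and your handling of it is right.
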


\begin{proof}
Let $h:M\to \R$ be a Cauchy time-function as in Corollary~\ref{cexisttimefctn}.
The level sets $S_t:=h^{-1}(\{t\})$ are Cauchy hypersurfaces for each $t\in
h(M)$.
Since $K$ is compact so is $h(K)$.
Hence there exist numbers $t_+,t_-\in h(M)$ such that  
\[
S_{t_\pm}\cap J_\mp^M(K)=\emptyset,
\]
that is, such that $K$ lies in the past of $S_{t_+}$ and in the future
of $S_{t_-}$. 
We consider the open set
\[O:=I_-^M\pa{J_+^M(K)\cap S_{t_+}}\cap\, I_+^M\pa{J_-^M(K)\cap S_{t_-}}.\] 

\begin{center}
\input{fig-constructIdir}
\end{center}

We show $K\subset O$.
Let $p\in K$. 
By the choice of ${t_\pm}$ we have $K\subset J_\mp^M(S_{t_\pm})$. 
Choose any inextendible future directed timelike curve starting at $p$.
Since $S_{t_+}$ is a Cauchy hypersurface, it is hit exactly once by
this curve at a point $q$.
Therefore $q\in I_+^M(K)\cap S_{t_+}$ hence $p\in I_-^M(q)\subset
I_-^M(I_+^M(K)\cap S_{t_+})\subset I_-^M(J_+^M(K)\cap S_{t_+})$. 
Analogously $p\in I_+^M\pa{J_-^M(K)\cap S_{t_-}}$. 
Therefore $p\in O$.

It follows from Lemma~\ref{lI+AI-Bglobhyp} that $O$ is a causally compatible
globally hyperbolic open subset of $M$. 
Since every Cauchy hypersurface is future and past compact, the subsets
$J_-^M(K)\cap S_{t_-}$ and $J_+^M(K)\cap S_{t_+}$ are relatively compact by
Lemma~\ref{pastcompact}.
According to Lemma~\ref{lI+AI-Bglobhyp} the subset $O$ is also
relatively compact. 
This finishes the proof. 
\end{proof}

\begin{lemma}\label{lem:globhypcyl}
Let $(S,g_0)$ be a connected Riemannian manifold.
Let $I\subset \R$ be an open interval and let $f:I\to \R$ be a smooth positive
function. 
Let $M=I\times S$ and $g=-\dt^2+f(t)^2 g_0$.
We give $M$ the timeorientation with respect to which the vector field
$\frac{\partial}{\partial t}$ is future directed.

Then $(M,g)$ is globally hyperbolic if and only if $(S,g_0)$ is complete.
\end{lemma}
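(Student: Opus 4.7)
My plan is to first reduce to the product metric case $f \equiv 1$ via a conformal change of the time coordinate, and then treat each implication separately on the resulting product.

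For the reduction, I would fix $t_\ast \in I$ and set $\tau(t) := \int_{t_\ast}^t du/f(u)$; since $f$ is smooth and positive, this defines a diffeomorphism $\tau \colon I \to \tilde I$ onto an open interval $\tilde I \subset \R$, and a direct computation gives $g = f(t)^2\bigl(-d\tau^2 + g_0\bigr)$. Hence $(M, g)$ is conformally equivalent to $(\tilde M, \tilde g) := (\tilde I \times S,\, -d\tau^2 + g_0)$, and since conformal changes preserve all causal notions and hence global hyperbolicity (as noted before Proposition~\ref{propGreenconformal}), it suffices to prove the equivalence for $(\tilde M, \tilde g)$, i.e., in the case $f \equiv 1$.

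For the forward direction, assuming $(S, g_0)$ is complete, I would show that $\Sigma := \{\tau_0\} \times S$ is a Cauchy hypersurface of $\tilde M$ for any $\tau_0 \in \tilde I$, so that global hyperbolicity follows from Theorem~\ref{globhyp}. The key step is: along any future-directed timelike curve $c(s) = (\tau(s), x(s))$ in $\tilde M$ the timelike condition forces $|x'(s)|_{g_0} < \tau'(s)$, so if $c$ is inextendible with $\tau_+ := \lim_{s \to \beta^-}\tau(s) < \sup\tilde I$, the estimate $d_{g_0}(x(s_1), x(s_2)) \le \int_{s_1}^{s_2} |x'|_{g_0}\,ds < \tau(s_2) - \tau(s_1)$ makes $(x(s))$ Cauchy in $S$, hence convergent to some $x_+ \in S$ by completeness; then $c$ admits a continuous extension to $(\tau_+, x_+) \in \tilde M$, contradicting inextendibility. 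Thus $\tau$ sweeps out all of $\tilde I$, and the intermediate value theorem yields the required intersection with $\Sigma$; uniqueness of the intersection follows from strict monotonicity of $\tau$ along future-directed causal curves.

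For the backward direction, I would prove the contrapositive: if $(S, g_0)$ is incomplete, Hopf-Rinow provides a unit-speed geodesic $\gamma \colon [0, a) \to S$ with $a < \infty$ that does not extend continuously to $s = a$, so $(\gamma(s_n))$ has no convergent subsequence in $S$ whenever $s_n \to a^-$. Fix $\tau_0 \in \tilde I$ and a small $\delta \in (0, a/2)$ with $[\tau_0 - 2\delta, \tau_0 + 2\delta] \subset \tilde I$ (possible since $\tilde I$ is a non-empty open interval), and put $p_0 := (\tau_0 - 2\delta, \gamma(a - \delta))$, $q_0 := (\tau_0 + 2\delta, \gamma(a - \delta))$, and $r_n := (\tau_0, \gamma(s_n))$ for a sequence $s_n \in (a - \delta, a)$ with $s_n \to a^-$. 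The affine curves along $\gamma$ with linearly varying $\tau$ connect $p_0$ to $r_n$ and $r_n$ to $q_0$ timelike-ly, since their tangents have $\tilde g$-square norm $-4\delta^2 + (s_n - (a - \delta))^2 < -4\delta^2 + \delta^2 < 0$; hence $r_n \in J_+^{\tilde M}(p_0) \cap J_-^{\tilde M}(q_0)$. Since $(\gamma(s_n))$ has no convergent subsequence in $S$, neither does $(r_n)$ in $\tilde M$, so this causal diamond is not compact and $(\tilde M, \tilde g)$ is not globally hyperbolic. The main technical point is arranging that these connecting curves remain in $\tilde M$ and are timelike; this is handled by choosing $\delta$ both less than $a/2$ (so the $S$-distance $s_n - (a - \delta) < \delta$ is strictly dominated by the $\tau$-gain $2\delta$) and small enough that $[\tau_0 - 2\delta, \tau_0 + 2\delta] \subset \tilde I$. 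The conformal reduction to $f \equiv 1$ is precisely what removes all dependence on the warping factor from the analysis.
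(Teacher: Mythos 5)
Your proposal is correct and follows a genuinely different route from the paper's. The paper works directly with the warped metric $g = -dt^2 + f(t)^2\,g_0$: for ``complete $\Rightarrow$ globally hyperbolic'' it bounds $f$ locally near the problematic endpoint ($C_1 \le f(t) \le C_2$ on a small interval) to make the $S$-component of an inextendible causal curve Cauchy, and for the converse it shows that every closed ball of one fixed small radius in $(S,g_0)$ sits inside a compact causal diamond $J_+^M(t_0-\epsilon,p)\cap J_-^M(t_0+\epsilon,p)$, hence is compact, and then invokes the elementary fact that a metric space with compact closed $r$-balls for one fixed $r>0$ is complete. Your preliminary conformal change $\tau=\int du/f(u)$ dispenses with the local bounds on $f$ entirely and reduces everything to the product metric, which is a real simplification of the forward direction; the two inextendibility arguments are otherwise the same. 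Your backward direction, by contrast, is a contrapositive that leans on somewhat heavier tools: you invoke Hopf--Rinow to obtain a maximal unit-speed geodesic $\gamma\colon[0,a)\to S$ and manufacture from it a noncompact causal diamond. The step ``$(\gamma(s_n))$ has no convergent subsequence'' deserves one more sentence than you give it: since $\gamma$ has unit speed, $(\gamma(s_n))$ is automatically Cauchy, so a convergent subsequence would force $\gamma(s)\to p$ as $s\to a^-$, and then $\gamma'$ would also converge (via the geodesic ODE in a chart about $p$), letting $\gamma$ restart as a geodesic at $p$ and contradicting maximality. The paper's compact-ball argument sidesteps Hopf--Rinow and this extendibility reasoning altogether, so it is more elementary and self-contained; yours is slicker once those standard facts are granted. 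The concrete construction of $p_0,q_0,r_n$, the choice of $\delta<a/2$, and the timelike-connectivity estimate showing $r_n\in J_+^{\tilde M}(p_0)\cap J_-^{\tilde M}(q_0)$ all check out.
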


\begin{proof}
Let  $(S,g_0)$ be complete.
Each slice $\{t_0\}\times S$ in $M$ is certainly achronal, $t_0\in I$.
We show that they are Cauchy hypersurfaces by proving that each inextendible
causal curve meets all the slices.

Let $c(s)=(t(s),x(s))$ be a causal curve in $M=I\times S$.
Without loss of generality we may assume that $c$ is future directed, i.~e., $t'(s)>0$.
We can reparametrize $c$ and use $t$ as the curve parameter, i.~e.,
$c$ is of the form $c(t)=(t,x(t))$.
 
Suppose that $c$ is inextendible.
We have to show that $c$ is defined on all of $I$.
Assume that $c$ is defined only on a proper subinterval $(\alpha,\beta)\subset
I$ with, say, $\alpha \in I$.
Pick $\epsilon>0$ with $[\alpha-\epsilon,\alpha+\epsilon]\subset I$.
Then there exist constants $C_2 > C_1>0$ such that $C_1 \leq f(t)\leq C_2$ for
all $t\in [\alpha-\epsilon,\alpha+\epsilon]$.
The curve $c$ being causal means $0\geq g(c'(t),c'(t)) = -1 + f(t)^2
\|x'(t)\|^2$ where $\|\cdot\|$ is the norm induced by $g_0$.
Hence $\|x'(t)\| \leq \frac{1}{f(t)} \leq \frac{1}{C_1}$ for all $t\in
(\alpha,\alpha+\epsilon)$.

Now let $(t_i)_i$ be a sequence with $t_i \searrow \alpha$.
For sufficiently large $i$ we have $t_i\in (\alpha,\alpha+\epsilon)$.
For $j>i\gg0$ the length of the part of $x$ from $t_i$ to $t_j$ is bounded
from above by $\frac{t_j-t_i}{C_1}$.
Thus we have for the Riemannian distance
$$
\dist(x(t_j),x(t_i)) \leq \frac{t_j-t_i}{C_1}.
$$
Hence $(x(t_i))_i$ is a Cauchy sequence and since $(S,g_0)$ is complete it
converges to a point $p\in S$.
This limit point $p$ does not depend on the choice of Cauchy sequence because
the union of two such Cauchy sequences is again a Cauchy sequence with a
unique limit point.
This shows that the curve $x$ can be extended continuously by putting
$x(\alpha):=p$.
We extend $x$ in an arbitrary fashion beyond $\alpha$ to a piecewise
$C^1$-curve with $\|x'(t)\| \leq \frac{1}{C_2}$ for all
$t\in(\alpha-\epsilon,\alpha)$.
This yields an extension of $c$ with
$$
g(c'(t),c'(t)) = -1 + f(t)^2\|x'(t)\|^2 \leq -1 + \frac{f(t)^2}{C_2^2} \leq 0.
$$
Thus this extension is causal in contradiction to the inextendibility of $c$.

Conversely, assume that $(M,g)$ is globally hyperbolic.
We fix $t_0\in I$ and choose $\epsilon>0$ such that
$[t_0-\epsilon,t_0+\epsilon] \subset I$.
There is a constant $\eta>0$ such that $\frac{1}{f(t)}\geq\eta$ for all $t\in
[t_0-\epsilon,t_0+\epsilon]$.
Fix $p\in S$.
For any $q\in S$ with $\dist(p,q)\leq \frac{\epsilon\eta}{2}$ there is a
smooth curve $x$ in $S$ of length at most $\epsilon\eta$ joining $p$ and $q$.
We may parametrize $x$ on $[t_0,t_0+\epsilon]$ such that $x(t_0)=q$,
$x(t_0+\epsilon)=p$ and $\|x'\|\leq \eta$. 
Now the curve $c(t):=(t,x(t))$ is causal because
$$
g(c',c') = -1 + f^2\, \|x'\|^2 \leq -1 + f^2\,\eta^2 \leq 0.
$$
Moreover, $c(t_0)=(t_0,q)$ and $c(t_0+\epsilon)=(t_0+\epsilon,p)$.
Thus $(t_0,q) \in J_-^M(t_0+\epsilon,p)$.
Similarly, one sees $(t_0,q) \in J_+^M(t_0-\epsilon,p)$.
Hence the closed ball $\ovl{B}_{r}(p)$ in $S$ is contained in the compact set
$J_+^M(t_0-\epsilon,p)\cap J_-^M(t_0+\epsilon,p)$ and therefore compact
itself, where $r=\frac{\eta\epsilon}{2}$. 
We have shown that all closed balls of the fixed radius $r>0$ in $S$ are
compact.

Every metric space with this property is complete.
Namely, let $(p_i)_i$ be a Cauchy sequence.
Then there exists $i_0>0$ such that $\dist(p_i,p_j) \leq r$ whenever $i,j\geq
i_0$.
Thus $p_j\in \ovl{B}_r(p_{i_0})$ for all $j\geq i_0$.
Since any Cauchy sequence in the compact ball $\ovl{B}_r(p_{i_0})$ must
converge we have shown completeness. 
\end{proof}

\end{appendix}


\backmatter 



\printindex{Abbildungen}{Figures}

\printindex{Symbole}{Symbols}

\printindex{DerIndex}{Index}


\begin{thebibliography}{1}
\addcontentsline{toc}{chapter}{Bibliography}

\bibitem[B\"ar-Gauduchon-Moroianu2005]{BGM}
{\sc C.~B\"ar, P.~Gauduchon, and A.~Moroianu:}
{\em Generalized Cylinders in Semi-Riemannian and Spin Geometry}.
Math.~Zeitschr.~{\bf 249} (2005), 545--580

\bibitem[Baum1981]{Baum}
{\sc H.~Baum:}
{\em Spin-Strukturen und Dirac-Operatoren \"uber pseudoriemannschen
  Mannigfaltigkeiten}.
Teubner, Leipzig, 1981

\bibitem[Baum-Kath1996]{BK}
{\sc H.~Baum and I.~Kath:}
{\em Normally Hyperbolic Operators, the Huygens Property and Conformal Geometry}.
Ann. Glob.~Anal.~Geom.~{\bf 14} (1996), 315--371

\bibitem[Baumg\"artel-Wollenberg1992]{BW}
{\sc H.~Baumg\"artel and M.~Wollenberg:}
{\em Causal Nets of Operator Algebras}.
Akademie Verlag, Berlin, 1992

\bibitem[Beem-Ehrlich-Easley1996]{BEE}
{\sc J.~K.~Beem, P.~E.~Ehrlich, and K.~L.~Easley:}
{\em Global Lorentzian Geometry} (second edition).
Marcel Dekker, New York-Basel-Hong Kong, 1996

\bibitem[Berline-Getzler-Vergne1992]{BGV}
{\sc N.~Berline, E.~Getzler, and M.~Vergne:}
{\em Heat Kernels and Dirac Operators}.
Springer-Verlag, Berlin-Heidelberg, 1992

\bibitem[Bernal-S\'anchez2005]{BS}
{\sc A.~N.~Bernal and M.~S\'anchez:}
{\em Smoothness of time functions and the metric splitting of globally
  hyperbolic spacetimes}.
Comm.~Math.~Phys.~{\bf 257} (2005), 43--50

\bibitem[Bernal-S\'anchez2006]{BS2}
{\sc A.~N.~Bernal and M.~S\'anchez:}
{\em Further results on the smoothability of Cauchy hypersurfaces and Cauchy
  time functions}.
Lett.\ Math.\ Phys.~{\bf 77} (2006), 183--197

\bibitem[Besse1987]{Bes} 
{\sc A.~L.~Besse:} 
{\em Einstein Manifolds}.
Springer-Verlag, Berlin-Heidelberg, 1987.

\bibitem[Birrell-Davies1984]{BD}
{\sc  N.~D.~Birrell and P.~C.~W.~Davies:}
{\em Quantum fields in curved space} (second edition).
Cambridge University Press, Cambridge, 1984

\bibitem[Bratteli-Robinson2002-I]{BR1}
{\sc O.~Bratteli and D.~W.~Robinson:}
{\em Operator Algebras and Quantum Statistical Mechanics I}.
Springer-Verlag, Berlin-Heidelberg, 2002

\bibitem[Bratteli-Robinson2002-II]{BR2}
{\sc O.~Bratteli and D.~W.~Robinson:}
{\em Operator Algebras and Quantum Statistical Mechanics II}.
Springer-Verlag, Berlin-Heidelberg, 2002

\bibitem[Brunetti-Fredenhagen1997]{BF}
{\sc R.~Brunetti and K.~Fredenhagen:}
{\em  Interacting quantum fields in curved space: renormalizability of
  $\phi\sp 4$}.
In: Operator algebras and quantum field theory, Int.\ Press, Cambridge, 1997,
546--563 

\bibitem[Brunetti-Fredenhagen-K\"ohler1996]{BFK}
{\sc R.~Brunetti, K.~Fredenhagen, and M.~K\"ohler:}
{\em  The microlocal spectrum condition and Wick polynomials of free fields on
  curved spacetimes}.
Commun.\ Math.\ Phys.~{\bf 180}  (1996), 633--652

\bibitem[Brunetti-Fredenhagen-Verch2003]{BFV}
{\sc R.~Brunetti, K.~Fredenhagen, and R.~Verch:}
{\em The generally covariant locality principle - A new paradigm for local
  quantum field theory}.
 Commun.~Math.~Phys.~{\bf 237} (2003), 31--68

\bibitem[Choquet-Bruhat1968]{CB}
{\sc Y.~Choquet-Bruhat:}
{\em Hyperbolic partial differential equations on a manifold}. 
In: Battelle Rencontres, 1967 Lectures Math.~Phys.\ (1968), 84--106

\bibitem[Dimock1980]{Dimock1}
{\sc J.~Dimock:}
{\em Algebras of Local Observables on a Manifold}.
Commun.~Math.~Phys.~{\bf 77} (1980), 219--228

\bibitem[Dimock1982]{Dimock2}
{\sc J.~Dimock:}
{\em Dirac quantum fields on a manifold}.
Trans.\ Amer.\ Math.\ Soc.~{\bf  269} (1982), 133--147

\bibitem[Dimock1992]{Dimock3}
{\sc J.~Dimock:}
{\em  Quantized electromagnetic field on a manifold}.
Rev.\ Math. Phys.~{\bf 4} (1992), 223--233


\bibitem[Ellis-Hawking1973]{EH}
{\sc G.~F.~R.~Ellis and S.~W.~Hawking:}
{\em The large scale structure of space-time}.
Cambridge University Press, London-New York, 1973

\bibitem[Friedlander1975]{FL2}
{\sc F.~Friedlander:}
{\em  The wave equation on a curved space-time}.
Cambridge University Press, Cambridge, 1975

\bibitem[Friedlander1998]{FL1}
{\sc F.~Friedlander:}
{\em  Introduction to the theory of distributions} (second edition). 
Cambridge University Press, Cambridge, 1998

\bibitem[Fulling1989]{Ful}
{\sc S.~Fulling:}
{\em Aspects of quantum field theory in curved space-time}.
Cambridge University Press, Cambridge, 1989

\bibitem[Furlani1999]{Fur}
{\sc E.~P.~Furlani:}
{\em Quantization of massive vector fields in curved space-time}.
J.~Math.~Phys.~{\bf 40} (1999), 2611--2626

\bibitem[Geroch1970]{Geroch}
{\sc R.~Geroch:}
{\em Domain of dependence}.
J.~Math.~Phys.~{\bf 11} (1970), 437--449


\bibitem[G\"unther1988]{Guenther}
{\sc P.~G\"unther:}
{\em Huygens' Principle and Hyperbolic Equations}.
Academic Press, Boston, 1988

\bibitem[Haag-Kastler1964]{HK}
{\sc R.~Haag and D.~Kastler:}
{\em An Algebraic Approach to Quantum Field Theory}.
J.~Math.~Phys.~{\bf 5} (1964), 848--861

\bibitem[Hajicek1978]{Haj}
{\sc P.~Hajicek:}
{\em  Observables for quantum fields on curved background}.
In: K.~Bleuler, H.~Petry, A.~Reetz (Eds.): Differential geometric methods in
mathematical physics. 
Springer-Verlag, Berlin, 1978, 535--566


\bibitem[Hollands-Wald2001]{HW}
{\sc S.~Hollands and R.~M.~Wald:}
{\em Local Wick polynomials and time ordered products of quantum fields in
  curved spacetime}.
Commun.\ Math.\ Phys.~{\bf 223} (2001), 289-326

\bibitem[Husemoller1993]{Hus}
{\sc D.~Husemoller:}
{\em Fibre bundles} (third edition).
Springer-Verlag, 1993

\bibitem[Isham1978]{Ish}
{\sc C.~Isham:}
{\em Quantum field theory in curved space-times: a general mathematical
  framework}.
In: K.~Bleuler, H.~Petry, A.~Reetz (Eds.): Differential geometric methods in
mathematical physics. 
Springer-Verlag, Berlin, 1978, 459--512

\bibitem[Jeffrey1995]{Jeff}
{\sc A.~Jeffrey:}
{\em  Handbook of Mathematical Formulas and Integrals}.
Academic Press, San Diego, 1995

\bibitem[Lang2002]{Lang}
{\sc S.~Lang:}
{\em Algebra} (third edition).
Springer-Verlag, New York, 2002

\bibitem[Lee-Parker1987]{LP}
{\sc J.~M.~Lee and T.~H.~Parker:}
{\em The Yamabe Problem}.
Bull.~of the AMS {\bf 17} (1987), 37--91

\bibitem[Leray1953]{L}
{\sc J.~Leray:}
{\em Hyperbolic Differential Equations}.
Unpublished Lecture Notes, Princeton, 1953

\bibitem[MacLane1998]{Mac}
{\sc S. Mac Lane:}
{\em Categories for the working mathematician} (second edition).
Springer-Verlag, New York, 1998

\bibitem[Manuceau1968]{Man}
{\sc J.~Manuceau:}
{\em $C^*$-alg\`ebre de relations de commutation}.
Ann.~Inst.~H.~Poincar\'e Sect.~A (N.S.)  {\bf 8} (1968), 139--161

\bibitem[Nicolaescu1996]{Nic}
{\sc L.~Nicolaescu:}
{\em Geometry of Manifolds}.
World Scientific, Singapore-New Jersey-London-Hong Kong, 1996

\bibitem[O'Neill1983]{ONeill}
{\sc B.~O'Neill:}
{\em Semi-Riemannian Geometry}.
Academic Press, San Diego, 1983

\bibitem[Radzikowski1996]{Rad}
{\sc M.~Radzikowski:}
{\em  Micro-local approach to the Hadamard condition in quantum field theory
  on curved space-time}.
Commun.\ Math.\ Phys.~{\bf 179} (1996), 529--553

\bibitem[Reed-Simon1980]{RS}
{\sc M.~Reed and B.~Simon:}
{\em Methods of Modern Mathematical Physics I: Functional Analysis} (revised
and enlarged edition).
Academic Press, San Diego, 1980

\bibitem[Reed-Simon1975]{RS2}
{\sc M.~Reed and B.~Simon:}
{\em Methods of Modern Mathematical Physics II: Fourier Analysis,
  Self-Adjointness}.
Academic Press, San Diego, 1975

\bibitem[Riesz1949]{Riesz}
{\sc M.~Riesz:}
{\em L'int\'egrale de Riemann-Liouville et le probl\`eme de Cauchy}.
Acta Math.~{\bf 81} (1949), 1--223

\bibitem[Rudin1973]{Ru}
{\sc W.~Rudin:}
{\em Functional Analysis}.
McGraw-Hill Book Co., New York-D\"usseldorf-Johannesburg, 1973


\bibitem[Spivak1979]{Sp1}
{\sc M.~Spivak:}
{\em A Comprehensive Introduction to Differential Geometry, Vol.~1} (second
edition).  
Publish or Perish, Houston, 1979

\bibitem[Verch2001]{Ver}
{\sc R.~Verch:}
{\em  A spin-statistics theorem for quantum fields on curved spacetime
  manifolds in a generally covariant framework}.
Commun.~Math.~Phys.\ {\bf 223} (2001), 261--288

\bibitem[Wald1984]{Wa1}
{\sc R.~M.~Wald:}
{\em General Relativity}. 
University of Chicago Press, Chicago, 1984

\bibitem[Wald1994]{Wa2}
{\sc R.~M.~Wald:}
{\em Quantum field theory in curved spacetime and black hole thermodynamics}. 
University of Chicago Press, Chicago, 1994

\bibitem[Warner1983]{War}
{\sc F.~W.~Warner:}
{\em Foundations of Differentiable Manifolds and Lie Groups}. 
Springer-Verlag, New York-Berlin-Heidelberg, 1983

\end{thebibliography}
\end{document}